\pgfplotsset{compat=1.15}
\pgfplotsset{compat=1.15}
\newcommand{\cal}{\mathcal}
\newcommand{\R}{\mathbb{R}}
\newcommand{\Z}{\mathbb{Z}}
\newcommand{\N}{\mathbb{N}}
\renewcommand{\H}{\mathbb{H}}
\newcommand{\ep}{
\epsilon
}
\newcommand{\mc}[1]{
\mathcal{#1}
}
\newcommand{\mb}[1]{
\mathbb{#1}
}
\newcommand{\GL}{\mathrm{GL}}
\newcommand{\SL}{\mathrm{SL}}
\newcommand{\SO}{\mathrm{SO}}
\newcommand{\interior}[1]{\accentset{\smash{\raisebox{-0.12ex}{$\scriptstyle\circ$}}}{#1}\rule{0pt}{2.3ex}}
\renewcommand{\tilde}{\accentset{\sim}}
\newcommand{\dimd}{d}  
\newcommand{\sph}{\mathbb{S}}   
\newcommand{\corner}{\mathfrak{C}} 
\newcommand{\blow}{\mathbf{B}}
\DeclareMathOperator{\conv}{Conv}
\DeclareMathOperator{\Ker}{Ker}
\DeclareMathOperator{\Span}{Span}
\DeclareMathOperator{\Stab}{Stab}
\DeclareMathOperator{\dev}{dev}
\DeclareMathOperator{\hol}{hol}
\DeclareMathOperator{\tr}{tr}
\DeclareMathOperator{\Aut}{Aut}
\newcommand{\Id}{\mb{I}}
\renewcommand{\Im}{\mathrm{Im}}
\newcommand{\ie}{i.e.\ }
\newcommand{\eg}{e.g.\ }
\newcommand{\resp}{resp.\ }
\newtheorem{prop}{Proposition}[section]
\newtheorem{thm}[prop]{Theorem}
\newtheorem{q}[prop]{Question}
\newtheorem{lemma}[prop]{Lemma}
\newtheorem{fact}[prop]{Fact}
\newtheorem{cor}[prop]{Corollary}
\theoremstyle{definition}
\newtheorem{defi}[prop]{Definition}
\theoremstyle{remark}
\newtheorem{rk}[prop]{Remark}
\numberwithin{equation}{section}
\newtheorem*{claim*}{Claim}
\newenvironment{claim}[1]{%
\claiminner
}{\endclaiminner}
\begin{document}

\title{Divisible convex sets with properly embedded cones}

\author{Pierre-Louis Blayac}
\address{University of Michigan, Ann Arbor, USA}
\email{blayac@umich.edu}

\author{Gabriele Viaggi}
\address{Department of Mathematics, Heidelberg University, Heidelberg, Germany}
\email{gviaggi@mathi.uni-heidelberg.de}

\begin{abstract}
In this article we construct many examples of properly convex irreducible domains divided by Zariski dense relatively hyperbolic groups in every dimension at least 3. This answers a question of Benoist. Relative hyperbolicity and non-strict convexity are captured by a family of properly embedded cones (convex hulls of points and ellipsoids) in the domain. Our construction is most flexible in dimension 3 where we give a purely topological criterion for the existence of a large deformation space of geometrically controlled convex projective structures with totally geodesic boundary on a compact 3-manifold.         
\end{abstract}

\maketitle

\setcounter{tocdepth}{1}
\tableofcontents

\section{Introduction}

In this article we construct properly convex open subsets $\Omega\subset\mb{RP}^d$ divided by subgroups $\Gamma<{\rm PSL}_{d+1}\R$ with special geometric properties in arbitrary dimension $d\ge 3$. 

A \emph{properly convex} subset $\Omega\subset\mb{RP}^d$ is an open subset contained in some affine chart of $\mb{R}^d$ where it is convex and bounded. It comes with a group of projective symmetries
\[
{\rm Aut}(\Omega):=\{A\in{\rm PSL}_{d+1}(\mb{R})\left|\; A(\Omega)=\Omega\right.\}
\]
and an ${\rm Aut}(\Omega)$-invariant Finsler metric called the \emph{Hilbert metric}.  

Discrete and torsion-free subgroups $\Gamma<{\rm Aut}(\Omega)$ give rise to quotient projective $d$-manifolds
\[
M:=\Omega/\Gamma.
\]
Among these groups $\Gamma$, of special interest are those such that $M=\Omega/\Gamma$ is compact. If such discrete subgroup exists, then the convex set $\Omega$ is said {\em divisible} and the group $\Gamma$ {\em divides} $\Omega$. Divisible convex sets are a rich source of geometry, dynamics, and group theory (see \cite{Besurvey}).

Basic examples are
\begin{itemize}
\item{The {\em hyperbolic space}, the symmetric space of ${\rm PO}_0(d,1)$
\[
\mb{H}^d=\{[x]\in\mb{RP}^d\left|\;x_1^2+\cdots+x_d^2-x_{d+1}^2<0\right.\}.
\]
}
\item{The projective model of the {symmetric space} of $\SL_n\R$ 
\[
\SL_n\R/\SO(n)={\rm PSym}^+_n=\{[S]\in\mb{P}({\rm Sym}_n(\mb{R}))\left|S \text{ is positive definite}\right.\},
\]
where ${\rm Sym}_n(\mb{R})$ is the space of symmetric square matrices of size $n$.
} 
\item{A projective model of the symmetric space $\mb{R}^d$
\[
\Delta^d=\left\{[x]\in\mb{RP}^d\left| x_1,\dots,x_{d+1}>0\right.\right\}.
\]
}
\end{itemize}
(The Hilbert metrics do not necessarily agree with the Riemannian ones.)

The study of divisible convex sets, initiated by Benzécri \cite{Ben60} and vastly expanded by Benoist \cite{Be1,Be2,B3,Be4}, branches into two major subjects:
\begin{itemize}
\item{{\em Existence}: Construct examples beyond symmetric ones (see Table~\ref{tab:table1}).}
\item{{\em Classification}: Describe the structure of divisible convex sets, of the groups dividing them, and of the quotient projective manifolds.} 

\end{itemize}

We are going to come back to this in Section~\ref{sec:classification and examples}.

The purpose of this article is to contribute to the first goal by exhibiting new examples of divisible convex sets with novel geometric and group theoretic features. 
First we answer a question of Benoist \cite[Prob.\,10]{Bexercises} (see also Marquis \cite[Open\,Q.\,3]{Masurvey}).

\begin{thm}
\label{thm:mainA}
For every $d\ge 3$ there exists a divisible convex set $\Omega\subset\mb{RP}^d$ divided by a {\rm Zariski dense} group $\Gamma<{\rm SL}_{d+1}(\mb{R})$ such that $\Omega$ is {\rm not strictly convex}, i.e. $\partial\Omega$ contains non-trivial line segments.
\end{thm}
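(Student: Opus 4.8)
The plan is to build $\Omega$ by a "mixed gluing" construction combining hyperbolic pieces with non-strictly-convex behavior along totally geodesic hypersurfaces, then upgrade Zariski density. In dimension $d=3$ the idea is to take a compact hyperbolic $3$-manifold $N$ with a separating totally geodesic surface $S$, cut along $S$, and reglue after a projective bending/deformation that is not conjugate into $\PO_0(3,1)$. The convex projective structure on each piece will be a small deformation of the hyperbolic one (Koszul openness), but the amalgamated structure will carry a properly embedded cone over $S$ — the convex hull of an ellipsoid (the limit set of $\pi_1 S$) and a point — whose presence forces $\partial\Omega$ to contain line segments, hence $\Omega$ is not strictly convex. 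Relative hyperbolicity of $\Gamma$ relative to the fundamental group of the gluing hypersurface follows from the properly embedded cone structure (this is presumably one of the earlier results the paper sets up, so I may cite it). For general $d\ge 4$ I would either iterate a suspension/doubling trick or start from a higher-dimensional hyperbolic manifold containing a totally geodesic hypersurface and perform the analogous bending; the existence of such manifolds in all dimensions comes from arithmetic constructions (Millson, et al.).

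More concretely, the key steps in order: (1) Fix a compact hyperbolic $d$-manifold $N_0$ with holonomy $\rho_0\colon \pi_1 N_0 \to \PO_0(d,1)$ admitting an embedded two-sided totally geodesic hypersurface $\Sigma$; write $\pi_1 N_0$ as an HNN extension or amalgam over $H:=\pi_1\Sigma$. (2) Deform $\rho_0$ to a representation $\rho$ into $\PSL_{d+1}\R$ by a "bulging"/bending deformation supported near $\Sigma$: on the vertex groups keep the hyperbolic holonomy (or a convex projective deformation of it), and twist the edge identification by a one-parameter family $u_t$ in the centralizer-type subgroup that preserves the stabilized hyperplane but acts nontrivially transversally. (3) Check, via a Klein–Maskit-style combination / developing map argument, that for small $t$ the representation $\rho_t$ is the holonomy of a convex projective structure on $N_0$; the developed image is a properly convex $\Omega_t$ obtained by gluing copies of the convex cores of the pieces along properly embedded "cones" $\mathbf{C}$ over the ellipsoids $\partial \tilde\Sigma$. (4) Identify the boundary structure of $\Omega_t$: the translates $\gamma \mathbf{C}$ foliate a portion of $\partial\Omega_t$ by line segments (the cone edges), so $\Omega_t$ is not strictly convex, and simultaneously these cones are the peripheral structure witnessing relative hyperbolicity of $\Gamma_t:=\rho_t(\pi_1 N_0)$. (5) Arrange Zariski density: the Zariski closure $G$ of $\Gamma_t$ is reductive (since $\Gamma_t$ divides a properly convex set, by Benoist), contains the Zariski closure of the hyperbolic vertex holonomies (already $\PO_0(d,1)$ up to the irreducibility one sets up), and the bending parameter pushes it out of any $\PO(d,1)$-conjugate; a short representation-theoretic argument (analyzing which reductive subgroups of $\SL_{d+1}\R$ contain a copy of $\PO_0(d,1)$ acting irreducibly and an extra bending element) forces $G=\SL_{d+1}\R$ for generic $t$. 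Finally lift $\Gamma_t$ from $\PSL_{d+1}\R$ to $\SL_{d+1}\R$, which is possible after passing to a finite-index torsion-free subgroup if needed.

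I expect the main obstacle to be step (3)–(4): proving that the glued object is genuinely properly convex (not just that the holonomy exists) and controlling its boundary precisely enough to see the properly embedded cones. The soft part is producing a projective structure with the right holonomy; the hard part is the global convexity of the developing image across the gluing locus, which requires showing the images of the pieces fit together without overlaps and that their union is convex — this is exactly where a careful analysis of how the bent copies of $\mathbb{H}^d$ (as ellipsoids in $\mathbb{RP}^d$) and the interpolating cones sit relative to their supporting hyperplanes is needed. A secondary difficulty is making the construction work uniformly in all $d\ge 3$: in dimension $3$ one has the flexible topological input (which the abstract promises to exploit fully), but in higher dimensions one leans on the scarcer supply of totally geodesic hypersurfaces in hyperbolic manifolds, so the construction there is more rigid and may require the arithmetic reflection-group examples as the starting point.
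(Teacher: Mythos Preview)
Your proposal has a genuine gap at steps (3)--(4), and it is not a technical one: the mechanism you describe does not produce non-strict convexity. Small Johnson--Millson bulging along a totally geodesic hypersurface $\Sigma$ in a closed hyperbolic $d$-manifold yields, by Koszul--Benoist openness, a divisible properly convex domain $\Omega_t$ --- but $\Gamma_t$ remains Gromov hyperbolic (it is isomorphic to $\pi_1 N_0$), and by Benoist's theorem a divisible convex set with hyperbolic dividing group is \emph{strictly convex}. So no line segments appear in $\partial\Omega_t$, and no properly embedded cones exist. Your claimed ``cone over the ellipsoid $\partial\tilde\Sigma$'' does not arise: there is no vertex $p$ in this picture, and $\partial\tilde\Sigma$ is a $(d-2)$-sphere sitting inside the smooth $\partial\Omega_t$, not the base of a cone.

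The paper's construction is quite different and the difference is essential. One starts not with a single hypersurface but with a hyperbolic manifold with totally geodesic boundary and \emph{codimension-two corners} (built arithmetically in dimension $\ge 4$). Doubling it gives a hyperbolic \emph{cone-manifold}; one then bulges with \emph{large} (not small) parameters along the walls adjacent to each corner. For large enough bulging the meridian holonomy around each corner becomes diagonalizable with a specific eigenvalue ordering, which allows one to \emph{blow up} each codimension-$2$ singularity to a totally geodesic boundary component. These blown-up boundaries are the cones $\mathcal{CH}(H,p)$, where $H$ is a $(d-2)$-dimensional ellipsoid (the developed corner) and the vertex $p$ is the attracting fixed point of the meridian. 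Doubling once more gives the closed example. The non-strict convexity and the non-hyperbolicity of $\Gamma$ (it contains $\mathbb{Z}\times\pi_1(\text{corner})$) are produced simultaneously by this blowup, not by the bending itself. Your Zariski-density argument in step (5) is along reasonable lines, but it is moot without the correct geometric input.
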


We will give a more precise version of Theorem \ref{thm:mainA} later on (see Theorem~\ref{thm:main1}),
which gives a description similar to a result of Benoist \cite[Th.\,1.1]{Be4} on the structure of divisible convex sets in dimension 3 (see also \cite{IZ19relhypb,IZ22relhypb,We21}).
Let us remark the following. 
\begin{itemize}
\item{Recall that a subgroup $\Gamma<{\rm PSL}_{d+1}(\mb{R})$ is {\em Zariski dense} if every polynomial in the matrix entries that vanishes on $\Gamma$ also vanishes on ${\rm PSL}_{d+1}(\mb{R})$.}
\item{Non-strict convexity comes from \emph{properly embedded cones} in $\Omega$, \ie convex hulls in $\Omega$ of a $(d-2)$-dimensional ellipsoid $H\subset\partial\Omega$ (the {\em base} of the cone) and a point $p\in\partial\Omega$ (the {\em vertex}), whose relative boundary is contained in $\partial\Omega$.}
In dimension at least $4$, the presence of these cones is also a novelty.
\item{Our construction allows a good amount of flexibility.
For instance, if $d\geq 5$, then we can produce infinitely many different {\em commensurability classes} of projective manifolds with the properties stated in the theorem. 
Recall that two groups $\Gamma,\Gamma'$ are {\em commensurable} if they admit isomorphic finite index subgroups $G<\Gamma,G'<\Gamma'$.}

In particular, for each $d\geq5$ there exist infinitely many different non-strictly convex divisible convex sets divided by a Zariski dense group.
\item{$\Omega$ will be constructed by gluing compact convex projective manifolds \emph{with totally geodesic boundary}, like the ones in the next result.}
\end{itemize}

Our second result is specific to dimension 3,
where geometry and topology are strictly tied by breakthroughs of Thurston \cite{Th82}. 
We provide a purely topological condition for the existence of (deformation spaces of) convex projective structures with totally geodesic boundary on a large class of compact 3-manifolds with toroidal boundary, addressing a question of Ballas, Danciger, and Lee \cite{BDL18} (see Question~\ref{qn:question2}).

\begin{thm}
\label{thm:mainB}
Consider: 
\begin{itemize}
 \item A compact, orientable, \emph{irreducible} and \emph{atoroidal} 3-manifold $M$  with non-empty connected boundary.
 \item A \emph{doubly incompressible}  separating simple closed curve $\alpha\subset\partial M$.
\end{itemize}
Then for all
$a>1$ close enough to $1$, $b>3$ and $1<c<-2+b^2/2$,
the complement $N=DM\smallsetminus U$ of an open tubular neighborhood $U$ of $\alpha$ in the double $DM$ admits a convex projective structure with totally geodesic boundary isomorphic to a projective torus with holonomy generated by $A={\rm diag}(a,a^{-1},1,1)$ and $B={\rm diag}(b^{-1},b^{-1},bc,bc^{-1})$. 
\end{thm}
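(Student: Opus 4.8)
The plan is to produce the structure by deforming a hyperbolic one in two stages: a bending along a totally geodesic surface coming from $\partial M$, followed by an opening of the cusp coming from $\alpha$.

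\emph{Step 1 (reduction to hyperbolic geometry).} First, the hypotheses force $\partial M$ to be incompressible of genus $g\ge 2$: a separating simple closed curve on a sphere or a torus is inessential and hence cannot be doubly incompressible, while if $\partial M$ were compressible then $DM$, and therefore $N$, would be reducible, so $N$ would not be aspherical and could carry no convex projective structure at all. Thus $M$ is compact, orientable, irreducible, atoroidal with incompressible boundary of genus $\ge2$, so by Thurston's hyperbolization it admits a hyperbolic structure with totally geodesic boundary and $DM$ is a closed hyperbolic $3$-manifold on which the doubling involution $\tau$ acts isometrically, with fixed locus the totally geodesic copy of $\partial M$. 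The curve $\alpha$ is then freely homotopic to a closed geodesic on this surface, and drilling it keeps the manifold hyperbolic (it bounds no solid torus and lies on no essential torus, since $DM$ is atoroidal), so the interior of $N=DM\smallsetminus U$ carries a complete finite-volume hyperbolic metric with a single cusp, the torus $T=\partial U$. Since $\tau$ fixes $\alpha$ it descends to $N$, still isometric, with fixed surface $\Sigma=\Sigma_1\sqcup\Sigma_2$, where $\Sigma_1,\Sigma_2$ are the two one-holed subsurfaces of genus $\ge1$ into which $\alpha$ cuts $\partial M$; $\Sigma$ is $\pi_1$-injective, properly embedded with $\partial\Sigma\subset T$, and cutting $N$ along it produces two copies of $M$. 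Write $\rho_0$ for the holonomy of the complete structure, post-composed with $\SO(3,1)\hookrightarrow\PGL_4(\R)$; on $\pi_1 T=\langle\mu,\lambda\rangle$, with $\mu$ the meridian of $U$ and $\lambda=[\alpha]\in\pi_1\Sigma$, the elements $\rho_0(\mu)$ and $\rho_0(\lambda)$ are commuting parabolics.

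\emph{Step 2 (bending).} Each $\rho_0(\pi_1\Sigma_i)$ is a nonelementary surface group preserving a projective plane $P_i$ on which it acts irreducibly (and trivially on the polar line $P_i^\perp$), so by Schur its centralizer in $\PGL_4(\R)$ is a one-parameter group $\{c^{(i)}_t\}$ scaling $P_i^\perp$. Viewing $N$ as the graph of groups with the two copies of $M$ as vertices and $\Sigma_1,\Sigma_2$ as edges, one twists the holonomy at the two edges by $c^{(1)}_{t_1}$ and $c^{(2)}_{t_2}$; the double incompressibility of $\alpha$ guarantees that the resulting bending cocycles are nonzero and unobstructed, so this is a genuine two-parameter family $\rho_{t_1,t_2}$ deforming $\rho_0$, and by Koszul's openness theorem these are holonomies of convex projective structures on $N$ for $(t_1,t_2)$ near the origin. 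Equivalently, one glues two copies of a suitably deformed convex projective $M$ with totally geodesic boundary along $\Sigma$ with a twist; this is the "gluing" of the abstract. However $\rho_{t_1,t_2}(\lambda)=\rho_0(\lambda)$ for all $(t_1,t_2)$, so bending alone keeps $T$ a cusp, and $\rho_{t_1,t_2}(\lambda)$ can be conjugate to $A$ only when $a=1$.

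\emph{Step 3 (opening the cusp and matching the boundary holonomy).} To turn $T$ into a totally geodesic torus boundary with holonomy $\langle A,B\rangle$ one must additionally open the cusp: deform $\rho(\lambda)$ from parabolic to the loxodromic $A=\mathrm{diag}(a,a^{-1},1,1)$ with $a$ near $1$ — the reverse of the hyperbolic Dehn surgery that refills $\lambda$ to recover $DM$ — while $\rho(\mu)$ becomes $B=\mathrm{diag}(b^{-1},b^{-1},bc,bc^{-1})$. Combining the hyperbolic Dehn-surgery deformation (moving $\rho(\lambda)$ inside $\SO(3,1)$) with the two bendings (moving the peripheral holonomy out of $\SO(3,1)$) one obtains a three-parameter family in the $\PGL_4$-relative character variety $X(N,\partial N)$ near $[\rho_0]$ whose restriction to $X(T^2)$ is a submersion onto an open set meeting the stratum of $\R$-diagonalizable commuting pairs of the prescribed form (a half-dimensional image inside the six-dimensional $X(T^2)$, as expected from Poincaré duality); an inverse-function-theorem argument then realizes every $\langle A,B\rangle$ with $a$ close to $1$ and $(b,c)$ near a limiting region. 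The decisive remaining point is to show that the representations so produced are holonomies of genuinely \emph{convex} projective structures whose end over $T$ is an honest totally geodesic torus boundary: that the developing map is a homeomorphism onto a properly convex domain $\Omega$ meeting a coordinate hyperplane in a $2$-simplex on which $\langle A,B\rangle$ acts cocompactly (so $\partial N\cong\Delta^2/\langle A,B\rangle$), locally modeled near $T$ on a neighborhood of a $2$-simplex face in $\Delta^3$, rather than on a cusp or the relative boundary of a cone. Here the explicit inequalities $b>3$ and $1<c<-2+b^2/2$ enter, as precisely the range in which this local model glues compatibly to the bent copies of $M$ along $\Sigma$ to give a properly convex $\Omega$; this uses the properly-embedded-cone machinery developed in the earlier sections.

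I expect Step 3 to be the main obstacle, in two linked parts: (a) producing the cusp-opening deformation with prescribed peripheral holonomy, where the non-degeneracy of the bending — hence the doubly-incompressible hypothesis — is indispensable; and (b) maintaining proper convexity and the totally geodesic torus boundary along the whole path from the complete hyperbolic structure. In essence one needs a "projective cusp-opening theorem" with explicit convexity control, which is the technical core of the construction and the origin of the conditions on $a,b,c$. Steps 1 and 2, by contrast, rest on standard hyperbolization and Johnson–Millson/Koszul bending.
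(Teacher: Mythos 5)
Your approach diverges from the paper's in a way that leaves a real gap, and your Step~1 contains a claim that is actually false.

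First, the reduction: you assert that $\partial M$ must be incompressible, so that $M$ carries a hyperbolic metric with (smooth) totally geodesic boundary and $DM$ is closed hyperbolic. This is not implied by the hypotheses. Double incompressibility of $\alpha$ only forces every essential disk $D$ to satisfy $i(\alpha,\partial D)>2$; it does not forbid such disks from existing. For instance, a genus-$2$ handlebody is irreducible and atoroidal, has connected boundary, and carries doubly incompressible separating simple closed curves, yet it certainly has no hyperbolic metric with smooth totally geodesic boundary (and its double is reducible, so $DM$ is not even hyperbolic). So your Step~1 excludes a family of manifolds the theorem is meant to cover. The paper handles this by never asking for a metric with smooth totally geodesic boundary: it invokes Bonahon--Otal (together with Choi--Series for the length control) to get a \emph{compact} convex hyperbolic metric on $M$ with totally geodesic boundary and \emph{codimension-two corners} along $\alpha$, with prescribed corner length $\ell=2\ln a$ and corner angle at most $\pi/4$. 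This works for compressible boundary as well; the corners are exactly what compensates for the failure of $\pi_1$-injectivity.

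Second, and more centrally, your Step~3 asks for a ``projective cusp-opening theorem,'' combining hyperbolic Dehn surgery with bending to turn a rank-one cusp into a totally geodesic torus boundary with prescribed diagonal holonomy $\langle A,B\rangle$ while maintaining convexity. You correctly identify this as the crux, but it is not what the paper does, and no such cusp-opening result is proved there. The paper never passes through a cusped hyperbolic structure at all. Starting from the Bonahon--Otal metric with a \emph{corner} along $\alpha$ (not a cusp), the holonomy of $\alpha$ is already a loxodromic of the form $\mathrm{diag}(a,a^{-1},1,1)$ sitting inside $\mathrm{SO}(1,1)$. The double $D\mathbb{M}$ is then a projective cone-manifold, which is bulged along the walls (the two halves of $\partial M$). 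The meridian holonomy around $\alpha$ becomes $R_\theta B_\mu R_\theta B_{\mu'}^{-1}$, and the bulging parameters $\mu=\sqrt{bd}$, $\mu'=\sqrt{d/b}$ are solved for explicitly so that this is conjugate to $\mathrm{diag}(b^{-1},b^{-1},bc,bc^{-1})$. The role of $b>3$ and $1<c<-2+b^2/2$ is purely algebraic: together with $\theta<\pi/4$ they guarantee that the auxiliary parameter $d>1$ exists and that the $\SL_2$-angle of the meridian is hyperbolic with trace exactly $c+c^{-1}>2$, i.e.\ that the tube around $\alpha$ is \emph{special} in the sense of Definition~\ref{def:special tubes} and can be blown up to a totally geodesic torus boundary (Proposition~\ref{prop:totgeod blowup}, Theorems~\ref{thm:cvx}, \ref{thm:addendum1}, \ref{thm:hyperbolic doubles}). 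The condition ``$a$ close to $1$'' controls the corner length $\ell=2\ln a<\epsilon$ so that Fact~\ref{fact:ChoiSeries} applies. In short, the paper replaces the unproved cusp-opening step by hyperbolic cone geometry on the input side plus an explicit tube-blowup computation on the projective side.
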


We will later state Theorem~\ref{thm:main3}, a more precise version  of Theorem \ref{thm:mainB}, which allows more boundary components in $\partial M$.
We will then recall classical results on the topology of 3-manifolds.

\begin{itemize}
 \item Recall that $M$ irreducible (\resp atoroidal) means that it does not contain essential spheres (\resp tori),
 and $\alpha$ doubly incompressible means that it intersects every properly embedded essential disk, M\"obius band, and annulus.
 \item{Curves $\alpha\subset\partial M$ that satisfy the assumptions of Theorem \ref{thm:mainB} exist and are abundant in a precise sense (by work of Lecuire \cite{Le05}).}
 \item{Given $(M,\alpha), (M',\alpha')$ as in Theorem \ref{thm:mainB}, the control on the boundary holonomy gives us a homeomorphism $f:\partial N\to \partial N'$ such that the closed manifold $N\cup_f N'$ can be endowed with a convex projective structure.} 
 {In particular, the double $DN$ has a convex projective structure.}
\end{itemize}

The remainder of the introduction is structured as follows:
\begin{enumerate}[(a)]
\item{In Section \ref{sec:classification and examples} we put our work into context by briefly surveying the classification of divisible convex sets and the known methods to produce examples.}
\item{In Section \ref{sec:introcvxcocpct} we give and comment a more precise statement (Theorem \ref{thm:main1}) of Theorem \ref{thm:mainA}. 
In particular we give more details on properly embedded cones and their stabilizers. 
Theorem \ref{thm:main1} will be deduced from Theorem \ref{thm:main2}, stated in Section \ref{sec:introcvxcocpct}, which gives compact convex projective manifolds with totally geodesic boundary (which are convex-cocompact in the sense of \cite{DGK17}).}
\item{In Section \ref{sec:dimension 3} we give a more precise version (Theorem \ref{thm:main3}) of Theorem~\ref{thm:mainB}, and we put this result into context by briefly surveying classical result in 3-dimensional topology.}
\end{enumerate}

\subsection{Classification and known examples}\label{sec:classification and examples}
Before describing our new examples and their novel features, let us briefly survey the classes of examples known so far in order to provide an adequate context. 

To this purpose, it is convenient to briefly recall the general classification scheme (see Table \ref{tab:table1} and Figure \ref{fig:figure1}) of divisible convex sets.

\subsubsection*{Classification}

\begin{table}[h]
\centering
\begin{tabular}{c | c | c | c}
\hline
\multicolumn{2}{c |}{} &\multicolumn{2}{| c}{}\\
\multicolumn{2}{c |}{{\slshape symmetric}} &\multicolumn{2}{| c}{{\slshape non-symmetric}}\\
\multicolumn{2}{c |}{${\rm Aut}(\Omega)$ semisimple Lie group} &\multicolumn{2}{| c}{${\rm Aut}(\Omega)<{\rm PSL}_{d+1}(\R)$ Zariski dense}\\
\multicolumn{2}{c |}{${\rm Aut}(\Omega)\curvearrowright\Omega$ transitive} &\multicolumn{2}{| c}{${\rm Aut}(\Omega)\curvearrowright\Omega$ properly discontinuous}\\
\multicolumn{2}{c |}{\cite{Ko99,Vin63,Vin65}} &\multicolumn{2}{| c}{\cite{Be2}}\\
\multicolumn{2}{c |}{} &\multicolumn{2}{| c}{}\\
$\mb{R}$-{\slshape rank} $\ge 2$ &$\mb{R}$-{\slshape rank} $=1$ &{\slshape strictly convex} &{\slshape non-strictly convex}\\
${\rm SL}_n(\mb{R})/{\rm SO}(n)$ &$\mb{H}^d$  &$\Omega$ &$\Omega$\\
\cite{Bo63} &\cite{Si51} &\cite{KV67,CG93,JM80,K07} &\cite{Be4,CLM20,BDL18,Ma10}\\
 & & &\\
\hline
\end{tabular}
\caption{Classification of irreducible divisible convex sets $\Omega$. The last row surveys some of the known examples.} 
\label{tab:table1}
\end{table}

\begin{figure}[h]
\centering
\begin{overpic}{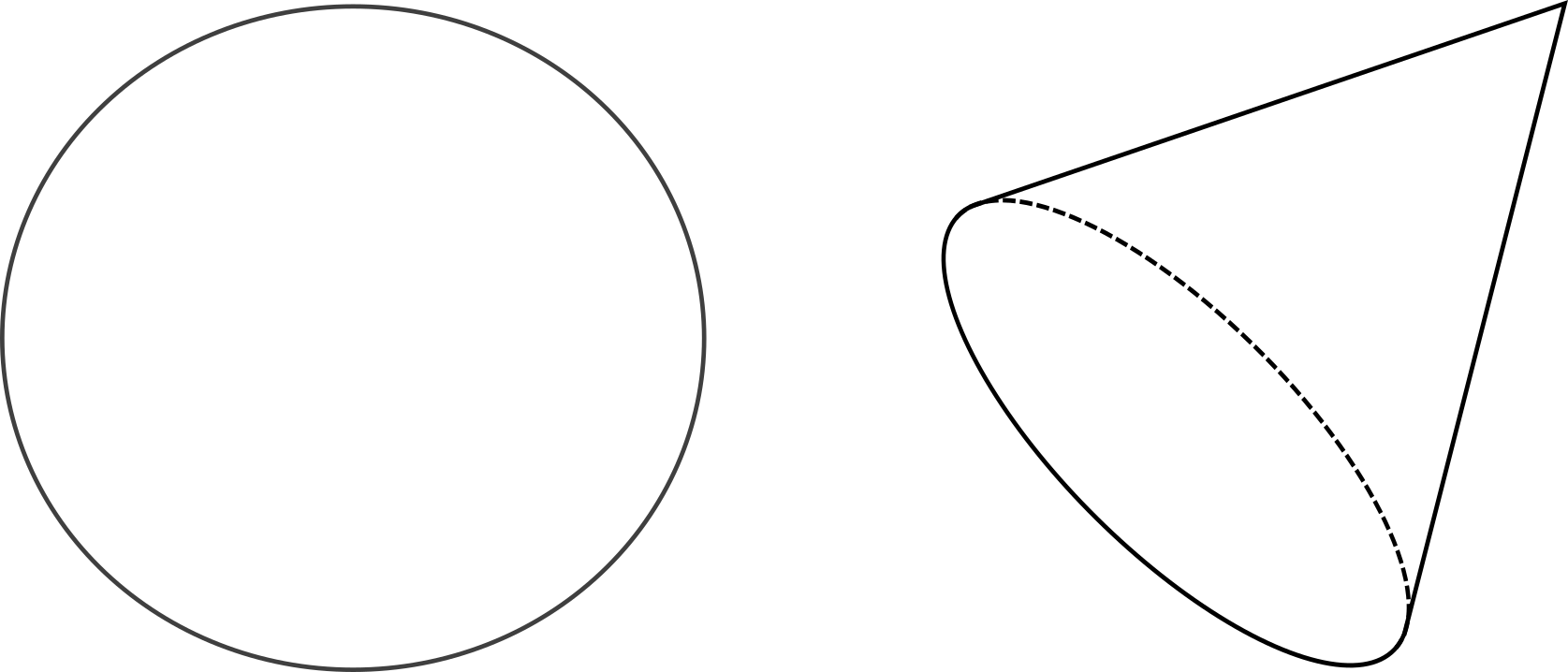}
\put(0,40) {$\mb{H}^d$}
\put(55,40) {$\mc{CH}(\mb{H}^{d-1},\star)$}
\end{overpic}
\caption{Irreducible and reducible examples.}
\label{fig:figure1}
\end{figure}

Let us comment on some features in Table \ref{tab:table1}:

{{\slshape Reducible and irreducible}.
If $\Gamma_j$ divides $\Omega_j\subset\mb{RP}^{d_j}$ for $j=1,2$, then $\Gamma_1\times\Gamma_2\times\mb{Z}$ divides the convex hull of the two 
\[
\mc{CH}(\Omega_1,\Omega_2)\subset\mb{RP}^d
\]
with $d=d_1+d_2+1$ and where $\mb{RP}^d=\mb{P}(\mb{R}^{d_1+1}\oplus\mb{R}^{d_2+1})$. 
Such examples are called {\em reducible}. 
If a divisible convex domain is not of this form, it is {\em irreducible}.
Vey proved \cite{Vey70} that if $\Gamma$ divides $\Omega$ then $\Omega$ is irreducible if and only if $\Gamma$ is \emph{strongly irreducible} (does not preserve any finite union of subspaces of $\R^{\dimd+1}$).
As we saw in Theorem~\ref{thm:mainA}, reducible divisible domains come up naturally in our constructions in the form of {\em cones}, that is, convex hulls of ellipsoids and points.}

{{\slshape Symmetric and non-symmetric}.
{As hinted in Table~\ref{tab:table1}, the dichotomy between the symmetric and non-symmetric cases is the result of the combination of the works 
\cite{Ko99,Vin63,Vin65,Be2}.}

{{\slshape Regularity of $\partial\Omega$ and geometry}. Strict convexity can be seen as an avatar of negative curvature: Benoist proves in \cite[Th.\,1.1]{Be1} that if $\Gamma$ divides $\Omega$ then the following are equivalent: 
\begin{itemize}
\item{$\Omega$ is strictly convex.}
\item{$\Gamma$ is {\em Gromov hyperbolic}.}
\item{$\partial\Omega$ is $\mc{C}^1$.}
\end{itemize}
Recall that Gromov hyperbolicity is a group theoretic abstraction of the geometric properties of fundamental groups of closed hyperbolic manifolds and free groups (see Definition~\ref{def:yaman} and Remark~\ref{rk:yaman}).

In the same direction, Islam \cite[Th.\,1.6]{I19} showed that a larger class of divisible convex sets possesses a weak form of negative curvature: If $\Gamma$ divides a non-symmetric irreducible $\Omega$, then $\Gamma$ is {\em acylindrically hyperbolic} (in the sense of Osin \cite{Osi16}).}

{{\slshape Dimensions $2$ and $3$.} As a consequence of Benz\'ecri's work \cite{Ben60}, every divisible convex set of dimension $2$ is either a triangle (hence reducible) or strictly convex.
Benoist gave in \cite{Be4} a beautiful geometric description of \emph{all} irreducible non-symmetric non-strictly divisible convex domains of dimension $3$, similar to that in Theorem~\ref{thm:main1}.
He also gave examples, as mentioned later on.}

{{\slshape Geometric rank-one and Higher rank rigidity}. As in the non-positively curved Riemannian setting there is a notion of {\em geometric rank-one} of a convex projective manifold $M=\Omega/\Gamma$ introduced by Islam \cite{I19}. Zimmer proved in \cite{Z22} a higher rank rigidity result for projective manifolds (analogous to Ballmann \cite{Ba85} and Burns and Spatzier \cite{BS87}): If $\Omega$ is irreducible and not symmetric then it has geometric rank-one.}

\subsubsection*{Examples}
We are now ready to describe the classes of examples known so far according to the methods used to produce them.

{\slshape Polyhedral tilings in dimension $d\le 6$ and local-to-global convexity}: One way of constructing groups dividing properly convex domains is to consider {\em Coxeter groups} $\Gamma$ generated by the reflections along the codimension 1 faces of a polyhedron $P\subset\mb{RP}^d$. 
Under suitable combinatorial and geometric assumptions on $P$, a local-to-global convexity argument asserts that the domain $\Omega=\Gamma\cdot P$ is properly convex and tiled by copies of $P$. 
This method, that goes back to Poincaré, has been extensively used by Kac and Vinberg \cite{KV67}, Benoist \cite{Be4}, and Marquis \cite{Ma10} to produce irreducible divisible domains  in dimension $d\le 6$ which are non-symmetric.

{\slshape Arithmetic methods}. A classical theorem of Borel \cite{Bo63} shows that every semisimple Lie group $G$ admits a so-called {\em uniform arithmetic lattice} $\Gamma$, which is a discrete subgroup $\Gamma<G$ acting cocompactly on the symmetric space $G/K$. This implies that every irreducible symmetric properly convex domain is divisible. In the case of the hyperbolic space $\mb{H}^d$, the first examples are due to Siegel \cite{Si51}.

{\slshape Bending and Bulging}. A subgroup $\Gamma=\Gamma_1*_\Lambda\Gamma_2<{\rm PSL}_{d+1}(\mb{R})$ can be algebraically deformed by performing a very general procedure called \emph{bending}: 
Let $(B_t)_t\subset{\rm PSL}_{d+1}(\mb{R})$ be a path of elements starting at the identity, that commute with $\Lambda$.
One can deform the inclusion of $\iota:\Gamma\to{\rm PSL}_{d+1}(\mb{R})$ to representations $\rho_t$ which are the identity on $\Gamma_1$ and send $\gamma_2\in\Gamma_2$ to $B_t\gamma_2B_t^{-1}$.  
By work of Koszul \cite{Kos68} and Benoist \cite{B3}, if $\Gamma$ divides some $\Omega$, then the representations $\rho_t$ are injective and $\rho_t(\Gamma)$ divides a properly convex domain $\Omega_t$. 
Furthermore, by the Ehresmann--Thurston principle \cite[Ch.\,3]{ThNotes}, the quotients $\Omega_t/\rho_t(\Gamma)$ are diffeomorphic to $\Omega/\Gamma$. 

The construction applies in particular to hyperbolic $d$-manifolds $M=\mb{H}^d/\Gamma$ containing a codimension 1 submanifold $\Sigma=\mb{H}^{d-1}/\Lambda$ separating $M$ into two connected components $M\smallsetminus\Sigma=M_1\sqcup M_2$ (such manifolds can be constructed using the arithmetic techniques mentioned above). 
By Seifert--van Kampen Theorem, we can write $\Gamma=\Gamma_1*_\Lambda\Gamma_2$ where $\Gamma_j=\pi_1(M_j)$.
The bending obtained using elements $B$ that fix $\H^{\dimd-1}$ and its orthogonal for the Lorentzian form is called \emph{bulging}. 
These deformations have been extensively studied by Johnson and Millson \cite{JM80}. They allow to produce irreducible non-symmetric strictly divisible convex domains in arbitrary dimension (see \cite[\S8.2]{Besurvey}). 

Note that, by Margulis' Superrigidity (see \cite[Ch.\,16]{WitteMorris}) divisible symmetric properly convex domains with $\mb{R}$-rank at least $2$ cannot be deformed. 

Using a bulging construction, Kapovich \cite{K07} showed that certain manifolds constructed by Gromov and Thurston \cite{GT87} admit convex projective structures. These manifolds have the property that they admit metrics of pinched negative curvature, but no purely hyperbolic one (\ie with constant curvature).
In particular, the examples are strictly convex and not deformations of symmetric ones.

Other works that used a bulging construction in a convex projective context include \cite{Ma12,Bob19,BM20}.

{\slshape Surfaces}. Generalizing classical Fenchel--Nielsen coordinates for hyperbolic surfaces, Choi and Goldman \cite{CG93} developed a gluing construction of convex projective surfaces starting from projective pairs of pants with totally geodesic boundary. They showed that their construction parametrizes all {\em Hitchin representations} $\rho:\pi_1(S_g)\to{\rm PSL}_3(\mb{R})$, which are continuous deformations of the holonomy $\rho_0:\pi_1(S_g)\to{\rm PSO}_0(2,1)$ of a hyperbolic structure on a closed orientable surface $S_g$ of genus $g\ge 2$.

{\slshape 3-Manifolds and Dehn fillings}. 
Thanks to Thurston's breakthroughs, in dimension 3 we have several techniques to produce closed hyperbolic 3-manifolds. 
Among those there is the celebrated Hyperbolic Dehn Filling Theorem (see \cite[Ch.\,4]{ThNotes}), which allows to deform a non-compact complete finite volume hyperbolic 3-manifold into many closed hyperbolic ones. 
Inspired by ideas from Thurston's theory, Ballas, Danciger, and Lee \cite{BDL18} gave many examples of convex projective 3-manifolds with a non-trivial JSJ decomposition. 
In a different direction, in combination with some Coxeter theory, Choi, Lee, and Marquis \cite{CLM20} produced examples of irreducible non-symmetric domains in dimension $d\le 6$ divided by groups which are relatively hyperbolic with respect to a collection of $\mb{Z}^2$ subgroups. 
(See also the work of Lee, Marquis, and Riolo \cite{LMR22}.)

Note that there is actually a lot of interactions between these methods.
Likewise, our construction owes to these previous works.
Indeed, to prove Theorem~\ref{thm:mainA}, we will:
\begin{itemize}
 \item Construct via arithmetic methods a particular hyperbolic manifold with codimension 2 cone-singularities, which is the double of a hyperbolic manifold with totally geodesic boundary and corners.
 \item Deform it into a \emph{projective} cone-manifold $M$ via bulging along totally geodesic hypersurfaces adjacent to the singularities.
 \item Use ideas from polyhedral tilings to show that the complement of the singularities in $M$ is properly convex.
 \item Blow up the singularities of $M$ to totally geodesic boundaries.
\end{itemize}

The closed convex projective examples of Theorem~\ref{thm:mainA} are actually doubles $DM$ of the above construction.

Let us now state refined versions of our main theorems.

\subsection{Divisible convex sets with properly embedded cones}
\label{sec:divisibles with cones}

\begin{thm}
\label{thm:main1}
For every $d\ge 3$ there exists a divisible convex set $\Omega\subset\mb{RP}^d$ divided by a discrete and {Zariski dense} subgroup $\Gamma<{\rm PSL}_{d+1}(\mb{R})$ with the following properties: 
\begin{enumerate}
\item{$\Omega$ contains a $\Gamma$-invariant family $\mc{C}$ of properly embedded cones with pairwise disjoint closures such that $\mc{C}/\Gamma$ is finite.}
\item{The stabilizer $\Gamma_C$ of each cone $C\in\mc{C}$ acts cocompactly on it, and has the form $\mb{Z}\times\Theta_C$, where $\Theta_C$ acts properly cocompactly on the base $H_C$ of $C$ while the $\mb{Z}$ factor acts trivially on it, so that $C/\Gamma_C$ is diffeomorphic to $H_C/\Theta_C\times\mb{S}^1$.}
\item \label{item:mainA relhypb} {The group $\Gamma$ is {\rm relatively hyperbolic} with respect to $\{\Gamma_C\}_{C\in\mc{C}}$, and its {\rm Bowditch boundary} naturally identifies with $\partial\Omega/_\sim$, where $p\sim q$ if $p,q\in\bar C$ for some $C\in\cal C$.}
\item \label{item:mainA regularity} {Every point of $\partial\Omega\smallsetminus\bigsqcup_{C\in\mc{C}}{{\bar C}}$ is extremal and $\mc{C}^1$.}
\end{enumerate}
\end{thm}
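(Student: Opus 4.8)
The plan is to build $\Omega$ as the universal cover (developing image) of a closed convex projective manifold $DM$ obtained by doubling a compact convex projective manifold $M$ with totally geodesic boundary. So the first step is to invoke Theorem~\ref{thm:main2} to produce, for each $d \ge 3$, a compact convex-cocompact convex projective $d$-manifold $(M, \partial M)$ with principal totally geodesic boundary whose boundary holonomies are the prescribed diagonal matrices (as in the $d=3$ case of Theorem~\ref{thm:mainB}); the key point for what follows is that each boundary component $\Sigma_i \subset \partial M$ carries a convex projective structure of the form $H_i/\Theta_i$ where $H_i$ is an ellipsoid of dimension $d-2$ (a lower-dimensional copy of $\mathbb{H}^{d-2}$), and the boundary holonomy fixing $\Sigma_i$ has, besides the cocompact action on $H_i$, a one-parameter ``neutral'' direction coming from the $\mathrm{diag}(b^{-1},b^{-1},bc,bc^{-1})$-type factor. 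Doubling $M$ along $\partial M$ via the identity on each $\Sigma_i$ (the holonomies match because they are equal on the two sides) yields a closed projective manifold $DM = M \cup_{\partial M} M'$; by the gluing theorem for convex-cocompact manifolds along principal totally geodesic boundary (\cite{DGK17}, see also the discussion after Theorem~\ref{thm:mainB}), $DM$ is convex projective, i.e.\ $DM = \Omega/\Gamma$ for a properly convex $\Omega$ and a discrete $\Gamma < \mathrm{PSL}_{d+1}(\mathbb{R})$.

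Next I would identify the cones and their stabilizers. Each totally geodesic boundary ellipsoid $\Sigma_i$, when the two copies are glued, becomes a codimension-one submanifold of $DM$ whose preimage in $\Omega$ is a $\Gamma$-orbit of properly embedded hyperplane slices $\bar\Omega \cap P$; but the extra ``neutral'' eigendirection forces these to fatten into cones: the relevant subgroup $\Lambda_i < \mathrm{PSL}_{d+1}(\mathbb{R})$ preserves a $(d-2)$-dimensional ellipsoid $H_C \subset \partial\Omega$ and also fixes a point $p \in \partial\Omega$ off its span, and the convex hull $C = \mathrm{Conv}(H_C, p)$ is a cone; one checks it is properly embedded ($\mathrm{rel}\,\partial C \subset \partial\Omega$) because it is a limit of the properly embedded supporting slices and because of the product structure of the boundary holonomy. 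This gives $\mathcal{C} = \Gamma \cdot \{C_i\}$, a $\Gamma$-invariant family with $\mathcal{C}/\Gamma$ finite; disjointness of closures follows from the fact that in $M$ the boundary components $\Sigma_i$ have disjoint closures and one controls how the cones in $\Omega$ project. The stabilizer $\Gamma_C = \Lambda_i$ splits as $\mathbb{Z} \times \Theta_C$ where $\Theta_C$ acts properly cocompactly on $H_C$ (this is the holonomy of the closed $(d-2)$-orbifold $\Sigma_i$) and the $\mathbb{Z}$ is generated by the neutral/translation element fixing $H_C$ pointwise in the relevant sense; hence $C/\Gamma_C \cong (H_C/\Theta_C) \times \mathbb{S}^1$, which is item~(2).

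For items (3) and (4) I would run the projective analogue of a combination/Dehn-filling argument. The complement $DM \smallsetminus (\bigsqcup_i \mathcal{C}_i/\Gamma)$ decomposes along the tori $\Sigma_i \times \mathbb{S}^1$ into two copies of a piece whose fundamental group is $\pi_1 M$, which is convex-cocompact hence relatively hyperbolic (actually hyperbolic) relative to the boundary tori's peripheral structure; applying Dahmani's combination theorem (or the Bowditch/Yaman characterization via the dynamics of $\Gamma \curvearrowright \partial\Omega/_\sim$) to the amalgam $\Gamma = \pi_1 M *_{\pi_1\Sigma_i} \pi_1 M'$ over the virtually-$\mathbb{Z}^{d-1}$ subgroups $\Gamma_C$ shows $\Gamma$ is relatively hyperbolic relative to $\{\Gamma_C\}$. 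To identify the Bowditch boundary with $\partial\Omega/_\sim$, I would verify that $\Gamma \curvearrowright \partial\Omega/_\sim$ is a geometrically finite convergence action with maximal parabolic subgroups exactly the $\Gamma_C$ — using that $\Omega$ is properly convex and that collapsing each $\bar C$ kills precisely the non-hyperbolic ``flat'' directions — and then invoke Yaman's theorem (Definition~\ref{def:yaman}). Item~(4), $\mathcal{C}^1$-smoothness and extremality of $\partial\Omega$ away from $\bigsqcup \bar C$, follows because those boundary points are conical limit points for the relatively hyperbolic action, and a Benoist-type argument (as in \cite[Th.\,1.1]{Be1}, adapted to the relatively hyperbolic setting following \cite{I19,IZ19relhypb}) gives that conical limit points on the boundary of a properly convex divisible set are extremal and $\mathcal{C}^1$; alternatively one exploits that $M$ itself is strictly convex along its interior directions since $\pi_1 M$ is hyperbolic.

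The main obstacle is the \emph{existence} input, Theorem~\ref{thm:main2}: producing a compact convex projective manifold with totally geodesic boundary of ellipsoidal type and prescribed, carefully tuned diagonal boundary holonomy — this is where the arithmetic construction of a hyperbolic manifold with codimension-two corners, the bulging deformation, the local-to-global (polyhedral-tiling) convexity argument, and the blow-up of the cone singularities all enter. Everything in Theorem~\ref{thm:main1} is then a ``soft'' consequence: gluing (via \cite{DGK17}), recognizing the cones from the shape of the boundary holonomy, and running the relative hyperbolicity / boundary-regularity machinery. A secondary technical point to be careful about is checking that the glued domain $\Omega$ is genuinely \emph{irreducible} and $\Gamma$ \emph{Zariski dense} — this uses Vey's criterion (strong irreducibility) plus the fact that $\pi_1 M$ already has Zariski-dense image after bulging, together with an argument that the amalgam cannot preserve a proper subspace because the two factors' invariant subspaces are in general position.
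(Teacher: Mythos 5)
Your overall plan---build a compact convex projective manifold with totally geodesic boundary (Theorem~\ref{thm:main2}), double it along the boundary, and recognize the resulting convex set as $\Omega$---is exactly the strategy of the paper. However, there is a genuine gap at the central step: you invoke ``the gluing theorem for convex-cocompact manifolds along principal totally geodesic boundary (\cite{DGK17})'' to conclude that the double $DM$ is convex projective. No such ready-made theorem exists in \cite{DGK17} in the generality needed here, and in fact proving that the doubled domain is convex is the main technical burden of the paper's argument. The paper handles it by its own local-to-global convexity machinery (Proposition~\ref{prop:injetcvx} and Theorem~\ref{thm:cvx}), whose applicability hinges on verifying a quantitative \emph{Containment Condition} at each totally geodesic boundary wall of $N_1$; this in turn requires the explicit control of the boundary holonomy obtained in Theorem~\ref{thm:hyperbolic doubles} (the $\lambda_{\corner}>1$ condition, which comes from the careful choice of bulging parameters with $t>2$). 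So Theorem~\ref{thm:main1} is not a ``soft'' consequence of Theorem~\ref{thm:main2}: the two are established in tandem, and doubling $N_1$ without re-running the Containment/tessellation argument is not justified. Moreover, in the paper $\Omega_1$ in Theorem~\ref{thm:main2} is already the universal cover of the \emph{double} $DN_1$ (with $\Gamma_1=\pi_1(N_1)$ acting convex-cocompactly but not cocompactly), so the roles of the two theorems are more intertwined than your proposal suggests.

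A secondary point: you prove (3) first (via Dahmani's combination theorem) and then deduce (4) from conicality of limit points. The paper proceeds in the opposite order: it first establishes (4) by direct analysis of the tessellation of $\Omega$ at infinity (Theorem~\ref{thm:addendum2}, Corollary~\ref{cor:extremal points}, Fact~\ref{obs:smooth}), because extremality of $\partial\Omega$ away from the cones is a \emph{hypothesis} of the dynamical criterion it uses for (3) (Fact~\ref{fact:Teddyrelhypb}/Theorem~\ref{thm:relhypb}, in the spirit of Yaman). Your reverse route is not obviously circular, but it rests on ``conical limit points of a relatively hyperbolic convex action are extremal and $\mathcal C^1$'' adapted from \cite{I19,IZ19relhypb}, which itself would need a careful citation or proof. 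On the other hand, your observation that Zariski density and irreducibility must be checked via Vey's criterion and general position of the two factors' invariant subspaces is a legitimate concern and is only implicitly addressed in the paper.
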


Let us remark the following. 
\begin{itemize}
\item{{\em Relative hyperbolicity} is a group theoretic abstraction of geometric properties of fundamental groups of finite volume hyperbolic manifolds. It comes with a notion of \emph{Bowditch boundary} (see Definition~\ref{def:yaman}).}
\item{Note that $\Gamma_C=\mb{Z}\times\Theta_C$ is abelian only for $d=3$, since $\Theta_C$ is conjugate to a uniform lattice of ${\rm SO}(d-2,1)$.
The existence of $\Omega$ (non-strictly convex) divided by a group which is hyperbolic relative to non-abelian subgroups is a novelty.

The answer to the following is still unknown:
\begin{q}
\label{qn:question1}
Is there for every $d>6$ an irreducible divisible convex set $\Omega\subset\mb{RP}^d$ which is non-symmetric and non-strictly convex and such that $\Gamma$ is relatively hyperbolic with respect to a collection of {\em abelian} subgroups?  
\end{q}
}
\item{Given the description of the structure of the properly convex set $\Omega$ and its properly embedded cones, parts \eqref{item:mainA relhypb} and \eqref{item:mainA regularity} of the above result can be deduced from \cite[Th.\,1.16]{We21} or \cite[Th.\,1.3-6]{IZ22relhypb}.
We will give a similar but different proof that applies to a different set of examples (see Theorem~\ref{thm:main2}).}
\item{Each component of $\left( \Omega\smallsetminus \bigcup_{C\in\cal C}C \right)/\Gamma$ admits an incomplete hyperbolic metric whose completion is a hyperbolic cone-manifold.}
\end{itemize}

Let us now state formally the construction of the manifold whose double is the non-strictly convex closed convex projective manifold $\Omega/\Gamma$ in Theorem~\ref{thm:main1}.

\subsection{Convex-cocompact manifolds with totally geodesic boundary}\label{sec:introcvxcocpct}

The notion of convex-cocompactness in projective spaces has been introduced by Danciger, Gu{\'e}ritaud, and Kassel \cite{DGK17}. It is inspired by (and generalizes) the corresponding definition of convex-cocompactness for Kleinian groups (see for example \cite[Ch\,3]{C21}) and it is linked to the concept of Anosov subgroups of higher rank Lie groups (see for example \cite[Ch.\,7]{C21}). 

Let $\Omega\subset\mb{RP}^d$ be open and properly convex.
Let $\Gamma<{\rm Aut}(\Omega)$ be a discrete subgroup. Consider the {\em full orbital limit set} $\Lambda\subset\partial\Omega$ consisting of the accumulation points on $\partial\Gamma$ of every orbit $\Gamma\cdot o$ with $o\in\Omega$ (\ie $\Lambda = \cup_{o\in\Omega}\overline{\Gamma o}\cap\partial\Omega$) and let $\mc{CH}(\Lambda)$ be the convex hull in $\Omega$ of $\Lambda$. We say that $\Gamma$ acts {\em convex-cocompactly} on $\Omega$ if $\mc{CH}(\Lambda)/\Gamma$ (the \emph{convex core}) is compact.

Compared to divisibility, convex-cocompactness is far more relaxed and flexible, and examples are abundant (see for example \cite{DGK17,DGKLM}). Their study is an area of very active research (see for example \cite{IZ19relhypb,We21,mesureBM,EeERfH+,IZ22relhypb}).

As mentioned, we will construct the closed convex projective manifolds of Theorem~\ref{thm:mainA} by gluing compact convex projective manifolds with totally geodesic boundary given by the following.

\begin{thm}
\label{thm:main2}
For every $d\ge 3$ there exist properly convex open sets $\Omega_1,\Omega_2\subset\mb{RP}^d$ and discrete groups $\Gamma_1,\Gamma_2<{\rm PSL}_{d+1}(\mb{R})$, preserving respectively $\Omega_1,\Omega_2$, with the following properties.
Let $\Lambda_j\subset\partial\Omega_j$ be the full orbital limit set with convex hull $\mc{CH}_j$ in $\Omega_j$.
Then
\begin{enumerate}
\item{$\Gamma_j$ acts properly and cocompactly on $\overline{\mc{CH}_j}\smallsetminus\Lambda_j$.}
\item{The connected components of $\partial\mc{CH}_j\smallsetminus\Lambda_j=\bigsqcup_{C\in\mc{C}_j}{C}$
form a collection $\mc{C}_j$ of cones (convex hulls of ellipsoids and points) with pairwise disjoint closures.}
\item \label{item:main2 relhypb} {The group $\Gamma_j$ is relatively hyperbolic with respect to the family $\{\Gamma_C\}_{C\in\mc{C}_j}$ of stabilizers of cones $C\in\mc{C}_j$, 
with Bowditch boundary $\partial\mc{CH}_j/_\sim$, where $p\sim q$ if $p,q\in\bar C$ for some $C\in\cal C_j$.}
\item \label{item:main2 regularity} {Every point of $\Lambda_j\smallsetminus\bigsqcup_{C\in\mc{C}_j}{{\bar C}}$ is an extremal and $\cal C^1$ point of $\partial\Omega_j$.}
\item{$\Gamma_1$ acts {convex-cocompactly} on $\Omega_1$ and $\mc{C}_1$ is a family of properly embedded cones.}
\item{$\Gamma_2$ {\rm does not} act convex-cocompactly on any properly convex open set, and $\mc{C}_2$ is the family of codimension 1 faces of $\partial\Omega_2$ (and $\Omega_2=\mc{CH}_2$).}
\end{enumerate}
\end{thm}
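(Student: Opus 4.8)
The plan is to follow the four-step strategy announced in the introduction, which produces both pairs $(\Omega_j,\Gamma_j)$ by the same machine run with two different choices of deformation parameter.

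\emph{Step 1: an arithmetic seed.} First I would build the hyperbolic model. Using a quadratic form $q$ of signature $(d,1)$ over a totally real number field, chosen so that the associated arithmetic reflection group $\Gamma_0<{\rm PO}(d,1)$ is a cocompact lattice whose Coxeter fundamental domain has several pairwise orthogonal facets (possible by Vinberg's theory of hyperbolic Coxeter polytopes, with cocompactness from anisotropy, and the lattice property going back to Borel \cite{Bo63}), one cuts $\H^d$ along finitely many totally geodesic hyperplanes meeting at right angles and passes to a torsion-free finite-index subgroup to obtain a compact hyperbolic $d$-manifold-with-corners $W$. Its codimension-$1$ faces split into a pattern $\partial_{\rm glue}W$ along which mirror copies will be reglued and a ridge pattern whose codimension-$2$ strata are pairwise disjoint totally geodesic closed hyperbolic $(d-2)$-manifolds $\Sigma_1,\dots,\Sigma_k$, each lying where two faces of $\partial_{\rm glue}W$ meet at angle $\pi/2$. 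Regluing $W$ along $\partial_{\rm glue}W$ produces a closed hyperbolic cone-manifold $M^{\circ}$ whose cone locus is exactly the image of $\Sigma=\bigsqcup\Sigma_i$, with cone angle $\pi$. In the body this $M^\circ$ is engineered to carry the topology dictated by the application, which is where the link with Theorem~\ref{thm:main3} (and the piece $DM\smallsetminus U$) enters.

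\emph{Step 2: projective bulging and global convexity.} Set $M=M^{\circ}\smallsetminus\Sigma$, a finite-volume hyperbolic manifold with holonomy $\rho_0\colon\pi_1 M\to{\rm PO}_0(d,1)\subset{\rm PSL}_{d+1}(\R)$; a neighborhood of each $\Sigma_i$ contributes a peripheral subgroup $\Z\times\pi_1(\Sigma_i)$ with the $\Z$-factor a meridian. Since $M$ is assembled from convex polyhedral pieces glued along totally geodesic hyperplanes that cross $\Sigma$, one can \emph{bulge} $\rho_0$ along these hyperplanes in the Johnson--Millson fashion \cite{JM80}, getting a family $\rho_t$ that is trivial on one side of each hyperplane and conjugates the other side by a commuting path; the parameters are tuned so that on each peripheral subgroup the holonomy becomes reducible, the meridian being sent to a diagonal element of the prescribed shape (e.g.\ ${\rm diag}(a,a^{-1},1,\dots,1)$) while $\pi_1(\Sigma_i)$ still preserves a quadric. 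Koszul \cite{Kos68}, Benoist \cite{B3} and the Ehresmann--Thurston principle \cite{ThNotes} keep the deformed structure convex projective; the real work is to promote this to a \emph{global} statement --- that the developing map stays a homeomorphism onto a properly convex $\Omega$ acted on by $\rho_t(\pi_1 M)$ and that the metric completion caps each end off with a flat piece. I would establish this with a Vinberg--Tits local-to-global convexity argument of the kind used for polyhedral tilings \cite{KV67,Ma10}: check that each deformed polyhedral tile develops to a genuine convex polytope, that the face identifications satisfy the combinatorial--geometric hypotheses forcing the union $\Omega$ of all translates to be convex, and that $\Omega$ is properly convex because it avoids the invariant hyperplanes of the peripheral diagonal elements. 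The numerical constraints on the eigenvalues (the inequalities $a>1$ close to $1$, $b>3$, $1<c<-2+b^2/2$ of Theorem~\ref{thm:mainB}) are precisely what keep the tiles convex and their union properly convex throughout.

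\emph{Step 3: blow-up, the two cases, and the remaining properties.} The completion adds around each end a codimension-$2$ stratum with now-reducible transverse holonomy; blowing it up (replacing $\Sigma_i$ by its circle of normal directions) turns it into a boundary component equal to the quotient by $\Gamma_C=\Z\times\Theta_C$ of a cone $C=\mathcal{CH}(H_C,p)$ over the $(d-2)$-ellipsoid $H_C$ --- the quadric preserved by $\Theta_C<{\rm SO}(d-2,1)$ --- with vertex the fixed point $p$ of the $\Z$-factor. This gives (1), the cones in (2), and, because the $\Sigma_i$ are disjoint and at positive distance, the pairwise-disjoint-closure assertion in (2). The dichotomy $\Omega_1$ vs.\ $\Omega_2$ comes from the choice of bulging: the minimal deformation makes the cone a genuine codimension-$1$ face, so $\Omega_2=\mathcal{CH}_2$ as in (6); composing instead with an extra projective automorphism commuting with each $\Gamma_C$ (a transverse bulge) pushes the cone into the interior of a strictly larger invariant properly convex set, so the cone is properly embedded and $\mathcal{CH}(\Lambda_1)/\Gamma_1$ is compact, giving (5). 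For (3) I would verify Yaman's dynamical criterion (Definition~\ref{def:yaman}) directly: $\pi_1 M$ acts as a geometrically finite convergence group on $\partial\Omega/{\sim}$, with the classes of the cones as bounded parabolic points (stabilizers $\Gamma_C$) and all other points conical; outside horoball-like neighborhoods of $\Sigma$ the geometry is that of a compact convex core, so this is a finite check, and it matches the stated Bowditch boundary --- alternatively one cites \cite{We21,IZ22relhypb}. Finally (4): a point of $\Lambda_j\smallsetminus\bigsqcup\bar C$ is a conical limit point, reached along a ray that stays in a compact part of the core, and it is extremal and $\mathcal{C}^1$ for the same reason that geometrically finite hyperbolic manifolds have strictly convex, $\mathcal{C}^1$ limit sets away from the cusps: transverse to the removed cones the geometry is negatively curved, so supporting hyperplanes and limiting rays there are unique.

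\emph{Main obstacle.} The crux is Step~2: upgrading the Ehresmann--Thurston deformation of the incomplete structure on $M$ to a globally properly convex developing image with a controlled completion at the new ends. Local convexity is automatic, but excluding non-injectivity of the developing map, or degeneration of $\Omega$ precisely as the peripheral holonomy becomes reducible, requires the local-to-global machinery to be arranged so that it survives the non-compactness around $\Sigma$; this, rather than any isolated computation, is where the argument is delicate.
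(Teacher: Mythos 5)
Your high-level outline --- take a compact hyperbolic block with geodesic boundary and corners, double it, bulge, prove convexity by a Vinberg-style local-to-global argument, blow up the resulting cone singularities, then verify Yaman's axioms --- is indeed the paper's strategy. But Step~1 contains a numerical error that is fatal to the whole construction. You cut $\H^d$ along hyperplanes meeting at right angles, so that the corners of $W$ have angle $\pi/2$ and the cone angle of $M^\circ$ is $\pi$. After bulging the two walls adjacent to a corner of angle $\theta$ with parameters giving $\nu,\nu'$ as in Theorem~\ref{thm:hyperbolic doubles}, the meridian's $\SL_2$-angle has trace
\[
t=\cos^2\theta\left(\tfrac{\nu}{\nu'}+\tfrac{\nu'}{\nu}\right)-\sin^2\theta\left(\nu\nu'+\tfrac1{\nu\nu'}\right),
\]
and Proposition~\ref{prop:uniformization} (via Remark~\ref{rk:uniformization}) requires $t\ge 2$ for the deformed tube to be uniformisable in $\sph^{\dimd}$ and hence admit a totally geodesic blowup. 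With $\theta=\pi/2$ the first term vanishes and $t=-(\nu\nu'+\tfrac1{\nu\nu'})\le -2$ for \emph{all} $\nu,\nu'>0$; the tube is then complete in $\blow$, never uniformisable, and no bulging produces a blow-upable cone-manifold. This is exactly why Theorem~\ref{thm:hyperbolic doubles}(i) demands corner angles strictly below $\pi/2$, and why Theorem~\ref{thm:main7} is proved in Section~\ref{sec:sec6} to furnish a block with angles $\pi/k$ ($k=4$ in the proof of Theorem~\ref{thm:main2}).

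Two further gaps. First, to make $t\ge 2$ simultaneously at every corner you need $\nu/\nu'$ large while $\nu\nu'$ stays moderate, i.e.\ adjacent walls must receive parameters of opposite size; this forces a $2$-colouring of the wall graph, and you never ensure the graph is bipartite --- that is the other half of what Theorem~\ref{thm:main7} delivers, and the ping-pong/separability argument there is substantially harder than ``Vinberg's theory gives a right-angled Coxeter polytope.'' Second, the dichotomy $\Omega_1$ vs.\ $\Omega_2$ is not obtained by composing with an extra automorphism: the paper chooses bulging parameters so that the meridian's $\SL_2$-angle has trace $>2$ (hyperbolic, giving properly embedded cones with polars satisfying the Containment Condition, hence $\tilde N_1$ sits inside the properly convex universal cover $\Omega_1$ of its double, where $\Gamma_1$ acts convex-cocompactly by Proposition~\ref{prop:cvxcocpct}) or trace exactly $2$ (parabolic, so the cones are codimension-$1$ faces and $\Omega_2=\mathcal{CH}_2$ with no invariant properly convex superset, as in Remark~\ref{rem:not cvxcocpct}).
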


Some remarks:
\begin{itemize}
\item{We have $\Gamma_C=\mb{Z}\times\Theta_C$, where $\Theta_C$ acts properly cocompactly on the base of the cone while the $\mb{Z}$ factor acts trivially on it.}
\item{The quotients $M_j=(\overline{\mc{CH}_j}\smallsetminus\Lambda_j)/\Gamma_j$ are compact convex projective manifolds with totally geodesic boundary
\[
\partial M_j=(\partial\mc{CH}_j\smallsetminus\Lambda_j)/\Gamma_j=\bigsqcup_{[C]\in\mc{C}_j/\Gamma_j}{C/\Gamma_C}.
\]
}
\item{The double $DM_1$ is a closed convex projective manifold. Its universal cover $\Omega_1\subset\mb{RP}^d$ is the properly divisible convex set of Theorem \ref{thm:mainA}.}
\item{As in Theorem~\ref{thm:main1}, one can use work of Weisman \cite{We21} and Islam and Zimmer \cite{IZ22relhypb} to simplify our proof of \eqref{item:main2 relhypb} and  \eqref{item:main2 regularity} \emph{in the convex-cocompact case ($j=1$)}.}
\item{In the non-convex-cocompact case ($j=2$): If $d=3$ then ${\rm int}(M_2)=\Omega_2/\Gamma_2$ is a 3-manifold with {\em generalized cusps} of {\em type 2} in the sense of \cite{BCL20}. If $d>3$, then ${\rm int}(M_2)=\Omega_2/\Gamma_2$ is not a manifold with generalized cusps (in the sense of \cite{BCL20}), although it is the interior of a compact $d$-manifold with boundary. The fundamental group of the boundary components have similar features to generalized cusp groups of type $d-1$ but are not solvable.}
\item{The inclusions $\Gamma_j<{\rm PSL}_{d+1}(\mb{R})$ are {\em extended geometrically finite} on the (partial) flag variety of points and hyperplanes in the sense of Weisman \cite[Def.\,1.3]{We22}. 
For $j=1$ this is due to Weisman \cite[Th.\,1.12]{We22}, for $j=2$ this is proved in Section~\ref{sec:EGF}.}
\end{itemize}

\subsection{Geometrization in dimension 3}\label{sec:dimension 3}

Theorem~\ref{thm:mainB} is a geometrization theorem that turns topological information into geometric structure.
Before we state a refined version of it, let us describe a few structural results from 3-dimensional topology.

By Alexander's Theorem (see \cite[Th\,9.2.10]{Mar16}) every convex projective 3-manifold $M=\Omega/\Gamma$ is \emph{irreducible} that is, it does not contain essential spheres. From the work of Jaco, Shalen, and Johannson (see \cite[Th\,11.5.1]{Mar16}), $M$ can be split along an essentially canonical collection $S\subset M$ of tori and Klein bottles into pieces 
\[
M\smallsetminus S=M_1\sqcup\cdots\sqcup M_r
\]
that are either {\em atoroidal} (not containing essential tori and Klein bottles), or {\em Seifert fibered} (fibrations in circles over 2-dimensional orbifolds). The decomposition is called the {\em JSJ decomposition} of $M$.

Thanks to the major breakthough given by Thurston's Geometrization Conjecture \cite{Th82}, whose proof has been completed by Perelman, we know that every piece of the JSJ decomposition of $M$ admits a homogeneous Riemannian metric locally modeled on one of 8 geometries (see \cite[Ch.\,12]{Mar16}), among which hyperbolic geometry occupies a prominent role. 
By Thurston's Hyperbolization for Haken manifolds (see \cite[Th.\,1.43]{K01}), each atoroidal piece $N_j$ with $\partial N_j\neq\emptyset$ admits a complete finite volume hyperbolic metric.

Convex projective geometry fits well into this picture: Benoist proved \cite[Prop.\,3.2]{Be4} that the JSJ decomposition of $M$ only contains atoroidal pieces and it is realized geometrically, meaning that $S\subset M$ can be chosen to be {\em totally geodesic} (the lifts to $\Omega$ are the intersection of $\Omega$ with projective 2-planes). Therefore, each piece of $M\smallsetminus S$ is the interior of a convex projective manifold with totally geodesic boundary.

\begin{q}[Ballas, Danciger, and Lee \cite{BDL18}]
\label{qn:question2}
Suppose a closed orientable irreducible 3-manifold has a JSJ decomposition containing only atoroidal pieces. Does it also admit a convex projective structure?
\end{q}

In the positive direction, Ballas, Danciger, and Lee \cite{BDL18} provide an infinite family of examples. Our second contribution consists of a large flexible class of controlled deformation families of convex projective structures on 3-manifolds with totally geodesic boundary.

\begin{thm}
\label{thm:main3}
Let $M$ be a compact orientable irreducible atoroidal 3-manifold with non-empty boundary $\partial M=\Sigma_1\sqcup\dots\sqcup\Sigma_n$.
Let $\alpha:=\alpha_1\cup\dots\cup\alpha_n$ be a doubly incompressible multicurve where $\alpha_j$ is a separating simple closed curve of $\Sigma_j$.
Consider the manifold 
\[
N:=DM\smallsetminus U_1\cup\cdots\cup U_n
\]
obtained by removing from the double $DM$ of $M$ an open tubular neighborhood $U_1\cup\cdots\cup U_n$ of $\alpha_1\cup\dots\cup\alpha_n$.

Then there exists $\ep=\ep(M,\alpha)>0$ such that for every $(a_j,b_j,c_j)$ in
\[
\mc{P}:=\{(a,b,c)\in(1,1+\ep)\times(3,\infty)\times(1,\infty)\left|c\le -2+b^2/2\right.\}
\]
there exists a convex projective structure with totally geodesic boundary on $N$, such that the boundary $\partial U_j$ is isomorphic to $\Delta/\mb{Z}A_j\oplus\mb{Z}B_j$ where $A_j={\rm diag}(a_j,a_j^{-1},1,1),B_j={\rm diag}(b_j^{-1},b_j^{-1},b_jc_j,b_jc_j^{-1})$ and $\Delta=\Delta(e_1,e_2,e_3)\subset\{x_4=0\}$ is the standard simplex.  
\end{thm}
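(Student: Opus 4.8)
The plan is to produce the convex projective structure as a deformation of a complete, finite-volume cusped hyperbolic structure supplied by Thurston's geometrization, bending along totally geodesic surfaces that approach the cusps.

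\emph{Step 1: from topology to a cusped hyperbolic structure.} Write $\mathcal{A}_j\subset\Sigma_j$ for a small open annular neighbourhood of $\alpha_j$ and $\mathcal{A}=\mathcal{A}_1\cup\dots\cup\mathcal{A}_n$. Let $\widehat M$ be the manifold obtained by doubling $M$ along $\partial M\smallsetminus\mathcal{A}$; its boundary is the union of the tori $T_j=\mathcal{A}_j\cup_{\partial\mathcal{A}_j}\mathcal{A}_j'$. A standard cut-and-paste check identifies $\widehat M$ with $N=DM\smallsetminus(U_1\cup\dots\cup U_n)$, carrying $\partial U_j$ to $T_j$. The hypotheses that $M$ is orientable, irreducible and atoroidal, together with the doubly incompressibility of $\alpha$, are precisely those of Thurston's hyperbolization theorem for Haken manifolds with boundary pattern applied to the pared manifold $(M,\mathcal{A})$ (equivalently, they force $\widehat M$ to be irreducible, atoroidal and non-Seifert-fibered, so $\mathrm{int}(\widehat M)$ is hyperbolic by geometrization; see \cite{K01}): one obtains a hyperbolic structure on $M$ with totally geodesic boundary faces $\Sigma_j^\pm:=\Sigma_j\smallsetminus\mathcal{A}_j$ and rank-one cusps along the $\mathcal{A}_j$. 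Doubling along the faces, and using Mostow rigidity to realize the doubling involution isometrically, yields a complete finite-volume hyperbolic structure on $\mathrm{int}(N)=\mathrm{int}(\widehat M)$ with holonomy $\rho_0\colon\pi_1(N)\to\PSO(3,1)\subset\PSL_4(\R)$; the $T_j$ become rank-two cusps and the $\Sigma_j^\pm$ become properly embedded totally geodesic surfaces running into them.

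\emph{Step 2: bending.} I would now deform $\rho_0$ by bulging along the totally geodesic surfaces $\Sigma_j^\pm$ adjacent to the cusps, exactly as in the construction underlying Theorem~\ref{thm:main1}: for each such surface there is a one-parameter subgroup of $\PSL_4(\R)$ fixing its supporting projective plane pointwise together with the polar point of that plane, hence centralizing the corresponding face subgroup; conjugating the two ``sides'' across these surfaces, through the graph-of-groups decomposition of $\pi_1(N)$ along the $\pi_1(\Sigma_j^\pm)$, produces a family $\rho_{\mathbf{t}}$. This bending simultaneously opens up the rank-two cusps into \emph{generalized cusps}; tracking how the peripheral generators of $\pi_1(T_j)$ cross the bending surfaces one finds that, after conjugation, $\rho_{\mathbf{t}}$ maps $\pi_1(T_j)$ into the diagonal torus, and the parameters may be reorganized as $(a_j,b_j,c_j)$ so that the two generators go to $A_j=\mathrm{diag}(a_j,a_j^{-1},1,1)$ and $B_j=\mathrm{diag}(b_j^{-1},b_j^{-1},b_jc_j,b_jc_j^{-1})$, the complete hyperbolic structure being recovered in the conjugacy-limit $a_j\to1$. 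The admissible region $\mc{P}$ is then pinned down by two requirements: that the bending be small enough for global convexity to survive — the source of $\ep=\ep(M,\alpha)$ and of the constraint $a_j<1+\ep$ — and that each $(A_j,B_j)$ be the peripheral holonomy of a \emph{properly convex generalized cusp of type $2$} in the sense of \cite{BCL20}, which is a short definiteness computation in the rank-two diagonal group acting on $\Delta$ and yields exactly $b_j>3$ and $1<c_j\le-2+b_j^2/2$.

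\emph{Step 3: convexity.} Finally I would upgrade each $\rho_{\mathbf{t}}$ with parameters in $\mc{P}$ to a convex projective structure with totally geodesic boundary on $N$. The Ehresmann--Thurston principle provides, for $\mathbf{t}$ small, a projective structure on $N$ with holonomy $\rho_{\mathbf{t}}$, close on the compact part to the hyperbolic one. To see its developing image is properly convex with totally geodesic triangular boundary faces, I would glue the explicit $\rho_{\mathbf{t}}(\pi_1 T_j)$-invariant convex generalized-cusp regions — whose ``finite-distance'' face is the triangle $\Delta=\Delta(e_1,e_2,e_3)\subset\{x_4=0\}$ — onto a convex core coming from the deformed compact part, and then run a local-to-global convexity argument in the style of Vinberg and Benoist, combined with openness of proper convexity under deformation à la Koszul--Benoist. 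This is exactly the machinery developed for Theorem~\ref{thm:main2}, whose pair $(\Omega_2,\Gamma_2)$ is a model for one such $N$; it also fits the Ballas--Danciger--Lee philosophy \cite{BDL18} of deforming cusped hyperbolic $3$-manifolds to convex projective ones with generalized cusps. The resulting structures, parametrized by $\mc{P}$, have boundary $\bigsqcup_j\Delta/\mathbb{Z}A_j\oplus\mathbb{Z}B_j$, as required.

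\emph{Main obstacle.} The crux is the global convexity control in Step~3. The Ehresmann--Thurston principle controls only holonomy, so one must genuinely prove that the developing map remains an embedding onto a \emph{properly convex} domain along the whole deformation, and that the bent peripheral regions really straighten out to \emph{totally geodesic} triangular faces rather than merely bending; this is also where uniformity of $\ep(M,\alpha)$ over the non-compact region $\mc{P}$ must be secured, presumably through a properness/compactness argument rather than naive continuity. A secondary difficulty is the exact bookkeeping in Step~2 — which peripheral generator maps to which matrix, and that the bulging sweeps out precisely $\mc{P}$ — and in Step~1 the verification that ``doubly incompressible'' is exactly the hypothesis making Thurston's theorem (and the atoroidality of $\widehat M$) applicable.
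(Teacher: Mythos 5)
Your Step~1 diverges from the paper in a way that matters for the rest of the argument. The paper does \emph{not} start from the complete, finite-volume, cusped hyperbolic structure on $\mathrm{int}(N)$ and then ``open the cusps.'' Instead it invokes Bonahon--Otal together with Choi--Series (the paper's Fact~\ref{fact:ChoiSeries}) to produce a \emph{compact} hyperbolic structure on $M$ with totally geodesic boundary and corners, whose corners are the $\alpha_i$ with prescribed small lengths $\ell_i = 2\ln(a_i)$ and corner angles $\theta_i < \pi/4$. This is a different hyperbolic metric for each value of $a_i$, not a deformation of the cusped one; the constraint $a_j < 1+\ep$ is exactly the length smallness needed for Choi--Series, and is unrelated to ``smallness of bending.'' The double $DM$ with this metric is then an admissible gluing of bc-manifolds, which is bulged and has its corners blown up to totally geodesic tori by Theorem~\ref{thm:hyperbolic doubles}; the specific bulging parameters $\mu_i,\mu_i'$ are computed directly from $(a_i,b_i,c_i)$ and $\theta_i$, and one checks the trace inequality \eqref{eq:special tubes} to see that the resulting tube is special. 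No deformation-theoretic input is required.

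Your Step~3 contains the genuine gap, and you half-diagnose it yourself. An Ehresmann--Thurston plus Koszul--Benoist argument only controls a neighborhood of the starting structure, whereas $\mathcal P$ is unbounded in $b$ and $c$; there is no way to get a uniform $\ep(M,\alpha)$ from a naive openness argument. Moreover the paper explicitly points out (in the remarks after Theorem~\ref{thm:main3}) that the Ballas--Danciger--Lee route of deforming cusped hyperbolic structures fails here, because $N$ contains the totally geodesic embedded subsurface $\partial M\smallsetminus\alpha$ and hence is not infinitesimally projectively rigid rel boundary, which is the hypothesis that makes the BDL openness machinery run. The paper sidesteps the deformation argument entirely: the convex domain is built \emph{ab initio} as a tessellation by translates of the developed images of the cells, convexity is established by the local-to-global criterion (Proposition~\ref{prop:injetcvx} via Theorem~\ref{thm:cvx}), and the triangular boundary tori appear as the blown-up corners via Theorem~\ref{thm:addendum1}. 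You correctly name the Vinberg/Benoist local-to-global argument as the relevant tool, but the paper uses it to replace deformation theory rather than to supplement it.

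A smaller error: the structures produced for parameters with $c_j>1$ have generalized cusps of type~$3$, not type~$2$; type~$2$ corresponds to the boundary case $c_j=1$, which is excluded from $\mathcal P$.
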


Double incompressibility was introduced by Thurston (see \cite{Th3}) in the study of deformations of geometrically finite hyperbolic structures on ${\rm int}(M)$ with prescribed cusps. 

Let us remark that: 
\begin{itemize}
\item{These convex projective structures on $N$ are convex-cocompact in the sense of \cite{DGK17} (See Section~\ref{sec:introcvxcocpct}).}
\item{The convex projective structures on $N$ vary continuously with the parameters $\{(a_j,b_j,c_j)\in\mc{P}\}_{j\le n}$.}
\item{Multicurves $\alpha\subset\partial M$ that satisfy the assumptions of Theorem \ref{thm:mainB} exist and are abundant in a precise sense (by work of Lecuire \cite{Le05}).}
\item{Given $(M,\alpha), (M',\alpha')$ as in Theorem \ref{thm:mainB} and a pairing $\{S_j\subset\partial N\leftrightarrow S_j'\subset\partial N'\}_{j\le r}$ of boundary components, the control on the boundary holonomy gives us homeomorphisms $f_j:S_j\to S_j'$ such that the gluing 
$N\cup_{f_1\sqcup\cdots\sqcup f_r} N'$,
can be endowed with a convex projective structure with totally geodesic boundary (which is empty if the pairing involves all connected components of $\partial N$ and $\partial N'$).} 
\item{In particular, the double $DN$ of $N$ is a closed orientable 3-manifold with a convex projective structure.}
\item{The manifold $N$ admits on its interior ${\rm int}(N)$ a complete finite volume hyperbolic metric which is itself a double of a complete finite volume hyperbolic metric with totally geodesic boundary on $M\smallsetminus\alpha$. In particular, it contains $\partial M\smallsetminus\alpha$ as a totally geodesic embedded subsurface.}
\item{The class that we construct is transverse to the one of Ballas, Danciger, and Lee \cite{BDL18}: Since the hyperbolic manifold $N$ contains a totally geodesic subsurface $\partial M\smallsetminus\alpha$ it is not {\em infinitesimally projectively rigid relative to the boundary} as they need in their work.} 
\item{$N$ is a projective manifold with generalized cusps of type $3$ in the sense of \cite{BCL20}.
Allowing $c_i=1$, one obtains projective manifolds with generalized cusps of type $2$ (the formulae for the holonomy would change though).
One should also be able to allow $a_i=1$ to obtain cusps of type $1$ and $0$ (the finite volume hyperbolic metric on $N$), but the above parametrization might not be well-suited for that.
In fact one should retrieve constructions in \cite{Ma12,Bob19,BM20}.}
\end{itemize}

\subsection{Hyperbolic manifolds with totally geodesic boundary and corners}

As mentioned at the end of Section~\ref{sec:classification and examples}, the building blocks for our constructions of the examples of all the previous theorems are convex compact hyperbolic manifolds with totally geodesic boundary and corners.
If $d=3$ then the manifolds we need are provided by work of Bonahon and Otal \cite{BO04}.
For the general case we construct arithmetic manifolds similar to the ones considered by Gromov and Thurston \cite{GT87} and Kapovich \cite{K07}. 
More precisely we prove:

\begin{thm}
\label{thm:main7}
For every $k\ge 3$ there exist:
\begin{enumerate}[(a)]
\item{A closed orientable hyperbolic $d$-manifold $M$ with closed connected totally geodesic hypersurfaces $N_1,\cdots,N_k\subset M$ intersecting along a connected totally geodesic $(d-2)$-submanifold $C(=N_i\cap N_j$ for $i\neq j)$ with angles $\angle_C N_jN_{j+1}=\pi/k$.
Moreover, $C$ is fixed by a cyclic isometry $\rho$ of $M$ with $\rho(N_j)=N_{j+1}$.}
\item{A compact convex hyperbolic $d$-manifold $M'$ with totally geodesic boundary and corners such that each corner has angle $\pi/k$ and the graph of the boundary is bipartite.}
\end{enumerate} 
\end{thm}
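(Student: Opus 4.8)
The plan is to build $M$ arithmetically, in the spirit of Gromov--Thurston \cite{GT87} and Kapovich \cite{K07}, so that the cyclic symmetry $\rho$ comes from a genuine rotation of $\H^d$ defined over a number field, and then to obtain $M'$ as a wedge cut out of $M$ by two of the hypersurfaces $N_j$. First I would fix the field $K:=\Q(\cos\tfrac{\pi}{k},\sin\tfrac{\pi}{k})$. It is totally real: $\sin^2\tfrac{\pi}{k}=1-\cos^2\tfrac{\pi}{k}$ is totally positive in the real cyclotomic field $\Q(\cos\tfrac{\pi}{k})=\Q(\zeta_{2k})^{+}$, its Galois conjugates being the $\sin^2\tfrac{j\pi}{k}>0$ with $\gcd(j,2k)=1$; and $[K:\Q]\ge 2$ since $k\ge 3$. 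Pick $\mu\in K$ positive at the identity real place and negative at every other real place, and let $q=\langle-\mu,1,\dots,1\rangle$ in variables $x_0,\dots,x_d$. Then $q$ has signature $(d,1)$ at the identity place and is definite at all other places, hence anisotropic over $K$, so $G:=\SO(q,\mathcal O_K)$ is a cocompact lattice in $\SO(q\otimes\R)\cong\SO(d,1)$. Let $\rho_0\in\SO(q,K)$ be the rotation by $\tfrac{\pi}{k}$ in the last two coordinates $x_{d-1},x_d$: it lies in the identity component, has order $2k$, fixes $P_0:=\{x_{d-1}=x_d=0\}\cong\H^{d-2}$ pointwise, and cyclically permutes the totally geodesic hyperplanes $H_j:=\{(\sin\tfrac{j\pi}{k})x_{d-1}=(\cos\tfrac{j\pi}{k})x_d\}\cong\H^{d-1}$, $j\in\Z/k$, which all contain $P_0$, satisfy $H_i\cap H_j=P_0$ for $i\ne j$, and meet consecutively at angle $\tfrac{\pi}{k}$, with $\rho_0 H_j=H_{j+1}$.

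Next I would replace $G$ by a $\rho_0$-equivariant torsion-free lattice. Since $\rho_0$ lies in the commensurator of $G$, the finite-index subgroup $G'':=\bigcap_{i=0}^{2k-1}\rho_0^{i}G\rho_0^{-i}$ is normalised by $\rho_0$; passing to a characteristic, torsion-free, orientation-preserving finite-index subgroup $\Gamma_0\trianglelefteq G''$ (Selberg) keeps it so, and $\rho_0$ descends to an order-$2k$ isometry $\bar\rho_0$ of the closed orientable hyperbolic manifold $W:=\H^d/\Gamma_0$. Restricting $q$ to $P_0$ and to each $H_j$ gives anisotropic forms of signature $(d-2,1)$ and $(d-1,1)$, so the $\Gamma_0$-stabilisers of $P_0$ and of the $H_j$ are cocompact on them; hence the images $C_0\subset W$ of $P_0$ and $N_j^{0}\subset W$ of $H_j$ are closed immersed totally geodesic submanifolds with $\bar\rho_0 C_0=C_0$ and $\bar\rho_0 N_j^{0}=N_{j+1}^{0}$. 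I would then clean up by passing to finite covers, using subgroup separability of these arithmetic lattices (Bergeron--Haglund--Wise, after Long--Reid), choosing the covers $\bar\rho_0$-equivariantly (intersect the $\bar\rho_0$-translates of the subgroup produced, then take a characteristic subgroup so $\bar\rho_0$ still lifts): first make $C_0$ and $N_1^{0}$ embedded, connected and two-sided --- whence, applying $\bar\rho_0$, so are all the $N_j$, and they are pairwise distinct --- and then kill the finitely many spurious components of $N_i^{0}\cap N_j^{0}$ to achieve $N_i\cap N_j=C$ for $i\ne j$. The resulting closed orientable hyperbolic $d$-manifold $M$, with the totally geodesic $N_1,\dots,N_k$ through $C$, constant dihedral angles $\pi/k$ read off from the configuration of the $H_j$ in $\H^d$, and the descended cyclic isometry $\rho$ fixing $C$ and sending $N_j$ to $N_{j+1}$, is the manifold of (a). I expect this clean-up step to be the main obstacle: one must run the separability arguments simultaneously for all $N_j$ and their pairwise intersections while preserving connectedness of each $N_j$ and of $C$ and keeping everything $\bar\rho_0$-equivariant. (When $d=3$ one may instead invoke Bonahon--Otal \cite{BO04} for the building block directly.)

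For (b), I would cut $M$ along $N_1\cup N_2$ only. Near $C$ these two hypersurfaces divide a tubular neighbourhood of $C$ into four wedges along $C$, two of dihedral angle $\pi/k$ and two of angle $\pi-\pi/k$ (here one uses $N_1\cap N_2=C$ and that the angle between $N_1$ and $N_2$ is the \emph{constant} $\pi/k$ along the connected set $C$, so the metric type of a component of the complement is locally constant along $C$). Let $R$ be the component of $M\smallsetminus(N_1\cup N_2)$ whose closure near $C$ is of the first kind, and set $M':=\overline R$. Then $M'$ is a compact hyperbolic $d$-manifold with totally geodesic boundary inside $N_1\cup N_2$, whose only corner stratum is a union of copies of $C$, where the dihedral angle is $\pi/k$; every corner is incident to exactly one boundary face lying in $N_1$ and one lying in $N_2$, so the boundary graph is bipartite with these two colour classes. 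Finally $M'$ is convex: the lift of $R$ to $\H^d$ corresponding to $\pi_1 M'$ is an intersection of geodesic half-spaces bounded by lifts of $N_1$ and $N_2$, hence a convex subset of $\H^d$ on which $\pi_1 M'$ acts freely, properly and cocompactly with quotient $M'$. This gives the manifold required in (b).
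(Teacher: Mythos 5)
Your overall strategy mirrors the paper's: fix $F=\Q(\cos\tfrac{\pi}{k},\sin\tfrac{\pi}{k})$, build a cocompact arithmetic lattice from a quadratic form $\langle -\tau,1,\dots,1\rangle$ with $\tau$ totally negative at the non-identity real places, conjugate by the $\pi/k$-rotation $\rho$ fixing $\H^{d-2}$ to produce the pattern of hyperplanes, normalize the lattice by $\rho$, and clean up in a finite cover using separability in the sense of Bergeron--Haglund--Wise.

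The two arguments diverge at the crucial step, and this is where your proposal is incomplete.  The paper does not attempt to impose the desired intersection pattern of the $N_j$ directly on a lattice via iterated separability. Instead it first runs a \emph{ping-pong argument} (Proposition~\ref{pro:cover}) with the $k$ sectors $U_j$ around $V$ to produce a \emph{convex-cocompact} subgroup $Q=\langle T_1,\dots,T_k\rangle$ for which the configuration $H_j/T_j\hookrightarrow\H^d/Q$, $H_i/T_i\cap H_j/T_j=V/P$ holds \emph{by construction}; one then applies BHW separability exactly once to the single convex-cocompact subgroup $Q$, embedding its convex core (hence the whole pattern) in a finite cover $\H^d/G'$.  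Your ``clean-up'' route would need, beyond subgroup separability: (i) \emph{double-coset} separability (or an equivalent) to kill the finitely many spurious components of $N_i^0\cap N_j^0$; (ii) a compatibility step ensuring that $\Stab_{\Gamma_1}(P_0)\subset\Stab_{\Gamma_1}(H_j)$ for each $j$, i.e.\ that no element of the deep cover fixes $P_0$ setwise while rotating the $H_j$ (otherwise the image of $P_0$ in $M$ is not the same as the image of $P_0$ inside $N_j$); and (iii) all of this done $\bar\rho_0$-equivariantly while preserving connectedness of $C$ and of each $N_j$.  You yourself flag this as ``the main obstacle'', and indeed this is the content that the ping-pong argument is built to sidestep.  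As written, (a) has a genuine gap.

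Part (b) also has a gap, and this one is easily repaired by following the paper.  You cut $M$ only along $N_1\cup N_2$.  Near a point of $C$ the four local wedges have angles $\pi/k,\ (k-1)\pi/k,\ \pi/k,\ (k-1)\pi/k$; your argument that ``the metric type is locally constant along $C$'' shows that a fixed \emph{germ} of $\overline R$ at $C$ has constant angle, but a component $R$ of $M\smallsetminus(N_1\cup N_2)$ may well meet $C$ from \emph{several} wedges simultaneously (e.g.\ by a path that leaves and re-enters a neighbourhood of $C$), giving $\overline R$ some corners of angle $\pi/k$ and others of angle $(k-1)\pi/k$.  The paper instead cuts along all $k$ hypersurfaces $N_1\cup\dots\cup N_k$: then every one of the $2k$ local wedges at $C$ has angle $\pi/k$, so every corner of every component automatically has angle $\pi/k$, and (for $k$ even) one gets bipartiteness from the cyclic labelling $N_j\to N_{j+1}$ around $C$.  (Your bipartition by $N_1$ versus $N_2$ is cleaner and works for all $k$, but only if the corner-angle issue is resolved.)  So replace ``cut along $N_1\cup N_2$'' with ``cut along $N_1\cup\dots\cup N_k$'', as in the paper, and (b) goes through.
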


The vertices of the graph of the boundary are the totally geodesic pieces of the boundary, and its edges are the corners.
For our bulging construction to work, it is crucial that this graph is bipartite, so that we can ``alternate between huge and small bulging parameters'' (see Section~\ref{sec:proofs}).
Corner angles less than $\pi/4$ would allow one to work with a tripartite assumption instead.

The existence of (b) follows from (a) as the completion of every connected component of
$
M\smallsetminus(N_1\cup\cdots\cup N_k)
$
satisfies all the requirements of (b).

\subsection*{Outline}
The article is organized as follows.
\begin{itemize}
\item{Section \ref{sec:proofs}: {\em Ingredients of the proofs}. We discuss the ideas and ingredients of the proofs first in dimension 2 and then in general.}
\item{Section \ref{sec:sec1}: {\em Preliminaries}. We recall classical notions from convex projective geometry.}
\item{Section \ref{sec:sec2}: {\em Tubes, cone-manifolds, and totally geodesic blowup}. We define and classify tubes, define cone-manifolds, and describe the totally geodesic blowup of cone-manifolds whose singularities satisfy \eqref{eq:geomdyncond}.}
\item{Section \ref{sec:sec3}: {\em Tessellations of convex domains}. We expose the local-to-global convexity argument that guarantees that a given collection of convex sets tiles a convex domain $\Omega$, and then describe the regularity of $\partial\Omega$.}
\item{Section \ref{sec:sec4}: {\em Convex-cocompactness and relative hyperbolicity}. We work with compact convex projective manifolds $M$ with totally geodesic boundary. We give sufficient conditions for $M$ to be convex-cocompact, for $\pi_1(M)$ to be hyperbolic relatively to  its boundary components, and for $\pi_1(M)$ to be extended geometrically finite in the sense of Weisman \cite{We22}.}
\item{Section \ref{sec:sec5}: {\em Projective gluings}. We define projective gluings of projective manifolds with totally geodesic boundary and corners, and explain how to bulge those gluings with respect to polars. We check that projective gluings are cone-manifolds. Then we translate into the language of projective gluings the results obtained in Sections~\ref{sec:sec3} and \ref{sec:sec4}.}
\item{Section \ref{sec:sec6}: {\em Hyperbolic building blocks}. We construct the convex compact hyperbolic manifolds with totally geodesic boundary and corners which will be projectively glued and bulged to prove the main theorems. In particular, we prove Theorem~\ref{thm:main7}.}
\item{Section \ref{sec:sec7}: {\em Hyperbolic doubles}. We apply the results of Section~\ref{sec:sec5} to the particular case of the bulged double gluing of a single convex compact hyperbolic manifolds with totally geodesic boundary and corners, and then prove Theorems~\ref{thm:mainA}, \ref{thm:mainB}, \ref{thm:main1}, \ref{thm:main2} and \ref{thm:main3}.}
\end{itemize}

\subsection*{Acknowledgements}
We thank Dick Canary, Mitul Islam, Fanny Kassel, Beatrice Pozzetti, Alan Reid, Teddy Weisman for useful discussions and comments.


We thank Teddy Weisman for helping us producing the bulging figures which we obtained using his package Geometry Tools available on his website.

Gabriele acknowledges the financial support of the DFG 427903332 (Beatrice Pozzetti's Emmy Noether) and of the DFG 390900948 (Heidelberg Structures Cluster of Excellence).

Pierre-Louis acknowledges financial support from the Max-Planck-Institut für Mathematik during the years 2021-2022.

\section{Ingredients of the proofs}\label{sec:proofs}

In this section we discuss the ideas and ingredients of the proofs of this paper.
In Section~\ref{sec:guiding example} we investigate the special case where the dimension $d$ equals 2, before turning to the general case in Section~\ref{sec:generalization example}.

The proofs involve projective cone-manifolds obtained by gluing projective manifolds with totally geodesic and corners.
We explain how the singularities of certain projective cone-manifolds can be blown up to a totally geodesic boundary, and how to describe the geometry of certain gluings of projective manifolds with totally geodesic and corners.}

Finally, in Section~\ref{sec:super long title} we discuss the hyperbolic building blocks needed for our construction: 
In dimension 3, such objects are classified by work of Bonahon and Otal \cite{BO04}. 
In higher dimension, we construct them using arithmetic techniques (Theorem \ref{thm:main7}).

\subsection{A guiding example in dimension $2$}\label{sec:guiding example}

As mentioned before, the compact convex projective manifolds with totally geodesic boundary in Theorems~\ref{thm:main2} and \ref{thm:main3} are obtained by bulging enough a hyperbolic cone-manifold along totally geodesic hypersurfaces adjacent to the cone-singularities.
This hyperbolic cone-manifold is in fact obtained by taking the double of a convex compact hyperbolic manifold $N$ with totally geodesic boundaries $\Sigma,\Sigma'\subset\partial N$ and \emph{codimension 2} corners $\corner\subset\partial N$, see Figure~\ref{fig:figure2a}. 
($\Sigma,\Sigma',\corner$ are not necessarily connected.)

\begin{figure}[h]
\centering
\begin{overpic}[scale=0.7]{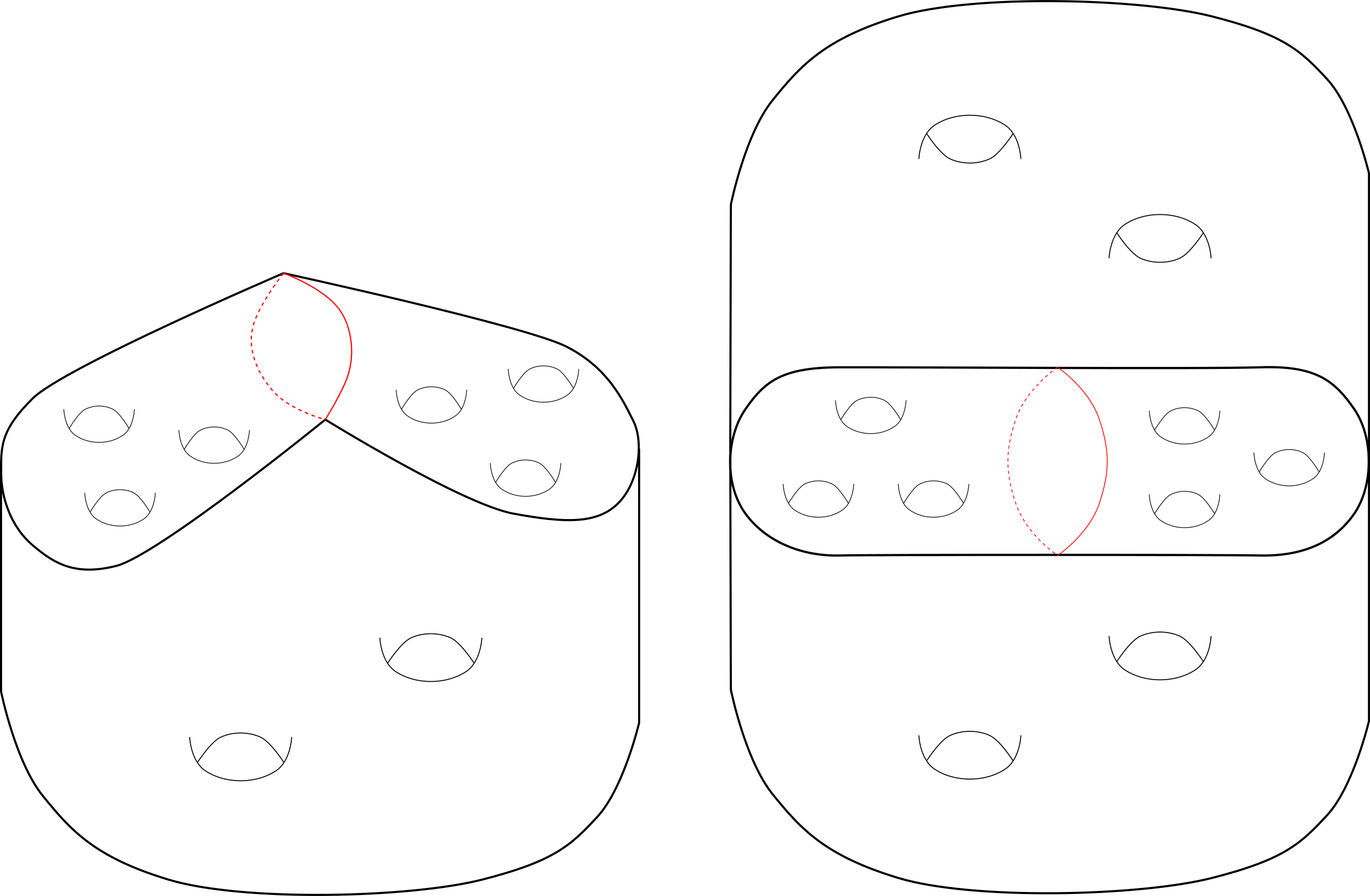}
\put (22,15) {$N$}
\put (21,39) {{\color{red} $\corner$}}
\put (20,29) {{\color{blue} $\theta<\frac{\pi}{2}$}}
\put (5,50) {$\partial N=\Sigma\cup_\corner\Sigma'$}
\put (74,15) {$DN$}
\end{overpic}
\caption{Double of a hyperbolic manifold with corners.}
\label{fig:figure2a}
\end{figure}

In this section we explain this bulging construction in the easier but already interesting case where $d=2$.
In this case one can pick for $N$ a compact quadrilateral of the hyperbolic plane with vertex angles less than $\pi/2$ (in dimension at least $3$ the construction of a suitable $N$ is not as obvious).

We discuss two prototypical examples in order to explain two different problems of the proof of Theorem~\ref{thm:main2}, namely {\em totally geodesic blowup} and {\em global convexity}. 

\subsubsection{Local model}\label{sec:local model}
The first example is a hyperbolic sector $\Sigma\subset\mb{H}^2$ of angle $0<\theta<\pi/2$ bounded by two geodesic rays $\ell_1,\ell_2$ issuing from the vertex $v$.

The double $S=D\Sigma$ is a hyperbolic cone with vertex $v$ and angle $2\theta$.
The complement of the cone point $S'=S-\{v\}$ is a punctured plane with an incomplete hyperbolic metric.

\begin{figure}[h]
\label{fig:figure2b}
\centering
\begin{overpic}[scale=0.5]{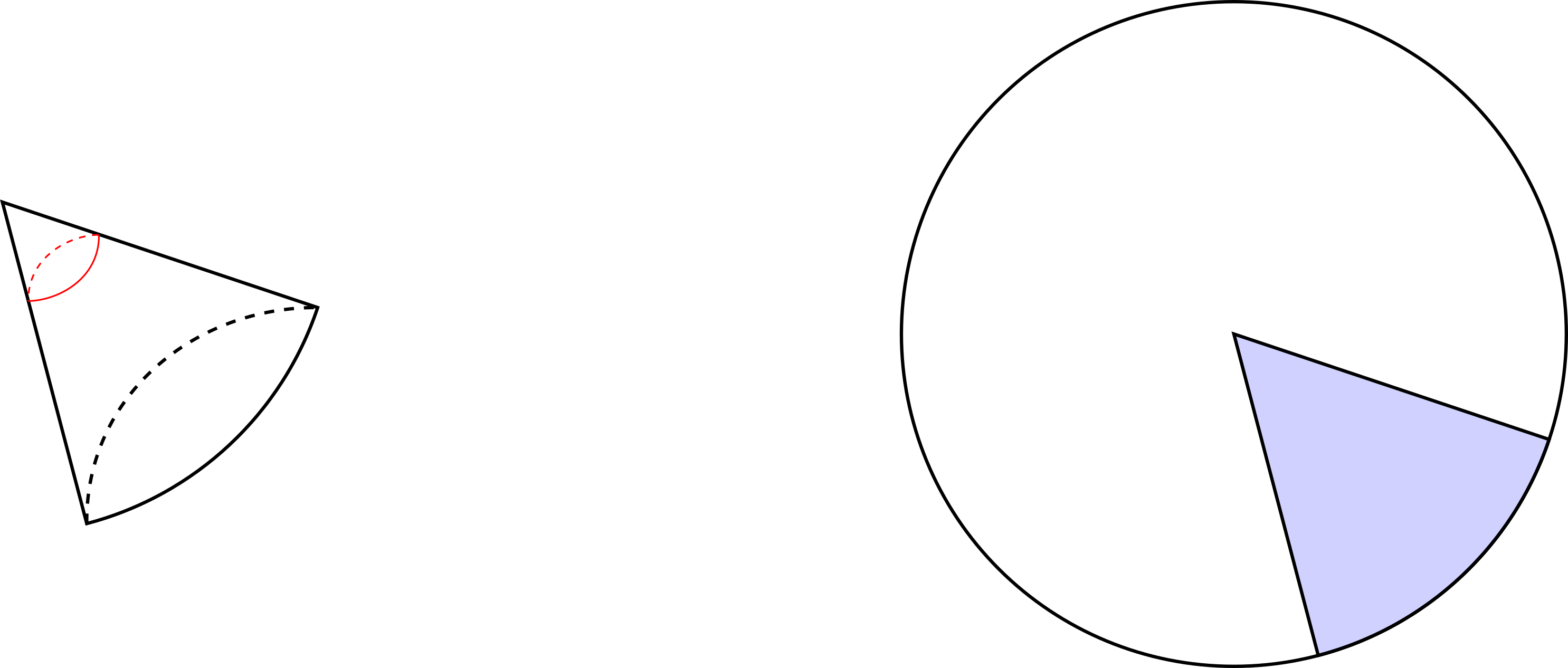}
\put(6,22) {{\color{red} $\gamma$}}
\put(-2,31) {{\color{blue} $2\theta$}}
\put(18,11) {$S=D\Sigma$}

\put(81,16) {{\color{blue} $\theta$}}
\put(77,22) {$v$}
\put(77,10) {$\ell_1$}
\put(88,19) {$\ell_2$}
\put(93,2) {$\Sigma$}

\end{overpic}
\caption{Hyperbolic cone: Double of a sector.}
\end{figure}

Let us see the universal cover $\tilde{S}'$ as a fan of sectors $\{\Sigma_j\}_{j\in\mb{Z}}$ lifting the sectors of $S'$,
and consider the hyperbolic developing map $\dev_0:\tilde S'\rightarrow\H^2$ sending $\Sigma_0$ onto $\Sigma$.
If $\gamma$ is the simple curve that winds once clockwise around the vertex, then its holonomy is given by the rotation $R_{2\theta}\in{\rm SO}(2,1)$ of angle $2\theta$ around the vertex $v$.
Moreover, $\dev_0(\Sigma_j)=R_{j\theta}\Sigma$ for every $j$.

We deform projectively the hyperbolic structure on $S'$ by performing independent bendings along each of the sides $\ell_1,\ell_2\subset S'$. 
Recall that these bendings are bulgings, \ie of the following form.
Every line $\ell\subset\mb{H}^2$ has a dual point $\ell^*\in\mb{RP}^2-\mb{H}^2$. 
For every $\mu>0$ the ``bulging'' transformation $B_{\ell,\mu}\in{\rm SL}_3(\mb{R})$ is the homothety by $1/\mu$ on the 2-plane $L\subset\R^3$ representing $\ell$ and is the homothety by $\mu^2$ on the line $L^\perp\subset\R^3$ representing $\ell^*$.  

We describe the (bulged) projective structure on $S'$ via the developing map $\dev$ that sends $\Sigma_0$ onto $\Sigma$.
We have $\dev(\Sigma_1)=B_{\ell_2,\mu_2}\dev_0(\Sigma_1)$ and $\dev(\Sigma_{-1})=B_{\ell_1,\mu_1}\dev_0(\Sigma_{-1})$, the bulged holonomy of $\gamma$ (which sends $\dev(\Sigma_{-1})$ on $\dev(\Sigma_1)$) changes as
\[
R_{2\theta}\leadsto\rho:=B_{\ell_2,\mu_2}R_{2\theta}B_{\ell_1,\mu_1}^{-1},
\]
and $\dev(\Sigma_{2j})=\rho^j\Sigma$ and $\dev(\Sigma_{2j+1})=\rho^jB_{\ell_2,\mu_2}R_\theta\Sigma$ for every $j$.

The idea is that, under suitable geometric and dynamical assumptions on $\rho$ the projective structure on $S'$ is convex and one can ``blow up'' the singularity $v$ of the projective cone-manifold $S$ into a totally geodesic circle, a boundary for $S'$.

\begin{figure}[h]
\centering
\begin{overpic}[scale=0.5]{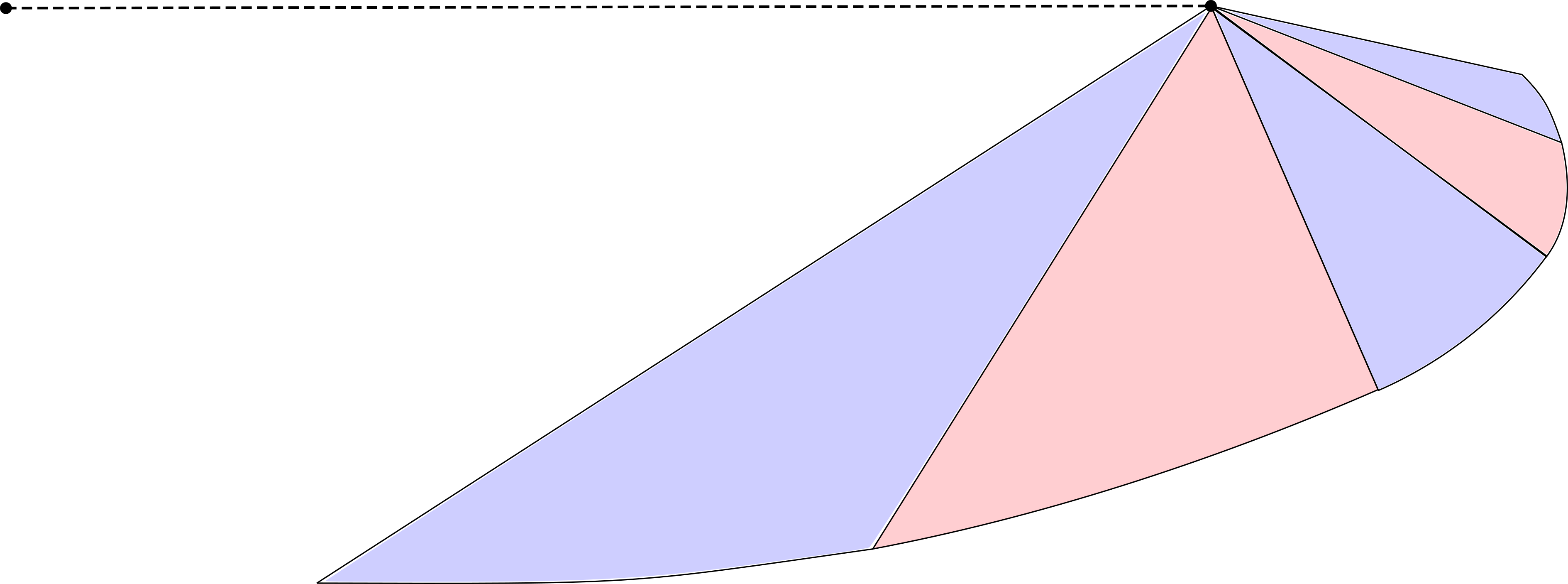}
\put(85,20) {{\color{blue} $\Sigma_0=\Sigma$}}
\put(43,10) {{\color{blue} $\Sigma_2=\rho(\Sigma)$}}
\put(90,35) {{\color{blue} $\Sigma_{-2}=\rho^{-1}(\Sigma)$}}
\put(76,38) {$v$}
\put(-2,38) {$e'$}
\put(6,20) {${\rm dev}(\tilde{S}')=$}
\put(40,20) {{\color{red} $\ddots$}}
\end{overpic}
\caption{Image of the new developing map $\dev$.}
\label{fig:figure2c}
\end{figure}

The heuristic picture is the following (see Figure \ref{fig:figure2c}).
Note that $v$ is an eigenvector of $\rho$ with eigenvalue $\mu_1/\mu_2$.
Assume that
\begin{equation}\label{eq:geomdyncond}
 \rho\ \text{is diagonalizable with eigenvalues}\ \mu_1/\mu_2<\mu<\mu',
\end{equation}
and denote by $e,e'$ the eigenlines of $\mu,\mu'$. 
(This holds \eg if $\theta<\pi/2$ and $\mu_2$ and $\mu_1^{-1}$ are large.)

The image $\Omega=\dev(\tilde{S}')$ does not intersect the two lines through $v$ that contain respectively $e$ and $e'$.
The sectors $\dev(\Sigma_j)$ accumulate onto the segment $[v,e']$ for $j\to\infty$ and to the vertex $v$ for $j\to-\infty$.
The properly convex subset $\Omega\cup(v,e')$ is a $\rho$-invariant convex set where $\rho$ acts properly discontinuously.
This gives the desired totally geodesic blowup 
\[
\bar{S}:=S'\cup((v,e')/\rho)=(\Omega\cup(v,e'))/\rho
\]
of the deformed cone $S'=\Omega/\rho$.

The non-convex-cocompact case of Theorem~\ref{thm:main2} corresponds to  $\mu=\mu'$ and $\rho$ acting as a parabolic transformation on the dual line to $v$.
Then convexity and totally geodesic blowup work the same way, except that $\ell=\ell'$ and $v$ is a $\cal C^1$ point of $\partial\Omega$.

\subsubsection{Global convexity}\label{sec:global convexity}
The second prototypical example is a hyperbolic convex quadrilateral $Q=Q(a,b,c,d)\subset\mb{H}^2$ with angles $0<\alpha,\beta,\gamma,\delta<\pi/2$ at the vertices $a,b,c,d$. 

The double $S=DQ$ is a sphere with a singular hyperbolic metric with four cone points of angle $2\alpha,2\beta,2\gamma,2\delta$ corresponding to the vertices $a,b,c,d$ of $Q$.
The complement of the cone points $S'=S\smallsetminus \{a,b,c,d\}$ is an incomplete hyperbolic sphere with four punctures. 

Let us bulge $S'$ along each of the sides $\ell_{ab},\ell_{bc},\ell_{cd},\ell_{da}$ of the quadrilateral with parameters $\mu_{ab},\mu_{bc},\mu_{cd},\mu_{da}$. 
Suppose $\mu_{ab},\mu_{bc}^{-1},\mu_{cd},\mu_{da}^{-1}$ are large, so that the holonomies $\rho_a,\rho_b,\rho_c,\rho_d$ of the curves around $a,b,c,d$ satisfy \eqref{eq:geomdyncond}.

By Section~\ref{sec:local model}, $S$ admits a projective totally geodesic blowup 
\[
{\bar S}=S'\cup(\mb{S}^1_a\cup \mb{S}^1_b\cup \mb{S}^1_c\cup \mb{S}^1_d)
\]
where each cone point $v\in\{a,b,c,d\}$ is replaced by a totally geodesic boundary circle $\mb{S}^1_v$.
Let us now explain why $\bar S$ is convex.

\begin{figure}[h]
\centering
\begin{overpic}[scale=0.4]{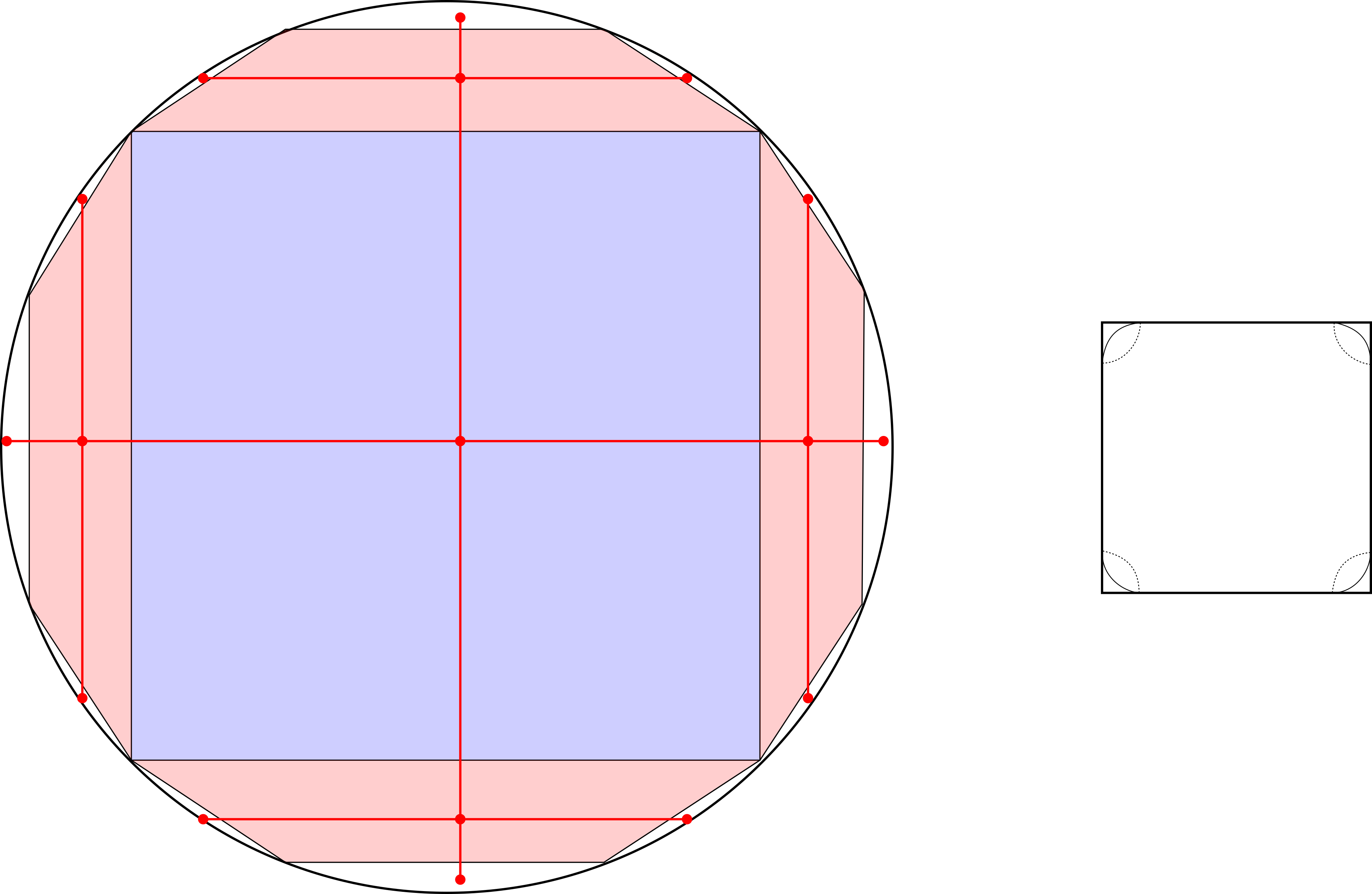}
\put (78,19) {$a$}
\put (100.5,42) {$c$}
\put (78,42) {$d$}
\put (100.5,19) {$b$}

{\small
\put (82.5,24.5) {{\color{blue} $2\alpha$}}
\put (94.5,36.5) {{\color{blue} $2\gamma$}}
\put (82.5,36.5) {{\color{blue} $2\delta$}}
\put (94.5,24.5) {{\color{blue} $2\beta$}}
}
\end{overpic}
\caption{Universal covering of $S'$.}
\label{fig:figure3b}
\end{figure}

One way to describe combinatorially the universal cover $\tilde{S}'$ of $S'$ is to identify it with $\mb{H}^2$ via \emph{another} hyperbolic metric on $S'$ which is  complete finite volume.
The two copies of $Q\smallsetminus \{a,b,c,d\}$  lift to an ideal tessellation $\mc{F}$ of $\mb H^2$ whose dual graph is a regular 4-valent tree (see Figure~\ref{fig:figure3b}). We now describe the image $\Omega$ of the (bulged) developing map 
\[
\dev:\tilde{S}'\to\mb{RP}^2.
\]
By the local model, for each vertex at infinity of the tessellation of $\tilde{S}'$, the fan of quadrilaterals adjacent to it develops into an open convex subset of a quadrant of $\mb{RP}^2-(\ell\cup\ell')$ where $\ell,\ell'$ are suitable lines. 
A local-to-global argument inspired by work of Vinberg \cite{Vin71} and revisited by Benoist \cite{Be5lectures} (see Proposition~\ref{prop:injetcvx}) guarantees that the image $\Omega=\dev(\tilde{S}')$ is a properly convex domain tiled by the images of the quadrilaterals in the tessellation.

\begin{figure}[h]
\centering  
\begin{subfigure}{.25\textwidth}
\centering
\includegraphics[scale=0.34]{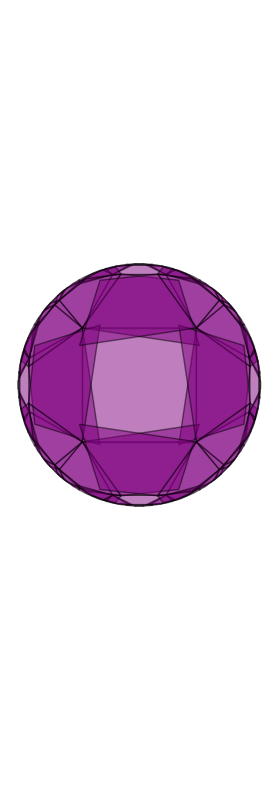}
\caption{No bulging.}
\label{fig:sub1}
\end{subfigure}%
\begin{subfigure}{.5\textwidth}
\centering
\includegraphics[scale=0.34]{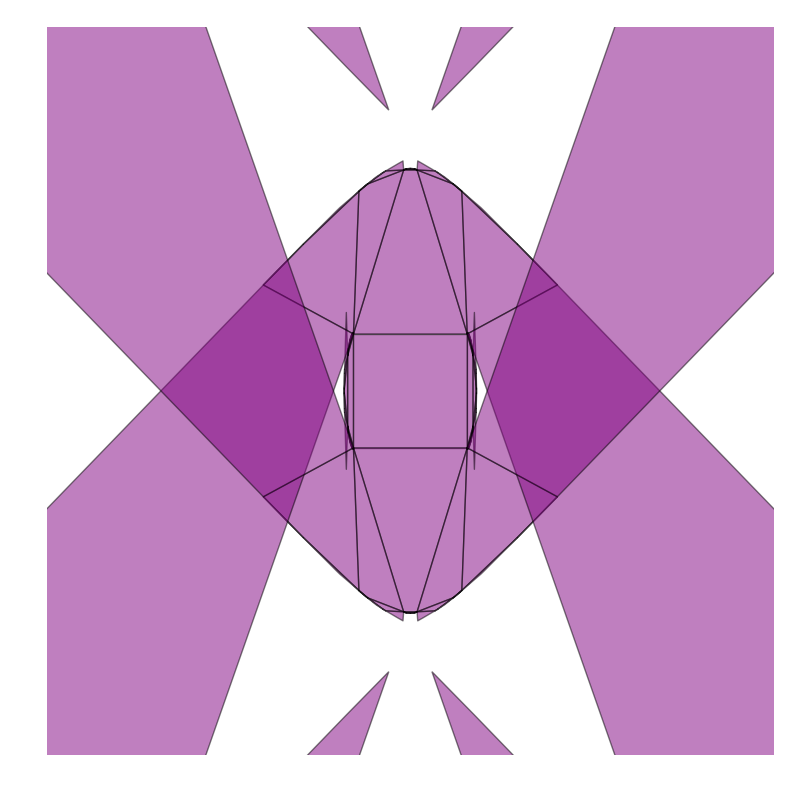}
\caption{Moderate bulging.}
\label{fig:sub2}
\end{subfigure}%
\begin{subfigure}{.25\textwidth}
\centering
\includegraphics[scale=0.34]{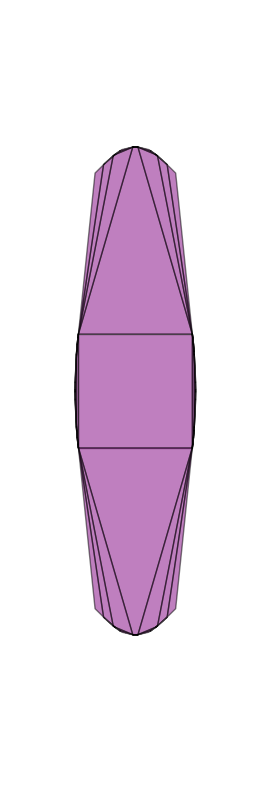}
\caption{Large bulging.}
\label{fig:sub3}
\end{subfigure}
\caption{Developing 52 tiles with different bulging parameters. The last picture is convex.}
\label{fig:figure3c}
\end{figure}

%

\subsection{A generalization of the guiding example}
\label{sec:generalization example}

The ideas presented in Section~\ref{sec:guiding example} apply more generally to gluings of the hyperbolic manifolds with totally geodesic boundary and corners.
We will in fact develop these ideas in a much more general framework, which includes:
\begin{itemize}
\item{Totally geodesic blowups of projective cone-manifolds.}
\item{A local-to-global convexity result for general unions of convex sets.}
\end{itemize}

The reason why we work in such a general setting is threefold: 
First, as we use gluings twice with different building blocks, it is convenient to work in an abstract setup that encompasses both constructions. 
Second, we think that within that general framework some of the arguments are natural. 
Last, we hope that our techniques could be well-suited to produce many new interesting projective manifolds (for example, as the ones in Questions \ref{qn:question1} and \ref{qn:question2}).  

We briefly describe the main steps.

\subsubsection*{Tubes and cone-manifolds}
We define a suitable class of {\em projective manifolds with cone-singularities} $(M,\mc{C})$, slightly more general than the previous notions in \cite{BBS11,Dan13,LMR22}.
The most important feature is that every singularity $\corner\in\mc{C}$ has a neighborhood $U$ locally modeled on a {\em tube}.

We work in $\sph^d$ the sphere of $\mb{R}^d$ (a double cover of $\mb{RP}^d$). 
Consider a {\em sector} $\Sigma\subset\sph^d$ bounded by two half-spheres $H_1,H_2$ containing $\sph^{d-2}$.
Glue $H_1$ and $H_2$ via  $\alpha\in G$  the group of transformations of $\sph^\dimd$ fixing $\sph^{\dimd-2}$.
The quotient
$S=\Sigma/\alpha$
is a projective manifold with a singularity $\sph^{d-2}\subset S$.
We call it \emph{tube} and think of it as a {\em projective cone} with {\em angle} $\alpha\in G$, analogous to a {\em hyperbolic cone} of angle {\em smaller} than $2\pi$. 
As in the hyperbolic setting, we will allow angles larger than $2\pi$ by modifying the above picture, taking $\alpha\in\tilde G$ in the universal cover of $G$.

Generalizing Section~\ref{sec:local model}, the {\em totally geodesic blowup} that we introduce is a local construction for standard tubes whose angle satisfies \eqref{eq:geomdyncond} (and ``have angle less than $\pi$'').
It replaces each singularity $\corner\subset M$ with a totally geodesic boundary component of the (topological) form $\corner\times\mb{S}^1$. 

\subsubsection*{Periodic tessellations of convex domains}
As in Section~\ref{sec:global convexity}, consider a collection of convex sets (tiles) that satisfies a local convexity property (ensured among other things by considerations like in Section~\ref{sec:local model}).

A local-to-global convexity argument ensures that this collection tessellates a properly convex domain $\Omega$.
Then we describe under additional assumptions the boundary of $\Omega$ and its regularity, with roughly three kinds of points.

The codimension 1 faces of the tiles that lie inside $\Omega$ are called {\em walls}.
First, for any tile $D$, the complement of closures of walls in $\partial D$ is contained in $\partial\Omega$, and its points have the same regularity in $\partial D$ and in $\partial \Omega$.

The remaining part of the boundary $\partial\Omega$ consists of {\em cells at infinity}, Hausdorff limit of sequences of tiles.
By \emph{periodicity} of the tiling (a finite number of tiles up to isometry), there will be essentially two cases to consider:
\begin{enumerate}
\item{A {\em telescope} of consecutive tiles $\{D_n\}_{n\in\mb{N}}$ with empty global intersection.
The corresponding cell at infinity is reduced to an extremal point of $\partial\Omega$.}
\item{A {\em fan} of tiles $\{D_n\}_{n\in\mb{Z}}$ sharing a codimension 2 face $H\subset\partial\Omega$, with $\gamma D_j=D_{j+k}$ for some $\gamma\in{\rm Aut}(\Omega)$ satisfying \eqref{eq:geomdyncond}.
The \emph{two} corresponding cells at infinity are $H$ and a codimension 1 face ${\mc CH}(H,\gamma_+)\subset\partial\Omega$ where $\gamma^+\in\partial\Omega\smallsetminus H$ is the attracting fixed point of $\gamma$ (as in Figure \ref{fig:figure2c}).}
\end{enumerate}

In addition to these ideas of totally geodesic blowup and periodic tessellations (which generalize Section~\ref{sec:guiding example}) and to the constructions presented in Section~\ref{sec:super long title}, the proof of the main theorems will use classical arguments from the theory of convergence actions and geometrically finite actions to prove the statements about relative hyperbolicity, as in \cite{IZ19relhypb,We21,IZ22relhypb}.

We now discuss how to produce hyperbolic manifolds with totally geodesic boundary and pleated along codimension 2 corners in arbitrary dimension.

\subsection{Hyperbolic building blocks}
\label{sec:super long title}
Convex compact hyperbolic manifolds with totally geodesic boundary and corners that look like the one in Figure~\ref{fig:figure2a} exist in every dimension.

\subsubsection{Dimension $d=3$}
This is a special case, where we have the following complete topological classification thanks to work of Bonahon and Otal \cite{BO04}.
Simple closed curves $\alpha_1\cup\cdots\cup\alpha_k\subset\partial M$ and angles $0<\theta_1,\cdots,\theta_k<\pi$ appear as pleating (or bending) locus and pleating angles of the boundary of a hyperbolic metric on $M$ with totally geodesic boundary and corners if and only if
\begin{itemize}
\item{$\sum_{j\le k}{\theta_j i(\alpha_j,\partial A)}>0$ for every properly embedded essential annulus or Möbius band $(A,\partial A)\subset(M,\partial M)$.}
\item{$\sum_{j\le k}{\theta_j i(\alpha_j,\partial D)}>2\pi$ for every properly embedded essential disk $(D,\partial D)\subset(M,\partial M)$.}
\end{itemize}

Such collections $\alpha_j,\theta_j$ are abundant (for example by \cite{Le05}).
Note that the $\theta_i$'s are \emph{pleating angles} and not \emph{corner angles}, which are the $\pi-\theta_i$'s.

Furthermore, we also have control on the lengths $\ell(\alpha_j)$ of the geodesic corners $\alpha_j\subset\partial M$ by work of Choi and Series \cite{CS06}.

\begin{fact}[{Bonahon--Otal \cite[Th.\,2-3]{BO04}, Choi--Series \cite[Th.\,A]{CS06}}]\label{fact:ChoiSeries}
 Let $M$ be a compact orientable irreducible atoroidal 3-manifold with non-empty boundary $\partial M=\Sigma_1\sqcup\dots\sqcup\Sigma_n$.
 Let $\alpha:=\alpha_1\cup\dots\cup\alpha_n$ be a doubly incompressible multicurve where $\alpha_j$ is a separating simple closed curve of $\Sigma_j$. Then for $\epsilon>0$ small enough, to any lengths $\ell_1,\dots,\ell_n<\epsilon$ can be associated a hyperbolic metric on $M$ with totally geodesic boundary and corners, whose corners are the $\alpha_i$'s, with length $\ell_i$ and angle at most $\pi/4$.
\end{fact}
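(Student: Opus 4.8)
The statement is an extraction of existing results, so the plan is to quote Bonahon--Otal and Choi--Series and check that the hypotheses of the present setup match theirs. First I would recall the precise form of Bonahon--Otal's theorem: given a compact orientable irreducible atoroidal $3$-manifold $M$ with $\partial M\neq\emptyset$, a weighted multicurve $\sum_j \theta_j\alpha_j$ on $\partial M$ is realized as the bending (pleating) locus of a hyperbolic structure with totally geodesic boundary and corners if and only if the two inequalities displayed just above Fact~\ref{fact:ChoiSeries} hold for every properly embedded essential annulus/M\"obius band and every properly embedded essential disk. The key observation is that when $\alpha=\alpha_1\cup\dots\cup\alpha_n$ is \emph{doubly incompressible} in the sense recalled in the introduction --- it meets every properly embedded essential disk, M\"obius band, and annulus --- the intersection numbers $i(\alpha_j,\partial A)$ and $i(\alpha_j,\partial D)$ are positive for at least one $j$, hence the left-hand sides of both inequalities are strictly positive for any choice of positive weights $\theta_j$. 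The disk inequality requires the stronger bound $>2\pi$; this is where smallness of the corner angles enters. If we take all corner angles $\pi-\theta_j$ at most $\pi/4$, then each $\theta_j\ge 3\pi/4$, and since $i(\alpha_j,\partial D)\ge 1$ for some $j$ and $i(\alpha_j,\partial D)$ is even (as $\alpha_j$ and $\partial D$ both lie on the surface $\Sigma_j$ and $\alpha_j$ is separating), in fact the relevant sum is at least $3\pi/4\cdot 2>2\pi$; so both Bonahon--Otal conditions hold automatically. This yields \emph{existence} of the hyperbolic metric with corners and prescribed small corner angles.

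Next I would bring in Choi--Series \cite[Th.\,A]{CS06}, which upgrades the Bonahon--Otal existence statement to a parametrization: the bending data may be prescribed not just by angles but, in an appropriate range, also by the \emph{lengths} $\ell_j=\ell(\alpha_j)$ of the geodesic corners, and these lengths can be taken arbitrarily small. Concretely, their result gives a neighborhood in the space of "length data" for which the hyperbolic structure with totally geodesic boundary and corners exists; I would note that the regime "all corner angles $\le\pi/4$" sits inside the domain of validity of their parametrization, so that after possibly shrinking $\epsilon$ we may realize any prescribed tuple $(\ell_1,\dots,\ell_n)$ with $\ell_j<\epsilon$ by such a metric whose corner angles are all $\le\pi/4$. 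Assembling: choose $\epsilon$ small enough for Choi--Series to apply with the angle constraint; then for $\ell_1,\dots,\ell_n<\epsilon$ the desired metric exists, has corners exactly $\alpha_1,\dots,\alpha_n$ of lengths $\ell_1,\dots,\ell_n$, and corner angles at most $\pi/4$.

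The only genuine point to be careful about --- and the one I expect to be the main obstacle --- is matching conventions: Bonahon--Otal and Choi--Series phrase things in terms of \emph{bending (pleating) angles} $\theta_j$, while in our applications we care about the \emph{dihedral corner angles} $\pi-\theta_j$ (and we want the latter small, equivalently $\theta_j$ close to $\pi$). One must also reconcile the "abundance" clause: double incompressibility of $\alpha$ (as defined by Thurston and used by Lecuire \cite{Le05}) is precisely what makes the Bonahon--Otal inequalities hold \emph{for all positive weights}, which is the uniformity we need to let the angles shrink freely. I would therefore spell out once and for all that "doubly incompressible $\Rightarrow$ both Bonahon--Otal inequalities hold for every positive weight system, with room to spare in the disk inequality once angles are $\le\pi/4$", and that this is exactly the input Choi--Series needs for the length-parametrized version; the rest is citation.
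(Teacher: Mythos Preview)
Your arithmetic in the disk inequality is wrong: $3\pi/4\cdot 2=3\pi/2$, which is \emph{not} greater than $2\pi$. So from the informal description of double incompressibility (``$\alpha$ meets every essential disk'') you only get $i(\alpha_k,\partial D)\ge 2$ by parity, and the Bonahon--Otal disk inequality fails at pleating angle $3\pi/4$. The fix is to use the paper's formal Definition~\ref{def:doubly incompressible}, which requires $\sum_j i(\alpha_j,\partial D)>2$; since $\partial D$ lies on a single component $\Sigma_k$ and $\alpha_k$ is separating there, this sum equals $i(\alpha_k,\partial D)$, which is even and $>2$, hence $\ge 4$, and then $3\pi/4\cdot 4=3\pi>2\pi$.

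More to the point, the paper does not argue this way at all. Its ``proof'' is the single sentence in Section~\ref{sec:dim3}: Fact~\ref{fact:ChoiSeries} is a combination of Facts~\ref{thm:bending continuous} and~\ref{thm:length continuous}. The logic is: by Bonahon--Otal (Fact~\ref{thm:bending continuous}) the pleating-angle map is a homeomorphism onto an open polytope $P_\alpha\subset(0,\pi]^n$, and the point $(\pi,\dots,\pi)$ (all rank-one cusps) lies in $P_\alpha$; by Choi--Series (Fact~\ref{thm:length continuous}) the length map is an injective local homeomorphism sending the cusp metric to $(0,\dots,0)$. Composing, small lengths correspond to pleating angles near $\pi$, hence to corner angles near $0$, in particular $\le\pi/4$ once $\epsilon$ is small. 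This bypasses any explicit verification of the Bonahon--Otal inequalities at a fixed angle and makes the link between ``small $\ell$'' and ``small corner angle'' transparent --- a point your write-up leaves somewhat implicit.
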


See Section~\ref{sec:dim3} for more details.

\subsubsection{Dimension $d\ge 4$}
In the general case our hyperbolic building blocks are provided by Theorem~\ref{thm:main7}, which is proved via arithmetic methods.
Let us discuss the construction of the manifold $M$ of the point (a) of this theorem in the case $k=4$ (where $M$ contains $4$ totally geodesic closed hypersurfaces intersecting along a single codimension $2$ closed submanifold).

The manifold $M$ is $\mb{H}^d/G$ where $G$ is a suitable finite index subgroup of
\[
G_0:={\rm SO}\left(x_1^2+\cdots+x_d^2-\sqrt{2} x_{d+1}^2\right)\cap{\rm SL}_{d+1}\left(\mb{Z}[\sqrt{2}]\right)
\] 

By classical facts, $G_0$ is discrete and $\mb{H}^d/G_0$ is compact. Consider the hyperplane $H=\{x_1=0\}$, the codimension 2 plane $V=\{x_1=x_2=0\}$, and the $\pi/4$-rotation around $V$ 
\[\rho=
\begin{pmatrix}
\sqrt{2}/2 &-\sqrt{2}/2 &\\
\sqrt{2}/2 &\sqrt{2}/2 &\\
  & &\mb{I}_{d-1}\\
 \end{pmatrix}\in {\rm SL}_{d+1}\left(\mb{Q}(\sqrt{2})\right).
\]
We exploit a ping-pong system adapted to the configuration of four hyperplanes $H_j=\rho^jH$ and their stabilizers in $G_0$ for $j\le 4$ to find a subgroup $Q<G_0$ with the following properties:
\begin{enumerate}[(i)]
\item{$Q$ is convex-cocompact.}
\item{The hyperplanes $H_j$ project to embedded compact hypersurfaces $N_j\subset\mb{H}^d/Q$ that pairwise intersect only along the projection of $V$ which is a codimension 2 totally geodesic embedded submanifold.}
\end{enumerate}
Given (i) and (ii), strong subgroups separability properties of $G$ (relatively to $Q$) due to Bergeron, Haglund, and Wise \cite{BHW11} allow us to embed $N_1\cup N_2\cup N_3\cup N_4$ in a finite cover $M=\mb{H}^d/G\to\mb{H}^d/G_0$.


\section{Convex projective geometry}
\label{sec:sec1}

In this section we review two notions that will be needed throughout the paper: We first recall the basic structure of convex sets in projective spaces. Then we discuss geometric structure, or $(G,X)$-structure, introducing (convex) projective manifolds with totally geodesic boundary and corners.

\subsection{Convex domains}
\label{sec:reminder cvx}

We start with some basic notions and terminology about convex subsets of $\sph^\dimd$, the sphere of rays in $\mb{R}^{d+1}$.

\begin{defi}[Properly Convex]
\label{def:prop convex}
A {\em convex} subset $K\subset\sph^\dimd$ is the image of a convex cone of $\R^{\dimd+1}$. The convex set $K$ is {\em properly convex} if its closure does not contain two antipodal points, that is, the closure of $K$ is contained in an affine chart $\mb{R}^\dimd\subset\sph^\dimd$.
\end{defi}

Note that intersections of (properly) convex subsets is again (properly) convex. Thus one can always define the smallest closed convex subset and the smallest linear subspace containing a given properly convex set. 

\begin{defi}[Convex Hull]
If $A\subset\sph^\dimd$ is contained in some properly convex subset $K\subset\sph^\dimd$, then we can define the {\em convex hull} $\mc{CH}(A)$ of $A$ in $K$ as the intersection of all the closed convex subsets of $K$ containing $A$. 
\end{defi}

\begin{defi}[Span and Dimension]
The {\em span} $\sph(K)\subset\sph^\dimd$ of the convex set $K$ is the smallest linear subspace containing $K$. We call ${\rm dim}(\sph(K))$ the {\em dimension} of $K$. One-dimensional convex subsets of $\sph^\dimd$ are called {\em segments}.
\end{defi}

Observe that any two non-antipodal points $x,y\in\sph^\dimd$ are contained in a unique minimal (properly convex) segment denoted by $[x,y]$.

\subsubsection{Topology}
The span $\sph(K)$ of the convex set $K\subset\sph^\dimd$ has the property that $K$ has non-empty interior in it.

\begin{defi}[Interior and Boundary]
We call ${\rm int}_{\sph(K)}(K)$ and $\partial_{\sph(K)}K$ the {\em relative interior} and {\em relative boundary} of $K$ respectively.
\end{defi}

Topologically, the pair $({\rm int}_{\sph(K)}(K)\cup\partial_{\sph(K)}K,\partial_{\sph(K)}K)$ is always homeomorphic to $(D,\partial D)$ where $D$ is an Euclidean disk of dimension ${\rm dim}(K)$. 

\subsubsection{Structure of the boundary}
The relative boundary of a properly convex set $K$ has a stratified structure.

\begin{defi}[Supporting Hyperplanes]
A {\em supporting hyperplane} of a convex subset $K\subset\sph^\dimd$ with non-empty interior is a hyperplane which intersects the closure of $K$ only in $\partial K$.
\end{defi}

For every point $\xi\in\partial K$ there always exists a supporting hyperplane passing through $\xi$, however it is not necessarily unique. 

\begin{defi}[$\mc{C}^1$-point]
A point $\xi\in\partial K$ admitting a unique supporting hyperplane, denoted by $T_\xi\partial K$, is called a {\em $\mc{C}^1$-point}.
\end{defi}

The following criterion will be sometimes useful.

\begin{fact}
\label{item:CNS smooth} 
Suppose $K$ spans $\sph^\dimd$. Consider two subspaces $\sph(V),\sph(W)\subset\sph^\dimd$ whose union spans $\sph^\dimd$ such that $\sph(V)\cap\sph(W)$ intersects the interior of $K$, and $x\in \sph(V)\cap\sph(W)\cap\partial K$. Then $x$ is $\cal C^1$ in $\partial K$ if and only if it is $\cal C^1$ in both $\partial K\cap\sph(V)$ and $\partial K\cap \sph(W)$.
\end{fact}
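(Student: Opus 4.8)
The plan is to prove Fact~\ref{item:CNS smooth} by reducing the statement about supporting hyperplanes of $K$ to a two-dimensional picture, using the hypothesis that the two subspaces $\sph(V),\sph(W)$ meet the interior of $K$ along their intersection. First I would fix notation: write $\sph(V)=\sph(\overline V)$ and $\sph(W)=\sph(\overline W)$ for linear subspaces $\overline V,\overline W\subset\R^{d+1}$ with $\overline V+\overline W=\R^{d+1}$, and set $\overline U=\overline V\cap\overline W$, so that $\dim\overline V+\dim\overline W=(d+1)+\dim\overline U$. The hypothesis says $\sph(\overline U)$ meets $\mr{int}(K)$, and $x\in\sph(\overline U)\cap\partial K$. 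The ``only if'' direction is essentially trivial: a supporting hyperplane $T$ of $K$ at $x$ that is not equal to $\sph(V)$ (it cannot be, since $\sph(V)$ meets the interior) intersects $\sph(V)$ in a hyperplane of $\sph(V)$ which supports $K\cap\sph(V)$ at $x$; uniqueness of $T$ forces uniqueness of $T\cap\sph(V)$, giving $\cal C^1$-ness of $x$ in $\partial K\cap\sph(V)$, and symmetrically for $\sph(W)$. So the content is the ``if'' direction.

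For the ``if'' direction, suppose $x$ is a $\cal C^1$-point of $K\cap\sph(V)$ and of $K\cap\sph(W)$, with respective (unique) supporting hyperplanes $T_V\subset\sph(V)$ and $T_W\subset\sph(W)$, and suppose for contradiction that $x$ admits two distinct supporting hyperplanes $P_1\neq P_2$ of $K$ in $\sph^d$. The key observation is that any supporting hyperplane $P$ of $K$ at $x$ restricts to a supporting hyperplane of $K\cap\sph(V)$ at $x$ inside $\sph(V)$ (again $P\neq\sph(V)$ as the latter meets $\mr{int}(K)$, so $P\cap\sph(V)$ is a genuine hyperplane of $\sph(V)$, and it meets $\overline K$ only in $\partial K\cap\sph(V)\supset\partial_{\sph(V)}(K\cap\sph(V))$); by uniqueness it must equal $T_V$. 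Likewise $P\cap\sph(W)=T_W$. Thus both $P_1$ and $P_2$ are hyperplanes of $\sph^d$ containing the fixed subspace $\Span(T_V\cup T_W)$ — i.e.\ the linear span $\overline{T_V}+\overline{T_W}\subset\R^{d+1}$. The plan is then to compute the dimension of this span and show it is a hyperplane, forcing $P_1=P_2$: working in $\R^{d+1}$, $\overline{T_V}$ is a hyperplane of $\overline V$, $\overline{T_W}$ a hyperplane of $\overline W$, and $\overline{T_V}\cap\overline{T_W}$ contains the line $\R x$; I would argue that in fact $\overline{T_V}\cap\overline{T_W}=\overline{T_V}\cap\overline U=\overline{T_W}\cap\overline U$ is a hyperplane of $\overline U$ (since $\overline U\not\subset\overline{T_V}$, as $\overline U$ meets $\mr{int}(K)$ while $\overline{T_V}$ does not), so $\dim(\overline{T_V}+\overline{T_W})=(\dim\overline V-1)+(\dim\overline W-1)-(\dim\overline U-1)=d$, whence $\overline{T_V}+\overline{T_W}$ is a hyperplane and $P_1=P_2=\sph(\overline{T_V}+\overline{T_W})$, a contradiction.

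There is one gap in the above that I expect to be the main obstacle: I must also show that a supporting hyperplane of $K$ at $x$ actually \emph{exists} once we have $T_V$ and $T_W$, i.e.\ that $\cal C^1$-ness downstairs produces a $\cal C^1$ supporting hyperplane upstairs, not merely that any such hyperplane is unique. (Existence of \emph{some} supporting hyperplane at a boundary point is automatic, so this is really subsumed: pick any supporting $P$ of $K$ at $x$; the argument above shows $P=\sph(\overline{T_V}+\overline{T_W})$ is forced, hence it is the \emph{unique} one, hence $x$ is $\cal C^1$ in $\partial K$.) So the only genuinely delicate point is the linear-algebra computation that $\overline{T_V}\cap\overline U$ has codimension exactly $1$ in $\overline U$ — this uses crucially that $\overline U$ is not contained in $\overline{T_V}$, which in turn uses that $\sph(\overline U)$ meets the interior of $K$ while the supporting hyperplane $\overline{T_V}$ (extended to $\R^{d+1}$) does not meet $\mr{int}(K)$ — together with the transversality identity $\dim\overline V+\dim\overline W=(d+1)+\dim\overline U$ coming from $\overline V+\overline W=\R^{d+1}$. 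Once that codimension count is pinned down, the rest is formal. I would present the argument entirely in terms of linear subspaces of $\R^{d+1}$ to avoid fussing with charts.
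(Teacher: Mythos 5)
Your overall plan — show any supporting hyperplane $P$ of $K$ at $x$ restricts to $T_V$ in $\sph(V)$ and $T_W$ in $\sph(W)$, then force $P=\sph(\overline{T_V}+\overline{T_W})$ by showing $\overline{T_V}+\overline{T_W}$ is a hyperplane of $\R^{\dimd+1}$ — is sound, and since the paper states this Fact without proof there is no ``official'' argument to compare against. But there is a missing step, and it is not the one you flag. You identify the delicate point as ``the linear-algebra computation that $\overline{T_V}\cap\overline U$ has codimension exactly $1$ in $\overline U$,'' which you correctly derive from $\overline U\not\subset\overline{T_V}$. What your dimension count actually requires, however, is $\dim(\overline{T_V}\cap\overline{T_W})=\dim\overline U-1$; since $\overline{T_V}\cap\overline{T_W}=(\overline{T_V}\cap\overline U)\cap(\overline{T_W}\cap\overline U)$ is the intersection of two hyperplanes of $\overline U$, this holds if and only if those two hyperplanes \emph{coincide}, i.e.\ $\overline{T_V}\cap\overline U=\overline{T_W}\cap\overline U$. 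You assert this equality in passing (the chain $\overline{T_V}\cap\overline{T_W}=\overline{T_V}\cap\overline U=\overline{T_W}\cap\overline U$) but give no argument for it, and it does not follow from the codimension observation: a priori $T_V\cap\sph(U)$ and $T_W\cap\sph(U)$ are just two supporting hyperplanes of $K\cap\sph(U)$ at $x$, which could be distinct if $K\cap\sph(U)$ were not $\cal C^1$ at $x$. The fix is to show that $K\cap\sph(U)$ \emph{is} $\cal C^1$ at $x$, which follows from the ``only if'' direction applied inside $\sph(V)$ to the body $K\cap\sph(V)$ and the subspace $\sph(U)$ (which meets its relative interior); then uniqueness of the supporting hyperplane of $K\cap\sph(U)$ at $x$ gives $T_V\cap\sph(U)=T_W\cap\sph(U)$, and your dimension count goes through.

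Relatedly, the ``only if'' direction, which the above fix leans on, is dispatched a bit too quickly: ``uniqueness of $T$ forces uniqueness of $T\cap\sph(V)$'' would require that every supporting hyperplane of $K\cap\sph(V)$ at $x$ inside $\sph(V)$ extends to one of $K$ in $\sph^\dimd$, which is not automatic. A self-contained way to get ``only if'' is the local graph argument: in an affine chart where $T$ is horizontal, $\partial K$ near $x$ is the graph of a convex function with derivative $0$ at $x$; intersecting with $\sph(V)$ (which is transverse to $T$ because it meets the interior) one sees that $\partial K\cap\sph(V)$ is again such a graph over $T\cap\sph(V)$, so its supporting hyperplane at $x$ is unique and equals $T\cap\sph(V)$. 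Equivalently, the normal cone of $K\cap\sph(V)$ at $x$ inside $\sph(V)$ is the projection of the normal cone of $K$ at $x$, hence a single ray. With these two points supplied, your proposal is a correct proof of Fact~\ref{item:CNS smooth}.
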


\begin{defi}[Faces]
Let $K\subset\sph^\dimd$ be a convex set with non-empty interior and let $\xi\in\partial K$ be a boundary point. The convex subset
\[
F_K(\xi):=\overline{K}\cap\bigcap_{\text{\rm $H$ supporting at $\xi$}}{H}
\] 
is the {\em closed face} of $\xi$ in $\overline{K}$ (its relative interior is the {\em open face} of $\xi$). Note that it is contained in $\partial K$. Every face has a {\em dimension} ${\rm dim}(F_K(\xi))$ and a {\em codimension} ${\rm codim}(F_K(\xi))={\rm dim(K)}-{\rm dim}(F_K(\xi))$.
\end{defi}

\begin{defi}[Extremal and Strictly Convex]
If $F_K(\xi)$ is reduced to a point, then $\xi$ is said to be {\em extremal}. If every boundary point is extremal, or, equivalently, there are no non-trivial segments in $\partial K$, then $K$ is said to be {\em strictly convex}. 
\end{defi}

\subsubsection{Symmetries}
The group ${\rm SL}_{d+1}(\mb{R})$ acts on the sphere of rays $\sph^\dimd$ by linear isomorphisms. Given a properly convex subset $K\subset\sph^\dimd$, we define the following natural group of symmetries:

\begin{defi}[Automorphisms]
Let $K\subset\sph^\dimd$ be an open properly convex subset. The group of {\em projective automorphisms} of $K$ is
\[
{\rm Aut}(K):=\{A\in{\rm SL}_{\dimd+1}(\mb{R})\left|\;A(K)=K\right.\}.
\]
\end{defi}

We will measure the size of ${\rm Aut}(\Omega)$ in terms of its Zariski closure:

\begin{defi}[Zariski Dense]
The {\em Zariski topology} on ${\rm SL}_{d+1}(\mb{R})$ is the topology whose closed subsets are the intersections of zero loci of polynomials in the matrix entries.

In this topology a subset $\Gamma<{\rm SL}_{d+1}(\mb{R})$ is {\em Zariski dense} if every polynomial in the matrix entries that vanishes on the elements of $\Gamma$ vanishes on all matrices in ${\rm SL}_{d+1}(\mb{R})$.  
\end{defi}

Notice that the Zariski closure of a subgroup $\Gamma<{\rm SL}_{d+1}(\mb{R})$ is always a Lie subgroup. Similar to the concept of lattices in Lie groups, we have the following: 

\begin{defi}[Divisible]
An open properly convex subset $K\subset\sph^\dimd$ is {\em divisible} if ${\rm Aut}(K)$ contains a subgroup $\Gamma$ acting properly discontinuously, freely, and cocompactly on $K$.
\end{defi}

Our goal is to construct examples of domains $K$ with special geometric properties where $K$ is divisible and $\Gamma$, the group dividing it, is as large as possible in the sense that it is Zariski dense. 

\subsubsection{Hilbert metric}
Every properly convex set $K\subset\sph^\dimd$ comes equipped with a natural ${\rm Aut}(K)$-invariant metric, called the Hilbert metric, whose geometric features are strictly tied to the regularity of the boundary $\partial K$.  

\begin{defi}[Hilbert Metric]
\label{rem:hilbert metric}
The {\em Hilbert metric} on $K$ is defined as follows: Consider $x,y\in K$. Let $a,b$ be the intersections of the projective line spanned by $x,y$ with $\partial K$ where $a$ lies on the side of $x$ and $b$ lies on the side of $y$. One sets
\[
d_K(x,y):=\frac12 \log([a,x,y,b]),
\]
where $[a,x,y,b]$ denotes the {\em cross-ratio} on the projective line spanned by $x,y$ normalized so that $[0,1,t,\infty]=t$.
\end{defi}

It is a classical fact that $d_K(\bullet,\bullet)$ defines a metric inducing the standard topology and such that metric balls are compact. As a consequence, as the group of projective automorphisms preserves the Hilbert metric, the action ${\rm Aut}(K)\curvearrowright K$ is proper.

There are a couple of useful general properties of the Hilbert metric that we will exploit several times. First, notice that $d_K(\bullet,\bullet)$ is monotone under inclusion, that is, if $K\subset K'$, then $d_{K}(x,y)\ge d_{K'}(x,y)$ for all $x,y\in K\subset K'$. Secondly, $d_K$ can always be extended to a lower semi-continuous function $d_{\bar K}(\bullet,\bullet)$ on $\bar K\times\bar K$ by setting
\begin{itemize}
 \item $d_{\bar K}(x,y)=\infty$ if $x$ and $y$ are in different faces.
 \item $d_{\bar K}(x,y)=d_F(x,y)$ if $x$ and $y$ share the same open face $F$.
\end{itemize}

\subsection{Geometric structures}
We now introduce the second crucial notion of the paper, namely the one of geometric structure and, in particular, projective structures with totally geodesic boundary and corners.

Let $G$ be a Lie group. Let $G\curvearrowright X$ be a {\em transitive} and {\em effective} action on a manifold $X$. Effective means that if $g,g'\in G$ agree on some open set of $X$, then they are equal.

\begin{defi}[$(G,X)$-structure]
A {\em $(G,X)$-structure} on a topological space $M$ is an atlas of charts
\[
\mc{A}=\{\phi_j:U_j\subset M\to V_j\subset X\}_{j\in J}
\]
such that every change of charts $\phi_i\phi_j^{-1}$ is a restrictions of a transformation $g_{ij}\in G$.

Via a standard process of analytic continuation of the local charts, every $(G,X)$-structure on $M$ has an associated {\em developing map} 
\[
{\rm dev}:{\tilde M}\to X,
\]
where ${\tilde M}$ is the universal cover of $M$, and a {\em holonomy representation} 
\[
\rho:\pi_1(M)\to G.
\]
The developing map is a local homeomorphism which is $\rho$-equivariant with respect to the deck group action $\pi_1(M)\curvearrowright{\hat M}$.
\end{defi}

\begin{defi}[Uniformisability and Completeness]
A $(G,X)$-manifold $M$ is {\em uniformisable} in $X$ if the developing map ${\rm dev}:\tilde{M}\to X$ is an embedding. In this case, we identify $M$ with the quotient ${\dev}(\tilde{M})/\rho(\pi_1(M))$, where $\rho:\pi_1(M)\to G$ is the holonomy representation. 

If $M$ is uniformisable and ${\dev}(\tilde{M})=X$ we say that $M$ is {\em complete}.
\end{defi}

We refer to \cite[Ch.\,3]{ThNotes} for more on the subject.

\subsubsection{Projective structures}
Projective structures correspond to the pair $({\rm SL}_{\dimd+1}(\mb{R}),\sph^\dimd)$. We will consider also projective structures locally modeled on pieces of $\sph^\dimd$ like hemispheres and half-hemispheres. They are defined as follows.

\begin{defi}[Projective structures] 
\label{def:projstructures}
A {\em projective structure} with {\em totally geodesic boundary} and {\em corners} on $M$ is a maximal atlas of charts into $\mb{S}^\dimd\cap\{x_1,x_2\ge 0\}$ with change of charts induced by restrictions of elements of ${\rm SL}_{\dimd+1}(\R)$.

The atlas of charts induces a compatible structure of $\dimd$-manifold with boundary and corners on $M$ where
\begin{itemize}
\item{The interior ${\rm int}(M)$ consists of those point mapped to $\mb{S}^\dimd\cap\{x_1,x_2>0\}$ by some (hence every) local chart.}
\item{The boundary $\partial M$ is the set of points mapped to $\mb{S}^\dimd\cap\{x_1=0\}\cup\mb{S}^\dimd\cap\{x_2=0\}$ by some (hence every) local chart.}
\item{The union of corners $C\subset\partial M$ is given by the set of points mapped to $\mb{S}^\dimd\cap\{x_1=x_2=0\}$ by some (hence every) local chart.}
\item{A \emph{wall} is a connected component of $\partial M\smallsetminus C$ and a \emph{corner} is a connected component of $C$.}
\end{itemize}  
\end{defi}

In order to ease the terminology, throughout the paper we will write \emph{b-manifold} (\resp \emph{bc-manifold}) instead of projective manifold with totally geodesic boundary (\resp projective manifold with totally geodesic boundary and corners).

Let ${\rm dev}:\tilde{M}\to\sph^\dimd$ be the developing map of a bc-manifold $M$. 

\begin{defi}[Convex Projective Manifolds]
A bc-manifold is said to be \emph{convex} (\resp {\em properly convex}) if the developing map is an embedding (uniformisable in $\sph^\dimd$) and its image is convex (\resp properly convex) in $\mb{S}^\dimd$.
\end{defi}

In Section \ref{sec:sec5}, we will explain a procedure to assemble several convex bc-manifolds in a single singular object, a projective manifold with cone-singularities. We formally define and analyze such objects in the next section (Section \ref{sec:sec2}) where we also explain why they are very useful for our purposes: Under suitable (local) geometric conditions, one can remove from a projective cone-manifold the singularities and replace them with totally geodesic boundaries.

\subsubsection{Hyperbolic structures}
A particularly flexible and rich class of projective structures is provided by hyperbolic manifolds. We will use them in the proofs of our main theorems as building blocks for our constructions.

\begin{defi}[Hyperbolic Space]
Let $\langle\bullet,\bullet\rangle$ denote the quadratic form
\[
\langle\bullet,\bullet\rangle:=x_1^2+\ldots+x_d^2-x_{d+1}^2
\]
on $\mb{R}^{\dimd+1}$. The hyperbolic $d$-space $\mb{H}^\dimd$ can be described as 
\[
\mb{H}^d=\{x\in\sph^d\left|\langle x,x\rangle<0,x_{d+1}>0\right.\}.
\]
\end{defi}

Note that $\mb{H}^\dimd$ is a properly convex subset of $\sph^\dimd$.

Its symmetry group is ${\rm SO}_0(d,1)<{\rm SL}_{d+1}(\mb{R})$, the (identity component of the) group of linear isometries of the quadratic form. Its Hilbert metric comes from a complete ${\rm SO}_0(d,1)$-invariant Riemannian metric defined at a point $x\in\mb{H}^\dimd$ by the (positive definite) restriction of the quadratic form $\langle\bullet,\bullet\rangle$ to the tangent space $T_x\mb{H}^\dimd=x^\perp$. 

Totally geodesic subspaces of $\mb{H}^\dimd$ are exactly those of the form $\sph(V)\cap\mb{H}^\dimd$ where $V\subset\mb{R}^\dimd$ is a linear subspace. Thus, in $\mb{H}^\dimd$ the notion of being geodesically convex with respect to the Riemannian metric coincides with the notion of being convex in $\sph^\dimd$.

\begin{defi}[Convex Hyperbolic Manifolds] 
A {\em convex hyperbolic manifold} is a quotient $M=\mc{C}/\Gamma$ of a convex subset $\mc{C}\subset\mb{H}^\dimd$ by a discrete torsion free subgroup $\Gamma<{\rm SO}_0(\dimd,1)$ preserving $\mc{C}$.
\end{defi}

We will also exploit the following useful criterion:

\begin{fact}
\label{fact:quasi-convex}
Let $\Gamma<{\rm SO}(d,1)$ be a finitely generated discrete subgroup. The following are equivalent:
\begin{itemize}
\item{The orbit $\Gamma o$ of a point $o\in\mb{H}^\dimd$ is a {\rm quasi-convex} subset of $\mb{H}^d$.}
\item{There exists a $\Gamma$-invariant convex subset $\mc{C}\subset\mb{H}^\dimd$ such that $\mc{C}/\Gamma$ is compact.}
\end{itemize}
\end{fact}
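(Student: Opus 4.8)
We prove the two implications separately. Throughout, write $d(\cdot,\cdot)$ for the hyperbolic distance on $\mb{H}^d$ and, for $A\subseteq\mb{H}^d$ and $R\ge 0$, let $\mc{N}_R(A):=\{x\in\mb{H}^d\mid d(x,A)\le R\}$. We first observe that whether the orbit $\Gamma o$ is quasi-convex does not depend on the basepoint $o$: for $o'\in\mb{H}^d$ one has $\Gamma o'\subseteq\mc{N}_{d(o,o')}(\Gamma o)$ and vice versa, and a subset at finite Hausdorff distance from a $K$-quasi-convex set is quasi-convex with a controlled constant. So in each implication we are free to choose $o$ conveniently.

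\textbf{From the convex set to quasi-convexity.} Suppose $\mc{C}\subseteq\mb{H}^d$ is a $\Gamma$-invariant convex subset with $\mc{C}/\Gamma$ compact. Since $\Gamma<\mathrm{O}(d,1)$ is discrete, it acts properly discontinuously on $\mb{H}^d$, hence on the closed invariant subset $\mc{C}$; cocompactness then provides a compact set $K\subseteq\mc{C}$ with $\Gamma\cdot K=\mc{C}$. Pick $o\in\mc{C}$ and set $R:=\sup_{k\in K}d(o,k)<\infty$, so that $\mc{C}=\Gamma K\subseteq\bigcup_{\gamma\in\Gamma}\overline{B}(\gamma o,R)=\mc{N}_R(\Gamma o)$. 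Moreover $\Gamma o\subseteq\mc{C}$, because $o\in\mc{C}$ and $\mc{C}$ is $\Gamma$-invariant. Hence for any $x,y\in\Gamma o$ the geodesic $[x,y]$ lies in $\mc{C}$ by convexity, so $[x,y]\subseteq\mc{N}_R(\Gamma o)$; thus $\Gamma o$ is $R$-quasi-convex (and $\Gamma$ is finitely generated by hypothesis).

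\textbf{From quasi-convexity to the convex set.} Suppose $\Gamma o$ is $K$-quasi-convex, and let $\mc{C}:=\overline{\conv(\Gamma o)}$ be the closed convex hull of the orbit, taken inside $\mb{H}^d$ (this stays in $\mb{H}^d$ since $\mb{H}^d$ is convex in $\sph^d$). It is nonempty, closed, convex, and $\Gamma$-invariant, because $\Gamma o$ is invariant and $\Gamma$ acts by automorphisms of $\mb{H}^d$. It remains to see that $\mc{C}/\Gamma$ is compact, and for this it suffices to prove that $\mc{C}\subseteq\mc{N}_R(\Gamma o)$ for some $R<\infty$: then every $p\in\mc{C}$ is of the form $\gamma\cdot z$ with $\gamma\in\Gamma$ and $z\in\overline{B}(o,R)\cap\mc{C}$, so the quotient map restricts to a continuous surjection from the compact set $\overline{B}(o,R)\cap\mc{C}$ onto $\mc{C}/\Gamma$.

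\textbf{The crux.} Everything thus reduces to the following classical fact about the (Gromov-hyperbolic) geometry of $\mb{H}^d$: the closed convex hull of a $K$-quasi-convex subset $A\subseteq\mb{H}^d$ is contained in $\mc{N}_{K'}(A)$ for some $K'$ depending only on $K$ and $d$. Granting this with $A=\Gamma o$ finishes the proof (take $R=K'$). The idea for the crux is that in a hyperbolic space coarse convexity agrees with convexity up to bounded error: writing $A_1:=\bigcup_{a,b\in A}[a,b]\subseteq\mc{N}_K(A)$ and, inductively, $A_{n+1}:=\bigcup_{x,y\in A_n}[x,y]$, one has $\conv(A)=\overline{\bigcup_n A_n}$, and the $\delta$-thinness of geodesic triangles (and quadrilaterals) in $\mb{H}^d$ forces $A_n\subseteq\mc{N}_{C\delta}(A_1)$ for \emph{every} $n$, with $C$ depending only on $d$. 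I expect this estimate --- controlling the convex hull, not merely the first iterate of the geodesic-hull operation, by a constant uniform in $n$ --- to be the only nontrivial point; it is where all of the hyperbolicity of $\mb{H}^d$ is used, the remainder of the argument being formal and valid in any proper geodesic metric space.
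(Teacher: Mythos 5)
The paper states this Fact without proof, as a recalled classical criterion, so there is no in-paper argument to compare against; judged on its own, your first implication and the formal reduction in the second are fine, but the second implication has a genuine gap exactly at the point you flag. You reduce everything to the claim that $\conv(A)\subseteq\mc{N}_{K'}(A)$ for a $K$-quasi-convex subset $A\subset\mb{H}^d$, which is indeed true and classical, but the mechanism you sketch does not prove it. Thinness of triangles does give that $A_1=\bigcup_{a,b\in A}[a,b]$ is $\rho$-quasi-convex with $\rho$ comparable to $\delta$, hence $A_2\subseteq\mc{N}_{\rho}(A_1)$; but at the next step you join points that merely lie \emph{near} $A_1$, and geodesic stability only yields $A_{n+1}\subseteq\mc{N}_{c_n+\rho+2\delta}(A_1)$ when $A_n\subseteq\mc{N}_{c_n}(A_1)$: the constants drift linearly in $n$. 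Equivalently, unwinding the iteration, a point of $A_n$ lies in the hull of up to $2^n$ points of $A$, and thin-triangle/polygon estimates alone give bounds growing with $n$. Uniformity in $n$ is not a formal consequence of $\delta$-hyperbolicity — the $R$-neighborhood of $A_1$ need not be convex, so the induction cannot close up at a uniform constant — and some input beyond coarse hyperbolicity is required.

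The standard repair uses the finite dimensionality of $\mb{H}^d$ through the Klein (projective) model, in which hyperbolic convexity is affine convexity: by Carath\'eodory, every point of $\conv(A)$ lies in the convex hull of at most $d+1$ points of $A$, so it suffices to show that a solid geodesic simplex on $k\le d+1$ vertices is contained in the $c(k)$-neighborhood of its $1$-skeleton. This follows by induction on $k$: in the Klein model the simplex is the union of the segments $[x_0,p]$ with $p$ in the face opposite to $x_0$, the inductive hypothesis places $p$ within $c(k-1)$ of an edge, and two applications of thin triangles give $c(k)\le c(k-1)+2\delta$. Hence $\conv(A)\subseteq\mc{N}_{c(d+1)}(A_1)\subseteq\mc{N}_{c(d+1)+K}(A)$, which is exactly the estimate you need, with $K'$ depending only on $K$ and $d$. (Alternatively, quote Bowditch's theorem that in pinched negative curvature the convex hull of a set lies in a uniform neighborhood of the union of geodesics joining its points, or the standard equivalence of quasi-convexity and convex-cocompactness for discrete subgroups of rank-one Lie groups, see e.g.\ \cite{K01}.) The rest of your argument — basepoint independence, cocompactness of the hull once it lies in $\mc{N}_R(\Gamma o)$, and the first implication — is correct; in the first implication just note that if $\mc{C}$ is not closed you should extract the compact set from its closure (orbits of compact sets under a properly discontinuous action are closed, so this is harmless).
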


We recall that a subset $S\subset\mb{H}^\dimd$ is quasi-convex if there exists a constant $R>0$ such that for every $x,y\in S$ the geodesic segment $[x,y]\subset\mb{H}^\dimd$ is contained in the $R$-neighborhood of $S$.

\begin{defi}[Totally Geodesic Boundary and Corners] 
A convex hyperbolic manifold $M=\mc{C}/\Gamma$ has {\em totally geodesic boundary and corners} if the boundary $\partial M$ decomposes as a union of totally geodesic codimension 1 submanifolds intersecting along totally geodesic codimension 2 submanifolds.
\end{defi}

Note that a convex hyperbolic manifold with totally geodesic boundary and corners is also a convex projective manifold with boundary and corners according to Definition \ref{def:projstructures}.

\section{Tubes, cone-manifolds, and totally geodesic blowup}
\label{sec:sec2}

The goal of this section is to generalize the notion of cone-manifolds (with singularities of codimension $2$ only) from the hyperbolic setting to the projective setting.
Projective cone-manifolds have been considered in \cite{BBS11,Dan13,RS22,LMR22}.
Note that Riolo-Seppi \cite[Def.\,5.3]{RS22} allow for singularities of any codimension, although they have other kinds of restrictions.

Our presentation is most similar to that of \cite[Def.\,2.1]{LMR22}, but we will be slightly more general.
Indeed, while in \cite{LMR22} projective singularities are (essentially) determined by a projective structures on a circle (equivalently, a conjugacy class in the universal cover of $\SL_2\R$), in our case they can be (roughly) parametrized by conjugacy classes in the universal cover of the fixator of $\sph^{\dimd-2}$ in $\sph^\dimd$.

Depending on the type of this conjugacy class, we show that the corresponding singularity admits a totally geodesic blowup. This is the main result of this section.

\subsection{Tubes and cone-manifolds}
We start by studying the local model of a singularity of a projective cone-manifold, which is what we call a tube.

We identify $\R^{\dimd-1}$ with a subspace of $\R^{\dimd+1}$, $\sph^{\dimd-2}$ with the corresponding subspace of $\sph^\dimd$, and $\sph^1$ with $\sph(\R^{\dimd+1}/\R^{\dimd-1})$ the set of half-hyperspheres that contain $\sph^{\dimd-2}$. Note that we have a natural map
\[
\sph^\dimd\smallsetminus\sph^{\dimd-2}\rightarrow\sph^1=\sph(\R^{\dimd+1}/\R^{\dimd-1})
\]
sending a point $x\in\sph^\dimd\smallsetminus\sph^{\dimd-2}$ to the unique half-hypersphere passing through $x$ and $\sph^{\dimd-2}$.

\begin{defi}[Universal Branched Cover]
Consider 
\[
 \blow = \{(H,x)\in \tilde\sph^1\times (\sph^\dimd\smallsetminus\sph^{\dimd-2}):\ x\in \pi(H)\},
\]
where $\pi:\tilde\sph^1\rightarrow \sph^1$ is the universal cover of $\sph^1$. The {\em universal branched cover of $\sph^\dimd$ along $\sph^{\dimd-2}$} is the space (endowed with the quotient topology)
\[
 \blow\sqcup\sph^{\dimd-2} = \{(H,x)\in \tilde\sph^1\times \sph^\dimd:\ x\in \pi(H)\}/_\sim,
\]
where $(H,x)\sim (H',x)$ for all $H,H'\in\tilde\sph^1$ and $x\in\sph^{\dimd-2}$.
\end{defi}

Note that the natural projection $\blow\to\sph^\dimd\smallsetminus\sph^{\dimd-2}$ is a (universal) covering.

\begin{defi}[Structure Group]
There is natural group $\Aut(\blow)$ acting on $\blow$ and $\blow\sqcup\sph^{\dimd-2}$, consisting of transformations of the form 
\[g=\begin{pmatrix}
    \mu^{-1} A & C \\
    0 & \mu^{(\dimd-1)/2} \tilde B
   \end{pmatrix}\]
where: 
\begin{itemize}
\item{$\mu>0$.}
\item{$\tilde B\in \tilde\SL^\pm_2(\R)$.}
\item{$A\in\SL^\pm_{\dimd-1}(\R)$.}
\item{$C\in\mathrm{Mat}_{2,\dimd-1}(\R)$.}
\end{itemize}
We use the convention that $C'\cdot \tilde B':=C'\cdot B'$ for all $\tilde B'\in \tilde\SL^\pm_2(\R)$ and $C'\in\mathrm{Mat}_{2,\dimd-1}(\R)$ where $B'\in \SL^\pm_2(\R)$ is the projection of $\tilde B'$.
\end{defi}

Observe that the projection $\blow\to\tilde\sph^1$ is equivariant with respect to the actions of ${\rm Aut}(\blow)$ and $\tilde{\rm SL}_2(\mb{R})$ and the projection ${\rm Aut}(\blow)\to\tilde{\rm SL}_2(\mb{R})$. 

Using this projection we define sectors and walls:

\begin{defi}[Sector and Wall]
A \emph{sector} of $\blow$ is the preimage by $\blow\rightarrow\tilde\sph^1$ of an interval.
A sector of $\blow\sqcup\sph^{\dimd-2}$ is the union of $\sph^{\dimd-2}$ with a sector of $\blow$.
The \emph{walls} of a sector are the preimages of the endpoints of the corresponding interval of~$\tilde\sph^1$.
\end{defi}

We are now ready to define tubes:

\begin{defi}[Tubes]
\label{def:tubes}
 A \emph{tube} is a space obtained by identifying the walls of a compact sector of $\blow\sqcup\sph^{\dimd-2}$ via an element of $\Aut(\blow)$ of the form
 \[g=\begin{pmatrix}
    \mu^{-1} \Id_{\dimd-1} & C \\
    0 & \mu^{\frac{\dimd-1}{2}} \tilde B
   \end{pmatrix}.\]
 The \emph{singular locus} of the tube is the image of $\sph^{\dimd-2}$ in the quotient, and the \emph{smooth locus} is the complement of the singular locus.
 
 A \emph{meridian} is a generator of the fundamental group of the smooth locus of the tube.
 Note that its holonomy is $g$ or $g^{-1}$.
 
 The \emph{$\tilde\SL_2$-angle} of the tube is the $\tilde\GL_2$-conjugacy class of $\tilde B$.
 \end{defi}

Notice that the smooth locus of a tube $T$ naturally fibers over a projective circle 
\[
T\to C,
\]
whose holonomy is given by the $\tilde\SL_2(\R)$-angle of the tube. The fibers of this bundle are the hyperplanes passing through the singular locus. 

We have the following natural notion of equivalence between tubes:

\begin{defi}[Isomorphisms of Tubes]
Let $T\cup\sph^{\dimd-2}$ and $T'\cup\sph^{\dimd-2}$ be tubes with corresponding fibrations of the smooth loci $T\to N$ and $T'\to N'$. An {\em isomorphism} between the tubes is a homeomorphism $T\cup\sph^{\dimd-2}\to T'\cup\sph^{\dimd-2}$ that induces a diagram
\[
\xymatrix{
T\ar[r]\ar[d] & T'\ar[d]\\
N\ar[r] &N'
}
\] 
where the horizontal arrows are projective isomorphisms.

An {\em automorphism} of a tube $T\cup\sph^{\dimd-2}$ is an isomorphism $T\cup\sph^{\dimd-2}\to T\cup\sph^{\dimd-2}$. We denote the group of automorphisms by ${\rm Aut}(T)$.
\end{defi}

We are now ready to give the definition of projective cone-manifolds.
We will see examples in Section~\ref{sec:proj gluing}, where we show that any gluing of projective manifold with totally geodesic boundary and corners is a projective cone-manifold.

\begin{defi}[Projective Cone Manifold]
\label{def:cone-manifold}
 A \emph{cone-manifold} consists of the following data:
 \begin{itemize}
  \item A topological manifold $M$.
  \item A codimension 2 submanifold $\corner\subset M$ (the \emph{singular locus}).
  \item A choice of a tube $T_C$ for each connected component  $C\subset\corner$.
  \item An atlas of charts $\{\phi_x\}_{x\in M}$ such that if $x$ is in the smooth locus $M\smallsetminus\corner$ then $\phi_x:U_x\subset M\smallsetminus\corner\hookrightarrow\sph^\dimd$, and if $x\in C\subset\corner$ then $\phi_x:U_x\hookrightarrow T_C$ such that $U_x\subset M\smallsetminus(\corner\smallsetminus C)$ and $\phi_x^{-1}(\sph^{\dimd-2})=U_x\cap C$.
  \item If $U_x\cap U_y\neq\emptyset$, then either $x,y\in C\subset\corner$ for some $C$ and the transition map $\phi_y\circ\phi_x^{-1}$ is the restriction of an automorphism of $T_C$, or $U_x\cap U_y\subset M\smallsetminus\corner$ and $\phi_y\circ\phi_x^{-1}$ is projective.
 \end{itemize}
\end{defi}

Note that the singular locus of a cone-manifold has a natural projective structure.

We now classify tubes in terms of their holonomy.

\subsection{Uniformisability and completeness}

Note that in general the projective structure of the smooth locus of a tube is not uniformisable in $\sph^\dimd$.
However it is uniformisable in the universal cover of $\sph^\dimd\smallsetminus\sph^{\dimd-2}$.
This is a consequence of the fact that closed projective manifolds of dimension 1 are uniformisable in the universal cover of $\sph^1$, as explained in \cite[\S3.3.1]{BBS11}, where all projective structures on $\sph^1$ are described.

\begin{prop}\label{prop:uniformization}
Let $T\sqcup\sph^{\dimd-2}$ be a tube, such that the smooth locus $T$ fibers over the projective circle $N$.
Then:
\begin{enumerate}
\item{$T$ and $N$ are uniformisable in $\blow$ and $\tilde\sph^1$ respectively. Choose developing maps with images $\tilde T\subset \blow$ and $\tilde N\subset\tilde\sph^1$ and holonomy generated by $g\in\Aut(\blow)$ fixing $\sph^{\dimd-2}$. Then $T\sqcup\sph^{\dimd-2}$ is the quotient by $g\Z$ of the sector $\tilde T\sqcup\sph^{\dimd-2}$ of $\blow\sqcup\sph^{\dimd-2}$.}
\item{We have a one-to-one correspondence
\[
\begin{array}{r c l}
\{\text{\rm Tubes}\}/\text{\rm isom.} &\simeq &\left\{\begin{array}{c} \mb{Z}<{\rm Fix}_{\Aut(\blow)}(\sph^{\dimd-2})\\ \text{ \rm with non-trivial angle}\\ \end{array}\right\}/\Aut(\blow)\text{\rm -conj}.\\
 & &\\
\left[T\right] &\mapsto &\left[g\mb{Z}\right].\\
\end{array}
\]
}
\end{enumerate}

 Moreover, there are two disjoint cases.
 \begin{itemize}
  \item \emph{$T$ is complete in $\blow$}: $\tilde N=\tilde\sph^1$ and $\tilde T=\blow$.
  \item \emph{$T$ is uniformisable in $\sph^\dimd\smallsetminus\sph^{\dimd-2}$}:
  $\tilde N\subset\tilde\sph^1$
  projects injectively onto an interval $I\subset\sph^1$ between two fixed points of the non-elliptic $\SL_2$-angle $h\in\SL_2(\R)$ (that has positive trace).
  If $h$ is hyperbolic then $I$ is properly convex. If $h$ is parabolic then it is a half-circle.
 \end{itemize}
\end{prop}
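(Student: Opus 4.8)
The plan is to reduce everything to the known classification of projective structures on the circle together with a careful analysis of which elements $g\in\Fix_{\Aut(\blow)}(\sph^{\dimd-2})$ can occur, using the block structure recorded in the definition of the structure group. First I would fix a tube $T\sqcup\sph^{\dimd-2}$, obtained by gluing the walls of a compact sector $\Sigma\sqcup\sph^{\dimd-2}$ of $\blow\sqcup\sph^{\dimd-2}$ via an element $g$ of the prescribed upper-triangular form. By construction the smooth locus $T$ is the quotient of the interior-sector $\interior{\Sigma}$ (a sector of $\blow$) by $g\Z$, and its fibration $T\to N$ over the projective circle $N$ is obtained from the $\tilde{\SL}_2$-equivariant projection $\blow\to\tilde\sph^1$; here $N$ is the quotient of an interval of $\tilde\sph^1$ by the projection $\tilde B$ of $g$. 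So the developing map of $N$ into $\tilde\sph^1$ is tautologically the inclusion of that interval, and the developing map of $T$ into $\blow$ is the inclusion of $\interior{\Sigma}$, with holonomy $g\Z$. This proves item (1).

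For item (2), the map $[T]\mapsto[g\Z]$ is well-defined and injective because two tubes are isomorphic exactly when there is a projective isomorphism of their sectors (in $\blow$) intertwining the gluing elements, i.e.\ a conjugacy in $\Aut(\blow)$ taking one cyclic group to the other; the ``non-trivial angle'' condition is precisely what rules out the degenerate case where $\tilde B$ is trivial (the fibration base would not be a genuine circle). Surjectivity: given $g\Z<\Fix_{\Aut(\blow)}(\sph^{\dimd-2})$ with non-trivial angle, the projection $\tilde B\in\tilde\SL_2^\pm(\R)$ of $g$ is nontrivial, hence its action on $\tilde\sph^1$ is fixed-point-free \emph{or} one can choose a compact fundamental interval; either way one produces a compact sector of $\blow\sqcup\sph^{\dimd-2}$ whose wall-identification by $g$ is a tube mapping to $[g\Z]$. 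The only subtlety is that one must choose $g$ of the normalized form $\begin{pmatrix}\mu^{-1}\Id_{\dimd-1} & C\\ 0 & \mu^{(\dimd-1)/2}\tilde B\end{pmatrix}$ (diagonal block $A=\Id$), which up to $\Aut(\blow)$-conjugacy is no loss of generality since conjugating by $\mathrm{diag}(A_0,\Id_2)$-type elements lets one absorb the $A$-block; I would spell this out as a short lemma.

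For the dichotomy, I would look at the projection $h\in\SL_2(\R)$ of $\tilde B$ to $\SL_2(\R)$ (a lift with positive trace exists because the angle is ``less than $\pi$'' / the relevant sign normalization), and split into cases according to the Jordan type of $h$. If $h=\Id$ (so $\tilde B$ is a nontrivial central/translation element of $\tilde\SL_2$), then $\tilde B$ acts on $\tilde\sph^1$ by a nontrivial deck transformation of the universal cover $\tilde\sph^1\to\sph^1$, hence freely and cocompactly, so a fundamental sector can be taken to be \emph{all} of $\tilde\sph^1$ after passing to the quotient structure: this is the complete case $\tilde N=\tilde\sph^1$, $\tilde T=\blow$. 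If $h\neq\Id$ it is elliptic, parabolic, or hyperbolic; ellipticity is excluded because an elliptic element has no fixed point on $\sph^1$ and its powers would force the developing image to be all of $\tilde\sph^1$ again — but then the ``non-elliptic'' requirement in the statement handles this by fiat, so really I only treat the non-elliptic subcase here: $h$ parabolic fixes a single point of $\sph^1$, so the maximal $h$-invariant interval on which the dynamics is a translation (towards/away from the fixed point) is a half-circle; $h$ hyperbolic fixes two points, between which lies a properly convex interval $I\subset\sph^1$ on which $h$ acts as a translation, and $\tilde N$ is a lift of $I$ projecting injectively to it. In both non-elliptic-non-trivial cases $\tilde N$ embeds in $\sph^1$, hence $\tilde T\subset\blow$ maps injectively (over $\tilde N$) all the way down to $\sph^\dimd\smallsetminus\sph^{\dimd-2}$, giving uniformisability there.

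\textbf{Main obstacle.} The routine parts are the reductions to the $A=\Id$ normal form and the bookkeeping of the branched-cover structure. The genuinely delicate point is the precise identification, in the non-complete case, of the interval $I\subset\sph^1$ as ``the interval between the two fixed points'' and the verification that $\tilde T\subset\blow$ not only injects into $\blow$ but descends to an embedding into $\sph^\dimd\smallsetminus\sph^{\dimd-2}$ — i.e.\ that the fan of hyperplanes over $I$ sweeps out an embedded region. This requires knowing that the full interval $I$ (not just a fundamental domain for $h$) lifts homeomorphically, which is exactly the statement that $h$ acts on $I$ without ``wrapping'', and I expect this to be where one must invoke the classification of circle projective structures from \cite{BBS11} most carefully, to exclude the remaining borderline dynamical behaviors and to pin down that parabolic gives a half-circle rather than something smaller.
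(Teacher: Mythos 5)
Your overall strategy — reduce to the base projective circle $N$, analyse the dynamics of the $\tilde\SL_2$-angle, and cite the BBS classification — is the same as the paper's, but several concrete steps are off or incomplete.

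\textbf{Item (1).} You identify $\tilde N$ with ``that interval'' (the compact fundamental interval) and $\tilde T$ with $\interior\Sigma$ (a single compact sector), and conclude uniformisability ``by construction.'' This is a misidentification: the developing image of $N$ is the union $\bigcup_{n\in\Z}\tilde B^n I$, and the developing image of $T$ is the union $\bigcup_{n\in\Z}g^n\Sigma$, which may or may not exhaust $\tilde\sph^1$ or $\blow$. The nontrivial content of item (1) is that the developing map of $\tilde N^{\mathrm{univ}}\cong\R$ into $\tilde\sph^1\cong\R$ is injective, which the paper gets from the elementary fact that a local homeomorphism $\R\to\R$ is injective, and then lifts to $\blow$ along the bundle map. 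That reduction is missing from your sketch.

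\textbf{The ``$A$-block'' subtlety is a red herring.} The correspondence in item (2) is with $\Z$-subgroups of $\Fix_{\Aut(\blow)}(\sph^{\dimd-2})$. An element of $\Aut(\blow)$ of the form $\begin{pmatrix}\mu^{-1}A&C\\0&\mu^{(\dimd-1)/2}\tilde B\end{pmatrix}$ fixes $\sph^{\dimd-2}$ \emph{pointwise} if and only if $\mu^{-1}A$ is a positive scalar, which forces $A=\Id_{\dimd-1}$. So all elements under consideration already have the normalized form, and conjugating by a block-diagonal element cannot change whether something lies in $\Fix_{\Aut(\blow)}(\sph^{\dimd-2})$. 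There is nothing to absorb.

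\textbf{The dichotomy is governed by the lift $\tilde h\in\tilde\SL_2(\R)$, not by the Jordan type of $h\in\SL_2(\R)$.} Your case analysis (``$h=\Id$ central, else elliptic/parabolic/hyperbolic'') misses two things: (a) the case $\tr(h)\le-2$, $h\ne-\Id$, which also gives completeness; and (b) when $\tr(h)\ge2$ and $h\ne\Id$, there is a \emph{unique} lift $\tilde h_0$ with fixed points in $\tilde\sph^1$, and the tube is uniformisable in $\sph^\dimd\smallsetminus\sph^{\dimd-2}$ only if $\tilde h=\tilde h_0$; any other lift (differing by a deck transformation of $\tilde\sph^1\to\sph^1$) gives $\tilde N=\tilde\sph^1$ and hence completeness. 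Your ``main obstacle'' paragraph correctly suspects a ``wrapping'' issue but then frames the borderline cases as ``behaviors to exclude,'' whereas they are actually the completeness branch of the dichotomy. The paper handles all of this by citing Remark~\ref{rk:uniformization}, which is exactly the classification you expected from \cite{BBS11}, but the lift-dependence needs to be stated explicitly, not folded into ``non-elliptic $h$.''

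\textbf{Item (2), well-definedness.} You assert that two tubes are isomorphic exactly when their gluing cyclic groups are $\Aut(\blow)$-conjugate, which is the statement to be proved, not its proof. The genuine content (the paper's Claim~1) is that an abstract isomorphism of tubes $T\to T'$, defined on the quotient, induces a projective isomorphism $\tilde T\to\tilde T'$ that is the restriction of a \emph{global} element of $\Aut(\blow)$. This uses that a projective isomorphism between open subsets of $\sph^\dimd$ extends to a global projective transformation, followed by analytic continuation; it does not come for free.
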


\begin{rk}[{Barbot--Bonsante--Schlenker \cite[\S3.3.1]{BBS11}}]\label{rk:uniformization}
 In the setting of Proposition~\ref{prop:uniformization}, let $\tilde h\in\tilde\SL_2(\R)$ be the $\tilde\SL_2$-angle of $g$ with projection $h\in\SL_2(\R)$. Then the following is classical.
 \begin{itemize}
  \item If $h$ is elliptic, \ie $-2<\tr(h)<2$ or $h=\pm\Id_2$, then $\tilde h$ acts freely, properly, and cocompactly on $\tilde S^1$, hence $T$ is complete in $\blow$.
  \item If $\tr(h)\leq -2$ with $h\neq -\Id_2$, then $\tilde h$ acts freely, properly, and cocompactly on $\tilde S^1$, hence $T$ is complete in $\blow$.
  \item If $\tr(h)\geq 2$ with $h\neq \Id_2$, then it has exactly one lift $\tilde h_0\in\tilde\SL_2(\R)$ with fixed points in $\tilde\sph^1$.
  \begin{itemize}
   \item If $\tilde h=\tilde h_0$ then $\tilde h$ acts freely, properly, and cocompactly on any interval of $\tilde \sph^1$ between two fixed points, which projects injectively onto an interval of $\sph^1$, so $T$ is uniformisable in $\sph^\dimd\smallsetminus\sph^{\dimd-2}$.
   \item Otherwise $\tilde h$ acts freely, properly, and cocompactly on $\tilde\sph^1$, hence $T$ is complete in $\blow$.
  \end{itemize}
 \end{itemize}
\end{rk}

\begin{proof}[Proof of Proposition~\ref{prop:uniformization}]
 {\bf Property (1)}. The developing maps of $\tilde T$ and $\tilde N$ fit into the following diagram, which is a morphism of bundles.
 \[
  \xymatrix{
  {\tilde T} \ar[d] \ar[r] & \blow \ar[d] \\
  {\tilde N} \ar[r] & {\tilde\sph^1} \\
  }
 \]
In other words, the map $\tilde T\rightarrow \blow$ is an embedding on each fiber (which is a half-hypersphere of $\sph^\dimd$).
 Thus, uniformisability or completeness of $\tilde T$ in $\blow$  follows from that of $\tilde N$ in $\tilde\sph^1$.
 For the same reasons, uniformisability or completeness of $\tilde T$ in $\sph^{\dimd}\smallsetminus\sph^{\dimd-2}$ follows from that of $\tilde N$ in $\sph^1$. The latter is always uniformisable in $\tilde\sph^1$ since any local homeomorphism from $\R$ to $\R$ is injective.
  
The dichotomy at the end of Proposition~\ref{prop:uniformization} is an immediate consequence of Remark~\ref{rk:uniformization} and the fact that uniformisability or completeness of $\tilde T$ follows from that of $\tilde N$.

{\bf Property (2)}. Finally, let us check the one-to-one correspondence.

\begin{claim}{1}
The map $\Psi$ that associates to the isomorphism class of $T$ the conjugacy class of $g\Z$ is well-defined.
\end{claim}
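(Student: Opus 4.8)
The plan is to recognize $\Psi$ as the holonomy map for $(\Aut(\blow),\blow)$-structures and to invoke its standard functoriality, the only genuine work being to control the behaviour along the singular locus $\sph^{\dimd-2}$. First I would check well-definedness for a single tube. By the first assertion of Proposition~\ref{prop:uniformization}, the smooth locus $T$ of a tube, together with its fibration over the projective circle $N$, is uniformisable in $\blow$; fix a developing embedding $\tilde T\hookrightarrow\blow$ with holonomy $\rho\colon\pi_1(T)=\Z\to\Aut(\blow)$ generated by $g$. The action $\Aut(\blow)\curvearrowright\blow$ is effective (an element that is the identity on an open set is the identity on each fibre half-hypersphere and on $\tilde\sph^1$, hence trivial), so the developing embedding is unique up to post-composition by a unique $\phi\in\Aut(\blow)$, under which $\rho$ is replaced by $\phi\rho\phi^{-1}$. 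A meridian generates $\pi_1(T)$ only up to inversion, but the \emph{subgroup} $g\Z=\rho(\Z)$ is insensitive to that; hence $[g\Z]\in\Aut(\blow)/\mathrm{conj}$ is well-defined independently of the presenting sector, the gluing element and the developing map. Finally $g$ fixes $\sph^{\dimd-2}$ (again by Proposition~\ref{prop:uniformization}) and has non-trivial angle (being presented by a non-degenerate sector), so $[g\Z]$ does lie in the asserted target.

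Next I would promote an isomorphism of tubes to the level of developing maps. Let $F\colon T\sqcup\sph^{\dimd-2}\to T'\sqcup\sph^{\dimd-2}$ be an isomorphism; by definition $F|_T\colon T\to T'$ is a projective isomorphism intertwining the fibrations $T\to N$ and $T'\to N'$, and $F$ extends $F|_T$ continuously across $\sph^{\dimd-2}$. Lift $F|_T$ to a homeomorphism $\tilde F\colon\tilde T\to\tilde T'$ of universal covers, which covers a lift $\tilde F_N\colon\tilde N\to\tilde N'$ of the induced isomorphism of circles. Viewed through the developing embeddings into $\blow$ and $\tilde\sph^1$, the map $\tilde F$ is a bijection between open subsets of $\blow$ that is, locally, a projective transformation of $\sph^\dimd$. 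By the analytic-continuation rigidity of projective maps — a map that is locally a projective transformation on a connected open subset of $\sph^\dimd$ is the restriction of a unique element of $\SL^\pm_{\dimd+1}(\R)$, that group acting effectively on $\sph^\dimd$ — $\tilde F$ extends to a unique such element; this element preserves $\sph^{\dimd-2}$ because $\tilde F$ extends continuously across it, and it respects the fibration over $\tilde\sph^1$ because $\tilde F$ covers $\tilde F_N$, so it lies in $\Aut(\blow)$. Call it $\phi$.

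Since $\tilde F$ descends to $F$, it intertwines the deck actions: $\tilde F\circ\gamma=\delta\circ\tilde F$, where $\gamma,\delta$ generate $\pi_1(T),\pi_1(T')$ and $\delta=F_*\gamma$. Reading this off through the developing embeddings gives $\phi\,\rho(\gamma)\,\phi^{-1}=\rho'(\delta)$, i.e. $\phi\,g^{\pm1}\,\phi^{-1}=g'$, whence $\phi(g\Z)\phi^{-1}=g'\Z$. Thus isomorphic tubes have $\Aut(\blow)$-conjugate holonomy subgroups, which is exactly the well-definedness of $\Psi$. I expect the middle step to be the main obstacle: one must check carefully that the locally defined projective isomorphism $\tilde F$, which a priori lives only on a sector of $\blow$, genuinely extends to a \emph{global} element of the specific matrix subgroup $\Aut(\blow)$ — this is where the effectiveness and rigidity of the projective action, combined with compatibility with the fibration and with $\sph^{\dimd-2}$, do the work. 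The rest is formal $(G,X)$-bookkeeping.
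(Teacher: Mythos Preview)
Your proposal is correct and follows essentially the same approach as the paper: lift the tube isomorphism to the universal covers inside $\blow$, observe that on a small sector it descends to a projective isomorphism of open subsets of $\sph^\dimd$ and hence is the restriction of a unique element of $\SL^\pm_{\dimd+1}(\R)$, check that this element preserves $\sph^{\dimd-2}$ and the fibration so lies in $\Aut(\blow)$, then use analytic continuation and equivariance to conclude conjugacy of the holonomies. You are also right to flag the extension step as the crux; the paper handles it in exactly the way you anticipate, by passing to a small sector where $\blow\to\sph^\dimd$ is injective and invoking rigidity there before continuing analytically.
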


\begin{proof}[Proof of the claim]
By definition, if two tubes $T\cup\sph^{\dimd-2},T'\cup\sph^{\dimd-2}$ are equivalent, then the isomorphism between them determines projective isomorphisms $\tilde{T}\subset\blow\to\tilde{T}'\subset\blow$ and $\tilde{N}\subset\tilde{\sph}^1\to\tilde{N}'\subset\tilde{\sph}^1$ that fit into the commutative diagram
\[
\xymatrix{
{\tilde{T}}\ar[r]\ar[d] &{\tilde{T}'}\ar[d]\\
{\tilde{N}}\ar[r] &{\tilde{N}'}.
}
\] 

Such isomorphism extends to an automorphism of the bundle $\blow\to\tilde{\sph}^1$: Let us work locally. On a sufficiently small sector, the projective isomorphism $\phi:\tilde{T}\to\tilde{T}'$ descends to a projective isomorphism between sectors of $\sph^\dimd$ centered at $\sph^{\dimd-2}$. On $\sph^\dimd$ any projective isomorphism between open subsets is the restriction of a global projective transformation. In our case, the unique extension of the projective isomorphism between sectors has to fix $\sph^{\dimd-2}$. Hence it lifts to an automorphism of $\blow$ that agrees with $\phi$ on the small sector. By analytic continuation, they agree everywhere. This implies that the holonomies of $T,T'$ are conjugate in ${\rm Aut}(\blow)$. 
\end{proof}

\begin{claim}{2}
The map $\Psi$ is bijective.
\end{claim}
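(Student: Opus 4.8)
The plan is to establish surjectivity and injectivity of $\Psi$ separately, using the uniformisability statement of Property (1) as the main tool. Recall that by Claim 1, $\Psi$ sends $[T]$ to the $\Aut(\blow)$-conjugacy class of the cyclic group $g\Z$ generated by the holonomy $g$ of a meridian, where $g$ fixes $\sph^{\dimd-2}$ and has non-trivial $\tilde\SL_2$-angle.

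For surjectivity, I would start from an arbitrary $g\in\Fix_{\Aut(\blow)}(\sph^{\dimd-2})$ with non-trivial angle and build a tube realizing it. Write $g$ in the block form $\begin{pmatrix}\mu^{-1}A & C\\ 0 & \mu^{(\dimd-1)/2}\tilde B\end{pmatrix}$. The subtlety is that Definition~\ref{def:tubes} requires the gluing element to have $A=\Id_{\dimd-1}$, so I must first argue that every $g\Z$ as above is $\Aut(\blow)$-conjugate to one with $A=\Id_{\dimd-1}$. This should follow because $A$ lies in $\SL_{\dimd-1}^\pm(\R)$ and the conjugation action of the block-diagonal subgroup with $\tilde B=\Id$, $C=0$ realizes all of $\GL_{\dimd-1}(\R)$ acting by conjugation on the $A$-slot while also scaling $C$; an explicit computation (which I would not grind through here) shows that after conjugation one can absorb $A$ — essentially because the $\tilde\SL_2$ part already has non-trivial angle and one is free to adjust $C$ — reducing to the normal form required. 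Once $g$ has the form in Definition~\ref{def:tubes}, I pick a compact sector $\tilde T\sqcup\sph^{\dimd-2}$ of $\blow\sqcup\sph^{\dimd-2}$ that is a fundamental domain for $g\Z$ (this exists by Remark~\ref{rk:uniformization}, which describes precisely how $\tilde h$ acts on $\tilde\sph^1$ in each case — cocompactly on all of $\tilde\sph^1$, or cocompactly on an interval between fixed points), glue its walls via $g$, and obtain a tube $T\sqcup\sph^{\dimd-2}$ whose meridian holonomy is $g$ by construction. Hence $\Psi([T])=[g\Z]$.

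For injectivity, suppose two tubes $T\sqcup\sph^{\dimd-2}$ and $T'\sqcup\sph^{\dimd-2}$ have conjugate holonomies: after replacing $T'$ by an isomorphic tube we may assume the holonomies are \emph{equal}, generated by the same $g\in\Aut(\blow)$. By Property (1), $T\sqcup\sph^{\dimd-2}$ is the quotient by $g\Z$ of a sector $\tilde T\sqcup\sph^{\dimd-2}$ of $\blow\sqcup\sph^{\dimd-2}$, and likewise $T'$ is the quotient of a sector $\tilde T'\sqcup\sph^{\dimd-2}$. Both sectors are fundamental domains for the action of $g\Z$ on the relevant uniformising space (either all of $\blow\sqcup\sph^{\dimd-2}$, or the preimage of the interval $I$ between fixed points of the $\SL_2$-angle). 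The identity map on this uniformising space is $g\Z$-equivariant and hence descends to a projective isomorphism $T\to T'$ carrying $\sph^{\dimd-2}$ to $\sph^{\dimd-2}$; compatibility with the bundle projections to the projective circles $N,N'$ is automatic since the whole construction is equivariant over the projection $\blow\to\tilde\sph^1$. Thus $[T]=[T']$ and $\Psi$ is injective.

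The main obstacle I anticipate is the normal-form reduction in the surjectivity step: showing that an arbitrary cyclic group $g\Z$ with $g\in\Fix_{\Aut(\blow)}(\sph^{\dimd-2})$ of non-trivial angle is $\Aut(\blow)$-conjugate into the special form of Definition~\ref{def:tubes} with $A=\Id_{\dimd-1}$. One must check carefully that the conjugation available within $\Aut(\blow)$ is rich enough to kill the $\SL_{\dimd-1}^\pm$-block without disturbing the requirement that the $\tilde\SL_2$-angle be non-trivial, and one should treat the hyperbolic, parabolic, and (negative-trace) elliptic cases of the $\SL_2$-angle — as organized in Remark~\ref{rk:uniformization} — since the structure of a fundamental domain for $g\Z$, and hence the shape of the sector to be glued, differs between them. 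Everything else is a matter of transporting the uniformisation diagram of Property (1) through the quotient, which is routine once the normal form is in hand.
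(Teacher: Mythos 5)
Your approach is essentially the same as the paper's: build the inverse of $\Psi$ by picking a sector over a fundamental domain $[x,\tilde h x]\subset\tilde\sph^1$ for the $\tilde\SL_2$-angle and gluing its walls by $g$. However, the "main obstacle" you anticipate — reducing the $\SL_{\dimd-1}^\pm$-block $A$ to $\Id_{\dimd-1}$ by conjugation — does not exist: the condition $g\in\Fix_{\Aut(\blow)}(\sph^{\dimd-2})$ means $g$ fixes every point (ray) of $\sph^{\dimd-2}$, which forces $\mu^{-1}A$ to be a positive scalar on $\R^{\dimd-1}$; combined with $\det A=\pm 1$ this gives $A=\Id_{\dimd-1}$ outright, so every element of $\Fix_{\Aut(\blow)}(\sph^{\dimd-2})$ is already of the form required by Definition~\ref{def:tubes}. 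You should recognize this and drop the would-be reduction; the paper simply writes down the sector $[x,\tilde h x]$ and glues.

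There is a small real gap in your injectivity step that you gloss over, and which the paper also compresses into "it is not hard to check": when the $\SL_2$-angle is hyperbolic, there is not a unique interval "between fixed points" in $\sph^1$, but four quadrants cut out by the two eigenlines, and two choices of base point $x$ can live in projections to different quadrants, so the two sectors need not be fundamental domains for $g\Z$ acting on the \emph{same} $g$-invariant open subset of $\blow$. The fix is to observe that $\Aut(\blow)$ contains block-diagonal elements (such as the one whose $\SL_2^\pm$-part is $\mathrm{diag}(1,-1)$ or $-\Id_2$, with $A=\Id_{\dimd-1}$, $C=0$, $\mu=1$) that commute with $g$ and act transitively on the four quadrants; such an element conjugates one sector/quotient to the other and hence supplies the required isomorphism of tubes. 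With that addendum, and the removal of the spurious normal-form discussion, your proof matches the paper's.
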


\begin{proof}[Proof of the claim]
We construct an inverse. Let $g\Z<{\rm Fix}_{\Aut(\blow)}(\sph^{\dimd-2})$ be an infinite cyclic subgroup generated by an element $g$ with non-trivial angle $\tilde h\in\tilde{{\rm SL}}_2(\mb{R})$. 

Consider the interval $I=[x,\tilde{h}x]\subset\tilde{\sph}^1$ where $x$ is not a fixed point of $\tilde{h}$ (see Remark \ref{rk:uniformization}). Let $S\subset\blow$ be the corresponding sector. Then the tube $T=S/g$ is mapped to $g\mb{Z}$ by $\Psi$. It is not hard to check that the isomorphism class of $T$ does not depend on the choice of the representative of $\Z$ in the conjugacy class, the generator $g\in\mb{Z}$, and the point $x$.  
\end{proof}

This concludes the proof of the proposition.
\end{proof}

We will need later on (proof of Theorem~\ref{thm:hyperbolic doubles}, Section~\ref{sec:pfTh8.1}) the following technical characterization of tubes that are uniformisable in $\sph^{\dimd}\smallsetminus\sph^{\dimd-2}$.

\begin{lemma}\label{lem:uniformization}
 In the setting of Proposition~\ref{prop:uniformization} and Remark~\ref{rk:uniformization}, $T$ is uniformizable in $\sph^\dimd\smallsetminus\sph^{\dimd-2}$ if and only if $h\neq \Id_2$, $\tr(h)\geq 2$, and $[x,\tilde hx]\subset \tilde N$  projects onto a properly convex segment of $\sph^1$ for some $x\in\tilde N$.
\end{lemma}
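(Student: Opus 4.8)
The strategy is to unwind the dichotomy already established in Proposition~\ref{prop:uniformization} together with the case analysis recalled in Remark~\ref{rk:uniformization}, and to observe that the claimed characterization is simply the conjunction of those bullets that correspond to the uniformisable case. First I would recall that, by Proposition~\ref{prop:uniformization}, $T$ is uniformisable in $\sph^\dimd\smallsetminus\sph^{\dimd-2}$ if and only if $\tilde N$ is uniformisable in $\sph^1$, equivalently $\tilde N$ projects injectively onto an interval $I\subset\sph^1$ between two fixed points of the non-elliptic $\SL_2$-angle $h$; and in that case $h$ has positive trace, so $\tr(h)\geq 2$ with $h\neq\Id_2$. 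This already gives the necessity of the stated conditions: if $T$ is uniformisable, then $h\neq\Id_2$, $\tr(h)\geq 2$, and for the chosen developing interval $[x,\tilde h x]\subset\tilde N$ its projection is exactly $I$, which is properly convex when $h$ is hyperbolic (if $h$ is parabolic $I$ is a half-circle — see below on how to handle this).

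For the converse I would argue contrapositively using Remark~\ref{rk:uniformization}, which partitions the possibilities for $h=\pi(\tilde h)$. If $h$ is elliptic ($-2<\tr(h)<2$ or $h=\pm\Id_2$), or if $\tr(h)\leq-2$ with $h\neq-\Id_2$, or if $\tr(h)\geq 2$, $h\neq\Id_2$ but $\tilde h\neq\tilde h_0$ (the unique lift with fixed points in $\tilde\sph^1$), then $\tilde h$ acts freely, properly, and cocompactly on all of $\tilde\sph^1$, so $T$ is complete in $\blow$ and \emph{not} uniformisable in $\sph^\dimd\smallsetminus\sph^{\dimd-2}$ (these two cases are disjoint by the final dichotomy of Proposition~\ref{prop:uniformization}). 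The only remaining case is $\tr(h)\geq2$, $h\neq\Id_2$, and $\tilde h=\tilde h_0$, which is precisely when $T$ is uniformisable. So it suffices to show that, among the hypotheses $h\neq\Id_2$ and $\tr(h)\geq2$, the extra condition that $[x,\tilde h x]$ projects onto a properly convex segment of $\sph^1$ (for some $x\in\tilde N$) forces $\tilde h=\tilde h_0$ and rules out all the other cases. Indeed if $\tr(h)=2$ (so $h$ parabolic, the lone case with $\tilde h=\tilde h_0$ but $I$ a \emph{half}-circle, hence not properly convex), the projected interval is a half-circle and therefore not a properly convex segment, so this case is excluded by the hypothesis — which is consistent, since the lemma is used in a hyperbolic context. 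If instead $\tr(h)>2$ but $\tilde h\neq\tilde h_0$, then by Remark~\ref{rk:uniformization} $\tilde h$ has no fixed point in $\tilde\sph^1$; the segment $[x,\tilde hx]$ then wraps around $\sph^1$ by more than a half-turn (its length exceeds $\pi$), so its image in $\sph^1$ is not contained in an affine chart and hence is not properly convex, contradicting the hypothesis.

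The one point that requires a little care — and the step I expect to be the main (minor) obstacle — is the quantitative claim that, for a lift $\tilde h$ of a hyperbolic $h$ with $\tilde h\neq\tilde h_0$, the translation length of $\tilde h$ on $\tilde\sph^1$ exceeds $\pi$, so that $[x,\tilde hx]$ always surjects onto more than a half-circle and can never project to a properly convex segment. I would establish this by passing to the standard model: $\sph^1\cong\R/\pi\Z$ with $\tilde\sph^1\cong\R$, where the hyperbolic $h$ has exactly two fixed points in $\sph^1$ a half-turn apart, its distinguished lift $\tilde h_0$ has translation number $0$ (i.e. genuinely fixes lifts of those two points), and any other lift $\tilde h=\tilde h_0\cdot T^n$ with $n\neq 0$ (where $T$ is the deck translation by $\pi$) has translation number $n\pi$, hence $\geq\pi$ in absolute value; since a single application moves any point by at least the translation number, $[x,\tilde hx]$ covers an interval of length $\geq\pi$, whose image in $\sph^1$ is all of $\sph^1$ or fails to lie in an affine chart, so is not properly convex. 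Conversely when $\tilde h=\tilde h_0$ and $h$ is hyperbolic, $[x,\tilde hx]$ for $x$ strictly between the two lifted fixed points has length $<\pi$ and projects injectively onto a properly convex segment, so the condition in the lemma is satisfied exactly in that case. Collecting the implications yields the stated equivalence.
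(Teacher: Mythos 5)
Your overall strategy — unwind the dichotomy of Proposition~\ref{prop:uniformization} through the case analysis in Remark~\ref{rk:uniformization}, and close the remaining gap with a quantitative translation-length estimate — is exactly the approach the paper takes, and the hyperbolic half of your argument is essentially sound. But the way you handle the parabolic case is wrong, and the error propagates into a proof of a false statement.

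The problem is your claim that the projection of $[x,\tilde h x]$ is ``exactly $I$.'' It isn't: $\tilde N$ is an \emph{open} interval on which $\tilde h$ acts freely, so for $x\in\tilde N$ the closed segment $[x,\tilde h x]$ is a \emph{proper} subinterval of $\tilde N$, and its projection is a proper subarc of $I$, strictly shorter than $I$. This matters precisely in the parabolic case: there $I$ is a half-circle of length $\pi$, so $I$ itself is not properly convex, but the subarc $[x,\tilde h x]$ has length $<\pi$ and therefore \emph{is} properly convex. Your deduction that the parabolic case ``is excluded by the hypothesis'' is thus incorrect — and it has to be, since Remark~\ref{rk:uniformization} explicitly asserts that when $\tr(h)=2$, $h\neq\Id_2$, and $\tilde h=\tilde h_0$ the tube \emph{is} uniformisable, so by the lemma the segment condition must hold in that case. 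If your claim were right, the lemma would be false. (The parabolic case is not decorative either: it is what produces the non-convex-cocompact example $\Omega_2$ in Theorem~\ref{thm:main2}.)

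There is also a smaller slip in your model: $\sph^1$ is the sphere of rays in $\R^2$, a circle of circumference $2\pi$ with deck group $2\pi\Z$, not $\R/\pi\Z$ (that is $\mb{RP}^1$, its antipodal quotient). For a hyperbolic $h$ there are \emph{four} fixed rays in $\sph^1$, not two, and ``other lifts'' differ by $\tau^{2n\pi}$. This does not kill your key estimate — pushing $[x,\tilde h x]$ past a half-circle (length $\geq\pi$) is still the right conclusion — but the cleaner calculation is the one the paper actually performs: normalize so $\tilde h_0$ fixes every multiple of $\pi$ and preserves the intervals between them, write $\tilde h=\tau^{2n\pi}\tilde h_0$, and observe that for $n\geq1$ and $x\in[m\pi,(m+1)\pi]$ one has $\tilde h(x)\geq\tilde h(m\pi)=(m+2n)\pi\geq x+\pi$. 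Once you recognize $[x,\tilde h x]$ as a proper subsegment of $\tilde N$ (so the parabolic case passes the test, as it must), your argument aligns with the paper's.
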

\begin{proof}
Remark~\ref{rk:uniformization} implies that if $T$ is uniformisable in $\sph^\dimd\smallsetminus\sph^{\dimd-2}$ then $\tr(h)\geq 2$ and $h\neq \Id_2$ and  any segment of $\tilde N$ projects onto a properly convex segment of $\sph^1$.

 Conversely, suppose $\tr(h)\geq 2$ and $h\neq \Id_2$.
 Identify $\tilde \sph^1$ with $\R$ so that the preimage of a rotation of $\SL_2(\R)$ of angle $\theta$ acts on $\R$ as a translation of the form $\tau^{\theta+2n\pi}(x)=x+\theta+2n\pi$ for some integer $n$.
 Then $\tilde h=\tau^{2n\pi}\tilde h_0$ with $n$ integer and $\tilde h_0$ from Remark~\ref{rk:uniformization}.
 
 Up to conjugation, we may assume $\tilde h_0$ fixes every multiple of $\pi$ in $\R$ (if $h$ is hyperbolic then $\tilde h_0$ has more fixed points), and preserves every interval in between two consecutive multiples of $\pi$.
 
 If  $T$ is not uniformisable in $\sph^\dimd\smallsetminus\sph^{\dimd-2}$, then $n\neq 0$. 
 Say $n\geq 1$. 
 Consider $x\in\R$, say in $[m\pi,(m+1)\pi]$.
 Then $\tilde h(x)\geq \tilde h (m\pi)=(m+2n)\pi\geq x+\pi$ so the projection in $\sph^1$ of $[x,\tilde hx]$ is not properly convex.
\end{proof}

\subsection{Totally geodesic blowup}
Our goal is to replace the singularities of a projective cone-manifold with totally geodesic boundary components. This is not always possible. We describe a family of tubes for which this process can be carried out. 

\begin{defi}[Special Tubes]
\label{def:special tubes}
 Consider a tube with holonomy in $\Aut(\blow)$ generated by
 \[
 g=\begin{pmatrix}
    \mu^{-1} \Id_{\dimd-1} & C \\
    0 & \mu^{(\dimd-1)/2}\tilde B
   \end{pmatrix}
   \]
 with $\mu\ge1$. Let $B\in\SL_2(\R)$ be the projection of $\tilde B\in\tilde\SL_2(\R)$.
 
 The tube is called \emph{special} if
 \begin{itemize}
 \item{It is uniformisable in $\sph^{\dimd}$ (which implies $B$ is hyperbolic or parabolic).}
 \item{All eigenvalues of $\mu^{(\dimd-1)/2}B$ are greater than $\mu^{-1}$.}
 \end{itemize}
  
  The latter is equivalent to 
\begin{equation}\label{eq:special tubes}
 \mu^{\dimd+1}-\mu^{(\dimd+1)/2}\tr B+1>0.
\end{equation}
\end{defi}

Note that the above implies that $\mu>1$. Hence, among the two generators of the holonomy of the tube only one satisfies this property. We call such generator the \emph{special generator}.

Before going on, let us make the following simple observation.

\begin{rk}\label{rk:attracting wall}
Notice that for any $g\in\Aut(\blow)$, if $g$ preserve a sector $S$ of $\blow$, then it preserves \emph{each} of its walls.
In other words, the endpoints of the interval $I$ in $\tilde \sph^1$ defining the sector are fixed points of $g$.

If the sector $S$ is small enough, more precisely, if $I$ projects (injectively) to a convex proper interval of $\sph^1$ (we allow half-circles), then $g$ acts properly discontinuously on the interior of $I$, hence one of the endpoints $H^+$ is \emph{attracting in $I$} ($g^nH\to H^+$ for any $H\in I$ and $n\to\infty$) and the other is \emph{repelling in $I$} ($g^nH\to H^-$ for any $H\in I$ and $n\to-\infty$). Note that if $I$ is a half-circle then $H^+$ is not an attracting fixed point in $\sph^1$.

We use the same terminology for the corresponding walls.
\end{rk}

The next proposition describes the blowup of a special tube.

\begin{prop}\label{prop:totgeod blowup}
 Consider a special tube $T\sqcup\sph^{\dimd-2}$ with smooth locus $T$ and special generator $g$, with the notation from Definition~\ref{def:special tubes}.
 By Proposition~\ref{prop:uniformization}, $T\sqcup\sph^{\dimd-2}$ is a quotient by $g\Z$ of a sector ${\tilde T}\sqcup\sph^{\dimd-2}\subset \sph^\dimd$. 
 
 Let $H^+,H^-$ be the $g$-invariant attracting and repelling walls of $\tilde T$ (see Remark~\ref{rk:attracting wall}).
 Note that $H^+$ contains a fixed point $x^+\in\sph^\dimd$ of $g$, corresponding to the highest eigenvalue of $g$. Then:
 \begin{enumerate}
 \item{$g\Z$ acts freely and properly discontinuously on  
 \[
 {\tilde T}\sqcup \left(H^+\smallsetminus(\sph^{\dimd-2}\cup\{x^+\})\right).
 \]
 }
 \item{The quotient $\bar T$ is a compact projective manifold with totally geodesic boundary with interior isomorphic to the smooth locus $T$, such that the isomorphism extends to a continuous onto map $\bar T\rightarrow T\sqcup\sph^{\dimd-2}$.}
 \item{Every isomorphism of tubes $T\sqcup \sph^{\dimd-2}\to T'\sqcup\sph^{\dimd-2}$ extends to an isomorphism of projective manifolds with totally geodesic boundary $\bar T\to\bar T'$.}
 \end{enumerate}
\end{prop}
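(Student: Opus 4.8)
\textbf{Proof plan for Proposition~\ref{prop:totgeod blowup}.}
The plan is to work entirely in the sector $\tilde T\sqcup\sph^{\dimd-2}\subset\sph^\dimd$ and analyze the dynamics of the special generator $g$ directly in terms of its eigenvalues. Write $g$ in the block form of Definition~\ref{def:special tubes}, so that the eigenvalue on the $\R^{\dimd-1}$ block is $\mu^{-1}$ (with eigenvectors spanning $\sph^{\dimd-2}$), and the two eigenvalues on the remaining $2$-plane are $\mu^{(\dimd-1)/2}$ times the eigenvalues of $B$; call them $\lambda^+\geq\lambda^-$. Since the tube is special, $\lambda^->\mu^{-1}$, and since $B$ is hyperbolic or parabolic with positive trace, $\lambda^+\lambda^-=\mu^{\dimd+1}>1$ and $\lambda^+>1$. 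The attracting wall $H^+$ in $\tilde T$ is the half-hypersphere spanned by $\sph^{\dimd-2}$ together with the eigendirection of $\lambda^+$; it contains the fixed point $x^+$ (the $\lambda^+$-eigenline) and the fixed sphere $\sph^{\dimd-2}$. First I would record this eigenvalue picture as a lemma, including the parabolic degenerate case $\lambda^+=\lambda^-$ where one works with a Jordan block instead.

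\textbf{Step 1 (freeness and proper discontinuity).}
I would show $g\Z$ acts freely and properly discontinuously on $X:=\tilde T\sqcup\bigl(H^+\smallsetminus(\sph^{\dimd-2}\cup\{x^+\})\bigr)$. By Proposition~\ref{prop:uniformization} we already know $g\Z$ acts freely and properly discontinuously on the open sector $\tilde T$ (minus $\sph^{\dimd-2}$ it is the smooth locus $T$, and on $\sph^{\dimd-2}$ the action of a nontrivial power would have to fix a half-hypersphere, contradicting the nontrivial angle); so the only new points are those of the punctured attracting wall. On $H^+\smallsetminus(\sph^{\dimd-2}\cup\{x^+\})$ the map $g$ acts as a projective transformation with exactly the two fixed points $\sph^{\dimd-2}$ (eigenvalue $\mu^{-1}$) and $x^+$ (eigenvalue $\lambda^+$); since $\lambda^+>\mu^{-1}$, on the complement of these fixed loci every orbit accumulates only at $x^+$ (forward) or at $\sph^{\dimd-2}$ (backward), neither of which lies in $X$, so the action there is free and properly discontinuous. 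To glue the two pieces, I would exhibit a $g$-invariant properly convex open set containing $X$: namely the convex hull in a suitable affine chart of $\tilde T\cup\{x^+\}$, or more directly the set $\Omega\cup(x^+,x^-)$ appearing in the heuristic of Section~\ref{sec:local model}, on which $\Aut(\Omega)$ acts properly (the Hilbert metric is $g$-invariant and proper). Properness of $g\Z$ on the larger convex set restricts to properness on $X$; freeness is checked pointwise as above.

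\textbf{Step 2 (the quotient is a $b$-manifold with the right interior).}
Set $\bar T:=X/g\Z$. Near points of $\tilde T\smallsetminus\sph^{\dimd-2}$ the quotient inherits the projective structure, giving ${\rm int}(\bar T)\cong T$ away from the blown-up core; near $\sph^{\dimd-2}$ we use the chart into the tube itself, so ${\rm int}(\bar T)\cong T$ on the nose. Near a point of the punctured attracting wall, the half-hypersphere $H^+$ is a totally geodesic hypersurface of $\sph^\dimd$, so a neighborhood in $X$ is projectively a half-hemisphere neighborhood; this gives $\bar T$ the structure of a projective manifold with totally geodesic boundary, the boundary being $(H^+\smallsetminus(\sph^{\dimd-2}\cup\{x^+\}))/g\Z$, which is a $(\dimd-1)$-dimensional manifold fibering as $\corner\times\sph^1$ over the core. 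Compactness of $\bar T$ follows because a fundamental domain for $g\Z$ in $X$ can be taken compact: one intersects a closed fundamental sub-sector of $\tilde T$ (for the $\tilde\SL_2$-dynamics on $\tilde N$) with the closure of $X$, and the added boundary wall points form a compact annular region $[H^-,H^+]$-sandwiched piece of $H^+$. For the continuous onto map $\bar T\to T\sqcup\sph^{\dimd-2}$: collapse the new boundary to $\sph^{\dimd-2}$ by sending a wall point, represented by a half-hypersphere direction at $\sph^{\dimd-2}$, to the corresponding point of $\sph^{\dimd-2}$ (i.e.\ use the limit of the fibration $\tilde T\to\tilde N$); this is $g$-equivariant, continuous, surjective, and restricts to the identity on interiors, hence descends as claimed.

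\textbf{Step 3 (functoriality).}
Given an isomorphism of tubes $\Phi:T\sqcup\sph^{\dimd-2}\to T'\sqcup\sph^{\dimd-2}$, Claim~1 in the proof of Proposition~\ref{prop:uniformization} shows it lifts to an element $\tilde\Phi\in\Aut(\blow)$ conjugating $g\Z$ to $g'\Z$; since $\Phi$ matches singular loci and fibrations, $\tilde\Phi$ carries $\sph^{\dimd-2}$ to $\sph^{\dimd-2}$ and the sector $\tilde T$ to $\tilde T'$, hence the attracting wall to the attracting wall and $x^+$ to $x'^+$. Thus $\tilde\Phi$ restricts to a homeomorphism $X\to X'$ which is $g\Z$–$g'\Z$-equivariant and projective, so it descends to the desired isomorphism of $b$-manifolds $\bar T\to\bar T'$, compatible with the collapse maps.

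\textbf{Main obstacle.}
The subtle point is Step 1: gluing proper discontinuity on the open sector $\tilde T$ to proper discontinuity on the added boundary wall, i.e.\ proving the action is proper on the union $X$ and not merely on each piece. The parabolic case ($\lambda^+=\lambda^-$, so $g$ acts as a parabolic on the $2$-plane transverse to $\sph^{\dimd-2}$) is the delicate one, because then $x^+$ is only a $\cal C^1$ point and the repelling/attracting dichotomy on $H^+$ degenerates; there one must check directly that orbits of points on the punctured wall still escape to $x^+\cup\sph^{\dimd-2}$. The clean way around this is to produce the explicit $g$-invariant properly convex set $\Omega\cup(x^+,x^-)$ of Section~\ref{sec:local model} containing $X$ and invoke properness of $\Aut$ of a properly convex set via the Hilbert metric (Definition~\ref{rem:hilbert metric} and the remark following it); then Steps 2 and 3 are essentially bookkeeping with charts and the functoriality already established for tubes.
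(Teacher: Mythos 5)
Your overall structure mirrors the paper's proof closely: the eigenvalue normalization, the split between dynamics on the open sector and dynamics on the wall, the description of the quotient as a $b$-manifold with boundary $\sph^{\dimd-2}\times\sph^1$, the collapse map via stereographic projection from $x^+$, and the functoriality argument via $\Aut(\blow)$. Steps~2 and~3 are essentially what the paper does. The gap is in Step~1, precisely at the point you flag as the "main obstacle," and the "clean way around" you propose does not actually close it.

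Concretely: you want a $g$-invariant \emph{properly convex open} set containing $X:=\tilde T\sqcup\bigl(H^+\smallsetminus(\sph^{\dimd-2}\cup\{x^+\})\bigr)$, and then invoke properness of the $\Aut$-action via the Hilbert metric. No such set exists. The sector $\tilde T$ lies strictly on one side of the hypersphere containing $H^+$, so any open convex set containing the punctured wall $H^+$ must spill over to the other side; imposing $g$-invariance then forces the set to contain the $g$-orbit of that overflow and (in coordinates where $g$ is as in the proof of Proposition~\ref{prop:totgeod blowup}) the closure accumulates on antipodal pairs, so the set cannot be properly convex. What you can write down is the non-open convex set $X$ itself — that is the $\Omega\cup(v,e')$ of Section~\ref{sec:local model}, where $e'=x^+$ and $v\in\sph^{\dimd-2}$, not $(x^+,x^-)$ — but the extended Hilbert distance $d_{\bar K}$ of Definition~\ref{rem:hilbert metric} is $\infty$ between $\Omega$ and the face $(v,e')$, hence not a metric compatible with the topology of $X$, and properness of $g\Z$ on $X$ cannot be read off from it. The failure mode you must rule out is exactly a \emph{mixed} sequence $x_n\in\tilde T$ converging to $x\in H^+$ with $g^{k_n}x_n$ staying in a compact of $X$; properness on each piece separately says nothing about this. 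The paper handles it by a direct four-case analysis (on $k_n\to\pm\infty$ crossed with $x\in\tilde T$ or $x\in H^+$), using the $g$-invariant complementary $1$-sphere $C$ and stereographic projection from $x^+$ onto $\sph^{\dimd-2}$; you should replace the convex-set shortcut with that computation.

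One smaller inaccuracy: in the parabolic case the dichotomy on $H^+$ does \emph{not} degenerate. Writing $g$ in the Jordan normal form of Definition~\ref{def:special tubes}, the eigendirection $x^+=[e_\dimd]$ lies in $H^+=\sph(\Span(e_1,\dots,e_\dimd))$, and the restriction of $g$ to $H^+$ is the diagonalizable map $\mathrm{diag}(\mu^{-1}\Id_{\dimd-1},\mu^{(\dimd-1)/2})$ with $\mu>1$, so the dynamics on $H^+$ are genuinely North–South with attractor $x^+$ and repeller $\sph^{\dimd-2}$. The parabolic degeneracy only affects the transverse direction (the $e_{\dimd+1}$-direction), which is why your separate analysis on the wall is actually fine and the real issue is only the mixed sequences just described.
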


\begin{proof}
 {\bf Property (1)}. Set
 \[
 \bar S:={\tilde T}\sqcup \left(H^+\smallsetminus(\sph^{\dimd-2}\cup\{x^+\})\right).
 \]
 
Notice that the assumption on the holonomy of a special tube implies that there is a change of basis fixing $\mb{R}^{\dimd-1}$ where $g$ is represented by a matrix of the form
 \[
 g=\begin{pmatrix}
 \mu^{-1} \Id_{\dimd-1} & &\\
  &\mu^{(d-1)/2}\lambda & \\
  & &\mu^{(d-1)/2}/\lambda\\
 \end{pmatrix}
 \text{ \rm or }
 \begin{pmatrix}
 \mu^{-1} \Id_{\dimd-1} & &\\
  &\mu^{(d-1)/2} &1\\
  & &\mu^{(d-1)/2}\\
 \end{pmatrix} 
 \]
 where $\lambda>1/\lambda>1/\mu$ or $\mu>1$. The first case occurs when $B$ is hyperbolic while the second case occurs when $B$ is parabolic. Note that $x^+=[e_d]$ and $H^+$ is a half-hypersphere in ${\rm Span}\{e_1,\cdots,e_d\}$.
 
We now show that the action is properly discontinuous on $\bar{S}$.

In the case $B$ is hyperbolic, it is an elementary computation to show that the action of $g$ on the space $\sph^\dimd\smallsetminus(\sph({\rm Span}\{e_1,\cdots,e_{d-1},e_{d+1}\})\cup\{\pm[e_d]\})$ is properly discontinuous.

We give a more general argument that works also in the parabolic case: Consider a sequence $x_n\in\tilde T$ converging to $x\in \bar{S}$ and a sequence of integers $k_n$ going to $k=\pm\infty$.
We have to prove that $g^{k_n}x_n$ leaves every compact set of $\bar{S}$.
There are four cases, depending whether $k=+\infty$ or $-\infty$, and whether $x$ lies in $\tilde T$ or in $H^+$.
 
Let $H_n\in\sph^1$ be the projection of $x_n$, seen as half-hyperspheres containing $\sph^{\dimd-2}$, that converge to $H$.
Let $C\subset\sph^\dimd$ be the $g$-invariant $1$-sphere complementary to $\sph^{\dimd-2}$.
\begin{enumerate}
\item If $k=\infty$ and $x\in H^+$ then one checks that $g^{k_n}H_n$ converges to $H^+$ and $g^{k_n}x_n$ accumulates on $H^+\cap C=\{x^+\}$.
\item If $k=\infty$ and $x\in \tilde T$ then $g^{k_n}x_n$ converges to $x^+$.
\item \label{item:bordification} If $k=-\infty$ and $x\in H^+$ then $g^{k_n}x_n$ converges to the stereographic projection of $x$ on $\sph^{\dimd-2}$ seen from $x^+$.
\item If $k=-\infty$ and $x\in\tilde T$ then $g^{k_n}H_n$ converges to $H^-$ and $g^{k_n}x_n$ accumulates on $H^-$.
\end{enumerate}
This ends the proof of properness of the action.

 {\bf Property (2)}. The projection $\bar T\rightarrow T\sqcup\sph^{\dimd-2}$ is constructed by descending the map $\bar{S}\rightarrow{\tilde T}\sqcup\sph^{\dimd-2}$ that sends every $x\in H^+$ to its stereographic projection on $\sph^{\dimd-2}$ seen from $x^+$.
 To see that the resulting map is continuous, one may check that for every sequence $x_n\in\tilde T$ converging to $x\in H^+\smallsetminus (\sph^{\dimd-2}\cup\{x^+\})$, with projection $H_n\in\sph^1$, the sequence of integers $k_n$ such that $g^{k_n}$ brings back $x_n$ in a compact fundamental domain of ${\tilde T}\sqcup\sph^{\dimd-2}$ converges to $-\infty$, and then apply point {\color{red} \eqref{item:bordification}} above.
 
{\bf Property (3)}. Finally, consider an isomorphism of tubes $T\sqcup \sph^{\dimd-2}\to T'\sqcup\sph^{\dimd-2}$.
 As we saw in the proof of Proposition~\ref{prop:uniformization}, the induced projective isomorphism $\tilde T\subset\blow\to\tilde T'\subset\blow$ is the restriction of an element $\phi\in\Aut(\blow)$.
 To conclude, it is not difficult to check that $\phi$ maps the attracting wall of $\tilde T$ to the attracting wall of $\tilde T'$.
\end{proof}

\begin{defi}[Totally Geodesic Blowup]
\label{def:totgeod blowup}
 Using the notation from Proposition~\ref{prop:totgeod blowup}, $\pi:\bar T\rightarrow T\cup\sph^{\dimd-2}$ is called the \emph{totally geodesic blowup} of $T\cup\sph^{\dimd-2}$.
 
 The \emph{totally geodesic blowup} of a cone-manifold with special singularities is obtained by locally blowing-up each component of the singular locus in  charts in tubes such that the change of charts are automorphisms of tubes.
 This process is independent of the chart by invariance of the blowup under automorphisms in Proposition~\ref{prop:totgeod blowup}.
\end{defi}

\section{Tessellations of convex domains}
\label{sec:sec3}

The key result of this section is Proposition~\ref{prop:injetcvx}.
It provides conditions which imply (uniformisability and) convexity for a union of convex tiles.

We will see in Section~\ref{sec:combidescription} that this result can be used to establish uniformisability and convexity for gluings of properly convex projective manifolds.
The reason is that the universal cover of a gluing of properly convex projective manifolds can be described as a gluing of universal covers of these convex projective manifolds.

This phenomenon is illustrated in Section~\ref{sec:proofs} and Figures~\ref{fig:figure3b} and \ref{fig:figure3c}.

\subsection{A local-to-global convexity statement}\label{sec:cvxstatement}

Let us now state the key result of this section, which is based on an idea of Vinberg \cite[\S3]{Vin71}, revisited by Benoist \cite[\S1.5]{Be5lectures}.

\begin{prop}\label{prop:injetcvx}
 Let $\cal G$ be a connected graph with vertex set $\cal V$ and edge set $\cal E$. 
 Suppose that to each vertex $v\in\cal V$ is associated  a closed convex subset $D_v\subset\sph^\dimd$ with non-empty interior. 
 Assume that: 
 \begin{enumerate}
  \item \label{item:hypi1} $F_e:=D_v\cap D_w$ is a codimension $1$ face of $D_v$ and $D_w$ for any edge $e=[v,w]\in \cal E$, such that every point of $\partial F_e$ is contained in the boundary of a half-space of $\sph^{\dimd}$ which contains $D_v\cup D_w$.
  \item \label{item:hypi2} For any path $p=(v_1,v_2,\dots,v_n)\subset \cal G$ such that $F_p:=D_{v_1}\cap\dots\cap D_{v_n}$ is a codimension $2$ face of $D_{v_i}$ for every $i$, there exists a half-space of $\sph^{\dimd}$ which contains $D_{v_1}\sqcup\dots\sqcup D_{v_n}$ and whose boundary contains $F_p$.
 \end{enumerate}
 Let $\sim$ be the smallest equivalence relation on $\bigsqcup_{v\in\cal V}D_v\times\{v\}$ such that $(x,v)\sim (y,w)$ whenever $v$ and $w$ are adjacent and $x=y$. Denote by $X$ the quotient space. 
 Then the natural projection map 
 $$\pi : X \rightarrow \sph^{\dimd}$$
 is injective with convex image.
\end{prop}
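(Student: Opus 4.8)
The plan is to run the classical Vinberg–Benoist local-to-global argument, so I would first set up the "developing map" $\pi\colon X\to\sph^\dimd$ on the quotient space $X$ and reduce everything to a statement about a single unfolding path. Concretely: for a reduced edge-path $p=(v_0,v_1,\dots,v_n)$ in $\cal G$, let $g_p\in\SL_{\dimd+1}(\R)$ be the product of the reflections (or more precisely the projective transformations) that glue $D_{v_{i-1}}$ to $D_{v_i}$ along the common face $F_{[v_{i-1},v_i]}$; the image of the copy of $D_{v_n}$ reached along $p$ is $g_p D_{v_n}$. Injectivity of $\pi$ together with convexity of the image will follow once I prove the key assertion: for every such path, $g_p D_{v_n}$ lies on the correct side of the supporting hyperplane carrying $F_{[v_0,v_1]}$, and more generally the tiles $g_p D_{v_n}$ along all paths from a fixed base vertex $v_0$ "fan out" monotonically, never overlapping except along shared faces. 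The hypotheses \eqref{item:hypi1} and \eqref{item:hypi2} are exactly the two local conditions (codimension $1$ and codimension $2$) that make this monotonicity propagate.

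The core of the argument is an induction on the length $n$ of the path. The base case $n\le 1$ is hypothesis \eqref{item:hypi1}. For the inductive step I would consider a reduced path $p=(v_0,\dots,v_n)$ and look at the last two edges: either (i) the faces $F_{[v_{n-2},v_{n-1}]}$ and $F_{[v_{n-1},v_n]}$ of $D_{v_{n-1}}$ are "far apart" (their intersection is not codimension $2$, or is empty), in which case the half-space produced by \eqref{item:hypi1} at the $(n-1)$-st gluing already contains $g_{(v_0,\dots,v_{n-1})}D_{v_n}$ on the right side, or (ii) the two faces meet in a codimension $2$ face, and then I invoke \eqref{item:hypi2} applied to the short path around that codimension $2$ cell: the half-space it furnishes controls the relative position of $D_{v_{n-2}}$, $D_{v_{n-1}}$, $D_{v_n}$ simultaneously, which lets me "commute" or "shorten" the unfolding and fall back to a shorter path. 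This is the standard Vinberg dihedral-angle bookkeeping: the codimension $2$ hypothesis guarantees that the composition of two adjacent gluings is controlled, so one can reduce the combinatorics of an arbitrary path to that of paths that are locally geodesic in a suitable sense. Iterating, one shows that the half-spaces $H_p^+$ (bounded by the hyperplane through $F_{[v_0,v_1]}$ and containing $D_{v_0}$) all contain every tile $g_p D_{v_n}$ reached without crossing $F_{[v_0,v_1]}$ back, and symmetrically on the other side; hence distinct tiles have disjoint interiors and $\pi$ is injective.

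Once injectivity is established, convexity of $\Omega:=\pi(X)$ is a short separate argument: $\Omega$ is a union of convex tiles glued along codimension $1$ faces such that at each such face the two adjacent tiles lie in a common half-space (hypothesis \eqref{item:hypi1}), and at each codimension $2$ face the surrounding tiles lie in a common half-space (hypothesis \eqref{item:hypi2}); a set that is locally convex at every boundary point and whose "non-smooth" strata are all of this controlled type is convex — one verifies that any segment between two points of $\Omega$ stays in $\Omega$ by following the sequence of tiles it meets and using the half-space conditions to rule out the segment exiting and re-entering. (Alternatively one cites that a connected locally convex subset of $\sph^\dimd$ which is not all of $\sph^\dimd$ is convex, after checking local convexity at face and corner points from \eqref{item:hypi1}–\eqref{item:hypi2}.) The main obstacle I anticipate is the bookkeeping in the inductive step (ii): making precise how the codimension $2$ half-space lets one shorten a path requires carefully tracking which hyperplanes are "active," and one must check that the reduction terminates, i.e. that the shortening procedure is well-founded — this is where the connectedness of $\cal G$ and the fact that the relation $\sim$ is generated by the edge gluings are used. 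Everything else is either a direct appeal to the hypotheses or routine convex geometry in $\sph^\dimd$.
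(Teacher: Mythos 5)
Your proposed route has a genuine gap, and it's worth identifying where it comes from: you are importing the Vinberg reflection-group paradigm into a setting where it does not apply. In the statement, the cells $D_v$ are already embedded convex subsets of $\sph^\dimd$ and the gluings are identity maps along shared faces, so $\pi(x,v)=x$ and every ``developing transformation'' $g_p$ you define is the identity; there is no group and no word problem. Consequently, the key move in your inductive step (ii)---invoking hypothesis~(2) to ``commute or shorten the unfolding and fall back to a shorter path''---has nothing to act on. In Vinberg's argument one can shorten words because the deletion condition for Coxeter groups lets you replace a non-reduced word by a shorter one representing the same element; here the graph $\mathcal G$ is arbitrary (it is not even known to be a tree until \emph{after} the proposition is proved), so a walk $(v_{n-2},v_{n-1},v_n)$ has no canonical shorter replacement, and you cannot propagate a half-space condition at $F_{[v_0,v_1]}$ down a long path this way.

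The actual role of hypothesis~(2) in the paper is different and more local: it is a visibility condition used to rule out bad tangencies of a \emph{segment}. The paper's proof shows that $\pi$ is injective with convex image by showing that any two points of $X$ can be joined by a subset $s\subset X$ on which $\pi$ is injective with segment image, ``going successively through'' the tiles along a path $p$. For a fixed path $p$, the set $S_p$ of joinable endpoint pairs is closed; a generic set $G_p$ of pairs (avoiding codimension-$\geq3$ strata and antipodes) is open, dense, and connected; and the crucial lemma (Lemma~\ref{lem:injetcvx}) shows that for generic endpoints such a segment must cross each wall in its relative interior. It is in proving this lemma that hypotheses~(1) and~(2) are used---as separating half-spaces yielding a contradiction if the segment were to touch a face boundary or a codimension-$2$ stratum---not to shorten paths. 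One then deduces $G_p\cap S_p$ is clopen in $G_p$ and extends paths by one vertex at a time. Your closing paragraph does gesture at the correct segment-based argument for convexity, but you treat it as a footnote separate from injectivity; in fact the segment argument proves both at once and \emph{is} the proof, while the Vinberg-style unfolding induction you put at the centre cannot be carried out.
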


The proof of Proposition~\ref{prop:injetcvx} is postponed to Section~\ref{sec:cvxalavinberg}.

\begin{cor}
 In the setting of Proposition~\ref{prop:injetcvx}, the graph $\cal G$ is a tree.
\end{cor}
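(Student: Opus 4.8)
The plan is to deduce this from the injectivity of the projection $\pi$ furnished by Proposition~\ref{prop:injetcvx}, arguing by contradiction: since $\cal G$ is connected, it is a tree iff it contains no embedded cycle. For $x\in\sph^\dimd$ write $V_x:=\{v\in\cal V:\ x\in D_v\}$. Unwinding the definition of $\sim$, two representatives $(x,v),(x,w)$ of a point $x$ become equal in $X$ exactly when $v$ and $w$ lie in a common component of the induced subgraph $\cal G[V_x]$; hence injectivity of $\pi$ is \emph{equivalent} to the statement that $\cal G[V_x]$ is connected for every $x$. In particular the open tiles $\mathrm{int}(D_v)$ are pairwise disjoint in $\sph^\dimd$, and I would first record two local consequences. (i) If $x$ lies in the relative interior of the codimension $1$ face $F_e=D_v\cap D_w$ of an edge $e=[v,w]$, then using disjointness of interiors and the local half‑space picture of $D_v,D_w$ along $F_e$ (hypothesis~\eqref{item:hypi1}) one checks that no further tile meets $x$, so $V_x=\{v,w\}$. (ii) If $x$ lies in the relative interior of a codimension $2$ stratum and $D_{v_1},\dots,D_{v_m}$ are the tiles through $x$, arranged as a fan around $F:=\bigcap_i D_{v_i}$, then consecutive tiles subtend sectors of angle $<\pi$ in the $2$-plane $P$ transverse to $F$; applying hypothesis~\eqref{item:hypi1} to an adjacent pair (if $m=3$) or hypothesis~\eqref{item:hypi2} to $m-1$ consecutive tiles (if $m\ge 4$) produces a half‑space whose trace on $P$ is a half‑plane containing those sectors, of angular measure $\le\pi$. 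Since $m-1$ consecutive sectors sweep angle $2\pi-a_{\max}>\pi$ were the fan to close up, this forces the fan to be a \emph{path}: no codimension $2$ stratum is entirely encircled by its adjacent tiles.

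Next I would pass to the global picture. The image $C:=\pi(X)$ is convex; if $C\neq\sph^\dimd$ then $C$ lies in a closed hemisphere and, being convex, is contractible, so $X\cong C$ is simply connected (the remaining case $C=\sph^\dimd$ is handled by a direct check that $\cal G$ has at most two vertices). Let $W:=\bigcup_{e\in\cal E}F_e$ be the union of interior walls. By (i) the connected components of $\mathrm{int}(C)\smallsetminus W$ are exactly the $\mathrm{int}(D_v)$, and crossing a wall realises an edge of $\cal G$; by (ii) the adjacency pattern around each codimension $2$ stratum is a path. These two facts should let one collapse each tile $D_v$ to a point and each wall $F_e$ to an arc, producing a deformation retraction of $X$ onto the geometric realisation $|\cal G|$ — the only conceivable obstruction being a cyclically glued fan of tiles around a lower‑dimensional stratum, which (ii) rules out. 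Granting this, $|\cal G|$ is a retract of the simply connected space $X$, so $\pi_1(\cal G)\hookrightarrow\pi_1(X)=1$, and a connected graph with trivial fundamental group is a tree.

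The step I expect to be the main obstacle is making that collapse precise — equivalently, proving that a cycle in $\cal G$ would force some codimension $2$ stratum of $W$ to be completely surrounded by its fan of tiles, so that the obstruction in step~(ii) is genuinely the only one. This is exactly where the convexity of $\pi(X)$ must be combined with the combinatorics of the wall arrangement, in the spirit of Vinberg~\cite{Vin71} and Benoist~\cite{Be5lectures} underlying Proposition~\ref{prop:injetcvx}; everything else is bookkeeping. In practice the cleanest route is probably to read the conclusion directly off the proof of Proposition~\ref{prop:injetcvx}, which builds $\pi$ by unfolding $X$ over a spanning tree of $\cal G$ and can be arranged to detect any surplus edge as an impossible self‑identification.
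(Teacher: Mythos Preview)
Your proposal has a genuine gap at precisely the point you flag: the deformation retraction of $X$ onto $|\cal G|$ is not constructed, and this is not a mere technicality. Even granting your local analysis (i) and (ii), passing from ``no codimension $2$ stratum is encircled'' to ``$|\cal G|$ is a retract of $X$'' requires a careful nerve-type argument that you have not supplied. Without it the proof is incomplete. Your handling of the case $C=\sph^\dimd$ is also too quick: it is not obvious that this forces $|\cal V|\le 2$.

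The paper bypasses all of this with a much shorter trick that your last sentence almost anticipates. Given an embedded cycle $(v_1,\dots,v_n=v_1)$ in $\cal G$, form a \emph{new} graph $\cal G'$: a bare path on vertices $1,\dots,n$ with edges $[i,i+1]$, and assign $D_i:=D_{v_i}$. Hypotheses \eqref{item:hypi1} and \eqref{item:hypi2} for $\cal G'$ are inherited verbatim from $\cal G$ (every subpath of $\cal G'$ maps to a subpath of $\cal G$), so Proposition~\ref{prop:injetcvx} applies and the projection $\pi':\bigsqcup_i D_i\times\{i\}/_\sim\to\sph^\dimd$ is injective. But $1$ and $n$ are \emph{not} adjacent in $\cal G'$, while $D_1=D_n$; any $x\in\mathrm{int}(D_1)$ then gives two $\sim$-inequivalent preimages $(x,1)$ and $(x,n)$ of the same point. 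This contradiction rules out the cycle.

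The key idea you are missing is that Proposition~\ref{prop:injetcvx} can be \emph{re}-applied to an auxiliary gluing kit built from the purported cycle, rather than squeezing more topology out of the original $X$. This avoids any discussion of retractions, fans, or the case $C=\sph^\dimd$.
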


\begin{proof}
Suppose by contradiction that there is a non-trivial loop $(v_1,\dots,v_n)\subset\cal G$ with $v_n=v_1$.
Let $\cal G'$ be the graph with vertices $1,\dots,n$ and with an edge from $i$ to $i+1$ for every $1\leq i<n$.
Associate to every vertex $i$ the convex set $D_i:=D_{v_i}$.
One checks that $\mc{G}'$ satisfies again the assumptions of Proposition~\ref{prop:injetcvx}.
However, $\bigsqcup_{1\leq i\leq n}D_i\times\{i\}/_\sim\rightarrow\sph^\dimd$ is clearly not injective since since any $x$ in ${\rm int}(D_{1})={\rm int}(D_{n})$ is the image of both $(x,1)$ and $(x,n)$, which are not equivalent for $\sim$.
\end{proof}

\begin{defi}[Gluing Kit]
\label{def:gluing kit}
 The data of $(\cal G, (D_v)_{v\in\cal V})$ satisfying Conditions~\ref{item:hypi1} and \ref{item:hypi2} of Proposition~\ref{prop:injetcvx} is called a \emph{gluing kit}.
 To ease the notation, we will identify $X=\bigsqcup_{v\in\cal V}D_v\times\{v\}/_\sim$ with its image under $\pi$.
 Moreover, Proposition~\ref{prop:injetcvx} implies that the gluing kit is determined by the family of convex sets $\{D_v\}_{v\in\cal V}$.
 As a consequence, we will identifies each vertex $v\in \cal V$ with its associated convex set $D_v$.
\end{defi}

\begin{defi}[Cells, Walls, Strata]
The convex sets of the form $D_v$ with $v\in\cal V$ (\resp $D_v\cap D_w$ with $v,w\in\cal V$ adjacent, \resp $\cap_{v\in A}D_v$ with $A\subset\cal V$ with size at least $2$) are called \emph{cells} (\resp \emph{walls}, \resp \emph{strata}).
A stratum with codimension $2$ is called a \emph{corner}.
\end{defi}

\begin{rk}\label{rem:succession}
 In the proof of Proposition~\ref{prop:injetcvx}, it will be proved that for any geodesic $(D_1,\dots,D_n)\subset\cal G$ and any pair $(x,y)\in D_1\times D_n$, there is a segment $[x,y]$ that \emph{goes successively through $D_1,\dots, D_n$}, in the sense that there exists $x_1,\dots,x_{n-1}\in[x,y]$ in this order such that $x_i\in D_i\cap D_{i+1}$ for any $1\leq i\leq n$.
\end{rk}

\begin{rk}\label{rk:union of two convex sets}
 Note that Proposition~\ref{prop:injetcvx} has the following corollary.
 Consider two closed convex sets $D,D'\subset\sph^\dimd$ that intersect on a codimension $1$ face $F=D\cap D'$.
 Then $D\cup D'$ is convex if and only if
 every point of $\partial F$ is contained in the boundary of a half-space of $\sph^{\dimd}$ which contains $D\cup D'$.
\end{rk}

\subsection{The proof of Proposition~\ref{prop:injetcvx}}\label{sec:cvxalavinberg}

In this section we explain the argument of Vinberg that, informally speaking, local convexity along codimension 1 and codimension 2 faces are enough to show the convexity of a union of convex tiles.

\begin{proof}[Proof of Proposition~\ref{prop:injetcvx}]
Let us call \emph{segment} a subset $s\subset X$ such that the restriction of $\pi$ to $s$ is injective and $\pi(s)$ is a segment of $\sph^{\dimd}$. 
In order to prove that $\pi$ is injective with convex image, it is enough to show that every pair of points of $X$ can be joined by a segment.

For all $v\in \cal V$ and $x\in D_v$ (\resp $A\subset D_v$), let us denote by $x|v$ (\resp $A|v$) the projection in $X$ of $(x,v)$ (\resp $A\times\{v\}$).

Let $X'\subset X$ be the projection of $\bigsqcup_{v\in\cal V}{\rm int}(D_v)\times\{v\}\sqcup\bigsqcup_{e\in\cal E}{\rm int}(F_e)\times e$. If $X$ is endowed with the quotient topology, then 

\begin{fact}
\label{obs:injetcvx:localhomeo}
$\pi$ is a continuous map, $X'$ is an open subset, and the restriction of $\pi$ to $X'$ is a local homeomorphism.
\end{fact}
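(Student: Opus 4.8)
\emph{Proof sketch (plan).} The plan is to understand the local structure of $X$ near the points of $X'$. Write $Z:=\bigsqcup_{v\in\cal V}D_v\times\{v\}$ and $q\colon Z\to X$ for the quotient map, so that $X$ carries the final topology for $q$, and recall that $A|v$ denotes the image in $X$ of $A\times\{v\}$. Continuity of $\pi$ is immediate: the composite $\pi\circ q\colon Z\to\sph^\dimd$ is the map $(x,v)\mapsto x$, which is continuous on each summand $D_v$, so $\pi$ is continuous by the universal property of the quotient topology. It then suffices to produce, around each $\xi\in X'$, a saturated open neighbourhood $W\subseteq X'$ on which $\pi$ restricts to a homeomorphism onto an open subset of $\sph^\dimd$: this gives at once that $X'$ is open (a union of such $W$'s) and that $\pi|_{X'}$ is a local homeomorphism.

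The only geometric input needed is a local description of a convex body near a codimension $1$ face: if $F$ is a codimension $1$ face of a closed convex $K\subset\sph^\dimd$ with non-empty interior and $x\in\mathrm{int}_{\sph(F)}(F)$, then $F$ has a unique supporting hyperplane $H=\sph(F)$ at $x$, and there is a ball $B\ni x$ with $B\cap K=B\cap H^-$, where $H^-$ is the closed half-space bounded by $H$ that contains $K$. Uniqueness of $H$ holds because two distinct supporting hyperplanes at $x$ would force $F_K(x)=F$ to have codimension at least $2$. For the equality $B\cap K=B\cap H^-$ one picks $z\in\mathrm{int}(K)$ and uses that $\sph(F)=H$ forces $F$ to contain a round $(\dimd-1)$-disk $U\subset H$ centred at $x$; a direct computation with the parametrisation $(u,t)\mapsto(1-t)u+tz$ then shows that every point of $B\cap H^-$ close enough to $x$ lies in $\mathrm{int}(\conv(U\cup\{z\}))\subset\mathrm{int}(K)$, giving $B\cap H^-\subseteq K$, while the reverse inclusion is the supporting property.

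Now fix $\xi=x|v\in X'$. If $x\in\mathrm{int}(D_v)$, take $W:=\mathrm{int}(D_v)|v$; one checks $q^{-1}(W)=\mathrm{int}(D_v)\times\{v\}$, so $W$ is open, $W\subseteq X'$, and $\pi|_W$ is, via the homeomorphism $q|_{q^{-1}(W)}$, the open inclusion $\mathrm{int}(D_v)\hookrightarrow\sph^\dimd$. If instead $x\in\mathrm{int}(F_e)$ for a (necessarily unique) edge $e=[v,w]$, one first checks — this is where Condition~\ref{item:hypi1} enters — that $D_v$ and $D_w$ lie on opposite sides of their common supporting hyperplane $H=\sph(F_e)$; applying the lemma above to both $D_v$ and $D_w$ one finds $\epsilon>0$ so small that $B:=B(x,\epsilon)$ meets no cell other than $D_v,D_w$, no wall other than $F_e$, and no point of $\partial F_e$, with $B\cap D_v=B\cap H^-$ and $B\cap D_w=B\cap H^+$. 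Let $W$ be the image in $X$ of $(B\cap D_v)\times\{v\}\sqcup(B\cap D_w)\times\{w\}$. The constraints on $\epsilon$ prevent any further identification, so $q^{-1}(W)=(B\cap D_v)\times\{v\}\sqcup(B\cap D_w)\times\{w\}$ is saturated and open, and $W\subseteq X'$. Then $\pi|_W\colon W\to B$ is a continuous bijection with image $(B\cap H^-)\cup(B\cap H^+)=B$; it is a homeomorphism because $q|_{q^{-1}(W)}$ is a quotient map and $\pi\circ q|_{q^{-1}(W)}\colon(B\cap H^-)\sqcup(B\cap H^+)\to B$ is a closed surjection (it folds two closed embeddings), hence a quotient map, so $\pi|_W$ is a bijective quotient map.

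The only part requiring real care is the convex-geometry lemma, together with the accompanying observation that along a wall the two cells sit on opposite sides of $H$ (a same-side configuration would make $\pi$ fail to be injective, and is ruled out by Condition~\ref{item:hypi1}); everything else is formal point-set topology — saturated open sets, restrictions of quotient maps to them, and upgrading a continuous bijection to a homeomorphism by a closed-map argument. I expect that observation and the lemma to be the main, if minor, obstacle.
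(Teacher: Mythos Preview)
The paper states this fact without proof, treating it as a routine observation; your argument is correct and supplies exactly the elementary details the authors omit. One small remark: the clause ``$B$ meets no cell other than $D_v,D_w$'' is neither achievable in general (nothing prevents other $D_u$ from passing through $x$ in $\sph^\dimd$) nor needed. What you actually use, and what is automatic once $B$ avoids $\partial F_e$, is that the \emph{saturation} of $(B\cap D_v)\times\{v\}\sqcup(B\cap D_w)\times\{w\}$ under $\sim$ equals itself; this follows because a point $y\in\mathrm{int}(F_e)$ lies in no other codimension~$1$ face of $D_v$ or $D_w$ (distinct faces have disjoint relative interiors), so no further identifications occur. With that adjustment your saturated-open-set argument goes through cleanly. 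Also note that the opposite-side observation follows already from part~(a) of Condition~\eqref{item:hypi1} (that $F_e=D_v\cap D_w$ has codimension~$1$): if both cells lay in $H^-$ then locally $D_v\cap D_w$ would contain $B\cap H^-$, contradicting $\dim F_e=\dimd-1$.
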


Denote  $F_p:=D_{v_1}\cap\dots\cap D_{v_n}$ for any path $p=(v_1,v_2,\dots,v_n)\subset \cal G$.

Fix a path $p=(v_1,v_2,\dots,v_n)\subset\cal G$ and set $D_i:=D_{v_i}$ for every $i$.
Denote by $S_p$ the set of pairs of points $(x,y)\in D_{1}\times D_{n}$ such that $x|v_1$ and $y|v_n$ can be joined by a segment of $X$ going successively through $D_{1}|v_1,D_{2}|v_2,\dots,D_{n}|v_n$ (see Remark~\ref{rem:succession}).

\begin{fact}
\label{obs:injetcvx:Sclosed}
 $S_p$ is a closed subset of $D_{1}\times D_{n}$.
\end{fact}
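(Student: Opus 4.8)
The plan is to prove that $S_p$ is \emph{sequentially} closed in $D_{1}\times D_{n}$ (where I abbreviate $D_i:=D_{v_i}$), which is enough since everything is metrizable. So fix a sequence $(x^{(k)},y^{(k)})\in S_p$ converging to some $(x,y)\in D_{1}\times D_{n}$, and for each $k$ choose a segment $s^{(k)}\subset X$ witnessing $(x^{(k)},y^{(k)})\in S_p$: it decomposes as a concatenation $s^{(k)}=s^{(k)}_1\cup\dots\cup s^{(k)}_n$, where $s^{(k)}_i$ is a segment contained in $D_i|v_i$ joining $z^{(k)}_{i-1}|v_i$ to $z^{(k)}_i|v_i$, with $z^{(k)}_0:=x^{(k)}$, $z^{(k)}_n:=y^{(k)}$, $z^{(k)}_i\in D_i\cap D_{i+1}$ for $1\le i\le n-1$, and with $\pi$ injective on $s^{(k)}$ (so that $\pi(s^{(k)})=[x^{(k)},y^{(k)}]$ goes successively through $D_1,\dots,D_n$ in $\sph^\dimd$, in the sense of Remark~\ref{rem:succession}).

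First I would extract limits. Each face $D_i\cap D_{i+1}$ is a closed, hence compact, subset of $\sph^\dimd$, so after passing to a subsequence we may assume $z^{(k)}_i\to z_i\in D_i\cap D_{i+1}$ for every $1\le i\le n-1$; set $z_0:=x$ and $z_n:=y$. Parametrizing each arc $[x^{(k)},y^{(k)}]$ proportionally to arclength along its great circle and using compactness of the unit tangent bundle of $\sph^\dimd$, I would pass to a further subsequence so that these arcs converge uniformly to a geodesic arc $\sigma\subset\sph^\dimd$ of length at most $\pi$ (when $x,y$ are non-antipodal, $\sigma$ is simply the minimal segment $[x,y]$). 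The parameters of $z^{(k)}_0,\dots,z^{(k)}_n$ on $[x^{(k)},y^{(k)}]$ are nondecreasing, so their limits are too, and hence $z_0,\dots,z_n$ lie on $\sigma$ in this order; moreover the sub-arc $\sigma_i$ of $\sigma$ from $z_{i-1}$ to $z_i$ is the Hausdorff limit of $\pi(s^{(k)}_i)\subset D_i$, so $\sigma_i\subset D_i$ because $D_i$ is closed. Thus $\sigma$ is a segment of $\sph^\dimd$ with $\sigma=\sigma_1\cup\dots\cup\sigma_n$ going successively through $D_1,\dots,D_n$.

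Then I would lift $\sigma$ to $X$. Since $\sigma_i\subset D_i$, it determines an arc $\sigma_i|v_i\subset D_i|v_i\subset X$; and since $z_i\in D_i\cap D_{i+1}$ with $v_i,v_{i+1}$ adjacent, we have $z_i|v_i=z_i|v_{i+1}$ in $X$, so the arcs $\sigma_1|v_1,\dots,\sigma_n|v_n$ glue along their endpoints into a path $s\subset X$ from $x|v_1$ to $y|v_n$. The projection $\pi$ maps $s$ onto $\sigma$, carrying each piece $\sigma_i|v_i$ homeomorphically onto the sub-arc $\sigma_i$, and the $\sigma_i$ are consecutive sub-arcs of $\sigma$ with pairwise disjoint interiors; hence $\pi|_s$ is injective and $s$ is a segment of $X$ going successively through $D_1|v_1,\dots,D_n|v_n$. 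Therefore $(x,y)\in S_p$, proving that $S_p$ is closed.

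I expect the only genuinely delicate point to be the extraction of the limiting geodesic arc $\sigma$ together with the fact that the limit points $z_0,\dots,z_n$ sit on it in the correct order — this is where the non-uniqueness of segments between antipodal points of $\sph^\dimd$ would otherwise cause trouble; the rest is a routine passage to the limit relying on the closedness of the cells $D_i$ and of the faces $D_i\cap D_{i+1}$, together with the very definition of the gluing relation $\sim$.
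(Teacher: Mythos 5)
The paper states Fact~\ref{obs:injetcvx:Sclosed} without any proof --- it is one of three routine observations the authors leave to the reader inside the proof of Proposition~\ref{prop:injetcvx} --- so there is no ``paper's approach'' to compare against. Your compactness argument is the natural one and it is correct. The structure is sound: extract convergent subsequences of the intermediate gluing points $z_i^{(k)}$ using compactness of the closed faces $D_i\cap D_{i+1}$, extract a uniform limit $\sigma$ of the spherical arcs $\pi(s^{(k)})$ by parametrizing them in $T\sph^\dimd$, observe that the $z_i$ land on $\sigma$ in nondecreasing parameter and that each $\sigma_i\subset D_i$ by closedness, and then re-lift to $X$ using the gluing relation at the $z_i$. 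You also correctly flag the one genuine subtlety, namely that antipodal endpoints would forbid you from writing $[x,y]$ and force you to keep track of the arc itself rather than its endpoints.

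One step is slightly too terse: when you conclude that $\pi|_s$ is injective from ``the $\sigma_i$ are consecutive sub-arcs with pairwise disjoint interiors,'' this glosses over the degenerate case where several consecutive arcs collapse to a single point (i.e.\ $t_{i-1}=t_i=\dots=t_{j}$ after passing to the limit). In that case the pieces $\sigma_i|v_i,\dots,\sigma_j|v_j$ are all the singleton $\{z_i\}$ and are literally the same point of $X$ by iterated application of the gluing relation along consecutive edges of $p$, so injectivity of $\pi|_s$ still holds --- but the reason is the transitivity of $\sim$ along the path, not disjointness of interiors. Worth a sentence. Otherwise the argument is complete.
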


Denote by $G_p$ the set of pairs of points $(x,y)\in {\rm int}(D_{1})\times{\rm int}(D_{n})$ such that $x\neq -y$ and $\Span(x,y)\cap F_q=\emptyset$ for any subpath $q\subset p$ such that $F_q$ has codimension at least $3$.

\begin{fact}
\label{obs:injetcvx:Gopendenseconn}
 $G_p$ is an open, dense and connected subset of ${\rm int}(D_{1})\times{\rm int}(D_{n})$.
\end{fact}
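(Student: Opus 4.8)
The claim is that $G_p$, the set of $(x,y) \in \mathrm{int}(D_1) \times \mathrm{int}(D_n)$ with $x \neq -y$ and $\mathrm{Span}(x,y) \cap F_q = \emptyset$ for every subpath $q \subset p$ whose face $F_q$ has codimension $\geq 3$, is open, dense, and connected in $\mathrm{int}(D_1) \times \mathrm{int}(D_n)$. The plan is to treat this as a routine ``generic position'' statement.

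First, openness: the conditions defining $G_p$ are open conditions. The condition $x \neq -y$ is clearly open. For each fixed subpath $q$ with $\mathrm{codim}(F_q) \geq 3$, the set of pairs $(x,y)$ with $x$ not antipodal to $y$ and $\mathrm{Span}(x,y)$ (a projective line, i.e. a great circle minus nothing, determined continuously by the pair) disjoint from the compact set $F_q$ is open, because ``a line avoids a fixed compact set'' is an open condition on the line, and the line depends continuously on $(x,y)$ away from the antipodal locus. Since $p$ has finitely many subpaths, $G_p$ is a finite intersection of open sets, hence open.

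Second, density: fix $(x_0, y_0) \in \mathrm{int}(D_1) \times \mathrm{int}(D_n)$; I want to perturb it into $G_p$. Since $\mathrm{int}(D_n)$ is open, I can first perturb $y_0$ slightly so that $x_0 \neq -y_0$. Now for each of the finitely many relevant faces $F_q$, the ``bad'' set of $y$ (with $x_0$ fixed) for which $\mathrm{Span}(x_0, y)$ meets $F_q$ is contained in the cone over $F_q$ from $x_0$, which is the image of $F_q \times [x_0, \cdot]$ and has dimension $\leq \dim F_q + 1 = d - \mathrm{codim}(F_q) + 1 \leq d - 2 < d = \dim \mathrm{int}(D_n)$; hence it is nowhere dense in $\mathrm{int}(D_n)$. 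A finite union of nowhere dense sets is nowhere dense, so one can choose $y$ arbitrarily close to $y_0$ avoiding all of them, giving a point of $G_p$ arbitrarily close to $(x_0, y_0)$.

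Third, connectedness: here is the only point requiring a little care, and it is the expected main (minor) obstacle. The complement of $G_p$ inside $\mathrm{int}(D_1) \times \mathrm{int}(D_n)$ (a connected manifold of dimension $2d$) is contained in a finite union of subsets each of codimension $\geq 2$: the antipodal locus $\{x = -y\}$ has codimension $d \geq 2$, and for each relevant $F_q$ the incidence set $\{(x,y) : \mathrm{Span}(x,y) \cap F_q \neq \emptyset\}$ is (off the antipodal locus) the image of a manifold of dimension $\dim F_q + 2 \leq (d-2) + 2 = d$ fibering over $\mathrm{int}(D_1)$ — more precisely, fixing $x$, the slice has dimension $\leq \dim F_q + 1 \leq d - 1$, so the total incidence set has dimension $\leq d + (d-1) = 2d - 1$; intersecting with the requirement that it also lie over $\mathrm{int}(D_n)$ and checking it is actually of codimension $\geq 2$ requires noting that generically the line through $x$ and a point of $F_q$ meets $\mathrm{int}(D_n)$ in a set of dimension one less than expected, OR simply arguing directly. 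The cleanest route is: removing from a connected open subset of $\sph^d \times \sph^d$ a finite union of locally closed submanifolds of codimension $\geq 2$ leaves it connected (standard transversality/general position argument, since codimension-$2$ sets do not separate). Applying this gives that $G_p$ is connected, completing the proof.
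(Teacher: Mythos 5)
The paper states this as a Fact with no proof, so there is no argument to compare against; your task is to confirm it is routine, and your approach is the standard and presumably intended one. Openness and density are fine. The only issue is a small but confusing arithmetic slip in the connectedness step: you repeatedly bound $\dim F_q$ by $d-2$ (\eg ``$\dim F_q + 1 \le d-1$'' and ``$\dim F_q + 2 \le (d-2)+2$''), but since $\operatorname{codim}(F_q) \ge 3$ the correct bound is $\dim F_q \le d-3$. With that fix, the slice of the incidence set over a fixed $x$ — the cone on $F_q$ from $x$, which is exactly what you already used for density — has dimension $\le d-2$, so each incidence set has dimension $\le 2d-2$, codimension $\ge 2$. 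This removes the off-by-one that prompted your hedging sentence (``checking it is actually of codimension $\ge 2$ requires noting\ldots''), which is not needed.

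One more minor point of hygiene: the incidence sets $\{(x,y) : \Span(x,y)\cap F_q \ne \emptyset\}$ need not be submanifolds, so the phrasing ``locally closed submanifolds of codimension $\ge 2$'' overstates what you have. What you actually have is finitely many closed subsets (off the antipodal locus) each covered by the image of a smooth map from a manifold of dimension $\le 2d-2$. That suffices: by a parametric transversality (general position) argument, a generic path between two points of $G_p$ inside the connected open set $\bigl(\operatorname{int}(D_1)\times\operatorname{int}(D_n)\bigr)\smallsetminus\{x=-y\}$ avoids such sets, so $G_p$ is path-connected.
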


Now comes the main ingredient, which makes use of the assumptions~\eqref{item:hypi1} and \eqref{item:hypi2}. Denote $F_i=D_{i}\cap D_{{i+1}}$ for any $1\leq i<n$.

\begin{lemma}\label{lem:injetcvx}
 For any $(x,y)\in G_p$, any segment of $X$ from $x|v_1$ to $y|v_n$ and going successively through $D_{1}|v_1,\dots,D_{n}|v_n$ is contained in 
 \[
 {\rm int}(D_{1})|v_1\cup {\rm int}(F_1)|v_1\cup {\rm int}(D_{2})|v_2\cup\dots\cup{\rm int}(F_{n-1})|v_{n-1}\cup{\rm int}(D_{n})|v_n.
 \]
\end{lemma}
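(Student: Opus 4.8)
The plan is to proceed by induction on the length $n$ of the path $p$. The base case $n=1$ is immediate: a segment from $x|v_1$ to $y|v_1$ with $x,y\in{\rm int}(D_1)$ projects to $[x,y]\subset{\rm int}(D_1)$ by convexity of $D_1$, so there is nothing in the boundary. For the inductive step, suppose a segment $s$ goes successively through $D_1|v_1,\dots,D_n|v_n$, and let $x_1,\dots,x_{n-1}$ be the successive transition points with $x_i\in F_i=D_{v_i}\cap D_{v_{i+1}}$. The key claim is that for $(x,y)\in G_p$, \emph{none of the transition points $x_i$ can lie on $\partial F_i$}; once this is known, each $x_i$ is in ${\rm int}(F_i)$, and then on each piece $[x_{i-1},x_i]$ (with $x_0=x$, $x_n=y$) we must argue the interior of the segment stays in ${\rm int}(D_{v_i})$ rather than touching $\partial D_{v_i}$ at an interior time.

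First I would nail down the statement "$x_i\notin\partial F_i$". Suppose for contradiction that some $x_i\in\partial F_i$. By assumption~\eqref{item:hypi1} applied to the edge $e_i=[v_i,v_{i+1}]$, every point of $\partial F_i$ lies on the boundary hyperplane $\Pi$ of a half-space $H$ containing $D_{v_i}\cup D_{v_{i+1}}$; in particular $x_i\in\Pi$. Now the segment $s$ enters $\Pi$ at the point $x_i$ while having its endpoints $x,y$ in the \emph{open} cells ${\rm int}(D_{v_1}),{\rm int}(D_{v_n})$, both of which lie in $H$. Since $\pi(s)$ is a genuine projective segment touching the supporting hyperplane $\Pi$ of a convex set containing all the intermediate cells, $\pi(s)$ must in fact lie entirely inside $\Pi$ — otherwise part of it would exit $H$, contradicting that $s$ passes only through cells $D_{v_j}\subset H$. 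But if $\pi(s)\subset\Pi$ then $x,y\in\Pi$, and $\Pi\cap D_{v_1}$, $\Pi\cap D_{v_n}$ are faces of positive codimension, contradicting $x\in{\rm int}(D_{v_1})$, $y\in{\rm int}(D_{v_n})$ (using $x\neq -y$ so that the segment is well-defined and non-degenerate). Hence $x_i\in{\rm int}(F_i)$ for all $i$. A parallel but slightly more delicate argument, using assumption~\eqref{item:hypi2} for the paths $q$ whose $F_q$ has codimension $2$ and the condition $\Span(x,y)\cap F_q=\emptyset$ for codimension $\ge 3$ strata built into the definition of $G_p$, rules out the segment meeting a corner or higher-codimension stratum in its interior.

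It then remains to see that on each sub-segment $[x_{i-1},x_i]$ the open part stays in ${\rm int}(D_{v_i})$. This is where I expect the main obstacle to lie: one needs to exclude the segment grazing $\partial D_{v_i}$ along a face that is \emph{not} one of the walls $F_{i-1},F_i$. Here is the argument I would run: if $[x_{i-1},x_i]$ met $\partial D_{v_i}$ at some interior time $t$, pick a supporting hyperplane $\Pi'$ of $D_{v_i}$ at that point. Either $\Pi'$ also supports the whole chain (e.g.\ if the contact face extends to a wall, use \eqref{item:hypi1}, or if it's a corner, use \eqref{item:hypi2}), in which case the previous paragraph's contradiction recurs; or the contact face $F_{D_{v_i}}(\pi(s)(t))$ is not among the recorded strata, but then one shows the corresponding endpoint $x_{i-1}$ or $x_i$ would have to lie in that same face of $D_{v_i}$ — again contradicting $x_{i-1}\in{\rm int}(F_{i-1})\subset{\rm int}(\text{relative to the appropriate span})$, or pushing the problem back to a shorter path where the inductive hypothesis applies. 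Combining: the whole segment $s$ avoids $\bigcup_j\partial F_j$ and $\bigcup_j(\partial D_{v_j}\setminus(F_{j-1}\cup F_j))$, which is exactly the assertion of the lemma. I would organize the writeup so that the "segment must lie in the supporting hyperplane" observation is isolated as a short sub-claim, since it is used several times.
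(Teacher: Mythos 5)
Your overall plan is the right one: the whole lemma reduces to showing that the transition points cannot lie on $\partial F_i$, and that is exactly what the paper proves. However the core step of your argument has a genuine gap.

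When you assume $x_i\in\partial F_i$ and invoke Assumption~\eqref{item:hypi1} for the edge $e_i=[v_i,v_{i+1}]$, what you obtain is a closed half-space $H$ with $x_i\in\partial H$ and $D_{v_i}\cup D_{v_{i+1}}\subset H$. You then write that all the $D_{v_j}$ lie in $H$ and in particular that $x\in{\rm int}(D_{v_1})\subset H$ and $y\in{\rm int}(D_{v_n})\subset H$, and deduce $\pi(s)\subset\partial H$, hence $x,y\in\partial H$, a contradiction. But Assumption~\eqref{item:hypi1} says nothing about $D_{v_j}$ for $j\notin\{i,i+1\}$, so the endpoints $x,y$ need not lie in $H$ and the contradiction does not go through. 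The half-space obtained from~\eqref{item:hypi1} is a \emph{local} object attached to one edge, not a global supporting half-space of $\bigcup_j D_{v_j}$ (which, at this stage of the argument, is not even known to be convex).

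The fix, which is the paper's argument, stays local. Take $i$ minimal with $z_i\in\partial F_i$, so $z_{i-1}\in{\rm int}(F_{i-1})$ (or $z_0=x\in{\rm int}(D_{v_1})$). Pick $a\in]z_{i-1},z_i[\subset{\rm int}(D_{v_i})\subset{\rm int}(H)$, and note that $z_{i+1}\in D_{v_{i+1}}\subset H$. Since $a,z_i,z_{i+1}$ are aligned in this order on $\pi(s)$, we get $z_i\in]a,z_{i+1}[\subset{\rm int}(H)$, contradicting $z_i\in\partial H$. No statement about $D_{v_1}$ or $D_{v_n}$ is needed. In the case $z_i=z_{i+1}=\dots=z_{j-1}\neq z_j$ one runs the same argument with the half-space supplied by Assumption~\eqref{item:hypi2} for the subpath $q=(v_i,\dots,v_{j-1})$ (this is where $(x,y)\in G_p$ is used: it guarantees $F_q$ has codimension exactly $2$), taking $a\in]z_{i-1},z_i[$ and $z_j$ as before. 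Your proposal leaves this codimension-$2$ case as ``a parallel but slightly more delicate argument''; note that it suffers from the same gap as the codimension-$1$ case, since~\eqref{item:hypi2} again only gives a half-space containing $D_{v_i}\cup\dots\cup D_{v_{j-1}}$.

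Two smaller remarks. First, the induction on $n$ you set up is never actually used: your main argument is a direct one (and so is the paper's). Second, the worry in your last paragraph about the segment grazing a face of $D_{v_i}$ other than $F_{i-1}$ or $F_i$ is resolved by a standard convexity fact, not by reusing Assumptions~\eqref{item:hypi1}--\eqref{item:hypi2}: once $z_{i-1}\in{\rm int}(F_{i-1})$ and $z_i\in{\rm int}(F_i)$ with $F_{i-1}\neq F_i$, the open arc $]z_{i-1},z_i[$ must lie in ${\rm int}(D_{v_i})$, because a segment in $\partial D_{v_i}$ with one endpoint in the relative interior of a face is contained in that face.
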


\begin{proof}
 Let $s$ be a segment from $x|v_1$ to $y|v_n$ and going successively through $D_{1}|v_1,\dots,D_{n}|v_n$.
 Pick $z_1|v_1,\dots, z_{n-1}|v_{n-1}\in s$ in this order such that $z_i|v_i=z_i|v_{i+1}\in F_i|v_i$ for any $1\leq i<n$.
 It is enough to show that $z_i\in{\rm int}(F_i)$ for any $1\leq i<n$. 
 Let us assume by contradiction that $z_i\in\partial F_i$ for some $i$.
 We may assume that $z_j\in{\rm int}(F_j)$ for any $1\leq j<i$.
 Pick $a\in ]z_{i-1},z_i[\subset{\rm int}(D_{i})$, where we may need the notation $z_{0}:=x$. Set also $z_n:=y$.
 
If $z_i\neq z_{i+1}$, then by Assumption~\eqref{item:hypi1} we may find a closed half-space $H\subset \sph^{\dimd}$ which contains $D_{i}\cup D_{{i+1}}$ and whose boundary contains $z_i$. We have $z_{i+1}\in H$ and $a\in{\rm int}(H)$ since $a\in{\rm int}(D_{i})$. Hence $z_i\in]a,z_{i+1}[\subset{\rm int}(H)$, which is a contradiction.
 
 Therefore $z_i=z_{i+1}$. Let $i+1<j\leq n$ be such that $z_i=z_{i+1}=\dots=z_{j-1}\neq z_j$. Set $q=(v_i,v_{i+1},\dots,v_{j-1})\subset p$. 
 By definition, $z_i\in F_q$, which has codimension $2$ since $(x,y)\in G_p$. 
 By Assumption~\eqref{item:hypi2}, there exists a closed half-space $H\subset\sph^{\dimd}$ which contains $D_{i}\cup\dots\cup D_{{j-1}}$ and whose boundary contains $z_i$. We have $z_j\in D_{{j-1}}\subset H$ and $a\in{\rm int}(D_{i})\subset{\rm int}(H)$, hence $z_i\in]a,z_j[\subset{\rm int}(H)$, which is a contradiction.
\end{proof}

Facts~\ref{obs:injetcvx:localhomeo}, \ref{obs:injetcvx:Sclosed} and \ref{obs:injetcvx:Gopendenseconn}, and Lemma~\ref{lem:injetcvx}  have the following consequence.
 
\begin{cor}\label{cor:injetcvx}
 $G_p\cap S_p$ is clopen in $G_p$. Thus, if it is non-empty then $S_p=D_{1}\times D_{n}$.
\end{cor}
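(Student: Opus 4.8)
The plan is to show that $G_p\cap S_p$ is both open and closed in $G_p$, and then invoke connectedness (Fact~\ref{obs:injetcvx:Gopendenseconn}) together with the density of $G_p$ in ${\rm int}(D_1)\times{\rm int}(D_n)$ and the closedness of $S_p$ (Fact~\ref{obs:injetcvx:Sclosed}) to upgrade non-emptiness of $G_p\cap S_p$ to the equality $S_p=D_1\times D_n$.

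First I would check that $G_p\cap S_p$ is closed in $G_p$: this is immediate from Fact~\ref{obs:injetcvx:Sclosed}, since $S_p$ is closed in $D_1\times D_n$ and hence $G_p\cap S_p$ is closed in $G_p$ with the subspace topology. Next I would argue that $G_p\cap S_p$ is open in $G_p$. Take $(x,y)\in G_p\cap S_p$ and a witnessing segment $s\subset X$ from $x|v_1$ to $y|v_n$ going successively through $D_1|v_1,\dots,D_n|v_n$. By Lemma~\ref{lem:injetcvx}, $s$ lies entirely in the open set
\[
W:={\rm int}(D_1)|v_1\cup {\rm int}(F_1)|v_1\cup \dots\cup{\rm int}(F_{n-1})|v_{n-1}\cup{\rm int}(D_n)|v_n\subset X',
\]
where $X'$ is the open locus on which $\pi$ restricts to a local homeomorphism (Fact~\ref{obs:injetcvx:localhomeo}). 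Since $\pi|_{X'}$ is a local homeomorphism and $s$ is a compact segment mapping injectively (and locally homeomorphically onto a straight segment of $\sph^\dimd$) into $W$, a standard tubular-neighborhood/lifting argument shows that for every $(x',y')$ in a neighborhood of $(x,y)$ in ${\rm int}(D_1)\times{\rm int}(D_n)$, the straight segment $[\pi(x'),\pi(y')]\subset\sph^\dimd$ lifts to a segment of $X$ from $x'|v_1$ to $y'|v_n$ that still passes successively through the $D_i|v_i$ (the crossing points $z_i$ depend continuously on the endpoints and stay in ${\rm int}(F_i)$). Intersecting this neighborhood with $G_p$ shows $(x',y')\in G_p\cap S_p$, so $G_p\cap S_p$ is open in $G_p$.

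Since $G_p$ is connected (Fact~\ref{obs:injetcvx:Gopendenseconn}), $G_p\cap S_p$ is therefore either empty or all of $G_p$. Assume it is non-empty; then $G_p\subset S_p$. As $G_p$ is dense in ${\rm int}(D_1)\times{\rm int}(D_n)$ and $S_p$ is closed in $D_1\times D_n$, we get ${\rm int}(D_1)\times{\rm int}(D_n)\subset S_p$; taking closures once more and using again that $S_p$ is closed gives $S_p=\overline{{\rm int}(D_1)\times{\rm int}(D_n)}=D_1\times D_n$ (the closure of the interior of a convex set with non-empty interior is the whole convex set). This proves the corollary. The main technical point — the one requiring a little care rather than being purely formal — is the openness step, namely justifying that a segment successively crossing the walls, with all crossings in the \emph{relative interiors} of the walls, persists under small perturbation of its endpoints; this is exactly where Lemma~\ref{lem:injetcvx} (forcing $z_i\in{\rm int}(F_i)$) is used, so that the local-homeomorphism property of $\pi$ on $X'$ can be applied uniformly along the compact segment.
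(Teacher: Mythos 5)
Your proof is correct and fills in exactly the details that the paper leaves implicit: the paper simply asserts that Facts~\ref{obs:injetcvx:localhomeo}, \ref{obs:injetcvx:Sclosed}, \ref{obs:injetcvx:Gopendenseconn} and Lemma~\ref{lem:injetcvx} ``have the following consequence'' without spelling out the clopenness argument. Your decomposition — closedness from Fact~\ref{obs:injetcvx:Sclosed}, openness by lifting perturbed segments along the compact set $s\subset X'$ using Lemma~\ref{lem:injetcvx} to keep crossings in ${\rm int}(F_i)$ and Fact~\ref{obs:injetcvx:localhomeo} for the local-homeomorphism property, then connectedness of $G_p$ and density plus closedness of $S_p$ — is exactly the intended route, and the step you flag as the one ``requiring a little care'' (persistence of the successive-crossing structure under small perturbation of endpoints) is indeed where the substance lies.
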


Let us prove that if $G_p\cap S_p$ is non-empty, then so is $G_q\cap S_q$ for any extension $q=(p,v_{n+1})$ such that $v_{n+1}\neq v_{n-1}$.
 
Set $F_n:=D_{n}\cap D_{{n+1}}$.
For dimensions reasons, we can find $(x,y)\in {\rm int}(D_{1})\times{\rm int}(F_n)$ such that $x\neq -y$ and $\Span(x,y)\cap F_{q'}=\emptyset$ for any subpath $q'\subset q$ such that $F_{q'}$ has codimension at least $3$, and $\Span(x,y)$ is transverse to $F_n$.
Then any segment from $x|v_1$ to $y|v_n$ going successively through $D_{1}|v_1,\dots,D_{n}|v_n$ (it exists by Corollary~\ref{cor:injetcvx}) may be extended to a point $y'|v_{n+1}\in {\rm int}(D_{{n+1}})|v_{n+1}$ such that $x\neq-y'$.
One checks that $(x,y')\in S_q\cap G_q$, which is therefore non-empty. 

\end{proof}

\subsection{The geometry of $X$}
Let us fix for the whole section a gluing kit $\cal G=(\cal V,\cal E)$ with $X=\bigcup_{D\in\cal V}D$.  

We now describe the geometry of the convex set $X$. In particular, we relate the stratification of the boundary to the shape of the cells. As it turns out, we have three distinct types of faces:
\begin{enumerate}[(i)]
\item{Some faces of the tiles are also faces of $X$.}
\item{Limits of {\em fans} of tiles sharing a common codimension 2 face.}
\item{Limits of {\em telescopes} of tiles containing infinitely many pairwise disjoint walls.}
\end{enumerate}
We will define them later on in this section and we will analyze in detail the last two types under additional assumptions of periodicity, satisfied for example if the tiling is preserved by a group $\Gamma$ acting cocompactly on $X$.

\subsubsection{The interior of $X$}
Let us start with the interior of $X$.

\begin{prop}\label{prop:intX}
 The interior of $X$ is
 \[
 \bigcup_{v\in\cal V}{\rm int}(D_v) \cup \bigcup_{e\in\cal E}{\rm int}(F_e).
 \]
\end{prop}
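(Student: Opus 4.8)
Write $Y:=\bigcup_{v\in\cal V}\interior{D_v}\cup\bigcup_{e\in\cal E}\interior{F_e}$ (relative interiors), so that the claim is $\interior X=Y$, all interiors being taken in $\sph^\dimd$ since every $D_v$, hence $X$, has non-empty interior there. The inclusion $Y\subseteq\interior X$ is immediate: $Y$ is exactly $\pi(X')$ in the notation of Fact~\ref{obs:injetcvx:localhomeo}, and since $\pi|_{X'}$ is a local homeomorphism into $\sph^\dimd$ it is an open map, so $Y$ is open in $\sph^\dimd$; being contained in $X$, it is contained in $\interior X$.

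For the reverse inclusion I would use that $X$ is convex (Proposition~\ref{prop:injetcvx}), hence $\interior X$ is connected; thus it suffices to show that $Y$ is \emph{closed} in $\interior X$, for then $Y$ is a non-empty clopen subset of the connected set $\interior X$ and equals it. So fix $p\in\interior X$ lying in the closure of $Y$; we must place $p$ in $Y$. Since $X=\bigcup_v D_v$ there is $v$ with $p\in D_v$; if $p\in\interior{D_v}$ for one such $v$ we are done, so assume $p\in\partial D_v$ for every $v$ in $V_p:=\{v\in\cal V:\ p\in D_v\}$, and the goal becomes to exhibit an edge $e$ with $p\in\interior{F_e}$.

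The plan is to pass to tangent cones at $p$. First, $V_p$ is a connected subtree of $\cal G$: if $v,w\in V_p$, the two formal copies of $p$ in $D_v\times\{v\}$ and $D_w\times\{w\}$ have the same image under the injective map $\pi$, hence are identified in $X$, and by the definition of $\sim$ this identification runs along a chain of adjacent cells each containing $p$. Set $C_v:=T_pD_v\subseteq T_p\sph^\dimd$ for $v\in V_p$: these are closed convex cones, \emph{proper} (i.e.\ $\neq T_p\sph^\dimd$, as $p\in\partial D_v$) and with pairwise disjoint interiors (again because $\pi$ is injective). Moreover they \emph{cover} $T_p\sph^\dimd$: cells with $v\notin V_p$ are closed and avoid $p$, hence stay at positive distance from $p$, so a neighbourhood of $p$ in $X$ is covered by $\{D_v\}_{v\in V_p}$, and taking tangent cones yields $\bigcup_{v\in V_p}C_v=T_p\sph^\dimd$. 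In particular $V_p$ is countable, since $T_p\sph^\dimd$ is second countable and the $\interior{C_v}$ are disjoint and non-empty. Finally, for adjacent $v,w\in V_p$ one has $C_v\cap C_w=T_pF_{[v,w]}$, and $p\notin\interior{F_{[v,w]}}$ (our assumption, recalling that $p\in F_e$ forces $e$ to be an edge of $V_p$) says precisely that $T_pF_{[v,w]}$ spans a subspace of codimension $\ge 2$; combining this with the fact that two cells of a gluing kit whose intersection has codimension $1$ are necessarily adjacent, \emph{every} pairwise intersection $C_v\cap C_w$ ($v\neq w$ in $V_p$) spans a subspace of codimension $\ge 2$.

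To conclude, let $Z\subseteq T_p\sph^\dimd$ be the union of the countably many codimension-$\ge 2$ subspaces $\Span(C_v\cap C_w)$; then $T_p\sph^\dimd\smallsetminus Z$ is connected. Every point $x\notin Z$ lies in $\interior{C_v}$ for a \emph{unique} $v\in V_p$: it lies in some $C_v$ since these cover, and if it lay in $\partial C_v$ then a Baire-category argument on a small ball about $x$ (using the covering property) would make $\interior{C_w}$ accumulate at $x$ for some $w\neq v$, forcing $x\in C_v\cap C_w\subseteq Z$. Hence $T_p\sph^\dimd\smallsetminus Z=\bigsqcup_{v\in V_p}(\interior{C_v}\smallsetminus Z)$ is a partition into relatively open sets; by connectedness exactly one is non-empty, so $\interior{C_v}\supseteq T_p\sph^\dimd\smallsetminus Z$ for a single $v$, whence $C_v=T_p\sph^\dimd$, contradicting properness of $C_v$. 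This contradiction shows the standing assumption was untenable, i.e.\ $p\in Y$. The delicate step — and the one I expect to take real work — is the covering claim $\bigcup_{v\in V_p}C_v=T_p\sph^\dimd$ (as opposed to mere density): a priori infinitely many cells may cluster near $p$, and ruling out uncovered wedges near a corner, as well as the auxiliary fact that a codimension-$1$ intersection of two cells occurs only for adjacent vertices, is exactly where the local convexity hypotheses~\eqref{item:hypi1} and \eqref{item:hypi2} must be used in an essential way.
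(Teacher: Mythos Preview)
Your proposal has a genuine gap, which you yourself flag at the end: the covering claim $\bigcup_{v\in V_p}C_v=T_p\sph^\dimd$ is not established. The justification you offer earlier (``cells with $v\notin V_p$ are closed and avoid $p$, hence stay at positive distance from $p$'') is precisely what fails when infinitely many cells accumulate near $p$, and that is the only case with content. The auxiliary fact that a codimension-$1$ intersection of two cells forces adjacency is likewise asserted but not proved. Without these two ingredients the argument does not close.

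The paper's proof sidesteps all of this with a short reduction to the finite case. When $\cal G$ is finite, only finitely many closed cells are in play, so the complement of $\bigcup_{v\notin V_p}D_v$ is a genuine neighbourhood of $p$; your tangent-cone argument (or a direct inspection of the boundary of the finite union) then goes through without difficulty. For general $\cal G$, write it as an increasing union of finite connected subgraphs $\cal G_n$ and set $X_n:=\bigcup_{v\in\cal V_n}D_v$; each $(\cal G_n,\{D_v\}_{v\in\cal V_n})$ inherits the hypotheses of Proposition~\ref{prop:injetcvx}, so each $X_n$ is convex. For an increasing union of convex sets with non-empty interior one always has $\interior X=\bigcup_n\interior{X_n}$: any point of $\interior X$ lies in the interior of a small simplex with vertices in $X$, these finitely many vertices lie in some common $X_n$, and convexity of $X_n$ puts the whole simplex inside $X_n$. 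Applying the finite case to each $X_n$ and taking the union over $n$ gives the result immediately.
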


\begin{proof}
It is clear if $\cal G$ is finite. 
In the general case write $\cal G$ as the union of an increasing sequence of finite subgraphs $\cal G_n$, and set $X_n:=\bigcup_{v\in\cal V_n}D_v$ for each $n$. As ${\rm int}(X)=\bigcup_n{\rm int}(X_n)$, this finishes the proof.
\end{proof}

\begin{cor}
 Any segment of $\bar X$ that intersects non-trivially a wall has to be contained in it.
\end{cor}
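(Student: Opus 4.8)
Let $\sigma$ be a segment of $\bar X$ whose intersection with a wall $F_e = D_v\cap D_w$ (here $e=[v,w]$ is an edge of the tree $\cal G$) contains two distinct points $x\neq y$; the goal is $\sigma\subseteq F_e$. The plan rests on the observation that a wall spans a hyperplane which the whole tessellation $X$ meets \emph{only in that wall}, so that a segment with two points in $F_e$ must lie in that hyperplane and hence, after a short point-set argument, in $F_e$. First I would record that, since $F_e$ is a codimension-$1$ face of $D_v$ (and of $D_w$), its span $\cal H:=\Span(F_e)$ is a genuine hyperplane, it is the unique supporting hyperplane of $D_v$ along $F_e$, and $D_v\cap\cal H=D_w\cap\cal H=F_e$. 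Because $D_v\cup D_w$ is convex (Proposition~\ref{prop:injetcvx} applied to the sub-kit on $\{v,w\}$) and $D_v\cap D_w=F_e\subseteq\cal H$, the interiors ${\rm int}(D_v),{\rm int}(D_w)$ lie in the two \emph{distinct} open half-spaces $\cal H^\pm$ bounded by $\cal H$: if they lay in the same one, a segment from a point of ${\rm int}(D_v)$ to a point of ${\rm int}(D_w)$ inside the convex set $D_v\cup D_w$ would avoid $\cal H$, hence avoid $F_e=D_v\cap D_w$, contradicting that its intersections with the closed sets $D_v$ and $D_w$ are nonempty and cover it.

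\textbf{The hyperplane meets the tessellation only in the wall.} Removing $e$ from the tree $\cal G$ splits $\cal V$ into two parts whose unions of tiles $X_v\supseteq D_v$ and $X_w\supseteq D_w$ are convex (Proposition~\ref{prop:injetcvx} applied to the two subtrees), satisfy $X=X_v\cup X_w$, and satisfy $X_v\cap X_w=F_e$: by Remark~\ref{rem:succession} any segment joining a tile of one side to a tile of the other goes successively through $D_v$ and $D_w$, hence through $F_e$, while a point lying in tiles on both sides lies, along a connecting chain of tiles, in $D_v\cap D_w$. The argument of the previous paragraph — now using that a compact segment meets only finitely many tiles, so that its intersections with $X_v$ and $X_w$ are closed — shows that $\cal H$ is a supporting hyperplane of $X_v$ and of $X_w$: otherwise $X_v$ would contain a point $q$ in the open half-space $\cal H^-$, and for $p\in{\rm int}(D_w)\subseteq\cal H^-$ the segment $[p,q]\subseteq X$ would stay in $\cal H^-$, avoiding $F_e$, contradicting connectedness of $[p,q]=([p,q]\cap X_v)\cup([p,q]\cap X_w)$. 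Finally $X_v\cap\cal H=F_e$: if $q\in X_v\cap\cal H$ lay outside $D_v$, a segment from $p\in{\rm int}(D_w)$ to $q$ going successively through $D_w,D_v,\dots$ (Remark~\ref{rem:succession}) would meet $\cal H$ both along $F_e\subseteq D_w\cap D_v$ and at $q$, and since a projective line not contained in $\cal H$ meets $\cal H$ in one point this forces $q\in F_e$. Symmetrically $X_w\cap\cal H=F_e$, so $X\cap\cal H=F_e$.

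\textbf{Conclusion.} The projective line $L$ through $x$ and $y$ is contained in $\cal H$, so $\sigma\subseteq L\cap\bar X$ and in particular $\sigma\cap X\subseteq L\cap(X\cap\cal H)=L\cap F_e$. Since the relative interior of $F_e$ lies in ${\rm int}(X)$ (Proposition~\ref{prop:intX}), whenever $\sigma$ meets ${\rm int}(F_e)$ the line $L$ meets ${\rm int}(X)$, so $L\cap\bar X=\overline{L\cap X}=\overline{L\cap F_e}=L\cap F_e\subseteq F_e$ ($F_e$ being closed), giving $\sigma\subseteq F_e$. The remaining case, where $\sigma\cap F_e$ is a nondegenerate segment contained in the relative boundary $\partial F_e$, is treated by the same mechanism one dimension down: such a segment lies in the linear span of a proper face $F'$ of $F_e$, and running the argument of the second paragraph inside $\cal H$ shows $X\cap\Span(F')=F'$, allowing an induction on the dimension of the face.

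\textbf{Main obstacle.} The conceptual core — that $\cal H=\Span(F_e)$ meets the whole tessellation only in $F_e$ — is the heart of the argument, and its proof is the main work: it requires combining the tree combinatorics of $\cal G$ (splitting along $e$, Remark~\ref{rem:succession}) with the local convexity hypotheses of Proposition~\ref{prop:injetcvx} and a mild local-finiteness property so that compact segments are covered by finitely many tiles. The secondary difficulty is purely point-set: passing from the clean identity $X\cap\cal H=F_e$ to the statement about $\bar X$ and dealing with the degenerate case in which $\sigma$ touches the wall only along its relative boundary, where one uses that walls (and their faces) are closed and applies Proposition~\ref{prop:intX}.
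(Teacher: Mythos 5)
Your argument is correct in its main lines, but it re-derives by hand a fact that the paper extracts in one line from Proposition~\ref{prop:intX}, and it adds complications at the end that can be avoided. The paper's proof observes that $\partial W\subset\partial X$ (because by Proposition~\ref{prop:intX} a point of $\partial W$ cannot lie in any ${\rm int}(D_v)$ nor in any ${\rm int}(F_{e'})$) and that ${\rm int}(W)\subset{\rm int}(X)$. These two facts immediately give $\Span(W)\cap\bar X=W$: for $z\in\Span(W)\cap\bar X$ and $p\in{\rm int}(W)\subset{\rm int}(X)$, the half-open segment $[p,z)$ lies in ${\rm int}(X)\cap\Span(W)$, so it cannot cross $\partial W\subset\partial X$, forcing $[p,z)\subset W$ and hence $z\in W$ since $W$ is closed. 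Since the segment $s$ lies in $\Span(W)\cap\bar X$, it is in $W$.

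Your route is genuinely different: you split the tree $\cal G$ at the edge $e$ and use Remark~\ref{rem:succession} together with Proposition~\ref{prop:injetcvx} to show that $\cal H=\Span(F_e)$ supports both convex halves $X_v,X_w$, and that $X\cap\cal H=F_e$. This works, and gives a more ``combinatorial'' picture, but it is considerably longer and two of its steps deserve a comment. First, you do not actually need the ``mild local-finiteness'' you flag: to show $X_v\subset\overline{\cal H^+}$, take $p\in{\rm int}(D_w)$ and a putative $q\in X_v\cap\cal H^-$; by Remark~\ref{rem:succession} the segment $[p,q]$ passes successively through the cells on the geodesic from $w$ to the vertex containing $q$, and since this geodesic crosses the edge $e$, the segment must meet $F_e\subset\cal H$, contradicting $[p,q]\subset\cal H^-$ directly. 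Second, the case split in your conclusion (whether $\sigma$ meets ${\rm int}(F_e)$ or only $\partial F_e$, with the second handled by an unfinished induction on the dimension of the face) is unnecessary. Once $X\cap\cal H=F_e$ is established, the same one-line convexity argument as in the paper upgrades it to $\bar X\cap\cal H=F_e$: take $p\in{\rm int}(F_e)\subset{\rm int}(X)$; for any $z\in\bar X\cap\cal H$ the segment $[p,z)$ lies in ${\rm int}(X)\cap\cal H\subset F_e$, so $z\in F_e$. Then $\sigma\subset\cal H\cap\bar X=F_e$ finishes the proof with no cases at all.
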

\begin{proof}
 Consider a segment $s$ intersecting non-trivially a closed wall $W$.
 It is clear that $s\subset \Span(W)$.
 By Proposition~\ref{prop:intX}, $\partial W\subset\partial X$ so $\Span(W)\cap\bar X=W$, hence $s\subset W$.
\end{proof}

We now turn to the boundary $\partial X$.

\subsubsection{The boundary of the cells in $\partial X$}
We first show that some of the faces of $X$ correspond to faces of the tiles.

\begin{lemma}\label{lem:faces of cells}
 Consider a cell $D\in\cal V$. Then 
 \begin{itemize}
 \item{$F\cap D$ is a closed face of $D$ for any closed face $F$ of $\bar X$.}
 \item{If $F$ is a closed face of $D$ that does not intersect any closed wall, then it is also a closed face of $\bar X$.}
 \end{itemize}
\end{lemma}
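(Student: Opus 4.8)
The plan is to work entirely inside a single tile $D\in\cal V$ and use the local-to-global description of $X$ that was established in Proposition~\ref{prop:injetcvx} together with the description of the interior from Proposition~\ref{prop:intX}. Throughout, recall that supporting hyperplanes of $\bar X$ at a boundary point $\xi$ determine the closed face $F_{\bar X}(\xi)$ as the intersection of $\bar X$ with all those supporting hyperplanes, and similarly for $D$.

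For the first bullet, fix a closed face $F$ of $\bar X$ and a point $\xi\in F\cap D$; I want to show $F_D(\xi)=F\cap D$. The inclusion $F_D(\xi)\subset F\cap D$ is the substantive one: a supporting hyperplane $\Pi$ of $\bar X$ at $\xi$ contains $F$ and intersects $\bar X$ only in $\partial X$; since $D\subset \bar X$, the hyperplane $\Pi$ is also a supporting hyperplane of $D$ at $\xi$ (it meets $D$ inside $\partial X$, and since $\partial D\supset D\cap\partial X$ this forces contact only along $\partial D$ — here one uses that $\mathrm{int}(D)\subset\mathrm{int}(X)$, which is Proposition~\ref{prop:intX}). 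Hence every supporting hyperplane of $\bar X$ at $\xi$ is a supporting hyperplane of $D$ at $\xi$, so $F_D(\xi)=\bar D\cap\bigcap_{H\ni\xi \text{ supp.\ }D}H\subset \bar D\cap\bigcap_{\Pi\ni\xi\text{ supp.\ }\bar X}\Pi = D\cap F$. For the reverse inclusion $F\cap D\subset F_D(\xi)$: if $\eta\in F\cap D$ then $\eta$ lies in every supporting hyperplane of $\bar X$ at $\xi$ (because these contain the whole face $F$), and we must see $\eta$ also lies in every supporting hyperplane of $D$ at $\xi$. This is where I expect the only real work: a supporting hyperplane of $D$ at $\xi$ need not a priori be one of $\bar X$. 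The fix is to note that $\xi$ and $\eta$ both lie in the face $F$ of $\bar X$, so the whole segment $[\xi,\eta]$ lies in $F\subset\partial X$; by the Corollary to Proposition~\ref{prop:intX} (``any segment of $\bar X$ meeting a wall lies in it''), and since $[\xi,\eta]\subset D$, the segment stays in the boundary of $D$ relative to $\mathrm{Span}(D)$ — more precisely $[\xi,\eta]\subset\partial X\cap D\subset\partial D$ — and a standard convexity fact then gives that $[\xi,\eta]$ lies in every supporting hyperplane of $D$ through $\xi$ (a supporting hyperplane touching $D$ at an interior point of a boundary segment must contain that segment). Hence $\eta\in F_D(\xi)$.

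For the second bullet, let $F=F_D(\xi)$ be a closed face of $D$ with $F\cap W=\emptyset$ for every closed wall $W$ incident to $D$. I want a supporting hyperplane of $D$ at $\xi$ that is also supporting for $\bar X$; this shows $\xi\in\partial X$ and that $F$ is contained in a face of $\bar X$, and combined with the first bullet ($F_{\bar X}(\xi)\cap D=F_D(\xi)=F$ once we know $F\subset\partial X$) it gives that $F$ is exactly a face of $\bar X$. First, $F\subset\partial X$: since $F$ avoids all walls of $D$, Proposition~\ref{prop:intX} shows $F$ cannot meet $\mathrm{int}(X)$ (the interior of $X$ is the union of cell-interiors and wall-interiors, and $F\subset\partial D$ is disjoint from both relative to this tile), so $F\subset\partial X$. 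Next, choose a supporting hyperplane $\Pi$ of $D$ at $\xi$ with $\Pi\cap D=F$. I claim $\Pi$ is supporting for $\bar X$. Write $\bar X=\bigcup_{D'\in\cal V}D'$; it suffices to check $D'$ lies on the correct closed side of $\Pi$ for each tile $D'$, which I do by induction on the graph distance from $D$ using the gluing-kit Conditions \eqref{item:hypi1} and \eqref{item:hypi2}: for an adjacent tile $D'$ meeting $D$ along a wall $W=D\cap D'$, the hypothesis $F\cap W=\emptyset$ means $\Pi$ does not meet $W$, hence $W$ lies strictly on the open side of $\Pi$ containing $\mathrm{int}(D)$, and then convexity of $D\cup D'$ (Remark~\ref{rk:union of two convex sets}) forces $D'$ onto that same closed side; propagating along geodesics in the tree $\cal G$ and using that a segment from $\xi$ into any $D'$ passes successively through intermediate tiles (Remark~\ref{rem:succession}) keeps everything on one side. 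Therefore $\Pi$ meets $\bar X$ only in $\partial X$, i.e.\ $\Pi$ is a supporting hyperplane of $\bar X$ at $\xi$, which completes the proof.

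The main obstacle I anticipate is the propagation-of-side argument in the second bullet: one must be careful that a supporting hyperplane $\Pi$ of the single tile $D$, chosen only from local data, genuinely has all the other (possibly infinitely many) tiles on one side. The key point making this work is that $F$ is disjoint from every wall of $D$, so $\Pi$ crosses into the interiors of all neighboring tiles transversally to their shared walls, and the gluing-kit convexity conditions then bootstrap side-consistency across the whole tree; writing $\cal G$ as an increasing union of finite subgraphs (as in the proof of Proposition~\ref{prop:intX}) lets one reduce to the finite case where this is a straightforward induction.
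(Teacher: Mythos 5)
Your first-bullet argument needs $\xi$ to be chosen in the \emph{relative interior} of $F\cap D$, not arbitrarily: for a general $\xi\in F\cap D$ the face $F_D(\xi)$ can be strictly smaller than $F\cap D$ (take $\xi$ an extreme point of $F\cap D$), and your reverse-inclusion step rests on $\xi$ being an interior point of a boundary segment, which again requires $\xi$ interior to $F\cap D$. The paper treats this bullet as clear; your argument is the standard one, modulo this fix.

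For the second bullet you take a genuinely different route from the paper, and there are two gaps. First, the existence of a supporting hyperplane $\Pi$ with $\Pi\cap D=F$ requires $F$ to be an \emph{exposed} face of $D$, which is not automatic for convex bodies (a face may be the intersection of $D$ with several supporting hyperplanes without being cut out by any single one) and is not guaranteed in this setting. Second, the propagation does not close beyond graph-distance one. For $D'$ adjacent to $D$ the argument works once filled in: $\Pi\cap W=\emptyset$ forces $\Pi\cap(D\cup D')$ to be the disjoint union of the two closed convex sets $F$ and $\Pi\cap D'$, and a convex set cannot be a disjoint union of two nonempty closed convex pieces, so $\Pi\cap D'=\emptyset$. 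But for $D''$ adjacent to $D'$ you only know $\Pi\cap D'=\emptyset$; then $\Pi\cap(D'\cup D'')=\Pi\cap D''$ is a \emph{single} convex set with no second disjoint piece to contradict, so convexity of $D'\cup D''$ does not prevent $D''$ from straddling $\Pi$. Remark~\ref{rem:succession} does give a segment from $\mathrm{int}(D)$ into $D''$ whose intermediate wall-crossings lie strictly on the correct side of $\Pi$, but that segment can still cross $\Pi$ later inside $D''$. In fact the target of your induction, $\Pi\cap\bar X=F$, is strictly stronger than what the lemma asserts and need not hold: $F$ being a face of $\bar X$ only means $F_{\bar X}(\xi)=F$ as an intersection over \emph{all} supporting hyperplanes of $\bar X$ at $\xi$, and a single $\Pi$ could graze a distant cell while $F$ remains a face.

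The paper avoids both obstacles: it considers the face $F'=F_{\bar X}(\xi)$ of $\bar X$ directly (for $\xi$ in the relative interior of $F$), picks the one wall $W$ that $[p,x[$ crosses for $x\in F'\smallsetminus F$ and $p\in\mathrm{int}(D)$, and shows that $\{y\in F':[p,y]\cap W\neq\emptyset\}$ is clopen, nonempty, and disjoint from $F$, contradicting connectedness of $F'$. This needs no exposedness hypothesis and no propagation along the tree.
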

\begin{proof}
 The first statement is clear.
 
 Consider a closed face $F$ of $D$ that does not intersect any closed wall.
 Let $F'$ be the closed face of $\bar X$ containing $F$. Note that $F'\cap D=F$.
 Suppose by contradiction that there is $x\in F'\smallsetminus F$.
 Pick $p\in{\rm int}(D)$.
 By Proposition~\ref{prop:intX}, $[p,x[$ meets $\partial D$ in the relative interior of some closed wall $W$ of $D$.
 The non-empty subset $A\subset F'$ consisting of points $y$ such that $[p,y]$ meets $W$ is clearly closed and disjoint from $F$.
 By connectedness of $F'$, in order to finish the proof it is enough to check that $A$ is open.
 In particular, it suffices to check that $[p,y]$ meets ${\rm int}(W)$ for any $y\in A$.
 If this was not the case then $[p,y]$ would meet $\partial W$ at $y$ itself by Proposition~\ref{prop:intX}, but then $y$ would lie in $D$ and, hence, in $W\cap F$ which is absurd.
\end{proof}

\begin{cor}\label{cor:extremal points}
 Consider two distinct adjacent cells $D,D'\in\cal V$.
 \begin{itemize}
 \item{If $x$ is an extremal point of $\partial D$ outside closed walls then it is also extremal in $\partial X$.}
 \item{If $x\in D\cap D'$ is an extremal point of $D\cup D'$ outside closed strata then it is also extremal in $\partial X$.}
 \end{itemize}
\end{cor}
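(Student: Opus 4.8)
The plan is to get the first assertion straight from Lemma~\ref{lem:faces of cells}, and to reduce the second one to the first by contracting the edge $[D,D']$ of the gluing kit.

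\emph{First assertion.} Since $x$ is an extremal point of $\partial D$, its closed face $F_D(x)$ is the single point $\{x\}$; as $x$ is outside every closed wall, this face meets no closed wall, so by the second item of Lemma~\ref{lem:faces of cells} it is a closed face of $\bar X$. A one-point closed face of $\bar X$ is precisely an extremal point of $\partial X$, which is what we want.

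\emph{Second assertion.} First, $D\cup D'$ is convex, by Condition~\ref{item:hypi1} of the gluing kit in the form of Remark~\ref{rk:union of two convex sets}. I would then pass to the graph $\cal G'$ obtained from $\cal G$ by contracting $[D,D']$: replace $D$ and $D'$ by the single convex cell $D\cup D'$, keep every other cell, and turn each edge $[D,D_z]$ or $[D',D_z]$ into an edge $[D\cup D',D_z]$; the union of the cells of $\cal G'$ is still $X$. The hypothesis ``$x$ outside closed strata'' forces $D$ and $D'$ to be the only cells of $\cal G$ containing $x$ (a third cell through $x$ would meet $D$ or $D'$ in a further closed stratum), so $x$ lies in no closed wall of $\cal G'$ while being extremal in the cell $D\cup D'$ of $\cal G'$; also $x\in\partial X$, since otherwise $D\cup D'$ would be a neighborhood of $x$ in $\sph^d$, contradicting extremality. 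Granting that $\cal G'$ is again a gluing kit, the first assertion applied to $\cal G'$ concludes.

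\emph{Main obstacle.} The point that really has to be checked is that $\cal G'$ satisfies Conditions~\ref{item:hypi1} and~\ref{item:hypi2}. The essential case is Condition~\ref{item:hypi1} for an edge $[D\cup D',D_z]$: writing $W=D\cap D'$ and $F=D\cap D_z$, a codimension $1$ face of $D$, one must show that the hyperplane $\Span(F)$, which supports $D$, also supports $D\cup D'$ — for then $(D\cup D')\cap\Span(F)=(D\cup D')\cap D_z$ is the desired codimension $1$ face, and the half-space condition along its boundary is inherited from that for $F$. This should follow from the observation that convexity of $D\cup D'$ along $\partial W$ confines $D'$ to the tangent cones of $D\cup D'$ at the points of $\partial W$, and these cones lie in the closed half-space bounded by $\Span(F)$ on the side of $D$; hence so do $D'$ and $D\cup D'$. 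Condition~\ref{item:hypi2} for paths running through $D\cup D'$ is inherited from the corresponding paths of $\cal G$ in the same spirit. If this bookkeeping turns out to be awkward, one can argue the second assertion directly instead: with $F:=F_{\bar X}(x)$, the hypothesis again yields that a neighborhood of $x$ in $\bar X$ agrees with one in $D\cup D'$, so every supporting hyperplane $H$ of $D\cup D'$ at $x$ contains $x$ and has $F$ locally on one side of it; since $x\in\mathrm{relint}(F)$ this forces $\mathrm{aff}(F)\subseteq H$, hence $F\subseteq H$, and intersecting over all such $H$ together with extremality of $x$ in $D\cup D'$ gives $F=\{x\}$.
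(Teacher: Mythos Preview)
Your approach matches the paper's exactly: the first bullet is immediate from Lemma~\ref{lem:faces of cells}, and for the second the paper also contracts the edge $[D,D']$, declaring that ``the tree $\cal G'$ obtained as a quotient of $\cal G$ by replacing $D$ and $D'$ by the single vertex $D\cup D'$ also yields a gluing kit, for which $D\cap D'$ is not a wall,'' and then invokes the first bullet.

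The ``main obstacle'' you flag is real in the sense that the paper asserts $\cal G'$ is a gluing kit without proof. But you are working harder than necessary. The only input from the gluing-kit structure that the proof of Lemma~\ref{lem:faces of cells} actually uses is Proposition~\ref{prop:intX} (the description of $\mathrm{int}(X)$), and that statement for $\cal G'$ follows formally from the one for $\cal G$: merging $D$ and $D'$ just absorbs $\mathrm{int}(D\cap D')$ into the interior of the new cell and leaves the rest of the decomposition untouched. So one does not need to re-verify Conditions~\ref{item:hypi1}--\ref{item:hypi2} for $\cal G'$ from scratch; it suffices to observe that the argument of Lemma~\ref{lem:faces of cells} goes through verbatim with $D\cup D'$ in place of a cell, using Proposition~\ref{prop:intX} for the original $\cal G$. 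Your sketch verifying Condition~\ref{item:hypi1} directly and your alternative local argument are both plausible, but this shortcut is what the paper has in mind.
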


\begin{proof}
 The first statement is a direct application of Lemma~\ref{lem:faces of cells}.
 
 The second statement is an application of the first one.
 Indeed the tree $\cal G'$ obtained as a quotient of $\cal G$ by replacing $D$ and $D'$ by the single vertex $D\cup D'$ also yields a gluing kit, for which $D\cap D'$ is not a wall.
\end{proof}

Finally in order to detect $\cal C^1$ points of $\partial X\smallsetminus X$, we can use the following.

\begin{fact}\label{obs:smooth}
 Consider two distinct adjacent cells $D,D'\in\cal V$.
 \begin{itemize}
 \item{If $x$ is a $\cal C^1$ point of $\partial D$ outside closed walls then it is also $\cal C^1$ in $\partial X$.}
 \item{If $x\in D\cap D'$ is a $\cal C^1$ point of $\partial(D\cup D')$ then it is also $\cal C^1$ in $\partial X$.}
 \end{itemize}
\end{fact}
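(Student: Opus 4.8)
The plan is to reduce both statements to a single principle: if $x$ lies on the boundary of a cell $D$ but outside all closed walls, then every supporting hyperplane of $X$ at $x$ is automatically a supporting hyperplane of $D$ at $x$, because ${\rm int}(D)\subset{\rm int}(X)$ by Proposition~\ref{prop:intX}. Uniqueness of the supporting hyperplane then transfers from $\partial D$ to $\partial X$.

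For the first statement, write $D=D_v$. First I would verify that $x\in\partial X$. Since $x$ lies outside every closed wall, and two distinct cells $D_v$, $D_w$ meet only inside a closed wall --- indeed, by Remark~\ref{rem:succession} a segment joining a point of $D_v$ to a point of $D_w$ passes successively through the cells of the geodesic from $v$ to $w$, which forces $D_v\cap D_w$ to sit inside the first wall on that geodesic --- the point $x$ belongs to the single cell $D$ and to no closed wall, so by Proposition~\ref{prop:intX} and $x\in\partial D$ we get $x\notin{\rm int}(X)$. Now let $H$ be any supporting hyperplane of $X$ at $x$. Then $H\cap\bar D\subset H\cap\bar X\subset\partial X$, and since ${\rm int}(D)\subset{\rm int}(X)$ this intersection misses ${\rm int}(D)$; hence $H\cap\bar D\subset\partial D$, so $H$ supports $D$ at $x$. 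As $x$ is $\cal C^1$ in $\partial D$, necessarily $H=T_x\partial D$, so $X$ admits a unique supporting hyperplane at $x$.

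For the second statement, I would pass to the convex set $D\cup D'$ with $D=D_v$, $D'=D_w$, $e=[v,w]\in\cal E$. Here $D\cup D'$ is convex --- this is exactly Condition~\eqref{item:hypi1} of a gluing kit, via Remark~\ref{rk:union of two convex sets} --- with relative interior ${\rm int}(D)\cup{\rm int}(D')\cup{\rm int}(F_e)$ contained in ${\rm int}(X)$ by Proposition~\ref{prop:intX}. One again checks $x\in\partial X$: the hypothesis that $x$ is $\cal C^1$ in $\partial(D\cup D')$ forces $x\in\partial(D\cup D')$, which excludes $x\in{\rm int}(F_e)$, while any further cell or wall through $x$ would meet $D_v$, hence contain $x$, inside a closed wall or lower stratum, keeping $x$ on $\partial X$. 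Then for any supporting hyperplane $H$ of $X$ at $x$ we have $H\cap\overline{D\cup D'}\subset\partial X\cap\overline{D\cup D'}\subset\partial(D\cup D')$, so $H$ supports $D\cup D'$ at $x$; by $\cal C^1$-ness of $\partial(D\cup D')$ it is unique, whence $x$ is $\cal C^1$ in $\partial X$. (Alternatively one could merge the vertices $v,w$ into a single vertex $D\cup D'$, as in Corollary~\ref{cor:extremal points}, and invoke the first statement.)

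The only genuinely delicate point in both parts is checking $x\in\partial X$, \ie that $x$ does not slip into ${\rm int}(X)$ through a cell or wall other than the ones at hand; this rests on the combinatorial fact --- extracted from the ``segments go successively through cells'' property of Remark~\ref{rem:succession} --- that two distinct cells of a gluing kit overlap only along a closed wall. Everything else is a formal manipulation of supporting hyperplanes together with Proposition~\ref{prop:intX}.
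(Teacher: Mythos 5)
Your proposal is correct and is essentially the same argument as the paper's: the paper proves this Fact by citing the standard observation that if $\Omega\subset\Omega'$ are nested properly convex open sets and $x\in\partial\Omega\cap\partial\Omega'$ is $\mathcal C^1$ in $\partial\Omega$, then it is $\mathcal C^1$ in $\partial\Omega'$, which is exactly the supporting-hyperplane transfer you spell out (applied with $\Omega={\rm int}(D)$, respectively ${\rm int}(D\cup D')$, and $\Omega'={\rm int}(X)$). The one place you add detail beyond what the paper records is the verification that $x\in\partial X$, which the paper leaves implicit; that check is correct, and indeed it follows already from Proposition~\ref{prop:intX} and its corollary (the relative boundary of a wall lies in $\partial X$), so you could shorten that part by citing it directly.
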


\begin{proof}
 This is an immediate consequence of the well-known observation that for any two nested properly convex open sets $\Omega\subset\Omega'$ and $x\in\partial\Omega\cap\partial\Omega'$ if $x$ is $\cal C^1$ in $\partial\Omega$ then it is also $\cal C^1$ in $\partial\Omega'$.
\end{proof}

\subsubsection{The complement in $\partial X$ of the boundary of the cells}
Next, we consider the portion of the boundary that does not come from faces of the tiles. It naturally comes from accumulation points of sequences of tiles which we conveniently organize as points on the boundary at infinity of the graph $\mc{G}$. 

Recall that $\cal G$ is a tree, so it has a natural boundary at infinity $\partial_\infty\cal V$: The set of rays up to equivalence, two rays being equivalent if they agree after some time. 
We set $\bar{\cal V}:=\cal V\sqcup\partial_\infty\cal V$, a space on which we have a natural topology and notion of convexity.

Each point $\xi\in\partial_\infty\mc{V}$ corresponds to a {\em cell at infinity}.

\begin{lemma}
 Consider a ray $\xi=\{D_n\}_{n\in\mb{N}}\in\partial_\infty\cal V$. Then the sequences of compact convex sets $D_{n}$ and $D_n\cap D_{n+1}$ both converge to the same closed convex subset $D(\xi)\subset\partial{X}$, called a \emph{cell at infinity}.
\end{lemma}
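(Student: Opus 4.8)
The plan is to work in an affine chart. Since $X$ is properly convex, after composing with a projective transformation we may assume $\overline{X}$ lies in a fixed affine chart $\mathbb{R}^d\subset\sph^d$, so that we can speak of Hausdorff convergence of the compact convex sets $D_n$ and $D_n\cap D_{n+1}$ inside $\overline{X}$. Recall from the corollary to Proposition~\ref{prop:injetcvx} that $\cal G$ is a tree, so a ray $\xi=(D_0,D_1,D_2,\dots)$ is an honest sequence of distinct cells with $D_n\cap D_{n+1}=F_{e_n}$ a common codimension $1$ wall for each $n$; moreover for $m<n$ the geodesic in $\cal G$ from $D_m$ to $D_n$ passes through $D_{m+1},\dots,D_{n-1}$, so by Remark~\ref{rem:succession} any segment joining a point of $D_m$ to a point of $D_n$ passes successively through all intermediate cells, and in particular $D_m\cap D_n\subset W_m:=D_m\cap D_{m+1}$ (and $\subset W_{n-1}$).

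**Step 1: the walls $W_n=D_n\cap D_{n+1}$ nest in the relevant direction and converge.**
First I would show that for the purpose of describing the tail behavior the walls shrink: fix a base point $o\in\mathrm{int}(D_0)$. For $x\in D_n$ the segment $[o,x]$ crosses $W_0,W_1,\dots,W_{n-1}$ in order (Remark~\ref{rem:succession}), giving a point of $W_k$ for each $k<n$; conversely points of $W_n$ are "further" from $o$ than points of $W_{n-1}$ along such segments. The key quantitative input is the Hilbert metric: by compactness of $X/\Gamma$-type arguments are not available in general here, so instead I would argue directly that the "width" of $D_n$ as seen from $o$ tends to $0$. Concretely, consider the nested closed convex sets $C_n:=\overline{\mathrm{Conv}}(\{o\}\cup D_n)$. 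Using that $[o,x]$ meets $W_{n-1}$ for every $x\in D_n$, one gets $C_{n}\subset \mathrm{Conv}(\{o\}\cup W_{n-1})$, hence $C_0\supset C_1\supset C_2\supset\cdots$ is a decreasing sequence of nonempty compact convex sets. Therefore $C_\infty:=\bigcap_n C_n$ is a nonempty compact convex set, and $C_n\to C_\infty$ in the Hausdorff metric. I claim $C_\infty$ is a single segment $[o,p]$ with $p\in\partial X$ (possibly $p=o$-degenerate is excluded since the $D_n$ are distinct with nonempty interior and the $C_n$ strictly decrease near the far end), and that $D_n\to D(\xi):=C_\infty\cap\partial X$. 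The point is that $C_\infty$ cannot contain an interior point of $X$ other than along $[o,\cdot)$: if $z\in\mathrm{int}(X)\cap C_\infty$, then $z\in\mathrm{int}(D_v)$ or $z\in\mathrm{int}(F_e)$ by Proposition~\ref{prop:intX}, but $z\in C_n$ for all $n$ forces $z$ to be separated from $o$ by every wall $W_{n-1}$, which is impossible once $n$ is large enough that $D_v$ (or the two cells adjacent to $e$) appear at bounded distance from $D_0$ in the tree $\cal G$ — beyond that point the segments $[o,z]$ would have to re-enter cells they already left, contradicting that $\pi$ restricted to the relevant segment of $X$ is injective (Proposition~\ref{prop:injetcvx}).

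**Step 2: identify the common limit.**
Now both $D_n$ and $D_n\cap D_{n+1}=W_n$ are sandwiched: $W_n\subset D_n\subset C_n$ and also $W_n\subset D_{n+1}\subset C_{n+1}\subset C_n$, while every point of $C_\infty\cap\partial X$ is a limit of points of $D_n$ (by construction of $C_\infty$ as the decreasing intersection, and since $C_n=\mathrm{Conv}(\{o\}\cup D_n)$ so that the "far" part of $\partial C_n$ lies in $D_n$). Hence $\limsup_{\mathrm{Haus}} W_n\subset \limsup_{\mathrm{Haus}} D_n\subset C_\infty\cap\partial X$. For the reverse inclusion for the walls, note $\dim W_n=d-1$ while $C_\infty$ is lower-dimensional once we are past the transient part; more robustly, given $q\in C_\infty\cap\partial X$, pick $x_n\in D_n$ with $x_n\to q$; the segment $[o,x_{n+1}]$ crosses $W_n$ at some point $y_n$, and since $x_{n+1}\to q$ and $o$ is fixed, $y_n\to q$ as well (the crossing point of $[o,x_{n+1}]$ with $W_n$ is squeezed toward the endpoint $x_{n+1}$ because the "remaining" cells $D_{n+1},D_{n+2},\dots$ all sit inside $C_{n+1}$ which Hausdorff-converges to $C_\infty\ni q$). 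Thus $W_n\to C_\infty\cap\partial X$ too, and we set $D(\xi):=C_\infty\cap\partial X\subset\partial X$, which is closed and convex, finishing the proof. Finally one checks $D(\xi)$ depends only on the equivalence class of the ray: two rays agreeing after some time $N$ give the same sequence of cells from $N$ on, hence the same decreasing tails $(C_n)_{n\ge N}$, hence the same intersection.

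**Main obstacle.**
The delicate point — and the step I expect to require the most care — is Step 1, establishing that the decreasing intersection $C_\infty=\bigcap_n C_n$ actually meets $\partial X$ in a set of the *same* limit as the $D_n$, i.e. that the cells $D_n$ do not "fatten out" near $\partial X$ but genuinely escape to the boundary and shrink there. In a cocompact situation this is immediate from the Hilbert metric (diameters of tiles are uniformly bounded, so $D_n$ leaves every compact subset of $\mathrm{int}(X)$ and its Hausdorff limit lies in $\partial X$), but in the stated generality (no group action, the tree can be wild) one must rule out pathologies like an infinite ray of ever-larger cells whose union is all of $X$. The way I would close this is exactly the injectivity of $\pi$ from Proposition~\ref{prop:injetcvx} together with Remark~\ref{rem:succession}: if some point $z\in\mathrm{int}(X)$ were in $C_n$ for all $n$, the segments $[o,z]$ (which is a genuine segment in $X$ by convexity of $X$) would have to pass through $D_n$ for all $n$, forcing the single cell containing $z$ to be adjacent to arbitrarily far-away cells in the tree, contradicting that it has only finitely many neighbors at each bounded distance — here is where one uses that $\cal G$ is a *locally finite* tree or, absent local finiteness, that a fixed segment $[o,z]$ meets only finitely many cells (which follows from $\pi|_{[o,z]}$ being injective and each $D_v$ being closed with nonempty interior, so $[o,z]$ is covered by finitely many of them by a compactness argument). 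This pins $C_\infty$ to $\partial X$ and the rest is routine Hausdorff-limit bookkeeping.
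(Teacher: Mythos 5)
Your proof has a fatal sign error at its very first substantive step. You define $C_n:=\overline{\mathrm{Conv}}(\{o\}\cup D_n)$ with $o\in\mathrm{int}(D_0)$, and you assert that because every segment $[o,x]$ with $x\in D_n$ meets $W_{n-1}=D_{n-1}\cap D_n$, one has $C_n\subset\mathrm{Conv}(\{o\}\cup W_{n-1})$, from which the nesting $C_0\supset C_1\supset\cdots$ would follow. But this inclusion goes in the wrong direction. If $y\in[o,x]\cap W_{n-1}$, then $x$ lies \emph{beyond} $y$ on the ray from $o$, so $x$ is \emph{not} a convex combination of $o$ and $y$: indeed $x=\tfrac{1}{1-s}y-\tfrac{s}{1-s}o$ for some $s\in(0,1)$, with a coefficient $>1$ on $y$. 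Concretely, since $o\in D_0$, $C_0=D_0$, while $C_1=\mathrm{Conv}(\{o\}\cup D_1)$ already contains $D_1\not\subset D_0$, so $C_1\not\subset C_0$. The sequence $(C_n)_n$ is not decreasing, $C_\infty:=\bigcap_n C_n$ is the wrong object, and everything downstream in Step~1 (the claim that $C_\infty$ is a segment, and the identification $D(\xi)=C_\infty\cap\partial X$) collapses. Even granting some repaired definition (say, replacing convex hulls by the \emph{shadows} of $W_{n-1}$ from $o$, which \emph{do} nest), the identification $D(\xi)=C_\infty\cap\partial X$ still fails in the telescope case: there $\bigcap_n D_n$ can be empty while the Kuratowski limit of $D_n$ is a single point, so $\bigcap_n(D_n\cap\partial X)=\emptyset$ does not equal the limit.

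The deeper issue is that you keep referencing a \emph{fixed} basepoint $o$, which does not by itself force the wall-crossing points to run out to the boundary, and you then invoke compactness/local finiteness arguments to patch that up. The paper sidesteps all of this with a much shorter trick that you are missing: it defines $K$ as the set of accumulation points of sequences $x_n\in D_n$, and for $x\in K$, limit of $x_k\in\mathrm{int}(D_{n_k})$, looks at the segments $[x_k,x_{k+1}]$ between \emph{consecutive} terms. By Remark~\ref{rem:succession} that segment crosses $D_{n_k}\cap D_{n_k+1},\dots,D_{n_{k+1}-1}\cap D_{n_{k+1}}$ in order, and because \emph{both} endpoints tend to $x$, the whole segment shrinks to the point $x$, forcing the wall-crossing points $y_n$ to converge to $x$ as well. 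This immediately gives $\limsup D_n=\limsup(D_n\cap D_{n+1})\subset\liminf(D_n\cap D_{n+1})$, hence Kuratowski convergence of both sequences to the same set. Nothing about monotone-decreasing convex sets, no fixed basepoint, no appeal to local finiteness. I'd encourage you to redo the argument using $[x_k,x_{k+1}]$ in place of $[o,x]$, which is where the real shrinkage comes from.
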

\begin{proof}
 Let $K$ be the set of accumulation points of sequences $x_n\in D_n$.
 We need to check that any point of $K$ is also the limit of a sequence $y_n\in D_n\cap D_{n+1}$. Consider $x\in K$, limit of $x_k$ where $x_k\in{\rm int}(D_{n_k})$ with $n_k$ increasing.
 Consider for each $k$ the intersection points $y_{n_k},\dots,y_{n_{k+1}-1}$ of the segment $[x_k,x_{k+1}]$ with successively $D_{n_k}\cap D_{n_k+1},\dots,D_{n_{k+1}-1}\cap D_{n_{k+1}}$.
 The sequence $y_n$ converges to $x$, as required.
\end{proof}

\begin{rk}
As a warning:
\begin{itemize}
\item{It is possible that $D(\xi)\subset X$.}
\item{It is possible that $D(\xi)=D(\eta)$ for different $\xi,\eta\in\partial_\infty\mc{V}$.}
\item{It is also possible that $D(\xi)$ is not a face of $X$.}
\end{itemize}
\end{rk}

\begin{prop}\label{prop:cells infinity}
 We have:
 \begin{enumerate}
  \item \label{item:cells infinity 1} For every point $x\in\bar X\smallsetminus X$ there exists a unique $\xi\in\partial_\infty\cal V$ such that $x\in D(\xi)$.
  \item \label{item:cells infinity 2} Consider a geodesic ray $\xi=\{D_n\}{n\in\mb{N}}\in\partial_\infty\cal V$, then there exists $N$ such that
  \[
  D(\xi)\cap X=\bigcap_{n\geq N}D_{n}.
  \]
  Moreover $D(\xi)\cap X$ is a closed face of $D(\xi)$.
 \end{enumerate}
\end{prop}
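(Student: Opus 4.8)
The plan is to exploit the tree structure of $\cal G$ together with the local description of the interior of $X$ (Proposition~\ref{prop:intX}) and the "goes successively through" property from the proof of Proposition~\ref{prop:injetcvx} (Remark~\ref{rem:succession}).

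For part~\eqref{item:cells infinity 1}, I would first fix a base cell $D_0\in\cal V$ and a point $p\in{\rm int}(D_0)$. Given $x\in\bar X\smallsetminus X$, take a sequence $x_k\to x$ with $x_k\in X$, and for each $k$ let $D^{(k)}$ be a cell containing $x_k$. Since $x\notin X$, no fixed finite subtree contains infinitely many of the $D^{(k)}$ (otherwise a subsequence of $x_k$ would stay in a fixed cell, forcing $x$ into that cell, contradiction with $x \notin X$ after checking $x$ is then in $\partial D$; but $\partial D \cap$ (complement of closed walls) $\subset \partial X$ and points on walls are interior — need a short argument that $x$ would actually land in $X$). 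Hence the combinatorial distance $d_{\cal G}(D_0, D^{(k)})\to\infty$. Passing to the geodesics $[D_0,D^{(k)}]\subset\cal G$ and using that $\cal G$ is a tree (local finiteness is not needed: geodesics from a fixed vertex in a tree subconverge to a ray because at each level only finitely many of the chosen geodesics can branch before we can diagonalize — more precisely, use that the segment $[p,x_k]$ passes successively through the cells of $[D_0,D^{(k)}]$, so consecutive cells along that geodesic are ordered along $[p,x_k]$ and hence the geodesics $[D_0,D^{(k)}]$ share longer and longer initial segments as $k\to\infty$, up to subsequence), extract a ray $\xi=\{D_n\}_{n\in\mb N}$. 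Then $x_k$ eventually lies beyond $D_n$ along $[p,x_k]$ for each fixed $n$, so the intersection point of $[p,x_k]$ with $D_n\cap D_{n+1}$ converges to a point of the segment $[p,x]$; letting $k\to\infty$ and using the previous lemma (that $D_n\cap D_{n+1}\to D(\xi)$), one gets $x\in D(\xi)$. For uniqueness: if $x\in D(\xi)\cap D(\eta)$ with $\xi\neq\eta$, the rays $\xi,\eta$ diverge after a cell $D$, entering two distinct walls $W_1\neq W_2$ of $D$; then $D(\xi)$ and $D(\eta)$ lie on the "far sides" of $W_1$ and $W_2$ respectively, and a point in both, together with a point $p\in{\rm int}(D)$, would force a segment through $p$ meeting both $W_1$ and $W_2$ on the same side — one shows $x$ would then have to lie in $W_1\cap W_2\subset D\subset X$, contradicting $x\notin X$. (The cleanest way: the convex sets $D(\xi)$, together with the cells it separates them from $D_0$, allow one to use Remark~\ref{rem:succession} to see that $[p,x]$ goes successively through all cells on the ray, pinning down the ray.)

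For part~\eqref{item:cells infinity 2}: the inclusion $\bigcap_{n\ge N}D_n\subset D(\xi)\cap X$ is immediate for every $N$, since any point in all $D_n$ beyond $N$ is in particular a limit of points $x_n=x\in D_n$. For the reverse inclusion and the stabilization, take $y\in D(\xi)\cap X$; then $y\in D_m$ for some cell $D_m$, which lies at some finite distance from the ray, attaching to it at a cell $D_{n_0}$. I claim $D_m=D_{n_0}$, i.e. $y$ lies on a cell of the ray itself: if not, then $D_{n_0}$ separates $D_m$ from all $D_n$ with $n>n_0$, so by the "successively through" property any segment from $y\in D_m$ to a deep cell $D_n$ passes through the wall $D_{n_0}\cap D_{n_0+1}$; since $y\in D(\xi)=\lim D_n$ is a limit of points of $D_n$, one concludes $y$ lies in that wall, hence in $D_{n_0}$ — contradiction unless $D_m = D_{n_0}$ (in which case there is nothing to prove) or $y$ lies in the wall (same conclusion). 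So every $y\in D(\xi)\cap X$ lies on some $D_{n}$ of the ray; moreover, once $y\in D_{n}$, applying the same separation argument with deeper cells shows $y\in D_{n'}$ for all $n'\ge n$ (a segment from $y$ to a point of a far $D_m$ meets the intervening walls, and $y$ being a limit of $D_m$-points forces $y$ into each intervening cell). Thus $D(\xi)\cap X = \bigcup_n (D(\xi)\cap D_n)$ is a nested-from-below union; to get a single $N$, I would argue by compactness: $D(\xi)\cap X$ is a convex subset of the compact $D(\xi)$, and $D(\xi)\cap\partial X$ is closed — actually the simplest route is to show $D(\xi)\cap X$ is relatively open in $D(\xi)$ (if $y\in D(\xi)\cap D_n$, then a neighborhood of $y$ in $D(\xi)$, which is a limit of $D_m$'s hugging the ray, stays within $D_n$ for the relevant scale) and closed in $D(\xi)$ would fail, so instead: $D(\xi)\cap X$ equals its relative interior union of a face; here use that $D(\xi)\cap D_n$ is an increasing (in $n$) family of compact convex sets whose union is $D(\xi)\cap X$, and $D(\xi)\cap X$ is itself the relative interior of $D(\xi)$ intersected with $X$ plus boundary contributions — the honest statement is that $D(\xi)\cap D_n$ is \emph{decreasing}, not increasing (each $D_{n+1}\subset$ the half-space cutting off from $D_n$), so in fact $D(\xi)\cap D_n$ is decreasing in $n$ and equals $\bigcap_{m\le n}(D(\xi)\cap D_m)$, and the nested decreasing intersection of compact convex sets stabilizes iff ... hmm). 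I'll settle the stabilization by noting: $\bigcap_{n}D_n$ is a face of each $D_n$ by Condition~\eqref{item:hypi1} applied repeatedly (each $D_n\cap D_{n+1}$ is a codimension-$1$ face, and the successive intersections along a geodesic are faces), so it equals $\bigcap_{n\le N}D_n$ for $N$ large enough because the dimension drops at most $\dimd$ times; and $D(\xi)\cap X=\bigcap_n D_n$ by the above, which also shows it is a closed face of $D(\xi)$ (an intersection of the cell at infinity with a supporting-half-space boundary, using Condition~\eqref{item:hypi1} again in the limit).

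\textbf{Main obstacle.} The delicate point is not the geometry but the point-set topology of extracting a well-defined ray $\xi$ from the approximating sequence $x_k\to x$ and proving its \emph{uniqueness}, since $\cal G$ need not be locally finite. The key technical device throughout — already implicit in Remark~\ref{rem:succession} — is that a geodesic segment $[p,x_k]$ in $\bar X$ passes through the cells of the combinatorial geodesic $[D_0,D^{(k)}]$ \emph{in order}; combined with Proposition~\ref{prop:intX} (which forces a segment leaving ${\rm int}(D_n)$ to exit through the relative interior of a wall unless it hits the boundary of a wall, landing in $X$), this both pins down the ray and gives the contradictions needed for uniqueness. I expect the bookkeeping of "which wall separates which cells" and the limiting arguments ($D_n\cap D_{n+1}\to D(\xi)$, passing intersection points to the limit) to be the bulk of the writing, but conceptually routine once the ordering principle is in hand.
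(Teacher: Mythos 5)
Your plan for part~(1) takes a genuinely different route from the paper's. You extract the ray $\xi$ from the converging segments $[p,x_k]$ and the "passes successively through" ordering, arguing that the combinatorial geodesics $[D_0,D^{(k)}]$ must share longer and longer initial segments because the exit point of $[p,x]$ from each $D_n$ lies in the relative interior of a unique wall (by Proposition~\ref{prop:intX}) and the $[p,x_k]$ eventually exit through that same wall. The paper instead fixes a decreasing countable basis of properly convex neighborhoods $U_n$ of $x$, notes that the set $\cal V_n$ of cells meeting $U_n$ is convex in the tree, and takes $D_n$ to be the nearest-point projection of a fixed cell onto $\cal V_n$ — this packages both existence and uniqueness of $\xi$ in one move, since any sequence of cells accumulating at $x$ eventually lies in $\cal V_n$ and so converges to the same ray. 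Your uniqueness argument is sketchier but the idea is sound: if $x\in D(\xi)\cap D(\eta)$ with $\xi\neq\eta$, take $y_n\in D_n$, $y'_n\in D'_n$ with $y_n,y'_n\to x$; Remark~\ref{rem:succession} forces $[y_n,y'_n]$ to meet the separating cell $D$ (and both its separating walls), and since the segments shrink to the point $x$, the intersection points converge to $x$, giving $x\in D\subset X$, a contradiction.

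For part~(2) there are two real problems. First, your concluding identity $D(\xi)\cap X=\bigcap_n D_n$ is false in general: your own argument (correctly) shows only that each $y\in D(\xi)\cap X$ lies in $D_n$ for all $n\geq n_0(y)$, where $n_0$ depends on where $y$ attaches to the ray, so the right statement is $D(\xi)\cap X=\bigcup_{N}\bigcap_{n\geq N}D_n$, which after a stabilization argument equals $\bigcap_{n\geq N}D_n$ for $N$ large but can be strictly larger than $\bigcap_n D_n$ (for example if $D_1,D_2,\dots$ share a corner that $D_0$ does not contain). The stabilization itself does require an argument — $\bigcap_{n\geq N}D_n$ is a closed face of $\bigcap_{n\geq N+1}D_n$, so the non-decreasing chain must stabilize after at most $\dimd$ steps of dimension increase — and this is closer to what you gesture at, but the index you land on is reversed. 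Second, your claim that the "closed face" assertion follows from Condition~\eqref{item:hypi1} "in the limit" is not a proof, and the sketch has a genuine difficulty: the natural supporting hyperplane is the span of $D_N\cap D_{N+1}$, but when $D(\xi)$ is itself contained in that hyperplane the argument collapses. The paper's proof is a concrete collinearity contradiction: if $x,y\in D(\xi)\smallsetminus X$ and $z\in]x,y[\cap D_n\cap D_{n+1}$, pick $p\in\mathrm{int}(D_n)$; by Remark~\ref{rem:succession} and a limiting argument, $[p,x]$ and $[p,y]$ meet $D_n\cap D_{n+1}$ at interior points $x'\neq x$ and $y'\neq y$; then $x',y',z$ all lie on the line where the plane through $p,x,y$ meets the hyperplane spanned by $D_n\cap D_{n+1}$, which is impossible since the segment $[x',y']$ separates $p$ from $]x,y[$ in that plane. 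You would need an argument of this kind (or a careful treatment of the degenerate case) to complete part~(2).
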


\begin{proof}
 {\bf Property (1)}.
 Fix $D\in\cal V$.
 Pick $x\in\bar X\smallsetminus X$ and a countable basis of (decreasing) properly convex neighborhoods $U_n$ of $x$ in $\sph^\dimd$.
 For each $n$ let $\cal V_n\subset\cal V$ be the set of all vertices that intersect $U_n$.
 Since $U_n$ is properly convex, $\cal V_n$ is a convex subset of $\cal V$.
 Let $D_n$ be the shortest point projection in $\cal G$ of $D$ on $\cal V_n$.
 Since $\cal V_n$ is non-increasing, $D_n$ must lie on a geodesic ray of $\cal G$ starting at $D$.
 Choose $x_n\in D_n\cap U_n$ for each $n$. Then $x_n$ tends to $x$.
 The fact that $x\not\in X$ implies that $D_n$ is not constant after some time and, hence, the geodesic ray containing $D_n$ is infinite and represents  some $\xi\in\partial_\infty\cal V$ whose cell $D(\xi)$ contains $x$.
 Moreover, if $D'_k$ accumulates on $x$ then $D'_k\in \cal V_{n_k}$ for some diverging sequence $n_k$, and $D'_k$ converges to $\xi$.
 
 {\bf Property (2)}.
 The sequence of sets $\bigcap_{n\geq N}D_{n}$ is non-decreasing for inclusion and constant after some time. Hence, it suffices to check that any $x\in D(\xi)\cap X$ lies in $\bigcap_{n\geq N}D_n$ for some $N$.
 Pick $D\in \cal V$ containing $x$ and $x_n\in D_n$ converging to $x$.
 There is $N$ such that $D_m$ lies on the geodesic segment from $D$ to $D_n$ for every $n\geq m\geq N$.
 For all $n\geq m\geq N$ pick $y_{n,m}\in[x,x_n]\cap D_m$.
 Observe that $y_{n,m}\in D_m$ converges to $x$ for every $m$, so $x\in D_m$.
 
 Suppose by contradiction that $D(\xi)\cap X$ is not a face of $D(\xi)$, \ie that there are $x,y\in D(\xi)\smallsetminus X$ and $z\in]x,y[\cap D_n$ for some $n$.
 Pick $p\in {\rm int}(D_n)$. 
 By Remark~\ref{rem:succession}, $[p,q]$ meets $D_n\cap D_{n+1}$ for any $q\in\bigcup_{k>n}D_k$.
 Passing to the limit, $[p,x]$ (\resp $[p,y]$) meets $D_n\cap D_{n+1}$ at $x'\neq x$ (\resp $y'\neq y$).
 By Proposition~\ref{prop:intX}, these two intersection points lie in the relative interior of $D_n\cap D_{n+1}$.
 The span of $D_n\cap D_{n+1}$ intersects the plane spanned by $p,x,y$ into a line which contains $x',y',z$. 
 But these three points are not collinear, which is a contradiction.
\end{proof}

As we have already observed, sequences of tiles can essentially be grouped into two different types: {\em Fans} of tiles and {\em telescopes} of tiles. We now analyze in detail both cases.

\subsection{Fan of tiles}

Informally speaking, a fan of tiles is an infinite sequence of consecutive tiles sharing a common codimension 2 face. We will work under the assumption that the sequence is periodic under some special projective transformation and describe for such a fan of tiles the corresponding cell at infinity.  See Figure~\ref{fig:figure2c}.

\subsubsection{Condition~\ref{item:hypi2} revisited}

First, we discuss a convexity criterion for a periodic fan of tiles. In order to do so, we investigate the Condition~\eqref{item:hypi2} in Proposition~\ref{prop:injetcvx} in this special setting.

\begin{prop}\label{prop:hypi2 revisited}
 Let $(D_n)_{n\in\Z}$ be a sequence of closed convex subsets of $\sph^{\dimd}$ with non-empty interior such that:
 \begin{enumerate}[(a)]
  \item They all share a codimension $2$ face $E$ spanning $\sph^{\dimd-2}\subset\sph^\dimd$.
  \item Any two consecutive domains $D_n$ and $D_{n+1}$ intersect along a codimension 1 face $F_n$, with $F_n\neq F_{n-1}$.
 \end{enumerate}
 Consider for every $n$ the projections $D'_n$ of $D_n$ to the circle $\sph(\R^{\dimd+1}/\R^{\dimd-1})=\sph^1$. Then $\sph^{\dimd-2}$ is contained in the boundary of a half-space of $\sph^{\dimd}$ containing $\bigcup_nD_n$ if and only if $\Omega:=\bigcup_n D_n'$ is contained in a half-circle.
 
Assume now that there exists $g\in\GL_{\dimd+1}(\R)$ and $T>0$ such that $gD_{n+T}=D_n$ for every $n$. Let $g'\in\GL(\R^{\dimd+1}/\R^{\dimd-1})$ be the induced transformation on $\sph^1$. Then $\Omega$ is contained in a half-circle if and only if
 \begin{enumerate}[(i)]
  \item $|\det(g')|^{-1/2}g'\neq\mb{I}_2$ and $\tr(|\det(g')|^{-1/2}g')\geq 2$.
  \item $D'_0\cup\dots\cup D'_{T-1}$ does not intersect any eigenline of $g'$.
 \end{enumerate}
\end{prop}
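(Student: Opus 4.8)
The statement has two parts, and my plan is to treat them in the order they appear. First, the equivalence between "$\sph^{\dimd-2}$ lies in the boundary of a half-space containing $\bigcup_n D_n$" and "$\Omega = \bigcup_n D_n'$ lies in a half-circle." The key observation here is that half-spaces of $\sph^\dimd$ whose boundary contains $\sph^{\dimd-2}$ correspond precisely to half-spaces of $\sph^1 = \sph(\R^{\dimd+1}/\R^{\dimd-1})$, i.e.\ to half-circles, under the projection $\sph^\dimd \smallsetminus \sph^{\dimd-2} \to \sph^1$. Concretely, a linear functional on $\R^{\dimd+1}$ vanishing on $\R^{\dimd-1}$ descends to a linear functional on $\R^{\dimd+1}/\R^{\dimd-1}$, and the half-space it cuts out pulls back correctly. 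So $\bigcup_n D_n$ is contained in such a half-space iff its image $\Omega$ in $\sph^1$ is contained in the corresponding half-circle. The only mild subtlety is checking that "contained in a half-space" can be taken to mean a \emph{closed} half-space and that the boundary condition ($\sph^{\dimd-2}$ in the boundary hyperplane) matches the assumption (a) that $E$ spans $\sph^{\dimd-2}$; since $E$ is a face of each $D_n$ and spans $\sph^{\dimd-2}$, any supporting half-space of the union restricting nontrivially to $\sph^{\dimd-2}$ is ruled out, so the functional must vanish on $\R^{\dimd-1}$ automatically. I would spell this correspondence out carefully as it is the conceptual heart of the first equivalence.

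Second, the characterization of when $\Omega$ (now a $g'$-invariant, up to the shift, union of arcs $D_n'$ in the circle $\sph^1$) is contained in a half-circle, in terms of conditions (i) and (ii). Here I would first reduce to the genuinely one-dimensional picture: $g'$ acts on $\sph^1$, and after normalizing by $|\det g'|^{-1/2}$ we get an element $\bar g' \in \mathrm{SL}_2^{\pm}(\R)$ acting projectively on $\sph^1$ (which is a double cover of $\mathbb{RP}^1$ — so I should be careful that $\sph^1$ here is the circle of half-hyperspheres, i.e.\ the circle, and $\bar g'$ acts with the right degree). The arcs $D_0', \dots, D_{T-1}'$ form a fundamental piece, and $\Omega = \bigcup_{k\in\Z} \bar g'^{\,k}(D_0' \cup \dots \cup D_{T-1}')$; consecutive arcs overlap along the images of the $F_n$'s, which project to points since $F_n$ has codimension $1$ and contains $E$. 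I would lift everything to the universal cover $\tilde \sph^1 \cong \R$ (as in Section~\ref{sec:sec2}), where the union of arcs becomes a sequence of intervals whose union is an interval $\tilde\Omega \subseteq \R$, and $\bar g'$ lifts to a homeomorphism $\tilde{\bar g'}$ of $\R$ commuting with integer translation by the period. Then "$\Omega$ contained in a half-circle" translates to "$\tilde\Omega$ has length $< \pi$" (using the identification of $\sph^1$ with $\R/\,(\text{length }2\pi)$), equivalently "$\tilde\Omega$ is a proper subinterval that is not all of $\R$ and not wrapping around."

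The third step is the trace/eigenline dichotomy. A homeomorphism of $\R$ commuting with a translation and having a bounded invariant interval must have fixed points in $\R$; translating back, $\bar g'$ must fix the two endpoints of $\tilde\Omega$ (as in Remark~\ref{rk:attracting wall}), which are therefore fixed points of $\bar g'$ — this forces $\bar g'$ to be non-elliptic, i.e.\ $\tr(\bar g') \ge 2$ (the case $\tr \le -2$ is excluded because the lift of such an element acts cocompactly on all of $\tilde\sph^1$ with no fixed points, cf.\ Remark~\ref{rk:uniformization}), and $\bar g' \ne \Id_2$ because otherwise $\Omega$ would be all of $\sph^1$; this gives (i). Condition (ii) — that $D_0' \cup \dots \cup D_{T-1}'$ misses the eigenlines of $g'$ — then says precisely that the fundamental piece lies strictly between two consecutive fixed points of $\bar g'$, which together with $g'$-invariance forces $\Omega$ to lie in the arc between those two fixed points, which is a proper subarc (indeed contained in a half-circle, as in Lemma~\ref{lem:uniformization}'s analysis). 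Conversely, given (i) and (ii), $\Omega$ is the $\bar g'$-orbit of arcs inside the interval between two adjacent fixed points, and by the dynamics of a hyperbolic or parabolic lift (Remark~\ref{rk:uniformization}) this orbit stays inside that interval, which projects injectively to a properly convex arc, hence inside a half-circle.

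\textbf{Main obstacle.} The routine parts are the half-space/half-circle correspondence and the bookkeeping of which arcs overlap where. The genuinely delicate point I expect to be the main obstacle is the careful handling of the double-cover issue and the $|\det g'|^{-1/2}$-normalization: one must make sure the fixed points of $g'$ on $\sph^1$ (equivalently the eigenlines of $g'$ on $\R^{\dimd+1}/\R^{\dimd-1}$) are correctly matched with the endpoints of the lifted interval $\tilde\Omega$, distinguishing the hyperbolic case (four fixed points on $\sph^1$, two eigenlines) from the parabolic case (two fixed points, one repeated eigenline, $\tilde\Omega$ a half-circle) — and to correctly exclude the case where $\tilde{\bar g'}$ has \emph{no} fixed point but a lift of the \emph{square} does, i.e.\ that $\bar g'$ is not merely "rotation by $\pi$." This is exactly the trace-$\ge 2$ versus trace-$\le -2$ dichotomy from Remark~\ref{rk:uniformization}, and I would lean on that remark and Lemma~\ref{lem:uniformization} to dispatch it rather than redoing the $\tilde{\SL}_2$ analysis from scratch.
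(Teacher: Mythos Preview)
Your plan is correct, but it is considerably more elaborate than what the paper actually does. The paper dispatches the first equivalence in one line (``clear''), exactly along the lines you sketch: half-spaces of $\sph^\dimd$ bounded by a hyperplane through $\sph^{\dimd-2}$ correspond to half-circles of $\sph^1$ via the projection.

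For the second equivalence the paper stays entirely on $\sph^1$ and avoids the universal cover. The forward direction is: $\Omega'=\bigcup_n D_n'$ is open, connected (consecutive $D_n'$ overlap), $g'$-invariant, and properly convex; but no element $h\in\SL_2(\R)$ with $\tr(h)<2$ preserves a proper convex subset of $\sph^1$, so (i) holds, and since $g'$ genuinely shifts the $D_n'$ it is not the identity and $\Omega'$ cannot contain any fixed point of $g'$, giving (ii). The backward direction is equally short: given (i), the eigenline(s) of $g'$ cut $\sph^1$ into arcs each contained in a half-circle; $\Omega'$ is connected and by (ii) misses the eigenlines, so it sits inside one such arc.

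Your route via lifting to $\tilde\sph^1\cong\R$ and invoking Remark~\ref{rk:uniformization} and Lemma~\ref{lem:uniformization} would also work, and it has the virtue of making the parabolic versus hyperbolic endpoint picture explicit. But what you flag as the ``main obstacle'' --- distinguishing $\tr\ge 2$ from $\tr\le -2$ and handling the double cover --- is in the paper's approach a one-line observation (negative-trace and elliptic elements preserve no proper convex arc on $\sph^1$), and the universal-cover bookkeeping you anticipate (choosing the right lift of $\bar g'$ preserving the bounded interval $\tilde\Omega$) is simply unnecessary. The key simplification you are missing is to exploit directly that $\Omega'$, not just the fundamental piece $D_0'\cup\dots\cup D_{T-1}'$, is connected.
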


\begin{proof}
 The fact that $\sph^{\dimd-2}$ is contained in the boundary of a half-space of $\sph^{\dimd}$ containing $\bigcup_nD_n$ if and only if $\Omega':=\bigcup_n D'_n$ is contained in a half-circle is clear.

Assume that $\Omega'$ is contained in a half-circle (it is a proper, open and convex subset of $\sph^1$).

Let us prove properties (i)+(ii). As $\Omega'$ is $g'$-invariant and no transformation $h\in\SL_2(\R)$ with $\tr(h)<2$ preserves a proper convex subset of $\sph^1$, we must have $\tr(|\det(g')|^{-1/2}g')\geq 2$. As $g'D'_n=D'_{n+T}$, we also have $|\det(g')|^{-1/2}g'\neq\mb{I}_2$ and $\Omega'$ does not contain fixed the points of $g'$, that is, its eigenline(s).
 
Assume now that properties (i),(ii) hold.

Let us prove that $\Omega'$ is contained in a half-circle. As $\tr(|\det(g')|^{-1/2}g')\geq 2$, the eigenlines of $g'$ determine either a half-circle or a unique convex segment. As $\Omega'$ does not intersect any eigenline of $g'$ and is connected, it must be contained in one the components of $\sph^1$ minus the eigenline(s) of $g'$. 
\end{proof}

\begin{rk}
 In the setting of Proposition~\ref{prop:hypi2 revisited}, if $\Omega$ is contained in a half-circle then:
  \begin{itemize}
  \item If $\tr(|\det(g')|^{-1/2}g')=2$, then $\Omega$ is one of the two half-circles delimited by the unique eigenline of $g'$.
  \item If $\tr(|\det(g')|^{-1/2}g')>2$, then $\Omega$ is one of the four quadrants delimited by the two eigenlines of $g'$.
 \end{itemize}
\end{rk}

\subsubsection{The cell at infinity of a fan of tiles}

Let $\cal G=(\cal V,\cal E)$ be a gluing kit with $X=\bigcup_{D\in\cal V}D$.

Pick $\mu>1$ and $\lambda\geq 1$ and set, depending whether $\lambda>1$ or $\lambda=1$,
\[g:=
 \begin{pmatrix}
  \mu^{-1}\Id_{\dimd-1} & 0 & 0 \\
  0 & \lambda\mu^{\frac{\dimd-1}2} & 0 \\
  0 & 0 & \lambda^{-1}\mu^{\frac{\dimd-1}2}
 \end{pmatrix}
 \ \text{or}\ 
 \begin{pmatrix}
  \mu^{-1}\Id_{\dimd-1} & 0 & 0 \\
  0 & \mu^{\frac{\dimd-1}2} & 1 \\
  0 & 0 & \mu^{\frac{\dimd-1}2}
 \end{pmatrix}.
\]

The following describes the cell at infinity of a fan of tiles:

\begin{prop}\label{prop:aroundcodim2}
 Let $\cal S$ be a corner.
 Suppose that: 
 \begin{itemize}
  \item $\cal S\subset \sph^{\dimd-2}$.
  \item The set of $D\in\cal V$ such that $\cal S\subset D$ is a bi-infinite path $(D_n)_{n\in\Z}$.
  \item $g D_{n}=D_{{n+m}}$ for any $n$, for some $m>0$.
 \end{itemize}
 Set $\xi:=\{D_n\}_{n\in\mb{N}}\in\partial_\infty\cal V$. 
 Then there exists $\alpha,\beta\in\{\pm1\}$ such that:
 \begin{enumerate}
  \item \label{item:aroundcodim2 cvxhull} $D(\xi)$ is the convex hull of $\cal S$ with $\alpha e_{\dimd}\R_+$.
  \item \label{item:aroundcodim2 face} If $X$ is properly convex or $\lambda>1$, then $D(\xi)$ is a closed face of $X$.
  \item \label{item:aroundcodim2 containment} If $\lambda>1$ then $X$ is contained in the convex hull of $D(\xi)$ and $\beta e_{\dimd+1}\R_+$.
 \end{enumerate}
\end{prop}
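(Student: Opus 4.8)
The plan is to reduce the statement to the one‑dimensional picture of the fan seen in the circle $\sph^1=\sph(\R^{\dimd+1}/\R^{\dimd-1})$, and then to run a north–south dynamics argument for the action of $g$ on the half‑hypersphere that the fan limits to. Since each $D_n$ is full‑dimensional and contains the corner $\cal S\subset\sph^{\dimd-2}$, it projects under $\sph^\dimd\smallsetminus\sph^{\dimd-2}\to\sph^1$ to a non‑degenerate closed arc $D_n'$, and two consecutive arcs share exactly the point image of $F_n$ (with $F_n\neq F_{n-1}$, since $(D_n)$ is an injective path and the gluing space maps locally homeomorphically to $\sph^\dimd$ on $X'$ by Fact~\ref{obs:injetcvx:localhomeo}). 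The transformation induced by $g$ on $\sph^1$ is $\mathrm{diag}(\lambda\mu^{(\dimd-1)/2},\lambda^{-1}\mu^{(\dimd-1)/2})$ or the corresponding parabolic Jordan block; after renormalising it has trace $\lambda+\lambda^{-1}\geq2$ and is distinct from $\Id$, and it shifts the arcs by $m$. By Proposition~\ref{prop:hypi2 revisited} and Remark~\ref{rk:uniformization}, $\Omega':=\bigcup_nD_n'$ is a proper open convex arc lying strictly between two consecutive fixed points of the induced map; the arcs $D_n'$ escape as $n\to+\infty$ to its attracting endpoint, which is the eigenline of the top eigenvalue $\lambda\mu^{(\dimd-1)/2}$, i.e. the half‑hypersphere $H$ carrying the ray $\alpha e_\dimd\R_+$ for a sign $\alpha\in\{\pm1\}$, and as $n\to-\infty$ to the repelling endpoint, carrying $\beta e_{\dimd+1}\R_+$ for a sign $\beta\in\{\pm1\}$ (when $\lambda=1$ both endpoints collapse to the parabolic fixed point). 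Each sub‑path of the fan is again a gluing kit, so each $\bigcup_{n\geq N}D_n$ is convex by Proposition~\ref{prop:injetcvx}; hence $D(\xi)=\bigcap_N\overline{\bigcup_{n\geq N}D_n}$ is a closed convex subset of $\bar X$.

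\textbf{Part \eqref{item:aroundcodim2 cvxhull}.} Passing to Hausdorff limits in $gD_n=D_{n+m}$ gives $gD(\xi)=D(\xi)$, and by the above $D(\xi)$ lies in the closed half‑hypersphere $\overline H=\{x_{\dimd+1}=0,\ \alpha x_\dimd\geq0\}$, with boundary $\sph^{\dimd-2}$ and pole $p:=\alpha e_\dimd\R_+$. The element $g$ fixes $\sph^{\dimd-2}$ pointwise and, in the affine chart of $\mathrm{int}(\overline H)$ centred at $p$ with $\sph^{\dimd-2}$ at infinity, acts as the homothety of ratio $(\lambda\mu^{(\dimd+1)/2})^{-1}<1$; so any $g$‑invariant closed convex subset of $\overline H$ meeting $\mathrm{int}(\overline H)$ is a cone with apex $p$. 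That $p\in D(\xi)$ follows from the walls: $gF_n=F_{n+m}$, and $F_n$ is $(\dimd-1)$‑dimensional and contained in the half‑hypersphere image of the point $D_n'\cap D_{n+1}'$ of $\Omega'$, hence $\mathrm{relint}(F_n)$ contains points with non‑zero $e_\dimd$‑coordinate, which high powers of $g$ push to $p$. Therefore $D(\xi)=\mc{CH}(p,D(\xi)\cap\sph^{\dimd-2})$. Finally, the arcs $D_n'$ are pairwise essentially disjoint and escape to $H$, so $\bigcap_{n\geq N}D_n'=\emptyset$; thus any point of $\bigcap_{n\geq N}D_n$ lies in $\sph^{\dimd-2}$, and since $\cal S$ is a codimension $2$ face of each $D_n$ spanning $\sph^{\dimd-2}$ one has $D_n\cap\sph^{\dimd-2}=\cal S$, whence $D(\xi)\cap X=\bigcap_{n\geq N}D_n=\cal S$ by Proposition~\ref{prop:cells infinity}\eqref{item:cells infinity 2}. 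Combining this with the facts that $D(\xi)\cap\sph^{\dimd-2}$ (the base of the cone) and $\cal S=D(\xi)\cap X$ are both faces of $D(\xi)$, and with the description of $\mathrm{int}(X)$ (Proposition~\ref{prop:intX}) — nothing of $X$ accumulates on $\sph^{\dimd-2}\smallsetminus\cal S$ from the fan side — forces $D(\xi)\cap\sph^{\dimd-2}=\cal S$, i.e. $D(\xi)=\mc{CH}(\cal S,\alpha e_\dimd\R_+)$.

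\textbf{Parts \eqref{item:aroundcodim2 face} and \eqref{item:aroundcodim2 containment}.} When $\lambda>1$, Proposition~\ref{prop:hypi2 revisited} places $\Omega'$ inside the quadrant of $\sph^1$ cut out by the two eigenlines of the induced map, so $\bigcup_nD_n$ lies in the wedge $\{\alpha x_\dimd\geq0\}\cap\{\beta x_{\dimd+1}\geq0\}$; propagating this half‑space datum through the tree $\cal G$ by means of Condition~\eqref{item:hypi1} (every edge‑face bounds a half‑space containing its two adjacent cells) shows that every cell, hence $X$, lies in that wedge, which is exactly $\mc{CH}(\cal S,\alpha e_\dimd\R_+,\beta e_{\dimd+1}\R_+)=\mc{CH}(D(\xi),\beta e_{\dimd+1}\R_+)$; this is \eqref{item:aroundcodim2 containment}. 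In particular $\{x_{\dimd+1}=0\}$ is then a supporting hyperplane of $X$, so $\bar X\cap\{x_{\dimd+1}=0\}$ is a closed face of $X$ containing $D(\xi)$, and, using $D(\xi)\cap X=\cal S$ together with the cone structure, one checks that this face equals $D(\xi)$. If instead one only assumes $X$ properly convex, one uses a supporting hyperplane of $X$ at the boundary point $p$ and combines it with the cone structure of $D(\xi)$ and Proposition~\ref{prop:cells infinity} to see that $D(\xi)$ is a closed face; this proves \eqref{item:aroundcodim2 face}.

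\textbf{Main obstacle.} The delicate point is the last step of \eqref{item:aroundcodim2 cvxhull} (and its twin in \eqref{item:aroundcodim2 face}): showing that $D(\xi)\cap\sph^{\dimd-2}$ is \emph{exactly} $\cal S$, and that the face of $X$ through $D(\xi)$ is not strictly larger. A priori the closure defining $D(\xi)$ could acquire spurious points of $\sph^{\dimd-2}$ arising as limits of points lying off $\sph^{\dimd-2}$ (in the walls $F_n$). Ruling this out requires exploiting that the $F_n$ meet $\sph^{\dimd-2}$ only along $\cal S$ and telescope monotonically onto $H$, so that nothing of $X$ approaches $\sph^{\dimd-2}\smallsetminus\cal S$ from inside the fan; making this rigorous relies on Proposition~\ref{prop:intX} and on the ``successive crossing'' property of Remark~\ref{rem:succession}. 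The tree‑propagation in \eqref{item:aroundcodim2 containment} is also somewhat delicate, since one must check that the half‑spaces furnished by Condition~\eqref{item:hypi1} are mutually compatible, all cutting $\sph^\dimd$ on the same side of the wedge.
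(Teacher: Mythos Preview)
Your Part~\eqref{item:aroundcodim2 cvxhull} is essentially correct, though more roundabout than the paper's argument. The paper bypasses the projection to $\sph^1$ entirely: it simply observes that $g^n x\to \alpha e_\dimd\R_+$ for generic $x$, so $\mc{CH}(\cal S,\alpha e_\dimd\R_+)\subset D(\xi)\subset\sph^{\dimd-1}$; then Proposition~\ref{prop:cells infinity}\eqref{item:cells infinity 2} gives that $\cal S$ is a closed face of $D(\xi)$, so $\sph^{\dimd-2}$ is a supporting hyperplane of $D(\xi)$ inside $\sph^{\dimd-1}$; finally, for any $x\in D(\xi)\smallsetminus\{\alpha e_\dimd\R_+\}$ the backward orbit $g^{-n}x$ lands in $\sph^{\dimd-2}\cap D(\xi)=\cal S$ (equality by dimension: both are $(\dimd-2)$-dimensional faces of $D(\xi)$, one inside the other), and $x$ lies on the segment from that limit to $\alpha e_\dimd\R_+$. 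Your cone-via-homothety picture and the chain $D(\xi)\cap X=\bigcap_{n\geq N}D_n=\cal S$ reach the same conclusion; the extra clause about ``nothing of $X$ accumulates on $\sph^{\dimd-2}\smallsetminus\cal S$'' is unnecessary once you invoke the dimension argument for faces.

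Your Parts~\eqref{item:aroundcodim2 face} and~\eqref{item:aroundcodim2 containment}, however, contain a genuine gap. The ``tree propagation via Condition~\eqref{item:hypi1}'' does not work: Condition~\eqref{item:hypi1} only furnishes, for each boundary point of a wall, \emph{some} half-space containing the two adjacent cells --- not the specific half-spaces $\{\alpha x_\dimd\geq0\}$ or $\{\beta x_{\dimd+1}\geq0\}$. There is no mechanism to carry containment in a \emph{fixed} half-space from one cell to the next across an arbitrary wall; convexity of $D_v\cup D_w$ says nothing about which side of an external hyperplane $D_w$ lies on. You flag this as ``delicate'', but it is in fact false as stated.

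The paper's route is different. For~\eqref{item:aroundcodim2 containment} it first shows, via a direct dynamical lemma (Lemma~\ref{lem:NSdynandcontainment}: for a diagonal $g$ with eigenvalue gaps, any $g$-invariant properly convex $\Omega$ projects parallel to $[e_{\dimd+1}]$ into $\mc{CH}(\partial\Omega\cap\sph^{\dimd-2},[e_\dimd])$), that the fan $\bar X'=\overline{\bigcup_nD_n}$ already lies in the cone $K=\mc{CH}(D(\xi),\beta e_{\dimd+1}\R_+)$. Then, for an arbitrary $x\in\bar X\smallsetminus K$, it takes the tree-projection $D_k$ of the cell of $x$ onto the fan and runs a segment from a point $y\in\partial D(\xi)\smallsetminus\partial\cal S$ to $x$; the last point of $[y,x]$ in $\bar X'$ lies in the wall $D_k\cap D(v_1)$, which avoids $\partial D(\xi)\smallsetminus\partial\cal S$, producing a point of $\bar X'\smallsetminus K$ and a contradiction. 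For~\eqref{item:aroundcodim2 face} the paper argues by contradiction in the same spirit: given $x\in(\bar X\cap\sph^{\dimd-1})\smallsetminus D(\xi)$, it reduces to $x\in\bar X'$ via the same tree-segment trick, then shows $x$ must lie on the half of $\sph^{\dimd-1}$ opposite to $D(\xi)$, so that $g^nx\to -\alpha e_\dimd\R_+\in\bar X'$, contradicting proper convexity (and forcing $\lambda=1$ via Proposition~\ref{prop:hypi2 revisited}). Your sketch ``use a supporting hyperplane at $p$'' does not capture this; the actual obstruction is showing $\bar X\cap\sph^{\dimd-1}\subset D(\xi)$, which genuinely requires the dynamics of $g$ together with the tree-segment reduction.
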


\begin{proof}
{\bf Property (1)}. For any generic $x\in\sph^\dimd$, the sequence $g^nx$ tends to $e_{\dimd}\R_+$ or $-e_{\dimd}\R_+$.
 We may assume that the limit is $e_{\dimd}\R_+$ for some $x\in{\rm int}(D_0)$.
 This implies that the convex hull of $\cal S$ with $e_{\dimd}\R_+$ is contained in $D(\xi)$, and hence that $\sph^{\dimd-1}$ is a supporting hyperplane for $X$ containing $D(\xi)$. 
 
By Part (2) of Proposition~\ref{prop:cells infinity}, $\cal S\subset\sph^{\dimd-2}$ is a closed face of $D(\xi)$. Hence $\sph^{\dimd-2}\subset\sph^{\dimd-1}$ is a supporting hyperplane of $D(\xi)$, which lies in the closed half-space $H$ of $\sph^{\dimd-1}$ delimited by $\sph^{\dimd-2}$ that contains $e_{\dimd}\R_+$
 
 For any $x\in D(\xi)\smallsetminus\{e_{\dimd}\R_+\}$, the limit $y$ of $g^{-n}x$ is in $\sph^{\dimd-2}\cap D(\xi)=\cal S$ (as $D(\xi)$ is clearly $g$-invariant), and $x\in[y,e_{\dimd}\R_+]$.
 Therefore $D(\xi)$ is the convex hull of $\cal S$ with $e_{\dimd}\R_+$.
 
 {\bf Property (2)}. Suppose that there exists $x\in \sph^{\dimd-1}\smallsetminus D(\xi)$, lying in some cell $D(v)$ with $v\in\bar{\cal V}\smallsetminus\{\xi\}$.
 Let us show that $X$ is not properly convex and $\lambda=1$.
 Set $\eta=\{D_{-n}\}_{n\in\mb{N}}\in\partial_\infty\cal V$ and $X':=\bigcup_nD_n$, so that $\bar X'=X'\cup D(\xi)\cup D(\eta)$.
 Pick $y\in{\rm int}({\cal S})$.
 
 We may assume that $x\in \bar X'$.
 Indeed, if it not the case then consider the shortest point projection $D_k$ of $v$ on $\{D_n\}_{n\in\Z}$ in $\bar{\cal V}$, and  a geodesic path $(v_m)_{0\leq m\leq M}$ from $D_k$ to $v$ in $\cal V$, with $M\in\N\cup\{\infty\}$.
 Then the last point of $[y,x]$ in $D_k$ lies in $D_k\cap D(v_1)$, which does not intersect $D(\xi)$.
 We may then consider this last point instead of $x$.
 
 By Proposition~\ref{prop:intX}, $x$ does not lie in $\sph^{\dimd-2}$.
 Moreover, $x$ does not lie in ${\rm int}(H)$, otherwise $[x,y]$ would intersect $D(\xi)\smallsetminus X$, which is not possible since the complementary of $D(\xi)\smallsetminus X$ in $\bar X$ is convex.
 
 Hence $g^nx$ tends to $-e_{\dimd}\R_+$, which hence belongs to $\bar X'$ since this last set is $g$-invariant.
 Thus $X$ is not properly convex.
 Moreover $\lambda=1$ by Proposition~\ref{prop:hypi2 revisited}.
 
 {\bf Property (3)}. Finally, we may assume that $\bar X$ is contained in the half-space of $\sph^\dimd$ delimited by $\sph^{\dimd-1}$ and containing $e_{\dimd+1}\R_+$.
 Let us prove that, under the condition that $\lambda>1$, then $\bar X$ is contained in the convex hull $K$ of $D(\xi)$ and $e_{\dimd+1}\R_+$.
 Suppose by contradiction that there exists $x\in \bar X\smallsetminus K$.
 First note that $x\not\in\bar X'$ by Lemma~\ref{lem:NSdynandcontainment} below.
 
 As before, consider $v\in\bar{\cal V}$ such that $x\in D(v)$, the shortest point projection $D_k$ of $v$ on $\{D_n\}_{n\in\Z}$ in $\bar{\cal V}$, and  a geodesic path $(v_m)_{0\leq m\leq M}$ from $D_k$ to $v$ in $\cal V$, with $M\in\N\cup\{\infty\}$.
 One may find $y\in \partial D(\xi)\smallsetminus\partial \cal S$ such that $[x,y[$ does not intersect $K$.
 The last point $z$ of $[y,x]$ in $\bar X'$ belongs to $D_k\cap D(v_1)$ which does not intersect $\partial D(\xi)\smallsetminus\partial \cal S$.
 Thus $z\neq y$, so $z\in]y,x]$, so $z\in \bar X'\smallsetminus K$ which is absurd.
\end{proof}

\begin{lemma}\label{lem:NSdynandcontainment}
 Consider  $0<\lambda_1<\lambda_2<\lambda_3$ and
 $$g=
 \begin{pmatrix}
  \lambda_1 I_{\dimd-1} & 0 & 0 \\
  0 & \lambda_3 & 0 \\
  0 & 0 & \lambda_2
 \end{pmatrix}.$$
 Let $\Omega\subset\sph^{\dimd}$ be a $g$-invariant properly convex open set. 
 Then its projection on $\sph^{\dimd-1}$ parallel to $[e_{\dimd+1}]$ is contained in $\overline\Omega\cap\sph^{\dimd-1}$, which is the convex hull of $\partial\Omega\cap \sph^{\dimd-2}$ and $[e_{\dimd}]$.
\end{lemma}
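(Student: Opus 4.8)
The plan is to analyze the dynamics of the diagonal element $g = \mathrm{diag}(\lambda_1 I_{d-1}, \lambda_3, \lambda_2)$ with $0 < \lambda_1 < \lambda_2 < \lambda_3$ acting on $\sph^\dimd$, and use the fact that $\Omega$ is a $g$-invariant properly convex open set to pin down where $\overline\Omega$ can sit. The key observation is that the eigenvalue $\lambda_3$ is the largest, so its eigenline $[e_\dimd]$ is the attracting fixed point of $g$ in $\sph^\dimd$; the eigenvalue $\lambda_2$ has eigenline $[e_{\dimd+1}]$, which is a saddle; and $\lambda_1$ (with multiplicity $d-1$) has eigenspace $\sph^{\dimd-2} = \sph(\Span\{e_1,\dots,e_{d-1}\})$, the repelling subspace. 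The hyperplane $\sph^{\dimd-1} = \sph(\Span\{e_1,\dots,e_{d-1},e_\dimd\})$ (omitting $e_{\dimd+1}$) is $g$-invariant, and on it $g$ restricts to $\mathrm{diag}(\lambda_1 I_{d-1},\lambda_3)$, a north–south type dynamics with attracting point $[e_\dimd]$ and repelling subspace $\sph^{\dimd-2}$.

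**First I would** show $\sph^{\dimd-1}$ is a supporting hyperplane for $\Omega$, or rather that $\overline\Omega \cap \sph^{\dimd-1}$ is a face. Take any $x \in \Omega$; since $[e_{\dimd+1}]$ is not a repelling direction, generically $g^n x \to [e_\dimd] \in \sph^{\dimd-1}$ and $g^{-n} x$ accumulates on $\sph^{\dimd-2} \cup$ (possibly) $[e_{\dimd+1}]$. Because $\Omega$ is properly convex and $g$-invariant, $[e_\dimd] \in \overline\Omega$; and proper convexity forbids $[e_{\dimd+1}]$ and its antipode both being limit points, so in fact the backward limit set of a generic point lands in $\sph^{\dimd-2}$. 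This forces $\sph^{\dimd-2} \cap \overline\Omega \neq \emptyset$ and, combined with $g$-invariance, that $\overline\Omega$ contains the convex hull of $\partial\Omega \cap \sph^{\dimd-2}$ with $[e_\dimd]$. Conversely, to see $\overline\Omega \cap \sph^{\dimd-1}$ is exactly this convex hull: any point $z \in \overline\Omega \cap \sph^{\dimd-1}$ satisfies $g^{-n} z \to $ some point of $\sph^{\dimd-2} \cap \overline\Omega$ (as $[e_\dimd]$ is the unique attractor on $\sph^{\dimd-1}$ and proper convexity rules out the antipode), and $z$ lies on the segment from that backward limit to $[e_\dimd]$, so $z$ is in the claimed hull. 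This identifies $\overline\Omega \cap \sph^{\dimd-1}$ as the convex hull of $\partial\Omega \cap \sph^{\dimd-2}$ and $[e_\dimd]$.

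**Then I would** handle the projection statement: let $p: \sph^\dimd \smallsetminus \{[e_{\dimd+1}], -[e_{\dimd+1}]\} \to \sph^{\dimd-1}$ be the projection parallel to $[e_{\dimd+1}]$. Given $x \in \Omega$, I want $p(x) \in \overline\Omega \cap \sph^{\dimd-1}$. The idea is that $g^n x$ converges to $[e_\dimd]$ along a trajectory whose projection to $\sph^{\dimd-1}$ is $g^n(p(x))$ (since $p$ is $g$-equivariant, $g$ being block-diagonal with the $\sph^{\dimd+1}/\sph^{\dimd-1}$-block upper-triangular in a way compatible with the splitting — one should check $p \circ g = g \circ p$, which holds because $g$ is genuinely diagonal here). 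So it suffices to control the geometry: $p(x)$ lies on the segment joining $x$ to $[e_{\dimd+1}]$ (or its continuation). One shows $p(x) \in \overline\Omega$ by observing that $[e_{\dimd+1}]$, while not in $\Omega$, is a limit point of $g^{-n}$ of suitable points, so that the half-open segment $[x, [e_{\dimd+1}])$ meets $\overline\Omega$, and then $p(x)$ — being on the line through $x$ and $[e_{\dimd+1}]$ — lies in $\overline\Omega \cap \sph^{\dimd-1}$ by convexity once we verify it lies between $x$ and a limit point of $\overline\Omega$ on that line. Concretely, writing $x$ in coordinates and applying $g^{-n}$, the last coordinate grows relative to the $e_\dimd$-coordinate (ratio $(\lambda_2/\lambda_3)^n \to 0$... wait, $\lambda_2 < \lambda_3$ so actually under $g^{-n}$ the $e_{\dimd+1}$-coordinate is scaled by $\lambda_2^{-n}$ which grows faster than $\lambda_3^{-n}$), so $g^{-n}x$ accumulates toward $[e_{\dimd+1}]$ — hence $[e_{\dimd+1}] \in \overline\Omega$ is false only if... here proper convexity is used to say $[e_{\dimd+1}] \notin \Omega$ but it may be in $\partial\Omega$; in any case the segment $[x,[e_{\dimd+1}]] \subset \overline\Omega$, and $p(x)$ is its intersection with $\sph^{\dimd-1}$, giving the containment.

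**The main obstacle** I anticipate is the bookkeeping around the non-attracting, non-repelling eigendirection $[e_{\dimd+1}]$: one has to be careful that proper convexity genuinely prevents $\overline\Omega$ from touching $[e_{\dimd+1}]$ in a way that would break the projection argument, and to correctly identify the forward/backward limit sets of generic versus special points of $\Omega$ under the partially-hyperbolic dynamics of $g$ (the eigenvalue $\lambda_1$ has multiplicity $d-1$, so the "repelling" behavior is toward a whole $\sph^{\dimd-2}$, not a point). The cleanest route is probably to reduce everything to the $g$-invariant $2$-sphere $C = \sph(\Span\{e_{\dimd}, e_{\dimd+1}\}) \cup$... actually to the projective plane containing $[e_i]$, $[e_\dimd]$, $[e_{\dimd+1}]$ for a test coordinate $i \le d-1$, where $g$ acts with three distinct eigenvalues, reducing to the classical fact (used elsewhere in this excerpt, cf. Proposition~\ref{prop:hypi2 revisited}) about invariant convex sets under such transformations. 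I would spell out the $2$-dimensional picture once and then note the general case follows slice by slice.
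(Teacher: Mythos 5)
Your first step---identifying $\overline\Omega\cap\sph^{\dimd-1}$ with the convex hull of $\partial\Omega\cap\sph^{\dimd-2}$ and $[e_\dimd]$ by running $g^{-n}$ and noting that $\lambda_1^{-1}$ dominates---is essentially the paper's argument and is correct (the aside about proper convexity forbidding $[e_{\dimd+1}]$ and its antipode is irrelevant there: for generic $x$, i.e.\ some $x_i\neq0$ with $i\le\dimd-1$, the backward limit lands in $\sph^{\dimd-2}$ purely because $\lambda_1^{-1}>\lambda_2^{-1}>\lambda_3^{-1}$).

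Your second step, however, has a genuine gap: routing the projection through $[e_{\dimd+1}]$ fails on three counts. First, for generic $x\in\Omega$ the backward orbit $g^{-n}x$ does \emph{not} accumulate on $[e_{\dimd+1}]$; the dominant eigenvalue of $g^{-1}$ is $\lambda_1^{-1}$ (not $\lambda_2^{-1}$), so $g^{-n}x\to[x_1,\dots,x_{\dimd-1},0,0]\in\sph^{\dimd-2}$. Your ``wait'' comparison of $\lambda_2^{-n}$ versus $\lambda_3^{-n}$ silently ignores the $\lambda_1^{-n}$ factor. Second, $[e_{\dimd+1}]$ need not lie in $\overline\Omega$ at all: take $\dimd=2$, $\lambda_1\lambda_3\ge\lambda_2^2$, and $\Omega=\{[x_1,x_2,x_3]: x_1,x_2>0,\ x_1x_2>x_3^2\}$; this is $g$-invariant, properly convex, open, and $\pm[e_3]\notin\overline\Omega$. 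Third, even if $[e_{\dimd+1}]\in\overline\Omega$, the point $p(x)$ is not between $x$ and $[e_{\dimd+1}]$ on the line they span: if $x_{\dimd+1}>0$ then $p(x)$ is on the opposite side of $x$ from $[e_{\dimd+1}]$, so the segment $[x,[e_{\dimd+1}]]$ tells you nothing about $p(x)$.

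The fix is the paper's one-line argument, which you already have in hand from your first step: $p(x)=[x_1,\dots,x_\dimd,0]$ is the segment-combination of the backward limit $[x_1,\dots,x_{\dimd-1},0,0]\in\overline\Omega\cap\sph^{\dimd-2}$ and $[e_\dimd]\in\overline\Omega$, so $p(x)$ lies in their convex hull in $\overline\Omega$. There is no need to involve $[e_{\dimd+1}]$ at all; the relevant chord is the one in $\sph^{\dimd-1}$, not the one parallel to $[e_{\dimd+1}]$.
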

\begin{proof}
 Consider $x=[x_1,\dots,x_{\dimd+1}]\in\overline\Omega$.
 If $x$ lies in $\Omega$ or $\sph^{\dimd-1}\smallsetminus\{[e_{\dimd}]\}$, then $x_i\neq 0$ for some $1\leq i\leq \dimd-1$.
 Thus $g^{-n}x$ converges in this case to $[x_1,\dots,x_{\dimd-1},0,0]$ when $n$ tends to infinity, and this limit must belong to $\partial\Omega\cap \sph^{\dimd-2}$.
 This implies that $[x_1,\dots,x_\dimd,0]$ lies in the convex hull of $\partial\Omega\cap \sph^{\dimd-2}$ and $[e_{\dimd}]$.  
\end{proof}

\subsection{Telescope of tiles}
\label{sec:contraction}

We now move to the second type of sequences which we call telescopes of tiles. Informally speaking, a telescope of tiles is a sequence of consecutive tiles $D_n$ such that infinitely many of the walls $D_m\cap D_{m+1}$ are pairwise disjoint.

Again, we will assume some periodicity assumptions, namely, that, up to projective transformations, there are only finitely (or compactly) many configurations of four consecutive tiles $D_n,D_{n+1},D_{n+2},D_{n+3}$ where the walls $D_n\cap D_{n+1}$ and $D_{n+1}\cap D_{n+2}$ are disjoint.

In this case $(D_n\cap D_{n+1},\Omega,D_{n+2}\cap D_{n+3})$ form what we call a visible triple. 

\begin{defi}[Visible Triple]
A \emph{visible triple} is a triple $(L,K,M)$ of (non-empty) compact properly convex sets such that $L,M\subset K$, $L\cap M=\emptyset$, and any segment of $K$ from $L$ to $M$ intersects the interior of $K$.
\end{defi}

The analysis of the cell at infinity of a telescope of tiles rests on a contraction property of visible triples which we now discuss.

\subsubsection{Contraction for visible triples}
The set of visible triples is endowed with a Hausdorff topology. A sequence of compact subsets $C_n\subset\sph^\dimd$ converges to a compact subset $C\subset\sph^\dimd$ if the following two conditions are satisfied:
\begin{itemize}
\item{If a sequence $x_{n_j}\in C_{n_j}$ converges to $x\in\sph^\dimd$, then $x\in C$.}
\item{For every $x\in C$ there exists a sequence $x_n\in C_n$ converging to it.}
\end{itemize}

This induces a Hausdorff topology on the space of triples in a natural way.

\begin{figure}[h]
\centering
\begin{overpic}{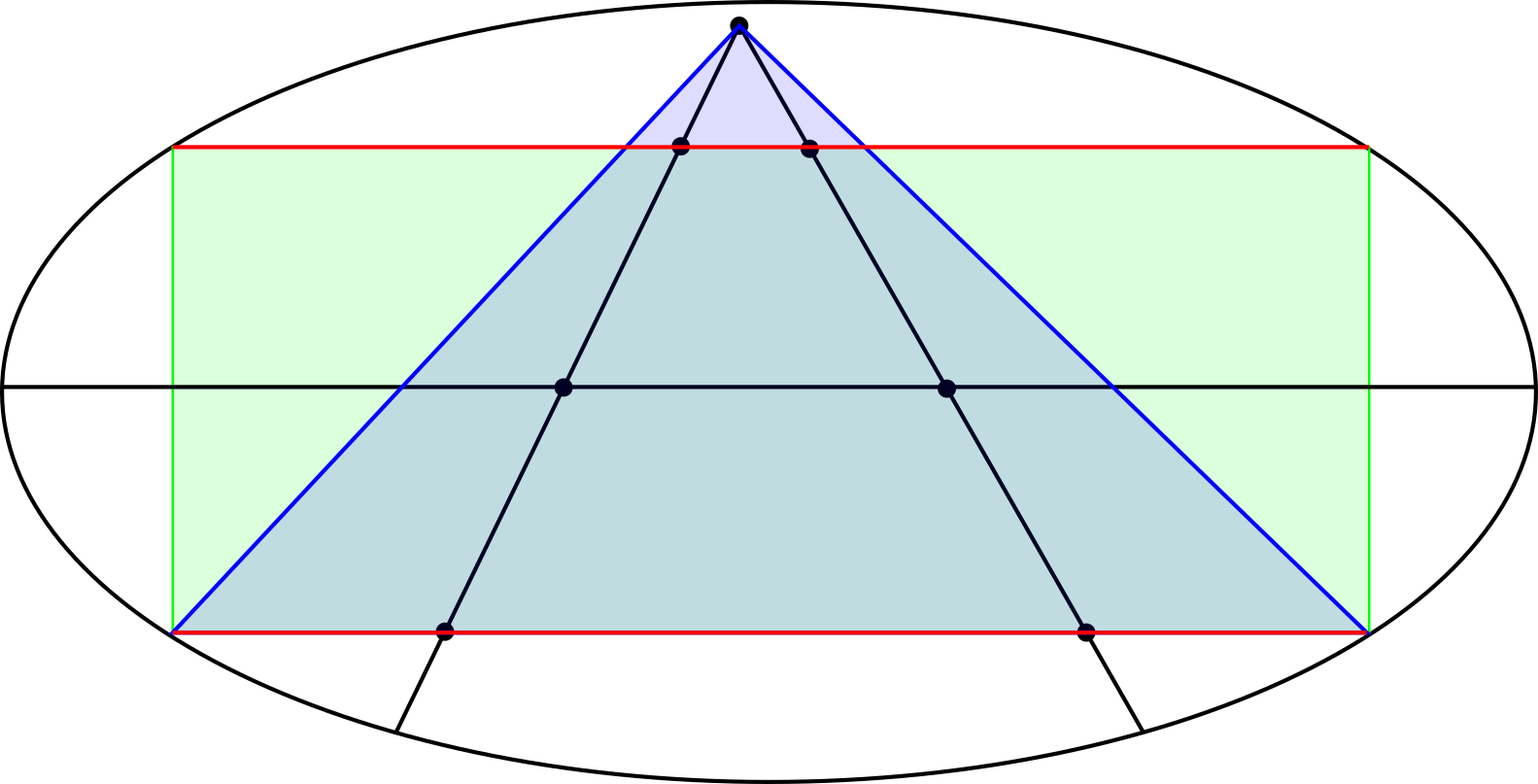}
\put (44,48) {$x$}
\put (44,38) {$y$}
\put (49,38) {$y'$}
\put (37,22) {$w$}
\put (57,22) {$w'$}
\put (30,5.5) {$z$}
\put (68,5.5) {$z'$}
\put (24,0) {$r$}
\put (73,0) {$r'$}
\put (7,42) {$L$}
\put (7,5) {$M$}
\put (-6,25) {$H_0$}

\end{overpic}
\caption{Contraction for visible triples.}
\label{fig:visible3}
\end{figure}

\begin{lemma}\label{lem:contraction}
 Let $\cal K$ be a compact set of visible triples.
 Then there exists $\lambda>1$ such that the following holds.
 Consider a properly convex open set $\Omega$, a point $x\in\Omega$, two rays $r,r'$ of $\Omega$ starting at $x$, two hyperplanes $H,H'$ and a closed convex subset $K\subset\bar\Omega$ such that:
 \begin{itemize}
  \item $(H\cap \bar\Omega,K,H'\cap \bar\Omega)\in\GL_{\dimd+1}(\R)\cdot\cal K$.
  \item The ray $r$ (\resp $r'$) meets first $H\cap \Omega$ at $y$ (\resp $y'$) and then $H'\cap\Omega$ at $z$ (\resp $z'$).
 \end{itemize}
 Then $d_\Omega(z,z')\geq \lambda d_\Omega(y,y')$.
\end{lemma}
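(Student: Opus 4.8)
I would reduce to dimension two, extract the elementary projective mechanism (a perspectivity based at $x$) that already yields the \emph{strict} inequality $d_\Omega(z,z')\geq d_\Omega(y,y')$, and then promote this to the uniform multiplicative bound by a compactness argument using that $\cal K$ is compact.

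\textbf{Reduction to the plane.} Since $y,z\in r$, $y',z'\in r'$ and the two rays meet at $x$, the four points $y,y',z,z'$ lie in the projective plane $P:=\Span(x,r,r')$ (which I may assume is genuinely two-dimensional, the case $r=r'$ being vacuous). The Hilbert distance between two points of $\Omega$ depends only on the line through them and on its two intersections with $\partial\Omega$, all of which lie in $P$; hence $d_\Omega(y,y')=d_{\Omega\cap P}(y,y')$ and $d_\Omega(z,z')=d_{\Omega\cap P}(z,z')$. One checks that $(H\cap P\cap\bar\Omega,\ K\cap P,\ H'\cap P\cap\bar\Omega)$ is again a visible triple: a segment of $K\cap P$ joining the first set to the last one is a segment of $K$ joining $L:=H\cap\bar\Omega$ to $M:=H'\cap\bar\Omega$ and contained in $P$, hence it meets ${\rm int}(K)$ and therefore ${\rm int}(K\cap P)$. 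So from now on I take $\dimd=2$ and write $\ell:=H\cap P$, $\ell':=H'\cap P$, so $L=\ell\cap\bar\Omega=[a_1,a_2]$ and $M=\ell'\cap\bar\Omega=[b_1,b_2]$ are segments with endpoints on $\partial\Omega$, labelled so that on $\ell'$ the points $z,z'$ lie in the open segment $]b_1,b_2[$.

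\textbf{The perspectivity and the pointwise inequality.} Let $\Pi\colon\ell\to\ell'$ be the perspectivity with centre $x$; by construction $\Pi(y)=z$, $\Pi(y')=z'$. As $\Pi$ is a projective isomorphism of lines it preserves cross-ratios of collinear points, so
\[
 d_\Omega(y,y')=\tfrac12\log[a_1,y,y',a_2]=\tfrac12\log[\Pi a_1,z,z',\Pi a_2].
\]
Now visibility enters: from $L\cap M=\emptyset$ the lines $\ell,\ell'$ meet at a point $w\notin\bar\Omega$; and since $r$ crosses $\ell$ at $y$ strictly before crossing $\ell'$ at $z$, the line $\ell$ separates $x$ from $z$, hence — the chord $M$ being disjoint from $w$, so lying in a single open half-plane bounded by $\ell$ — it separates $x$ from all of $M$. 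By convexity of $\bar\Omega$, for each $m\in M$ the segment $[x,m]\subset\bar\Omega$ crosses $\ell$ inside $\ell\cap\bar\Omega=L$; thus $\Pi^{-1}(M)\subset L$, i.e.\ $M\subset\Pi(L)$. Moreover each $\Pi a_i$ lies outside $M$, since the line through $x$ and $a_i\in\partial\Omega$ leaves $\bar\Omega$ at $a_i$ and so meets $\ell'$ beyond the exit point $b_i$. Hence on $\ell'$ the six points occur in the order $\Pi a_1,b_1,z,z',b_2,\Pi a_2$, and monotonicity of the cross-ratio under this nesting gives $d_\Omega(z,z')=\tfrac12\log[b_1,z,z',b_2]\geq\tfrac12\log[\Pi a_1,z,z',\Pi a_2]=d_\Omega(y,y')$, with a genuine gap measured by the overshoot of $\Pi a_i$ past $b_i$ (compare Figure~\ref{fig:visible3}).

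\textbf{Uniformity, and the main difficulty.} Finally I would produce a single $\lambda>1$ by contradiction: given configurations $(\Omega_n,x_n,r_n,r_n',H_n,H_n',K_n)$ with $d_{\Omega_n}(z_n,z_n')\leq(1+\tfrac1n)\,d_{\Omega_n}(y_n,y_n')$, apply $g_n\in\GL_{\dimd+1}(\R)$ so that the triples lie in $\cal K$ and, along a subsequence, converge to a visible triple $(L_\infty,K_\infty,M_\infty)\in\cal K$; the points $y_n,y_n'\in L_n$ and $z_n,z_n'\in M_n$ subconverge, and since $L_\infty\cap M_\infty=\emptyset$ the lines $\overline{r_n},\overline{r_n'}$ converge as well. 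Taking a Blaschke limit $\Omega_\infty$ of $\bar\Omega_n$ (a closed convex set containing $K_\infty$, so with non-empty interior), the cross-ratios computing the two distances converge to those of the limit configuration, which again satisfies the hypotheses; Step~2 applied to the limit then gives a ratio $>1$, a contradiction. The crux is precisely this last step: $\Omega$ is not part of the compact data, so besides controlling it one must rule out the degenerate limits — $\Pi$ becoming singular, the limiting points collapsing, or $\Omega_\infty$ degenerating — in which the limit ratio could be $1$. This is exactly where the full force of the visibility hypothesis (every segment of $K$ from $L$ to $M$ meets ${\rm int}(K)$, not merely $L\cap M=\emptyset$) is used: it forces the passage from $L_\infty$ to $M_\infty$ to go through the interior in the limit, which keeps $y_\infty\neq z_\infty$, keeps $\Pi$ non-singular along the relevant segment, and prevents the nested windows $[b_1,b_2]\subset[\Pi a_1,\Pi a_2]$ from collapsing onto each other.
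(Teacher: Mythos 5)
Your pointwise argument is correct, and in fact it captures the same projective mechanism that the paper's proof uses (perspectivities from $x$ and nesting of windows under cross-ratio). The gap is in the last paragraph, the promotion of the weak inequality $d_\Omega(z,z')\geq d_\Omega(y,y')$ to the uniform multiplicative bound with $\lambda>1$.

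Your compactness/contradiction scheme is where the argument breaks down, and you partially flag the issue yourself. After normalizing by $g_n$ so that the triples converge in $\cal K$, the Blaschke limit $\bar\Omega_\infty$ does have non-empty interior (it contains $K_\infty$), so that particular degeneracy is avoided. But two degenerate regimes of the \emph{ratio} are not ruled out: (i) $d_{\Omega_n}(y_n,y_n')\to 0$, in which case $y_n,y_n'$ coincide in the limit and the ratio is an indeterminate $0/0$ — the limiting configuration gives no information and one would need an infinitesimal (derivative) version of your Step 2, which is not supplied; (ii) $d_{\Omega_n}(y_n,y_n')\to\infty$, where $y_n,y_n'$ approach the endpoints of $L_n$ and the ratio is $\infty/\infty$. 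In this second regime a finite-limit argument must either force $x_n$ onto $\partial\Omega_\infty$ (collapsing back into regime (i)) or control how fast the perspectivity images $\Pi_n a_i$ overshoot $b_i$, neither of which follows from compactness of $\cal K$ alone because $\Omega$ and $x$ are not part of the compact data. Saying the visibility hypothesis ``keeps the nested windows from collapsing onto each other'' is not a complete argument: visibility constrains $L,K,M$ uniformly, but the window $[\Pi a_1,\Pi a_2]$ depends on $x$, and the apparent ``gap'' between it and $[b_1,b_2]$ is not bounded below in a scale-invariant way unless one makes the comparison inside a domain that depends only on $\cal K$.

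That is exactly what the paper's proof does, and it is the ingredient missing from yours. Instead of comparing the windows on the moving line $\ell'$, the paper projects from $x$ onto a \emph{fixed} separating hyperplane $H_0$ (depending only on the triple), and the crux is the chain of Hilbert-metric comparisons
\[
d_\Omega(z,z')=d_{\Omega_0}(w,w')\geq d_{\Omega_1}(w,w')\geq \lambda\, d_{\Omega_2}(w,w')\geq \lambda\, d_{\Omega_3}(w,w')\geq\lambda\, d_\Omega(y,y'),
\]
where $\Omega_0\subset\Omega_1\subset\Omega_2\subset\Omega_3$ are the $H_0$-slices of, respectively, $\conv(x,M')$, $\conv(L',M')$, $K'$, and $\Omega$. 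The outer inequalities are the monotonicity/perspectivity facts you use; the middle one is Birkhoff's contraction theorem for nested Hilbert domains, applied to $\Omega_1\subset\Omega_2$, \emph{both of which depend only on the triple $(L',K',M')$ and not on $\Omega$ or $x$}. Compactness of $\cal K$ then produces the uniform constants $\epsilon$ (distance from $\Omega_1$ to $\partial\Omega_2$ in an affine chart) and $D$ (diameter of $\Omega_2$), and hence a uniform $\lambda>1$, with no need for a limit argument on $\Omega$ at all. Your approach would become rigorous if you replaced the compactness-contradiction with this explicit Birkhoff step applied to the nested $H_0$-slices; the rest of your outline is sound.
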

\begin{proof}
 We may assume that $\cal K$ is a very small neighborhood of a given triple $(L,K,M)$.
 Fix an affine chart $\cal A$ containing $K$, with a Euclidean distance $d_{\cal A}$. 
 We may assume that it contains $K'$ for every $(L',K',M')\in\cal K$.
 
 Let $H_0\subset\sph^\dimd$ by a hyperplane separating $L$ and $M$.
 We may assume that it separates $L'$ and $M'$ for every $(L',K',M')\in\cal K$.
 
 Let $\epsilon>0$ be the minimal distance for $d_{\cal A}$ between a point of $H_0\cap \conv(L',M')$ and a point of $H_0\cap \partial K'$ for $(L',K',M')\in\cal K$, and $D$ the maximal possible $d_{\cal A}$-diameter for $K'$ .
 By \cite{Birkhoff_perronfrob} (see also \cite[Fact 2.1.3]{ThesePL}), there exists $\lambda>1$ depending on $\epsilon$ and $D$ such that for any $(L',K',M')\in\cal K$, denoting by $\Omega_1$ (\resp $\Omega_2$) the relative interior of $H_0\cap \conv(L',M')$ (\resp $H_0\cap K'$), for all $x,y\in\Omega_1$, we have
 $$d_{\Omega_1}(x,y)\geq \lambda d_{\Omega_2}(x,y).$$
 
 Consider a properly convex open set $\Omega$, a point $x\in\Omega$, two rays $r,r'$ of $\Omega$ starting at $x$, two hyperplanes $H,H'$ and a closed convex subset $K'\subset\bar\Omega$ such that, denoting $L':=H\cap \bar\Omega$ and $M':=H'\cap \bar\Omega$
 \begin{itemize}
  \item $(L',K',M')\in\GL_{\dimd+1}(\R)\cdot\cal K$.
  \item The ray $r$ (\resp $r'$) meets first ${\rm int}(L')$ at $y$ (\resp $y'$) and then ${\rm int}(M')$ at $z$ (\resp $z'$).
 \end{itemize}
 Denote by $\Omega_0$ (\resp $\Omega_1$, \resp $\Omega_2$, \resp $\Omega_3$) the relative interior of $H_0\cap \conv(x,M')$ (\resp $H_0\cap \conv(L',M')$, \resp $H_0\cap K'$, \resp $H_0\cap \Omega$).
 Observe that 
 $$\Omega_0\subset\Omega_1\subset\Omega_2\subset\Omega_3.$$

 Therefore, denoting by $w$ and $w'$ the stereographic projections in $\Omega_0$ of $z$ and $z'$ seen from $x$, we obtain by Remark~\ref{rem:hilbert metric} that
 \begin{align*}
 d_{\Omega}(z,z')=d_{\Omega_0}(w,w')\geq d_{\Omega_1}(w,w')&\geq \lambda d_{\Omega_2}(w,w')\\ &\geq \lambda d_{\Omega_3}(w,w') \geq \lambda d_{\Omega}(x,x').\qedhere
 \end{align*}
\end{proof}

\subsubsection{Visible triples in $X$}

Let $X=\bigcup_{v\in\mc{V}}{D_v}$ be a tessellated properly convex set as above. We now describe where to find visible triples in it.

Assume that each tile $D_v$ has the following properties:
\begin{enumerate}
 \item \label{item:find contract1} Any intersection of (at least $2$) walls is either empty or a corner.
 \item \label{item:find contract2} Any closed proper face touching a wall is contained in a (possibly different) wall.
 \item \label{item:find contract3} Any closed proper face with codimension at least $2$ that touches a corner is contained in a corner.
\end{enumerate}

We have the following:

\begin{figure}[h]
\centering
\begin{overpic}{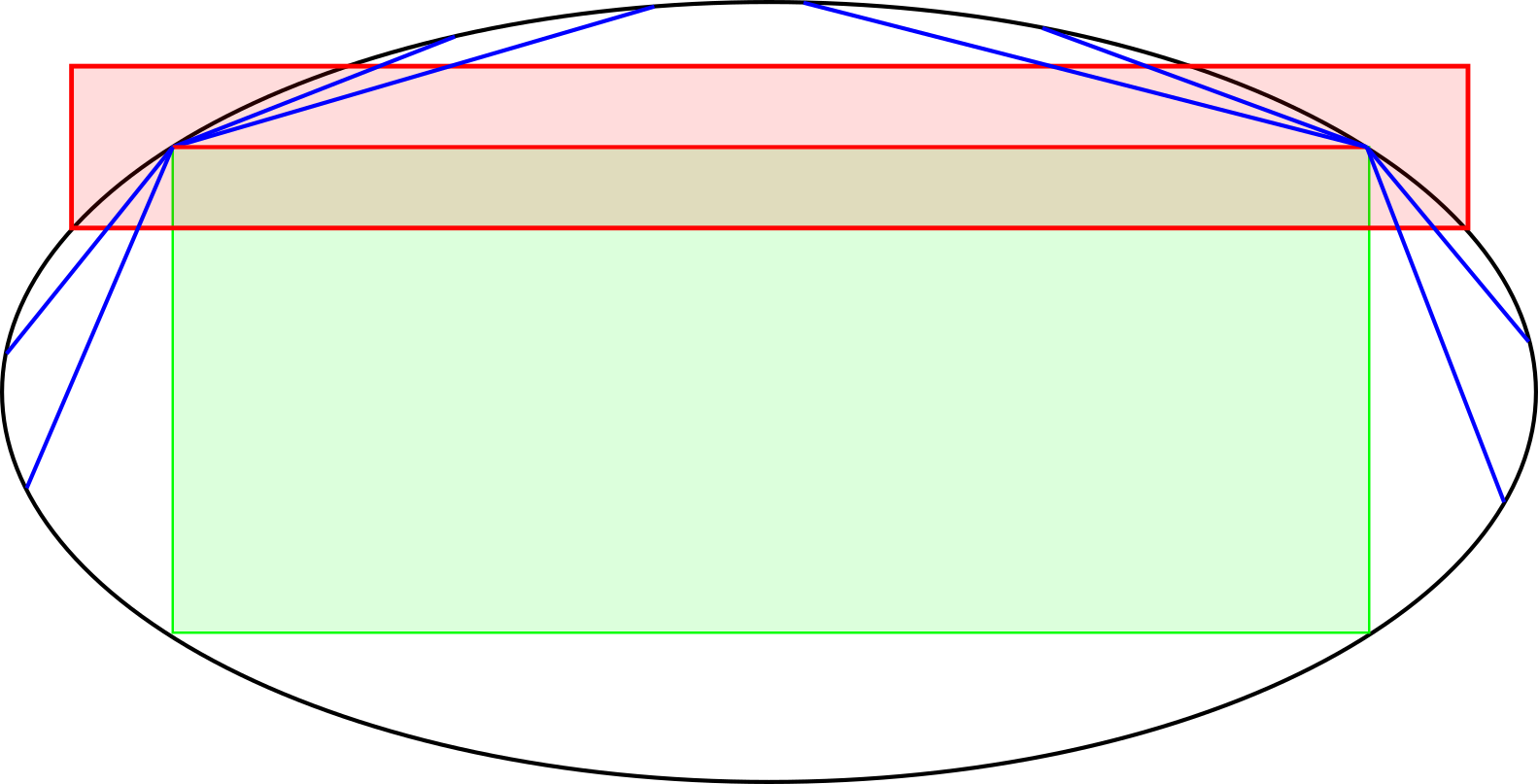}
\put (0,43) {$U$}
\put (42,23) {$D\smallsetminus U$}
\put (98,15) {$W_j$}
\put (45,43) {$W$}

\end{overpic}
\caption{Visible triples in $X$.}
\label{fig:visible1}
\end{figure}

\begin{lemma}\label{lem:finding visible triples}
 Let $D\subset X$ be a tile. Let $W$ be a wall of $D$. Consider an open neighborhood $U$ of $W$ in $\sph^\dimd$ such that $D\smallsetminus U$ is convex. Then: 
 \begin{itemize}
 \item{There are only finitely many walls $W_1,\dots,W_n$ adjacent to $W$ and not contained in $U$.}
 \item{$(W,K,D\smallsetminus U)$ is a visible triple for every convex compact set $K\subset\bar{X}$ with ${\rm int}(W_j)\subset{\rm int}(K)$ for every $j\le n$.}
 \end{itemize}
\end{lemma}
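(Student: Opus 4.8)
The plan is to treat the two assertions separately: the finiteness is a structural remark, while the visibility statement carries the content. Throughout I set $E:=D\smallsetminus U$, a compact convex subset of $D$ disjoint from $W$ (since $W\subset U$); I assume $E\neq\emptyset$ (otherwise there is nothing to prove) and — as is the case for the thin tubular neighbourhoods $U$ occurring in the applications — that $U$ is small enough that $\mc{CH}(W\cup E)=D$. I also take for granted that the tilings considered are locally finite, i.e.\ each tile has only finitely many walls meeting any given compact set; this is automatic for the tilings assembled from compact pieces in this paper, and is what makes the first bullet true.

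For the first bullet: by Condition~\ref{item:find contract1} every wall $W'$ of $D$ meeting $W$ does so in a corner, and by local finiteness only finitely many walls of $D$ meet the compact set $W$, hence a fortiori only finitely many are adjacent to $W$ and not contained in $U$; list these as $W_1,\dots,W_n$.

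For visibility I fix $K$ as in the statement (so $W\cup E\subset K\subset\bar X$, $K$ compact convex, ${\rm int}(W_j)\subset{\rm int}(K)$ for every $j$; the nonempty, compact, properly convex and disjointness requirements on $W,E$ are immediate) and a segment $[x,y]$ with $x\in W$, $y\in E$ and $[x,y]\subset K$, and I must produce a point of $]x,y[$ lying in ${\rm int}(K)$. Since $[x,y]\subset D$, the right object to examine is the smallest face $G$ of $D$ containing $[x,y]$, and I split on its dimension. If $G=D$, then $[x,y]$ meets no proper face of $D$, so, taking a supporting hyperplane of $D$ at an interior point of the segment, $]x,y[\subset{\rm int}(D)$; and ${\rm int}(D)\subset{\rm int}(K)$ because $\mc{CH}(W\cup E)=D\subset K$ with both full-dimensional. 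If $G\subsetneq D$, then $G$ touches $W$ (it contains $x$), so Condition~\ref{item:find contract2} places $G$ inside a wall $W'$, which — since $y\in W'\smallsetminus W$ and $y\notin U$ — is adjacent to $W$ and not contained in $U$, hence $W'=W_j$ for some $j$. When $\dim G=\dimd-1$ this forces $G=W_j$, and the same supporting-hyperplane argument, carried out inside the hyperplane $\sph(W_j)$, yields $]x,y[\subset{\rm int}(W_j)\subset{\rm int}(K)$.

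The case I expect to be the main obstacle is $\dim G\le\dimd-2$. Here $G$ is a proper face of codimension $\ge 2$ touching the corner $W\cap W_j$ (it contains $x$, and $W\cap W_j$ is a corner by Condition~\ref{item:find contract1}), so Condition~\ref{item:find contract3} forces $G$ into a corner $\cal S$; I then want to rule out that a corner can contain a segment joining $W$ to $E$. For this I would argue: if $\cal S\subset W$ then $y\in[x,y]\subset\cal S\subset W$, impossible; otherwise $\cal S\cap W$ is a proper face of $\cal S$, hence of codimension $\ge 3$ in $D$, yet it is a nonempty ($\ni x$) intersection of at least two walls — namely $W$ together with the (at least two) walls cutting out $\cal S$ — which contradicts Condition~\ref{item:find contract1}. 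This excludes the case, and visibility follows. Besides this codimension bookkeeping (and the minor point that a corner is a finite intersection of walls), the only other thing I would be careful about is the reduction $\mc{CH}(W\cup E)=D$: it is exactly what makes the $G=D$ case go through, and it genuinely requires $U$ to be a small neighbourhood of $W$ rather than an arbitrary one with $D\smallsetminus U$ convex.
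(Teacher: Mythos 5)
Your second-bullet argument is essentially the paper's (split on the codimension of the minimal face $G$ of $D$ containing the segment, use Condition~(2) to push $G$ into a wall, and Conditions~(3) and~(1) to rule out $\dim G\le\dimd-2$); your handling of the last step is a correct, slightly more explicit version of the paper's one-liner. You are also right that the lemma as literally stated is missing the hypothesis that makes the case $]x,y[\subset{\rm int}(D)$ go through: one needs $D\subset K$ (so that ${\rm int}(D)\subset{\rm int}(K)$), and this can genuinely fail for compact convex $K\subset\bar X$ satisfying only the stated conditions. Your remedy is to shrink $U$ so that $\mc{CH}(W\cup E)=D$; the cleaner fix, and the one implicit in every application, is to add $D\subset K$ directly, since there $K=\bar X\supset D$.

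The genuine gap is the first bullet. The local finiteness you postulate --- finitely many walls of $D$ meeting any compact set --- is \emph{false} in the examples the paper cares about. The tile $D$ is the universal cover of a compact cell, and $W$ is the universal cover of a compact boundary piece $\bar W$; whenever $\pi_1(\bar W)$ is infinite (which it is for $\dimd\geq 3$), $W$ has infinitely many corners, and each such corner is where $W$ meets a \emph{distinct} wall of $D$, so infinitely many walls are adjacent to (hence meet) the compact set $W$. The whole point of the first bullet is that, although walls adjacent to $W$ accumulate near $\overline W\smallsetminus W$, only finitely many can escape a fixed neighbourhood $U$ of $W$, and this is exactly what Conditions~(2) and~(3) buy you: an infinite family of walls adjacent to $W$ and not contained in $U$ would Hausdorff-accumulate on a compact face $F$ of $D$ of codimension at least $2$, touching $W$ but not contained in $U$; by Condition~(2), $F$ lies in a wall $W'$ adjacent to $W$; $F$ then touches the corner $W\cap W'$, so Condition~(3) forces it into a corner, which (distinct corners being disjoint, by Condition~(1)) must be $W\cap W'\subset W\subset U$, contradicting $F\not\subset U$. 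Replacing this argument by a false postulate is a genuine gap, not a harmless simplification.
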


\begin{proof}
 Only a finite number of walls $W_1,\dots, W_n$ are adjacent to $W$ but not contained in $U$.
 Indeed, if there was an infinite sequence of them then they would accumulate on a closed codimension $\geq 2$ face $F$ touching $W$ but not contained in $U$, which by assumption would be contained in a wall $W'$ adjacent to $W$. The fact that $F$ touches the corner $W\cap W'$ would then imply that it is contained in it, which is absurd.
 
 To finish the proof we only need to prove that any segment $s$ from $W$ to $D\smallsetminus U$ intersects ${\rm int}(D)$ or ${\rm int}(W_i)$ for some $i$.
 If $s\subset\partial D$, then by assumption it is contained in a wall adjacent to $W$, which is not contained in $U$. So $s\subset W_i$ for some $i$.
 If, by contradiction, $s\subset\partial W_i$ then it is contained in a face of $D$ of codimension at least $2$ which touches $W\cap W_i$, hence is contained in it. This is absurd.
\end{proof}

We will also need the following technical lemma.

\begin{lemma}\label{lem:walls far away}
 For any neighborhood $U$ of $W$ there exists a smaller neighborhood $U'$ such that $D\smallsetminus U'$ contains all walls not contained in $U$ and non-adjacent to $W$.
\end{lemma}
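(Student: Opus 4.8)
The plan is to argue by compactness. Fix the tile $D$ and the wall $W$, and let $U$ be a given open neighborhood of $W$ in $\sph^\dimd$. Let $\{W_i\}_{i\in I}$ be the walls of $D$ that are \emph{not} contained in $U$ and \emph{not} adjacent to $W$. By the first bullet of Lemma~\ref{lem:finding visible triples} (applied with $U$ itself), the walls not contained in $U$ that \emph{are} adjacent to $W$ are finite in number; but the ones not adjacent to $W$ could a priori be infinite. I would first reduce to showing that the compact set $\overline{W}$ is disjoint from $\overline{W_i}$ for every $i\in I$, and then that the distances $d(\overline W,\overline{W_i})$ are bounded below by a positive constant.

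For the first point: if $x\in\overline W\cap\overline{W_i}$, then $x$ lies in the intersection of two walls of $D$, which by assumption~\eqref{item:find contract1} is a corner, i.e.\ a codimension $2$ stratum; in particular $W$ and $W_i$ are adjacent walls of $D$ (they share the corner $\overline W\cap\overline{W_i}$), contradicting $i\in I$. Hence $\overline W\cap\overline{W_i}=\emptyset$ for all $i$. For the uniform separation: suppose not, so there are indices $i_n\in I$ and points $x_n\in\overline{W}$, $y_n\in\overline{W_{i_n}}$ with $d(x_n,y_n)\to 0$. Up to extracting a subsequence, $x_n\to x\in\overline W$ and $y_n\to x$ as well. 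Now the $W_{i_n}$ accumulate (in the Hausdorff sense, using that $\overline\Omega$ is compact and passing to a further subsequence) onto a closed convex set contained in $\partial D$ and containing $x\in\overline W$; by assumption~\eqref{item:find contract2} this limit set lies in some wall $W'$ of $D$. Since $x\in \overline{W}\cap\overline{W'}$, the wall $W'$ is adjacent to $W$. But then cofinally many of the $W_{i_n}$ are forced to touch $\overline W\cap\overline{W'}$, hence by \eqref{item:find contract3} (a closed face of codimension $\ge 2$ touching a corner is contained in a corner) they coincide with or are contained in that corner — in any case each such $W_{i_n}$ is a wall adjacent to $W$, contradicting $i_n\in I$. (One must check here that the $W_{i_n}$ are genuinely distinct walls accumulating on $x$; if some wall $W_i$ is repeated infinitely often then $x\in\overline{W_i}$ directly, and the previous paragraph already gives a contradiction.)

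Granting the uniform bound, let $\delta>0$ be such that $d(\overline W,\overline{W_i})\ge\delta$ for all $i\in I$, and set $U':=\{p\in\sph^\dimd : d(p,\overline W)<\min(\delta/2,\,r)\}$ where $r>0$ is small enough that the open $r$-neighborhood of $W$ is still contained in $U$ (so $U'\subset U$ and $D\smallsetminus U'$ is still convex, shrinking $U'$ further if the convexity hypothesis on $D\smallsetminus U$ requires it — note $D\smallsetminus U\subset D\smallsetminus U'$ so one can simply take $U'$ small enough to be contained in $U$ and invoke convexity of $D\smallsetminus U$). Then every $W_i$ with $i\in I$ is disjoint from $U'$, i.e.\ $W_i\subset D\smallsetminus U'$, which is exactly the assertion.

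The main obstacle is the Hausdorff-limit step: one must be careful that an infinite family of walls not adjacent to $W$ genuinely cannot accumulate on $\overline W$, and the only leverage for this is the finiteness-type conclusions coming from the face conditions \eqref{item:find contract1}--\eqref{item:find contract3} together with compactness of $\overline\Omega$. Handling the possibility that the limit of the $W_{i_n}$ is lower-dimensional, or that it meets $\overline W$ only along its relative boundary, is where the conditions on faces touching corners do the real work, essentially reducing everything to the finiteness statement already established in Lemma~\ref{lem:finding visible triples}.
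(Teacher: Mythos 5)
The core compactness strategy is the right one, and the reduction to showing that walls in $I$ cannot accumulate on $\overline W$ is in the right spirit (this is also essentially what the paper does, referring back to the argument in Lemma~\ref{lem:finding visible triples}). However, there is a genuine gap in the contradiction step. You assert that ``cofinally many of the $W_{i_n}$ are forced to touch $\overline W\cap\overline{W'}$'' and then apply condition~\eqref{item:find contract3} to the $W_{i_n}$ themselves. Neither step is justified. Hausdorff convergence of $W_{i_n}$ to a subset of $\overline{W'}$ does not force any $W_{i_n}$ to intersect $\overline{W'}$, let alone the codimension-$2$ corner $\overline W\cap\overline{W'}$ --- the $W_{i_n}$ can approach it from the outside while remaining at positive distance. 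And even if they did touch the corner, condition~\eqref{item:find contract3} applies to closed proper faces of codimension $\geq 2$, whereas the $W_{i_n}$ are codimension-$1$ walls, so you cannot conclude that they are contained in the corner.

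The contradiction should instead be extracted from properties of the accumulation set $F$ itself. The relevant facts are: (i) the Hausdorff limit $F$ of infinitely many \emph{distinct} codimension-$1$ faces of $D$ is contained in a face $G$ of $D$ of codimension $\geq 2$ (otherwise infinitely many pairwise almost-disjoint walls would each have $(\dimd-1)$-measure bounded below, contradicting finiteness of the $(\dimd-1)$-measure of $\partial D$); (ii) $G$ contains the accumulation point $x$; and (iii) if $x$ lies in the \emph{open} wall $W$ --- i.e.\ in the relative interior of the closed face $\overline W$ --- then the smallest face of $D$ containing $x$ is $\overline W$ itself, which has codimension $1$, so no codimension-$\geq 2$ face can contain $x$. (Alternatively, via \eqref{item:find contract2}--\eqref{item:find contract3}: $G$ touches the wall $W$, so is contained in some wall $W'$; then $x\in\overline W\cap\overline{W'}$ forces $W'\neq W$ to be adjacent, and $G$ lies in the corner $\mathcal C=\overline W\cap\overline{W'}\subset\partial W$, contradicting $x\in W$.) A related point: you actually attempt to establish uniform separation of $\overline{W_i}$ from the whole compactified wall $\overline W$, which is stronger than what the lemma requires --- a neighborhood $U'$ only needs to be a neighborhood of the \emph{open} wall $W$ --- and this stronger statement is in fact not obviously true, since walls in $I$ may legitimately accumulate near the relative boundary $\partial W$ of $W$ without threatening the conclusion. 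So both for correctness and for not over-claiming, the argument should be run with the accumulation point taken in $W$, not merely in $\overline W$, which is exactly the case analysis you acknowledge but do not resolve.
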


\begin{proof}
 If by contradiction this was not the case, then there would exist an infinite sequence of walls limiting on a comdimension-$\geq 2$ face $F$ intersecting $W$ but not contained in $U$. This is absurd, as remarked in the proof of Lemma~\ref{lem:finding visible triples}.
\end{proof}

\subsubsection{The extremal point of a telescope of tiles}

 Let $\cal G=(\cal V,\cal E)$ be a gluing kit with $X=\bigcup_{D\in\cal V}D$.

\begin{prop}\label{prop:contracting cells at infinity}
Suppose that all strata have codimension 2, and that there exists a compact set $\cal K$ of visible triples such that the following holds: 
For any path $\{A,B,C,D\}\subset\cal V$ such that ${A\cap B}$ and ${C\cap D}$ are disjoint, there exists  a convex compact subset $K\subset \bar X$ such that
\[
(A\cap B,K,C\cap D)\in\GL_{\dimd+1}(\R)\cdot\cal K.
\]
Then any cell at infinity that does not intersect $X$ is an extremal $\cal C^1$ point.
\end{prop}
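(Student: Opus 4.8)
The plan is to combine the contraction estimate from Lemma~\ref{lem:contraction} with the combinatorial structure of telescopes of tiles to force the cell at infinity to be a single point, and then use Fact~\ref{item:CNS smooth} (or a direct telescoping argument) to show it is $\cal C^1$.

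\textbf{Step 1: Reduce to a telescope.} Fix a cell at infinity $D(\xi)$ with $D(\xi)\cap X=\emptyset$, where $\xi=\{D_n\}_{n\in\N}\in\partial_\infty\cal V$. First I would observe that along the ray $\xi$, infinitely many of the consecutive walls $F_n:=D_n\cap D_{n+1}$ must be pairwise disjoint — otherwise, by finiteness (periodicity) of configurations, some corner $\cal S$ would be shared by a bi-infinite sub-path, and then Proposition~\ref{prop:aroundcodim2} would describe $D(\xi)$ as a cone over $\cal S$ which meets $X$ in $\cal S$, contradicting $D(\xi)\cap X=\emptyset$. (I would need to check that the only alternative to ``infinitely many disjoint walls along $\xi$'' is ``the tail of $\xi$ stabilizes a corner''; this uses assumption~\eqref{item:find contract1}, that every intersection of walls is empty or a corner, together with periodicity.) So after passing to a subsequence we get indices $n_1<n_2<\cdots$ with $F_{n_k}\cap F_{n_{k+1}}=\emptyset$ — more precisely, so that $\{D_{n_k},\dots,D_{n_{k+1}}\}$ contains a sub-path $\{A,B,C,D\}$ with $A\cap B$ and $C\cap D$ disjoint, hence by hypothesis a visible triple $(A\cap B,K,C\cap D)\in\GL_{\dimd+1}(\R)\cdot\cal K$ with $K\subset\bar X$.

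\textbf{Step 2: Apply contraction to show $D(\xi)$ is a point.} Suppose $D(\xi)$ contained two distinct points $p\neq q$. Pick $o\in{\rm int}(D_{n_1})$ and consider the segments $[o,p)$ and $[o,q)$ in $X$; these are two distinct rays $r,r'$ of $\Omega:={\rm int}(X)$ from $o$. Each passes successively through the walls $F_{n_k}\cap\Omega$ (by Remark~\ref{rem:succession}, after passing to the interior). Apply Lemma~\ref{lem:contraction} with the compact family $\cal K$ of visible triples and the hyperplanes $H=\Span(F_{n_k})$, $H'=\Span(F_{n_{k+1}})$: if $y_k,y_k'$ (resp.\ $z_k,z_k'$) are where $r,r'$ cross $F_{n_k}$ (resp.\ $F_{n_{k+1}}$) then $d_\Omega(z_k,z_k')\geq\lambda\,d_\Omega(y_k,y_k')$ for a uniform $\lambda>1$. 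Iterating, $d_\Omega(y_{n_k},y_{n_k}')\to\infty$. But $y_{n_k}\to p$ and $y_{n_k}'\to q$, and $p,q$ lie on the same segment $[o,\cdot]$ direction only if they are comparable — in any case $y_{n_k}$ and $y_{n_k}'$ both converge into $\bar\Omega$, and the lower semi-continuous extension $d_{\bar\Omega}$ from Definition~\ref{rem:hilbert metric} would be finite on the pair $(p,q)$ if $p,q$ lie in the same open face of $\bar X$. Since $D(\xi)$ is convex and (by Part (2) of Proposition~\ref{prop:cells infinity}) $D(\xi)\cap X=\emptyset$ forces $D(\xi)$ to be a single closed face, $p$ and $q$ share an open face, so $d_{\bar\Omega}(p,q)<\infty$, contradicting $d_\Omega(y_{n_k},y_{n_k}')\to\infty$. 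Hence $D(\xi)$ is a single point, which by Proposition~\ref{prop:cells infinity} and Corollary~\ref{cor:extremal points}-type reasoning is extremal in $\partial X$.

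\textbf{Step 3: $\cal C^1$-ness.} Finally I would show the point $p=D(\xi)$ admits a unique supporting hyperplane. Here the idea is that the walls $F_{n_k}$, whose spans $\Span(F_{n_k})$ all support $X$ near $p$ (since $F_{n_k}\subset\partial X$ after $k$ large — actually $F_{n_k}\subset X$, so rather their spans ``shrink'' toward a limiting hyperplane), converge to a well-defined hyperplane $T_p$: the contraction estimate controls the rate at which the tiles $D_{n_k}$ shrink, forcing the supporting hyperplanes at $p$ coming from either side of the telescope to coincide. Concretely, any supporting hyperplane at $p$ must contain the limit of the $D_{n_k}$'s, and the visible-triple contraction shows this limit is $\{p\}$ while pinning the tangent directions; a cleaner route is to note that the two ``sides'' of the telescope (the closed convex hulls $\overline{\bigcup_{k}D_{n_k}^{\le}}$ and $\overline{\bigcup_k D_{n_k}^{\ge}}$ glued along each wall) each contribute a supporting hyperplane at $p$, and Lemma~\ref{lem:contraction} forces these to agree, using Fact~\ref{item:CNS smooth} to upgrade $\cal C^1$-ness in two complementary subspaces to $\cal C^1$-ness in $\partial X$.

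\textbf{Main obstacle.} The subtle point I expect to be hardest is Step 3 and the precise extraction in Step 1: making rigorous that ``$D(\xi)\cap X=\emptyset$ and not a fan'' genuinely yields a telescope with the required disjoint-wall pattern occurring \emph{cofinally} along $\xi$ with configurations lying in a \emph{fixed} compact family $\cal K$ — this is where periodicity (finitely many tiles mod the automorphism group, together with assumptions~\eqref{item:find contract1}--\eqref{item:find contract3}) has to be invoked carefully, since a priori the walls could fail to be disjoint for long stretches. For the $\cal C^1$ conclusion, the delicate part is that $p$ could a priori be a $\cal C^1$ point of each of the two convex ``halves'' cut out by the telescope but with distinct tangent hyperplanes; ruling this out requires that the contraction $\lambda>1$ be used not just for distances between two rays but to compare the two one-sided supporting hyperplanes, essentially showing their ``angle'' at $p$ is contracted to zero.
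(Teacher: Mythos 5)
Your Steps~1--2 follow the paper's strategy (telescope $+$ contraction), but there are two genuine gaps and Step~3 goes down the wrong road.

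\textbf{The upper bound in Step 2 is missing.} For the contraction to yield a contradiction you need $d_\Omega(y_{n_k},y'_{n_k})$ to be bounded above \emph{uniformly in $k$}, and you only gesture at this via ``the lower semi-continuous extension would be finite on $(p,q)$.'' What the paper actually establishes (its Claim~1) is that the sequence $d_\Omega(y_n,y'_n)$ is \emph{monotone non-decreasing} (by a cross-ratio computation comparing tiles along the telescope) \emph{and} bounded by $d_{\mathrm{int}(D(\xi))}(p,p')<\infty$, with $p,p'$ chosen in the \emph{relative interior} of $D(\xi)$ so that this Hilbert distance is automatically finite — no claim about sharing an open face of $\bar X$ is needed. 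Your appeal to ``$p,q$ share an open face of $\bar X$'' is both unnecessary and not justified by Proposition~\ref{prop:cells infinity}(2).

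\textbf{Step 1 is muddled and uses the wrong ingredient.} The hypothesis ``all strata have codimension 2'' is precisely what lets one argue directly: since $D(\xi)\cap X=\emptyset$, for every $n$ there is a minimal $k\geq 2$ with $F_{n+1}^k=\emptyset$; if $k=2$ one takes $m=n$, and if $k\geq 3$ then codimension $2$ forces $F_{n+1}^{k-1}=F_{n+k-2}^2$ so one takes $m=n+k-2$, giving $F_m^3=\emptyset$, i.e.\ four consecutive tiles with disjoint outer walls. This produces the required configuration cofinally along $\xi$ in one stroke. Your detour through Proposition~\ref{prop:aroundcodim2} and ``periodicity'' is both heavier and not quite what the hypothesis provides: the hypothesis is about a compact family $\cal K$ of visible triples, not a finiteness statement about tiles modulo automorphisms, and ``infinitely many pairwise disjoint consecutive walls'' is not the same as ``cofinally many $m$ with the four-tile disjointness pattern.''

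\textbf{Step 3 takes the wrong approach.} The paper disposes of $\cal C^1$-ness in one line: $p$ is $\cal C^1$ iff for any two rays $r,r'$ of $\Omega$ ending at $p$ one has $\inf\{d_\Omega(x,x'):(x,x')\in r\times r'\}=0$, and this follows by \emph{the same contraction argument as for extremality} (now $p'=p$ but the rays are different). Your comparison of ``one-sided'' supporting hyperplanes from the two halves of the telescope and the appeal to Fact~\ref{item:CNS smooth} is not well-founded: that fact requires two subspaces $\sph(V),\sph(W)$ whose union spans $\sph^\dimd$ and whose intersection meets $\mathrm{int}(K)$, and the two ``halves'' of a telescope do not furnish such a pair. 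Making rigorous that the ``angle'' between one-sided tangent hyperplanes contracts to zero would amount to re-deriving the very distance estimate the paper uses, through a more opaque intermediary.

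In summary, your intuition about which lemma to invoke and where is good, but the precise telescoping extraction, the monotone upper bound, and especially the $\cal C^1$ step all need the paper's cleaner formulations; as written, the proof does not close.
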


\begin{proof}
 Consider the ray $\xi=\{D_n\}_{n\in\mb{N}}\in\partial_\infty\cal V$, such that $D(\xi)\cap X=\emptyset$.
 
 Let us prove that $D(\xi)$ is reduced to a point. 
 
 Suppose by contradiction that it is not the case. Pick $x\in{\rm int}(D_0)$ and $p,p'\in{\rm int}(D(\xi))$, and denote by $r$ and $r'$ the rays of $\Omega:={\rm int}(X)$ starting at $x$ and ending respectively at $p$ and $p'$.
 Set $F_n^k:=D_n\cap\dots \cap D_{n+k}$ for all $n,k$.
 Let $y_n$ (\resp $y'_n$) be the successive intersection points of $r$ (\resp $r'$) with ${\rm int}(F_n^1)$.
 
 \begin{claim}{1}
 The sequence $d_\Omega(y_n,y'_n)$ is non-decreasing and bounded from above by $d_{{\rm int}(D(\xi))}(p,p')<\infty$.
 \end{claim}
 
\begin{proof}[Proof of the claim]
Let us take care of monotonicity first: Consider the 2-plane $P$ spanned by $r,r'$ and the intersection $X\cap P$. Let $[a_n,a_n']$ be the intersection of the projective line spanned by $y_n,y_n'$ with $X$. Notice that we have $F_n^1\cap P=[a_n,a_n']$ and, by construction, $F_m^1\cap P=[a_m,a_m']$ is contained in the cone with vertex $x$ bounded by the lines $[x,a_n]$ and $[x,a_n']$. The intersection of the line spanned by $[a_m,a_m']$ with such cone is a larger segment $[b_m,b_m']$. By standard properties of the the cross ratio, we get $[b_m,y_m,y_m',b_m']=[a_n,y_n,y_n',a_n']$. By monotonicity under inclusion (see Definition~\ref{rem:hilbert metric}), we also get $[b_m,y_m,y_m',b_m']\le[a_m,y_m,y_m',a_m']$. This concludes the monotonicity part of the statement.
 
 The upper bound follows from the semicontinuity of the extension of the Hilbert distance to the boundary (see Definition~\ref{rem:hilbert metric}).
 \end{proof}

 Let $\lambda>1$ be the constant from Lemma~\ref{lem:contraction}. In order to get a contradiction, it is enough to prove the following:

 \begin{claim}{2}
 For any $n$ there exists $m\geq n$ such that $d_\Omega(y_m,y'_m)\geq \lambda d_\Omega(y_n,y'_n)$.
 \end{claim}
 
 \begin{proof}[Proof of the claim]
 
 Pick $n\in\N$.
 Since $D(\xi)$ does not intersect $X$, there exists $k\geq 2$ such that $F_{n+1}^k$ is empty. Take it minimal, meaning that $F_{n+1}^{k-1}\neq\emptyset$.
 
 If $k=2$, then $F_n^3=\emptyset$. Set $m=n$.
 
 If $k\geq 3$, then $F_{n+1}^{k-1}=F_{n+2}^{k-2}=\dots=F_{n+k-2}^2$ (since all strata have codimension $2$), and $F_{n+k-2}^3=F_{n+1}^{k}=\emptyset$. Set $m=n+k-2$.

 Let $\cal V'$ be the component of $\cal V\smallsetminus\{D_{m},D_{m+3}\}$ containing $D_{m+1}$.
 By assumption, there exists  a properly convex compact set $K\subset \bigcup\cal V'$ containing $D_{m+1}\cup D_{m+2}$ such that
 $$ (D_{m}\cap D_{m+1},K,D_{m+2}\cap D_{m+3})\in\GL_{\dimd+1}(\R)\cdot\cal K.$$
 By Lemma~\ref{lem:contraction}, we get
 \[
  d_\Omega(y_{m+2},y'_{m+2})\geq \lambda d_\Omega(y_{m},y'_{m})\geq \lambda d_\Omega(y_n,y'_n).
 \]
 \end{proof} 
 
\begin{claim}{3}
The point $D(\xi)$ is extremal.
\end{claim}

\begin{proof}[Proof of the claim]
This is an immediate consequence of the fact that $\bar X\smallsetminus D(\xi)$ is convex.
\end{proof}
 
 Saying that $p$ is $\cal C^1$ is equivalent (see \eg \cite[Prop.\,5.4.8]{ThesePL}) to saying that 
 for any two rays $r,r'$ of $\Omega$ ending at $p$, the distance $\inf\{d_\Omega(x,x'):(x,x')\in r\times r'\}$ is null,
 which can be proved using almost the same proof as above. 
\end{proof}

\section{Convex-cocompactness and relative hyperbolicity}
\label{sec:sec4}

The goal of this section 3-fold: We are going to establish three geometric properties of the convex projective manifolds (potentially with totally geodesic boundary) that we construct in this paper, namely:
\begin{enumerate}[(a)] 
\item{Convex-cocompactness.}
\item{Relative hyperbolicity.}
\item{Extended geometric finiteness.}
\end{enumerate}

We now state the results. Later we will give precise definitions of their terms. 

The first property we establish is convex-cocompactness.
The following result is probably well-known to experts, as the proof follows a classical strategy.
Recall that for us the terminology b-manifolds denotes projective manifolds with totally geodesic boundary.

\begin{prop}\label{prop:cvxcocpct}
 Let $M$ be a compact properly convex b-manifold with universal cover $\tilde M\subset\sph^\dimd$.
 Suppose that every proper face of $\tilde M$ touching the closure of a wall belongs to it.
 Then $\pi_1(M)$ acts \emph{convex-cocompactly} on any invariant properly convex open set containing $\tilde M$.
\end{prop}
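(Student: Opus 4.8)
The plan is to show that the full orbital limit set $\Lambda$ of $\Gamma:=\pi_1(M)$ in $\partial\Omega$ satisfies $\Lambda\subseteq\overline{\tilde M}$ (closure in $\sph^\dimd$); convex-cocompactness — i.e.\ compactness of $\mc{CH}(\Lambda)/\Gamma$ — then follows formally. Since $\Gamma$ is a discrete subgroup of $\Aut(\Omega)$ it acts properly discontinuously and freely on $\Omega$, and it acts cocompactly on $\tilde M$ because $M=\tilde M/\Gamma$ is compact. The first thing I would record is that properness on $\Omega$ together with cocompactness on $\tilde M\subseteq\Omega$ forces $\overline{\tilde M}\cap\Omega=\tilde M$: if $x=\lim\gamma_n k_n\in\overline{\tilde M}\cap\Omega$ with $k_n$ in a compact fundamental domain $K\subseteq\tilde M$ and $p\in K$ fixed, then $d_\Omega(\gamma_n k_n,\gamma_n p)=d_\Omega(k_n,p)\le\diam_{d_\Omega}(K)$, so $\{\gamma_n p\}$ stays in a compact subset of $\Omega$, whence $\{\gamma_n\}$ is finite and $x\in\gamma K\subseteq\tilde M$. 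Writing $\partial_\infty\tilde M:=\overline{\tilde M}\smallsetminus\tilde M$, this gives $\partial_\infty\tilde M=\overline{\tilde M}\cap\partial\Omega$ and $\overline{\tilde M}\subseteq\overline\Omega$. Now $\Lambda$ is a compact subset of $\partial\Omega$, so $\mc{CH}(\Lambda)=\conv(\Lambda)\cap\Omega$ with $\conv(\Lambda)$ compact; if $\Lambda\subseteq\overline{\tilde M}$, then $\conv(\Lambda)\subseteq\overline{\tilde M}$ and hence $\mc{CH}(\Lambda)\subseteq\overline{\tilde M}\cap\Omega=\tilde M$, so $\mc{CH}(\Lambda)/\Gamma$ is a closed subset of the compact manifold $M$, hence compact. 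So it suffices to prove $\Lambda\subseteq\overline{\tilde M}$.

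Next I would compare an arbitrary orbit with an interior basepoint. Fix $o'\in{\rm int}(\tilde M)$ and let $\xi\in\Lambda$, say $\xi=\lim_n\gamma_n o$ with $o\in\Omega$. Since $\Gamma$ preserves $\Omega$, $d_\Omega(\gamma_n o,\gamma_n o')=d_\Omega(o,o')$ for all $n$; passing to a subsequence, $\gamma_n o'\to\eta\in\overline{\tilde M}$. By lower semicontinuity of the Hilbert metric (Definition~\ref{rem:hilbert metric}), $d_{\overline\Omega}(\xi,\eta)\le d_\Omega(o,o')<\infty$, so $\xi$ and $\eta$ lie in a common open face of $\overline\Omega$; let $G:=F_{\overline\Omega}(\eta)=F_{\overline\Omega}(\xi)$ be the corresponding closed face. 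As $\xi\in\partial\Omega$ the face $G$ is proper, hence $G\subseteq\partial\overline\Omega=\partial\Omega$, so $\eta\in\overline{\tilde M}\cap\partial\Omega=\partial_\infty\tilde M$. Thus everything reduces to proving: for every $\eta\in\partial_\infty\tilde M$ one has $F_{\overline\Omega}(\eta)\subseteq\overline{\tilde M}$.

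This last statement is the crux, and it is where the hypothesis on faces enters, through two ingredients. First, \emph{rigidity of the walls}: since $M$ is a b-manifold, $\tilde M={\rm int}(\tilde M)\sqcup\bigsqcup_W W$ where each wall $W$ is an open properly convex subset of its linear span $\sph(W)\cong\sph^{\dimd-1}$, and $\Gamma_W:=\Stab_\Gamma(W)$ acts freely, properly and cocompactly on $W$ because $W/\Gamma_W$ is a closed component of $\partial M$ — i.e.\ $\Gamma_W$ divides $W$. A divisible properly convex open set cannot be properly enlarged inside a $\Gamma_W$-invariant properly convex open set (a short proper-discontinuity argument, in the same spirit as the one above), so $\Omega\cap\sph(W)=W$ and therefore $\overline\Omega\cap\sph(W)=\overline W$. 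Second, the \emph{face hypothesis} itself: every point of $\Omega\smallsetminus\tilde M$ is visible from ${\rm int}(\tilde M)$ through some wall $W$ (the segment from an interior point leaves $\tilde M$ only through $\partial\overline{\tilde M}\cap\Omega=\bigsqcup_W W$, by convexity), and combined with the assumption that every proper face of $\tilde M$ meeting $\overline W$ lies in $\overline W$ this shows that $\overline\Omega$ coincides with $\overline{\tilde M}$ near any point of $\partial_\infty\tilde M$ whose $\overline{\tilde M}$-face avoids all wall closures — so there $F_{\overline\Omega}(\eta)=F_{\overline{\tilde M}}(\eta)\subseteq\overline{\tilde M}$ — while if $F_{\overline{\tilde M}}(\eta)$ does meet some $\overline W$ then $F_{\overline{\tilde M}}(\eta)\subseteq\overline W\subseteq\sph(W)$, and the identity $\overline\Omega\cap\sph(W)=\overline W$ forces $F_{\overline\Omega}(\eta)\subseteq\overline W\subseteq\overline{\tilde M}$ as well.

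I expect the hard part to be making the ``$\overline\Omega$ coincides with $\overline{\tilde M}$ near $\eta$'' step of the second ingredient fully rigorous: a priori infinitely many wall closures can accumulate near $\eta$, and one must pass to the closure $\partial\Omega$ of $\Omega\smallsetminus\tilde M$, which is where the face hypothesis must be leveraged carefully. I would handle this using the periodicity of the configuration — the walls form a locally finite family in $\Omega$ with finitely many $\Gamma$-orbits, and $\Gamma_W$ acts cocompactly on each $W$ — to reduce to finitely many relevant wall closures near a point of $\partial_\infty\tilde M$ whose $\overline{\tilde M}$-face avoids them, approximating boundary points by points of $\Omega\smallsetminus\tilde M$ and applying the ``visibility through a wall'' description to each approximant (compare the telescope/fan analysis of Section~\ref{sec:sec3}). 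Once $F_{\overline\Omega}(\eta)\subseteq\overline{\tilde M}$ is established for all $\eta\in\partial_\infty\tilde M$, the comparison step yields $\Lambda\subseteq\overline{\tilde M}$ and the reduction concludes the proof.
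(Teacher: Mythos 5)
Your reduction to showing $\Lambda\subseteq\overline{\tilde M}$, the identity $\overline{\tilde M}\cap\Omega=\tilde M$, and the face-comparison via lower semicontinuity of $d_{\overline\Omega}$ are sound and track the opening of the paper's argument; the wall-rigidity observation $\Omega\cap\Span(W)=W$ is a nice lemma. The genuine gap is in your Case~2. You assert that $\eta\in F_{\overline{\tilde M}}(\eta)\subseteq\overline W$ together with $\overline\Omega\cap\Span(W)=\overline W$ forces $F_{\overline\Omega}(\eta)\subseteq\overline W$. It does not: $\Span(W)$ is not a supporting hyperplane of $\overline\Omega$ (it crosses $\Omega$ along $W$), so nothing prevents the face $F_{\overline\Omega}(\eta)$ --- which contains $\eta$ in its relative interior --- from straddling $\Span(W)$ transversally at $\eta$, and the slice identity $\overline\Omega\cap\Span(W)=\overline W$ says nothing about the two halves off the hyperplane. (Model: $\overline\Omega$ a square in $\sph^2$, $\overline W$ the vertical chord joining the midpoints of the horizontal edges, $\eta$ the bottom endpoint of $\overline W$; then $F_{\overline\Omega}(\eta)$ is the whole bottom edge, transverse to $\Span(W)$, although $\overline\Omega\cap\Span(W)=\overline W$.) The hypothesis of the proposition constrains faces of $\tilde M$, not faces of $\overline\Omega$, and you never bring it to bear on the part of $F_{\overline\Omega}(\eta)$ off $\Span(W)$. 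Your ``hard part'' paragraph flags only Case~1; Case~2 is, I believe, where the real difficulty lies.

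The paper sidesteps the need to control $F_{\overline\Omega}(\eta)$. Supposing $\xi\in\Lambda$ is not in $\overline{\tilde M}\smallsetminus\tilde M$, it picks aligned points $a,y,z,x,b$ with $y\in{\rm int}(\tilde M)$, $z$ in a wall, $\gamma_nx\to\xi$, passes to limits $(\alpha,\eta,\zeta,\xi,\beta)$, and studies the continuous ``entry-point'' map $f$ sending $p\in\overline{\tilde M}$ to the first point of $[\xi,p]$ lying in $\overline{\tilde M}$. Connectivity forces $f({\rm int}(\tilde M))$ into a single wall $W$, hence $f(\eta)\in\partial W$. The face hypothesis is then applied to the nontrivial segment $[\eta,f(\eta)]\subset\partial\tilde M$ --- an actual face of $\tilde M$ touching $\overline W$ --- to place $\xi\in\Span(W)$, and this is absurd because the line $\Span(\xi,y)$ meets $\Span(W)$ only at $\xi$ (since $y\in{\rm int}(\tilde M)$ is off $\Span(W)$), yet $f(y)\in W\subset\Span(W)$ lies on it. The key move you are missing is to manufacture a segment in $\partial\tilde M$ (not merely in $\partial\Omega$) touching a closed wall, to which the hypothesis can actually be applied; the aligned-point configuration and $f$ do exactly that.
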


\begin{rk}\label{rem:not cvxcocpct}
 In the setting of Proposition~\ref{prop:cvxcocpct},
 there does not always exists an invariant properly convex open set containing $\tilde M$.
 For instance,  if $M$ is the manifold $\Omega_2/\Gamma_2$ obtained in Theorem~\ref{thm:main2}, then $\pi_1(M)=\Gamma_2$ does not act naively convex-cocompactly on any properly convex open set $\Omega$ (see the definition of naive convex-cocompactness in Section~\ref{sec:cvxcocpct}).
\end{rk}

The second property we consider is relative hyperbolicity.
There are two cases, depending on whether we consider a manifold with or without boundary.

\begin{thm}\label{thm:relhypb}
 Let $M$ be a compact properly convex b-manifold with universal cover $\tilde M\subset\sph^\dimd$.
 Suppose that:
 \begin{itemize}
  \item All walls of $\tilde M$ have pairwise disjoint closures.
  \item Any point of $\partial \tilde M$ outside closed walls is extremal.
 \end{itemize}
 Then $\pi_1(M)$ is \emph{hyperbolic relatively to} the fundamental groups of its walls,
 and the \emph{Bowditch boundary} is equivariantly homeomorphic to the quotient $Y:=\partial \tilde M/_\sim$ obtained by contracting closed walls into points.
\end{thm}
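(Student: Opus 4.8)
\textbf{Proof plan for Theorem~\ref{thm:relhypb}.}

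The strategy is the standard one for establishing relative hyperbolicity via convergence group actions, following \cite{Yaman} and as used in \cite{IZ19relhypb,We21,IZ22relhypb}: produce a compact metrizable space on which $\pi_1(M)$ acts as a geometrically finite convergence group with the prescribed parabolics, then invoke Yaman's characterization. Concretely, let $\Gamma=\pi_1(M)$, let $\Omega:={\rm int}(\tilde M)$, and let $Y:=\partial\tilde M/_\sim$ be the quotient obtained by collapsing each closed wall to a point. First I would check that $Y$ is a compact metrizable space and that the $\Gamma$-action on it is by homeomorphisms; since the closed walls have pairwise disjoint closures and $\Gamma$ permutes them properly (each wall corresponds to a coset of a wall subgroup, and the wall subgroups are precisely the stabilizers), the collapsing is $\Gamma$-equivariant and well-behaved. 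Metrizability follows because $\partial\tilde M$ is a compact metrizable space and $\sim$ is a closed equivalence relation with the quotient being Hausdorff (disjointness of closures of walls).

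Next I would prove that $\Gamma\curvearrowright Y$ is a \emph{convergence action}. The natural approach is to use the Hilbert metric on $\Omega$ (or on an invariant properly convex open set containing $\tilde M$, when one exists — but here we must argue directly on $\tilde M$, which may not be open-convex-embeddable, cf.\ Remark~\ref{rem:not cvxcocpct}). Given a sequence $\gamma_n\to\infty$ in $\Gamma$, pick a basepoint $o\in\Omega$; since $M$ is compact, $\gamma_n o$ exits every compact subset of $\tilde M$, so after passing to a subsequence $\gamma_n o\to x^+\in\partial\tilde M$ and $\gamma_n^{-1}o\to x^-\in\partial\tilde M$. The key dynamical input is the classical north–south type lemma for the Hilbert metric (Benoist's contraction/flag convergence, or the arguments in \cite[\S...]{ThesePL}): $\gamma_n$ converges locally uniformly on $\tilde M\smallsetminus F_{\tilde M}(x^-)$ to the face $F_{\tilde M}(x^+)$. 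Passing to the quotient $Y$, the face $F_{\tilde M}(x^-)$ either is a single point of $\partial\tilde M$ (if $x^-$ lies outside closed walls, by the extremality hypothesis) or is contained in a closed wall (hence collapses to a point of $Y$); likewise for $x^+$. In either case the limit set of $\{\gamma_n\}$ in $Y$ is a single point, and the repelling set is a single point, so the action on $Y$ has the convergence property.

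Then I would identify the parabolic and conical points to show geometric finiteness, i.e.\ that \emph{every} point of $Y$ is either conical or bounded parabolic. For a point $\bar\xi\in Y$ coming from a wall $W$: its stabilizer is $\pi_1(W)$, which acts cocompactly on $W$ (as $W/\pi_1(W)$ is a boundary component of the compact manifold $M$), and one checks it acts properly discontinuously and cocompactly on $(Y\smallsetminus\{\bar\xi\})$ near $\bar\xi$ — this is where the disjointness of closures of walls is essential, so that no other collapsed point accumulates onto $\bar\xi$ badly; this makes $\bar\xi$ a bounded parabolic point. For a point $\bar\xi\in Y$ coming from $\xi\in\partial\tilde M$ outside all closed walls: by hypothesis $\xi$ is extremal, and one produces a sequence $\gamma_n\in\Gamma$ and a basepoint with $\gamma_n^{-1}$ staying bounded-distance from a geodesic ray towards $\xi$ while $\gamma_n^{-1}\bar\eta$ stays off a compact set for all $\bar\eta\neq\bar\xi$ — i.e.\ $\bar\xi$ is a conical limit point. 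The compactness of $M$ gives the needed coboundedness. Finally, Yaman's theorem \cite{Yaman} converts ``geometrically finite convergence action on a compact metrizable perfect space'' into ``$\Gamma$ is hyperbolic relative to the maximal parabolic subgroups, with Bowditch boundary equivariantly homeomorphic to that space''; the maximal parabolics are exactly the wall subgroups $\{\pi_1(W)\}$, yielding the statement.

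The main obstacle I anticipate is the convergence-action step \emph{without} the luxury of an ambient invariant properly convex open set: one must run the Hilbert-metric north–south dynamics intrinsically on $\tilde M$ (which is convex but only ``properly convex b-manifold'', possibly with the bad behavior of Remark~\ref{rem:not cvxcocpct}), and carefully check that the limiting faces behave correctly after collapsing — in particular that when $x^+$ lies \emph{on} a wall, the entire face $F_{\tilde M}(x^+)$ lands in that one closed wall (using the hypothesis ``every proper face touching the closure of a wall belongs to it'', which is implicitly needed here and is exactly the structural hypothesis of Proposition~\ref{prop:cvxcocpct}) so that it collapses to a single point. Handling the parabolic points also requires knowing the geometry of a ``fan'' around a wall (Proposition~\ref{prop:aroundcodim2}) to see that the wall subgroup acts cocompactly on the complement of the parabolic point in $Y$; assembling these local pictures into the global cusped-space coboundedness is the technical heart of the argument.
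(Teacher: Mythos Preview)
Your overall strategy---Yaman's characterization via a geometrically finite convergence action on $Y$---is exactly the paper's, and your treatment of the convergence-action step and the conical points is essentially right (the paper cites Islam--Zimmer for the former and uses a ray-recurrence argument, Proposition~\ref{prop:conicality}, for the latter, which is what your sketch describes).

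The genuine gap is in your bounded-parabolic step. You say one ``checks'' that $\pi_1(W)$ acts cocompactly on $Y\smallsetminus\{\bar\xi\}$, and then propose to extract this from Proposition~\ref{prop:aroundcodim2}. That proposition is about fans of tiles around a \emph{codimension-2 corner} in a gluing kit; it has nothing to do with the codimension-1 walls of a single b-manifold $M$, and does not give you cocompactness on $Y\smallsetminus\{\bar\xi\}$. The paper's actual mechanism is quite different and is the technical heart you correctly flagged: first Lemma~\ref{lem:parabolic cvx} shows $\mathrm{Stab}_\Gamma(W)$ acts properly on $\partial\tilde M\smallsetminus\overline W$ (via a limit-of-matrices argument forcing $\mathrm{Ker}(g_\pm)\cap\overline\Omega$ into $\overline W$), and then Corollary~\ref{cor:parab cvx} upgrades this to cocompactness by a cohomological-dimension trick: since $W$ has codimension $1$, $\partial\tilde M\smallsetminus\overline W$ is a finite union of copies of $\R^{\dimd-1}$, and $\pi_1(W)$ (acting properly cocompactly on the $(\dimd-1)$-dimensional face) has virtual cohomological dimension $\dimd-1$, so by a classical fact any proper action of such a group on $\R^{\dimd-1}$ is automatically cocompact. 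This codimension-1 feature is precisely what lets the argument go through without convex-cocompactness (cf.\ Remark~\ref{rem:not cvxcocpct}); your proposal does not supply a substitute for it. Also, the extra hypothesis you suspect is ``implicitly needed'' (every proper face touching a closed wall belongs to it) is in fact a consequence of the stated hypotheses: a nontrivial face not contained in a closed wall would contain a point outside all closed walls, which must be extremal.
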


Relative hyperbolicity is a group theoretic generalization of the fundamental groups of geometrically finite hyperbolic manifolds, in which case the Bowditch boundary corresponds to the usual limit set (see Section~\ref{sec:yaman} for more details).

The above theorem can be seen as a consequence of very general work of Weisman \cite[Th.\,1.16]{We21} or work of Islam and Zimmer \cite[Th.\,1.3-6]{IZ22relhypb}, {in the special case where $\pi_1(M)$ is convex-cocompact}. 
However, notice that we will apply this theorem in a case where convex-cocompactness does not hold (see the above Remark~\ref{rem:not cvxcocpct}). In fact we will exploit a different property: Namely that our subgroups with respect to which we want to prove relative hyperbolicity have codimension 1.
The strategy of proof of Theorem~\ref{thm:relhypb} follows the same line as in Weisman and Islam--Zimmer's works, which themself are inspired by older works.

In the case where $M$ is closed, relative hyperbolicity comes from the following:

\begin{fact}[{Weisman \cite[Th.\,1.16]{We21}}]\label{fact:Teddyrelhypb}
 Let $M=\Omega/\Gamma$ be a closed properly convex projective manifold.
 Consider closed, totally geodesic, embedded, pairwise disjoint hypersurfaces $W_j=C_j/\Gamma_j\subset M$ where $C_j=\Omega\cap H_j$ with $H_j$ projective hyperplanes, $\Gamma_j=\Stab_\Gamma(H_j)$, and $j=1\dots n$.
 Suppose that:
 \begin{itemize}
  \item The $\Gamma$-translates of the $C_j$'s in $\Omega$ have pairwise disjoint closures.
  \item Any point of $\partial \Omega$ outside the closures of these translates is extremal.
 \end{itemize}
 Then $\Gamma$ is hyperbolic relatively to $\Gamma_1,\dots,\Gamma_n$,
 and the Bowditch boundary is equivariantly homeomorphic to the quotient $Y:=\partial \Omega/_\sim$ obtained by contracting $\gamma\partial C_j$ for every $\gamma\in\Gamma$ and $j=1\dots n$.
\end{fact}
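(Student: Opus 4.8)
The plan is to run the standard dynamical characterization of relative hyperbolicity, in the spirit of \cite{We21,IZ22relhypb}: produce a compact metrizable $\Gamma$-space $Y$ on which the action is a \emph{geometrically finite convergence action} whose maximal parabolic subgroups are exactly the $\Gamma$-conjugates of $\Gamma_1,\dots,\Gamma_n$, and then invoke Yaman's theorem (Definition~\ref{def:yaman}, Remark~\ref{rk:yaman}), which yields simultaneously that $\Gamma$ is hyperbolic relative to $\Gamma_1,\dots,\Gamma_n$ and that $Y$ is $\Gamma$-equivariantly the Bowditch boundary. Let $\mathcal H$ be the $\Gamma$-orbit of $H_1,\dots,H_n$; for $H\in\mathcal H$ write $C_H=\Omega\cap H$ and $\partial C_H=\overline{C_H}\cap\partial\Omega=\overline\Omega\cap H\smallsetminus C_H$, a closed convex subset of $\partial\Omega$ of dimension $\le\dimd-2$, and $\Gamma_H=\Stab_\Gamma(H)$. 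The candidate boundary is $Y:=\partial\Omega/_\sim$, obtained by collapsing each $\partial C_H$ to a point $[C_H]$.

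The structural input I would establish first is that every positive-dimensional face $F$ of $\partial\Omega$ is contained in a single $\partial C_H$. Indeed, $F\smallsetminus\bigcup_H\partial C_H$ would consist of extremal points of $\Omega$ lying in a positive-dimensional face, which is impossible; hence $F=\bigsqcup_H(F\cap\partial C_H)$ is a countable partition of the continuum $F$ into closed sets, so Sierpi\'nski's theorem forces all but one to be empty. In particular every nontrivial segment of $\partial\Omega$ lies in one $\partial C_H$, so the quotient map $\pi\colon\partial\Omega\to Y$ is a closed map sending each face of $\partial\Omega$ to a point. Moreover $Y$ is compact metrizable: $\partial\Omega$ is compact, and the decomposition is null, i.e. $\diam(\partial C_H)\to 0$ along any sequence of pairwise distinct $H\in\mathcal H$ — this uses cocompactness of $\Gamma\curvearrowright\Omega$ together with disjointness of closures (a family of translates of a fixed $C_j$ with diameters bounded below would, after translating one basepoint into a compact fundamental domain, violate properness of $\Gamma\curvearrowright\Omega$ and local finiteness of $\{\overline{C_H}\}$).

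Next, the heart: $\Gamma\curvearrowright Y$ is a convergence action. Given distinct $\gamma_k\in\Gamma$, pass to a subsequence so that $\gamma_k/\|\gamma_k\|\to A$ and $\gamma_k^{-1}/\|\gamma_k^{-1}\|\to A'$ in $\mathrm{End}(\R^{\dimd+1})$ with $A,A'$ nonzero of rank $\le\dimd$. By standard dynamics of $\mathrm{Aut}(\Omega)$ for $\Omega$ properly convex (see e.g. \cite{Be1}), $\mb{P}(\operatorname{Im}A)\cap\overline\Omega$ is a face $F^+$ of $\partial\Omega$, $\mb{P}(\operatorname{Im}A')\cap\overline\Omega$ a face $F^-$, and $\gamma_k x\to\overline{F^+}$ for every $x\in\overline\Omega\smallsetminus\mb{P}(\ker A)$; by the previous paragraph $\pi(F^+)=\{a^+\}$ and $\pi(F^-)=\{a^-\}$, and since $\pi$ is closed and collapses faces, one concludes $\gamma_k\to a^+$ uniformly on compacta of $Y\smallsetminus\{a^-\}$. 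Then I would verify geometric finiteness. For $\xi\in\partial\Omega$ extremal (outside all $\partial C_H$), the half-open Hilbert geodesic ray $[o,\xi)$ stays in $\Omega$; cocompactness gives $\gamma_k\in\Gamma$ returning points $\xi_k\in[o,\xi)$, $\xi_k\to\xi$, into a compact fundamental domain, so $\gamma_k^{-1}o\to\xi$, and extremality of $\xi$ forces the attracting face of $\gamma_k^{-1}$ to reduce to $\{\xi\}$, while the repelling point (image of the attracting face of $\gamma_k$) is distinct; hence $[\xi]$ is a conical limit point. For a wall point $[C_H]$ one shows that $\Gamma_H$ acts cocompactly on $Y\smallsetminus\{[C_H]\}$, so $[C_H]$ is bounded parabolic — and this is where the codimension-one nature of the peripheral subgroups is essential: $H$ separates $\Omega$ into two $\Gamma_H$-invariant convex halves, $\Gamma_H$ acts cocompactly on the wall $C_H$ (as $C_H/\Gamma_H=W_H$ is compact), and a visibility/shadow argument transverse to $H$ propagates this cocompactness to $\partial\Omega\smallsetminus\partial C_H$, hence to $Y\smallsetminus\{[C_H]\}$, exactly as in \cite{We21,IZ22relhypb}.

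To conclude, in a convergence action bounded parabolic points are never conical, so the maximal parabolic subgroups are precisely the $\Gamma_H$, whose $\Gamma$-conjugacy classes are those of $\Gamma_1,\dots,\Gamma_n$; Yaman's theorem then gives that $\Gamma$ is hyperbolic relative to $\Gamma_1,\dots,\Gamma_n$ and that $Y$ is equivariantly homeomorphic to the Bowditch boundary. I expect the main obstacles to be the convergence-action step and the bounded-parabolicity of the wall points: both demand turning the coarse geometry of $\Omega$ near a wall into precise dynamical statements on $Y$, in particular excluding that translates of other walls accumulate on $[C_H]$ in a way that would break cocompactness of $\Gamma_H$ on the complement — this codimension-one mechanism is the substantive content, the rest being bookkeeping around Yaman's criterion.
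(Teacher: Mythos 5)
Your strategy — realize the candidate boundary $Y=\partial\Omega/_\sim$, verify that $\Gamma\curvearrowright Y$ is a geometrically finite convergence action whose maximal parabolics are the wall stabilizers, and invoke Yaman's criterion (Definition~\ref{def:yaman}) — is precisely the paper's. The paper proves Theorem~\ref{thm:relhypb} and Fact~\ref{fact:Teddyrelhypb} simultaneously in exactly this way, but delegates compactness, metrisability and the convergence-group property to Islam--Zimmer's Fact~\ref{fact:IZ}, bounded parabolicity of wall points to Corollary~\ref{cor:parab cvx}, and conicality to Proposition~\ref{prop:conicality}, rather than re-deriving these from scratch as you do.

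One step of yours is stated incorrectly: an extremal point of $\partial\Omega$ \emph{can} lie in a positive-dimensional face $F$ (on the relative boundary of $F$) — only points of $\mathrm{relint}(F)$ are forced to be non-extremal — so $F\smallsetminus\bigcup_H\partial C_H$ need not be empty and the claimed partition $F=\bigsqcup_H(F\cap\partial C_H)$ does not hold as written. The statement you actually need, that every nontrivial segment of $\partial\Omega$ lies in a single $\partial C_H$ (this is the visibility condition on $\sim$ in Fact~\ref{fact:IZ}), is nonetheless true by applying Sierpi\'nski to the compact segment $[x,y]$: its relative interior is covered by the closed, pairwise disjoint traces of the $\partial C_H$'s, and if an endpoint lay outside all of them it would be an extra singleton piece of the partition, forcing $[x,y]$ to be a point, which is absurd. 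Relatedly, the assertion that $\mb{P}(\operatorname{Im}A)\cap\overline{\Omega}$ is a \emph{face} of $\partial\Omega$ is stronger than what is available in \cite{Be1} without further hypotheses; what you can say, once visibility is in place, is that this convex set is collapsed to a single point of $Y$, which is all the convergence step requires — and this is exactly why the paper defers the whole convergence step to Fact~\ref{fact:IZ} rather than arguing it directly.
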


The last property we consider is extended geometrical finiteness, a recent notion due to Weisman \cite[Def.\,1.3]{We22}.
Extended geometrically finite discrete subgroups of semisimple Lie groups generalize the class of geometrically finite discrete subgroups of rank-one semisimple Lie groups such as $\SO(n,1)$.
In particular, all extended geometrically finite groups are by definition relatively hyperbolic with respect to some proper subgroups.

Let $\mathcal F$ be the partial flag variety of pairs of points and hyperplanes $(x,H)$ of $\mb{RP}^d$ such that $x\in H$.

\begin{prop}
 In the settings of Theorem~\ref{thm:relhypb} and Fact~\ref{fact:Teddyrelhypb}, the action of $\Gamma$ on $\cal F$ is \emph{extended geometrically finite} in the sense of \cite[Def.\,1.3]{We22}.
\end{prop}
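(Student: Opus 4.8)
The plan is to deduce extended geometric finiteness from the structure already established: namely that $\Gamma$ acts on the quotient $Y = \partial\tilde M/_\sim$ (resp. $\partial\Omega/_\sim$) as a geometrically finite convergence action with parabolic points the wall stabilizers $\Gamma_j$ (this is exactly the content of Theorem~\ref{thm:relhypb} and Fact~\ref{fact:Teddyrelhypb}), and to upgrade this to an \emph{extended convergence action} on a compact subset of the flag variety $\mathcal F$. Recall that Weisman's definition \cite[Def.\,1.3]{We22} requires a $\Gamma$-invariant closed subset $\Lambda \subset \mathcal F$, a $\Gamma$-equivariant continuous surjection $\Lambda \to \partial(\Gamma, \mathcal P)$ onto the Bowditch boundary relative to the peripheral structure $\mathcal P = \{\Gamma_j\}$, such that the action on $\Lambda$ is a convergence action whose ``conical'' and ``bounded parabolic'' points match up correctly with the preimages in $\Lambda$. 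So the first step is to write down the candidate $\Lambda$: for each boundary point $\xi \in \partial\tilde M$ one assigns the pair $(\xi, T_\xi)$ where $T_\xi$ is a supporting hyperplane — but since at non-wall points $T_\xi$ is unique (points outside walls are $\mathcal C^1$ by the regularity already proven, or at least extremal, and one checks $\mathcal C^1$ using Fact~\ref{item:CNS smooth} together with the face structure from Lemma~\ref{lem:faces of cells} and Proposition~\ref{prop:contracting cells at infinity}) the map is single-valued there; on closed walls $\bar C_j$ one includes instead the hyperplane $H_j$ spanning the wall, paired with every point of $\bar C_j$. Thus $\Lambda = \{(\xi, T_\xi) : \xi \notin \bigcup_j \Gamma \bar C_j\} \cup \{(\xi, H_j) : \xi \in \bar C_j,\ j, \gamma\} $.

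The second step is to verify $\Lambda$ is closed and that the obvious map $\Lambda \to Y$ (forget the hyperplane, then quotient by $\sim$) is a continuous $\Gamma$-equivariant surjection onto the Bowditch boundary $\partial(\Gamma,\mathcal P) \cong Y$. Closedness is the delicate bit: one must check that a limit of pairs $(\xi_n, T_{\xi_n})$ with $\xi_n$ outside walls, where $\xi_n \to \xi \in \bar C_j$, has the hyperplane converging to $H_j$; this should follow from the explicit local picture of the boundary near a corner, i.e. Proposition~\ref{prop:aroundcodim2}, which says $D(\xi)$ for the relevant fan is the cone on the ellipsoid $\mathcal S$ with the attracting vertex and lies in the supporting hyperplane $\sph^{d-1}$ — so supporting hyperplanes at nearby smooth points are forced to converge to that one. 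The third step is the convergence dynamics: the action of $\Gamma$ on $\Lambda$ is a convergence action because it factors the convergence action on $Y$ and the fibers over parabolic points are the compact sets $\{(\xi,H_j): \xi\in\bar C_j\}$ on which $\Gamma_j$ acts cocompactly (by the cocompactness of $\Gamma_j = \mathbb Z \times \Theta_j$ on the ellipsoid $\bar C_j$ minus its attracting/repelling data — this is in the remarks after Theorem~\ref{thm:main2}), while over conical points the fiber is a single point, so no new non-proper behaviour is introduced. One then checks the two technical conditions in \cite[Def.\,1.3]{We22}: conical points of $Y$ lift to points whose $\Lambda$-preimage is a singleton and are ``conical in $\mathcal F$'' in Weisman's sense (north–south type contraction along a sequence in $\Gamma$, which comes from proximality of the relevant elements — here one uses that translates of tiles telescope, Proposition~\ref{prop:contracting cells at infinity} and Lemma~\ref{lem:contraction}), and parabolic points $\bar C_j$ are ``bounded parabolic in $\mathcal F$'', i.e. $\Gamma_j$ acts cocompactly on $\{(\xi,H_j):\xi\in\bar C_j\setminus\{\text{vertex}\}\}$, which again is the cocompactness statement for $\Gamma_j$.

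I expect the \textbf{main obstacle} to be the precise verification that the conical points are conical in the flag-variety sense of Weisman, rather than merely in the topological convergence-group sense on $Y$: this requires producing, for each conical limit point $\xi$, a sequence $\gamma_n \in \Gamma$ and a pair $(a, A) \in \mathcal F$ with $A$ transverse to $(\xi, T_\xi)$ such that $\gamma_n^{-1}$ converges to the constant map $(a,A)$ locally uniformly off a compact set avoiding $(\xi,T_\xi)$. The natural way to get this is from the dynamics of the tiling: a conical point arises from a telescope $\{D_n\}$, and the group elements realizing the telescope periodically act by proximal (indeed biproximal, since they also contract on the dual) transformations because the telescoping is governed by visible triples whose contraction constant $\lambda>1$ (Lemma~\ref{lem:contraction}) forces a uniform gap in the singular values. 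Making this quantitative — extracting genuine proximality in both $\mathbb{RP}^d$ and its dual from the Hilbert-metric contraction, uniformly over the finitely many tile-configurations up to $\Gamma$ — is where the real work lies; once that is in hand, the bounded-parabolic condition and the remaining bookkeeping are comparatively routine, reusing the cone stabilizer description and Proposition~\ref{prop:totgeod blowup}. For $j=2$ (the non-convex-cocompact case) the same argument applies verbatim since none of it used the existence of an ambient invariant properly convex open set, only the intrinsic structure of $\tilde M = \mathcal{CH}_2$ and its boundary; this is the point flagged in the remark that extended geometric finiteness for $j=2$ is proved in Section~\ref{sec:EGF}.
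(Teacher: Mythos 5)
Your candidate set $\Lambda$ is different from the one the paper uses, and the difference matters: you take the graph of the tangent-hyperplane map on the $\mathcal C^1$ points of $\partial\Omega$ (resp.\ $\partial\tilde M$) and pair each point of a closed wall $\bar C_j$ with the single hyperplane $H_j$ spanning the wall. But this set is not closed, and the delicate point you flag — that tangent hyperplanes at nearby smooth points converge to $H_j$ — is false at the vertex of a cone. In the convex-cocompact case ($j=1$) the walls are cones $\mathcal{CH}(H,p)\cap\Omega$; the vertex $p$ is a non-$\mathcal C^1$ point with a whole fan of supporting hyperplanes, and boundary points approaching $p$ from outside the closed cone can have their unique supporting hyperplanes converge to \emph{any} of these, not just $\Span(C_j)$. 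So the limit $(\xi_n,T_{\xi_n})\to(p,H)$ with $H\neq H_j$ lies outside your $\Lambda$, and Proposition~\ref{prop:aroundcodim2} does not force the convergence you want. The paper sidesteps this entirely by taking $\Lambda=(F\times F^*)\cap\mathcal F$ where $F^*$ is the set of \emph{all} supporting hyperplanes of $\Omega$ meeting $F$: over a non-wall (hence $\mathcal C^1$) point the fiber is automatically a singleton, and over a wall point all supporting hyperplanes are included, so closedness is automatic.

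The second, more structural, difference is in how the extension is verified. You propose to check Weisman's conditions point by point — conicality in the flag-variety sense via proximality coming from the contraction constants of Lemma~\ref{lem:contraction}, and bounded parabolicity via cocompactness of $\Gamma_C$ on the relevant fiber — and correctly identify that extracting genuine (bi)proximality uniformly from the Hilbert-metric contraction is where the work lies. The paper instead proves directly that $\phi:\Lambda\to Y$ \emph{extends the convergence action} in the sense of \cite[Def.\,1.2]{We22}: for any diverging $g_n\in\Gamma$, pass to a subsequence where $\|g_n^{\pm1}\|^{-1}g_n^{\pm1}$ converge to rank-deficient matrices $g_\pm$, use Fact~\ref{fact:div seq aut} to locate $\Ker g_\pm\cap\overline\Omega$ and $\Im g_\pm\cap\overline\Omega$ in $\partial\Omega$, observe via the visibility condition that these lie in fibers $\phi^{-1}(z_\mp)$, and then run a short calculation (plus a dual argument in $\Omega^*$) to get the required contraction to $\phi^{-1}(z_+)$ on compacta of $C_{z_-}$. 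This avoids proximality arguments altogether, and then Weisman's Definition~1.3 gives EGF directly once you input that the action on $Y$ is geometrically finite (which is the content of Theorem~\ref{thm:relhypb} and Fact~\ref{fact:Teddyrelhypb}). So the missing idea is twofold: use the full set of supporting flags rather than a selection, and reduce to the "extends convergence dynamics" criterion rather than re-verifying conicality and parabolicity in $\mathcal F$.
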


Extended geometrical finiteness in the setting of Fact~\ref{fact:Teddyrelhypb} and in the setting of Theorem~\ref{thm:relhypb} \emph{with convex-cocompactness} was already known in \cite[Th.\,1.12]{We22}.

\subsection{Convex-cocompactness}\label{sec:cvxcocpct}

Consider a properly convex open set $\Omega\subset\sph^\dimd$ and a discrete group $\Gamma\subset\SL^\pm_{\dimd+1}(\R)$ preserving it.

The action of $\Gamma$ on $\Omega$ is said to be \emph{naively convex-cocompact} if there exists a $\Gamma$-invariant convex subset of $\Omega$ on which $\Gamma$ acts cocompactly.
In such a case, and if $\Omega$ is not strictly convex, there is not always a smallest such convex set, contrarily to convex-cocompactness in hyperbolic geometry.
One reason is that the set of accumulation of points of an orbit $\Gamma\cdot x$ may depend on the choice of $x\in\Omega$ (\eg if $\Omega$ is a triangle in $\sph^2$, and $\Gamma$ is generated by an infinite-order non-proximal element).
For this reason, and others, explained in \cite{DGK17}, it appears that this notion of convex-cocompactness is not the best one; this is why another one has been introduced, as follows.

The \emph{full orbital limit set} of $\Gamma$ is the union over all $x\in\Omega$ of the set of accumulation points of the orbit $\Gamma\cdot x$, \ie $\cup_{x\in\Omega}\overline{\Gamma x}\cap\partial\Omega$.

\begin{defi} [{Danciger--Guéritaud--Kassel \cite[Def.\,1.11]{DGK17}}]\label{def:cvxcocpct} 
The action of $\Gamma$ on $\Omega$ is said to be \emph{convex-cocompact} if the convex hull in $\Omega$ of the full orbital limit set is non-empty and has compact quotient by $\Gamma$.
\end{defi}

We refer to \cite[\S1.4--1.6--1.7--4.1--10.7]{DGK17} and \cite{DGKLM} for more details and examples on convex-cocompactness.

\begin{proof}[Proof of Proposition~\ref{prop:cvxcocpct}]
 Let $\Omega$ be an invariant  properly convex open set containing $\tilde M$ and $\Gamma=\pi_1(M)$.
 Let $\Lambda$ be the full orbital limit set.
 
 The convex set $\tilde M$ is the union of its interior with the relative interior of countably many of its codimension $1$ faces (its totally geodesic boundary).
 Hence $\partial_i\tilde M=(\partial\tilde M)\smallsetminus\tilde M$ is compact, and the closure of $\tilde M$ is the convex hull of $\partial_i\tilde M$.
 Moreover, one easily checks by compactness of $M$ that $\partial_i\tilde M\subset\Lambda\subset\partial\Omega$.

 To prove the proposition it is now enough to prove that any $\xi\in\Lambda$ lies in $\partial_i\tilde M$.
 Assume by contradiction that $\xi\not\in\partial_i\tilde M$.
 There exist $x\in\Omega$ and a sequence $\gamma_n\in\subset\Gamma$ such that $\gamma_nx$ converges to $\xi$.

 Pick $y$ in the interior of $\tilde M$.
 Consider $a,b\in\partial\Omega$ and $z$ in a wall of $\tilde M$ such that 
 $a,y,z,x,b$
 are aligned in this order.
 Up to extracting a subsequence, let us assume that 
 $$\gamma_n(a,y,z,x,b)\underset{n\to\infty}{\longrightarrow}(\alpha,\eta,\zeta,\xi,\beta)\in \partial\Omega\times\partial_i\tilde M\times\partial_i\tilde M\times\partial\Omega\times\partial\Omega.$$
 
 Since $\xi\not\in\partial_i\tilde M$, one then checks that $\alpha,\eta,\zeta,\xi,\beta$ are pairwise distinct.
 
 Let $K$ be the closure of $\tilde M$ in $\sph^\dimd$ and $f:K\rightarrow\partial\tilde M$ map each $p$ to the entry point of $[\xi,p]$ in $K$.
 By convexity, this map is continuous.
 Moreover, $f(p)$ belongs to a wall of $\tilde M$ for every $p$ in the interior of $\tilde M$;
 by continuity and connectivity they all belong to the same wall $W$.
 By continuity again $f(\eta)\in\partial W\cap [\zeta,\xi]$.
 
 The segment $[\eta,f(\eta)]$ is non-trivial, contained in $\partial\tilde M$ and touches $\bar W$, hence it must be contained in $\bar W$. 
 Thus $\xi\in\Span W$.
 
 This is absurd: $\Span(\xi,y)\cap\Span W=\{\xi\}$ whence $f(y)\not\in W$.
\end{proof}

\subsection{Yaman's characterization of relative hyperbolicity}\label{sec:yaman}

We now give our working definition of relative hyperbolicity, following Yaman.
This is not the original definition, which is due to Bowditch \cite{Bow12}.
There are many definitions in the literature; the following one is equivalent to Bowditch's definition.

\begin{defi}[Relative Hyperbolicity {\cite[Th.\,0.1]{Yam04}}]\label{def:yaman}
Let $Y$ be a compact metrisable space and $\Gamma$ a discrete group acting by homeomorphisms on $Y$.
\begin{itemize}
 \item The action is called a \emph{convergence action} if $\Gamma$ acts properly on the set of distinct triples of $Y$.
 \item A point $p\in Y$ is called \emph{conical} if there exists a sequence $\gamma_n\in\Gamma$ and $a\neq b\in Y$ such that $\gamma_nx\to a$ and $\gamma_ny\to b$ for any $y\neq x$.
 \item A point $p\in Y$ is called \emph{parabolic} if $\Stab_\Gamma(p)$ is infinite and  acts properly on $Y\smallsetminus\{p\}$.
 \item A parabolic point $p\in Y$ is called \emph{bounded} if $\Stab_\Gamma(p)$ acts cocompactly on $Y\smallsetminus\{p\}$.
 \item The action is \emph{geometrically finite} if it is a convergence action and all points are conical or bounded parabolic.
\end{itemize}
Suppose the action is geometrically finite, and all stabilizers of parabolic points are finitely generated, then
 \begin{enumerate}
  \item $\Gamma$ is \emph{relatively hyperbolic} with respect to the stabilizers of parabolic points.
  \item $Y$ is the \emph{Bowditch boundary}.
 \end{enumerate}
\end{defi}

\begin{rk}\label{rk:yaman}
 In the setting of Definition~\ref{def:yaman}, the action is \emph{uniform} if all points of $Y$ are conical.
 Then $\Gamma$ is hyperbolic and $Y$ is its Gromov boundary.
\end{rk}

Let us now consider the convex projective setting.

Fix a properly convex open set $\Omega\subset\sph^\dimd$, a closed subset $F\subset\Omega$, and a discrete subgroup $\Gamma\subset\SL^\pm_{\dimd+1}(\R)$ preserving $\Omega$ and $F$.
(In our case $F$ will always be $\partial\Omega$, except in Proposition~\ref{prop:conicality} and Section~\ref{sec:EGF}.)
Fix a $\Gamma$-invariant equivalence relation $\sim$ on $F$ which is closed (in the sense that $\{(x,y)\in F^2 : x\sim y\}\subset F^2$ is closed) and such that $Y:=F/_\sim$ contains at least $3$ points.

We say that
\begin{itemize}
 \item $F$ satisfies the \emph{convexity condition} if it contains all segment $[x,y]\subset\partial\Omega$ with $x,y\in F$.
 \item $F$ satisfies the \emph{visibility condition} it contains all segments of $\partial\Omega$ touching it.
 \item $\sim$ satisfies the \emph{visibility condition} if $[x,y]\subset\partial\Omega$ implies $x\sim y$ for all $x,y\in F^2$.
\end{itemize}

These assumptions imply that the convex hull of $F$ intersects $\Omega$.

\begin{fact}[{Islam--Zimmer \cite[Prop.\,8.1-2]{IZ22relhypb}}]\label{fact:IZ}
 If the convexity condition on $F$ and the visibility condition on $\sim$ are satisfied, then
 $Y$ is a compact metrisable space on which the action of $\Gamma$ is a convergence action.
\end{fact}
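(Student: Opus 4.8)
The plan is to deduce everything from the proper discontinuity of $\Gamma$ on $\Omega$ by transporting triples of $Y$ into $\Omega$ via a barycentre map. First the topology of $Y$: since $\Omega$ is properly convex, $\overline\Omega$ and hence $F$ are compact, so $Y=F/_\sim$ is compact; since $\sim$ is a \emph{closed} equivalence relation on the compact Hausdorff space $F$, the quotient $Y$ is Hausdorff; and $Y$ is metrisable because $F$ is compact metric, so $C(F)$ is separable, the pullback $q^*\colon C(Y)\to C(F)$ along the surjection $q\colon F\to Y$ is an isometric embedding, $C(Y)$ is therefore separable, and a compact Hausdorff space with separable algebra of functions embeds topologically in $[0,1]^{\N}$ and is thus metrisable. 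Being $\Gamma$-invariant, $\sim$ makes the linear action of $\Gamma$ on $F$ descend to an action by homeomorphisms on $Y$ with $q$ equivariant; note also that $q$, being a continuous surjection of compact Hausdorff spaces, is closed.

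Next I would isolate the single place where the hypotheses are used. \emph{Claim:} if $a\neq b$ in $Y$ then for all lifts $\xi\in q^{-1}(a)$ and $\eta\in q^{-1}(b)$ the segment $[\xi,\eta]$ is not contained in $\partial\Omega$; otherwise the visibility condition on $\sim$ would give $\xi\sim\eta$, i.e.\ $a=b$. As $\overline\Omega$ is convex and $\Omega$ open, the relative interior of $[\xi,\eta]$ then lies in $\Omega$. Consequently, given three distinct $a_1,a_2,a_3\in Y$ and lifts $\xi_i\in q^{-1}(a_i)\subset F\subset\partial\Omega$, the points $\xi_1,\xi_2,\xi_3$ are affinely independent (a point of $\partial\Omega$ cannot lie in the relative interior of a segment meeting $\Omega$), so after choosing representatives in $\R^{\dimd+1}$ consistently inside the convex cone over $\Omega$ the barycentre $o(\xi_1,\xi_2,\xi_3):=[\xi_1+\xi_2+\xi_3]$ is a point of $\Omega$, depending continuously on the $\xi_i$ and equivariantly under $\Gamma$ (which preserves that cone). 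The convexity condition on $F$ enters only to ensure that each $\sim$-class is convex and contained in a single face of $\partial\Omega$; I would record this, though it is not needed for the convergence property itself.

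Now for the convergence action. Recall (cf.\ Section~\ref{sec:reminder cvx}) that a discrete subgroup $\Gamma\subset\SL^\pm_{\dimd+1}(\R)$ preserving $\Omega$ acts properly discontinuously on $\Omega$, the Hilbert metric being $\Gamma$-invariant and proper. Suppose $C\subset Y^3$ is a compact set of triples of distinct points and $\gamma_n\to\infty$ in $\Gamma$ with $\gamma_nC\cap C\neq\emptyset$; I want a contradiction. Choose $T_n\in C$ with $\gamma_nT_n\in C$; after extraction $T_n\to T=(a_1,a_2,a_3)$ and $\gamma_nT_n\to T'=(b_1,b_2,b_3)$, both triples of distinct points. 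Lift coordinates: pick $\xi_i^n\in q^{-1}((T_n)_i)$; after a further extraction $\xi_i^n\to\xi_i\in q^{-1}(a_i)$ and $\gamma_n\xi_i^n\to\eta_i\in q^{-1}(b_i)$, using closedness of $q$ and the identity $q(\gamma_n\xi_i^n)=\gamma_n(T_n)_i$. Put $o_n:=o(\xi_1^n,\xi_2^n,\xi_3^n)\in\Omega$. By the claim and continuity of the barycentre, $o_n\to o(\xi_1,\xi_2,\xi_3)\in\Omega$, while $\gamma_no_n=o(\gamma_n\xi_1^n,\gamma_n\xi_2^n,\gamma_n\xi_3^n)\to o(\eta_1,\eta_2,\eta_3)\in\Omega$. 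Thus $\gamma_n$ carries a convergent sequence of points of $\Omega$ to a convergent sequence of points of $\Omega$; choosing compact $K,K'\subset\Omega$ containing the two sequences eventually, we get $\gamma_nK\cap K'\neq\emptyset$ for all large $n$, contradicting proper discontinuity since $\gamma_n\to\infty$. Hence $\Gamma$ acts properly on the set of distinct triples of $Y$, that is, $\Gamma\curvearrowright Y$ is a convergence action.

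The only subtle point I expect in writing this up carefully is the affine independence of three lifts in the full generality where $F$ is allowed to meet $\Omega$ (the exceptional cases mentioned after the definitions); there one must either choose the lifts more carefully or replace $F$ by $\overline F\cap\partial\Omega$ before running the argument. The alternative, more ``dynamical'' proof --- extract a limiting singular projective transformation from $\gamma_n$ and read off north--south dynamics on $Y$ from its image and kernel subspaces --- would run into the usual non-strict-convexity bookkeeping as its main difficulty: one has to match the kernel subspace of the limit with the correct face of $\partial\Omega$ and verify that this face, together with everything the dynamics pushes onto it, collapses to a single point of $Y$, which is precisely what the visibility condition on $\sim$ guarantees through the claim above.
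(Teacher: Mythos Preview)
The paper does not give its own proof of this statement; Fact~\ref{fact:IZ} is simply cited from Islam--Zimmer and then used. So there is nothing to compare your approach to in the paper itself. Your overall strategy --- manufacture from each distinct triple in $Y$ a point of $\Omega$ and invoke properness of $\Gamma\curvearrowright\Omega$ --- is natural and close in spirit to what one finds in the literature, but there is a genuine gap.

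The barycentre $o(\xi_1,\xi_2,\xi_3)=[s(\xi_1)+s(\xi_2)+s(\xi_3)]$, built from a fixed continuous section $s:\sph^\dimd\to\R^{\dimd+1}$, is \emph{not} $\Gamma$-equivariant: one has $\gamma\cdot o(\xi_1,\xi_2,\xi_3)=[\gamma s(\xi_1)+\gamma s(\xi_2)+\gamma s(\xi_3)]$, and $\gamma s(\xi_i)=\mu_i\,s(\gamma\xi_i)$ with scalars $\mu_i>0$ that depend on $i$. Hence the claimed identity $\gamma_n o_n=o(\gamma_n\xi_1^n,\gamma_n\xi_2^n,\gamma_n\xi_3^n)$ fails. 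What you actually get is that $\gamma_no_n$ is \emph{some} convex combination of the $\eta_i^n$, and this combination can degenerate: if one of the ratios $\mu_i^n/\mu_j^n$ blows up, then $\gamma_no_n$ converges to a vertex $\eta_i\in\partial\Omega$, and no contradiction with properness on $\Omega$ follows. So the proof, as written, does not close.

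Your approach can be repaired without switching to the dynamical argument. Replace the single barycentre by the three affine midpoints $m_{ij}^n$ of the pairs $\xi_i^n,\xi_j^n$; these converge to points $m_{ij}\in\Omega$ and hence sit in a fixed compact set, at uniformly bounded Hilbert distance from one another. Their images $\gamma_nm_{ij}^n$ are therefore also at mutually bounded Hilbert distance. If one of them stays in a compact of $\Omega$ you are done. Otherwise each $\gamma_nm_{ij}^n$ accumulates on $[\eta_i,\eta_j]\cap\partial\Omega=\{\eta_i,\eta_j\}$, and the standard fact that two sequences at bounded Hilbert distance escaping to $\partial\Omega$ have limits joined by a segment in $\partial\Omega$ forces $[\eta_k,\eta_l]\subset\partial\Omega$ for some $k\neq l$, contradicting the visibility condition on $\sim$ (since $b_k\neq b_l$ in $Y$). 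Alternatively, the north--south argument you outline at the end --- passing to a limit of $\|\gamma_n\|^{-1}\gamma_n$ and analysing its kernel and image --- is exactly how Islam--Zimmer proceed.
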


By the fact, in order to get relative hyperbolicity, it remains to understand conical and bounded parabolic points.

\subsection{Conicality}

Consider $\Omega$, $F$, $\Gamma$, $\sim$ and $Y:=F/_\sim$ as in the previous section.

We first give a criterion for conicality.
It is similar to a classical criterion in real hyperbolic geometry (see for instance the discussion \cite[p.\,75]{Tu98}), and it also has a similar flavor to a result of Islam and Zimmer \cite[Prop.\,8.4]{IZ22relhypb} and the argument behind a proof of Weisman \cite[Prop.\,8.17]{We22}.

For any $p\in Y$, denote by $\mathrm{Sing}_p\subset T^1\Omega$ the set of unit tangent vectors $v$ such that $v^+,v^-\in p$.
Moreover, set $\mathrm{Sing}:=\bigcup_{p\in Y}\mathrm{Sing}_p$, which is a closed subset of $T^1\Omega$.

\begin{prop}\label{prop:conicality}
 Suppose the convexity condition on $F$ and the visibility condition on $\sim$.
 Consider $p\in Y$, $x\in p\subset{F}$, and $v\in T^1\Omega$ pointing forward at $x$ and backward at $y\in{F}$ with $y\not\sim x$.
 Then $p$ is conical if and only if the projection in $T^1\Omega/\Gamma$ of the forward geodesic orbit of $v$ intersects infinitely often a compact subset of $(T^1\Omega\smallsetminus \mathrm{Sing})/\Gamma$.
\end{prop}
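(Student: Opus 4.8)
The plan is to prove both implications by working with the Hilbert geometry of $\Omega$ and the dynamics of the $\Gamma$-action on the set of triples of $Y$. First I would set up notation: let $\gamma(t)$ be the forward geodesic ray of $v$ in $\Omega$, parametrised by Hilbert arclength, with $\gamma(0)$ a point on the segment $[y,x]$ (which lies in $\overline\Omega$, with $x,y\in F$), $\gamma(+\infty)=x$, and $\gamma(-\infty)=y$. Write $\bar\gamma$ for its projection to $T^1\Omega/\Gamma$.

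For the ``if'' direction, suppose $\bar\gamma(t_n)$ stays in a compact set $\mathcal K\subset (T^1\Omega\smallsetminus\mathrm{Sing})/\Gamma$ for a sequence $t_n\to+\infty$. Lift: there are $\gamma_n\in\Gamma$ such that $\gamma_n\cdot v_{t_n}$ (where $v_t$ is the unit tangent to $\gamma$ at time $t$) lies in a fixed compact set $\widetilde{\mathcal K}\subset T^1\Omega\smallsetminus\mathrm{Sing}$ projecting onto $\mathcal K$. Up to a subsequence, $\gamma_n v_{t_n}\to w\in\widetilde{\mathcal K}$, so in particular $\gamma_n\gamma(t_n)\to o\in\Omega$, and $\gamma_n x=\gamma_n\gamma(+\infty)\to w^+$, $\gamma_n\gamma(0)\to w^-$ with $w^+\not\sim w^-$ since $w\notin\mathrm{Sing}$. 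I claim $p=[x]$ is conical: one checks that $\gamma_n$ realizes the conical condition in $Y$, i.e. $\gamma_n[x]\to [w^+]=:a$ while $\gamma_n q\to [w^-]=:b$ for every $q\neq [x]$ in $Y$. This last uniformity is the standard north–south/contraction argument: since $\gamma_n\gamma(t_n)$ stays bounded in $\Omega$ and $t_n\to\infty$, the maps $\gamma_n$ push the ``shadow'' of $[x]$ (everything seen past $\gamma(t_n)$) to a neighbourhood of $w^+$ shrinking to a point, and everything else to a neighbourhood of $w^-$; this is made precise using the contraction lemma for visible triples (Lemma~\ref{lem:contraction}) together with the semicontinuous extension of the Hilbert metric to $\overline\Omega$ (Definition~\ref{rem:hilbert metric}). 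Combined with the fact (Fact~\ref{fact:IZ}) that $\Gamma\curvearrowright Y$ is a convergence action, this gives conicality of $p$.

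For the ``only if'' direction, suppose $p$ is conical, witnessed by $\gamma_n\in\Gamma$, $a\neq b\in Y$ with $\gamma_n q\to b$ for all $q\neq p$ and $\gamma_n p\to a$. Pick $\tilde a\in a$, $\tilde b\in b$, $\tilde a'\in a$, $\tilde b'\in b$ generic, and consider the images $\gamma_n^{-1}$ of a fixed point $o\in\Omega$; by a standard convergence-group argument (cf.\ \cite[p.\,75]{Tu98}) one shows $\gamma_n^{-1}o$ accumulates only on points of $p$ (in $F/_\sim$), and $\gamma_n o\to a$. The geodesic ray $\gamma$ has endpoint $x\in p$, so for $t$ large $\gamma(t)$ is ``seen from $o$ in the direction of $p$''; applying $\gamma_n$ and using that $\gamma_n$ contracts the complement of a shrinking neighbourhood of $p$ to a neighbourhood of $a$, one extracts times $t_n\to+\infty$ and elements $g_n\in\Gamma$ (built from the $\gamma_n$'s and bounded correction factors) such that $g_n\gamma(t_n)$ stays in a fixed compact subset of $\Omega$; moreover $g_n\gamma(+\infty)=g_n x\to\alpha$ and $g_n\gamma(-\infty)=g_n y\to\beta$ with $\alpha,\beta$ in distinct classes of $Y$ — here one uses $y\not\sim x$, so that $y$ is not ``swallowed'' by $p$ and its image stays bounded away from the image of $x$. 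Hence $g_n v_{t_n}$ subconverges to a vector $w\in T^1\Omega$ with $w^+\not\sim w^-$, i.e.\ $w\notin\mathrm{Sing}$, and the forward orbit of $\bar v$ returns infinitely often to a compact neighbourhood of $[w]$ in $(T^1\Omega\smallsetminus\mathrm{Sing})/\Gamma$.

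The main obstacle, I expect, is the uniformity in both directions: passing between ``$\gamma_n\gamma(t_n)$ stays in a compact set of $\Omega$'' and ``$\gamma_n$ acts as a near-contraction onto a neighbourhood of a point of $Y$, uniformly over all of $Y\smallsetminus\{p\}$''. The subtlety is that $\Omega$ is not strictly convex and $Y$ is a quotient, so a single $\gamma_n$ need not contract all of $\partial\Omega\smallsetminus(\text{shadow})$ into a small metric ball — only its image in $Y$ shrinks. Handling this requires the visible-triple contraction estimate (Lemma~\ref{lem:contraction}) applied along the telescope of ``walls'' (or supporting hyperplanes) crossed by $\gamma$, exactly as in the proof of Proposition~\ref{prop:contracting cells at infinity}, plus care that the correction factors $g_n\gamma_n^{-1}$ used to keep the basepoint in a compact set remain bounded. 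The condition $y\not\sim x$ is precisely what prevents the backward endpoint from degenerating, and $w\notin\mathrm{Sing}$ is the quantitative record that the return happens away from the singular set.
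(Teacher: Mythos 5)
Your skeleton matches the paper's — lift to $T^1\Omega/\Gamma$, extract a recurrent subsequence $\gamma_n\phi_{t_n}v\to w\notin\mathrm{Sing}$, and use $w$ to produce the conical dynamics — but the two steps you yourself flag as delicate are exactly where the argument breaks. In the ``if'' direction you ultimately appeal to Lemma~\ref{lem:contraction} ``along the telescope of walls crossed by $\gamma$.'' That machinery is not available here: Proposition~\ref{prop:conicality} is stated for an arbitrary properly convex $\Omega$ with an abstract closed $\Gamma$-invariant $F\subset\partial\Omega$ and relation $\sim$, so there is no tessellation, no walls, and no compact family of visible triples; Lemma~\ref{lem:contraction} simply does not apply. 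Moreover the ``uniformity over $Y\smallsetminus\{p\}$'' you identify as the main obstacle is not required: Definition~\ref{def:yaman} only asks that $\gamma_n q\to b$ for each $q\neq p$ \emph{separately}. The paper's argument is pointwise and elementary: fix $z\not\sim x$, pass to a sub-subsequence with $\gamma_n z\to z'$, and show $[y',z']\subset\partial\Omega$ by proving that $d_\Omega(\phi_{t_n}v,[y,z])\to\infty$, which holds precisely because $z\not\sim x$ forces $[x,z]\not\subset\partial\Omega$; nothing beyond the semicontinuous boundary extension of the Hilbert metric is needed.

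In the ``only if'' direction you write ``one extracts times $t_n\to+\infty$'' and introduce undefined ``correction factors $g_n\gamma_n^{-1}$,'' but never justify why $t_n$ goes to $+\infty$ rather than $-\infty$, which is in fact the only genuinely delicate point of this implication. The paper's route is cleaner and needs no auxiliary $g_n$: since $\gamma_n x\to x'\in a$ and $\gamma_n y\to y'\in b$ with $x'\not\sim y'$, the segment $[x',y']$ meets $\Omega$, so one can pick $t_n$ with $\gamma_n\phi_{t_n}v$ converging in $T^1\Omega$; divergence of $\gamma_n$ forces $t_n\to\pm\infty$; and $t_n\to-\infty$ is excluded by applying the ``if'' direction to the reversed vector $-v$, which would give $\gamma_n c\to a$ for every $c\neq[y]$, hence $a=b$ upon taking $c\notin\{p,[y]\}$ (this exists since $Y$ has at least three points). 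Two minor slips: in your ``if'' direction it should be $\gamma_n y\to w^-$, not $\gamma_n\gamma(0)\to w^-$, and in your ``only if'' sketch $\gamma_n$ contracts $Y\smallsetminus\{p\}$ towards $b$, not towards $a$.
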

\begin{proof}
 Suppose that the projection of the  forward geodesic orbit of $v$ intersects infinitely often a compact subset of $(T^1\Omega\smallsetminus \mathrm{Sing})/\Gamma$, \ie there exists $\gamma_n\in\Gamma$ and $t_n$ going to infinity such that $\gamma_n\phi_{t_n}v$ converges to $w\not\in \mathrm{Sing}$.
 This implies that $\gamma_nx$ and $\gamma_ny$ converge respectively to $x':=w^+$ and $y':=w^-\in{F}$ with $x'\not\sim y'$.
 It suffices to prove that $\gamma_nz$ accumulates on $[y']$ for any $z\in{F}\smallsetminus p$.
 Up to extracting a subsequence we may assume that $\gamma_nz$ converges to $z'$.
 It is enough to check that $[y',z']\subset\partial \Omega$.
 We can assume that $[y,z]$ intersects $\Omega$.
 It is then enough to check that the Hilbert distance from the base point of $\gamma_n\phi_{t_n}v$ to $\gamma_n[y,z]$ tends to infinity,
 in other words that the distance from $\phi_{t_n}v$ to $[y,z]$ tends to infinity.
 This is a consequence of the fact that $z\not\sim x$ (which implies that $z$ and $x$ do not have the same face).
 
 Suppose that $p$ is conical: there exists $\gamma_n\in\Gamma$ diverging and $a\neq b\in Y$ such that $\gamma_np$ tends to $a$ while $\gamma_nq$ tends to $b$ for any $q\in Y\smallsetminus\{p\}$.
 Up to extracting a subsequence we may assume that $\gamma_nx$ and $\gamma_ny$ converge to respectively $x'\in a$ and $y'\in b$.
 The segment $[x',y']$ intersects $\Omega$ since $x'\not\sim y'$.
 Therefore there exists $t_n\in\R$ such that $\gamma_n\phi_{t_n}v$ converges to some unit tangent vector $w$.
 Since $\gamma_n$ diverges, the sequence $t_n$ accumulates on $\{\pm\infty\}$.
 Up to extracting again a subsequence we may assume that $t_n$ converges.
 If by contradiction the limit was $-\infty$, then by the first step of the present proof, for any $c\in Y\smallsetminus\{y\}$ the sequence $\gamma_nc$ would converge to $a$, which is absurd.
\end{proof}

\subsection{Technical lemmas for parabolicity}

\begin{fact}\label{fact:div seq aut}
 Consider a properly convex open set $\Omega$ and a sequence of automorphisms $g_n\in\GL_{\dimd+1}(\R)$ that converges to a non-invertible non-zero matrix $g$.
 Then the kernel and image of $g$ intersects $\overline\Omega$ but not $\Omega$.
\end{fact}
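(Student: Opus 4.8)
The plan is to exploit the properness of the action $\Aut(\Omega)\curvearrowright\Omega$ (recalled in Section~\ref{sec:reminder cvx}) together with the convergence $g_n\to g$. Write $\sph(\Ker g),\sph(\Im g)\subset\sph^\dimd$ for the projectivisations of the subspaces $\Ker g,\Im g\subset\R^{\dimd+1}$, which are non-zero and proper since $g\neq 0$ is non-invertible; in particular $\sph(\Ker g)$ is nowhere dense in $\sph^\dimd$ and $\sph(\Im g)$ is a proper projective subspace. On the dense open set $\sph^\dimd\smallsetminus\sph(\Ker g)$ the matrices $g_n$ converge uniformly on compacts to the projective map induced by $g$, which I write $o\mapsto g\cdot o$. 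The starting observation is that, since $g$ is non-invertible, the sequence $(g_n)$ regarded as a sequence of projective transformations leaves every compact subset of $\Aut(\Omega)$ (a convergent subsequence in $\Aut(\Omega)\subset\PGL_{\dimd+1}(\R)$ would have to converge to $[g]$, which is not invertible), and the same holds for $(g_n^{-1})$; hence by properness, for every $o\in\Omega$ both $(g_no)$ and $(g_n^{-1}o)$ leave every compact subset of $\Omega$.

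I would first treat the image. That $\sph(\Im g)\cap\overline\Omega\neq\emptyset$ is immediate: choose $o\in\Omega\smallsetminus\sph(\Ker g)$ (possible since $\sph(\Ker g)$ is nowhere dense); then $g_no\to g\cdot o$, and this limit lies in $\sph(\Im g)\cap\overline\Omega$. For $\sph(\Im g)\cap\Omega=\emptyset$, suppose $p\in\sph(\Im g)\cap\Omega$ and pick $x\in\sph^\dimd\smallsetminus\sph(\Ker g)$ with $g\cdot x=p$. Since $g_nx\to p\in\Omega$ and $\Omega$ is open, $g_nx\in\Omega$ for large $n$, hence $x=g_n^{-1}(g_nx)\in\Omega$. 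Fixing any $o\in\Omega$ and using that $g_n$ preserves the Hilbert metric, $d_\Omega(g_nx,g_no)=d_\Omega(x,o)$ stays bounded; but $g_nx$ converges to the interior point $p$ while $g_no$ leaves every compact subset of $\Omega$, so $d_\Omega(g_nx,g_no)\to\infty$, a contradiction.

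For the kernel, $\sph(\Ker g)\cap\overline\Omega\neq\emptyset$ follows by running $(g_n^{-1})$ in place of $(g_n)$: fix $o\in\Omega\smallsetminus\sph(\Im g)$, set $y_n:=g_n^{-1}o\in\Omega$, and pass to a subsequence with $y_n\to\eta\in\overline\Omega$ (indeed $\eta\in\partial\Omega$ by the properness remark). If $\eta\notin\sph(\Ker g)$ then $o=g_ny_n\to g\cdot\eta\in\sph(\Im g)$, contradicting the choice of $o$; hence $\eta\in\sph(\Ker g)\cap\overline\Omega$. Finally, $\sph(\Ker g)\cap\Omega=\emptyset$ I would reduce to the image case by duality: the dual properly convex set $\Omega^*\subset\sph((\R^{\dimd+1})^*)$ carries an action of $\Aut(\Omega)$ through transposition $A\mapsto A^\top$ (acting by $\phi\mapsto\phi\circ A$), and $g_n^\top\to g^\top$ with $g^\top$ non-zero and non-invertible; the inclusion $\sph(\Im(\cdot))\cap\overline{(\cdot)}\neq\emptyset$ proved above, applied to $(\Omega^*,g_n^\top)$, yields a non-zero $\phi\in\Im(g^\top)=(\Ker g)^\perp$ with $[\phi]\in\overline{\Omega^*}$, i.e.\ $\phi$ does not change sign on the cone over $\Omega$, so the hyperplane $\Ker\phi\supseteq\Ker g$ is disjoint from $\Omega$, whence $\sph(\Ker g)\cap\Omega=\emptyset$.

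The one genuinely delicate point is this last disjointness $\sph(\Ker g)\cap\Omega=\emptyset$: pushing forward by $g_n^{-1}$ does not work directly because $(g_n^{-1})$ has no limit, and the relation between $\Ker$ and $\Im$ of a normalised limit of $(g_n^{-1})$ need not recover all of $\Ker g$, so one is forced to route through the dual convex set $\Omega^*$. A secondary care point is making precise that $(g_n)$ diverges in $\Aut(\Omega)$ even though the matrices converge: it diverges as a sequence of projective transformations precisely because $g\notin\GL_{\dimd+1}(\R)$.
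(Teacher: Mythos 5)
The paper states this as a ``Fact'' with no proof, so there is no argument to compare against; the question is simply whether your proof is correct, and it is. The two load-bearing observations are both right: (a) since $g$ is non-invertible, the $g_n$ escape every compact subset of $\Aut(\Omega)$ even though they converge as matrices (normalizing to $\SL^\pm$ or $\PGL$ blows up because $\det g_n\to 0$), so properness of the action forces $g_no$ and $g_n^{-1}o$ to leave every compact of $\Omega$; and (b) the kernel disjointness genuinely does not follow from pushing forward by $g_n^{-1}$, because $\Im(h)$ for a rescaled limit $h$ of $g_n^{-1}$ may be a proper subspace of $\Ker g$ — e.g.\ $g_n=\mathrm{diag}(1,1/n,1/n^2)$ gives $\Ker g = \Span(e_2,e_3)$ but $\Im(h)=\Span(e_3)$ — so routing through $\Omega^*$ and $g_n^\top$ is the right move. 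Two cosmetic points worth tightening if you write this up: in the final step you should spell out that $[\phi]\in\overline{\Omega^*}$ gives $\phi\geq 0$ on $\bar C$ and hence $\phi>0$ on the open cone $C$ (a nonnegative linear form vanishing at an interior point of an open cone must vanish identically), which is what actually yields $\Ker\phi\cap C=\emptyset$; and $A\mapsto A^\top$ is an anti-homomorphism rather than a homomorphism, though this plays no role since all you use is that $g_n^\top\in\Aut(\Omega^*)$ and $g_n^\top\to g^\top$.
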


\begin{lemma}\label{lem:parabolic cvx}
 Consider a properly convex open set $\Omega\subset\sph^\dimd$, a closed subset $F\subset\partial\Omega$ with the visibility condition (it contains all segments of $\partial\Omega$ touching it), and a closed subgroup $\Gamma\subset\Aut(\Omega)$ preserving $F$.
 
 Let $X\subset\sph^\dimd$ be the union of supporting hyperplanes of $\Omega$ at points of $F$ (it is closed).
 Let $\cal O$ be the connected component of $\sph^\dimd\smallsetminus X$  that contains $\Omega$.
 It is open, convex, $\Gamma$-invariant, and contains $\overline\Omega\smallsetminus F$.
 Then:
 \begin{itemize}
  \item Either $\Gamma$ acts properly on $\cal O$, and every $\Gamma$-orbit of a compact subset of $\cal O$ accumulates on $F$,
  \item or $F$ is reduced to a point and $\Gamma$ is virtually generated by a \emph{rank-one} automorphism in the sense of Islam \cite[Def.\,6.2]{I19}.
 \end{itemize}
\end{lemma}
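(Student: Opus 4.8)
The plan is to analyze the dichotomy by studying the dynamics of $\Gamma$ on $\mathcal O$. First I would record the stated elementary facts about $\mathcal O$: it is open since $X$ is closed; it is convex because it is a connected component of the complement of a union of hyperplanes that all support the convex set $\Omega$ (so locally $\mathcal O$ lies on one side of each such hyperplane), and $\Gamma$-invariance follows from $\Gamma$-invariance of $F$ and hence of $X$; finally $\overline\Omega\smallsetminus F\subset\mathcal O$ since a point of $\overline\Omega$ not in $F$ is not on any supporting hyperplane at a point of $F$ (it may lie on such a hyperplane only if it is itself in that hyperplane's trace on $\partial\Omega$, hence in a segment of $\partial\Omega$ touching $F$, hence in $F$ by the visibility condition). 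I would then observe that $\mathcal O$ is itself properly convex: if not, its closure contains two antipodal points, and one extracts a line in $\partial\mathcal O$ contradicting the way $\mathcal O$ sits between supporting hyperplanes of the properly convex $\Omega$ — or, more simply, $\mathcal O$ is contained in the open set bounded by a single supporting hyperplane of $\Omega$, hence in an affine chart.

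Next, the core of the argument. Suppose $\Gamma$ does \emph{not} act properly on $\mathcal O$. Then there is a sequence $\gamma_n\in\Gamma$, divergent in $\mathrm{PGL}_{\dimd+1}(\R)$, and compact $K\subset\mathcal O$ with $\gamma_nK\cap K\neq\emptyset$. Normalizing and extracting, $\gamma_n\to g$, a nonzero non-invertible matrix; by Fact~\ref{fact:div seq aut} both $\ker g$ and $\mathrm{Im}\,g$ meet $\overline{\mathcal O}$ but not $\mathcal O$, so they meet $\partial\mathcal O$. The non-properness of the action on $\mathcal O$ forces, via the usual north–south type analysis of such limits, that $\gamma_n$ has a proximal-like behavior: after extracting, the attracting and repelling data of $\gamma_n$ converge to fixed points/hyperplanes of $g$ lying in $\partial\mathcal O\subset X$. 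Since every point of $\partial\mathcal O$ on $X$ lies on a supporting hyperplane of $\Omega$ at a point of $F$, and $F$ is $\Gamma$-invariant and closed, I would push this to conclude that the attracting fixed point of (a power of) $\gamma_n$, and its limit, actually lie in $F$; symmetrically for the repelling point. A rank-one automorphism in the sense of Islam \cite[Def.\,6.2]{I19} is precisely a biproximal element whose attracting/repelling fixed points $\gamma^\pm$ satisfy that the segment $[\gamma^+,\gamma^-]$ is not contained in $\partial\Omega$ and the open face of each of $\gamma^\pm$ is a single point; I would verify these conditions hold for the limiting data, using that $F$ satisfies the visibility condition (so if $F$ had more than one point, segments in $\partial\Omega$ joining points of $F$ stay in $F$, which would let the limit set of $\Gamma$ inside $F$ be nontrivial and connected in the wrong way — this is where ``$F$ reduced to a point'' enters). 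Once one rank-one element $\gamma$ is produced, $\Gamma$ being discrete and the action on $\mathcal O$ non-proper, a standard ping-pong / convergence-group argument (as $\langle\gamma\rangle$ already has a north–south action on the relevant flag variety) shows that $\Gamma$ is virtually cyclic, generated up to finite index by $\gamma$, and that $F=\{\gamma^+\}=\{\gamma^-\}$ is a single point.

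Finally, in the complementary case where $\Gamma$ \emph{does} act properly on $\mathcal O$, I must show every $\Gamma$-orbit of a compact $K\subset\mathcal O$ accumulates on $F$. Properness gives that $\gamma_n K$ leaves every compact subset of $\mathcal O$ for any divergent $\gamma_n$; extracting, $\gamma_n x\to\xi\in\partial\mathcal O$ for $x\in K$. Since $\partial\mathcal O\subset X\cup\partial\Omega$ and, more precisely, the accumulation must land in $\overline\Omega$ (because $x\in\overline\Omega$ would suffice, but for general $x\in\mathcal O$ one uses that $\Omega$ is $\Gamma$-invariant so orbits of interior points of $\Omega$ accumulate in $\partial\Omega$, and then a convexity/nesting argument transfers this to $\mathcal O$), the limit $\xi$ lies in $\overline\Omega\cap\partial\mathcal O$; but $\overline\Omega\cap\partial\mathcal O\subset F$ by the description of $\mathcal O$ above. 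Hence the accumulation set is contained in $F$; conversely one shows it is nonempty and, if needed, that it is all of the relevant part of $F$, though the statement only asks for accumulation \emph{on} $F$, i.e.\ containment of the limit set in $F$ together with non-triviality.

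\textbf{Main obstacle.} The delicate point is the non-proper case: extracting from $\gamma_n\to g$ the conclusion that the limiting fixed points lie in $F$ and that the element is genuinely rank-one (both the biproximality and the condition that $[\gamma^+,\gamma^-]\not\subset\partial\Omega$, equivalently that $\gamma^\pm$ are $\mathcal C^1$ / extremal in the right sense), and then bootstrapping from one rank-one element to ``virtually cyclic''. This requires carefully combining Fact~\ref{fact:div seq aut}, the visibility condition on $F$, and the convergence-group behavior of proximal elements on flag varieties; the elimination of the case $|F|\geq 2$ is exactly where the geometry of $\mathcal O$ versus $\Omega$ must be exploited.
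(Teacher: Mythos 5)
Your high-level outline has the right ingredients---Fact~\ref{fact:div seq aut}, the visibility condition, a dichotomy according to whether the action is proper---and you correctly flag that the non-proper/rank-one case is the crux. But the crux is left essentially undeveloped; "north--south analysis" and "standard ping-pong / convergence-group argument" do not obviously produce the statement, and this is precisely where the paper's proof does real work with a quite different mechanism.

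The paper does not actually branch on proper versus non-proper. For a diverging sequence $g_n\in\Gamma$, it passes to $\|g_n^{\pm 1}\|^{-1}g_n^{\pm 1}\to g_\pm$ and forms the four compact convex subsets of $\partial\Omega$, namely $I_\pm:=\Im(g_\pm)\cap\overline\Omega$ and $K_\pm:=\Ker(g_\pm)\cap\overline\Omega$, with the containments $I_-\subset K_+$ and $I_+\subset K_-$. The dichotomy is then on whether $K_\pm\subset F$. If, say, $K_+\not\subset F$, then $K_+$ and $F$ are \emph{disjoint} by visibility (two compact convex subsets of $\partial\Omega$, neither contained in the other, cannot meet without producing a segment of $\partial\Omega$ touching $F$ that leaves $F$); since $\Ker(g_+)$ misses $F$, the dynamics gives $g_nF\to g_+(F)\subset I_+$, but $g_nF=F$, so $F\subset I_+$ and, again by visibility, $F=I_+=K_-$. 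This forces $I_-$ disjoint from $K_-=F$, so $I_-$ admits a compact convex neighborhood $U$ with $g_n^{-1}U\subset U$ for $n$ large; Brouwer then gives a fixed point of $g_n$ in $U$ and one in $F$. At this point the paper cites \cite[Cor.\,6.9]{mesureBM} to conclude $g_n$ is rank-one and $F$ is a singleton, and \cite[Prop.\,2.4]{EeERfH+} to conclude $\Gamma$ is virtually generated by $g_n$. Steps (b) (the propagation $g_nF=F\Rightarrow F\subset I_+$), the double application of visibility, the Brouwer step, and the two cited results are all missing from your plan, and it is step (e), not your sketch, that actually delivers ``$F$ is a point''. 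The complementary case ($K_\pm\subset F$) then yields \emph{both} properness and accumulation on $F$ in one stroke: for $x_n\to x\in\mathcal O$, the kernel $\Ker(g_+)$ lies in a supporting hyperplane at $K_+\subset F$, hence avoids $\mathcal O$, so $g_nx_n\to g_+x$, and choosing $y\in K_+$ the ray $[x,y)$ exits through a point $x'\in\overline\Omega\smallsetminus F$ with $g_+x=g_+x'\in\overline\Omega\cap\Im(g_+)=I_+\subset K_-\subset F$. Your proposal's treatment of this case (``a convexity/nesting argument transfers this to $\mathcal O$'') is imprecise compared to this explicit projection argument. So: good identification of the players, but the logical chain that makes the dichotomy close is absent.
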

\begin{proof}
 Consider a diverging sequence $g_n\in\Gamma$ such that $\Vert g_n^{\pm1}\Vert^{-1} g_n^{\pm1}$ converges to a non-zero non-invertible matrix $g_\pm$.
 By Fact~\ref{fact:div seq aut}, the following four compact convex sets are non-empty and contained in $\partial\Omega$.
 $$\emptyset\neq I_-:=\Im(g_-)\cap\overline\Omega\ \subset\ K_+:=\Ker(g_+)\cap\overline\Omega\ \subset\partial\Omega$$
 $$\emptyset\neq I_+:=\Im(g_+)\cap\overline\Omega \ \subset \ K_-:=\Ker(g_-)\cap\overline\Omega\ \subset\partial\Omega$$
 
 Let us prove that if $\Gamma$ is not virtually generated by a rank-one element, then $K_-$ and $K_+$ are contained in $F$.
 Suppose for instance that $K_+$ is not contained in $F$.
 Then those two compact convex subsets of $\partial\Omega$ are disjoint by the visibility condition on $F$.
 By definition of $K_+$ this means that $g_nF$ tends to $g_+(F)\subset I_+$ as $t\to\infty$.
 But $g_nF=F$ for all $n$ so $F\subset I_+$, and hence $I_+=K_-=F$ by the visibility condition on $F$.
 This implies that $I_-$ and $K_-$ are disjoint, and hence $I_-$ admits a compact convex neighborhood $U$ disjoint from $K_-=F$ such that $g_n^{-1}U\subset U$ for $n$ large.
 By the Brouwer fixed point theorem, $g_n$ has a fixed point $x\in U$, and also one $y\in F$.
 By \cite[Cor.\,6.9]{mesureBM}, $g_n$ is a rank-one automorphism and $F=\{y\}$, and hence $\Gamma$ is a rank-one group that fixes $y$.
 This implies that $\Gamma$ is virtually generated by $g_n$ by \cite[Prop.\,2.4]{EeERfH+}.
 
 Suppose now that $K_-$ and $K_+$ are contained in $F$.
 Let us consider a converging sequence $x_n\in\cal O$ with limit $x\in \cal O$ and prove that $g_nx_n$ converge to a point of $F$.
 Consider $y\in K_+$.
 The kernel $\Ker(g_+)$ is contained in a supporting hyperplane of $\Omega$ at $K_+$ so it does not intersects $\cal O$ (by definition), hence $g_nx_n$ tends to $g_+x$ as $n\to\infty$.
 By definition of $\cal O$, the ray $[x,y)\subset \cal O$ enters $\overline\Omega\smallsetminus F$ at some point $x'$.
 Since $y\in K_+$, we have $g_+x=g_+x'=\lim_ng_nx'\in \overline\Omega\cap \Im(g_+)=K_+$. 
\end{proof}

Lemma~\ref{lem:parabolic cvx} has the following corollary, where is exploited the assumption in Theorem~\ref{thm:relhypb} that the subgroups ``with respect to which we want to prove relative hyperbolicity'' have codimension $1$; assumption that, we recall, compensate for the absence of convex-cocompactness.

\begin{cor}\label{cor:parab cvx}
Consider a properly convex open set $\Omega$, a closed subset $F\subset\partial\Omega$ with the visibility condition, a discrete group $\Gamma\subset \Aut(\Omega)$ preserving $F$, a $\Gamma$-invariant closed equivalence relation $\sim$ on $F$ with at least three equivalence classes and the visibility condition (see Section~\ref{sec:yaman}), and $Y:=\partial\Omega/_\sim$.

Pick a point $y\in Y$, seen as a closed subset of $\partial\Omega$, which is not fixed by a rank-one element of $\Gamma$ (for instance if $y$ contains at least three points).
Then $y$ is a parabolic point.

Suppose moreover that:
\begin{itemize}
 \item $F=\partial\Omega$.
 \item $\partial\Omega\smallsetminus y$ is homeomorphic to a finite disjoint union of copies of $\R^{\dimd-1}$ (\eg if $y$ is a face of $\partial\Omega$).
 \item The virtual cohomological dimension of $\Stab_\Gamma(y)$ is equal to $\dimd-1$ (\eg if $y$ is a face on which $\Stab_\Gamma(y)$ acts properly cocompactly).
\end{itemize}
Then $y$ is bounded parabolic.
\end{cor}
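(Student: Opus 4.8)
\textbf{Proof plan for Corollary~\ref{cor:parab cvx}.}

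First I would establish that $y$ is a parabolic point. By Lemma~\ref{lem:parabolic cvx}, applied to the closed $\Stab_\Gamma(y)$-invariant subset $y\subset\partial\Omega$ (which satisfies the visibility condition because equivalence classes of $\sim$ are exactly the segments of $\partial\Omega$ the relation collapses, and $\sim$ has the visibility condition), there are two alternatives for the action of $H:=\Stab_\Gamma(y)$ on the convex set $\cal O$ associated to $y$: either $H$ acts properly on $\cal O$ with every orbit of a compact set accumulating on $y$, or $y$ is a point fixed by a rank-one element. The hypothesis rules out the second alternative, so $H$ acts properly on $\cal O\supset\overline\Omega\smallsetminus y\supset \partial\Omega\smallsetminus y$. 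Since $\partial\Omega\smallsetminus y$ is a closed $H$-invariant subset of $\cal O$, the action of $H$ on $\partial\Omega\smallsetminus y = Y\smallsetminus\{y\}$ is proper. Here I would also need that $H$ is infinite; this follows because $y$ is not a conical limit point for a rank-one element and, more simply, because by Fact~\ref{fact:IZ} the $\Gamma$-action on $Y$ is a convergence action and a point with finite stabilizer that is not conical cannot occur — alternatively, since $Y$ has at least three points and $\Gamma$ acts as a convergence group, every point is either conical (with finite or infinite stabilizer) or has infinite stabilizer; combined with properness of $H\curvearrowright Y\smallsetminus\{y\}$ this gives that $y$ is parabolic (if $y$ were conical one checks using the convergence property that $H$ could not act properly on $Y\smallsetminus\{y\}$, or one simply notes the definition of parabolic only requires $H$ infinite and proper on $Y\smallsetminus\{y\}$, and infiniteness of $H$ follows from cohomological dimension $\dimd-1\ge 2$).

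Next, under the additional hypotheses, I would upgrade ``parabolic'' to ``bounded parabolic'', i.e.\ show $H$ acts \emph{cocompactly} on $Y\smallsetminus\{y\} = \partial\Omega\smallsetminus y$. Since $F=\partial\Omega$, by Fact~\ref{fact:IZ} (convexity condition on $F=\partial\Omega$ is automatic) the whole group $\Gamma$ acts as a convergence group on $Y$, and the parabolic point $y$ has the property that $\partial\Omega\smallsetminus y$ is a finite disjoint union of copies of $\R^{\dimd-1}$, hence an open $(\dimd-1)$-manifold. The quotient $(\partial\Omega\smallsetminus y)/H$ is a $(\dimd-1)$-manifold (the action is proper and free away from torsion, and one can pass to a finite-index torsion-free subgroup) which is aspherical: indeed $\partial\Omega\smallsetminus y$ is contractible on each component (it is $\R^{\dimd-1}$), so each component of the quotient is a $K(\pi,1)$ with $\pi$ a finite-index subgroup of a stabilizer of a component of $\partial\Omega\smallsetminus y$ in $H$. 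Now I use the cohomological-dimension hypothesis: $\mathrm{vcd}(H)=\dimd-1$ equals the dimension of the open manifold $\partial\Omega\smallsetminus y$. A proper action of a group of virtual cohomological dimension $n$ on a contractible $n$-manifold must be cocompact — this is a standard fact (an open $n$-manifold quotient that is a non-compact aspherical $n$-manifold has a fundamental group of cohomological dimension $<n$, since non-compact $n$-manifolds have the homotopy type of an $(n-1)$-complex). Applying this componentwise and using that $H$ permutes the finitely many components, I conclude $H$ acts cocompactly on $\partial\Omega\smallsetminus y$, so $y$ is bounded parabolic.

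The main obstacle I expect is the cocompactness step: one must carefully justify that a proper action on the contractible open manifold $\R^{\dimd-1}$ by a group realizing the full virtual cohomological dimension $\dimd-1$ is forced to be cocompact, and to handle torsion (passing to a finite-index torsion-free subgroup via Selberg's lemma, noting $H$ is finitely generated by the relative hyperbolicity setup — or rather this is exactly what we are in the process of proving, so I would instead argue directly with group cohomology: if the action were not cocompact, the quotient would be a non-compact aspherical $(\dimd-1)$-manifold, whose fundamental group has cohomological dimension at most $\dimd-2$, contradicting $\mathrm{vcd}(H)=\dimd-1$, after replacing $H$ by a torsion-free finite-index subgroup which exists because $H$ acts properly discontinuously and freely-up-to-finite-stabilizers on a manifold, hence is virtually torsion-free by, e.g., it being a discrete subgroup of $\SL^\pm_{\dimd+1}(\R)$ and Selberg's lemma). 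A secondary subtlety is checking that $\partial\Omega\smallsetminus y$ being ``a finite disjoint union of copies of $\R^{\dimd-1}$'' indeed holds when $y$ is a face of $\partial\Omega$ on which $\Stab_\Gamma(y)$ acts properly cocompactly — this is where the geometry of $\partial\Omega$ near a codimension-$1$ face enters, and one invokes the structure established in Section~\ref{sec:sec3} together with the hypotheses of Theorem~\ref{thm:relhypb} (walls with disjoint closures, extremal points outside walls).
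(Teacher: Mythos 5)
Your proposal follows essentially the same strategy as the paper: apply Lemma~\ref{lem:parabolic cvx} with the closed set being $y$ and the group being $H:=\Stab_\Gamma(y)$ to obtain properness of the $H$-action on $\cal O\supset\overline\Omega\smallsetminus y$, hence on $F\smallsetminus y$, with the rank-one alternative ruled out by hypothesis; and for bounded parabolicity, invoke the cohomological-dimension argument (a discrete group of virtual cohomological dimension $\dimd-1$ acting properly on a finite union of copies of $\R^{\dimd-1}$ acts cocompactly, citing Serre). Your verification that $y$ itself satisfies the visibility condition (a segment of $\partial\Omega$ touching $y$ lies in $F$ by visibility of $F$, and then its endpoints are $\sim$-related by visibility of $\sim$, so the segment lies in $y$) is a correct and necessary step that the paper leaves implicit, though your phrasing about equivalence classes ``being segments'' is off: the classes are typically higher-dimensional sets, not segments.

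There is, however, a genuine gap in the passage from properness on $F\smallsetminus y$ to properness on $Y\smallsetminus\{y\}$, which is what the definition of a parabolic point actually requires. You write ``$\partial\Omega\smallsetminus y = Y\smallsetminus\{y\}$'', but this is false: $Y=F/_\sim$ is a quotient space whose points are $\sim$-classes (of positive dimension in the applications, e.g.\ closures of walls), not points of $\partial\Omega$. Properness of an action does not descend along a quotient map without an argument. The paper devotes a paragraph to exactly this: if $x,x'\in Y\smallsetminus\{y\}$ were dynamically related via $x_n\to x$ in $Y$ and $\gamma_nx_n\to x'$ with $\gamma_n\in H$ diverging, then choosing representatives $p_n\in x_n\subset F\smallsetminus y$ and extracting subsequences (using compactness of $F$ and the definition of the quotient topology on $Y$) yields $p_n\to p\in x$ and $\gamma_np_n\to p'\in x'$, both in $F\smallsetminus y$, contradicting properness there. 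Alternatively one may check that the quotient map $F\smallsetminus y\to Y\smallsetminus\{y\}$ is topologically proper: a compact $K\subset Y\smallsetminus\{y\}$ is closed in the metrisable space $Y$, so its preimage is closed in the compact $F$ and disjoint from $y$, hence compact in $F\smallsetminus y$; a proper equivariant surjection transports properness of the action. Either argument closes the gap, but as written your proof hides the step behind a false equality.
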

\begin{proof}
Lemma~\ref{lem:parabolic cvx} tells us $\Stab_\Gamma(y)$ acts properly discontinuously on $F\smallsetminus y$.

Suppose by contradiction that two points $x,x'\in Y\smallsetminus \{y\}$ are dynamically related by $x_n\in Y\smallsetminus \{y\}$ converging to $x$ (for $Y$'s topology), $\gamma_n\in\Stab_\Gamma(y)$ diverging and $\gamma_nx_n$ converging to $x'$.

Pick any sequence $(p_n)_n\in\prod_nx_n$.
By definition of the quotient topology on $Y$, up to extracting we can assume that it converges to $p\in x\subset\partial\Omega\smallsetminus y$,
and that $\gamma_np_n$ converges to $p'\in x'\subset\partial\Omega\smallsetminus y$.
This means $\Stab_\Gamma(y)$ does not act properly on $\partial\Omega\smallsetminus y$, which is a contradiction.
 
The last conclusion of the corollary follows immediatly from the well-known fact that a discrete group of cohomological dimension $\dimd-1$ acting properly on $\R^{\dimd-1}$ must act cocompactly. See for instance \cite{cohomgpdiscret}.
\end{proof}

\subsection{Proof of Theorem~\ref{thm:relhypb} and Fact~\ref{fact:Teddyrelhypb}}

We are going to prove both results at the same time.
Let us call ``walls'' the hypersurfaces $W_1,\dots,W_n\subset M$ in the setting of Fact~\ref{fact:Teddyrelhypb}, as well as their lifts in $\tilde M$.

A point of $Y$ corresponds either to an extremal point of $\partial\tilde M$ or to a closed subset of $\partial\tilde M$ of the form $\bar W\cap \partial\tilde M$ where $W$ is a wall of $\tilde M$.

By Fact~\ref{fact:IZ}, $Y$ is compact metrisable and the action of $\pi_1(M)$ is a convergence action.

By Corollary~\ref{cor:parab cvx}, every point of $Y$ corresponding to a wall of $\tilde M$ is bounded parabolic.

Thus, by the working definition of relative hyperbolicity, it is enough to check that  every point $x\in\partial\tilde M$ outside of closed walls projects to a conical point of $Y$, using Proposition~\ref{prop:conicality}.
 
Fix compact pairwise disjoint neighborhoods for the walls of $M$.
Pick also a point $p$ in the interior of $\tilde M$.
 
By Proposition~\ref{prop:conicality} it is enough to check that the projection in $M$ of the ray $[p,x)$ passes infinitely often in the complementary of our fixed set of neighborhoods.
Let us assume the contrary: that there exists $q\in[p,x)$ such that $[q,x)$ is contained in a lift $U$ of a neighborhood of a wall; denote by $W\subset U$ the lift of the wall.

Let $q_n\in[q,x)$ converge to $x$.
Since the stabilizer $\Stab_\Gamma(W)$ of $W$ acts cocompactly on it and on $U$, there exists a diverging sequence $g_n\in\Stab_\Gamma(W)$ such that $g_nq_n$ stays in a compact set $K$ of $U$.

Up to extracting a subsequence we may assume that $\Vert g_n\Vert^{-1}g_n$ converges to a matrix $g$.
By Lemma~\ref{lem:parabolic cvx}, if $\Omega$ is the interior of $\tilde M$ then $\Ker(g)\cap \overline\Omega$ and $\Im(g)\cap \overline\Omega$ are contained in $\overline W$, which does not contain $x$.
Thus $g_nq_n$ tends to $gx$, which is also the limit of $g_nx$ and hence belongs to the relative boundary of $W$, which does not meet $K$: contradiction.

\subsection{Extended geometrical finiteness}\label{sec:EGF}

Consider a properly convex open set $\Omega\subset\sph^\dimd$, a closed subset $F\subset\Omega$ 
with the visibility condition (it contains all segments of $\partial\Omega$ touching it), and a discrete subgroup $\Gamma\subset\Aut(\Omega)$ preserving $F$.
Fix a $\Gamma$-invariant closed equivalence relation $\sim$ on $F$ with at least three equivalence classes and the visibility condition (see Section~\ref{sec:yaman}), and $Y:=F/_\sim$.

Denote by $\R\mb P^\dimd{}^*$ the set of hyperplanes of $\R\mb P^\dimd$.
Let $F^*\subset\partial\Omega^*$ be the set of supporting hyperplanes intersecting $F$.
Notice that $F^*$ is a closed subset with the visibility condition, and that it carries a natural $\Gamma$-invariant closed equivalence relation with the visibility condition ($H\sim H'$ if there are $x\in H\cap F$ and $x'\in H'\cap F$ with $x\sim x'$), such that the map $F^*/_\sim \rightarrow Y$, mapping $[H]$ to $[x]$ when $x\in H$, is well-defined and is a homeomorphism.

Let $\cal F\subset\R\mb P^\dimd\times \R\mb P^\dimd{}^*$ be the partial flag variety of pairs $(x,H)$ such that $x\in H$.
Set $\Lambda:=(F\times F^*)\cap \cal F$.
We have a natural $\Gamma$-equivariant surjective continuous map $\phi:\Lambda\rightarrow Y$, sending $(x,H)$ to the equivalence class of $x$.
This map extends to a map $\phi_1\times\phi_2:F\times F^*\rightarrow Y$.

Our assumptions imply that $\phi$ is \emph{antipodal}: any two points $(x,H),(x',H')\in \Lambda$ with different images by $\phi$ are \emph{antipodal}, in the sense that $x\not\in H'$ and $x'\not\in H$.

\begin{defi}[{Weisman \cite[Def.\,1.2]{We22}}]\label{def:EGF}
 A map $\psi:\Lambda\rightarrow Y$ \emph{extends the convergence action of $\Gamma$ on $Y$} if for every $z\in \Lambda$ there exists an open subset $C_z\subset \mathcal F$ containing $\Lambda\smallsetminus\psi^{-1}(z)$ such that every diverging sequence $\gamma_n\in\Gamma$ admits a subsequence $\gamma_{n_k}$ and $z_\pm\in Y$ such that $\gamma_{n_k}K$ accumulates on $\psi^{-1}(z_+)$ for every compact subset $K\subset C_{z_-}$.
\end{defi}

\begin{lemma}
 The map $\phi$ \emph{extends the convergence action of $\Gamma$ on $Y$}.
\end{lemma}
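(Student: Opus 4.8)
The plan is to verify Definition~\ref{def:EGF} directly for $\psi=\phi$, using the convergence dynamics of $\Gamma$ on $\Omega$ coming from the contraction behavior of diverging sequences of automorphisms (Fact~\ref{fact:div seq aut}) together with the visibility conditions on $F$ and $\sim$. First I would describe the candidate open sets $C_z$. For $z\in\Lambda$ with $\phi(z)=p\in Y$, seen as a closed convex subset $p\subset F$ with dual cell $p^*\subset F^*$, I would set
\[
C_z:=\{(x,H)\in\cal F\ :\ x\notin H_0\text{ and }x_0\notin H\text{ for some }x_0\in p,\ H_0\in p^*\},
\]
i.e.\ the set of flags that are antipodal to the flag-cell over $p$. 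Antipodality of $\phi$ (noted just before the lemma) gives exactly $\Lambda\smallsetminus\phi^{-1}(p)\subset C_z$, and $C_z$ is open in $\cal F$ since the incidence/antipodality conditions are open. Note $C_z$ depends only on $p=\phi(z)$, which is why the statement can be phrased as $C_{z_-}$.

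Next I would analyze a diverging sequence $\gamma_n\in\Gamma$. Pass to a subsequence so that $\Vert\gamma_n\Vert^{-1}\gamma_n\to g_+$ and $\Vert\gamma_n^{-1}\Vert^{-1}\gamma_n^{-1}\to g_-$, both nonzero and non-invertible. By Fact~\ref{fact:div seq aut}, $K_+:=\Ker(g_+)\cap\overline\Omega$ and $I_+:=\Im(g_+)\cap\overline\Omega$ are nonempty compact convex subsets of $\partial\Omega$, with $I_+\subset K_-$ and $I_-\subset K_+$. The key point, exactly as in the proof of Lemma~\ref{lem:parabolic cvx}: since $\Gamma$ preserves $F$ and $F$ satisfies the visibility condition, one shows $F\cap K_\pm$ and $F\cap I_\pm$ are all contained in single $\sim$-classes (else one extracts a rank-one element, but then $F$ would be a point and there would be fewer than three classes, contradicting our hypothesis on $\sim$). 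Write $z_+:=[F\cap I_+]\in Y$ and $z_-:=[F\cap K_+]\in Y$ (equivalently the class of $F^*\cap\Im(g_+^{\,t})$, matched via the homeomorphism $F^*/_\sim\to Y$). Now take any compact $\mathcal Kpt\subset C_{z_-}$. A flag $(x,H)\in C_{z_-}$ has $x$ avoiding some supporting hyperplane at $F\cap K_+$, hence $x\notin\Ker(g_+)$, so $\gamma_nx\to g_+x\in I_+$; dually $H\notin$ (the tangent hyperplane corresponding to) $I_+^*$-style kernel, so $\gamma_nH\to g_+^{\,t}H\in$ the supporting hyperplanes at $I_+$. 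Uniformity over the compact set $\mathcal Kpt$ (away from the kernels, which are at positive distance from $\mathcal Kpt$ by compactness) gives that $\gamma_n\mathcal Kpt$ accumulates inside $\phi^{-1}(z_+)=(F\times F^*)\cap\cal F$ lying over $[F\cap I_+]=z_+$. This is precisely the conclusion of Definition~\ref{def:EGF}.

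The main obstacle I expect is the bookkeeping at the boundary between ``point-type'' classes (extremal points of $\partial\Omega$) and ``cell-type'' classes (dual faces $\bar W\cap\partial\Omega$): one must check that $C_z$ as defined is genuinely open and large enough in all cases, and that the limiting flags $g_+x,g_+^{\,t}H$ are not merely in $\partial\Omega\times\partial\Omega^*$ but actually land in $\Lambda$ over the \emph{correct} class $z_+$ — this requires knowing $\phi^{-1}(z_+)$ is the full set of flags $(x,H)$ with $x\in F\cap I_+$ and $H$ supporting there, which in turn uses that $I_+$ is a $\sim$-class, i.e.\ the visibility conditions again. The secondary subtlety is ruling out the rank-one degeneracy uniformly, but that is handled verbatim as in Lemma~\ref{lem:parabolic cvx} using \cite[Cor.\,6.9]{mesureBM} and \cite[Prop.\,2.4]{EeERfH+}. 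Once these are in place, the remaining steps are the routine convergence-of-flags computation, so I would not grind through them.
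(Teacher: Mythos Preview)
Your overall strategy --- analyze a diverging sequence via $g_\pm$, identify $z_\pm$ through $K_\pm,I_\pm$, and verify the flag-convergence --- matches the paper's. The gap is in your choice of $C_z$ and the inference you draw from it.

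You set $C_z$ to be the set of flags antipodal to \emph{some} flag over $p=\phi(z)$. This is too large. Knowing only that $x\notin H_0$ for \emph{one} $H_0\in p^*$ does not give $x\notin\Ker(g_+)$: the subspace $\Ker(g_+)$ sits inside \emph{some} supporting hyperplane at $K_+$, but there is no reason it should be $H_0$. Concretely, if $g_+$ has rank one then $\Ker(g_+)$ is itself a supporting hyperplane at $K_+$, and a point $x\in\Ker(g_+)$ can still miss a \emph{different} supporting hyperplane at another point of the class $z_-$, hence lie in your $C_{z_-}$. For such $x$ you lose control of $\gamma_n x$ entirely. There is a second, related problem: even once $x\notin\Ker(g_+)$, you assert $g_+x\in I_+$, but $I_+=\Im(g_+)\cap\overline\Omega$, and nothing in your hypothesis forces $g_+x\in\overline\Omega$ when $x$ is a general point of $\cal F$.

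The paper fixes both issues simultaneously by taking, for each $z\in Y$, the \emph{connected component $A_z$ containing $\Omega$} of the set of points avoiding \emph{every} supporting hyperplane at the class $z$ (and dually $A_z^*$), then $C_z:=(A_z\times A_z^*)\cap\cal F$. The point is geometric: for $x\in A_{z_-}$ and any $y\in K_+$, the segment $[x,y]$ must enter $\Omega$ at some $p$ (this is exactly what ``component containing $\Omega$, avoiding all supporting hyperplanes at $z_-$'' buys you). Since $y\in\Ker(g_+)$, the lifts of $x$ and $p$ differ by an element of $\Ker(g_+)$, so $g_+x=g_+p\in\Im(g_+)\cap\overline\Omega=I_+$. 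No rank-one dichotomy from Lemma~\ref{lem:parabolic cvx} is needed here; that argument is for a different purpose and your invocation of it is misplaced. The identification of $z_\pm$ as single $\sim$-classes follows directly from convexity of $K_\pm$ in $\partial\Omega$ together with the visibility condition on $\sim$.
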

\begin{proof} 
 For any $z\in Y$, let $A_z\subset \R\mb P^\dimd$ (\resp $A_z^*\subset \R\mb P^\dimd{}^*$) be the connected component containing $\Omega$ (\resp $\Omega^*$) of the (open and convex) set of $x$ (\resp $H$) such that $x\not\in H'$ (\resp $x'\not\in H$) for any $(x,H)\in\phi^{-1}(z)$.
 
 Consider a diverging sequence $g_n\in\Gamma$.
 
 Up to extracting a subsequence, we may assume that $\Vert g_n^{\pm1}\Vert^{-1}g_n^{\pm1}$ converges to a matrix $g_\pm$.
 Set $K_\pm=\Ker(g_\pm)\cap \partial \Omega$ and $I_\pm={\rm Im}(g_\pm)\cap \partial\Omega$ (note that $I_\pm\subset K_\mp$).
 
 By Fact~\ref{fact:div seq aut}, for every $x\in \Omega$ the sequence $g_n^{\pm1}x$ tends to $g_\pm x\in I_\pm$.
 This applies in particular to any $x$ in the convex hull of $F$, and the $\Gamma$-invariance of $F$ implies that it intersects $K_\pm$, and hence contains it by the visibility condition on $F$.
 
 The visibility condition on $\sim$ implies that $\phi$ sends all pairs $(x,H)\in\Lambda$ with  $x\in K_\pm$ on the same point of $z_\mp\in Y$.
 
 Let us check that $g_+A_{z_-}\subset I_+$, with the consequence that $g_nK\to \phi_1^{-1}(z_+)$ for any compact subset $K\subset A_{z_-}$.
 Pick $x\in A_{z_-}$ and $y\in K_+$.
 By definition of $A_{z_-}$, the segment $[x,y]$ intersects $\Omega$ at some point $p$.
 Now any lifts of $x$ and $p$ in $\R^{\dimd+1}$ differ by a vector of $\Ker g_+$, so $gx=xp\in I_+$.
 
 A duality argument (involving $g_n^{-1}$) ensures that for any compact subset $K\subset A_{z_-}^*$, the orbit $g_n K$ accumulates on $\phi_2^{-1}(z_+)$, which is the set of supporting hyperplanes of $\Omega$ at $\phi_1^{-1}(z_+)$.
 
 In conclusion, for any compact subset $K\subset (A_{z_-}\times A_{z_-}^*)\cap \cal F$, the orbit $g_nK$ accumulates on $\phi^{-1}(z_+)$, \ie $\phi$ extends the convergence dynamics of $\Gamma$ on $Y$.
\end{proof}

\begin{cor}
 If the action of $\Gamma$ on $Y$ is geometrically finite, as in the settings of Theorem~\ref{thm:relhypb} and Fact~\ref{fact:Teddyrelhypb}, then the action of $\Gamma$ on $\cal F$ is (by definition) \emph{extended geometrically finite} in the sense of \cite[Def.\,1.3]{We22}.
\end{cor}

\section{Projective gluings}
\label{sec:sec5}

In this section we describe a gluing construction whose input consists of projective manifolds with totally geodesic boundary and corners and whose output is a convex projective cone-manifold. By translating the conditions of the previous sections in the language of gluings:
\begin{itemize}
\item{We establish when the singularities of the glued projective cone-manifold can be blown up to totally geodesic boundary components.}
\item{We describe the shape of the resulting convex domain, in particular, we characterize the flat regions of the boundary and the extremal and $\mc{C}^1$ points.}
\end{itemize}

\subsection{Gluings}\label{sec:combidescription}

We start with some topological preliminaries about gluings.

\begin{defi}[Gluing]
\label{def:topgluing}
 Given a collection of b-manifolds $\cal M$, a {\em gluing} is a smooth involution $f:\partial\cal M\to\partial\mc{M}$ that does not preserve any boundary component. We denote by $\cal M_f$ the quotient of $\cal M$ under the equivalence relation $x\sim f(x)$ for $x\in \partial\cal M$.
 
Every component $M\subset\mc{M}$ can be seen as an \emph{immersed} codimension 0 submanifold of $\cal M_f$ called a {\em cell}. 
Similarly, every boundary component $W\subset\partial\mc{M}$ projects to an embedded codimension 1 submanifold of $\cal M_f$ called a {\em wall}. 
The {\em graph of cells} is the graph whose vertices are cells and whose edges are walls (a wall links the two adjacent cells).
\end{defi}

\subsubsection{Universal cover}\label{sec:univgluing}

Let $\cal M$ be a collection of b-manifolds and $f$ a gluing of the boundary. Assume that the gluing $\mc{M}_f$ is connected. 
We now describe its universal cover 
\[
\pi:\tilde{\mc{M}}_f\to\mc{M}_f.
\]

The preimages of the interior of the cells and the preimages of the walls give a natural decomposition of $\tilde{\mc{M}}_f$
\[
\tilde{\mc{M}}_f=\bigsqcup_{M\subset\mc{M}}{\pi^{-1}({\rm int} (M))}\sqcup \bigsqcup_{M\subset\mc{M}}{\pi^{-1}(\partial M)}.
\]
into cells which are manifolds with boundary that cover the cells of $\cal M_f$ (the components of $\cal M$).

It is convenient to realize this decomposition as a gluing of a covering $\cal N$ of $\cal M$.
We now explain more formally how to do that.

The covering $\cal N$ is a pullback
\[
 \xymatrix{\cal N \ar[r] \ar[d] & {\tilde{\cal{M}}}_f \ar[d]^{\pi} \\
            \cal M \ar[r]_{\pi_f} & {\cal M}_f}
\]
More precisely, $\cal N\subset \tilde {\cal M}_f\times\cal M$ is the set of pairs $(x,y)\in \tilde {\cal M}_f\times \cal M$ such that $\pi(x)=\pi_f(y)$.
It is elementary to check that the second coordinate projection $\cal N\to\cal M$ is a covering.

Then $\tilde{\cal M}_f$ is naturally identified with $\cal N_g$, where $g$ is the involution of $\partial \cal N=\cal N\cap (\tilde{\cal M}_f\times\partial\cal M)$ such that $g(x,y)= (x,f(y))$ for any $(x,y)$.

In general, the cells and walls of $\cal N_g=\tilde{\cal M}_f$ (\ie the components of $\cal N$ and $\partial\cal N$) are not copies of the universal covers of the corresponding cells and walls of $\cal M_f$.
We now show that this is the case when the gluing satisfies a $\pi_1$-injectivity property. 

Conveniently, this can be explained using the theory of graph of spaces of Scott and Wall \cite{SW79}: Let $\mc{G}$ be the graph of cells of $\cal N_g=\tilde{\cal M}_f$ (vertices are components of $\cal N$ and edges are images in $\cal N_g$ of components of $\partial\cal N$). 

%
%


\begin{fact}\label{fact:seifertvankampen}
 If $\cal M$ has $\pi_1$-injective boundary, then $\mc{G}$ is a tree,
 and each cell and wall of $\tilde{\cal M}_f$  (\ie each component of $\cal N$ and $\partial \cal N$) is simply connected.
 \end{fact}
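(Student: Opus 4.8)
\textbf{Proof plan for Fact~\ref{fact:seifertvankampen}.}

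The plan is to realize $\tilde{\cal M}_f = \cal N_g$ as a graph of spaces in the sense of Scott--Wall \cite{SW79} and transfer the hypothesis to the underlying graph of groups. First I would set up the graph of spaces structure: the vertex spaces are the components of $\cal N$, the edge spaces are the components of $\partial\cal N$, and the attaching maps are the inclusions of boundary components composed with $g$. The graph of cells $\cal G$ is by construction the underlying graph of this graph of spaces. Since each $M\subset \cal M$ has $\pi_1$-injective boundary, and taking a covering of $\cal M$ preserves $\pi_1$-injectivity of boundary components (a covering of a $\pi_1$-injective inclusion is $\pi_1$-injective), the same is true of $\cal N$: every component of $\partial \cal N$ is $\pi_1$-injective in the component of $\cal N$ that contains it. Thus the graph of groups associated to this graph of spaces has injective edge-to-vertex maps.

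Next I would use the fundamental theorem of Bass--Serre theory (via Scott--Wall): the fundamental group of a graph of spaces with $\pi_1$-injective edge spaces is the fundamental group of the underlying graph of groups, which acts on the associated Bass--Serre tree $\cal T$ with vertex and edge stabilizers the conjugates of the vertex and edge groups, and with quotient the underlying graph. Moreover the universal cover of the total space is itself a graph of spaces whose underlying graph is exactly $\cal T$, whose vertex spaces are the universal covers of the vertex spaces of $\cal M_f$ (one copy for each coset), and likewise for edge spaces. Now $\tilde{\cal M}_f$ is, by construction as the universal cover, simply connected, and it is \emph{a} graph of spaces lying over $\cal M_f$'s graph of spaces; by uniqueness of the universal cover it must coincide with the one just described. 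Hence the underlying graph of $\tilde{\cal M}_f = \cal N_g$, which is $\cal G$, is the Bass--Serre tree $\cal T$, in particular a tree; and each cell (resp.\ wall) of $\tilde{\cal M}_f$, being a vertex space (resp.\ edge space) of this graph of spaces, is a copy of the universal cover of the corresponding cell (resp.\ wall) of $\cal M_f$, hence simply connected.

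Alternatively, and perhaps more directly, I would argue by hand: since every component of $\partial \cal N$ is $\pi_1$-injective in its ambient component of $\cal N$, a standard Seifert--van Kampen / graph-of-spaces argument shows that a loop in a cell of $\tilde{\cal M}_f$ that is null-homotopic in $\tilde{\cal M}_f$ is already null-homotopic in that cell (the $\pi_1$-injectivity at each wall crossing prevents any ``shortcut'' through adjacent cells), so each cell is simply connected; and if $\cal G$ contained an embedded cycle, then gluing the corresponding cells around the cycle would produce — again using $\pi_1$-injectivity of the walls at each gluing — a nontrivial loop in $\tilde{\cal M}_f$ meeting each wall of the cycle once, contradicting simple connectivity of $\tilde{\cal M}_f$. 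Since $\cal G$ is connected (as $\tilde{\cal M}_f$ is connected) and has no cycles, it is a tree.

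The main obstacle is purely bookkeeping: one must check carefully that the covering $\cal N\to \cal M$ restricts on each boundary component of $\cal N$ to a covering of a boundary component of $\cal M$, so that $\pi_1$-injectivity is genuinely inherited, and that the identification $\tilde{\cal M}_f \cong \cal N_g$ respects the graph-of-spaces structure (i.e.\ that the pullback square really does exhibit $\cal N_g$ as the universal cover). Once these compatibilities are in place the conclusion is immediate from Bass--Serre theory, so I do not expect any genuine difficulty beyond setting up notation consistent with \cite{SW79}.
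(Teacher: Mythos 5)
Your main argument is correct and follows essentially the same route as the paper: both invoke the Scott--Wall graph-of-spaces structure on $\cal M_f$ and the $\pi_1$-injectivity of cell and wall groups (Scott--Wall Prop.~3.6) to conclude that the lifted cells and walls are universal covers (hence simply connected), and then identify $\cal G$ with a tree. The paper derives tree-ness slightly differently (simple connectedness of cells plus triviality of $\pi_1(\tilde{\cal M}_f)$ forces $\pi_1(\cal G)=1$), whereas you identify $\cal G$ directly with the Bass--Serre tree; these are the same argument in mildly different packaging, and your ``alternative by hand'' paragraph is really just a sketch of Scott--Wall's proof of the injectivity you need.
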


\begin{proof}
 Each wall of $\cal M$ admits a tubular neighborhood, and hence $\cal M_f$ is a realization of the \emph{graph of space} associated to $\cal M$ and $f$ and their graph of cells, in the sense of Scott and Wall \cite[p.\,155]{SW79}.
 According to Scott and Wall, this means that the fundamental group of $\cal M_f$ is isomorphic to the \emph{fundamental group of the associated graph of groups}, in which Scott and Wall \cite[Prop.\,3.6]{SW79} prove that the fundamental groups of each cell and wall injects (see also \cite[\S5 Cor.\,1]{arbres}).
 This proves that each component of $\cal N$ and $\partial \cal N$ is simply connected.
 
 The fact that $\mc{G}$ is a tree comes from that $\tilde{\mc{M}}_f$ is also the realization of a graph of spaces with underlying graph $\mc{G}$.
 Indeed since each cell of $\tilde{\mc{M}}_f$ is simply connected, the fundamental group of $\tilde{\mc{M}}_f$ (which is trivial) coincides with the fundamental group of $\mc{G}$ by \cite[p.\,61]{arbres}.
\end{proof}

The gluings that we will consider from now on will always satisfy the $\pi_1$-injectivity condition of Fact~\ref{fact:seifertvankampen} thanks to the following classical fact.

\begin{fact}
\label{pi1 injective}
 Any wall of a properly convex b-manifold is $\pi_1$-injective.
\end{fact}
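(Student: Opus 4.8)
The plan is to realize a wall of a properly convex b-manifold $M$ as the quotient of a \emph{contractible} piece of the boundary of the universal cover, and then invoke covering space theory. Write $p\colon\tilde M\to\sph^\dimd$ for the universal covering, so that by the definition of properly convex b-manifold $\tilde M\subset\sph^\dimd$ is properly convex, and set $\Gamma=\pi_1(M)$ acting on $\tilde M$ by deck transformations. Fix a wall $W\subset\partial M$ (a connected component of $\partial M$) and a connected component $\tilde W$ of $p^{-1}(W)$.

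The first and only substantive step is to check that $\tilde W$ is a properly convex subset of $\sph^\dimd$. I would argue as follows: by Definition~\ref{def:projstructures} the totally geodesic boundary is locally modeled on the hypersphere $\{x_1=0\}\subset\sph^\dimd$, so each point of $\tilde W$ has a neighborhood in $\tilde W$ contained in a hyperplane; since $\tilde W$ is connected, a standard analytic-continuation argument forces $\tilde W$ to lie in a single hyperplane $H$. This $H$ is a locally supporting hyperplane of the convex set $\tilde M$, hence a genuine supporting hyperplane, so $H\cap\tilde M\subset\partial\tilde M$; and since $\tilde W$ is open and closed in $\partial\tilde M$ (being a connected component of the open and closed set $p^{-1}(W)$) and non-empty, it must equal the connected set $H\cap\tilde M$. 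Being an intersection of properly convex sets, $\tilde W$ is properly convex, hence homeomorphic to a Euclidean disk (see \S\ref{sec:reminder cvx}), and in particular simply connected.

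With this in hand I would conclude by pure covering space theory: the restriction $p\colon p^{-1}(W)\to W$ is a covering, so $\tilde W\to W$ is a covering by a simply connected space, i.e.\ a universal cover of $W$; consequently $\pi_1(W)$ is identified with $\Stab_\Gamma(\tilde W)\subset\Gamma=\pi_1(M)$. Concretely, fixing base points $w_0\in W$ and $\tilde w_0\in\tilde W$ over it, a loop in $W$ lifts to a path in $\tilde M$ from $\tilde w_0$ to $\gamma\tilde w_0$ which stays in $\tilde W$, so $\gamma\in\Stab_\Gamma(\tilde W)$; this monodromy construction is simultaneously the isomorphism $\pi_1(W)\cong\Stab_\Gamma(\tilde W)$ and the restriction to $\pi_1(W)$ of the isomorphism $\pi_1(M)\cong\Gamma$. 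Therefore the inclusion-induced map $\pi_1(W)\to\pi_1(M)$ becomes, through these identifications, the subgroup inclusion $\Stab_\Gamma(\tilde W)\hookrightarrow\Gamma$, which is visibly injective. The only place where anything needs care is the convexity of the lift $\tilde W$ of the \emph{totally geodesic} boundary component — this is where the hypothesis on the boundary is genuinely used and is the heart of the matter — but it presents no real obstacle.
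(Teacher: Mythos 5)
Your proof is correct and follows essentially the same approach as the paper's: the key step in both is that a lift $\tilde W$ of a wall is a convex subset of a hyperplane (you supply the justification that $\tilde W$ equals $H\cap\tilde M$, which the paper takes as evident), hence contractible, and $\pi_1$-injectivity then follows from the standard covering-space argument. The only nit is a small imprecision where you say $\tilde W$ is "open and closed in $\partial\tilde M$"—it is open and closed in $\tilde M\cap\partial\tilde M = p^{-1}(\partial M)$, which may omit ideal boundary points of $\tilde M$—but since $H\cap\tilde M$ is contained in $\tilde M\cap\partial\tilde M$ the connectedness argument goes through unchanged.
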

\begin{proof}
 Let $M$ be a properly convex b-manifold with universal cover $\tilde M$.
 Each lift in $\tilde M$ of a wall of $M$ is mapped homeomorphically by the developing map to a convex subset of a hyperplane.
 In particular, it is simply connected.
\end{proof}

\subsubsection{Projective gluing}\label{sec:proj gluing}
Next, we introduce geometric structures on gluings:

\begin{defi}[Projective Gluing]
Let $\cal M$ be a collection of b-manifolds.
A \emph{projective gluing} is the data of a gluing $f$ together with a projective structure on $\cal M_f$  that restricts to the projective structure of each component of $\cal M$.
\end{defi}

Not all gluings admit a compatible projective structure and, if it exists, it is not necessarily unique. 
Note for instance that if there is  compatible projective structure, then $f:\partial\cal M \to \cal M$ is an isomorphism of  projective $(\dimd-1)$-manifolds (but this is not a sufficient condition in general).

\subsubsection{Admissible gluing}
We now come to gluings of projective manifolds with totally geodesic boundary and corners. They are defined as follows:

\begin{defi}[Admissible Gluing]
Let $\mc{MC}=\{\mb{M}_j\}_{j\in J}$ be a collection of bc-manifolds with smooth locus  $\cal M=\{M_j\subset\mb{M}_j\}_{j\in J}$.
A projective gluing $f$ for $\mc{M}$ is said to be \emph{admissible} if
for each corner $\corner$ of $\mc{MC}$,
we can find cells $\mb{M}_1,\cdots,\mb{M}_m\subset\mc{MC}$ such that 
each $\mb{M}_i$ has a corner $\corner_i$ (with $\corner_0=\corner$) and two adjacent walls $W_{i}^{-}$ and $W_{i}^{+}$ such that
\begin{itemize}
 \item $fW_i^{+}=W_{i+1}^-$ for each $i$.
 \item $f_{|W_i^{+}}$ extends to a homeomorphism $f_{|\corner_i}:\corner_i\rightarrow\corner_{i+1}$.
 \item $f_{|\corner_{m-1}}\circ\cdots\circ f_{|\corner_1}\circ f_{|\corner_0}$ is the identity map of $\corner_0$. 
\end{itemize}
(All the indices are thought modulo $m$). We denote by $\mc{MC}_f$ the quotient of $\mc{MC}$ by the closure of the relation $x\sim f(x)$ for $x\in \partial{\cal M}$.

A \emph{corner} or \emph{singularity} of $\mc{MC}_f$ is the projection of a corner of $\mc{MC}$. 

A choice of $\{(\mb{M}_i,W_i^\pm,\corner_i)\}_{i\le m}$ as in the previous paragraph is called a \emph{cell ordering} of the corner in $\mc{MC}_f$.
\end{defi}

Admissible gluings provide natural examples of projective cone-manifolds:

\begin{lemma}\label{lem:cone-manifold}
Let $\mc{MC}$ be a collection of bc-manifolds with smooth locus $\cal M$.
Let $f$ be an admissible gluing. Then $\mc{MC}_f$ is a cone-manifold (Definition~\ref{def:cone-manifold}).
\end{lemma}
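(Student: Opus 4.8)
The plan is to verify that an admissible gluing $\mc{MC}_f$ satisfies all the bullet points of Definition~\ref{def:cone-manifold}. The underlying topological manifold and the singular locus $\corner$ are immediate from the definition of admissibility (the corner is a codimension $2$ submanifold, being the projection of corners of the $\mb{M}_j$'s, glued together along the homeomorphisms $f_{|\corner_i}$). The interior (smooth locus) $\mc{MC}_f\smallsetminus\corner$ is $\cal M_f$ minus the walls plus the walls, which already carries a projective structure by definition of a projective gluing; so the charts $\phi_x$ for $x$ in the smooth locus, and the requirement that transition maps there are projective, are given for free. The real content is to build, for each component $C$ of the singular locus, a tube $T_C$ and charts $\phi_x:U_x\hookrightarrow T_C$ near $C$ with transition maps that are restrictions of automorphisms of $T_C$.

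First I would fix a component $C$ of the singular locus and a cell ordering $\{(\mb{M}_i,W_i^\pm,\corner_i)\}_{i\le m}$ of the corresponding corner. Near $\corner_i$, the bc-manifold $\mb{M}_i$ looks projectively like a sector: pick a point $x$ of $\corner_i$ and a small chart $\psi_i^x\colon U\hookrightarrow \sph^\dimd\cap\{x_1,x_2\ge 0\}$ with $\psi_i^x(U\cap\corner_i)\subset \sph^{\dimd-2}$; this identifies a neighborhood of $x$ in $\mb{M}_i$ with a (half-)sector of $\blow\sqcup\sph^{\dimd-2}$ with walls covering pieces of $W_i^\pm$. The gluing maps $f_{|W_i^+}\colon W_i^+\to W_{i+1}^-$ are projective isomorphisms of the walls, so they are realized by elements of $\Aut(\blow)$ fixing $\sph^{\dimd-2}$ (as in the proof of Claim~1 of Proposition~\ref{prop:uniformization}: a projective isomorphism between sectors of $\sph^\dimd$ centered on $\sph^{\dimd-2}$ is the restriction of a global projective transformation, which here must fix $\sph^{\dimd-2}$, hence lifts to $\Aut(\blow)$). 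Composing these $m$ elements around the cycle gives an element $g\in\Aut(\blow)$ fixing $\sph^{\dimd-2}$, and the closure condition $f_{|\corner_{m-1}}\circ\cdots\circ f_{|\corner_0}=\id$ guarantees that $g$ lies in the subgroup of the stated form $\left(\begin{smallmatrix}\mu^{-1}\Id_{\dimd-1}&C\\0&\mu^{(\dimd-1)/2}\tilde B\end{smallmatrix}\right)$ and that the chain of charts closes up consistently modulo $g\Z$. Its angle is non-trivial because $f$ does not preserve boundary components, so the walls $W_i^\pm$ are genuinely distinct and the sectors are nested around $\sph^{\dimd-2}$ rather than folding back. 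This produces the tube $T_C$ as the quotient by $g\Z$ of the union of the $m$ sectors, per Definition~\ref{def:tubes}.

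Next I would assemble the charts: for a point $x\in C$, the neighborhood of $x$ in $\mc{MC}_f$ is obtained by gluing the sector-neighborhoods of the corresponding points in $\corner_1,\dots,\corner_m$ via the $f_{|W_i^+}$'s, which by the previous paragraph is precisely a fundamental domain for $g\Z$ acting on $T_C$'s universal branched cover; this gives $\phi_x\colon U_x\hookrightarrow T_C$ with $\phi_x^{-1}(\sph^{\dimd-2})=U_x\cap C$ and $U_x\subset\mc{MC}_f\smallsetminus(\corner\smallsetminus C)$. Overlap compatibility is the last check: if $U_x\cap U_y\neq\emptyset$ with $x,y\in C$, the change of chart $\phi_y\circ\phi_x^{-1}$ agrees on an open set with an element of $\Aut(\blow)$ built from the cocycle of the underlying projective chart changes of the $\mb{M}_i$'s and the gluing maps, hence (by analytic continuation, as in Proposition~\ref{prop:uniformization}) is the restriction of an automorphism of $T_C$; and if $U_x\cap U_y$ misses $\corner$ it is projective by the projective-gluing hypothesis. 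The main obstacle is the bookkeeping in this paragraph and the previous one: one has to make sure the element $g$ and the charts are well-defined independently of the choices (base point $x$, the chart $\psi_i^x$, the starting index in the cell ordering) and that the resulting $T_C$'s isomorphism type does not depend on them — this is where one leans on the well-definedness results already established in Proposition~\ref{prop:uniformization}, but translating between the ``gluing'' picture and the ``tube'' picture requires care, especially in checking that the angle is non-trivial and that $U_x$ really embeds into $T_C$ (no self-overlap forced by a small total angle). Everything else is routine unwinding of definitions.
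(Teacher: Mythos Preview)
Your proposal is correct and follows essentially the same approach as the paper: fix a cell ordering, develop the pieces as sectors around $\sph^{\dimd-2}$, compose the transition maps around the cycle to obtain $g\in\Aut(\blow)$ fixing $\sph^{\dimd-2}$ pointwise (via the closure condition $f_{|\corner_{m-1}}\circ\cdots\circ f_{|\corner_0}=\id$), form the tube as the quotient of the union of sectors by $g\Z$, and then verify compatibility. The only notable difference is organizational: the paper packages the construction by passing to the universal cover $\tilde V$ of a tubular neighborhood $V$ of the corner and building a single $\pi_1(\corner)$-equivariant local embedding $F:\tilde V\to T$ (with Claims 1--4 checking that $g$ fixes $\sph^{\dimd-2}$, that $F$ is well-defined, that the holonomy lands in $\Aut(T)$, and equivariance), whereas you work pointwise with local charts and check overlap compatibility directly; the paper's packaging makes the ``bookkeeping obstacle'' you flag at the end more transparent, but the content is the same.
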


\begin{proof}
Fix a corner $\corner$. Let $\{(\mb{M}_i,{\rm Wall}_i^\pm,\corner_i)\}_{i\le n}$ be a cell ordering for $\corner$.

One can find an open neighborhood $V$ of $\corner$ that satisfies the following:
\begin{itemize}
\item{$V$ does not intersect any other singularity.}
\item{$V={V}_1\cup\cdots\cup{V}_n$ where each ${V}_j$ is a tubular neighborhood of the lift $\corner_j\subset\mb{M}_j$ of $\corner$.}
\item{$W_j^\pm:=V_j\cap {\rm Wall}_j^\pm$ is a tubular neighborhood of $\corner_j$ in ${\rm Wall}_j^\pm$.}
\item{The universal cover $\tilde{V}$ of $V$ can decomposed as a gluing of universal covers $\tilde V_1,\cdots,\tilde V_n$
via $f_i:\tilde W_i^+\to \tilde W_{i+1}^-$ such that $f_n\circ\dots\circ f_1(x)=x$ for any $x\in\tilde\corner_1$.}
\item{$\pi_1(\corner)$ naturally acts on $\tilde \corner$ and $\tilde V$ and all the $\tilde\corner_i$'s and $\tilde V_i$'s and $\tilde W_i^\pm$'s, so that $\alpha(f_i(x))=f_i(\alpha(x))$ for any $i$ and $x\in \tilde W_i^+$.}
\end{itemize}

Our goal is to find a tube $T$ and a local homeomorphism $\tilde{V}\to T$ which is projective on the smooth locus and equivariant with respect to a morphism $\pi_1(\corner)\to\Aut(T)$. 

Fix a developing map $\dev^1:\tilde V_1\to\sph^\dimd$ that sends $\tilde\corner_1$ to $\sph^{\dimd-2}$. By induction, the projective structure on $\cal M_f$ determines for each $i\le n$ a developing map $\dev^i:\tilde V_i\to\sph^\dimd$ such that
$\dev^{i-1}=\dev^i\circ f_{i-1}$ on $\tilde W^+_{i-1}$. It also determines a new developing map of $\tilde V_1$ of the form $\dev^{n+1}=g\circ\dev^1$ where $g\in \SL_{\dimd+1}(\R)$, such that $\dev^{n}=\dev^{n+1}\circ f_n$ on $\tilde W^+_{n}$.

\begin{claim}{1}
$g$ fixes every point of $\sph^{\dimd-2}$.
\end{claim}

\begin{proof}[Proof of the claim]
For every $x\in\corner_1$ we have
\begin{align*}
 g\dev^1(x) & = \dev^{n+1}(f_{n}\circ\cdots\circ f_{2}\circ f_{1}(x)) \\
 & = \dev^{n}(f_{n-1}\circ\cdots\circ f_{2}\circ f_{1}(x))\\
 & = \dev^1(x).\qedhere
\end{align*}
\end{proof}

Up to making $V$ smaller, we may assume that for each $1\leq i\leq n+1$ the image of $\dev^i$ is contained in the sector $S_i$ of $\sph^\dimd$ between the half-hyperplanes spanned by $\dev^i(\tilde W_i^-)$ and $\dev^i(\tilde W_i^+)$.
We choose a lift $\tilde S_1\subset\blow\sqcup\sph^{\dimd-2}$ of the first sector, which determines by induction lifts of $S_2,\dots,S_{n+1}$ to sectors $\tilde S_2,\dots,\tilde S_{n+1}\subset\blow\sqcup\sph^{\dimd-2}$. Then $\dev^i$ lifts in a unique way to ${\rm d\tilde ev}^i:\tilde V_i\to\tilde S_i$. Similarly $g$ lifts to a unique $\tilde g\in \Aut(\blow)$ that maps $\tilde S_1$ to $\tilde S_{n+1}$ (and  fixes $\sph^{\dimd-2}$).

Let $T$ be the tube 
\[
T:=\tilde S_1\cup\dots\cup \tilde S_{n}/_\sim
\]
where $\sim$ identifies  via $\tilde g$ the wall  of $\tilde S_1$ spanned by ${\rm d\tilde ev}^1(\tilde W^-_1)$ with the wall of $\tilde S_{n}$ spanned by ${\rm d\tilde ev}^n(\tilde W^+_n)$.
Denote by $\pi_T:\tilde S_1\cup\dots\cup \tilde S_n\to T$ the quotient projection.
We have the following:

\begin{claim}{2}
The map $F:\tilde V\to T$ such that $F(x)=\pi_T({\rm d\tilde ev}^i(\alpha x))$ for all $i$ and $x\in \tilde V_i$ is well-defined and is a local embedding that respects the projective structures.
\end{claim}

Let $\hol:\pi_1(\corner)\to \SL_{\dimd+1}\R$ be the holonomy of $\dev^1$, which lifts to a holonomy ${\rm h\tilde ol}:\pi_1(\corner)\to\Aut(\blow)$ of ${\rm d\tilde ev}^1$. For any $\alpha\in\pi_1(\corner)$, the new developing map ${\rm d\tilde ev}^1\circ\alpha$ determines by induction as above new developing maps of $\tilde V_2,\dots,\tilde V_n,\tilde V_1$, which can be written ${\rm d\tilde ev}^i\circ\alpha$. On the other hand, the developing map ${\rm h\tilde ol}(\alpha)\circ {\rm d\tilde ev}^1$ clearly determines the developing maps ${\rm h\tilde ol}(\alpha)\circ {\rm d\tilde ev}^i$. We check that:

\begin{claim}{3}
${\rm h\tilde ol}(\alpha)$ is an automorphism of $T$.
\end{claim}

\begin{proof}[Proof of the claim]
Since ${\rm d\tilde ev}^1\circ\alpha={\rm h\tilde ol}(\alpha)\circ {\rm d\tilde ev}^1$, we have ${\rm d\tilde ev}^i\circ\alpha={\rm h\tilde ol}(\alpha)\circ {\rm d\tilde ev}^i$ for all $2\leq i\leq n+1$.
As a consequence, ${\rm h\tilde ol}(\alpha)$ preserves each $\tilde S_i$.
Moreover, it commutes with $\tilde g$ since
\begin{align*}
{\rm h\tilde ol}(\alpha)\circ \tilde g\circ {\rm d\tilde ev}^{1}
&={\rm h\tilde ol}(\alpha)\circ {\rm d\tilde ev}^{n+1}
={\rm d\tilde ev}^{n+1}\circ \alpha\\
&=\tilde g\circ {\rm d\tilde ev}^{1}\circ \alpha
=\tilde g\circ {\rm h\tilde ol}(\alpha) \circ {\rm d\tilde ev}^{1}.
\end{align*}
Thus ${\rm h\tilde ol}(\alpha)$ yields an automorphism $\phi(\alpha)$ of $T$.
\end{proof}

It is clear that $\phi$ is a morphism. To conclude, let us check that: 

\begin{claim}{4}
$F\circ\alpha=\phi(\alpha)\circ F$.
\end{claim}

\begin{proof}[Proof of the claim]
Let $x\in \tilde V$.
We have $x\in \tilde V_i$ for some $i$.
Then $\alpha(x)$ is also in $\tilde V_i$, and by definition
\[
F(\alpha(x))=\pi_T({\rm d\tilde ev}^i(\alpha x))=\pi_T({\rm h\tilde ol}(\alpha){\rm d\tilde ev}^i(x))=\phi(\alpha)\pi_T({\rm d\tilde ev}^i(x)).\qedhere
\]
\end{proof}

This concludes the proof of the lemma.
\end{proof}

\subsection{Polars and bulging}
We describe the notion of polars and gluings adapted to polars.
They come in one-parameter families obtained via a bulging procedure.
The goal of this section is to explain this picture.

Let $\mc{MC}$ be a collection of bc-manifolds. Let $f$ be a gluing for $\mc{M}$.

\begin{defi}[Polars]
A \emph{polar point} of a wall $W\subset\cal M$ is the data, for each projective chart of $W$ in $\sph^{\dimd}$, of a point of $\mb{RP}^{\dimd}$ transverse to the image of the chart, which behaves well with respect to transition maps.
 
Each developing map $\tilde W \rightarrow \sph^{\dimd}$ gives rise to a holonomy invariant polar.
 
A projective gluing $\cal M_f$ is said to be \emph{adapted to a given choice of polars} if for any developing map $\tilde{\cal M}_f\rightarrow \sph^{\dimd}$ and any wall $W\subset\partial\cal M$, the associated polars of $W$ and $fW$ coincide.
\end{defi}

\begin{rk}
 If all walls of $\cal M$ admit a polar, then there exists a projective gluing adapted to the polars if and only if $f$ is a projective isomorphism of $(\dimd-1)$-dimensional projective manifolds and for any wall $W\subset\partial\cal M$, denoting by $\rho$ (\resp $\rho'$) the {\em dilation character} of $W$ (\resp $fW$), we have $\rho=\rho'\circ f_*$.
 
The dilation character is defined as follows: For each wall $W\subset\partial\cal M$ and developing map $\dev : \tilde W \rightarrow \sph^{\dimd}$, whose image spans $V\subset \R^{\dimd+1}$, the \emph{dilation character} of $W$ is the determinant of the representation of $\pi_1(W)$ to $\GL(\R^{\dimd+1}/V)$ induced by the holonomy representation. It is a real character that does not depend on the choice of $\dev$.

If $\cal M$ is hyperbolic, then the dilation characters are trivial, the polars are the orthogonal subspaces for the Lorentzian bilinear form, and the unique hyperbolic gluing is adapted to this choice of polars.
 
 If $\cal M_f$ is the double of a manifold $M$, and all boundary components of $M$ admit polars, then there is a natural projective gluing adapted to the polars.
\end{rk}

Let us finally recall the definition of bulging.

\begin{defi}[Bulging]
 Assume that each wall of $\cal M$ admits a polar, and that $\cal M_f$ admits a projective gluing adapted to the polars with atlas $\cal A$.
 Let $(\mu_W)_{W\subset\partial\cal M}$ be a family of positive real parameters.
 The \emph{bulging of $\cal M_f$ along the polars with parameters $(\mu_W)_{W\subset\partial\cal M}$} is the projective gluing which includes the following charts.
 
 For any point $p\in\cal M_f$ which is the image of $\bar p\in W\subset M\subset \cal M$ and $f(\bar p)\in fW\subset M'\subset\cal M$, for any chart $\phi$ of $\cal A$ defined on a small enough neighborhood $U\sqcup U'\subset M\sqcup M'$,
 the map $B\circ \phi_{|U} \cup B'\circ\phi_{|U'}$ is a chart of the bulging, where $B$ (\resp $B'$) is the linear transformation that fixes $\phi(U\cap W)$ (\resp $\phi(U'\cap fW)$) with eigenvalue $\mu_{W}^{-1}$ (\resp $\mu_{fW}^{-1}$) and also fixes the polar, with eigenvalue $\mu_{W}^{\dimd}$ (\resp $\mu_{fW}^{\dimd}$).  
\end{defi}

\subsection{Convexity of gluings}
Having described how an admissible gluing of bc-manifold gives rise to a projective cone-manifold, we now turn to the problem of determining when (the smooth locus of) such manifold is convex and when its decomposition into gluing pieces corresponds to a periodic tessellation of a convex domain in $\sph^\dimd$.
 
In order to do so, we revisit the results of the previous sections using the terminology of admissible projective gluings and Section~\ref{sec:combidescription}.

First of all, we consider a single tile $\mb{M}\in\mc{MC}$, a properly convex bc-manifold with universal cover $\tilde{\mb{M}}\subset\sph^\dimd$ and study its geometry. We describe under which conditions it satisfies the hypothesis of Proposition \ref{prop:injetcvx}.  

\subsubsection{Codimensions 1 and 2 strata}
\label{sec:wallcplt}

Let $\mb{M}$ be a properly convex bc-manifold, with universal cover $\tilde {\mb{M}}\subset\sph^\dimd$. First, we have to make sure that the walls and corners of $\mb{M}$ correspond to faces of $\tilde{\mb{M}}$.

\begin{defi}[Complete Walls and Corners]
\label{def:Cplteness}
A wall (\resp corner) of $\mb{M}$ is said to be \emph{complete} if any lift in $\tilde {\mb{M}}$ consists of a full codimension 1 (\resp codimension $2$) face.
\end{defi}

Completeness of walls and corners is not difficult to check:

\begin{fact}
We have the following:
\begin{itemize}
\item{Any convex hyperbolic bc-manifold which is complete for the hyperbolic metric also has complete walls and corners, in the sense of the previous definition.}
\item{Any \emph{compact} properly convex bc-manifold has complete walls and corners.}
\end{itemize}
\end{fact}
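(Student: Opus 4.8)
The plan is to establish a single statement that subsumes both bullets: if $\mathbf M$ is a properly convex bc-manifold whose developing map embeds $\tilde{\mathbf M}$ onto a properly convex $\mathcal C\subset\sph^\dimd$, then every lift $\tilde W\subset\mathcal C$ of a wall is a full codimension $1$ face of $\mathcal C$, and every lift $\tilde\corner\subset\mathcal C$ of a corner is a full codimension $2$ face. Both hypotheses of the Fact are then covered: a complete convex hyperbolic bc-manifold is in particular a convex — hence, since $\mathbb H^\dimd$ is, properly convex — projective bc-manifold, and a compact properly convex bc-manifold is one by definition. The two hypotheses will only re-enter at the very end, to control ideal boundary points.

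First I would record that a wall lift $\tilde W$ lies in a single hyperplane $P$: locally a wall develops into the fixed hyperplane $\{x_1=0\}$ of its chart, and by connectedness (analytic continuation) these local hyperplanes agree, so $\tilde W\subset P$; since $\tilde W$ has dimension $\dimd-1$ it spans $P$. Next, $P$ is a supporting hyperplane of $\mathcal C$: at a point $x$ in the relative interior of the wall (non-empty as $\tilde W$ is codimension $1$) the chart is modeled on $\sph^\dimd\cap\{x_1\ge 0\}$, so $\mathcal C$ lies near $x$ in one closed half-space bounded by $P$, and convexity of $\mathcal C$ propagates this globally. Hence $F:=\mathcal C\cap P$ is a face of $\mathcal C$, contained in $\partial\mathcal C$, convex, and in particular connected.

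The core step is an open--closed argument on $F$. Let $\tilde W^{\,c}\subset F$ be the lift of the \emph{closed} wall $\overline W$, i.e.\ $\tilde W$ together with the lifts of the corners bounding it. I claim $\tilde W^{\,c}$ is open in $F$: near a relative-interior wall point the half-space chart shows $F$ coincides with the wall; near a point on a corner lift the chart is modeled on $\sph^\dimd\cap\{x_1,x_2\ge 0\}$, and since a corner of a bc-manifold has angle in $(0,\pi)$ the two adjacent wall hyperplanes $P$ and $P'$ are distinct, so $\mathcal C\cap P$ coincides near that point with the closed half-space of $P$ cut out by $P\cap P'$, which is exactly $\tilde W^{\,c}$ there. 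Symmetrically, $F\smallsetminus\tilde W^{\,c}$ is open: a point $y$ of it lies in $\mathcal C\cap\partial\mathcal C$, hence on some other wall or corner lift, and the same local analysis produces an $F$-neighbourhood of $y$ disjoint from $\tilde W^{\,c}$. Connectedness of $F$ and non-emptiness of $\tilde W^{\,c}$ then give $\tilde W^{\,c}=F=\mathcal C\cap P$. For corners I would iterate: $\tilde\corner$ lies in $P\cap P'$, where $P,P'$ are the hyperplanes of the two adjacent walls (now identified with $\mathcal C\cap P$ and $\mathcal C\cap P'$), and applying the wall case to the convex $(\dimd-1)$-dimensional bc-manifold $\mathcal C\cap P$ — of which $\tilde\corner$ is a boundary wall — yields $\tilde\corner=\mathcal C\cap P\cap P'$, a codimension $2$ face.

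Finally I would reconcile $\mathcal C\cap P$ with the ``full face'' of Definition~\ref{def:Cplteness}, that is, with $\overline{\mathcal C}\cap P$: one must rule out ideal points of $\overline{\mathcal C}$ lying in $P$ but outside the closure of $\mathcal C\cap P$. This is the only genuinely delicate point, and it is exactly where the hypotheses are used. In the complete hyperbolic case, $\mathcal C$ is a complete convex subset of $\mathbb H^\dimd$, hence closed in $\mathbb H^\dimd$, and $\tilde W^{\,c}$ is a complete convex subset of the totally geodesic copy of $\mathbb H^{\dimd-1}$ carried by $P$, hence closed there; combined with $\tilde W^{\,c}=\mathcal C\cap P$ this pins the face down. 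In the compact case, $\partial M$ is compact, so there are finitely many walls and corners, each compact, and $\Stab_{\pi_1(M)}(\tilde W)$ acts on $\tilde W^{\,c}$ with a compact fundamental domain, which determines $\overline{\tilde W^{\,c}}$ and identifies it with $\overline{\mathcal C}\cap P$. I expect this ideal-point bookkeeping — together with the intuitively clear but slightly tedious verification that a bc-chart never allows a ``wall with a slit'', so that the local models used above really are the only possibilities — to be the main obstacle; the rest is the routine open--closed--connected mechanism.
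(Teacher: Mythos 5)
Your proof takes a genuinely different route from the paper's. Where the paper phrases everything as four maximality statements (Claims~1--4 in its proof) about invariant convex subsets, you argue directly on the codimension-$1$ face by an open--closed--connected argument, and this argument is sound: showing that the closed wall lift $\tilde W^{\,c}$ is both open in $F_0:=\mathcal C\cap P$ (via the two local models $\{x_1\ge 0\}$ and $\{x_1,x_2\ge 0\}$, using that the two hyperplanes at a corner are distinct) and closed in $F_0$ (since $\tilde W^{\,c}$ is closed in $\mathcal C$) gives $\tilde W^{\,c}=\mathcal C\cap P$ for \emph{any} properly convex bc-manifold, with no completeness or compactness hypothesis. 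The induction to corners is also correct. This is a cleaner separation of concerns than the paper's: the ``finite'' face structure is seen to be automatic, and the hypotheses only enter when controlling ideal points, i.e., when upgrading $\mathcal C\cap P$ to the full closed face $\overline{\mathcal C}\cap P$.

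However, the one step you explicitly flag as delicate is genuinely a gap, not merely ``bookkeeping.'' In the compact case you write that cocompactness of $\Stab_{\pi_1(M)}(\tilde W)$ on $\tilde W^{\,c}$ ``determines $\overline{\tilde W^{\,c}}$ and identifies it with $\overline{\mathcal C}\cap P$,'' but no mechanism is given, and it is precisely here that a real argument is needed: one must rule out a $\Gamma_W$-invariant properly convex open set $\Omega'\subset P$ strictly containing $\mathrm{int}_P(\tilde W^{\,c})$, and cocompactness alone does not do that without a recurrence argument. The paper's Claim~4 does exactly this: it takes a sequence $x_n\to x\in\Omega'$ in $\mathrm{int}(\tilde N)$, uses compactness to recur $\gamma_n x_n\to y$, and then the crux is a Hilbert-metric estimate with a small closed half-ball $B$ around $y$ showing that $\gamma_n x$ also falls in $B$ (otherwise $[\gamma_n x_n,\gamma_n x]$ would be uniformly long). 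That estimate is the missing content of your compact case. The hyperbolic case is essentially fine by strict convexity of $\mathbb H^{\dimd-1}$ (together with $\tilde{\mathbf M}$ being closed in $\mathbb H^\dimd$), so the first bullet is solid; but for the second bullet you should supply the recurrence/Hilbert-metric argument, or invoke a maximality lemma in the style of the paper's Claims~3--4, before the proof is complete.
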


\begin{proof}
The proof is subdivided into claims about $(\dimd-2)$ and $(\dimd-1)$-manifolds.
The first one is classical and not proved here. 
It implies that complete convex hyperbolic bc-manifolds have complete corners.
\begin{claim}{1}
 Any hyperbolic $(\dimd-2)$-manifold which is complete in the metric sense is also complete in the sense of $(G,X)$-structures: The developing map is a bijection onto $\H^{\dimd-2}$.
\end{claim}

Next we have another classical result about hyperbolic manifold.
It implies that complete convex hyperbolic bc-manifolds have complete walls.
\begin{claim}{2}
 Let $N$ be a complete hyperbolic $(\dimd-1)$-manifold with totally geodesic boundary, with universal cover $\tilde N\subset\H^{\dimd-1}$.
 Then $\tilde N$ is the biggest convex subset of $\H^{\dimd-1}$ delimited by the walls of $\tilde N$.
\end{claim}

We now turn to the second point of the fact.
The following classical fact implies that compact properly convex bc-manifolds have complete corners.
We recall the proof for the reader's convenience.
\begin{claim}{3}
 Let $\Omega/\Gamma$ be a closed convex projective $(\dimd-2)$-manifold.
 Then $\Omega$ is maximal for inclusion among $\Gamma$-invariant properly convex open subsets of $\sph^{\dimd-2}$.
\end{claim}
\begin{proof}
 Let $\Omega'$ be $\Gamma$-invariant and contain $\Omega$.
 It suffices to show $\Omega$ is closed in $\Omega'$.
 Let $x\in\Omega'$ be the limit of $x_n\in\Omega$.
 By compactness there is $\gamma_n\in\Gamma$ such that $\gamma_nx_n$ converges to $y\in\Omega$.
 Since $\Gamma$ acts properly on $\Omega'$, the sequence $\gamma_n$ converges in $\Gamma$ and $x\in\Omega$.
\end{proof}

To conclude, the last claim implies that compact properly convex bc-manifolds have complete walls.
\begin{claim}{4}
 Let $N$ be a compact properly convex projective $(\dimd-1)$-manifold with totally geodesic boundary, with universal cover $\tilde N\subset\sph^{\dimd-1}$ and holonomy $\Gamma<\Aut(\mathrm{int}(\tilde N))$.
 Then $\mathrm{int}(\tilde N)$ is maximal among $\Gamma$-invariant  properly convex open subsets of $\sph^{\dimd-1}$ delimited by the span of the walls of $\tilde N$.
\end{claim}
\begin{proof}
 Let $\Omega={\rm int}(\tilde N)$ and let $\Omega'\subset\sph^{\dimd-1}$ be $\Gamma$-invariant, open, properly convex, and delimited by the span of the walls of $\tilde N$.
 It suffices to show $\Omega$ is closed in $\Omega'$.
 Let $x\in\Omega'$ be the limit of $x_n\in\Omega$.
 By compactness there is $\gamma_n\in\Gamma$ such that $\gamma_nx_n$ converges to some $y\in \tilde N$.
 Since $\Gamma$ acts properly on $\Omega'$, if $y\in\Omega$ then $\gamma_n$ converges in $\Gamma$ and $x\in\Omega$.
 Assume that $y$ lies in a wall $W$ of $\tilde N$ (\ie a component of $\tilde N\cap \partial\tilde N$).
 
 Consider a small closed half-ball $B$ centered at $y$ and contained in $\tilde N$, whose boundary is decomposed into $\partial B=(B\cap W)\sqcup \Sigma$ where $\Sigma\subset\Omega$.
 One may check that $\gamma_nx_n\in {\rm int}(B)$ for $n$ large and that the Hilbert distance $d_{\Omega'}(\gamma_nx_n,\Sigma)$ is bounded below by some constant $\epsilon$ independent of $n$.
 Note that $d_{\Omega'}(\gamma_nx_n,\gamma_nx)=d_{\Omega'}(x_n,x)$ tends to zero, and thus is less than $\epsilon$ for $n$ large enough.
 Then $\gamma_nx\in {\rm int}(B)$, otherwise $[\gamma_nx_n,\gamma_nx]$ would cross $\Sigma$ (it cannot cross $B\cap W$) and have length at least $\epsilon$.
 Therefore $x\in\Omega$.
\end{proof}

This concludes the proof of the fact.
\end{proof}

\subsubsection{Ghost strata}\label{sec:ghost}
Let $\mb{M}$ be a properly convex bc-manifold which we now assume to have complete walls and corners which will lift to codimension 1 and 2 faces of the universal cover $\tilde{\mb{M}}\subset\sph^\dimd$.

The next issue that we want to address is the potential presence of codimension 2 faces that do not come from corners. 

\begin{defi}[Ghost Corner]
\label{def:ghost}
 A \emph{ghost stratum} is a face of $\tilde{\mb{M}}$ which is the intersection of at least two closed walls but which is \emph{not} a closed corner.
 It is called a \emph{ghost corner} if it has codimension 2.
\end{defi}

\begin{rk}
\label{ex:ghost}
Here is an example of ghost strata: Let $C\subset\R^3$ be the positive cone consisting of vectors $v=(v_1,v_2,v_3)\in\R^3$ such that $v_1,v_2,v_3>0$ or $v_1,v_2>0=v_3$ or $v_1,v_3>0=v_2$, and let $T:=\sph(C)$. Then $\{[e_1]\}$ is a ghost corner of $T$ and its compact quotient $M$ under the action of any diagonal matrix with positive decreasing diagonal entries. Moreover, $T\cup\{[e_1]\}$ is a properly convex bc-manifold with complete walls and corners and without ghost corner, but, since it is compact, $M$ cannot be described as the complement of corners in a properly convex bc-manifold with complete walls and corners and without ghost corner.
\end{rk}

Again, there is a simple criterion to check that there are no ghost strata in the hyperbolic setting:

\begin{fact}\label{obs:hyperbolic ghost stratum}
 Let $\mb{M}$ be a complete convex hyperbolic bc-manifold. Then:
 \begin{itemize}
 \item{Any ghost stratum has dimension 0.}
 \item{If $\mb{M}$ is compact then there is no ghost stratum.}
 \end{itemize}
\end{fact}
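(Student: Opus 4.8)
The plan is to pass to the developing picture and exploit that $\H^\dimd$ is \emph{strictly} convex. Write $\tilde{\mb M}\subset\H^\dimd\subset\sph^\dimd$ for the embedded convex universal cover: by completeness it is closed in $\H^\dimd$, and by the previous fact it has complete walls and corners, each wall being $W=\tilde{\mb M}\cap P$ for a totally geodesic hyperplane $P=H\cap\H^\dimd$ with $H$ a supporting hyperplane of $\overline{\tilde{\mb M}}$; since $\tilde{\mb M}\cap H$ is convex hence connected, distinct walls span distinct hyperplanes and a boundary point outside the singular locus lies on exactly one wall. The one soft input I would isolate first is that $\partial\H^\dimd$ contains no non-trivial segment, so \emph{any face of $\overline{\tilde{\mb M}}$ contained in $\partial\H^\dimd$ is a single point}; hence every face of $\overline{\tilde{\mb M}}$ either is an ideal point or has a relative interior point in $\H^\dimd$, and moreover $\overline{W}\smallsetminus W\subset\partial\H^\dimd$ for every wall.

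For the first assertion, let $F=\bigcap_{l\in I}\overline{W_l}$ ($|I|\geq 2$, the $W_l$ distinct) be a ghost stratum and suppose $\dim F\geq 1$. By the observation above $F$ has a relative interior point $q\in\H^\dimd$, and since each $W_l$ is closed in $\H^\dimd$ we get $q\in\overline{W_1}\cap\overline{W_2}\cap\H^\dimd=W_1\cap W_2$, so $q$ lies on two distinct walls of the bc-manifold $\tilde{\mb M}$. The bc-chart structure forces $q$ to be a corner point, lying in a corner $C_0$; by completeness the lift of $C_0$ is the relative interior of a closed codimension-$2$ face $\hat C_0$ of $\overline{\tilde{\mb M}}$, so $q\in\operatorname{relint}(\hat C_0)$ and therefore $F=F_{\tilde{\mb M}}(q)=\hat C_0$ is a closed corner — a contradiction. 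Hence $\dim F=0$; the same computation shows the point of $F$ cannot lie in $\H^\dimd$ (a non-ideal point on two walls always produces a closed corner through it), so it is an ideal point.

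For the second assertion, assume $\mb M$ compact, so $\Gamma:=\pi_1(\mb M)$ and every wall stabilizer $\Gamma_l:=\Stab_\Gamma(W_l)$ are torsion-free and convex-cocompact (Fact~\ref{fact:quasi-convex}), with $\overline{W_l}\cap\partial\H^\dimd=\Lambda(\Gamma_l)$. Suppose $F=\{q\}$ is a ghost stratum; by the first assertion $q\in\partial\H^\dimd$ and $q\in\Lambda(\Gamma_1)\cap\Lambda(\Gamma_2)$ for two distinct walls. By Susskind's intersection-of-limit-sets theorem, $q\in\Lambda(\Gamma_1\cap\Gamma_2)$, so $\Gamma_1\cap\Gamma_2$ is infinite and hence contains a loxodromic $\delta$; its axis lies in the convex hull of $\Lambda(\langle\delta\rangle)\subset\Lambda(\Gamma_l)$, hence in $W_l$, for $l=1,2$, so $\emptyset\neq\operatorname{ax}(\delta)\subset W_1\cap W_2$. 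Every point of $W_1\cap W_2$ being a corner point, $W_1\cap W_2$ is a complete corner $C_0$ and $\overline{W_1}\cap\overline{W_2}=\overline{C_0}=\hat C_0$ is a closed codimension-$2$ face, so $F\subset\hat C_0$. If $I=\{1,2\}$ then $F=\hat C_0$ is a closed corner — contradicting that $F$ is a ghost stratum; if $|I|\geq 3$, pick a third wall $W_3$, and note that $W_1\cap W_2\cap W_3=C_0\cap W_3$ would consist of corner points lying on three distinct walls, hence is empty, while $q\in\Lambda(\Stab_\Gamma C_0)\cap\Lambda(\Gamma_3)$ and one more application of Susskind's theorem yields a loxodromic with axis in $C_0\cap W_3=\emptyset$ — absurd. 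Hence $\mb M$ compact has no ghost stratum.

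The genuine obstacle is this last assertion: excluding a $0$-dimensional (ideal) ghost stratum in the compact case is the only place requiring a truly hyperbolic-geometric input — the Susskind-type statement that the intersection of the limit sets of two convex-cocompact subgroups is the limit set of their intersection (equivalently, quasi-convexity of intersections of quasi-convex subgroups of the hyperbolic group $\Gamma$). The remaining ingredients — completeness of walls and corners, strict convexity of $\partial\H^\dimd$, and the bc-chart structure at a corner point — are soft and were already set up in the preceding facts.
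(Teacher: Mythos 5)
Your first bullet is proved exactly as in the paper: the key input is strict convexity of $\partial\H^d$, which forces a ghost stratum contained in $\partial\H^d$ to be a point, and the local bc-structure shows that a relative-interior point in $\H^d$ lying on two closed walls would have to be a corner point, making the ghost stratum a closed corner. One small imprecision: you write $\overline{W_1}\cap\overline{W_2}\cap\H^d=W_1\cap W_2$ ``since each $W_l$ is closed in $\H^d$.'' With the paper's convention a wall is a component of $\partial\tilde{\mb M}\smallsetminus C$, hence \emph{not} closed in $\H^d$; the correct identity is $\overline{W_l}\cap\H^d=W_l\cup(\text{adjacent corners})$. Since distinct walls are disjoint this still forces $q$ to be a corner point, so the conclusion is unaffected, but the phrasing should be fixed (or you should say explicitly that you are redefining $W_l$ as the closed codimension-$1$ face $\tilde{\mb M}\cap\Span W_l$).

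For the second bullet you take a genuinely different route. The paper's argument is soft and local: compactness of $\mb M$ gives a uniform $r>0$ such that any hyperbolic $r$-ball in $\tilde{\mb M}$ meeting two walls meets them along a shared corner and meets no third wall; then $r$-balls centered on the ray $[p,S)$ eventually meet every wall adjacent to $S$, which either contradicts the uniform bound (three or more adjacent walls) or forces $S$ to be a genuine closed corner (exactly two). Your argument instead passes to limit sets: $\Stab_\Gamma(W_l)$ is convex-cocompact with $\Lambda=\overline{W_l}\cap\partial\H^d$, so $q\in\Lambda(\Gamma_1)\cap\Lambda(\Gamma_2)$ and the Susskind(--Swarup) intersection theorem for geometrically finite subgroups yields a loxodromic $\delta\in\Gamma_1\cap\Gamma_2$ whose axis lies in $\overline{W_1}\cap\overline{W_2}\cap\H^d$; this already contradicts the first bullet (which shows $S\cap\H^d=\emptyset$), and your further analysis shows $\overline{W_1}\cap\overline{W_2}$ is the closed corner between $W_1$ and $W_2$. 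Both arguments are correct. The trade-off is clear: yours is conceptually clean once one accepts the Susskind theorem as a black box (and one should make sure the precise version used applies without a non-elementary hypothesis, since the $\Gamma_l$ could a priori be elementary, though in that case $\Lambda(\Gamma_l)$ is finite and the issue is easily disposed of); the paper's is entirely self-contained, using only compactness and elementary hyperbolic geometry, which fits better with the paper's preference for keeping the local-to-global convexity machinery independent of the theory of limit sets of Kleinian groups.
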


\begin{proof}
 The universal cover $\tilde {\mb M}$ is a closed convex subset of the hyperbolic space $\H^\dimd$ with non-empty interior.
 Let $S$ be a ghost stratum.

 It is a proper face of $\tilde {\mb M}$ and lies in $\partial \tilde{\mb M}\smallsetminus \tilde{\mb M}\subset\partial\H^\dimd$, and hence is a point, since $\H^\dimd$ is strictly convex.
 
 Suppose by contradiction that ${\mb M}$ is compact.
 Then there is $r>0$ such that for any ball $B$ of $\H^\dimd$ of radius $r$, if $B$ intersects two walls of $\tilde {\mb M}$ then these walls share a corner and $B$ intersects no other wall.
 Fix $p\in \tilde {\mb M}$.
 
 If three walls were adjacent to $S$, then any ball of radius $r$ with center in $[p,S)$ close enough to $S$ would intersect all three walls, contradicting the definition of $r$.
 
 Thus $S$ is the intersection $\overline W_1\cap \overline W_2$ where $W_1,W_2$ are two walls of $\tilde{\mb M}$.
 As before any ball of radius $r$ with center in $[p,S)$ close enough to $S$ intersects both $W_1$ and $W_2$, making $S$ an actual corner of $\tilde {\mb M}$, which is absurd.
\end{proof}

\subsubsection{Local convexity}\label{sec:containment}

We are now ready to translate the local convexity conditions of Proposition \ref{prop:injetcvx}.

\begin{defi}[Containment Condition]\label{def:ContainmentCond} 
Let $M$ be a properly convex  b-manifold. We say that the polar point associated to a wall $W$ of $M$ satisfies the \emph{Containment Condition} if for any lift $\tilde W$ of $W$ in $\tilde M$, and any developing map $\dev : \tilde M \rightarrow \sph^{\dimd}$, the image of $\dev$ is contained in the convex hull of $\dev(\tilde W)$ and the polar point.
\end{defi}

\begin{fact}\label{obs:containment implies adj cells cvx}
 Let $\cal M$ be a collection of properly convex b-manifolds with polar points satisfying the Containment Condition, and consider a compatible projective gluing.
 Then the union of any two adjacent cells of the universal cover is convex.
\end{fact}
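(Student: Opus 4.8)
The plan is to reduce the statement to the codimension-$1$ convexity criterion of Remark~\ref{rk:union of two convex sets}, exploiting that the common polar of the two glued walls forces the two developed cells to lie in a single half-space along every boundary point of their shared wall.

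First I would fix two adjacent cells of the universal cover: let $D:=\dev(\tilde{\mb M})$ and $D':=\dev(\tilde{\mb M}')$ be the images, under the developing map of the projective gluing, of the lifts of two cells $\mb M,\mb M'\in\cal M$ glued along a wall $W$ (so that $fW\subset\partial\mb M'$), and put $F:=D\cap D'$. From the local structure of $(G,X)$-gluings of b-manifolds, $F$ is a codimension-$1$ face of both $D$ and $D'$; hence $\Pi:=\Span(F)$ is a hyperplane, it is a supporting hyperplane of both cells, and $D$ and $D'$ lie in the two distinct closed hemispheres it bounds — indeed the developing map is a local homeomorphism near interior points of the lift of $W$, so the two cells cannot fold onto the same side of $\Pi$. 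By Remark~\ref{rk:union of two convex sets} it then suffices to exhibit, for every $x\in\partial F$, a closed half-space of $\sph^\dimd$ that contains $D\cup D'$ and whose boundary hyperplane passes through $x$.

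The key ingredient is the polar. Since the gluing is compatible with the chosen polars (\ie adapted to them), the polar of $W$ and the polar of $fW$ agree, as a single point $\polar\in\mb{RP}^\dimd$ transverse to $\Pi$; let $\polar_D,\polar_{D'}\in\sph^\dimd$ be its two lifts, lying in the open hemispheres containing $D$, respectively $D'$, so that $\polar_{D'}=-\polar_D$. Applying the Containment Condition (Definition~\ref{def:ContainmentCond}) to $\mb M$ and to $\mb M'$, with the developing maps inherited from the gluing, yields $D\subset\conv(F\cup\{\polar_D\})$ and $D'\subset\conv(F\cup\{\polar_{D'}\})$. Now fix $x\in\partial F$ and choose a supporting hyperplane $K\subset\Pi$ of the convex set $F$ through $x$, a codimension-$2$ projective subspace of $\sph^\dimd$; set $\hat K:=\Span(K\cup\{\polar\})$. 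As $\polar\notin\Pi\supset K$, this $\hat K$ is a hyperplane, it contains $x$ and both lifts $\polar_D,\polar_{D'}$ (these span the same line of $\R^{\dimd+1}$), and $\hat K\cap\Pi=K$. Let $\overline H$ be the closed hemisphere bounded by $\hat K$ that contains $F\smallsetminus K$ — a non-empty set lying on one open side of $K$ in $\Pi$, hence on one open side of $\hat K$. Writing the cones as unions of segments, $\conv(F\cup\{\polar_D\})=\bigcup_{y\in F}[y,\polar_D]$ and similarly for $\polar_{D'}$; for $y\in F\smallsetminus K$ the segment lies in $\overline H$ (its interior in the open side, its endpoint on $\hat K$), and for $y\in F\cap K\subset\hat K$ the segment lies in $\hat K\subset\overline H$. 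Hence $D\cup D'\subset\conv(F\cup\{\polar_D\})\cup\conv(F\cup\{\polar_{D'}\})\subset\overline H$ with $x\in\hat K=\partial\overline H$, which is exactly the hypothesis of Remark~\ref{rk:union of two convex sets}; so $D\cup D'$ is convex.

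The crux — and the only place where the hypotheses really bite — will be the observation that the two ``pyramids'' $\conv(F\cup\{\polar_D\})$ and $\conv(F\cup\{\polar_{D'}\})$, although built on opposite sides of $\Pi$ with antipodal apices, lie on the \emph{same} side of every hyperplane $\hat K$ obtained from a supporting hyperplane of $F$ inside $\Pi$ together with $\polar$; this is precisely what makes the union convex, and it fails without the assumption that the gluing respects the polars. The remaining ingredients — that $F$ is a genuine codimension-$1$ face of each cell, that the two cells lie on opposite sides of $\Pi$, and that the convex hulls above are the stated unions of segments and are properly convex — are routine facts about convex subsets of $\sph^\dimd$ and about local charts of $(G,X)$-manifolds, which I would dispatch briefly.
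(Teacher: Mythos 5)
Your proposal is correct and follows exactly the route the paper intends: the paper's proof is simply a citation of Remark~\ref{rk:union of two convex sets}, and your argument is the spelled-out verification that the Containment Condition at both $D$ and $D'$ (together with the polars being identified under the gluing) produces, for each $x\in\partial F$, the required half-space through $x$ containing $D\cup D'$.
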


\begin{proof}
 This is a consequence of Remark~\ref{rk:union of two convex sets}.
\end{proof}

In the case of convex hyperbolic manifolds, the Containment Condition follows from angle assumptions on the corners.

\begin{fact}\label{obs:hyperbolic containment condition}
 Let ${\mb M}$ be a complete hyperbolic bc-manifold and $W$ a wall such that all adjacent corners have angle at most $\pi/2$.
 Then the hyperbolic polar point of $W$ satisfies the Containment Condition.
\end{fact}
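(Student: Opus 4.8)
The plan is to reduce the Fact to a short Lorentzian computation, exploiting that in the projective model of $\H^\dimd$ the foot‑of‑perpendicular projection onto a totally geodesic hyperplane is realised by a \emph{linear} projection.

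\textbf{Setup.} It suffices to treat one lift $\tilde W\subset\tilde{\mathbf M}$ of $W$, since the other lifts are holonomy translates and the Containment Condition is projectively invariant. As $\mathbf M$ is complete, its walls and corners are complete (by the Fact characterising complete walls and corners), so $\tilde W$ is a genuine codimension‑$1$ face of the closed convex set $\tilde{\mathbf M}\subset\H^\dimd$; hence $P:=\Span(\tilde W)$ is a supporting hyperplane of $\tilde{\mathbf M}$. Choose Lorentzian coordinates with $P=\{x_1=0\}$, so the hyperbolic polar point of $W$ is $[e_1]$; pick the representative $e_1$ with $\tilde{\mathbf M}\subset H:=\{\langle\cdot,e_1\rangle\ge 0\}$ (possible since $P$ is supporting) and set $p:=[e_1]$. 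Similarly, for each wall $W_i$ adjacent to $W$ fix a lift $\tilde W_i$ meeting $\tilde W$, a Lorentz‑unit normal $v_i$ to $\Span(\tilde W_i)$ with $\tilde{\mathbf M}\subset H_i:=\{\langle\cdot,v_i\rangle\ge 0\}$, and write $\theta_i=\angle(W,W_i)\le\pi/2$ for the corner angle.

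\textbf{Identifying the convex hull.} For $x\in\H^\dimd$ (normalised so that its last coordinate is positive) write $x=\langle x,e_1\rangle e_1+u$ with $u:=x-\langle x,e_1\rangle e_1$. Then $u_1=0$, the last coordinate of $u$ equals that of $x$, and $\langle u,u\rangle=\langle x,x\rangle-\langle x,e_1\rangle^2<0$, so $[u]$ is the foot $\pi_P(x)\in P\cap\H^\dimd$ of the perpendicular from $x$ to $P$, represented by the vector $u$. Since $e_1\perp u$ for the Euclidean form and $\langle x,e_1\rangle\ge 0$, the point $[x]$ lies on the minimal segment $[p,\pi_P(x)]$. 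Hence $x\in\mathcal{CH}(\tilde W,p)$ as soon as $\pi_P(x)\in\tilde W$, and the Fact follows once we establish $\pi_P(\tilde{\mathbf M})\subseteq\tilde W$.

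\textbf{The projection lands in $\tilde W$.} Being a codimension‑$1$ face of $\tilde{\mathbf M}$ lying in $P$, the face $\tilde W$ is cut out inside $P\cap\H^\dimd$ by the half‑spaces $P\cap H_i$ over the \emph{adjacent} walls $W_i$ (completeness of $\mathbf M$ applied to $\tilde W$, together with the local finiteness of its wall structure), i.e. $\tilde W=(P\cap\H^\dimd)\cap\bigcap_i H_i$. So it is enough to show $\pi_P(\tilde{\mathbf M})\subseteq H_i$ for each $i$. With the chosen orientations, the interior dihedral angle of the wedge $H\cap H_i$ along $\tilde W\cap\tilde W_i$ is the corner angle $\theta_i$, which yields the identity $\langle e_1,v_i\rangle=-\cos\theta_i\le 0$. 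For $x\in\tilde{\mathbf M}$ we have $\langle x,e_1\rangle\ge 0$ and $\langle x,v_i\rangle\ge 0$, and $\pi_P(x)$ is a positive multiple of $u=x-\langle x,e_1\rangle e_1$, so
\[
\langle\pi_P(x),v_i\rangle\ \propto\ \langle x,v_i\rangle-\langle x,e_1\rangle\langle e_1,v_i\rangle\ =\ \langle x,v_i\rangle+\langle x,e_1\rangle\cos\theta_i\ \ge\ 0,
\]
whence $\pi_P(x)\in H_i$. This proves $\pi_P(\tilde{\mathbf M})\subseteq\tilde W$ and finishes the proof.

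The conceptual heart is the final one‑line display, which is precisely where the hypothesis $\theta_i\le\pi/2$ enters; I expect the main obstacle to be purely bookkeeping around it: pinning down the correct lift and sign of the polar point, checking the dihedral‑angle identity $\langle e_1,v_i\rangle=-\cos\theta_i$ with the right orientation of the normals, and justifying that a face of the possibly non‑compact, infinitely‑faceted convex body $\tilde{\mathbf M}$ is indeed the intersection of its span with the half‑spaces of the adjacent walls only.
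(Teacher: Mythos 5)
Your proof is correct and follows essentially the same route as the paper's: both reduce to showing that the orthogonal projection of $\tilde{\mathbf M}$ onto the hyperplane $P=\Span(\tilde W)$ lands inside $\tilde W$, using (a) that a point $x$ lies on the segment from the polar $p$ to its foot $\pi_P(x)$, (b) that $\tilde W$ is cut out inside $P\cap\H^\dimd$ by the half-spaces of the adjacent walls (via completeness), and (c) that the corner angles $\le\pi/2$ force the projection to respect those half-spaces. The only difference is presentational: the paper phrases (c) geometrically as the inclusion $X\cap\bigcap_iX_i\subset X\cap\bigcap_iX_i'$, where $X_i'$ is bounded by the hyperplane orthogonal to $P$ along the corner, while you verify it directly by the Lorentzian computation $\langle\pi_P(x),v_i\rangle=\langle x,v_i\rangle+\langle x,e_1\rangle\cos\theta_i\ge 0$. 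Your version makes the role of the angle hypothesis slightly more explicit; the paper's is a bit more geometric but requires introducing the auxiliary orthogonal hyperplanes. As you note, the step that deserves a citation rather than a parenthetical is that $\tilde W=(P\cap\H^\dimd)\cap\bigcap_iH_i$ — this is precisely Claim 2 of the preceding Fact on complete walls and corners, which the paper implicitly invokes as well.
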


\begin{proof}
 Suppose $\tilde{\mb M}\subset \H^\dimd$ and $\tilde W\subset\H^{\dimd-1}$.
 Let $p\in\sph^{\dimd}$ be one of the two hyperbolic polars, such that $\tilde {\mb M}$ intersects the cone $C=\cal{CH}(p,\H^{\dimd-1})$.
 Let $X\subset\H^\dimd$ be the half-space with boundary $\H^{\dimd-1}$ that contains $\tilde {\mb M}$.
 
 Fact~\ref{obs:hyperbolic containment condition} is a consequence of the classical facts that:
 \begin{itemize}
  \item $X$ satisfies the Containment Condition with respect to $\H^{\dimd-1}$ and $p$, \ie $X\subset C=\cal{CH}(\H^{\dimd-1},p)$.
  \item For any $x\in \H^{\dimd-1}$, the projective line through $x$ and $p$ intersects $\H^\dimd$ in exactly the geodesic perpendicular to $\H^{\dimd-1}$ at $x$.
 \end{itemize}
 
 Indeed, let $\{\tilde W_i\}_{i}$ be the collection of walls of $\tilde{\mb M}$ adjacent to $\tilde W$.
 For each $i$ let $H_i=\Span(\tilde W_i)\cap \H^\dimd$, let $H_i'\subset \H^\dimd$ be the hyperplane orthogonal to $\H^{\dimd-1}$ at $\tilde W\cap\tilde W_i$, and let $X_i$ and $X_i'$  be the half-spaces with boundary $H_i$ and $H_i'$ containing $\tilde{\mb M}$.
 
 Then $Y':=X\cap\bigcap_iX_i'$ is the preimage of $\tilde W$ by the orthogonal projection on $\H^{\dimd-1}$,
 and it satisfies the Containment Condition with respect to $Y'\cap\H^{\dimd-1}=\tilde W$, \ie $X'\subset \cal{CH}(\tilde W,p)$.
 
 Moreover, $Y'$ contains $Y:=X\cap\bigcap_iX_i$ which contains $\tilde {\mb M}$ since all corners adjacent to $\tilde W$ have angle at most $\pi/2$.
 Thus $\tilde {\mb M}$ must also satisfy the Containment Condition.
\end{proof}

\subsubsection{Global convexity}

We now rephrase Proposition~\ref{prop:injetcvx} using the terminology that was introduced in the previous sections.

\begin{thm}\label{thm:cvx}
 Let $\mc{MC}$ be a collection of bc-manifolds. Let $f$ be a projective gluing on $\cal M_f$.
 Assume that:
 \begin{enumerate}
  \item \label{item:thmcvx1} $\cal M_f$ is connected.
  \item \label{item:thmcvx2} All cells are properly convex, with complete walls and corners (Definition~\ref{def:Cplteness}), and without ghost corners (Definition~\ref{def:ghost}).
  \item \label{item:thmcvx3} The union of any two adjacent cells in the universal cover is convex (satisfied for instance if there are polar points satisfying the Containment Condition).
  \item \label{item:thmcvx4} Each singularity is uniformisable in $\sph^\dimd$ (see Proposition~\ref{prop:uniformization}).
 \end{enumerate}
 Then $\mc{M}_f$ is convex.
\end{thm}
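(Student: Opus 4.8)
The plan is to deduce the statement from the local-to-global convexity result Proposition~\ref{prop:injetcvx}, applied to the tree-of-cells description of the universal cover $\widetilde{\mathcal M}_f$ given in Section~\ref{sec:combidescription}. Since the gluing $f$ pairs up all boundary components, $\mathcal M_f$ is a projective manifold without boundary, hence carries a developing map $\dev\colon\widetilde{\mathcal M}_f\to\sph^\dimd$, and ``$\mathcal M_f$ is convex'' means precisely that $\dev$ is an embedding with convex image. The cells of $\mathcal M$ are properly convex b-manifolds, so their walls are $\pi_1$-injective by Fact~\ref{pi1 injective}; hence Fact~\ref{fact:seifertvankampen} applies, and the graph of cells $\mathcal G=(\mathcal V,\mathcal E)$ of $\widetilde{\mathcal M}_f$ is a tree (connected, by assumption~\eqref{item:thmcvx1}), each of whose cells is a copy of the universal cover of some $\mathbf M_{j}\in\mathcal{MC}$ with its corners removed, and with simply connected walls. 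Fixing a base developing map for $\widetilde{\mathcal M}_f$ and propagating it across walls, each cell $v$ develops to a translate of the universal cover of $\mathbf M_{j(v)}$ minus its corner faces; let $D_v\subset\sph^\dimd$ denote the closure of this developed set, a closed properly convex set with non-empty interior. By completeness of walls and corners (assumption~\eqref{item:thmcvx2}), for $w$ adjacent to $v$ the set $D_v\cap D_w$ is exactly the developed wall, a full codimension-$1$ face of both $D_v$ and $D_w$, and by the absence of ghost corners its only codimension-$2$ sub-faces are developed corners; consequently the developed walls are the relative interiors of the codimension-$1$ faces of the $D_v$'s.

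The next step is to check that $(\mathcal G,(D_v)_{v\in\mathcal V})$ is a gluing kit in the sense of Definition~\ref{def:gluing kit}, \ie satisfies Conditions~\eqref{item:hypi1} and~\eqref{item:hypi2} of Proposition~\ref{prop:injetcvx}. Condition~\eqref{item:hypi1} is immediate: for an edge $e=[v,w]$, the face $F_e=D_v\cap D_w$ lies in a hyperplane $H$ with $D_v$ and $D_w$ on opposite sides, so by assumption~\eqref{item:thmcvx3} the union $D_v\cup D_w$ is convex, which by Remark~\ref{rk:union of two convex sets} is exactly the half-space statement of~\eqref{item:hypi1}. For Condition~\eqref{item:hypi2}, one observes that a path $p=(v_1,\dots,v_n)$ whose common intersection $F_p=D_{v_1}\cap\dots\cap D_{v_n}$ has codimension $2$ must, in the absence of ghost corners (assumption~\eqref{item:thmcvx2}), be a finite sub-path of the bi-infinite fan of cells of $\widetilde{\mathcal M}_f$ wrapping around a lift of a singularity $\corner$ of $\mathcal{MC}_f$. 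By Lemma~\ref{lem:cone-manifold} this fan is the developed picture of the tube $T_\corner$, periodic under its meridian holonomy $g$. Assumption~\eqref{item:thmcvx4} says $T_\corner$ is uniformisable in $\sph^\dimd$, so by Proposition~\ref{prop:uniformization} the images of the fan cells under the projection $\sph^\dimd\smallsetminus\sph^{\dimd-2}\to\sph(\R^{\dimd+1}/\R^{\dimd-1})=\sph^1$ all lie in a single properly convex interval or half-circle; by the first part of Proposition~\ref{prop:hypi2 revisited} this is equivalent to $\sph^{\dimd-2}$ (the span of the developed corner $F_p$) lying on the boundary of a half-space of $\sph^\dimd$ containing the whole fan, and in particular containing $D_{v_1}\sqcup\dots\sqcup D_{v_n}$. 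This establishes~\eqref{item:hypi2}.

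Proposition~\ref{prop:injetcvx} then yields that $X=\bigsqcup_{v\in\mathcal V}D_v\times\{v\}/{\sim}$ maps injectively onto a convex subset of $\sph^\dimd$. To finish, one identifies this with the developing picture of $\widetilde{\mathcal M}_f$: the relation $\sim$ on $\bigsqcup_v D_v$ is the one induced by the wall identifications of the tree of spaces of Section~\ref{sec:combidescription}, so the natural map $\widetilde{\mathcal M}_f\to X$ is a homeomorphism onto $\bigcup_{v}{\rm int}(D_v)\cup\bigcup_{e}{\rm int}(F_e)$ (the corner faces of the $D_v$'s being absent from the smooth locus), which equals ${\rm int}(X)$ by Proposition~\ref{prop:intX}, and it conjugates $\dev$ to the tautological projection $X\to\sph^\dimd$. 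Hence $\dev$ is an embedding with image the convex open set ${\rm int}(X)$, \ie $\mathcal M_f$ is convex.

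I expect the main obstacle to be this last identification: carefully matching the abstract gluing kit built from the cells with the developed picture of $\widetilde{\mathcal M}_f$, and in particular ensuring that near each singularity the developing map does not spiral around $\sph^{\dimd-2}$ — which is exactly where uniformisability of the singularities (assumption~\eqref{item:thmcvx4}) enters, via the periodicity analysis of Proposition~\ref{prop:hypi2 revisited}. By contrast, the verification of Conditions~\eqref{item:hypi1}--\eqref{item:hypi2} and of the tree structure is routine once the material of Sections~\ref{sec:sec2}--\ref{sec:sec3} and~\ref{sec:combidescription} is in place.
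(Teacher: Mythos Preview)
Your proposal is correct and follows essentially the same route as the paper's proof: set up the tree of cells of $\widetilde{\mathcal M}_f$, develop each cell to a closed properly convex $D_v$, verify Condition~\eqref{item:hypi1} via assumption~\eqref{item:thmcvx3} and Remark~\ref{rk:union of two convex sets}, verify Condition~\eqref{item:hypi2} by using the absence of ghost corners to extend any codimension-$2$ path to the full bi-infinite fan around a corner and then invoking uniformisability (Proposition~\ref{prop:uniformization}) together with Proposition~\ref{prop:hypi2 revisited}, and conclude by Proposition~\ref{prop:injetcvx}. Your final paragraph making the identification $\widetilde{\mathcal M}_f\cong{\rm int}(X)$ explicit via Proposition~\ref{prop:intX} is a welcome addition that the paper leaves implicit.
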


\begin{proof}
 Let $\cal G=(\cal V,\cal E)$ be the graph of cells associated to the gluing of $\tilde{\cal M}_f$: each $v\in\cal V$ corresponds to a cell of $\tilde{\cal M}_f$ and each $e\in\cal E$ corresponds to a wall of $\cal M_f$ (see Definition~\ref{def:topgluing} and Section~\ref{sec:univgluing}).
 Fix a developing map $\tilde{\cal M}_f\rightarrow\sph^\dimd$, and for each $v\in\cal V$ let $D_v$ be the closure in $\sph^\dimd$ of the image of $v$ by the developing map.
 
 It is enough to check that $(\cal G,\{D_v\}_{v\in\cal V})$ satisfies the assumptions of Proposition~\ref{prop:injetcvx}, \ie that it is a gluing kit (Definition~\ref{def:gluing kit}). Note that connectedness of $\cal G$ is an immediate consequence of Property~\eqref{item:thmcvx1}.
 
{\bf Assumption~\eqref{item:hypi1} of Proposition~\ref{prop:injetcvx}.} This follows from the fact that walls of $\cal M$ are complete (Property~\eqref{item:thmcvx2}) and Property~\eqref{item:thmcvx3}.
 
 {\bf Assumption~\eqref{item:hypi2} of Proposition~\ref{prop:injetcvx}.}
 Let $p=(v_1,v_2,\dots,v_n)\subset \cal G$ be a path such that $F_p:=D_{v_1}\cap\dots\cap D_{v_n}$ is a codimension $2$ face of $D_{v_i}$ for every $i$.
 Since $\cal M$ has no ghost corner (Property~\eqref{item:thmcvx2}), ${\rm int}(F_p)$ is a lift of a corner of $\mc{MC}_f$.
 As a consequence, $p$ may be extended to a bi-infinite path $q=(v_k)_{k\in\Z}$ such that $\bigcap_{k\in\Z}D_{v_k}=F_p$.
 Then Property~\eqref{item:thmcvx4} implies by Proposition~\ref{prop:uniformization} that the sequence $(D_{v_k})_{k\in\Z}$ satisfies the assumptions of Proposition~\ref{prop:hypi2 revisited}, whose conclusion is that $F_p$ is contained in the boundary of a half-space of $\sph^{\dimd}$ containing $\bigcup_{n\in\Z}D_{v_k}$, as required by Proposition~\ref{prop:injetcvx}.
\end{proof}

\subsection{Totally geodesic blowups of gluings}
As we have seen in Lemma~\ref{lem:cone-manifold}, admissible projective gluings give rise to projective cone-manifolds. Hence, as proved in Section \ref{sec:sec2}, under suitable conditions on the holonomy of the meridians of the corners, we can blow up each singularity to a totally geodesic boundary component and we can also give an explicit description of such components in terms of the geometry of the corners. This is the content of the next theorem.

\begin{thm}\label{thm:addendum1} 
 In the setting of Theorem~\ref{thm:cvx}, 
 assume further that
 \begin{itemize}
  \item $\cal M_f$ is properly convex (for instance if one of the cells has an irreducible holonomy).
  \item Each singularity of $\mc{MC}_f$ is special (Definition~\ref{def:special tubes}).
 \end{itemize}
 Then the following holds:
 \begin{enumerate}
 \item{The totally geodesic blowup $N$ of the singularities of $\mc{MC}_f$ (Definition~\ref{def:totgeod blowup}) has complete walls and no ghost corners.
 If each cell of $\cal M$ has no ghost stratum then so does $N$.}
 \item{For each wall $W$ of $N$ blowing up a corner $\corner$ there is a developing map of $N$ sending $\tilde W$ onto the interior of $\conv(\tilde\corner,[e_{\dimd}])$
such that the holonomy of $\pi_1(W)$ is generated by that of the corner
\[
 \gamma\in\pi_1(\corner)\longmapsto \left(
\begin{array}{c c}
\phi(\gamma)^{-1}\rho'(\gamma) & \\
 &\phi(\gamma)^{\frac{\dimd-1}{2}}\mb{I}_2\\
\end{array}
\right)\in{\rm SL}_{d+1}(\mb{R})
\]
and of the meridian 
\[
 \left(
\begin{array}{ccc}
 \mu^{-1}\mb{I}_{d-1}&& \\
 &\mu^{\frac{\dimd-1}2}\lambda &\ep\\
 &&\mu^{\frac{\dimd-1}2}\lambda^{-1}\\
\end{array}
\right)\in{\rm SL}_{d+1}(\mb{R}),
\]
where $\rho'$ is a morphism, $\phi$ a positive character, $\lambda\geq1$, $\ep=1$ if $\lambda=1$ and $0$ otherwise, and $\mu>\lambda^{\frac{\dimd+1}2}$.}
\item{If $\lambda >1$ then $[e_{\dimd+1}]$ yields a polar of $W$ which satisfies the {\rm Containment Condition}.}
\end{enumerate}
\end{thm}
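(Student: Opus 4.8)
The plan is to reduce the statement to the local analysis of special tubes from Section~\ref{sec:sec2} (Propositions~\ref{prop:uniformization} and \ref{prop:totgeod blowup}) together with the description of the cell at infinity of a fan of tiles (Proposition~\ref{prop:aroundcodim2}) applied to the tessellation produced by Theorem~\ref{thm:cvx}.

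\textbf{Setting up.} Since $\mc M_f$ is properly convex, Theorem~\ref{thm:cvx} realizes $\tilde{\mc M}_f$ as a properly convex tessellated domain $X=\bigcup_{v\in\cal V}D_v\subset\sph^\dimd$ whose graph of cells $\cal G=(\cal V,\cal E)$ is a tree. The inclusion $\mc M_f\hookrightarrow N$ of the smooth locus into the blowup adds an exterior collar along each blown-up corner and is therefore a homotopy equivalence; hence ${\rm int}(\tilde N)\cong\tilde{\mc M}_f\cong{\rm int}(X)$, and $\tilde N$ develops into $\sph^\dimd$ with image ${\rm int}(X)$ together with one blown-up wall for each $\pi_1(N)$-orbit of lifts of a corner of $\mc{MC}_f$, so that $N$ is itself a properly convex b-manifold with $\overline{\tilde N}=X$. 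Fix a corner $\corner$ of $\mc{MC}_f$, a cell ordering, and the associated tube $T_\corner$ (Lemma~\ref{lem:cone-manifold}), with meridian holonomy $g$, special by hypothesis. By the tube structure at $\corner$, the set of cells of $X$ containing a fixed lift $\cal S$ of $\corner$ is a bi-infinite path $(D_n)_{n\in\Z}$ (the fan around the corner) with $gD_n=D_{n+m}$ for some $m>0$; being a face of the convex set $X$, $\cal S$ is contractible, hence coincides with the universal cover $\tilde\corner$ of $\corner$ and spans a $\sph^{\dimd-2}$.

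\textbf{Part (2): boundary holonomy.} As $T_\corner$ is special, the proof of Proposition~\ref{prop:totgeod blowup} furnishes a basis fixing $\R^{\dimd-1}=\Span(\cal S)$ in which $g$ equals ${\rm diag}\bigl(\mu^{-1}\Id_{\dimd-1},\mu^{(\dimd-1)/2}\lambda,\mu^{(\dimd-1)/2}\lambda^{-1}\bigr)$ when the $\SL_2$-part is hyperbolic, or its parabolic variant with off-diagonal entry $\ep=1$ when it is parabolic (so $\lambda=1$), with attracting fixed point $x^+=[e_\dimd]$ and attracting wall $H^+$ spanning $\Span\{e_1,\dots,e_\dimd\}$; the specialness inequality \eqref{eq:special tubes} together with $\mu\ge1$ is exactly the stated constraint on $(\mu,\lambda)$. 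Proposition~\ref{prop:totgeod blowup} realizes the blown-up sector as $\tilde T\sqcup\bigl(H^+\smallsetminus(\sph^{\dimd-2}\cup\{x^+\})\bigr)/g\Z$, while Proposition~\ref{prop:aroundcodim2}(1) identifies the cell at infinity of $\{D_n\}_{n\ge0}$ with $D(\xi)=\conv(\cal S,[e_\dimd])$. Hence the wall $W$ of $N$ blowing up $\corner$ develops, on its universal cover $\tilde W=\tilde\corner\times\R$, onto ${\rm int}\,\conv(\tilde\corner,[e_\dimd])$, with the meridian acting by $g$. For $\gamma\in\pi_1(\corner)$, its holonomy commutes with $g$ and fixes $\cal S$ projectively, hence preserves the normal lines $\Span\{e_\dimd\}$ and $\Span\{e_{\dimd+1}\}$ (distinct eigenlines of $g$ if $\lambda>1$; otherwise one uses the tube fibration over the projective circle); since $\pi_1(\corner)$ moreover induces no monodromy on that fibration, it acts on $\Span\{e_\dimd,e_{\dimd+1}\}$ by a single positive scalar, which normalizing the total determinant pins to $\phi(\gamma)^{(\dimd-1)/2}$, where $\phi$ is the dilation character and $\phi(\gamma)^{-1}\rho'(\gamma)$ (with $\rho'$ valued in $\SL^{\pm}_{\dimd-1}$) is the induced holonomy of the corner. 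This produces the two displayed matrices.

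\textbf{Parts (1) and (3).} By Proposition~\ref{prop:aroundcodim2}(2), proper convexity of $X$ makes $D(\xi)=\conv(\cal S,[e_\dimd])$ a closed codimension-$1$ face of $X$; with $\cal S=\tilde\corner$ this shows $W$ develops exactly onto the relative interior of that full face, and the same holds for every $\pi_1(N)$-translate, so $W$ is complete. For ghost corners: $N$ has no corners, so a ghost corner of $\tilde N$ would be a codimension-$2$ face lying in the closures of two distinct walls $D(\xi_1)=\conv(\cal S_1,[e_\dimd]_1)$, $D(\xi_2)=\conv(\cal S_2,[e_\dimd]_2)$; since $\cal S_1,\cal S_2$ are lifts of distinct, hence disjoint, connected components of the singular locus, these two $(\dimd-1)$-dimensional faces cannot share a $(\dimd-2)$-dimensional face. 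Any remaining candidate ghost stratum of $\tilde N$, lying off the attracting walls (which carry no ghost strata by the diagonal form of $g$), projects into some cell $\overline{D_v}=\tilde{\mb M}_{j(v)}$, so if the cells have no ghost strata neither does $N$. Finally suppose $\lambda>1$: then $[e_{\dimd+1}]\in\mb{RP}^\dimd$ is transverse to $\Span(\tilde W)=\Span\{e_1,\dots,e_\dimd\}$ and is fixed by the holonomy of $\pi_1(W)$ (eigenvalue $\mu^{(\dimd-1)/2}\lambda^{-1}$ under the meridian, $\phi(\gamma)^{(\dimd-1)/2}$ under $\pi_1(\corner)$), hence a polar of $W$; and Proposition~\ref{prop:aroundcodim2}(3) states precisely that $X$, thus the image of any developing map of $\tilde N$, lies in $\conv(D(\xi),[e_{\dimd+1}])$, which is the Containment Condition (Definition~\ref{def:ContainmentCond}) for $W$ and this polar.

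\textbf{Main obstacle.} I expect the crux to be Part (2), specifically the claim that $\pi_1(\corner)$ acts by a scalar on the normal $2$-plane $\Span\{e_\dimd,e_{\dimd+1}\}$: this requires both that it commutes with the meridian (fixing the eigenlines when $\lambda>1$) and that it induces no monodromy on the fiber of the tube fibration over the projective circle (so no unipotent part survives in the parabolic case $\lambda=1$), and extracting these facts from the construction in Lemma~\ref{lem:cone-manifold} means keeping careful track of the several universal covers in play ($\tilde{\mc M}_f$, $\tilde N$, the tube $T_\corner$, and $\tilde\corner$). The identification $\cal S=\tilde\corner$ and the translation from ``cell at infinity'' to ``complete wall'' are comparatively routine once the tree structure of $\cal G$ and the convexity of faces are exploited.
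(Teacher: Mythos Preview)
Your approach is essentially the paper's: reduce everything to the fan analysis (Proposition~\ref{prop:aroundcodim2}) and the local blowup of special tubes (Proposition~\ref{prop:totgeod blowup}), and you invoke the right propositions at each step. Two points need sharpening.

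In your ghost-corner argument for Part~(1), the assertion that ``$\cal S_1,\cal S_2$ are lifts of distinct, hence disjoint, connected components of the singular locus'' is not justified and in general false: two walls of $\tilde N$ may well arise from different lifts of the \emph{same} component of the singular locus. The paper's argument instead goes through Proposition~\ref{prop:cells infinity}. By part~\eqref{item:cells infinity 1} every point of $\bar X\smallsetminus X$ lies in a unique $D(\xi)$, so two distinct walls $D(\xi_1),D(\xi_2)$ of $\tilde N$ can only meet inside $X$; by part~\eqref{item:cells infinity 2} this forces the intersection into $\cal S_1\cap\cal S_2$. Since $\cal S_1\neq\cal S_2$ are codimension-$2$ faces of the properly convex set $X$, their intersection is a face of codimension at least $3$, so no ghost corner can occur; and if the cells of $\cal M$ have no ghost strata then $\cal S_1\cap\cal S_2=\emptyset$ and $N$ has none either.

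For the crux you flag in Part~(2), the paper's argument is cleaner than your case split and works uniformly for $\lambda>1$ and $\lambda=1$. The corner holonomy, being built as in Lemma~\ref{lem:cone-manifold} from automorphisms of the tube, commutes with the meridian $g$ and hence preserves $\sph^{\dimd-2}$ and the plane $\Span\{e_\dimd,e_{\dimd+1}\}$. The decisive extra observation is that it also preserves the image under $\dev$ of each wall of a cell $\mb X$ adjacent to $\tilde\corner$: these are half-hyperplanes through $\sph^{\dimd-2}$, projecting to points of $\sph^1=\sph(\R^{\dimd+1}/\R^{\dimd-1})$ that are \emph{not} eigendirections of the $\SL_2$-part of $g$ (they lie in the open arc swept by the fan). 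An element of $\GL_2(\R)$ commuting with a nontrivial hyperbolic or parabolic element and fixing two such points must be a scalar, so the corner holonomy fixes $\Span\{e_\dimd,e_{\dimd+1}\}$ pointwise. This is exactly your ``no monodromy on the fibration'' made concrete, and it dispenses with tracking the parabolic case separately.
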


\begin{proof}
 We start with the description of the developing map of $N$ sending $\tilde W$ onto $\conv(\tilde\corner,[e_{\dimd}])$.

 By Lemma~\ref{lem:cone-manifold}, $\mc{MC}_f$ is a cone-manifold: Every point of $\corner$ admits a neighborhood that embeds into a tube $\mb T=T\sqcup\sph^{\dimd-2}$.
 Thus we can find a tubular neighborhood $\mb{U}\subset \mc{MC}_f$ of $\corner$ and a developing map $\dev_1:\tilde{\mb{U}}\rightarrow \mb T$ with holonomy in the automorphism group of $\mb T$.
 Since each cell of $\cal M$ is properly convex, $\corner$ is uniformisable in $\sph^{\dimd-2}$ so $\dev_1$ is injective on $\tilde \corner$.
 Up to making $\mb{U}$ smaller we can assume that $\dev_1$ is injective, which identifies $\tilde{\mb U}$ with a subset of $\dev_1(\tilde{\mb U}):=\mb V\subset\mb T$ (note that $\mb V$ is the universal cover of $\mb U$ but the smooth locus $V$ is \emph{not} the universal cover of the smooth locus $U$).
 
 Let $\pi: T^b\rightarrow \mb T=T\cup\sph^{\dimd-2}$ be the totally geodesic blowup of $\mb T$.
 By definition, the totally geodesic blowup $V^b$ of $\mb{V}$ is $\pi^{-1}(\mb{V})$.
 , and there is a developing map of the universal cover of the totally geodesic blowup $U^b$ of $\mb U$ denoted by $\dev_2:\tilde{U}^b\rightarrow V^b$ whose holonomy consists of automorphisms of $\mb T$.

 There is a developing map $\dev_3:\tilde{T}^b\rightarrow\sph^\dimd$ that sends the boundary onto $\conv(\sph^{\dimd-2},[e_{\dimd}])$ such that the holonomy of the meridian has the form
 \[
 g=\left(
\begin{array}{ccc}
 \mu^{-1}\mb{I}_{d-1}&& \\
 &\mu^{\frac{\dimd-1}2}\lambda&\ep\\
 &&\mu^{\frac{\dimd-1}2}\lambda^{-1}\\
\end{array}
\right)\in{\rm SL}_{d+1}(\mb{R}),
\]
with $\lambda\geq1$, $\ep=1$ if $\lambda=1$ and $0$ otherwise, and $\mu>\lambda^{\frac{\dimd+1}2}$, since by assumption each singularity of $\mc{MC}_f$ is special.

The map $\dev_2$ lifts to a developing map $\dev_4:\tilde{U}^b\rightarrow \sph^\dimd$.

When restricted to a cell $X$ of $\tilde{\cal M}_f$ (smooth locus of $\mb{X}$, which is a universal cover of a cell of $\mc{MC}$), $\dev_4$ extends to a developing map $\dev_5$ of $\mb{X}$ that sends $\tilde\corner$ onto $\tilde C\subset\sph^{\dimd-2}$, which in fine does not depend on $X$.

The map $\dev_4$ sends the boundary of $\tilde{U}^b$ onto $\conv(\tilde C,[e_{\dimd}])$, and the holonomy of the meridian is $g$, and the holonomy of $\corner$ is in $\Aut(\blow)$ and commutes with $g$, so it preserves $\sph^{\dimd-2}$ and $[e_\dimd]$ and $\Span([e_\dimd],[e_{\dimd+1}])$.
Since this holonomy must also preserve the image by $\dev_5$ of the walls of $\mb{X}$ adjacent to $\tilde \corner$, it has to fix every point of $\Span([e_\dimd],[e_{\dimd+1}])$ and hence have the form stated in Theorem~\ref{thm:addendum1}.

By Property \ref{item:aroundcodim2 cvxhull} of Proposition~\ref{prop:aroundcodim2}, $\conv(\tilde C,[e_{\dimd}])$ corresponds to a cell at infinity of the associated gluing kit $\cal G=(\cal V,\cal E)$.
Property \ref{item:aroundcodim2 face} of Proposition~\ref{prop:aroundcodim2} says moreover that such a cell at infinity is a full closed face of $\partial\tilde N$, which implies that $N$ has complete walls.
 
By Property \ref{item:cells infinity 1} of Proposition~\ref{prop:cells infinity} and the above, a ghost stratum of $\tilde N$ (the intersection of at least two closed walls of $\tilde N$) is in fact the intersection of at least two corners of ${\tilde{\mc{MC}}}_f$.
 This implies that $N$ has no ghost corners, and moreover no ghost stratum at all if $\cal M$ has no ghost stratum.
 
 That each wall of $N$ above a singularity with hyperbolic $\SL_2$-angle admits a polar point satisfying the Containment Condition is an immediate consequence of Property \ref{item:aroundcodim2 containment} of Proposition~\ref{prop:aroundcodim2}.
\end{proof}

\begin{rk}
 In the setting of Theorem~\ref{thm:addendum1}, if every singularity has hyperbolic $\SL_2$-angle then the double $P$ of $N$ admits a convex projective structure.
\end{rk}

\subsection{Stratification of the boundary for a gluing}
Lastly, after blowing up the singularity of the convex projective cone-manifold given by a suitable admissible gluing, we want to describe the geometry of its universal cover. In order to do so, we have to understand the stratification of the boundary. We do this exploiting the work we developed in Section \ref{sec:sec3} as stated in the next result:

\begin{thm}\label{thm:addendum2}
 In the setting of Theorem~\ref{thm:addendum1},
 suppose that: 
 \begin{itemize}
  \item For each cell $M\subset\cal M$ with universal cover $\tilde M$, each non-trivial face of $\tilde M$ touching a wall is contained in a wall.
  \item For each cell $M\subset\cal M$ with universal cover $\tilde M$, each non-trivial face of $\tilde M$ touching a  corner is a wall adjacent to the corner or is contained in the corner.
  \item There is no ghost stratum.
  \item $\mc{MC}_f$ is compact.
 \end{itemize}
 Then the associated gluing kit satisfies the assumptions of Proposition~\ref{prop:contracting cells at infinity}, 
 so any cell at infinity which is not a wall of the totally geodesic blowup $\tilde N$ consists of an extremal $\cal C^1$ point.
\end{thm}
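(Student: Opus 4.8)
The plan is to verify that the gluing kit $\cal G=(\cal V,\cal E)$ associated to a developing map of $\tilde N$---as in the proof of Theorem~\ref{thm:cvx}, so that its tiles are the developed universal covers of the totally geodesic blowups of the cells of $\cal M$, and $X=\bigcup_{D\in\cal V}D$ is the image of $\tilde N$---satisfies the hypotheses of Proposition~\ref{prop:contracting cells at infinity}, and then to identify which cells at infinity are the exceptional ones. By Theorem~\ref{thm:addendum1}, every tile of $\cal G$ already has complete walls and no ghost corners, and, since by hypothesis no cell of $\cal M$ has a ghost stratum, no ghost stratum at all.

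First I would check the combinatorial input, namely the conditions~\eqref{item:find contract1}, \eqref{item:find contract2}, \eqref{item:find contract3} of Section~\ref{sec:sec3} and the codimension statement. Condition~\eqref{item:find contract1} (an intersection of at least two walls of a tile is empty or a corner) is exactly the absence of ghost strata in the tiles. Conditions~\eqref{item:find contract2} and~\eqref{item:find contract3} (a non-trivial face touching a wall lies in a wall; a non-trivial face of codimension $\geq2$ touching a corner lies in a corner) hold by hypothesis for the cells of $\cal M$; to transport them to the blown-up tiles one uses the explicit description, furnished by parts~\eqref{item:aroundcodim2 cvxhull} and~\eqref{item:aroundcodim2 face} of Proposition~\ref{prop:aroundcodim2} and by the local model in the proof of Theorem~\ref{thm:addendum1}, of the new codimension-$1$ faces $\conv(\cal S,[e_\dimd])$ created when blowing up a corner $\cal S$, and of their sub-faces. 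For the codimension statement, a stratum of $\cal G$ that is not a wall is of the form $\bigcap_{v\in A}D_v$ where the convex hull of $A$ in the tree $\cal G$ is a path $v_0,\dots,v_k$ with $k\geq2$; by~\eqref{item:find contract1} the intersection $(D_{v_0}\cap D_{v_1})\cap(D_{v_1}\cap D_{v_2})$ of two walls of $D_{v_1}$ is a corner $\cal S$ of codimension $2$, and a short induction---using that a wall of a tile touching a corner must contain that corner, which is the transported form of the face hypothesis---shows that $\bigcap_{v\in A}D_v=\cal S$.

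The second, and main, point is to produce the compact family $\cal K$ of visible triples. Given a path $\{A,B,C,D\}\subset\cal V$ with $A\cap B$ and $C\cap D$ disjoint, I would apply Lemmas~\ref{lem:walls far away} and~\ref{lem:finding visible triples} inside the cells $B$ and $C$---whose union $B\cup C$ is convex and compact---together with the conditions~\eqref{item:find contract1}--\eqref{item:find contract3}, to build a convex compact $K\subset\bar X$ containing $B\cup C$ (essentially $B\cup C$ itself, possibly trimmed near the walls $A\cap B$ and $C\cap D$) for which $(A\cap B,K,C\cap D)$ is a visible triple. Since $\mc{MC}_f$ is compact, so is $N$, and $\pi_1(N)$ acts cocompactly on $X$ with only finitely many orbits of tiles and walls; choosing the auxiliary data equivariantly, there are then only finitely many $\GL_{\dimd+1}(\R)$-orbits of configurations $(A,B,C,D)$ together with their triples, and these all lie in $\GL_{\dimd+1}(\R)\cdot\cal K$ for a single compact set $\cal K$ of visible triples. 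This establishes all the hypotheses of Proposition~\ref{prop:contracting cells at infinity}.

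Finally, Proposition~\ref{prop:contracting cells at infinity} yields that every cell at infinity $D(\xi)$ disjoint from $X$ is an extremal $\cal C^1$ point, so it remains to see that a cell at infinity meeting $X$ is a wall of $\tilde N$. If $\xi=\{D_n\}_{n\in\mb N}\in\partial_\infty\cal V$ and $D(\xi)\cap X\neq\emptyset$, then by part~\eqref{item:cells infinity 2} of Proposition~\ref{prop:cells infinity} the set $D(\xi)\cap X=\bigcap_{n\geq N}D_n$ is a non-empty stratum; being an intersection of infinitely many tiles it cannot be a wall (a wall lies in exactly two tiles), so by the previous step it is a corner $\cal S$, whence $\xi$ is a fan around $\cal S$ and part~\eqref{item:aroundcodim2 cvxhull} of Proposition~\ref{prop:aroundcodim2} identifies $D(\xi)$ with $\conv(\cal S,[e_\dimd])$, which by Theorem~\ref{thm:addendum1} is one of the totally geodesic boundary faces of $\tilde N$ obtained by blowing up $\cal S$. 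Hence any cell at infinity which is not a wall of $\tilde N$ consists of an extremal $\cal C^1$ point. I expect the main obstacle to be the bookkeeping in the combinatorial step: the face lattice of a blown-up tile interlaces the original face lattice of $\tilde M$ with the new faces $\conv(\cal S,[e_\dimd])$ and their sub-faces, so checking~\eqref{item:find contract1}--\eqref{item:find contract3} and the codimension-$2$ property for it requires unwinding the explicit local model of Proposition~\ref{prop:aroundcodim2}.
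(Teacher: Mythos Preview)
Your proposal has the right overall architecture (verify the hypotheses of Proposition~\ref{prop:contracting cells at infinity}, then identify the exceptional cells at infinity with the walls of $\tilde N$), and your third paragraph is essentially correct. But there are two problems, one of setup and one substantive.

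\textbf{Setup.} The gluing kit in question is the one already built in Theorem~\ref{thm:cvx} from $(\cal M,f)$: its tiles are the developed universal covers of the \emph{original} cells of $\cal M$, not ``blown-up'' cells. The walls of $\tilde N$ created by the blowup are not faces of any tile; they appear as the cells at infinity attached to fans around corners (this is exactly what Theorem~\ref{thm:addendum1} and Proposition~\ref{prop:aroundcodim2} say). So your first paragraph's effort to ``transport'' the face conditions to blown-up tiles is misplaced---the hypotheses of the theorem are stated directly for the original cells, and those are the tiles.

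\textbf{The real gap.} Your compactness argument for $\cal K$ does not work as written. You assert that by cocompactness of the $\pi_1(N)$-action ``there are then only finitely many $\GL_{\dimd+1}(\R)$-orbits of configurations $(A,B,C,D)$.'' This is false: there are finitely many $\Gamma$-orbits of pairs $(\text{tile},\text{wall})$, but a single tile has infinitely many walls, and the stabiliser of one wall $W_i$ does \emph{not} act with finitely many orbits on the other walls of that tile. So length-$3$ paths fall into infinitely many $\Gamma$-orbits, and there is no reason their associated triples $(A\cap B,\,B\cup C,\,C\cap D)$ should lie in a compact $\GL_{\dimd+1}(\R)$-orbit either. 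The paper avoids this by not parametrising by paths at all: it fixes a Riemannian metric on $\mc{MC}_f$, uses the cocompact action of $\Stab_\Gamma(W_i)$ (respectively $\Stab_\Gamma(W_i,\corner_{i,j})$) to choose, for each far wall $W$ (respectively far corner), a shortest segment from $W_i$ (respectively $\corner_{i,j}$) whose initial vector lies in a fixed compact set of transverse vectors, and then invokes Lemma~\ref{lem:walls far away} to trap all these far walls inside a single convex piece $D_i\smallsetminus U_i'$ (respectively $(D_i\cup D_{i,j})\smallsetminus U_{i,j}'$). The middle term of every visible triple is then $\bar X$ itself, and $\cal K$ becomes the set of visible triples $(W_i,\bar X,M)$ with $M$ contained in one of these finitely many fixed compact pieces. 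For an arbitrary path $A,B,C,D$ one splits into cases according to whether $A\cap B\cap C$ and $B\cap C\cap D$ are empty, and translates by an appropriate element of $\Gamma$ into one of these model configurations. This reduction via wall and corner stabilisers is the missing idea in your sketch.
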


\begin{proof}
 Let $\cal G=(\cal V,\cal E)$ be the gluing kit associated to $(\cal M,f)$, so that $\tilde M_f=\interior X$ where $X:=\bigcup_{D\in\cal V}D$.
 Let us see how to derive the above theorem from Lemmas~\ref{lem:finding visible triples} and \ref{lem:walls far away} and from our compactness assumption.

 Fix a smooth structure on $\mc{MC}_f$ that extends that of $\cal M_f$ by using a nice parametrization. Fix also a Riemannian metric.
 
 The group $\Gamma:=\pi_1(\cal M_f)$ acts on the set of pairs $(D,W)$ where $D$ is a cell of $\cal G$ and $W$ is an adjacent wall, and this action admits finitely many representatives $(D_1,W_1),\dots,(D_n,W_n)$ since $\mc{MC}_f$ is compact.
 
 Fix $i$. 
 Choose for each wall $W$ of $D_i$ not adjacent to $W_i$, in a $\Stab_\Gamma(W_i)$-equivariant way, a segment from $W_i$ to $W$ with smallest length for the Riemannian metric; denote by $v_W$ the starting vector (transverse to $W_i$) and by  $y_W\in W$ the ending point of the segment.
 By compactness of $\mc{MC}_f$, we can find a compact set $K$ of transverse vectors to $W_i$ and a set $\cal W_i$ of $\Stab_\Gamma(W_i)$-representatives of walls $W$ of $D_i$ non-adjacent to $W_i$ such that $v_W\in K$ for any $W\in\cal W_i$.
 Then there exists an open neighborhood $U_i$ of $W_i$ such that $y_W\not\in U_i$ for any $W\in\cal W_i$.
 By Lemma~\ref{lem:walls far away}, we can find an open neighborhood $U_i'$ of $W_i$ such that $D_i\smallsetminus U_i'$ is convex and contains walls in $\cal W_i$.
 By Lemma~\ref{lem:finding visible triples}, $(W_i,\bar X,D_i\smallsetminus U_i')$ is contracting.
 
 By compactness, there is a finite number of $\Stab_{\Gamma}(W_i)$-representatives $\{W_{i,j}\}_j$ of walls of $D_i$ adjacent to $W_i$.
 Fix $j$.
 Let $D_{i,j}$ be the cell other than $D_i$ which is adjacent to $W_{i,j}$ and $\corner_{i,j}:=W_i\cap W_{i,j}$.
 Choose for each corner $\corner$ of $W_{i,j}$ other than $\corner_{i,j}$, in a $\Stab_\Gamma(W_i,\corner_{i,j})$-equivariant way, a segment from $\corner_{i,j}$ to $\corner$ with smallest length for the Riemannian metric; denote by $v_\corner$ the starting vector (transverse to $\corner_{i,j}$) and by  $y_\corner\in \corner$ the ending point of the segment.
 
 By compactness, we can find a compact set $K$ of vectors tangent to $W_{i,j}$ and transverse to $\corner_{i,j}$ and a set $\cal{C}_{i,j}$ of $\Stab_\Gamma(W_i,\corner_{i,j})$-representatives of corners $\corner$ of $W_{i,j}$ other than $\corner_{i,j}$ such that $v_\corner\in K$ for any $\corner\in\cal{C}_{i,j}$.
 Then there exists an open neighborhood $U_{i,j}$ of $W_i$ such that $y_\corner\not\in U_i$ for any $\corner\in\cal C_{i,j}$.
 By Lemma~\ref{lem:walls far away}, we can find an open neighborhood $U_{i,j}'$ of $W_i$ such that $D_i\cup D_{i,j}\smallsetminus U_{i,j}'$ is convex and contains walls adjacent to a corner of  $\cal C_{i,j}$, other than $W_{i,j}$.
 By Lemma~\ref{lem:finding visible triples}, $(W_i,\bar X,(D_i\cup D_{i,j})\smallsetminus U_{i,j}')$ is a visible triple.
 
 Let us now stop fixing $i,j$, and let $\cal K$ be the set of visible triples of the form $(W_i,\bar X,M)$ where $M\subset D_i\smallsetminus U_i'$ or $(D_i\cup D_{i,j})\smallsetminus U_{i,j}'$ for some $i,j$.
 
 Let $A,B,C,D$ be a path of cells of $\cal G$ such that $A\cap B\cap C\cap D$ is empty.
 \begin{itemize}
  \item if $A\cap B\cap C$ is empty, then do the following.
  Find $i$ and $g\in\Gamma$ such that $(B,A\cap B)=g(D_i,W_i)$.
  Find $W\in\cal W_i$ and $h\in\Stab_\Gamma(W_i)$  such that $B\cap C=ghW$.
  Then $(A\cap B,\bar X,C\cap D)\in gh\cal K$;
  \item if $B\cap C\cap D$ is empty, then apply the previous point to the path $D,C,B,A$, so that $(C\cap D,\bar X,A\cap B)\in\GL_{\dimd+1}(\R)\cdot \cal K$;
  \item if $A\cap B\cap C$ and $B\cap C\cap D$ are non-empty, then do the following.
  Find $i$ and $g\in\Gamma$ such that $(B,A\cap B)=g(D_i,W_i)$.
  Find $j$ and $h\in\Stab_\Gamma(W_i)$ such that $B\cap C=hgW_{i,j}$ and $A\cap B\cap C=gh\corner_{i,j}$.
  Find $\corner\in \cal C_{i,j}$ and $k\in\Stab_\Gamma(W_i,\corner_{i,j})$ such that $B\cap C\cap D=ghk\corner$.
  Then $(A\cap B,\bar X,C\cap D)\in ghk\cal K$.\qedhere
 \end{itemize}
\end{proof}


\section{Hyperbolic building blocks}
\label{sec:sec6}

In this section we construct the hyperbolic building blocks that serve as input for the construction in the proof of Theorems \ref{thm:mainA} and \ref{thm:mainB}. In particular we describe the results of Bonahon and Otal \cite{BO04} and we prove Theorem \ref{thm:main7}.

\subsection{Dimension $d=3$}\label{sec:dim3}

In dimension $d=3$ there is a lot of flexibility in the choice of building blocks. 

\subsubsection{3-manifold topology}
The existence of a convex hyperbolic metric with totally geodesic boundary bent along the some corners on a compact 3-manifold $(M,\partial M)$ such that the pleating locus coincides with a multicurve $\alpha=\alpha_1\sqcup\cdots\sqcup\alpha_n\subset\partial M$ can be reduced to an explicit purely topological problem by work of Bonahon and Otal \cite{BO04}.

Before stating their results, we briefly review the necessary terminology and notion from 3-dimensional topology. For a more in-depth discussion, we refer to \cite[Ch\;9]{Mar16}.

\begin{defi}[Irreducible]
\label{def:irreducible}
An orientable 3-manifold $M$ is {\em irreducible} if every embedded 2-sphere $\mb{S}^2\subset M$ bounds a 3-ball $\mb{B}^3\subset M$.
\end{defi}

\begin{defi}[Atoroidal]
\label{def:atoroidal}
An orientable irreducible 3-manifold $M$ is {\em atoroidal} if every embedded $\pi_1$-injective 2-torus $\mb{T}^2\subset M$ is isotopic to a component of $\partial M$.
\end{defi}

\begin{defi}[Doubly Incompressible]
\label{def:doubly incompressible}
Let $M$ be a compact 3-manifold with boundary $\partial M\neq\emptyset$. An essential multicurve $\alpha=\alpha_1\sqcup\cdots\sqcup\alpha_n\subset\partial M$ is {\em doubly incompressible} if:
\begin{enumerate}
\item{We have 
\[
\sum_{j\le n}{i(\alpha_j,\partial A)}>0
\]
for every properly embedded essential annulus or Möbius band $(A,\partial A)\subset(M,\partial M)$.}
\item{We have 
\[
\sum_{j\le n}{i(\alpha_j,\partial D)}>2
\]
for every properly embedded essential disk $(D,\partial D)\subset(M,\partial M)$.}
\end{enumerate}

Here $i(\bullet,\bullet)$ denotes the {\em geometric intersection number} between closed curves on $\partial M$ (see \cite[Ch.\,8]{Mar16}).
\end{defi}

The notion of doubly incompressible multicurve was introduced by Thurston in \cite{Th3}. It is well-known that doubly incompressible multicurves are abundant, in fact, there is an open set $\mc{O}\subset\mc{PML}$ of the space of projective measured laminations on $\partial M$ (see Lecuire \cite{Le05}), such that every multicurve $\alpha\in\mc{O}$ satisfies such property. We refer to \cite[Ch.\,8]{Mar16} for an introduction to (projective) measured laminations.

\subsubsection{Convex hyperbolic 3-manifolds with prescribed corners}
The existence of hyperbolic structures on $(M,\partial M)$ is a consequence of Thurston's Hyperbolization Theorem:

\begin{fact}[{Thurston, see \cite[Th.\,1.43]{K01}}]
\label{thm:hyperbolization}
Let $M$ be a compact orientable irreducible atoroidal 3-manifold with non-empty boundary $\partial M\neq\emptyset$. Let $T$ be the union of all toroidal components of $\partial M$. Then $M-T$ admits a complete convex hyperbolic structure of finite volume, that is $M-T$ is homeomorphic to a quotient $\mc{C}/\Gamma$ where $\mc{C}\subset\mb{H}^3$ is a $\Gamma$-invariant convex set and $\Gamma<{\rm Isom}^+(\mb{H}^3)$ is a discrete and torsion free subgroup. 
\end{fact}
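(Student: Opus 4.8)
The statement to be proved is Fact~\ref{thm:hyperbolization}, Thurston's Hyperbolization Theorem for Haken manifolds with (possibly toroidal) boundary. Since this is a classical result with a long and delicate proof, the plan is not to reprove it from scratch but to \emph{deduce} it from the version usually stated in the literature (e.g.\ \cite[Th.\,1.43]{K01}), which is exactly the reference cited. So the proof is really a short bookkeeping argument reconciling the hypotheses.

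\textbf{Step 1: reduce to the Haken case.} First I would recall that a compact orientable irreducible $3$-manifold $M$ with non-empty boundary is automatically Haken: if $\partial M$ contains a component of genus $\geq 1$ then it is incompressible or can be compressed down, and irreducibility plus $\partial M\neq\emptyset$ forces $M$ to contain a properly embedded two-sided incompressible surface (this is the standard fact that a compact orientable irreducible $3$-manifold with non-empty boundary that is not a ball is Haken; a ball has $S^2$ boundary, excluded since a genuine boundary component of positive genus must exist once we know $M\ne B^3$, and if $M = B^3$ it is trivially hyperbolizable after removing nothing — but actually $B^3$ is not atoroidal-relevant here, so I would just note $M\ne B^3$). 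The upshot is that $M$ falls within the scope of the Haken hyperbolization theorem.

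\textbf{Step 2: apply the cited statement.} Now I would invoke \cite[Th.\,1.43]{K01} directly: for a compact orientable irreducible atoroidal Haken $3$-manifold $M$, letting $T\subset\partial M$ be the union of toroidal boundary components, the manifold $M\smallsetminus T$ (equivalently, the interior of $M$ with the toroidal boundary components pushed to cusps) admits a complete finite-volume hyperbolic structure. The only point requiring a word is the translation between the ``interior admits a complete hyperbolic metric'' formulation and the ``$M\smallsetminus T = \mathcal{C}/\Gamma$ with $\mathcal{C}\subset\mathbb{H}^3$ convex'' formulation requested in the statement: one takes $\Gamma$ to be the holonomy image in $\mathrm{Isom}^+(\mathbb{H}^3)$ (torsion-free since $M$ is a manifold), and $\mathcal{C}$ to be the convex core's preimage, or more simply one observes that for a complete hyperbolic manifold the developing map is an isometry onto $\mathbb{H}^3$ itself when $M$ has no non-toroidal boundary, and onto a convex subset (the intersection of half-spaces bounded by the totally geodesic boundary, or the convex hull of the limit set together with horoballs) in general; here since we removed all toroidal boundary and $M$'s remaining boundary is higher genus, the relevant $\mathcal C$ is $\mathbb H^3$ minus the appropriate horoballs if one wants finite volume on the nose, but for the statement as phrased ($\mathcal C$ a $\Gamma$-invariant convex set, $M-T$ homeomorphic to $\mathcal C/\Gamma$) one may just take $\mathcal C=\mathbb H^3$ and note $M-T\cong \mathbb H^3/\Gamma$ as the complete hyperbolic structure has no boundary after removing $T$ — wait, if $\partial M$ has higher-genus components these do \emph{not} become cusps, so $M-T$ still has boundary, which is incompatible with $M-T\cong\mathbb H^3/\Gamma$. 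Therefore the correct reading, and the one I would record, is that $M-T$ is the \emph{interior} being hyperbolized; more precisely the statement should be understood as: $\mathrm{int}(M)\smallsetminus T$ (or the interior of $M-T$) carries the complete finite-volume metric, and is $\mathbb H^3/\Gamma$; equivalently $M-T$ is homeomorphic to $\mathcal C/\Gamma$ for $\mathcal C$ the preimage of the non-cuspidal part, which is convex. I would phrase the deduction to match whichever convention the paper uses downstream (it is applied in Section~\ref{sec:dim3} to get totally geodesic boundary via Bonahon--Otal, so the convex-core formulation is what is wanted).

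\textbf{Main obstacle.} There is no real mathematical obstacle — the hard work is entirely contained in the cited theorem of Thurston. The only genuine care needed is the \emph{bookkeeping}: correctly identifying that ``irreducible with non-empty boundary'' $\Rightarrow$ Haken, and correctly stating the relationship between the complete finite-volume hyperbolic structure on the interior and the convex-set-quotient description $\mathcal C/\Gamma$, so that the statement is literally the one consumed later in the paper. I would write perhaps four or five lines: invoke Haken-ness, invoke \cite[Th.\,1.43]{K01}, and spell out the $\mathcal C = $ (preimage of the truncated manifold) identification, noting $\Gamma$ is discrete and torsion-free because $\mathrm{int}(M)$ is an aspherical manifold and the holonomy of a complete structure is faithful and discrete.
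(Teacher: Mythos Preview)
The paper does not prove this statement: it is recorded as a \emph{Fact} with a bare citation to \cite[Th.\,1.43]{K01} and no argument whatsoever. Your approach---invoke the cited reference and do a few lines of bookkeeping---is therefore already more than the paper provides, and is correct in spirit. The only comment is that your Step~2 wanders a bit: the convex-set formulation $\mathcal{C}/\Gamma$ is precisely designed to accommodate higher-genus boundary components (which become totally geodesic, so $\mathcal{C}$ is an intersection of half-spaces in $\mathbb{H}^3$), while the toroidal components become cusps; there is no need for the back-and-forth about whether $M-T$ has boundary. In the paper's usage this fact is immediately followed by Bonahon--Otal (Fact~\ref{thm:bonahon otal}), which handles the boundary geometry explicitly, so the authors simply take Thurston's theorem as a black box.
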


If $M$ has a convex hyperbolic metric of finite volume as in Fact \ref{thm:hyperbolization}, then it has a large deformation space of such metrics. 

In particular, Bonahon and Otal proved:

\begin{fact}[{Bonahon--Otal \cite[Th.\,2]{BO04}}]
\label{thm:bonahon otal}
Let $M$ be a compact orientable 3-manifold with non-empty boundary $\partial M\neq\emptyset$ such that $M$ is irreducible and atoroidal. Let $\alpha_1\cup\cdots\cup\alpha_n\subset\partial M$ be a multicurve and let $\theta_1,\cdots,\theta_n\in[0,\pi]$ be a set of angles. Suppose that the following properties hold: 
\begin{enumerate}
\item{We have 
\[
\sum_{j\le n}{\theta_j i(\alpha_j,\partial A)}>0
\]
for every properly embedded essential annulus or Möbius band $(A,\partial A)\subset(M,\partial M)$.}
\item{We have 
\[
\sum_{j\le n}{\theta_j i(\alpha_j,\partial D)}>2\pi
\]
for every properly embedded essential disk $(D,\partial D)\subset(M,\partial M)$.}
\end{enumerate} 

Let $\mu\subset\alpha$ be the subset of curves with angle $\pi$. Then, there exists a convex hyperbolic metric on $M-\mu$, unique up to isotopy, such that 
\begin{itemize}
\item{Each curve $\alpha_j\subset\alpha-\mu$ is a geodesic.}
\item{The closure of each component of $\partial M-\alpha$ is totally geodesic.}
\item{The pleating angle between the components adjacent to $\alpha_j$ of $\alpha-\mu$ is $\pi-\theta_j$.}
\item{Every curve $\alpha_j\subset\mu$ is a rank one cusp.}
\end{itemize}
\end{fact}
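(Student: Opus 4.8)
This is Theorem~2 of Bonahon--Otal \cite{BO04}, so I will only describe the strategy. Write $\mathcal{P}=\mathcal{P}(M,\alpha)\subset[0,\pi]^n$ for the (convex, hence connected) set of tuples $(\theta_1,\dots,\theta_n)$ satisfying the two inequalities in the statement. The plan is to introduce the space $\mathcal{B}$ of isotopy classes of geometrically finite hyperbolic structures on $\mathrm{int}(M)$ whose convex core equals $\mathrm{int}(M)$ with some subcollection of the $\alpha_j$ removed and realized as rank-one cusps, with totally geodesic convex-core boundary away from the remaining $\alpha_j$ and pleated along them, together with the \emph{bending map} $\mathrm{bend}\colon\mathcal{B}\to[0,\pi]^n$ recording the exterior bending angle at each $\alpha_j$ (and $\pi$ at the curves turned into cusps). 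The goal is to prove $\mathrm{bend}$ is a homeomorphism onto $\mathcal{P}$.

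First I would set up the deformation theory: by Thurston's hyperbolization (Fact~\ref{thm:hyperbolization}) a geometrically finite structure on $\mathrm{int}(M)$ with the $\alpha_j$ as cusps exists, freezing the cusp locus turns the deformation space into a finite-dimensional manifold (Ahlfors--Bers--Marden), and Bonahon's analysis of the convex core shows the bending lamination varies continuously. The easy inclusion $\mathrm{im}(\mathrm{bend})\subseteq\mathcal{P}$ is a Gauss--Bonnet computation on the pleated convex-core boundary. For \emph{openness} I would use a ``bending deformation'': near a structure pleated along the multicurve $\alpha$ one can independently perturb the angles, so $\mathrm{bend}$ is open there; the faces $\{\theta_j=\pi\}$ of $\mathcal{P}$ are handled by a limiting argument, using that a curve whose realized bending angle tends to $\pi$ inside a convex core must have length tending to $0$ and thus pinch to a rank-one cusp -- this is what links the totally geodesic boundary picture to the cusped one.

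The hard part is \emph{properness}: if $(X_k)\subset\mathcal{B}$ has $\mathrm{bend}(X_k)\to(\theta_j)\in\mathcal{P}$, then $(X_k)$ subconverges in $\mathcal{B}$. I would argue by contradiction: from a degenerating geometric limit one extracts either an essential annulus or M\"obius band $(A,\partial A)\subset(M,\partial M)$ along which $\sum_j\theta_j\,i(\alpha_j,\partial A)=0$, or an essential disk $(D,\partial D)$ along which $\sum_j\theta_j\,i(\alpha_j,\partial D)\le 2\pi$ -- the $2\pi$ coming from the total turning of $\partial D$ transported to the pleated boundary via Gauss--Bonnet -- contradicting inequality (1) or (2). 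This is Thurston's ``bounded image'' mechanism; making the thin-part/geometric-limit bookkeeping that produces the incompressible surface precise, and checking it really violates the sharp inequalities, is where irreducibility and atoroidality of $M$ are used and is the main technical obstacle. Combining $\mathrm{im}(\mathrm{bend})\subseteq\mathcal{P}$, openness, properness (so that $\mathrm{im}(\mathrm{bend})$ is closed in $\mathcal{P}$), connectedness of $\mathcal{P}$, and nonemptiness (a structure exists near the cusped locus by hyperbolic Dehn filling from Fact~\ref{thm:hyperbolization}) forces $\mathrm{im}(\mathrm{bend})=\mathcal{P}$, giving existence.

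Finally, for \emph{uniqueness} up to isotopy: given $X,X'\in\mathcal{B}$ with $\mathrm{bend}(X)=\mathrm{bend}(X')=(\theta_j)$ I would first establish local uniqueness, i.e. that $\mathrm{bend}$ is a local diffeomorphism -- equivalently, that a hyperbolic structure on $M$ with totally geodesic boundary and prescribed corner angles $\pi-\theta_j$ along $\alpha$ is infinitesimally rigid. One way to see this is to double across the totally geodesic boundary, obtaining a hyperbolic cone-manifold structure on $DM$ with cone angles $2(\pi-\theta_j)\in(0,2\pi)$ along the $\alpha_j$, and invoke local rigidity of cone-manifolds in the spirit of Hodgson--Kerckhoff (or argue more directly as in \cite{BO04}). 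Global uniqueness then follows from local uniqueness together with the properness above and connectedness of $\mathcal{B}$. This rigidity input is the second substantial ingredient of \cite[Th.\,2]{BO04}.
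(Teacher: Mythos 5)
The paper does not prove this Fact; it is quoted verbatim (modulo rephrasing) from Bonahon--Otal and treated as a black box. Immediately after the statement the authors remark that it ``is not phrased in the same language used in Theorem 2 of \cite{BO04}, but is equivalent to it'', and no argument is offered. So there is no ``paper's own proof'' for your sketch to be compared against; the only meaningful question is whether your outline is a fair account of what Bonahon and Otal actually do.

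On that score, your sketch captures the standard continuity-method skeleton correctly: define the bending map $\mathrm{bend}$ into $[0,\pi]^n$, show $\mathrm{im}(\mathrm{bend})\subseteq\mathcal{P}$ via Gauss--Bonnet, show openness by perturbing bending angles, show properness by extracting an essential disk or annulus from a degenerating sequence and contradicting the strict inequalities, and conclude by convexity (hence connectedness) of $\mathcal{P}$. This is indeed Bonahon--Otal's strategy. One caveat about the uniqueness step: your proposal to double across the totally geodesic boundary and invoke Hodgson--Kerckhoff cone-manifold rigidity is not how Bonahon--Otal argue --- their uniqueness theorem predates a clean cone-manifold rigidity statement and is established by different, more combinatorial/geometric means internal to their framework. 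The doubling reduction is also slightly delicate because $DM$ is a closed cone $3$-manifold whose cone angles $2(\pi-\theta_j)$ can approach $2\pi$, right at the edge of the classical Hodgson--Kerckhoff hypothesis; it would work with care, but you would be proving a different (and in some ways stronger) local rigidity statement than is needed. You hedge appropriately with ``or argue more directly as in \cite{BO04}'', which is the right instinct. Another small imprecision: your properness argument should be stated for the closure of $\mathcal{P}$-approaching sequences inside $\mathcal{P}$ itself; a sequence whose limiting angle data lies on the boundary $\partial\mathcal{P}$ (where one of the inequalities degenerates to equality) genuinely can degenerate, and that is precisely the content of the necessity direction.

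In short: the sketch is a plausible and roughly faithful summary of the Bonahon--Otal approach, but the present paper neither proves nor needs to prove this statement, so no genuine gap in the paper is at stake here.
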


The statement of Fact \ref{thm:bonahon otal} is not phrased in the same language used in Theorem 2 of \cite{BO04}, but is equivalent to it. We formulated it in a language adapted to this paper.

Endowed with such hyperbolic metric $M$ is a hyperbolic 3-manifold with totally geodesic boundary and prescribed corners $\alpha_j\subset\partial M$ and corner angles $\theta_j$.

It is immediate to check that if a multicurve $\alpha=\alpha_1\cup\ldots\cup\alpha_n$ is doubly incompressible, then it is enough to choose angles $\theta_j$ very close to $\pi$ to be sure that the requirements of Fact \ref{thm:bonahon otal} are also fullfilled.

Note that Fact \ref{thm:bonahon otal} guarantees some freedom in the choice of the angles at the corners. This is an important feature and we will use it to obtain a certain degree of flexibility in the choice of the projective structure and holonomy of the totally geodesic boundary tori $T_j$ in Theorem \ref{thm:mainB}.

\subsubsection{Deformation space}

The space of isotopy classes of hyperbolic metrics on $M$ with totally geodesic boundary and corners or rank one cusps at the multicurve $\alpha=\alpha_1\cup\cdots\cup\alpha_n$ has a natural topology induced by the topology of algebraic convergence of the holonomy representations $\rho:\pi_1(M,\star)\to{\rm Isom}^+(\mb{H}^3)$ where $\star\in{\rm int}(M)$ is a fixed basepoint. The next result of Bonahon and Otal gives a simple description of convergence of metrics in terms of convergence of angles:

\begin{fact}[{Bonahon--Otal \cite[Th.\,24]{BO04}}]
\label{thm:bending continuous}
Let $\alpha=\alpha_1\cup\cdots\cup\alpha_n\subset\partial M$ be a doubly incompressible multicurve. The map that associates to each hyperbolic metric on $M$ with totally geodesic boundary and corners or rank one cusps at the multicurve $\alpha$ the pleating angles $(\theta_1,\cdots,\theta_n)\in(0,\pi]^n$ is a homeomorphism onto an open polytope $P_\alpha\subset(0,\pi]^n$. The image of those metrics that have rank one cusps at the curves $\alpha_{j_1},\cdots,\alpha_{j_k}$ is contained in the subset with $\theta_{j_1},\cdots,\theta_{j_k}=\pi$.
\end{fact}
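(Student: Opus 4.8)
The plan is to reduce the statement to the deformation theory of hyperbolic cone-manifolds, or equivalently of geometrically finite hyperbolic structures on $\mathrm{int}(M)$. Doubling $M$ across $\partial M$ turns a hyperbolic metric with totally geodesic boundary and corners (or rank one cusps) along $\alpha$, with pleating angles $\theta=(\theta_1,\dots,\theta_n)$, into a $\mathbb{Z}/2$-symmetric hyperbolic cone-manifold structure on the fixed topological manifold $DM$, with cone angle $2(\pi-\theta_j)\in[0,2\pi)$ along the $j$-th component of the link $D\alpha$ (cone angle $0$ being exactly a rank one cusp, which already accounts for the last sentence of the statement). Dually, such a metric on $M$ is the convex core of a geometrically finite structure on $\mathrm{int}(M)$ whose bending measured lamination is the weighted multicurve $\sum_j\theta_j\alpha_j$. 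Under either dictionary the assertion breaks into three pieces: (i) $\theta$ determines the metric; (ii) the set $P_\alpha$ of realizable $\theta$ is exactly the open region of $(0,\pi]^n$ cut out by the inequalities (1)--(2) of Fact~\ref{thm:bonahon otal}, hence an open convex polytope; (iii) both $\theta\mapsto(\text{metric})$ and $(\text{metric})\mapsto\theta$ are continuous.

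Continuity of $(\text{metric})\mapsto\theta$ is immediate, since the dihedral angle along each $\alpha_j$ is a smooth function of the holonomy representation. Sufficiency of (1)--(2), i.e.\ that $P_\alpha$ contains the indicated region, is the existence half of Fact~\ref{thm:bonahon otal}. For necessity I would argue by a Gauss--Bonnet / efficiency estimate: given a bent hyperbolic metric and a properly embedded essential disk $(D,\partial D)$, replace $D$ by a least-area disk, homotope $\partial D$ onto the pleated boundary surface, and bound its total curvature to get $\sum_j\theta_j\,i(\alpha_j,\partial D)>2\pi$, and likewise (with the softer bound $>0$) for essential annuli and M\"obius bands. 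Together with (i) and invariance of domain, once one knows the relevant deformation space is an $n$-manifold on which $\theta$ is a submersion — which follows from the infinitesimal theory of bending deformations (Bonahon's shear-bend cocycles, or Hodgson--Kerckhoff-type local rigidity for cone-manifolds: the $n$ bending angles deform freely and without obstruction near every point of $P_\alpha$) — this shows $\theta$ is a homeomorphism onto the open polytope $P_\alpha$. Continuity of the inverse then follows by properness: if $\theta^{(k)}\to\theta\in P_\alpha$, a compactness argument giving uniform geometric control on the convex cores away from the cusps extracts a subsequence of metrics converging to one realizing angles $\theta$, and by (i) the limit is independent of the subsequence.

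The heart of the matter, and the step I expect to be the main obstacle, is the rigidity (i): a hyperbolic metric on $M-\mu$ with geodesic boundary bent along $\alpha$ with prescribed pleating angles is unique up to isotopy. This is the uniqueness half of Fact~\ref{thm:bonahon otal}; at the level of the doubled cone-manifold on $DM$ it is a Mostow--Prasad-type rigidity for geometrically finite hyperbolic cone-manifolds with cone angles in $[0,2\pi)$, and it is precisely here that double incompressibility of $\alpha$ is indispensable — it forces $\alpha$ to meet every essential disk, annulus and M\"obius band, which in \cite{BO04} is exploited through a delicate analysis of pleated surfaces spanning such subsurfaces to eliminate the flexibility that would otherwise be present. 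Reproducing this would require the full argument of \cite{BO04}, which is what I would invoke for this step.
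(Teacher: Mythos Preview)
The paper does not prove this statement at all: it is recorded as a \emph{Fact} imported directly from Bonahon--Otal \cite[Th.\,24]{BO04}, with only the remark that the phrasing has been adapted to the paper's language. There is therefore no ``paper's own proof'' to compare against.

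Your sketch is a reasonable outline of the Bonahon--Otal strategy (uniqueness via cone-manifold rigidity, existence via Fact~\ref{thm:bonahon otal}, local parametrization via bending deformations, and properness), and you correctly identify the rigidity step as the deep input. Since the paper treats the result as a black box, any honest write-up here should simply cite \cite{BO04} rather than attempt a self-contained argument.
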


Again, the statement of Fact \ref{thm:bending continuous} is not literally the one of \cite[Th.\,24]{BO04}, but is equivalent to it. 

Using a different set of parameters, Choi and Series \cite{CS06} showed that the lengths of the pleating locus of the boundary give a local parametrization of the same space of metrics considered by Bonahon and Otal \cite{BO04}. 

\begin{fact}[{Choi--Series \cite[Th.\,A]{CS06}}]
\label{thm:length continuous}
Let $\alpha=\alpha_1\cup\cdots\cup\alpha_n\subset\partial M$ be a doubly incompressible multicurve. The map that associates to each hyperbolic metric on $M$ with totally geodesic boundary and corners or rank one cusps at the multicurve $\alpha$ the pleating lengths $(\ell_1,\cdots,\ell_n)\in[0,\infty)^n$ is an injective local homeomorphism. The image of those metrics that have rank one cusps at the curves $\alpha_{j_1},\cdots,\alpha_{j_k}$ is contained in the subset with $\ell_{j_1},\cdots,\ell_{j_k}=0$.
\end{fact}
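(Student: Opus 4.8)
The statement to prove is Fact~\ref{thm:length continuous}, the Choi--Series parametrization, so strictly speaking I am only asked to quote it. But to propose a genuine proof (following the strategy that \cite{CS06} employs, and that I would reconstruct here) the plan is as follows. First I would fix the doubly incompressible multicurve $\alpha=\alpha_1\cup\cdots\cup\alpha_n\subset\partial M$ and consider the deformation space $\mathcal{QF}_\alpha$ of hyperbolic metrics on $M-\mu$ (for $\mu$ the sublocus of cusped curves) with totally geodesic boundary and corners along $\alpha$. By Fact~\ref{thm:bending continuous} of Bonahon--Otal this space is parametrized by the bending angles $(\theta_1,\dots,\theta_n)$, ranging over an open polytope $P_\alpha\subset(0,\pi]^n$; in particular $\mathcal{QF}_\alpha$ is a manifold of real dimension $n$. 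So the length map $\ell=(\ell_1,\dots,\ell_n):\mathcal{QF}_\alpha\to[0,\infty)^n$ is a map between manifolds of equal dimension, and it suffices to prove that it is a local homeomorphism; injectivity globally is a separate rigidity input (it comes from the fact that a hyperbolic structure with totally geodesic boundary/corners is determined by the isometry type of the boundary pleated surface together with the bending data, combined with the length spectrum rigidity of the doubled locus — I would cite \cite{CS06} directly for this or reduce it to Mostow-type rigidity for the doubled manifold).

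\textbf{Key steps.} The heart of the argument is the local homeomorphism claim, and I would establish it by showing the differential $d\ell$ is an isomorphism at every point, i.e.\ that the lengths $\ell_j$ of the pleating curves form a system of local coordinates. The standard mechanism (due to Kerckhoff, Series, and in this geodesic-boundary setting Choi--Series) is a convexity/monotonicity argument: one shows that the Hessian of a suitable sum $\sum_j t_j \ell_j$ of pleating lengths, viewed as a function on the deformation space via the bending coordinates, is nondegenerate. Concretely I would: (i) write the derivative of $\ell_j$ with respect to a bending deformation along $\alpha_k$ as a cosine-formula-type expression (the classical formula $\partial \ell_j/\partial(\text{bend along }\alpha_k)$ in terms of the geometry of the pleated boundary, generalizing Kerckhoff's formula for earthquakes); (ii) observe that bending along a curve $\alpha_k$ increases all lengths $\ell_j$ with $j\ne k$ in a controlled monotone way while the diagonal term is governed by the second variation of length under bending along the curve itself; (iii) conclude that the matrix $(\partial\ell_j/\partial\theta_k)$ is, after suitable normalization, diagonally dominant or has a definite sign structure, hence invertible. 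This would give that $\ell$ is a local diffeomorphism onto its image. The statement about the cusped stratum $\{\ell_{j_1}=\cdots=\ell_{j_k}=0\}$ follows because $\ell_j\to 0$ precisely as the corresponding corner degenerates to a rank-one cusp, which in the Bonahon--Otal picture is the boundary facet $\theta_j=\pi$ of $P_\alpha$; so $\ell$ maps the cusped stratum into the corresponding coordinate subspace, and continuity/properness near that stratum handles the closure.

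\textbf{Main obstacle.} The delicate point is the nondegeneracy of the length-vs-bending Jacobian near the boundary of the polytope $P_\alpha$, i.e.\ as one or more bending angles $\theta_j\to\pi$ (the curve becoming a cusp). There the pleated boundary surface is degenerating, some length $\ell_j\to 0$, and the cosine-type formulas for the derivatives have terms that blow up or vanish; one must show the singularities conspire so that the rescaled Jacobian remains invertible, which is exactly the content that makes the cusped curves appear on the nose in the coordinate subspace rather than approached tangentially. Handling this requires a careful asymptotic analysis of the geometry of a pleated surface near a short geodesic, comparing the drilling/Dehn-filling behaviour with the bending parameters — this is where the real work of \cite{CS06} lies, and I would either reproduce that asymptotic estimate or, more economically, cite \cite[Th.\,A]{CS06} outright, since for our purposes (the applications in Section~\ref{sec:sec7}) we only use the statement of Fact~\ref{thm:length continuous} as a black box, exactly as we use Facts~\ref{thm:bonahon otal} and \ref{thm:bending continuous}.
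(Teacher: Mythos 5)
The paper does not prove this statement: it is recorded as a \emph{Fact} with the attribution \emph{Choi--Series \cite[Th.\,A]{CS06}} and is used as a black box (together with Fact~\ref{thm:bending continuous}) to yield Fact~\ref{fact:ChoiSeries} in the introduction. You correctly identify this at the end of your proposal — for the purposes of this paper the statement is an external input, not a result to be reproved, so the appropriate ``proof'' is indeed a citation. Since there is no internal argument to compare against, your reconstruction of the Choi--Series strategy can only be judged on plausibility. It is reasonable: the Kerckhoff-style derivative formula for length under bending, the sign/nondegeneracy structure of the Jacobian $(\partial\ell_j/\partial\theta_k)$, and the asymptotic analysis near the cusped boundary facets $\theta_j=\pi$ of $P_\alpha$ are recognizably the ingredients in \cite{CS06}. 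One caution: your claim that global injectivity ``comes from ... length spectrum rigidity of the doubled locus, combined with Mostow-type rigidity'' is a plausible heuristic, but you should be careful not to present it as if it were established; the injectivity in the Fact is part of what is attributed to Choi--Series, and whether it requires an argument beyond the local-homeomorphism statement plus the Bonahon--Otal global parametrization by angles is a point you would need to check against the source rather than assert. Since the paper itself offers no proof, though, this caveat does not affect the correctness of using the result as a citation.
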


Fact \ref{fact:ChoiSeries} from the introduction is a combination of Facts \ref{thm:bending continuous} and \ref{thm:length continuous} (note that the pleating angle is $\pi$ minus the corner angle).

\subsection{Dimension $d\ge 3$}

In this section we prove Theorem \ref{thm:main7}.

For convenience of the reader, we briefly recall our goal: We want to find a closed hyperbolic $d$-manifold $M$ which contains $k$ totally geodesic connected embedded hypersurfaces $N_1,\cdots,N_k$ whose pairwise intersection $C=N_i\cap N_j$ is a (fixed) codimension 2 connected totally geodesic submanifold and such that the angles of intersection are $\angle N_jN_{j+1}=\pi/k$ for every $j\le k$ (indices modulo $k$). Furthermore, we want that the hypersurfaces $N_j$ are pairwise isometric and related by a cyclic isometry $\rho$ of $M$ that fixes $C$ pointwise and maps $\rho(N_j)=N_{j+1}$.

When the dimension is $d\ge 4$, hyperbolic $d$-manifolds with patterns of totally geodesic hypersurfaces can be obtained by using arithmetic techniques and separability properties of arithmetic lattices as in Gromov-Thurston \cite{GT87} and Kapovich \cite{K07} and, in fact, our construction is inspired by those. Let us point out that, while we have little control on the topology and geometry of these examples, we can, crucially, make sure that two properties are satisfied: The pattern and the angles at the intersections are controlled. 

Our strategy is as follows: For every $k$ we consider an explicit cocompact arithmetic lattice $G<{\rm SO}_0(d,1)$ defined in terms of a suitable quadratic form $q$. We first construct explicitly a convex-cocompact subgroup $Q<G$ such that $\mb{H}^d/Q$ contains, by construction, the desired configuration of $k$ totally geodesic embedded hypersurfaces $N_1,\cdots, N_k$ intersecting exactly along a codimension 2 totally geodesic corner $C=N_1\cap\cdots\cap N_k$ with angles of $\angle N_jN_{j+1}=\pi/k$. Then we use strong separability properties of $G$ due to Bergeron, Haglund, and Wise \cite{BHW11} to embed $N_1\cup\cdots\cup N_k$ in a finite cover of $\mb{H}^d/G$.

We begin with the description of some standard arithmetic manifolds which is the starting point of the construction.

\subsubsection{Arithmetic manifolds from quadratic forms}
From now on let $k\ge 2$ be a fixed integer.

Consider the number field $F:=\mb{Q}(\cos(\pi/k),\sin(\pi/k))$. It is a {\em totally real} extension of $\mb{Q}$, that is, every field embedding $F\to\mb{C}$ has image contained in $\mb{R}$. We denote by $\sigma_1,\cdots,\sigma_r$ the $r={\rm deg}(F/\mb{Q})$ real embeddings $\sigma_j:F\to\mb{R}$ where $\sigma_1$ is the trivial one $\sigma_1(x)=x$ for every $x\in F$. 

Let $\mc{O}\subset F$ be the ring of integers of the field. It is a standard fact that every $x\in F$ can be written as $x=\frac{a}{b}$ with $a\in\mc{O}$ and $b\in\mb{N}$.

\begin{lemma}
\label{lem:number field}
There exists $\tau\in\mc{O}$ such that 
\begin{enumerate}
\item{$F=\mb{Q}(\tau)$.}
\item{$\tau=\sigma_1(\tau)>0$.}
\item{$\sigma_j(\tau)<0$ for every $j\ge 2$.}
\end{enumerate}
\end{lemma}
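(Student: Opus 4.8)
The plan is to construct $\tau$ explicitly as an algebraic integer built from a primitive element of $F$, then correct its sign under the various embeddings by a standard approximation/shifting trick. First I would produce \emph{some} algebraic integer $\theta$ with $F = \mb{Q}(\theta)$: since $F = \mb{Q}(\cos(\pi/k),\sin(\pi/k))$ is a number field, the primitive element theorem gives a generator, and multiplying by a suitable positive rational integer (a common denominator) we may assume the generator lies in $\mc{O}$. So there is $\theta \in \mc{O}$ with $F = \mb{Q}(\theta)$; denote $\theta_j := \sigma_j(\theta)$ for $j = 1,\dots,r$, and note that the $\theta_j$ are pairwise distinct since $\theta$ is a primitive element.

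Next I would fix the sign behaviour. The idea is to choose a large positive integer $n$ and consider $\tau := n - \theta$ or, more flexibly, an element of the form $\tau := m\theta + n$ with $m,n \in \Z$. We want $\sigma_1(\tau) > 0$ and $\sigma_j(\tau) < 0$ for $j \geq 2$. Since $\theta_1,\dots,\theta_r$ are distinct reals, pick a rational number $q$ strictly between $\theta_1$ and $\max_{j\geq 2}\theta_j$ is not quite enough because the $\theta_j$ for $j\geq 2$ need not all lie on one side of $\theta_1$; instead one should first arrange this. A clean way: replace $\theta$ by a large power $\theta^N$ (still a primitive element: $\mb{Q}(\theta^N) = F$ provided $N$ avoids finitely many bad values, which can be checked, or one replaces $\theta$ by $\theta + c$ for generic integer $c$ first so that $|\theta_1| > |\theta_j|$ for all $j\geq 2$ — possible because the $\theta_j$ are distinct, so translating by a large integer makes the conjugate of largest absolute value unique and equal to $\theta_1$). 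Once $|\theta_1| > |\theta_j|$ for all $j \geq 2$, and after possibly replacing $\theta$ by $-\theta$ so that $\theta_1 > 0$, choosing an odd power $\theta^N$ with $N$ large forces $\theta_1^N > 0$ to dominate while the $\theta_j^N$ are comparatively small; then $\tau := \theta^N - \lceil \max_{j\geq 2}\theta_j^N\rceil - 1$, or more simply $\tau := M\theta^N - \theta^{N}$... — rather, the cleanest is: with $|\theta_1|$ strictly the largest, set $\tau := \theta^N$ for $N$ odd and large so that $\mathrm{sign}(\sigma_j(\tau)) = \mathrm{sign}(\theta_j)$ and $|\theta_1^N| \gg |\theta_j^N|$; then $\tau' := \theta^N - c$ with $c$ a positive integer chosen in the narrow window $\max_{j\geq 2}\theta_j^N < c < \theta_1^N$ (which is non-empty once $N$ is large) gives $\sigma_1(\tau') > 0 > \sigma_j(\tau')$ for all $j \geq 2$. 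Finally I would double-check $F = \mb{Q}(\tau')$: since $\tau' = \theta^N - c$ and $\mb{Q}(\tau') = \mb{Q}(\theta^N)$, it remains to see $\mb{Q}(\theta^N) = F$, which holds for all but finitely many $N$ (the degree of $\theta^N$ over $\mb{Q}$ equals $r$ unless $\theta$ satisfies an exceptional monomial relation among conjugates), so pick $N$ large and in this good set.

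The main obstacle — and the only genuinely delicate point — is reconciling the three requirements simultaneously: being a \emph{primitive} generator of $F$, being an algebraic \emph{integer}, and having the prescribed sign pattern under \emph{all} real embeddings at once. Primitivity and integrality are easy and stable under the operations used (integer translation, taking powers, clearing denominators), but the sign pattern requires that under $\sigma_1$ the element be positive and under every other embedding negative, which is a condition in $r-1$ independent real coordinates. The trick that resolves this is precisely making $|\sigma_1(\theta)|$ strictly the largest conjugate in absolute value — achievable by a large integer translation since the conjugates are distinct — after which raising to a large odd power separates the scales so dramatically that a single integer shift $-c$ can flip every $\sigma_j$, $j\geq 2$, negative while leaving $\sigma_1$ positive. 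I would also remark that the lemma will be applied with $\tau$ playing the role of the coefficient $\sqrt 2$ in the motivating example (there $F = \mb{Q}(\sqrt 2)$, $\tau = \sqrt 2$, $r = 2$, $\sigma_2(\sqrt 2) = -\sqrt 2 < 0$), so the construction is genuinely a generalization of that case, and its purpose is to build the anisotropic quadratic form $q = x_1^2 + \cdots + x_d^2 - \tau\, x_{d+1}^2$ over $F$ whose associated $\mc{O}$-lattice yields a cocompact arithmetic subgroup of $\mathrm{SO}_0(d,1)$.
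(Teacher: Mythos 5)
There is a genuine gap in your argument, and it sits exactly at the step you flag as the delicate one. You want to reduce to the situation where $|\sigma_1(\theta)|$ is strictly the largest among the conjugates, and you propose to achieve this by replacing $\theta$ with $\theta + c$ for a suitably large integer $c$. But integer translation shifts every conjugate by the same amount, so it cannot change which embedding is extremal: for $c$ large and positive, all $\theta_j + c$ are positive and the largest in absolute value is the one that was already the largest $\theta_j$; for $c$ large and negative, it is the one that was the smallest. If $\theta_1$ happens to be neither the maximum nor the minimum of the conjugates (which does occur, e.g.\ for a totally real cubic whose chosen root is the middle one), no translation makes $\sigma_1$ dominate. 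Replacing $\theta$ by $-\theta$ only swaps max and min and does not help either. Once this reduction fails, the odd-power-and-shift device that follows (which is otherwise correct) has nothing to stand on, so the proof does not go through as written.

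The paper's proof sidesteps this entirely: instead of manipulating a fixed primitive element, it first writes down one target sign for each real place, and then invokes the Weak Approximation Theorem to produce a single $\tau \in F$ simultaneously close to all $r$ targets. Closeness forces the $\sigma_j(\tau)$ to be pairwise distinct (hence $\mb{Q}(\tau) = F$) and to carry the prescribed signs — no "dominating embedding" reduction is needed. Your draft does mention clearing denominators to get into $\mc{O}$, which is a point the paper's proof actually leaves implicit (it produces $\tau \in F$, and one should multiply by a positive rational integer at the end, which preserves both the sign pattern and primitivity); that is a detail worth keeping. To repair your proof you could keep the integrality and shift steps but replace the translation trick by weak approximation — or, more in the spirit of your approach, replace $\theta$ not by $\theta+c$ but by $1/(\theta-q)$ for a rational $q$ chosen close to $\theta_1$ and bounded away from the other conjugates, which genuinely makes $\sigma_1$ dominate, and then clear denominators; but that is already essentially an instance of approximation.
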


\begin{proof}
Choose $\tau_1,\cdots,\tau_r\in F$ with $\sigma_j(\tau_j)$ pairwise distinct and with $\sigma_1(\tau_1)>0$ and $\sigma_j(\tau_j)<0$ for every $j>1$. Let $\ep>0$ be smaller than $\min_{j\le r}\{|\sigma_j(\tau_j)|\}$ and $\min_{i<j}\{|\sigma_i(\tau_i)-\sigma_j(\tau_j)|\}$. By the Weak Approximation Theorem (see \cite[Th.\,7.20]{Milne}) applied to the $r$ absolute values $|\bullet|_j:=|\sigma_j(\bullet)|$, there exists $\tau\in F$ such that $|\sigma_j(\tau)-\sigma_j(\tau_j)|<\ep$. By our choice of $\ep$, we have that $\sigma_1(\tau),\cdots,\sigma_r(\tau)$ are pairwise distinct, in particular, $\tau$ has degree $r={\rm deg}(F/\mb{Q})$ over $\mb{Q}$ which implies $F=\mb{Q}(\tau)$. Furthermore, by our choice of $\ep$, we also have $\sigma_1(\tau)>0$ and $\sigma_j(\tau)<0$ for every $j>1$.
\end{proof}

Consider the quadratic form of signature $(n,1)$
\[
q(x):=x_1^2+\ldots+x_d^2-\tau x_{d+1}^2.
\]
Let ${\rm SO}(q)_{\mc{O}}$ be the group of orientation preserving isometries of $q$ with coefficients in $\mc{O}$ that preserve the hyperboloid
\[
\mb{H}^d:=\{x\in\mb{R}^{d,1}\left|q(x)=-1,x_{d+1}>0\right.\}.
\]

\begin{lemma}
\label{lem:cocompact}
${\rm SO}(q)_{\mc{O}}<{\rm SO}(q)$ is discrete and acts cocompactly on $\mb{H}^d$.
\end{lemma}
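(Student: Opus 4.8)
The plan is to recognise $\mathrm{SO}(q)_{\mathcal O}$, up to commensurability, as an arithmetic lattice obtained by restriction of scalars, and then to invoke the Borel--Harish-Chandra theorem together with Godement's compactness criterion. First I would regard $\mathbf G:=\mathrm{SO}(q)$ as a linear algebraic group over $F$ and form $\mathbf H:=\mathrm{Res}_{F/\mathbb Q}\mathbf G$, an algebraic group over $\mathbb Q$. Choosing the $\mathbb Z$-structure on $\mathbf H$ coming from the $\mathcal O$-lattice $\mathcal O^{d+1}\subset F^{d+1}$ with the form $q$, one identifies $\mathbf H(\mathbb Z)$ with $\mathrm{SO}(q)_{\mathcal O}$, and there is a canonical isomorphism
\[
\mathbf H(\mathbb R)\;\cong\;\prod_{j=1}^{r}\mathrm{SO}(q^{\sigma_j})(\mathbb R),
\]
where $q^{\sigma_j}$ is the conjugate form $x_1^2+\cdots+x_d^2-\sigma_j(\tau)x_{d+1}^2$. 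By the choice of $\tau$ in Lemma~\ref{lem:number field}, $\sigma_1(\tau)=\tau>0$, so the first factor is $\mathrm{SO}(d,1)$, whereas $\sigma_j(\tau)<0$ for $j\ge 2$ makes $q^{\sigma_j}$ positive definite over $\mathbb R$, so $\mathrm{SO}(q^{\sigma_j})(\mathbb R)\cong\mathrm{SO}(d+1)$ is compact. Hence $\mathbf H(\mathbb R)\cong\mathrm{SO}(d,1)\times K$ with $K$ compact.

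Next I would apply Borel--Harish-Chandra: $\mathbf H(\mathbb Z)$ is a lattice in $\mathbf H(\mathbb R)$. Projecting to the non-compact factor $\mathrm{SO}(d,1)$, a standard argument shows the image is again a lattice: the kernel of the projection is a discrete subgroup of the compact group $K$, hence finite, and if a sequence in $\mathbf H(\mathbb Z)$ converged to the identity in the first factor then, passing to a convergent subsequence in $K$ and using discreteness of the lattice, it would be eventually trivial in the first factor, which gives discreteness of the image. Since $\mathrm{SO}(q)(\mathbb R)$ acts transitively on $\mathbb H^d$ with compact point stabiliser, discreteness of $\mathrm{SO}(q)_{\mathcal O}$ yields a properly discontinuous action on $\mathbb H^d$, and compactness of $\mathbb H^d/\mathrm{SO}(q)_{\mathcal O}$ is equivalent to compactness of $\mathrm{SO}(q)_{\mathcal O}\backslash\mathrm{SO}(d,1)$, hence — via the compact factor $K$ — to compactness of $\mathbf H(\mathbb Z)\backslash\mathbf H(\mathbb R)$.

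The heart of the matter is therefore this last compactness, for which I would invoke Godement's criterion: $\mathbf H(\mathbb Z)\backslash\mathbf H(\mathbb R)$ is compact provided $\mathbf H$ is $\mathbb Q$-anisotropic, equivalently $\mathbf G=\mathrm{SO}(q)$ is $F$-anisotropic, equivalently $q$ does not represent $0$ nontrivially over $F$. This is exactly where the sign conditions of Lemma~\ref{lem:number field} are used: if $q(x)=0$ for some $x\in F^{d+1}\setminus\{0\}$, then applying $\sigma_2$ gives $q^{\sigma_2}(\sigma_2(x))=0$; but $q^{\sigma_2}$ is positive definite over $\mathbb R$, forcing $\sigma_2(x)=0$ and hence $x=0$ by injectivity of $\sigma_2$, a contradiction. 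So $q$ is $F$-anisotropic, Godement's criterion applies, and $\mathbb H^d/\mathrm{SO}(q)_{\mathcal O}$ is compact. I expect the only genuinely delicate bookkeeping to be the restriction-of-scalars identifications and the descent of the lattice property to the non-compact factor; everything else is either a sign computation fed by Lemma~\ref{lem:number field} or a citation of classical results. One should also note that changing the $\mathbb Z$-model only changes $\mathrm{SO}(q)_{\mathcal O}$ within its commensurability class, so the statement is independent of these choices.
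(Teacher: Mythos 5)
Your proposal is correct and follows essentially the same route as the paper: the paper's proof is a one-line citation of the restriction-of-scalars construction (referring to Benoist's lecture notes) and notes exactly the signature facts about $q^{\sigma_1}$ and $q^{\sigma_j}$ that Lemma~\ref{lem:number field} guarantees, while you spell out the steps that citation encapsulates (Borel--Harish-Chandra, projection to the noncompact factor, Godement's criterion, and $F$-anisotropy of $q$). No gaps; the expansion matches what the cited reference contains.
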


\begin{proof}
This comes from the so-called restriction of scalars construction (see \cite[Ex.\,5\,\S 2.1]{Be5lectures}) which requires that the quadratic forms 
\[
q^{\sigma_j}:=x_1^2+\ldots+x_d^2-\sigma_j(\tau)x_{d+1}^2.
\]
are all positive definite for $j>1$ while $q^{\sigma_1}$ has signature $(d,1)$ (as ensured by Lemma \ref{lem:number field}).
\end{proof}

We will work with closed oriented hyperbolic $d$-manifolds that are quotients of $\mb{H}^d$ by torsion free finite index subgroups of ${\rm SO}(q)_{\mc{O}}$. Such groups are abundant by Selberg's Lemma and Malcev's Theorem (see \cite{N13}). The group ${\rm SO}(q)_{\mc{O}}$ has also much stronger subgroup separability properties and we will need them later on: 

\begin{fact}[{Bergeron--Haglund--Wise \cite[Th.\,1.4]{BHW11}}]
\label{bhw}
There exists a torsion free finite index subgroup $G<{\rm SO}(q)_{\mc{O}}$ such that every {\rm convex-cocompact} subgroup of $G$ is {\rm separable}. 
\end{fact}

Let us fix such a $G<{\rm SO}(q)_{\mc{O}}$.

We briefly recall the definition of separability: 

\begin{defi}[Separable]
A subset $S\subset A$ of a group $A$ is {\em separable} if for every $g\in A-S$ there exists a finite index subgroup $A'<A$ such that $S\subset A'$ and $g\not\in A'$. 
\end{defi}

Consider the hyperplane
\[
H:=\{x\in\mb{R}^{d,1}\left|x_1=0\right.\}
\]
and the codimension 2 plane 
\[
V:=\{x\in\mb{R}^{d,1}\left|x_1=x_2=0\right.\}.
\]

Let $\rho$ be the $\pi/k$-rotation around $V$
\[
\rho:=\left(
\begin{array}{c c c}
\cos(\pi/k) &-\sin(\pi/k) &\\
\sin(\pi/k) &\cos(\pi/k) &\\
& &\mb{I}_{d-1}\\
\end{array}
\right).
\]

Note that $\rho\in{\rm SO}(q)_F$. This implies that: 

\begin{lemma}
\label{lem:congruence}
The subgroup
\[
G\cap\rho G\rho^{-1}\cap\cdots\cap\rho^{2k-1} G\rho^{-(2k-1)}
\]
has finite index in $G$ and is normalized by $\rho$.
\end{lemma}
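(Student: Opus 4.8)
The plan is to reduce the statement to the classical fact that conjugating the arithmetic group ${\rm SO}(q)_{\mathcal O}$ by an element of ${\rm SO}(q)_F$ yields a commensurable subgroup. First I would record that $\rho$, being the rotation by $\pi/k$ in the $(x_1,x_2)$-plane, has finite order $2k$ in ${\rm SO}(q)_F$, so $\rho^{2k}={\rm Id}$ and conjugation by $\rho$ cyclically permutes the family of matrices $\{\rho^j\}_{0\le j\le 2k-1}$, hence also the families $\{\rho^j\,{\rm SO}(q)_{\mathcal O}\,\rho^{-j}\}_j$ and $\{\rho^jG\rho^{-j}\}_j$. In particular, once each $\rho^jG\rho^{-j}$ is known to have finite index in $G$, the intersection $\Gamma:=\bigcap_{j=0}^{2k-1}\rho^jG\rho^{-j}$ is automatically $\rho$-invariant, since $\rho\Gamma\rho^{-1}=\bigcap_{j}\rho^{j+1}G\rho^{-(j+1)}=\Gamma$.

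The core step is to show that for each $j$ the subgroup ${\rm SO}(q)_{\mathcal O}\cap\rho^j\,{\rm SO}(q)_{\mathcal O}\,\rho^{-j}$ has finite index in ${\rm SO}(q)_{\mathcal O}$. Here I would identify ${\rm SO}(q)_{\mathcal O}$ with the stabiliser in ${\rm SO}(q)_F$ of the lattice $L:=\mathcal O^{d+1}\subset F^{d+1}$: an element $h\in{\rm SO}(q)_F$ lies in ${\rm SO}(q)_{\mathcal O}$ if and only if $h$ and $h^{-1}={\rm adj}(h)$ (using $\det h=1$) have entries in $\mathcal O$, equivalently if and only if $hL=L$. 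Since the entries of $\rho^{\pm j}$ lie in $F$, one can pick $m\in\mathbb N$ with $m\rho^{j},m\rho^{-j}\in{\rm Mat}_{d+1}(\mathcal O)$, and then the lattice $L':=\rho^jL$ satisfies $mL\subseteq L'\subseteq m^{-1}L$. Thus $L'$ belongs to the set $\mathcal L$ of $\mathcal O$-submodules $M$ with $mL\subseteq M\subseteq m^{-1}L$, which is finite because $m^{-1}L/mL$ is a finite abelian group. The group ${\rm SO}(q)_{\mathcal O}$ fixes $L$, hence permutes $\mathcal L$, and the stabiliser of $L'$ for this action is precisely ${\rm SO}(q)_{\mathcal O}\cap\rho^j\,{\rm SO}(q)_{\mathcal O}\,\rho^{-j}$; since $\mathcal L$ is finite, this stabiliser has finite index. (One could instead simply invoke that $\rho^j\,{\rm SO}(q)_{\mathcal O}\,\rho^{-j}$ is an arithmetic subgroup of ${\rm SO}(q)$ for the same $F$-structure, hence commensurable with ${\rm SO}(q)_{\mathcal O}$.)

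Finally I would descend to $G$. Since $G$ has finite index in ${\rm SO}(q)_{\mathcal O}$ (Fact~\ref{bhw}), intersecting the finite-index subgroup ${\rm SO}(q)_{\mathcal O}\cap\rho^j\,{\rm SO}(q)_{\mathcal O}\,\rho^{-j}$ of ${\rm SO}(q)_{\mathcal O}$ with $G$, and then further with the finite-index subgroup $\rho^jG\rho^{-j}$ of $\rho^j\,{\rm SO}(q)_{\mathcal O}\,\rho^{-j}$, shows that $G\cap\rho^jG\rho^{-j}$ has finite index in $G$ for each $j$; a finite intersection of finite-index subgroups is finite-index, so $\Gamma=\bigcap_{j=0}^{2k-1}\rho^jG\rho^{-j}$ has finite index in $G$, and by the first paragraph it is normalised by $\rho$. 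The only point demanding any care is the commensurability of ${\rm SO}(q)_{\mathcal O}$ with its ${\rm SO}(q)_F$-conjugates—$\rho$ has entries in $F$ but not in $\mathcal O$—which is exactly what the clearing-denominators argument above handles; otherwise the proof is formal.
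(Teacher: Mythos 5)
Your proof is correct but takes a genuinely different route from the paper's. You establish commensurability of ${\rm SO}(q)_{\mathcal O}$ with its $\rho^j$-conjugates via the classical lattice-stabiliser argument: identify ${\rm SO}(q)_{\mathcal O}$ with the stabiliser of the $\mathcal O$-lattice $L=\mathcal O^{d+1}$, clear denominators to trap $\rho^jL$ between $mL$ and $m^{-1}L$, and observe that the finite group $m^{-1}L/mL$ leaves only finitely many candidate lattices, so the stabiliser of $\rho^jL$ in ${\rm SO}(q)_{\mathcal O}$ has finite index. The paper instead introduces the congruence subgroup $G_\delta:=\ker\bigl(G\to \mathrm{SL}_{d+1}(\mathcal O/(\delta^2))\bigr)$, where $\delta$ is a common denominator for the entries of the powers of $\rho$; it verifies directly that each $\rho^jG_\delta\rho^{-j}$ has entries in $\mathcal O$ (conjugating $I+B$ with $B\equiv 0\bmod\delta^2$ absorbs the $\delta^{-1}$-factors coming from $\rho^{\pm j}$), hence lies in ${\rm SO}(q)_{\mathcal O}$, and has finite index there by cocompactness of lattices. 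Both proofs rest on the same underlying fact that $F$-rational conjugation preserves the commensurability class of an arithmetic lattice; yours is the more conceptual and textbook route, while the paper's produces an explicit congruence subgroup sitting inside the intersection, which is marginally more constructive. Your explicit justification of the $\rho$-invariance, via $\rho^{2k}=\mathrm{Id}$ and the resulting cyclic permutation of the conjugates $\rho^jG\rho^{-j}$, is correct and in fact fills in a point the paper states but does not argue.
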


\begin{proof}
Recall that every $x\in F$ can be written as $x=\frac{a}{b}$ where $a\in\mc{O}$, the ring of integers of $F$, and $b\in\mb{N}$. For every $1\le j\le 2k$, write every entry of $\rho^j$ as a fraction of an element in $\mc{O}$ and an element in $\mb{N}$ and let $D\subset\mb{N}$ be the finite set of the denominators (note that $\rho^{-j}=\rho^{2k-j}$ for every $0\le j\le 2k$). Let $\delta\in\mb{N}$ be the product of all the elements in $D$.

Let $I<\mc{O}$ be the ideal generated by $\delta^2$. 

In particular, as $\mc{O}$ is finitely generated as a $\mb{Z}$-module (see \cite[Cor.\,2.30]{Milne}), $\mc{O}/I$ is a finite extension of the finite ring $\mb{Z}/\delta^2\mb{Z}$ and, hence, it is itself a finite ring. 

Consider the congruence subgroup of $G$
\[
G_\delta:={\rm ker}(G<{\rm SO}(q)_{\mc{O}}\to{\rm SL}_{n+1}(\mc{O}/I)).
\]
$G_\delta$ has finite index in $G$. Every matrix $A\in G_\delta$ can be written as $A=I+B$ with every entry of $B$ of the form $\delta^2u$ for some $u\in\mc{O}$. Therefore, the matrix $\rho^jA\rho^{-j}=I+\rho^jB\rho^{-j}$ has coefficients in $\mc{O}$ and preserves $q$. In other words, the conjugation by $\rho^j$ induces an injective homomorphism $G_\delta\to{\rm SO}(q)_{\mc{O}}$. 

Note that $\rho^jG_\delta\rho^{-j}<{\rm SO}(q)_{\mc{O}}$, being cocompact in ${\rm SO}(q)$, has finite index in ${\rm SO}(q)_{\mc{O}}$. 

As the intersection $G\cap\rho G\rho^{-1}\cap\cdots\cap\rho^{2k-1} G\rho^{-(2k-1)}$ contains the intersection of finite index subgroups $G_\delta\cap\rho G_\delta\rho^{-1}\cap\cdots\cap\rho^{2k-1} G_\delta\rho^{-(2k-1)}$, it has finite index in $G$. 
\end{proof}

Recall that separability passes to subgroups, meaning that, if $S$ is separable in $A$ and $B<A$, then $S\cap B$ is separable in $B$. Hence, up to replacing $G$ with the intersection given by Lemma \ref{lem:congruence}, we can assume that $G$ is normalized by $\rho$. From now on we assume this is the case.

\subsubsection{A ping-pong argument}
We now construct a convex-cocompact subgroup $Q<G$ such that $\mb{H}^d/Q$ contains the desired configuration of hypersurfaces. 

Recall that $H\subset\mb{H}^d$ is the hyperplane $\{x\in\mb{R}^{d,1}\left|x_1=0\right.\}$.

Let $K$ denote the isometry group $K:=G\cap {\rm SO}(q|_H)_{\mc{O}}$ where we identify ${\rm SO}(q|_H)_{\mc{O}}$ with its natural inclusion in ${\rm SO}(q)_{\mc{O}}$. Observe that $K$ acts cocompactly on $H$ as ${\rm SO}(q|_H)_{\mc{O}}$ act cocompactly on $H$ (by Lemma \ref{lem:cocompact}) and $K$ is a finite index subgroup of it.

We have:

\begin{prop}
\label{pro:cover}
There exists a finite set $A\subset K$ such that for every finite index subgroup of $T<K$ with $T\cap A=\emptyset$ we have the following: The group 
\[
Q=\langle T_1,\cdots,T_k\rangle<G,
\]
where $T_j:=\rho^j T\rho^{-j}$ leaves invariant the hyperplane $H_j:=\rho^j H$, has the following properties:
\begin{enumerate}
\item{It is convex-cocompact.}
\item{The projection $\pi:H_j\to\mb{H}^d/Q$ induces an embedding $H_j/T_j\to\mb{H}^d/Q$.}
\item{For every $1\le i<j\le k$ we have $H_i/T_i\cap H_j/T_j=V/P$ where $P={\rm Stab}_T(V)$.}
\end{enumerate} 
\end{prop}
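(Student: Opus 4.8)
The strategy is a Klein--Maskit combination (ping-pong) argument for the cyclic family $H_1,\dots,H_k$, all meeting along $V$, in the spirit of Gromov--Thurston \cite{GT87} and Kapovich \cite{K07}; the finite set $A$ will encode ``how deep $T$ must sit in $K$'' for the ping-pong to run. First I would record the geometry of the configuration: since $\rho$ normalizes $G$, fixes $V$ pointwise, and rotates the normal $2$-plane of $V$ by $\pi/k$, the hyperplanes $H_j=\rho^jH$ all contain $V$, pairwise meet exactly along $V$, and consecutive ones $H_j,H_{j+1}$ (indices mod $k$, using $\rho^kH=H$) enclose a dihedral angle $\pi/k$ along $V$; hence $H_1\cup\dots\cup H_k$ cuts $\mathbb H^d$ into $2k$ closed convex wedges $\Delta_1,\dots,\Delta_{2k}$ of dihedral angle $\pi/k$, cyclically arranged around $V$, with $T_j$ stabilising $H_j$ and the two wedges abutting it. From the restriction of this configuration to a metric neighbourhood of $V$ I extract one geometric constant $L=L(k,d,G)$, the ``interaction radius''. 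Using that $\mathrm{Stab}_G(V)$ is convex-cocompact, hence (by \cite{BHW11}) separable in $G$, so that $\mathrm{Stab}_K(V)=\mathrm{Stab}_G(V)\cap K$ is separable in $K$, I would produce a finite set $A\subset K$ with the property that any finite-index $T<K$ with $T\cap A=\emptyset$ makes the immersed hypersurface $V/\mathrm{Stab}_T(V)$ embedded and two-sided in $H/T$, with every $T$-translate of $V$ inside $H$ distinct from $V$ at distance $>L$ from $V$.

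\textbf{Ping-pong.} Fix such a $T$ and put $P=\mathrm{Stab}_T(V)$. I would then verify the Maskit hypotheses for $T_1,\dots,T_k$: because any $t\in T_j\smallsetminus \rho^jP\rho^{-j}$ moves $V$ — hence every $H_i$ with $i\ne j$ — a distance $>L$ away from $V$, the only ``interaction'' between the regions bounded by the $Q$-translates of the $H_j$'s occurs within distance $L$ of a $Q$-translate of $V$, where the local picture is an isometric copy of the standard wedge configuration. The combination theorem then yields simultaneously: (i) $Q<G$ is discrete and torsion-free, the natural map from the star-shaped graph of groups with central group $P$ and outer groups $T_1,\dots,T_k$ (i.e.\ $T_1\ast_P\dots\ast_P T_k$) to $Q$ is an isomorphism, $\mathrm{Stab}_Q(H_j)=T_j$, the $Q$-orbit of $H_j$ meets $\{H_1,\dots,H_k\}$ only in $H_j$, and the codimension-$2$ faces along which $Q$-translates of the $H_j$'s meet are exactly the $Q$-translates of $V$, each lying in precisely $k$ of these hyperplanes at angles $\pi/k$; and (ii) the convex hull in $\mathbb H^d$ of the limit set of $Q$ has compact quotient by $Q$, so $Q$ is convex-cocompact.

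\textbf{Deriving the three claims.} Part (1) is (ii). For (2): $N_j=H_j/T_j$ embeds in $\mathbb H^d/Q$ because by (i) any $g\in Q$ with $gH_j\cap H_j\ne\emptyset$ lies in $T_j$, hence fixes $H_j$ and creates no self-intersection, and $N_j$ is two-sided since $T_j$ preserves the two sides of $H_j$. For (3): writing $\pi:\mathbb H^d\to\mathbb H^d/Q$, we have $\pi(H_i)\cap\pi(H_j)=\pi\big(H_i\cap\bigcup_{g\in Q}gH_j\big)$, and by (i) every nonempty $H_i\cap gH_j$ with $i\ne j$ (or $g\ne 1$) is a $Q$-translate $hV$ of $V$; moreover, since conjugation by $\rho^j$ is the identity on $\mathrm{Isom}(V)$, each $\mathrm{Stab}_{T_j}(V)=\rho^jP\rho^{-j}$ acts on $V$ exactly as $P$ does, hence so does $\mathrm{Stab}_Q(V)=\langle\mathrm{Stab}_{T_1}(V),\dots,\mathrm{Stab}_{T_k}(V)\rangle$. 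Therefore $\pi(H_i)\cap\pi(H_j)=\pi(V)=V/P$ for all $i<j$.

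\textbf{Main obstacle.} The subtle point is precisely that $H_1,\dots,H_k$ are \emph{not} disjoint — they all contain $V$ — so a naive ping-pong with disjoint half-spaces is unavailable; the argument must localise all interaction near the $Q$-translates of the codimension-$2$ plane $V$ and control it by the fixed wedge geometry. This is also what forces the correct finite set $A$: one has to arrange, via separability of $\mathrm{Stab}_K(V)$ in $K$, that inside $H$ no $T$-translate of $V$ other than $V$ itself enters the interaction radius $L$ of $V$, while keeping $\mathrm{Stab}_T(V)$ large enough that $V/\mathrm{Stab}_T(V)$ is still a genuine compact embedded hypersurface. Getting this bookkeeping exactly right, and matching it against the Maskit combination hypotheses, is the bulk of the proof.
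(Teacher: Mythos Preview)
Your strategy is essentially the paper's: use separability of $\mathrm{Stab}_K(V)$ in $K$ to push all $T$-translates of $V$ far from $V$, then run a ping-pong around $V$ for the groups $T_1,\dots,T_k$. The paper carries this out by hand rather than invoking a packaged combination theorem: it fixes explicit sectors $U_j$ (the union of two opposite wedges of angle $\pi/k$ about $H_j$, so that $U_i\cap U_j=V$ for $i\neq j$), proves directly via a hyperbolic-trigonometry estimate that $\alpha_i(U_j)\subset\mathrm{int}(U_i)$ for $\alpha_i\in T_i\smallsetminus P$, and from this deduces the normal form ``$\gamma H_i\cap H_j\neq\emptyset\Rightarrow\gamma=\alpha_j\alpha_i$'', which gives (2) and (3) immediately. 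So on (2) and (3) you and the paper agree in substance.

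The one place your outline is thinner than the paper is convex-cocompactness. You assert it as output (ii) of ``the combination theorem'', but you do not say which theorem, and standard Klein--Maskit statements give discreteness and the amalgam structure, not convex-cocompactness. The paper proves (1) separately: it shows the $Q$-orbit of a point is quasi-convex by observing that any intrinsic geodesic in $\bigcup_{\gamma,j}\gamma H_j$ is a broken geodesic whose pieces have length $\ge R$ and meet at angle $\ge\pi/k$, hence is a uniform quasi-geodesic in $\mathbb H^d$. If you want to keep your packaging, you need to cite a combination theorem that explicitly outputs convex-cocompactness when combining convex-cocompact groups along a common convex-cocompact subgroup, and check its hypotheses; otherwise supply the quasi-geodesic argument the paper gives.
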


\begin{proof}
The idea of the proof is to construct a suitable ping-pong system adapted to the groups $T_1,\cdots,T_k$ and hyperplanes $H_1,\cdots,H_k$.

The group $Q$ is a subgroup of $G$ because $G$ is normalized by $\rho$ and $T<G$. 

Let us consider a very large $R>0$ which we will adjust for our needs later on.

First, we select a finite index subgroup $T<K$ so that 
\[
d_{\mb{H}^d}(V,\alpha V)\ge R.
\]
for every $\alpha\in T-P$ where $P={\rm Stab}_T(V)$. 

We can find such subgroup by using hyperplane separability properties: Let $W\subset V$ be a compact fundamental domain for the action of $P$ on $V$. Consider the finite set
\[
A_R=\{\gamma\in K-P\left|\;d_{\mb{H}^d}(W,\gamma W)<R\right.\}.
\]
By \cite[Lem.\,1.7]{BHW11} the subgroup $P<K$ is separable, thus there exists a finite index subgroup $T<K$ containing $P$ and such that $T\cap A_R=\emptyset$. Such subgroup satisfies the assumption: If we had $d_{\mb{H}^d}(V,\gamma V)\le R$ for some $\gamma\in T-P$, then we would have $d_{\mb{H}^d}(\alpha W,\gamma\beta W)\le R$ for some $\alpha,\beta\in P$ and, hence, $d_{\mb{H}^d}(W,\alpha^{-1}\gamma\beta W)\le R$. As a consequence, by our choice of $A_R$, we would get $\alpha^{-1}\gamma\beta\in P$ and, hence, $\gamma\in P$ contradicting the initial assumption.

Consider the two hyperplanes $H_j',H_j''$ containing $V$ that form with $H_j$ angles of $\pi/2k$. The hyperplanes $H_j',H_j''$ divide $\mb{H}^d$ into four quadrants. Let us consider the union $U_j$ of two opposite closed quadrants containing $H_j$. Notice that, by the angle assumption, we have $U_i\cap U_j=V$ for every $i<j$. In particular, the interiors of the sectors $U_1,\cdots, U_k$ are pairwise disjoint.

\begin{figure}[h]
\label{fig:ping-pong}
\centering 
\begin{overpic}[scale=0.7]{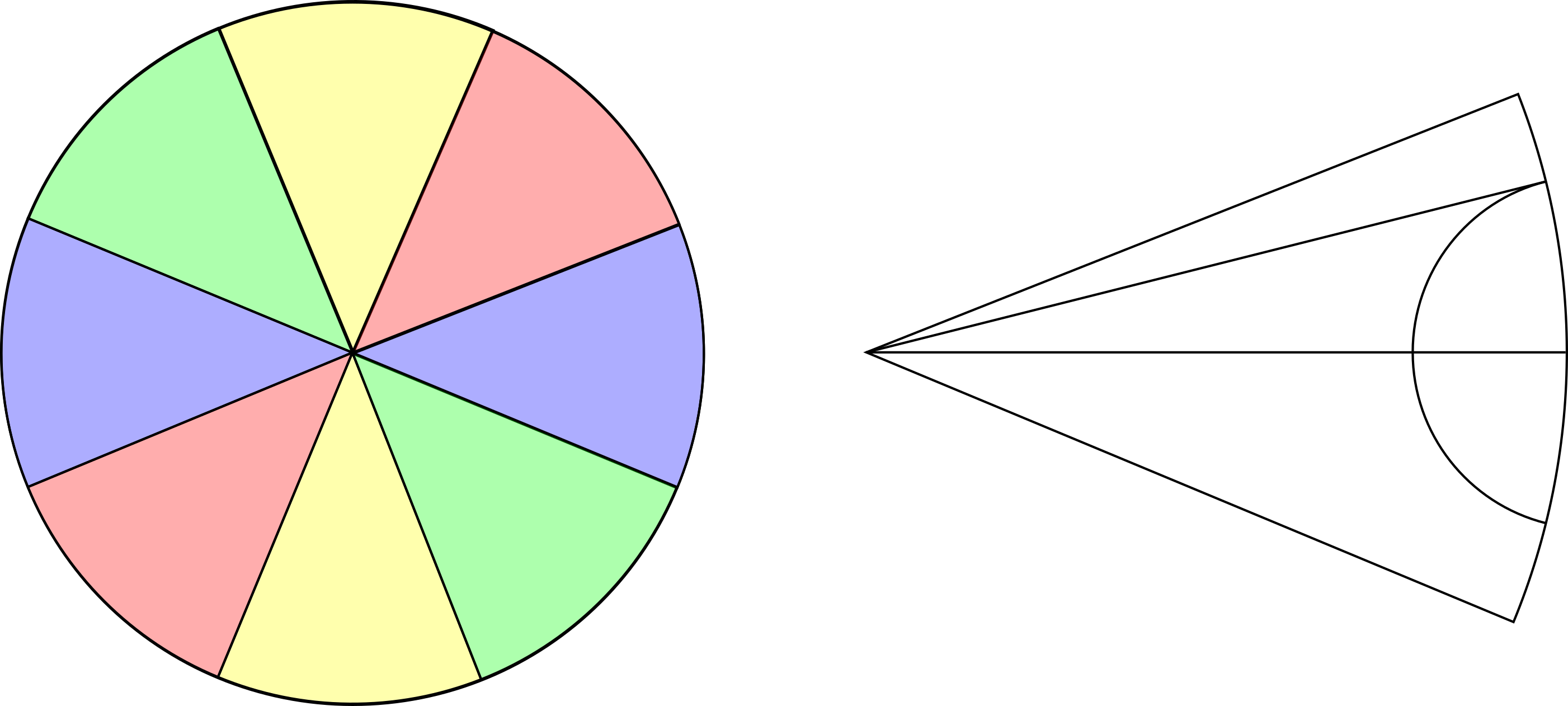}
\put (42,38) {$U_j$}
\put (75,19) {$R$}
\put (92,19) {$q$}
\put (53,19) {$p$}
\put (100,35) {$\xi$}
\end{overpic}
\caption{Ping-pong system.}
\end{figure}

We use these sectors as a ping-pong system.

\begin{claim}{1}
We have:
\begin{enumerate}[(i)]
\item{If $\beta\in P$, then $\beta(U_j)=U_j$.}
\item{Let $R$ be big enough. If $\alpha_i\in T_i-P$ and $i\neq j$, then $\alpha_i(U_j)\subset{\rm int}(U_i)$.}
\end{enumerate} 
\end{claim}

\begin{proof}[Proof of the claim] 
The first part is clear.

For the second part, consider $H_i,H_j,\alpha_i H_j$. We need the following:

\begin{lemma} 
\label{lem:orthogonal plane}
There is a 2-plane $Z\subset\mb{H}^d$ that intersects orthogonally all of them.
\end{lemma}

\begin{proof}
We work in the projective model $\mb{H}^d\subset\mb{RP}^d$. 

With a slight abuse of notation, unless it is not necessary, we will not make a distinction between a totally geodesic subspace of $\mb{H}^d$ and the corresponding linear subspace of $\mb{RP}^d$. Also recall that every linear subspace $L\subset\mb{RP}^d$ has a dual $L^*\subset\mb{RP}^d$ which corresponds to the orthogonal subspace $L^\perp$ with respect to the quadratic form $q$.

A 2-plane $Z\subset\mb{H}^d$ is orthogonal to a hyperplane $H\subset\mb{H}^d$ if and only if $Z$ passes through the dual point $H^*$.

Let $x_i,x_j,\alpha_jx_i$ be the dual points of $H_i,H_j,\alpha_jH_i$. Let 
\[
Z:={\rm Span}\{x_i,x_j,\alpha_jx_i\}
\]
be the linear subspace of $\mb{RP}^d$ they generate. Notice that $Z$ is dual to the intersection (as linear subspaces) $H_i\cap H_j\cap\alpha_j H_i$. The triple intersection of the hyperplanes (as linear subspaces) avoids the closure of $\mb{H}^d$ as 
\[
(H_i\cap H_j\cap\alpha_j H_i)\cap\bar{\mb{H}}^d=(V\cap\alpha_jV)\cap\bar{\mb{H}}^d=\emptyset.
\]
In particular, the restriction of the quadratic form $q$ to the triple intersection is positive definite. As a consequence, the restriction of the quadratic form $q$ on the dual space $Z$ has signature $(2,1)$ which implies that $Z$ intersects $\mb{H}^d$ in a totally geodesic $\mb{H}^2$. Furthermore, by construction, $Z$ passes through the dual points $x_i,x_j,\alpha_jx_i$, hence, it is orthogonal to the subspaces $H_i,H_j,\alpha_jH_i$.
\end{proof}

Let $Z$ be the 2-plane provided by Lemma \ref{lem:orthogonal plane}.

Consider the intersection points $p=Z\cap V,q:=Z\cap\alpha_i V$. 

By the assumption on $T$, we have that $d_{\mb{H}^d}(p,q)\ge d_{\mb{H}^d}(V,\alpha_iV)\ge R$.

Consider also the intersections at infinity $\{\xi,\zeta\}:=\partial Z\cap\partial\alpha_iH_j$. The angles $\angle pq\xi,\angle pq\zeta$ are respectively $|j-i|\pi/k$ and $\pi-|j-i|\pi/k$. Consider the triangles $\Delta(p,q,\xi)$ and $\Delta(p,q,\zeta)$. By hyperbolic trigonometry (see \cite{}), the angles at $p$ are very small: By the hyperbolic law of cosines and a small manipulation we have 
\begin{align*}
\sin\left(\angle qp\xi\right),\sin\left(\angle qp\zeta\right) &\le\frac{2}{\cosh\left(d_{\mb{H}^n}(p,q)\right)\sin\left(|j-i|\frac{\pi}{k}\right)}\\
 &\le\frac{2}{\cosh(R)\sin\left(|j-i|\frac{\pi}{k}\right)}.
\end{align*}

If $R$ is chosen so that the angles $\angle qp\xi,\angle qp\zeta$ are much smaller that $\pi/2k$, then $\alpha_i U_j$ is contained in $U_i$.
\end{proof}

Using the ping-pong system, we deduce the properties that we need.

{\bf Properties (2) and (3)}. Let us pick an element $\gamma\in Q$. We can always write it as $\gamma=\alpha_{j_1}\cdots\alpha_{j_n}$ where $\alpha_{j_k}\in T_{j_k}-P$ and $\alpha_{j_n}\in T_{j_n}$ and such that $j_k\neq j_{k+1}$ for every $k$. 

Let us observe that, by the first claim, we have $\gamma H_i\subset U_{j_1}$ for every $H_i$ and, furthermore, $\gamma H_i\subset{\rm int}(U_{j_1})$ if $n>1$: In fact, consider first $\alpha_{j_n}H_i$ there are two cases: If $j_n=i$, then $\alpha_{j_n}H_i=H_i$ and, hence, it is contained in $U_i=U_{j_n}$. Otherwise, by the Claim, $\alpha_{j_n}H_i\subset \alpha_{j_n}U_i\subset U_{j_n}$. The conclusion follows by inductively applying the Claim using the fact that $j_k\neq j_{k+1}$:
\begin{align*}
\gamma H_i &=\alpha_{j_1}\cdots\alpha_{j_n}H_i\\
 &\subset\alpha_{j_1}\cdots\alpha_{j_{n-1}}U_{j_n}\\
 &\subset\alpha_{j_1}\cdots\alpha_{j_{n-2}}{\rm int}(U_{j_{n-1}})\\
 &\subset\cdots\\
 &\subset\alpha_{j_1}U_{j_2}\subset{\rm int}(U_{j_1}).
\end{align*}

We can now prove the following

\begin{claim}{2}
If $\gamma H_i\cap H_j\neq\emptyset$, then $\gamma=\alpha_j\alpha_i$ with $\alpha_i\in T_i$ and $\alpha_j\in T_j$.
\end{claim}

\begin{proof}[Proof of the claim] 
We proceed by induction on the length of the shortest representative of $\gamma$ of the above form $\gamma=\alpha_{j_1}\cdots\alpha_{j_n}$. The base case $n=1$ is clear. If $n>1$ then, by the above computation, $\gamma H_i\subset{\rm int}(U_{j_1})$. As $\gamma H_i\cap H_j\neq\emptyset$, we must have $j_1=j$. Consider $\gamma'=\alpha_{j_1}^{-1}\gamma$. The element $\gamma'$ has by construction a shorter representative than $\gamma$ and satisfies $\gamma' H_i\cap H_j=\alpha_{j_1}^{-1}(\gamma H_i\cap H_j)\neq\emptyset$. Therefore, by the inductive hypothesis, we have $\gamma'=\alpha_j'\alpha_i'$ with $\alpha_j'\in T_j$ and $\alpha_i'\in T_i$. Finally, we conclude $\gamma=(\alpha_{j_1}\alpha_j')\alpha_i'$ which concludes the inductive step as $\alpha_{j_1}\alpha_j'\in T_j$.
\end{proof}
 
\begin{claim}{3}
For every $j\le k$, the natural map $H_j/T_j\to\mb{H}^d/Q$ is an embedding. 
\end{claim}

\begin{proof}[Proof of the claim] 
Suppose that $\pi(x)=\pi(y)$ for $x,y\in H_j$. Then, there exists $\gamma\in Q$ such that $y=\gamma x$ and, in particular, $\gamma H_j\cap H_j\neq\emptyset$. By the previous claim, we deduce that $\gamma\in T_j$ and, hence $[x]=[y]$ in $H_j/T_j$.
\end{proof}

\begin{claim}{4} 
For every $i<j$ we have $H_i/T_i\cap H_j/T_j=V/P$.  
\end{claim}

\begin{proof}[Proof of the claim] 
Suppose that $\pi(x)=\pi(y)$ for $x\in H_i,y\in H_j$. Then, there exists $\gamma\in Q$ such that $y=\gamma x$ and, in particular, $\gamma H_i\cap H_j\neq\emptyset$. By the previous claim, we deduce that $\gamma=\alpha_j\alpha_i$ with $\alpha_j\in T_j$ and $\alpha_i\in T_i$. In order to coclude we just observe that 
\[
\alpha_j^{-1}(\gamma H_i\cap H_j)=\alpha_j^{-1}(\alpha_j\alpha_i H_i\cap H_j)=H_i\cap H_j=V
\]
so that $y\in\gamma H_i\cap H_j=\alpha_jV$ and $x=\gamma^{-1}y\in\alpha_iV$. Therefore $\pi(x)=\pi(y)\in V/P$.
\end{proof}

This concludes the proof of the properties (2) and (3).

{\bf Property (1)}. By Fact \ref{fact:quasi-convex}, it is enough to prove that the orbit of a point $p\in V$ is quasi-convex provided that $R$ is large enough. As the orbit is coarsely dense in $H_j$ for each $j\le k$, it is enough to show that 
\[
C:=\bigcup_{\gamma\in Q,\;j\le k}{\gamma H_j}
\]
is quasi-convex. 

Notice that $C$ has an intrinsic path metric. Two distinct points of $C$ can be connected by either a geodesic segment of $\mb{H}^d$, if they lie on the same flat piece $\gamma H_i$, or by a concatenation $\kappa=\kappa_1\star\ldots\star\kappa_m\subset C$ of geodesic segments of $\mb{H}^d$ which is geodesic for $C$ and where each $\kappa_j$ lies on some translate $\gamma_j H_{i_j}$ and has endpoints on two different translates of $V$ in $\gamma_j H_{i_j}$. We show that such intrinsic $C$-geodesics $\kappa$ are a uniform quasi-geodesics and, therefore, by stability of quasi-geodesics, they lie at a uniform Hausdorff distance from the geodesic in $\mb{H}^d$ joining the same endpoints. This suffices to prove that $C$ is quasi-convex.

In order to prove that $\kappa$ is a uniform quasi-geodesic we need a couple of observations: First, the length of each $\kappa_j$ is at least $R$ as the geodesic segment joins distinct translates of $V$. Second, as the concatenation is a geodesic for the intrinsic path metric of $C$, the angle between two consecutive segments $\kappa_j,\kappa_{j+1}$ is at least the angle between $\gamma_jH_{i_j},\gamma_{j+1}H_{i_{j+1}}$ which is at least $\pi/k$. The conclusion follows from the following elementary fact

\begin{fact}
For every $\theta>0$ there exist $R>0$ and $c>0$ such that each concatenation $\kappa\subset\mb{H}^d$ of geodesic segments of length at least $R$ forming angles of at least $\theta$ is a $c$-quasi-geodesic.
\end{fact}

This concludes the proof.
\end{proof}

We are now ready to prove Theorem \ref{thm:main7}.

\subsubsection{The proof of Theorem \ref{thm:main7}}
We prove part (a).

Let $Q<G$ be the subgroup provided by Proposition \ref{pro:cover}. By Fact \ref{bhw}, $Q$ is separable in $G$. A standard argument shows that:  

\begin{claim*}
There is a finite index subgroup $G'<G$ containing $Q<G'$ and such that the covering projection $\mb{H}^d/Q\to\mb{H}^d/G'$ restricts to an embedding on $N_1\cup\cdots\cup N_k\subset\mb{H}^d/Q$ where  $N_j:=H_j/T_j$. 
\end{claim*}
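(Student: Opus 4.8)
The plan is to use the standard separability trick to upgrade the embedding of $N_1\cup\cdots\cup N_k$ from the convex-cocompact cover $\mathbb{H}^d/Q$ to a finite cover $\mathbb{H}^d/G'$ of $\mathbb{H}^d/G$. The key point is that self-intersections of the image of $N_1\cup\cdots\cup N_k$ in $\mathbb{H}^d/G$ are detected by finitely many double cosets $Q\gamma Q$ with $\gamma\in G\smallsetminus Q$ (after accounting for the internal intersection pattern $N_i\cap N_j=V/P$, which we do \emph{want} to keep), and separability of $Q$ in $G$ lets us kill all of them at once by passing to a suitable finite index subgroup $G'$ containing $Q$.

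More precisely, first I would fix a compact core: since $Q$ is convex-cocompact (Proposition~\ref{pro:cover}(1)) there is a compact $\Gamma$-equivariant convex set, hence $N_1\cup\cdots\cup N_k = (H_1\cup\cdots\cup H_k)/Q$ has a compact piece $L\subset\mathbb{H}^d$ projecting onto it, with $Q\cdot L$ covering the preimage of $N_1\cup\cdots\cup N_k$ in $\mathbb{H}^d$. The map $N_1\cup\cdots\cup N_k\to\mathbb{H}^d/G$ fails to be injective exactly when there is $\gamma\in G$ with $\gamma L'\cap L''\neq\emptyset$ for lifts $L',L''$ of pieces, and $\gamma$ does not already identify them inside $\mathbb{H}^d/Q$. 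By local finiteness of the orbit $G\cdot L$ and compactness of $L$, the set of $\gamma\in G$ with $\gamma L\cap L\neq\emptyset$ is contained in a finite union of double cosets $Q g_1 Q,\dots, Q g_m Q$; discarding those $g_i$ already lying in $Q$ (and, for the pieces $H_i,H_j$, those producing the intended intersection along $V$, which land in the subgroup generated by $T_i,T_j$ and hence in $Q$), we are left with finitely many $g_i\in G\smallsetminus Q$. By Fact~\ref{bhw}, $Q$ is separable in $G$, so for each such $g_i$ there is a finite index subgroup $G_i<G$ with $Q<G_i$ and $g_i\notin G_i$; taking $G'=\bigcap_i G_i$, still of finite index and still containing $Q$, we get that no element of $G'\smallsetminus Q$ can bring one piece of $L$ into another. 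Hence the composite $N_1\cup\cdots\cup N_k=(H_1\cup\cdots\cup H_k)/Q\to\mathbb{H}^d/G'$ is injective, and it is a local isometric embedding since each $H_j$ is totally geodesic; thus it is an embedding of the totally geodesic $N_1\cup\cdots\cup N_k$ into the closed hyperbolic manifold $M:=\mathbb{H}^d/G'$.

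The angles, isometry type, and the cyclic symmetry then come for free from the construction: in $\mathbb{H}^d/G'$ the hypersurfaces $N_j$ still pairwise meet exactly along $C=V/P$ with $\angle_C N_jN_{j+1}=\pi/k$ (this is a local statement already true upstairs), the $T_j=\rho^j T\rho^{-j}$ are conjugate so the $N_j$ are pairwise isometric, and since $G$ (hence $G'$, if we also intersect its conjugates $\rho^iG'\rho^{-i}$ as in Lemma~\ref{lem:congruence}, which preserves finite index and the inclusion of $Q$ because $\rho$ normalizes $Q$ up to the internal structure — more carefully, one replaces $G'$ by $\bigcap_{i=0}^{k-1}\rho^i G'\rho^{-i}$) is $\rho$-invariant, $\rho$ descends to an isometry of $M$ fixing $C$ pointwise with $\rho(N_j)=N_{j+1}$. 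Part (b) is then immediate as noted in the excerpt: the metric completion of any connected component of $M\smallsetminus(N_1\cup\cdots\cup N_k)$ is a compact convex hyperbolic manifold with totally geodesic boundary and corners of angle $\pi/k$, and bipartiteness of the boundary graph follows because the $N_j$ are indexed cyclically and $\rho$ acts by a cyclic shift, so two boundary walls sharing a corner come from consecutive $N_j,N_{j+1}$.

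The main obstacle I expect is the bookkeeping in the double-coset/separability step — in particular being careful to \emph{preserve} the internal intersection pattern $N_i\cap N_j=V/P$ (we must not separate those coincidences, only the spurious ones coming from $\mathbb{H}^d/G$) — together with keeping track of the $\rho$-equivariance after passing to the finite cover, which is why one should intersect over the $\rho$-orbit of $G'$ exactly as in Lemma~\ref{lem:congruence}. The geometric input (convex-cocompactness of $Q$, the angle computation, the ping-pong) is all already established in Proposition~\ref{pro:cover}, so nothing new is needed there; the remaining work is purely the standard ``separability promotes immersions to embeddings'' argument, carried out carefully.
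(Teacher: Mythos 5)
Your argument is essentially the same as the paper's: both reduce the injectivity failure to a finite set of spurious identifications (you phrase it via finitely many double cosets $Qg_iQ$ with $g_i\notin Q$, the paper via the finite set $A=\{\gamma\in G\smallsetminus Q : \gamma F\cap F\neq\emptyset\}$ for a compact fundamental domain $F$ of the convex core $\mc{CH}$), and both kill them by intersecting finitely many finite-index subgroups containing $Q$ furnished by separability. The paper works with the whole compact quotient $\mc{CH}/Q$, which makes the "don't separate the intended intersections $N_i\cap N_j=V/P$" bookkeeping you worry about automatic, but this is a framing choice rather than a genuinely different route.
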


\begin{proof}[Proof of the claim] 
By Proposition \ref{pro:cover}, $Q$ acts cocompactly on a convex set $\mc{CH}\subset\mb{H}^d$. The quotient $\mc{CH}/Q$ embeds in $\mb{H}^d/Q$ and contains $H_j/T_j$. Consider the covering projections $\pi:\mb{H}^d/Q\to\mb{H}^d/G$ and the restriction of $\pi$ to $\mc{CH}/Q$. Observe that $\pi(p)=\pi(q)$ for some $p,q\in\mc{C}/Q$ if and only if there exists lifts $x,y\in F$ of $p,q$ to some compact fundamental domain $F\subset\mc{CH}$ and an element $\gamma\in G$ such that $\gamma x=y$. 

We use separability of $Q$ to eliminate this configuration in an intermediate finite covering $\mb{H}^d/Q\to\mb{H}^d/G'\to\mb{H}^d/G$: Consider the finite set
\[
A:=\{\gamma\in G-Q\left|\gamma F\cap F\neq\emptyset\right.\}. 
\]
By separability of $Q$ in $G$, there exists a finite index subgroup $G'<G$ containing $Q$ and such that $G'\cap A=\emptyset$. As $G'$ contains $Q$, we have a covering projection $\pi':\mb{H}^d/Q\to\mb{H}^d/G'$. By our choice of $A$ and the fact that $G'\cap A=\emptyset$, the restriction of $\pi'$ to $\mc{CH}/Q$ is now injective.
\end{proof}

By Proposition \ref{pro:cover} and the fact that the projection $\mb{H}^d/Q\to\mb{H}^d/G'$ restricts to an embedding on $N_1\cup\cdots\cup N_k\subset\mc{C}/Q$, the abstract completion $M$ of any of the complementary components of $\mb{H}^d/G'-N_1\cup\cdots\cup N_k$ is a compact hyperbolic $d$-manifold with totally geodesic boundary and corners, all the corner angles are equal to $\pi/k$, and, if $k$ is even, then the graph of the boundary is bipartite.

This finishes the proof.\qed

\section{Hyperbolic doubles}
\label{sec:sec7}

In this section we prove Theorems \ref{thm:mainA}, \ref{thm:mainB}, and \ref{thm:main2}.

We will deduce them from the following special case of the glue-and-bend construction, namely, the double of a compact convex hyperbolic manifold with totally geodesic boundary bent along some corners.

\begin{thm}
\label{thm:hyperbolic doubles}
Let $\mb{M}$ be a compact convex hyperbolic bc-manifold with non-empty boundary of dimension $\dimd\ge 3$. Fix a parameter for each wall. Suppose that
\begin{enumerate}[(i)]
\item{All the corner angles are smaller than $\pi/2$.}
\item{For every corner $\corner$ with angle $\theta$ and adjacent walls $W$ and $W'$ with parameters $\mu$ and $\mu'$, denoting $\nu=\mu^{\frac{\dimd+1}{2}}$ and $\nu'=\mu'{}^{\frac{\dimd+1}{2}}$,
\[
t:=\cos(\theta)^2\left(\frac{\nu}{\nu'}+\frac{\nu'}{\nu}\right)-\sin(\theta)^2\left(\nu'\nu+\frac1{\nu'\nu}\right)\geq 2.
\]
Set 
$
 \mu_\corner := \max\left\{\frac{\mu}{\mu'},\frac{\mu'}{\mu}\right\}
$
and $\lambda_\corner:=\frac {t}2+\sqrt{\frac{t^2}4-1}$.
}
\end{enumerate}

Then the following holds:
\begin{enumerate}
\item{$N:=D\mb{M}-U$ obtained from the double $D\mb{M}$ of $\mb{M}$ by removing a tubular neighborhood $U$ of the corners admits a convex projective structure with totally geodesic boundary.}

\item{For each wall $W$ of $N$ there is a developing map of $N$ sending a copy of $\tilde W$ onto
$
\conv(\mb H^{\dimd-2},[e_{\dimd}])
$
such that the holonomy of $\pi_1(W)$ is the group generated by
\[
\left(
\begin{array}{c c}
\Gamma_\corner & \\
 &\mb{I}_2\\
\end{array}
\right), 
\left(
\begin{array}{ccc}
 \mu_\corner^{-1}\mb{I}_{d-1}&& \\
 &\mu_\corner^{\frac{\dimd-1}2}\lambda_\corner&\ep\\
 &&\mu_\corner^{\frac{\dimd-1}2}\lambda_\corner^{-1}\\
\end{array}
\right)\in{\rm SL}_{d+1}(\mb{R}),
\]  
with $\ep=1$ if $\lambda=1$ and $0$ otherwise and where $\corner$ is the corner $W$ blows up, with holonomy $\Gamma_{\corner}<{\rm SO}(d-2,1)$.}
\item{If $\lambda_\corner>1$ then $N$ satisfies the Containment Condition with respect to $W$ and the polar $[e_{\dimd+1}]$.}
\item{Every pair of distinct walls of $\tilde N\subset\sph^\dimd$ have disjoint closures (\ie $N$ has no ghost stratum) and every point of $\partial\tilde N$ outside closures of walls is extremal and $\cal C^1$.}
\end{enumerate}
\end{thm}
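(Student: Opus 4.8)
The plan is to derive Theorem~\ref{thm:hyperbolic doubles} by assembling the machinery developed in Sections~\ref{sec:sec2}--\ref{sec:sec5}, applied to the particular projective gluing that is the bulged double of $\mb{M}$. First I would set up the gluing: take $\mc{MC}=\{\mb{M},\mb{M}'\}$ with $\mb{M}'$ a second copy of $\mb{M}$ and $f:\partial\cal M\to\partial\cal M$ the tautological involution swapping the two copies, so that $\mc{MC}_f=D\mb{M}$. Since $\mb{M}$ is hyperbolic, each wall carries its Lorentzian polar, the double gluing is adapted to these polars, and the bulging with the chosen parameters $(\mu_W)_W$ produces a one-parameter family of projective gluings (Section~\ref{sec:proj gluing}). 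By Lemma~\ref{lem:cone-manifold} this gluing is a projective cone-manifold, and one needs to check its singularities are special in the sense of Definition~\ref{def:special tubes}: this is where the trigonometric quantity $t$ enters. Concretely, for a corner $\corner$ of angle $\theta$ between walls with parameters $\mu,\mu'$, the holonomy of the meridian of the corresponding tube is a product of the rotation $R_{2\theta}$ (the angle of the doubled hyperbolic corner) conjugated/composed with the two bulging matrices $B_{\ell,\mu},B_{\ell',\mu'}$, exactly as in the two-dimensional model of Section~\ref{sec:local model}. Computing the $\SL_2$-angle of this product gives an element whose trace, after normalizing the determinant, is precisely $t$; the condition $t\ge 2$ then says the $\SL_2$-angle is hyperbolic or parabolic, so by Proposition~\ref{prop:uniformization} the tube is uniformisable in $\sph^\dimd\smallsetminus\sph^{\dimd-2}$, and the eigenvalue inequality \eqref{eq:special tubes} (which reduces to $\mu_\corner^{\dimd+1}-\mu_\corner^{(\dimd+1)/2}(\lambda_\corner+\lambda_\corner^{-1})+1>0$, an identity given the definition of $\lambda_\corner$) confirms specialness. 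This computation — carefully tracking the bulged holonomy of a meridian around a doubled corner and identifying its invariants with $t$, $\mu_\corner$, $\lambda_\corner$ — is the main obstacle and the technical heart of the proof.

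Once specialness is established, the remaining points follow by feeding the gluing into Theorems~\ref{thm:cvx}, \ref{thm:addendum1} and \ref{thm:addendum2}. For Theorem~\ref{thm:cvx} I would verify its four hypotheses: connectedness is clear; each cell $\mb{M}$ (resp.\ $\mb{M}'$) is properly convex with complete walls and corners (it is a compact properly convex bc-manifold, so by the Fact in Section~\ref{sec:wallcplt} walls and corners are complete) and has no ghost corners (by Fact~\ref{obs:hyperbolic ghost stratum}, since $\mb{M}$ is compact convex hyperbolic); convexity of unions of adjacent cells follows from the Containment Condition, which by Fact~\ref{obs:hyperbolic containment condition} holds because all corner angles are $<\pi/2$; and uniformisability of singularities in $\sph^\dimd$ is exactly specialness. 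Hence $\mc{MC}_f$ is convex, and it is properly convex because $\mb{M}$ is hyperbolic so its holonomy is irreducible (the hypothesis of Theorem~\ref{thm:addendum1}). Then Theorem~\ref{thm:addendum1} gives the totally geodesic blowup $N$ with complete walls and no ghost corners, together with the explicit shape $\conv(\mb H^{\dimd-2},[e_\dimd])$ of each blown-up wall and the stated normal forms for the holonomy of $\pi_1(W)$ and of the meridian — here $\Gamma_\corner$ is the holonomy of the hyperbolic corner $\corner\cong\mb H^{\dimd-2}/\Gamma_\corner$, $\phi$ is trivial since the hyperbolic dilation characters are trivial, and $\rho'$ is just $\Gamma_\corner$ itself, yielding the block form with $\mb{I}_2$. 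This proves parts (1)--(3), and the last clause of Theorem~\ref{thm:addendum1} (the remark following it) additionally gives that the double of $N$ is convex, which is what is needed later.

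For part (4), I would invoke Theorem~\ref{thm:addendum2}. Its hypotheses require: (a) every non-trivial face of $\tilde{\mb{M}}$ touching a wall is contained in a wall, and (b) every non-trivial face touching a corner is either a wall adjacent to it or contained in it; both hold because $\mb{M}$ is a compact convex \emph{hyperbolic} bc-manifold, hence $\tilde{\mb{M}}\subset\mb{H}^\dimd$ is a closed convex set whose proper faces lie on $\partial\mb{H}^\dimd$ (strict convexity of $\mb{H}^\dimd$), so any such face is a point, and by completeness of walls and corners and Fact~\ref{obs:hyperbolic ghost stratum} these boundary points are accounted for by the walls and corners — I would make this precise using the strict convexity argument already used in Fact~\ref{obs:hyperbolic ghost stratum}. (c) no ghost stratum: $\mb{M}$ has none (Fact~\ref{obs:hyperbolic ghost stratum}) and then Theorem~\ref{thm:addendum1}(1) propagates this to $N$. (d) $\mc{MC}_f=D\mb{M}$ is compact. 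Theorem~\ref{thm:addendum2} then yields that every cell at infinity which is not a wall of $\tilde N$ is an extremal $\cal C^1$ point; combined with Theorem~\ref{thm:addendum1} identifying the non-extremal part of $\partial\tilde N$ with the (pairwise-disjoint-closure) walls, this gives both assertions of (4): distinct walls have disjoint closures and every boundary point outside closed walls is extremal and $\cal C^1$. The disjointness of wall closures I would also phrase as "no ghost stratum" as stated. Finally I would remark that the parabolic case $\lambda_\corner=1$ (\ie $t=2$) is handled uniformly throughout by the $\ep\in\{0,1\}$ bookkeeping already built into Definition~\ref{def:special tubes} and Theorem~\ref{thm:addendum1}, so parts (1), (2) and (4) hold verbatim while part (3) is vacuous.
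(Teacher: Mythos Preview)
Your overall architecture matches the paper's: set up the bulged double as an admissible gluing, verify the hypotheses of Theorem~\ref{thm:cvx}, check specialness and proper convexity to feed into Theorem~\ref{thm:addendum1}, then use Theorem~\ref{thm:addendum2} for the boundary regularity. Two points need attention.

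First, a computational gloss: the specialness inequality \eqref{eq:special tubes} is \emph{not} ``an identity given the definition of $\lambda_\corner$''. Writing $x=\mu_\corner^{(\dimd+1)/2}$ and $t=\lambda_\corner+\lambda_\corner^{-1}$, the expression $x^2-xt+1=(x-\lambda_\corner)(x-\lambda_\corner^{-1})$ is positive only when $x>\lambda_\corner$, which is not automatic. The paper (Claim~5 in the proof) verifies it by a direct substitution showing $(\nu/\nu')^2-(\nu/\nu')t+1=\sin^2\theta\bigl((\nu/\nu')^2+1+\nu'^2+\nu^{-2}\bigr)>0$; you need that computation, not just the definitions.

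Second, and more substantively, your argument for part~(4) has a gap. Theorem~\ref{thm:addendum2} only tells you about \emph{cells at infinity}: points of $\overline X\smallsetminus X$. But $\partial\tilde N$ also contains points of $X$, namely points of $\partial D_v\cap\partial\mb{H}^\dimd$ for each hyperbolic cell $D_v$ (the ideal boundary of $\tilde{\mb{M}}$, away from closed walls and corners). These are not cells at infinity and are not covered by Theorem~\ref{thm:addendum2}; nor does Theorem~\ref{thm:addendum1} assert that the complement of the walls in $\partial\tilde N$ consists of extremal points. The paper handles them separately via Corollary~\ref{cor:extremal points} and Fact~\ref{obs:smooth}, which reduce extremality and $\mc{C}^1$-ness in $\partial\tilde N$ to the corresponding statements in $\partial D_v$ (resp.\ in $\partial(D_v\cup D_w)$ for adjacent cells). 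Those in turn are the content of Claims~7 and~8 in the paper's proof: a point of $\partial\tilde{\mb{M}}\cap\partial\mb{H}^\dimd$ outside closed walls is extremal (immediate from strict convexity of $\mb{H}^\dimd$) and $\mc{C}^1$ (by a recurrence argument producing uniformly large hyperbolic balls along a ray to the point), and similarly for points of $\partial\tilde W\cap\partial\mb{H}^{\dimd-1}$ outside closed corners, viewed inside the union of two adjacent (bulged) cells via a stretched ellipsoid containing that union. Without this piece your deduction of~(4) is incomplete.
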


Before proving the main result, let us fix some notations.

\subsection{Some notations}
For every $\theta\in[0,2\pi)$, we will denote by $r_\theta,R_\theta$ the rotations
\begin{equation}
r_\theta:= 
\left(
\begin{array}{c c}
\cos(\theta) & -\sin(\theta)\\
\sin(\theta) & \cos(\theta)\\
\end{array}
\right)
{\scriptstyle\in{\rm SL}_2(\mb{R})}
,\quad 
R_\theta:=
\left(
\begin{array}{c c}
\mb{I}_{d-1} & \\
  &r_\theta\\
\end{array}
\right)
{\scriptstyle\in{\rm SL}_{d+1}(\mb{R}).}
\end{equation}

For every $\mu>0$, we will also denote by $b_\mu,b'_\mu,B_\mu$ the bulging matrices
\begin{equation}
b_\mu:=
\left(
\begin{array}{c c}
\frac{1}{\mu} &0\\
0 &\mu\\
\end{array}
\right){\scriptstyle \in{\rm SL}_2(\mb{R})},\quad
b'_\mu:=
\left(
\begin{array}{c c}
\frac{1}{\mu} &0\\
0 &\mu^d\\
\end{array}
\right)
{\scriptstyle\in{\rm GL}^+_2(\mb{R})},
\end{equation}
and
\begin{equation}\label{eq:Bmu}
B_\mu:=
\left(
\begin{array}{c c}
\frac{1}{\mu}\mb{I}_d & \\
 &\mu^d\\
\end{array}
\right)
=
\left(
\begin{array}{c c}
\frac{1}{\mu}\mb{I}_{d-1} & \\
 &\mu^{\frac{d-1}{2}}b_{\mu^{(d+1)/2}}\\
\end{array}
\right)
{\scriptstyle\in{\rm SL}_{d+1}(\mb{R}).}
\end{equation}

\subsection{Projective structures on hyperbolic doubles}

The basic computation behind the proof of Theorem~\ref{thm:hyperbolic doubles} is that of the holonomy of a meridian around each corner, and the tube-type of the corner.
Let us use the notation from Theorem~\ref{thm:hyperbolic doubles}, and further denote by $\mb{M}'$ the copy of $\mb{M}$ that we glue to $\mb{M}$, and by $M$ and $M'$ the smooth loci of these manifolds.

We bulge the gluing $DM$ of $M$ with its double $M'$ (and hence obtain a projective structure on $DM$) using the parameters of Theorem~\ref{thm:hyperbolic doubles} for the walls of $M$, and using trivial parameters equal to $1$ for the walls of $M'$.

\subsubsection{Holonomy of meridians}\label{sec:holonomy computation}

Consider a corner $\corner$ of $\mb{M}$  with angle $\theta$ and adjacent walls $W$ and $W'$ with bulging parameters $\mu$ and $\mu'$.

Set $\nu=\mu^{(\dimd+1)/2}$ and $\nu'=\mu'{}^{(\dimd+1)/2}$.

Let $\mb{U}\subset D\mb{M}$ be an open neighborhood of a point of $\corner$  such that $\mb{U}\cap \mb{M}$ and $\mb{U}\cap \mb{M}'$ are both small hyperbolic balls (hence convex).

Let $\tilde U\subset \tilde{T}$ be the lift of $U$, which identifies with any lift of $U$ in $\tilde{DM}$.
It is moreover the gluing of sequences of lifts $(V_n)_{n\in\Z}$ and $(V'_n)_{n\in\Z}$ of respectively $U\cap M$ and $U\cap M'$, along lifts $(W_n)_{n\in\Z}$ and $(W_n')_{n\in\Z}$ of $U\cap W$ and $U\cap W'$, such that $V_n\cap V'_n=W_n$ and $V_n'\cap V_{n+1}=W_{n+1}'$.

Consider a hyperbolic developing chart $\phi$ of $\mb{U}\cap \mb{M}$ that sends $\mb{M}\cap\corner$ in $\H^{\dimd-2}$, sends $\mb{U} \cap W'$ in $\conv(\mb H^{\dimd-2},[e_{\dimd}])$, and sends $\mb{U}\cap \mb{M}$ in $\conv(\mb H^{\dimd-1},[e_{\dimd+1}])$.

This determines a developing map $\dev$ for the (bulged) projective structure on $\tilde{U}$. An elementary computation shows the following:

\begin{lemma}
\label{lem:holonomy computation}
Let $\gamma$ be a meridian around $\corner$ that starts on $U\cap W'$, goes through $U\cap M$, intersects $U\cap W$, then goes through $U\cap M'$ back to its starting point.
\begin{itemize}
\item{The $\SL_{\dimd+1}(\R)$-holonomy of $\gamma$ is
$
\rho(\gamma)=R_{\theta} B_{\mu} R_{\theta} B_{\mu'}^{-1}
$.
}
\item{The $\SL^\pm_2$-angle of $\rho(\gamma)$ (Definition~\ref{def:tubes}) is
\[
g(\gamma)=r_{\theta} b_{\nu} r_{\theta} b_{\nu'}^{-1}.
\]
}
\item{The projection of $\dev(V_0\cup V'_0)$ in $\sph^1=\sph(\R^{d+1}/\R^{d-1})$ is the properly convex segment between $[e_{\dimd}]$ and $r_{\theta} b_{\nu} r_{\theta}b_{\nu'}^{-1}[e_{\dimd}]$.
}
\end{itemize}
\end{lemma}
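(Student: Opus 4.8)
This lemma is an explicit bookkeeping computation, so the plan is to set up coordinates carefully and then compose the relevant matrices. First I would fix the developing chart $\phi$ of $\mb{U}\cap\mb{M}$ exactly as described: it sends $\mb{M}\cap\corner$ into $\mb{H}^{\dimd-2}=\sph(\Span\{e_1,\dots,e_{\dimd-1}\})\cap\mb{H}^\dimd$, the wall $\mb{U}\cap W'$ into the hyperplane $\{x_\dimd=0\}$ (so that $W'$ is spanned by $\mb H^{\dimd-2}$ and $[e_{\dimd+1}]$ — wait, one must be careful which of the last two coordinates plays which role), and the piece $\mb{U}\cap\mb{M}$ into $\{x_{\dimd+1}\geq 0\}$. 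The key point is that in these coordinates the hyperbolic reflection/rotation structure around the corner is governed by the $2\times 2$ block acting on $(x_\dimd,x_{\dimd+1})$, and the totally geodesic hyperplane $W$ adjacent to $\corner$ at angle $\theta$ is the image of $\{x_\dimd=0\}$ under the rotation $R_\theta$ in that block. So the hyperbolic holonomy of a path that crosses from $M$ through $W'$, around, and back through $W$ is built from the two rotations $R_\theta$ (one for each wall crossing) composed with the identification maps.

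Second, I would track the effect of the bulging. Crossing the wall $W'$ (with parameter $\mu'$) contributes a bulging matrix; by the definition of bulging (Section~\ref{sec:proj gluing}) this is the linear map fixing the span of $W'$ with eigenvalue $\mu'^{-1}$ and fixing the polar $[e_{\dimd+1}]$ with eigenvalue $\mu'^{\dimd}$. In the chosen coordinates, where $W'$ is $\{x_\dimd=0\}$ and its polar is $[e_\dimd]$ — here I need to re-examine: the polar of the totally geodesic hyperplane $W'=\{x_\dimd=0\}\cap\mb H^\dimd$ for the Lorentzian form is $[e_\dimd]$, so the bulging along $W'$ is exactly $B_{\mu'}$ as in \eqref{eq:Bmu} (dilation $\mu'^{-1}$ on $\Span\{e_1,\dots,e_{\dimd-1},e_{\dimd+1}\}$, dilation $\mu'^{\dimd}$... no). Let me instead just say: by construction $B_\mu$ is the bulging matrix along a wall whose polar is $[e_\dimd]$, decomposing as $\diag(\mu^{-1}\mb I_{\dimd-1},\,\mu^{(\dimd-1)/2}b_{\mu^{(\dimd+1)/2}})$ on the last block. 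Assembling the loop $\gamma$ — start on $W'$, go through $M$ (trivial, since $M$ carries its own chart), cross $W$ which is $R_\theta(\{x_\dimd=0\})$ so the bulging there is the conjugate $R_\theta B_\mu R_\theta^{-1}$, then go through $M'$ (trivially bulged), and the gluing identification $f$ between $W$ on $\mb M$ and $W$ on $\mb M'$, together with the return crossing — gives after cancellation of the trivial $M,M'$ pieces and the hyperbolic identifications the product $\rho(\gamma)=R_\theta B_\mu R_\theta B_{\mu'}^{-1}$. The two factors of $R_\theta$ appear because the loop crosses two walls each meeting $\corner$ at angle $\theta$ (equivalently, going around the corner in the double picks up the full doubled angle $2\theta$, realized as $R_\theta\cdot(\text{bulge})\cdot R_\theta$). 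I would present this as a short matrix computation in the $2$-plane complementary to $\sph^{\dimd-2}$, citing the guiding computation of Section~\ref{sec:local model}.

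Third, the $\tilde\SL_2$-angle claim is immediate from Definition~\ref{def:tubes}: it is the $2\times 2$ block of the conjugate of $\rho(\gamma)$ that fixes $\sph^{\dimd-2}$, and $R_\theta$ restricts to $r_\theta$ while $B_\mu$ restricts on that block to $\mu^{(\dimd-1)/2}b_{\mu^{(\dimd+1)/2}}=\mu^{(\dimd-1)/2}b_\nu$; the scalar factors $\mu^{(\dimd-1)/2}$ are absorbed when we pass to the $\tilde\GL_2$-conjugacy class (the angle lives in $\tilde\SL^\pm_2$ up to $\GL_2$-conjugacy, equivalently one normalizes the determinant), leaving $r_\theta b_\nu r_\theta b_{\nu'}^{-1}$. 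For the third bullet, the projection of $\dev(V_0\cup V_0')$ to $\sph^1$ is the union of the two half-hypersphere-intervals covered by the two sectors $V_0$ (a bulged hyperbolic sector of angle $\theta$ at $W'$, whose $\sph^1$-projection runs from $[e_\dimd]$ — the direction of $W'$ — to the direction of $W$) and $V_0'$ (the trivially-bulged copy, running from the direction of $W$ back to $r_\theta b_\nu r_\theta b_{\nu'}^{-1}[e_\dimd]$, which is where the meridian holonomy sends the starting wall $W'$). This is a proper convex segment of $\sph^1$ precisely under hypothesis~(ii), which one checks is equivalent to $\tr(r_\theta b_\nu r_\theta b_{\nu'}^{-1})=t\geq 2$ after normalizing the determinant; that trace identity is the short algebraic manipulation $\tr(r_\theta b_\nu r_\theta b_{\nu'}^{-1})/\sqrt{\det}=\cos^2\theta(\nu/\nu'+\nu'/\nu)-\sin^2\theta(\nu\nu'+1/(\nu\nu'))$.

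\textbf{Main obstacle.} The only real pitfall is sign and coordinate conventions: which of $[e_\dimd],[e_{\dimd+1}]$ is the polar versus the ``center'' direction, the orientation of the loop $\gamma$ (which determines whether one gets $\rho(\gamma)$ or $\rho(\gamma)^{-1}$), and making sure the two $R_\theta$ factors — rather than one $R_{2\theta}$ or $R_\theta$ on each side split differently — come out correctly after the cancellations from the trivially-bulged copy $M'$. I would pin this down once and for all by redoing the $d=2$ computation of Section~\ref{sec:local model} in these exact coordinates (there the analogous holonomy is $B_{\ell_2,\mu_2}R_{2\theta}B_{\ell_1,\mu_1}^{-1}$), and then observing that the codimension-$2$ situation here is the same computation tensored trivially on the $\sph^{\dimd-2}$ factor, with the bulging replacing the $2$-dimensional bulge by $B_\mu$. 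Everything else is a routine $2\times 2$ (or $4\times 4$) matrix multiplication, so I would not belabor it.
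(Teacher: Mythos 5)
Your plan is sound and it is what the paper leaves implicit behind the one-line justification ``an elementary computation'': fix the chart $\phi$ explicitly, track the bulging factor picked up at each wall crossing, compose with the unbulged hyperbolic holonomy $R_{2\theta}$, then read off the $\SL_2$ block for the second bullet and the $\sph^1$-projection for the third. Your consistency check against the $d=2$ computation of Section~\ref{sec:local model} is exactly the right one: with $\ell_1=W'$, $\ell_2=W=R_\theta W'$ one has $B_{\ell_1,\mu'}=B_{\mu'}$ and $B_{\ell_2,\mu}=R_\theta B_\mu R_\theta^{-1}$, so $B_{\ell_2,\mu}R_{2\theta}B_{\ell_1,\mu'}^{-1}=R_\theta B_\mu R_\theta B_{\mu'}^{-1}$, matching the first bullet.

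Two inaccuracies worth flagging. The smaller one is the parenthetical ``by construction $B_\mu$ is the bulging matrix along a wall whose polar is $[e_\dimd]$,'' which you abandon but which is backwards: $B_\mu=\mathrm{diag}(\mu^{-1}\mb I_\dimd,\mu^\dimd)$ fixes $\Span(e_1,\dots,e_\dimd)=\{x_{\dimd+1}=0\}$ with eigenvalue $\mu^{-1}$ and the polar $[e_{\dimd+1}]$ with eigenvalue $\mu^\dimd$; this is consistent with the chart sending $\mb U\cap W'$ into $\conv(\H^{\dimd-2},[e_\dimd])\subset\{x_{\dimd+1}=0\}$. The more substantive one is the claim that proper convexity of the projected segment holds ``precisely under hypothesis~(ii).'' It does not depend on (ii); it follows from hypothesis~(i), $\theta<\pi/2$. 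Each of $\dev(V_0)$ and $\dev(V_0')$ projects to an arc of $\sph^1$ contained in a quadrant bounded by the fixed directions of the relevant $b$-matrices, hence of length $<\pi/2$, and the two arcs share exactly one endpoint, so the union has length $<\pi$. The trace condition $t\ge 2$ is an independent input consumed only later, in Claim~4 of the proof of Theorem~\ref{thm:hyperbolic doubles}, where the third bullet and Lemma~\ref{lem:uniformization} together reduce uniformisability to checking $t\geq 2$; if the third bullet itself needed $t\geq 2$ that reduction would be confusingly stated, and in any case the third bullet is a geometric assertion that one should verify directly from $\theta<\pi/2$. (Also, ``after normalizing the determinant'' is not needed here: $\det(r_\theta b_\nu r_\theta b_{\nu'}^{-1})=1$ already.)

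Your identified obstacle — pinning down sign and orientation conventions, and whether one picks up $B$ or $B^{-1}$ on each crossing — is genuinely the delicate point, and your plan of resolving it by reworking the $d=2$ case in the exact coordinates of Section~\ref{sec:holonomy computation} is the right way to make the bookkeeping airtight.
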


\subsubsection{The proof of Theorem~\ref{thm:hyperbolic doubles}}\label{sec:pfTh8.1}

The plan is the following.
\begin{itemize}
 \item We check that the conditions of Theorem~\ref{thm:cvx} are satisfied.
 \item We apply Theorem~\ref{thm:cvx} to obtain that $DM$ is convex.
 \item We check that the conditions of Theorem~\ref{thm:addendum1} are satisfied, in particular that $DM$ is properly convex.
 \item We apply Theorem~\ref{thm:addendum1} to obtain the totally geodesic blowup $N$ of $D\mb{M}$, which has no ghost stratum and satisfy the Containment Condition at walls blowing up singularities with hyperbolic $\SL_2$-angle.
 \item We prove that every point of $\partial\tilde N$ outside closures of walls is extremal and $\cal C^1$ using Theorem~\ref{thm:addendum2}, Corollary~\ref{cor:extremal points} and Observation~\ref{obs:smooth}.
\end{itemize}

\begin{proof}[Proof of Theorem~\ref{thm:hyperbolic doubles}]
Let us check that the assumptions of Theorem~\ref{thm:cvx} are verified (this is where we use Lemma~\ref{lem:holonomy computation}):

\begin{claim}{1}
$DM$ is connected.
\end{claim}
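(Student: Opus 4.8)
The plan is to deduce connectedness of $DM$ from connectedness of each of the two pieces being glued, together with the fact that the gluing involution $f$ actually identifies at least one pair of walls. First I would record that $\mb{M}$ is connected: by definition a convex hyperbolic $bc$-manifold is a quotient $\mc{C}/\Gamma$ with $\mc{C}\subset\mb{H}^\dimd$ convex, and a convex subset of $\mb{H}^\dimd$ is connected since any two of its points are joined by the segment between them; hence $\mc{C}$, and therefore $\mb{M}$ and its copy $\mb{M}'$, is connected. Next, the smooth locus $M=\mb{M}\smallsetminus\corner$ is obtained from $\mb{M}$ by deleting the corner locus $\corner$, which is a submanifold of codimension $2$; deleting a submanifold of codimension $\geq 2$ does not disconnect a manifold, so $M$, and likewise $M'$, is connected.

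Finally, since $\partial\mb{M}\neq\emptyset$ and $\corner$ has codimension $2$ inside the codimension-$1$ boundary, $\partial\mb{M}$ must contain at least one wall $W$. In $DM=M\sqcup M'/{\sim}$ the gluing $f$ identifies $W\subset\partial M$ with the corresponding wall $W'\subset\partial M'$, so the images of the connected sets $M$ and $M'$ in $DM$ meet; as their union is all of $DM$, the quotient $DM$ is connected. There is essentially no obstacle to this claim; the only point meriting a line of justification is the standard observation that removing the codimension-$2$ corner locus from $\mb{M}$ leaves a connected space.
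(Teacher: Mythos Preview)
Your argument is correct and is essentially just an explicit unwinding of what the paper considers obvious: the paper's entire proof of this claim is ``This is clear.'' There is no difference in approach to comment on.
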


\begin{proof}[Proof of the claim]
This is clear.
\end{proof}

\begin{claim}{2}
$\mb{M}$ is properly convex, with complete walls and corners, and without ghost strata.
\end{claim}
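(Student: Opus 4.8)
The plan is to derive this claim directly from the general structural facts of Sections~\ref{sec:wallcplt} and~\ref{sec:ghost}, after two cheap observations about $\mb{M}$. First, proper convexity: by definition a convex hyperbolic bc-manifold has a developing map that embeds its universal cover $\tilde{\mb M}$ as a convex subset $\mathcal{C}$ of $\mb{H}^\dimd$; since $\mb{H}^\dimd$ is properly convex in $\sph^\dimd$ (its closure sits inside an affine chart $\mb{R}^\dimd$), so is $\mathcal{C}$, and hence $\mb M$ is a properly convex bc-manifold in the sense of Definition~\ref{def:projstructures}. Second, completeness: since $\mb M$ is compact, the hyperbolic metric it carries is complete, so $\mb M$ is a complete convex hyperbolic bc-manifold (equivalently, $\mathcal{C}$ is closed in $\mb{H}^\dimd$).

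Granting these two points, I would conclude as follows. The first Fact of Section~\ref{sec:wallcplt} applies on either count — $\mb M$ is a complete convex hyperbolic bc-manifold, and also a compact properly convex bc-manifold — and yields that $\mb M$ has complete walls and corners (Definition~\ref{def:Cplteness}). For the absence of ghost strata I would invoke Fact~\ref{obs:hyperbolic ghost stratum}: $\mb M$ being a compact complete convex hyperbolic bc-manifold, the second bullet of that fact states precisely that $\mb M$ has no ghost stratum.

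There is essentially no obstacle here: the claim is a bookkeeping assembly of facts proved earlier, and the only step deserving a sentence is the standard implication that compactness of $\mb M$ forces metric completeness of its hyperbolic structure. Note in particular that hypothesis~(i) of Theorem~\ref{thm:hyperbolic doubles} (all corner angles $<\pi/2$) is not needed for this claim; it will enter only in the subsequent verifications, e.g. through Fact~\ref{obs:hyperbolic containment condition} when establishing the Containment Condition for the blown-up walls.
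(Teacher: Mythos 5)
Your proof is correct and takes essentially the same approach as the paper, which disposes of the claim in one line by citing the compactness hypothesis and Fact~\ref{obs:hyperbolic ghost stratum}. You simply make explicit the two intermediate observations the paper leaves implicit (that $\tilde{\mb{M}}\subset\mb{H}^\dimd$ is automatically properly convex, and that compactness forces metric completeness so both the completeness criterion of Section~\ref{sec:wallcplt} and Fact~\ref{obs:hyperbolic ghost stratum} apply), which is a fair expansion of the paper's terse justification.
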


\begin{proof}[Proof of the claim]
This follows from our assumption and Fact~\ref{obs:hyperbolic ghost stratum}.
\end{proof}

\begin{claim}{3}
Every wall of $\mb{M}$ satisfies the Containment Condition relatively to its hyperbolic polar.
\end{claim}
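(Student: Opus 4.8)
The plan is to obtain Claim~3 directly from Fact~\ref{obs:hyperbolic containment condition}, which asserts that on a complete hyperbolic bc-manifold any wall all of whose adjacent corners have angle at most $\pi/2$ carries a hyperbolic polar satisfying the Containment Condition. First I would record that $\mb{M}$, being compact, is complete as a hyperbolic manifold, so the standing completeness hypothesis of that fact holds. Next, assumption~(i) of Theorem~\ref{thm:hyperbolic doubles} says that every corner angle of $\mb{M}$ is strictly less than $\pi/2$, hence in particular at most $\pi/2$; so for any wall $W$ of $\mb{M}$, every corner adjacent to $W$ meets the angle bound required by Fact~\ref{obs:hyperbolic containment condition}. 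Applying that fact wall-by-wall then gives the claim, the relevant polar being (one of the two choices of) the point dual to $\Span(\tilde W)$ for the Lorentzian form $\langle\bullet,\bullet\rangle$, \ie the point $p$ with $\tilde{\mb{M}}$ meeting $\mc{CH}(p,\H^{\dimd-1})$.

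The only genuine point to be careful about is that the ``hyperbolic polar'' appearing in Claim~3 is the same object as the one in Fact~\ref{obs:hyperbolic containment condition} and, later, the one used to bulge the gluing $DM$: in each case it is the Lorentzian dual of the wall's span, so no further identification argument is needed. I do not expect any real obstacle here — this claim is a one-line consequence of Fact~\ref{obs:hyperbolic containment condition} together with assumption~(i), and is recorded only so that hypothesis~\ref{item:thmcvx3} of Theorem~\ref{thm:cvx} (convexity of unions of adjacent cells) can subsequently be verified through Fact~\ref{obs:containment implies adj cells cvx}.
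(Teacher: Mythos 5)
Your proof is correct and is exactly the paper's argument: the paper's justification of Claim~3 is the single line ``This is given by Fact~\ref{obs:hyperbolic containment condition},'' and you have simply spelled out why the hypotheses of that fact hold (compactness gives completeness, and assumption~(i) of Theorem~\ref{thm:hyperbolic doubles} gives the corner-angle bound).
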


\begin{proof}[Proof of the claim]
This is given by Fact~\ref{obs:hyperbolic containment condition}.
\end{proof}

\begin{claim}{4}
\label{item:holonomy computation}
Each singularity of $D\hat{M}$ is uniformisable in $\sph^\dimd$ (Proposition~\ref{prop:uniformization}).
\end{claim}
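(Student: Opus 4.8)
I would prove Claim 4 by analyzing the $\SL_2$-angle $g(\gamma) = r_\theta b_\nu r_\theta b_{\nu'}^{-1}$ computed in Lemma~\ref{lem:holonomy computation} and applying the criterion of Proposition~\ref{prop:uniformization} together with Lemma~\ref{lem:uniformization}. First I would compute the trace of a suitably normalized representative: since $\det(r_\theta b_\nu r_\theta b_{\nu'}^{-1}) = 1$, the element $g(\gamma) \in \SL_2^\pm(\R)$ already has determinant $1$, so by Remark~\ref{rk:uniformization} uniformisability in $\sph^\dimd \smallsetminus \sph^{\dimd-2}$ (as opposed to completeness in $\blow$) requires $\tr(g(\gamma)) \geq 2$ (with $g(\gamma) \neq \Id_2$) \emph{and} that the segment $[x,\tilde h x] \subset \tilde N$ project onto a properly convex segment of $\sph^1$. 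A direct matrix multiplication gives
\[
 \tr\big(r_\theta b_\nu r_\theta b_{\nu'}^{-1}\big) = \cos(\theta)^2\Big(\tfrac{\nu}{\nu'} + \tfrac{\nu'}{\nu}\Big) - \sin(\theta)^2\Big(\nu\nu' + \tfrac{1}{\nu\nu'}\Big) = t,
\]
which is exactly the quantity appearing in hypothesis~(ii) of Theorem~\ref{thm:hyperbolic doubles}. So the hypothesis $t \geq 2$ directly delivers the trace condition, and $t \geq 2$ with $t = 2$ only in degenerate situations gives $g(\gamma) \neq \Id_2$ (one checks $g(\gamma) = \Id_2$ would force $\theta = 0$).

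Next I would verify the convexity-of-segment condition. By the third bullet of Lemma~\ref{lem:holonomy computation}, the projection of $\dev(V_0 \cup V_0')$ to $\sph^1$ is the properly convex segment between $[e_\dimd]$ and $g(\gamma)[e_\dimd]$ — this is precisely a segment of the form $[x, \tilde h x]$ (with $x$ the lift of $[e_\dimd]$, $\tilde h$ the $\tilde\SL_2$-angle) that projects onto a properly convex segment of $\sph^1$. Thus both conditions of Lemma~\ref{lem:uniformization} hold, and the tube $T_\corner$ is uniformisable in $\sph^\dimd \smallsetminus \sph^{\dimd-2}$, hence in particular in $\sph^\dimd$. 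Since this holds for every corner $\corner$ of $D\mb{M}$, every singularity of $D\mb{M}$ is uniformisable in $\sph^\dimd$, which is Claim~4 (= assumption~\eqref{item:thmcvx4} of Theorem~\ref{thm:cvx}).

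**Where the difficulty lies.** The main obstacle is the bookkeeping in the trace computation: one must correctly track which wall carries which bulging parameter, the order in which the meridian traverses $W'$, $M$, $W$, $M'$, and the fact that the bulging on the $M'$-side is trivial, so that the net holonomy of the meridian is $R_\theta B_\mu R_\theta B_{\mu'}^{-1}$ rather than some symmetric conjugate. This is already handled by Lemma~\ref{lem:holonomy computation}, so in the write-up it reduces to: substitute, expand the $2\times 2$ product $r_\theta b_\nu r_\theta b_{\nu'}^{-1}$, read off the trace, identify it with $t$, and invoke Lemma~\ref{lem:uniformization}. A secondary point worth a sentence is confirming that $g(\gamma)$ is not the identity and that the "parabolic vs.\ hyperbolic" dichotomy ($t = 2$ vs.\ $t > 2$) both fall under the uniformisable case — both are covered by Remark~\ref{rk:uniformization} once $\tr \geq 2$ is established, so no extra work is needed there.
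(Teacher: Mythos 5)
Your proof is correct and follows essentially the same route as the paper's: invoke Lemma~\ref{lem:uniformization} together with the third bullet of Lemma~\ref{lem:holonomy computation}, compute $\tr(g(\gamma)) = t$, and use hypothesis~(ii) of Theorem~\ref{thm:hyperbolic doubles} to get $t \geq 2$ (plus the elementary check that $g(\gamma)\neq\Id_2$, which uses $0<\theta<\pi/2$). The paper streamlines the appeal to Remark~\ref{rk:uniformization} by going directly through Lemma~\ref{lem:uniformization}, but the content is identical.
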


\begin{proof}[Proof of the claim]
Fix a corner $\corner$ of $D\mb{M}$ and take notation from Section~\ref{sec:holonomy computation} and Lemma~\ref{lem:holonomy computation}.

By Lemma~\ref{lem:uniformization} and the last point of Lemma~\ref{lem:holonomy computation}, we only need to check that $g(\gamma)=r_\theta b_\nu r_\theta b_{\nu'}^{-1}$ is non-trivial (this is an elementary computation) and has trace at least $2$, which is exactly the assumption of  Theorem~\ref{thm:hyperbolic doubles} since this trace is
\[
 t=\cos(\theta)^2\left(\frac{\nu}{\nu'}+\frac{\nu'}{\nu}\right)-\sin(\theta)^2\left(\nu'\nu+\frac1{\nu'\nu}\right).\qedhere
\]
\end{proof}

At this point we can use Theorem~\ref{thm:cvx} to say that $DM$ is convex.
To be able to use Theorem~\ref{thm:addendum1}, we need to check that:

\begin{claim}{5}
\label{item:holonomy computation2}
 Each singularity of $D\mb{M}$ is special (Definition~\ref{def:special tubes}).
 \end{claim}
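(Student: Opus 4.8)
To establish Claim~5 I would check the two bullets of Definition~\ref{def:special tubes} for the tube $T_\mathfrak{C}$ modelling a given corner $\mathfrak{C}$ of $D\mathbf{M}$, feeding in the holonomy computation of Lemma~\ref{lem:holonomy computation}. Since $R_\theta$, $B_\mu$ and $B_{\mu'}$ are all block-diagonal for the splitting $\R^{\dimd+1}=\R^{\dimd-1}\oplus\R^2$ (the first summand spanning $\sph^{\dimd-2}$, the second being the fibre direction), the meridian holonomy $\rho(\gamma)=R_\theta B_\mu R_\theta B_{\mu'}^{-1}$ acts on $\R^{\dimd-1}$ by the scalar $\mu'/\mu$ and on $\R^2$ by $(\mu/\mu')^{(\dimd-1)/2}\,r_\theta b_\nu r_\theta b_{\nu'}^{-1}$. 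Matching this against the normal form $g=\left(\begin{smallmatrix}\mu_T^{-1}\Id_{\dimd-1}&C\\0&\mu_T^{(\dimd-1)/2}\tilde B\end{smallmatrix}\right)$ of Definition~\ref{def:tubes}, and replacing $\gamma$ by $\gamma^{-1}$ if $\mu<\mu'$ so that the parameter is $\geq 1$, I read off that the special generator of the meridian has tube parameter $\mu_T=\max\{\mu/\mu',\mu'/\mu\}=\mu_\mathfrak{C}$ and $\SL_2$-angle $B$ conjugate (up to inversion) to $r_\theta b_\nu r_\theta b_{\nu'}^{-1}$, whose trace is the quantity $t$ from the hypotheses of Theorem~\ref{thm:hyperbolic doubles}; note $\tr(B^{-1})=\tr(B)=t$ since $B\in\SL_2(\R)$.

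With these identifications in hand, the first bullet of Definition~\ref{def:special tubes}---uniformisability of $T_\mathfrak{C}$ in $\sph^\dimd$---is exactly what was already proved in Claim~\ref{item:holonomy computation} above (via Lemma~\ref{lem:uniformization}, the standing hypothesis $t\geq 2$, $t$ being also $\neq$ the trace of $\pm\Id_2$, and the properly convex segment furnished by the last point of Lemma~\ref{lem:holonomy computation}). It therefore only remains to verify the second bullet, equivalently inequality~\eqref{eq:special tubes} with $\mu=\mu_\mathfrak{C}$ and $\tr B=t$, i.e. $\mu_\mathfrak{C}^{\dimd+1}-\mu_\mathfrak{C}^{(\dimd+1)/2}t+1>0$. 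Setting $N:=\mu_\mathfrak{C}^{(\dimd+1)/2}=\max\{\nu/\nu',\nu'/\nu\}$ and dividing by $N>0$, this is the inequality $N+N^{-1}>t$; and since $N+N^{-1}=\tfrac{\nu}{\nu'}+\tfrac{\nu'}{\nu}$ while $t=\cos^2\theta\big(\tfrac{\nu}{\nu'}+\tfrac{\nu'}{\nu}\big)-\sin^2\theta\big(\nu\nu'+\tfrac{1}{\nu\nu'}\big)$, the difference is
\[
N+N^{-1}-t=\sin^2\theta\Big(\tfrac{\nu}{\nu'}+\tfrac{\nu'}{\nu}+\nu\nu'+\tfrac{1}{\nu\nu'}\Big),
\]
which is strictly positive by assumption (i) of Theorem~\ref{thm:hyperbolic doubles} ($0<\theta<\pi/2$) together with $\nu,\nu'>0$. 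This gives~\eqref{eq:special tubes}, hence Claim~5, which in turn licenses the application of Theorem~\ref{thm:addendum1}. (As a sanity check, the same identity shows $\mu=\mu'$ would force $t<2$, so under our standing hypotheses $\mu_\mathfrak{C}>1$ automatically, in agreement with the remark following Definition~\ref{def:special tubes}.)

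The only delicate point is the bookkeeping in the first paragraph: being consistent about the conventions for $\sph^{\dimd-2}\subset\sph^\dimd$ and the fibre circle $\sph^1=\sph(\R^{\dimd+1}/\R^{\dimd-1})$ so that the $\SL_2$-angle is correctly extracted from the $\R^2$-block, keeping the bulging normalisation of~\eqref{eq:Bmu} straight so that $\nu=\mu^{(\dimd+1)/2}$ enters with the right exponent, and selecting the special generator rather than its inverse. Once those are pinned down there is no remaining geometric content---the rest is the one-line trigonometric identity displayed above---so I expect Claim~5 to be short.
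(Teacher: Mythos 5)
Your proposal is correct and follows essentially the same route as the paper: reduce Definition~\ref{def:special tubes} to the inequality~\eqref{eq:special tubes}, substitute the trace $t$ of the $\SL_2$-angle computed in Lemma~\ref{lem:holonomy computation}, and observe that the difference collapses to a manifestly positive multiple of $\sin^2\theta$ (the paper writes the identity before dividing by $\nu/\nu'$, giving $\sin^2\theta\bigl((\nu/\nu')^2+1+\nu^2+\nu'^{-2}\bigr)$, which is the same thing). Your write-up is a touch more careful in two places — it normalises the tube parameter as $\mu_\mathfrak{C}=\max\{\mu/\mu',\mu'/\mu\}$ so that it is $\geq 1$ as required, and it explicitly discharges the uniformisability bullet by citing Claim~\ref{item:holonomy computation} — but there is no difference in substance.
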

 
 and 
 
 \begin{claim}{6}
\label{item:proper convexity} 
$DM$ is \emph{properly} convex.
\end{claim}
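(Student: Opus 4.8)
The plan is to verify the two remaining claims that feed into Theorems~\ref{thm:addendum1} and \ref{thm:addendum2}, namely that every singularity of $DM$ is special (Claim~5) and that $DM$ is properly convex (Claim~6), and then to assemble the conclusions. For Claim~5, recall that by Definition~\ref{def:special tubes} a tube is special exactly when it is uniformisable in $\sph^\dimd$ and all eigenvalues of the ``dilation-twisted'' $\SL_2$-part exceed $\mu^{-1}$; the first condition was already established in Claim~4. For the second condition, the meridian holonomy computed in Lemma~\ref{lem:holonomy computation} is $\rho(\gamma)=R_\theta B_\mu R_\theta B_{\mu'}^{-1}$, whose $\SL_2$-angle is $h=r_\theta b_\nu r_\theta b_{\nu'}^{-1}$; after conjugating $\rho(\gamma)$ into the normal form of Definition~\ref{def:special tubes}, the relevant parameter is $\mu_\corner=\max\{\mu/\mu',\mu'/\mu\}$ and the $\SL_2$-eigenvalues of $\mu_\corner^{(\dimd-1)/2}B$ are $\mu_\corner^{(\dimd-1)/2}\lambda_\corner^{\pm1}$ where $\lambda_\corner=\frac t2+\sqrt{\frac{t^2}4-1}$ is the larger root of $X^2-tX+1$ (this uses $t\geq 2$ so that $h$ is hyperbolic or parabolic and $\lambda_\corner\geq 1$). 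Then inequality~\eqref{eq:special tubes} becomes $\mu_\corner^{\dimd+1}-\mu_\corner^{(\dimd+1)/2}t+1>0$, i.e.\ $(\mu_\corner^{(\dimd+1)/2}-\lambda_\corner)(\mu_\corner^{(\dimd+1)/2}-\lambda_\corner^{-1})>0$; this is an elementary calculation comparing $\mu_\corner^{(\dimd+1)/2}$ with $\lambda_\corner$, which should follow from the angle hypothesis $\theta<\pi/2$ together with the explicit formula for $t$ (when $\mu=\mu'$ one has $\mu_\corner=1$ and $t=\cos(2\theta)\cdot 2<2$, so the hypothesis $t\geq 2$ already forces $\mu\neq\mu'$, hence $\mu_\corner>1$, and a short monotonicity argument in $\nu/\nu'$ gives the strict inequality). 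This identifies the special generator and pins down the data $(\mu_\corner,\lambda_\corner,\ep)$ appearing in statement~(2) of the theorem.

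For Claim~6, properness of the convex set $DM\subset\sph^\dimd$: by Theorem~\ref{thm:cvx} we already know $DM$ is convex, and the final remark preceding Definition~\ref{def:cvxcocpct}-style discussions is that to upgrade convexity to proper convexity it suffices to exhibit one cell whose holonomy is irreducible (or, equivalently, to rule out that $\overline{DM}$ contains a pair of antipodal points / a projective subspace). Here the cells are copies of $\tilde{\mb M}\subset\H^\dimd$ with holonomy $\Gamma=\pi_1(\mb M)<\SO_0(\dimd,1)$; since $\mb M$ is a compact convex hyperbolic bc-manifold with non-empty boundary, $\Gamma$ is a non-elementary discrete subgroup of $\SO_0(\dimd,1)$ (its limit set is not contained in a proper subsphere because $\mb M$ has totally geodesic boundary and corners, so $\Gamma$ contains a Zariski-dense free subgroup via ping-pong on the walls), hence acts irreducibly on $\R^{\dimd+1}$. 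Then the developing image of a cell is a properly convex set in $\sph^\dimd$, and the local-to-global structure of $DM$ as a gluing kit (Theorem~\ref{thm:cvx}) together with the fact that the span of $DM$ is all of $\sph^\dimd$ (each cell already spans it) forces $DM$ itself to be properly convex: if it were not, its closure would contain a segment joining antipodes, which by Proposition~\ref{prop:intX} would have to live in $\partial(DM)$, and tracking it through the tree of cells contradicts proper convexity of the individual cells.

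With Claims~1--6 in hand, the assembly is formal. Theorem~\ref{thm:cvx} gives that $D\mb M$ is a convex projective cone-manifold; Theorem~\ref{thm:addendum1} (whose hypotheses — proper convexity from Claim~6 and specialness from Claim~5 — are now verified) produces the totally geodesic blowup $N$, yields statement~(1), the explicit developing map and meridian holonomy of statement~(2) with the stated $(\mu_\corner,\lambda_\corner,\ep)$, the Containment Condition of statement~(3) when $\lambda_\corner>1$, and the facts that $N$ has complete walls and no ghost strata (using that $\mb M$ itself has no ghost strata by Claim~2 and Fact~\ref{obs:hyperbolic ghost stratum}). For statement~(4), the disjointness of closures of distinct walls of $\tilde N$ is exactly the ``no ghost stratum'' conclusion, and the fact that every point of $\partial\tilde N$ outside closures of walls is extremal and $\cal C^1$ follows from Theorem~\ref{thm:addendum2}: one must check its hypotheses (faces of cells touching walls are contained in walls, faces touching corners are walls adjacent to the corner or contained in it, no ghost strata, $D\mb M$ compact), all of which hold for a compact convex hyperbolic bc-manifold since $\H^\dimd$ is strictly convex so the cells $\tilde{\mb M}$ have no nontrivial faces at all except their walls and corners. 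The main obstacle is the bookkeeping in Claim~5 — correctly conjugating $\rho(\gamma)$ into the Definition~\ref{def:special tubes} normal form, reading off that the special generator corresponds to $\mu_\corner=\max\{\mu/\mu',\mu'/\mu\}>1$ (rather than its inverse), and checking the strict inequality~\eqref{eq:special tubes}; the rest is an application of the machinery already built in Sections~\ref{sec:sec2}--\ref{sec:sec5}.
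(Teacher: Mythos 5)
Your high-level route for Claim~6 — establish that $\pi_1(\mb M)$ acts irreducibly on $\R^{\dimd+1}$ and then invoke the standard fact that a non-trivial convex open set preserved by an irreducible group is properly convex — is the same as the paper's (the paper isolates this as Fact~\ref{fact:irred}). The issue is in how you justify irreducibility, and it is a genuine gap.

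You assert that the limit set of $\Gamma=\pi_1(\mb M)$ is not contained in a proper subsphere ``because $\mb M$ has totally geodesic boundary and corners, so $\Gamma$ contains a Zariski-dense free subgroup via ping-pong on the walls.'' This is not an argument: ping-pong on walls is not set up, it is not explained what groups are playing and where, and being non-elementary is strictly weaker than what you need (a non-elementary Fuchsian subgroup of $\SO_0(\dimd,1)$ preserving a 2-plane is plainly reducible). Crucially, your paragraph never invokes $\dimd\geq 3$, but the claim genuinely depends on it: in dimension~$2$ a compact convex hyperbolic bc-manifold can be a right-angled polygon with trivial holonomy, and the paper explicitly singles out this case as requiring a separate argument. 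The paper's proof of Fact~\ref{fact:irred} gets irreducibility by showing that every extremal point of $\overline{\tilde{\mb M}}\subset\sph^\dimd$ lies at infinity (in $\partial\H^\dimd$), hence in the limit set, so the convex hull of the limit set equals $\overline{\tilde{\mb M}}$ and has non-empty interior; and that step uses precisely that walls are $(\dimd-1)$-dimensional and corners are $(\dimd-2)$-dimensional, both positive-dimensional because $\dimd>2$, so no extremal point can sit in their relative interiors.

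The fallback argument at the end of your paragraph is also flawed. You claim that if $\overline{DM}$ contained a pair of antipodes then ``tracking it through the tree of cells contradicts proper convexity of the individual cells.'' This does not follow: a convex union of properly convex sets need not be properly convex. Already in $\sph^1$ the arcs $(\theta_n,\pi-\theta_n)$ with $\theta_n\to 0$ are each properly convex while their union is a half-circle whose closure contains an antipodal pair. The antipodal segment in $\overline{DM}$ need not enter the closure of any single cell, and Proposition~\ref{prop:intX} does not force a contradiction. You really do need irreducibility (or some other global input) here, and therefore you need a correct proof of Fact~\ref{fact:irred} — which is exactly where the dimension hypothesis $\dimd\geq3$ does the work.
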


\begin{proof}[Proof of Claim \ref{item:holonomy computation2}]
Take notation from the above proof of \eqref{item:holonomy computation}.
We have
\[
 \rho(\gamma)=
 \begin{pmatrix}
  \left(\frac{\nu}{\nu'}\right)^{-\frac{2}{\dimd+1}}& \\
  & \left(\frac{\nu}{\nu'}\right)^{\frac{\dimd-1}{\dimd+1}}g(\gamma) 
 \end{pmatrix}.
\]
According to Definition~\ref{def:special tubes} (more precisely \eqref{def:special tubes}), we must show that  $\left(\frac{\nu}{\nu'}\right)^{2}-\left(\frac{\nu}{\nu'}\right)t+1$ is positive.
We compute:
\[
 \left(\frac{\nu}{\nu'}\right)^{2}-\left(\frac{\nu}{\nu'}\right)t+1=\sin(\theta)^2\left( \left(\frac{\nu}{\nu'}\right)^{2}+1+ \nu'{}^2 + \nu^{-2} \right)>0.\qedhere
\]
\end{proof}

To prove Claim \ref{item:proper convexity} we will need the following.

\begin{fact}\label{fact:irred}
 Any compact convex hyperbolic bc-manifold $\mb{X}=(\mc{C}\subset\mb{H}^\dimd)/\Gamma$ of dimension at least $3$ has irreducible holonomy (no invariant proper subspace).
\end{fact}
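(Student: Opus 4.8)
The statement to prove is Fact~\ref{fact:irred}: a compact convex hyperbolic bc-manifold $\mb{X}=(\mc{C}\subset\mb{H}^\dimd)/\Gamma$ of dimension $\dimd\ge3$ has irreducible holonomy. I would argue by contradiction: suppose $\Gamma<{\rm SO}_0(\dimd,1)$ preserves a proper nonzero linear subspace $L\subsetneq\R^{\dimd+1}$. The first step is to show that, because $\Gamma$ preserves a convex set $\mc{C}\subset\H^\dimd$ with $\mc{C}/\Gamma$ compact, $\Gamma$ acts on $\H^\dimd$ with a (quasi-convex) orbit that is coarsely all of $\mc{C}$, so the limit set $\Lambda_\Gamma\subset\partial\H^\dimd$ is the full set of accumulation points of any orbit; in particular $\Lambda_\Gamma$ contains at least two points, and if $\mc{C}$ has nonempty interior (which it does, being the universal cover of a $\dimd$-manifold with boundary) then $\Lambda_\Gamma$ is not contained in any proper projective subspace of $\partial\H^\dimd$. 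Here I would use Fact~\ref{fact:quasi-convex} and the standard fact that a discrete subgroup of ${\rm SO}_0(\dimd,1)$ acting cocompactly on a convex body has limit set not contained in a proper subsphere unless the convex body itself is degenerate.

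Next I would exploit the ${\rm SO}_0(\dimd,1)$-invariant Lorentzian form $\langle\bullet,\bullet\rangle$. If $\Gamma$ preserves $L$, then it also preserves the orthogonal complement $L^\perp$ with respect to $\langle\bullet,\bullet\rangle$, since $\Gamma$ acts by isometries of the form. The restriction of $\langle\bullet,\bullet\rangle$ to $L$ is either (i) nondegenerate of signature $(\dim L,0)$ or $(\dim L-1,1)$, or (ii) degenerate. In case (i) with a negative direction, $\sph(L)\cap\H^\dimd$ is a nonempty totally geodesic $\Gamma$-invariant hyperbolic subspace of dimension $\dim L-1<\dimd$; then $\mc{C}$ would have to be contained in it (the orbit, hence its coarsely dense hull $\mc{C}$, lies in this totally geodesic subspace by quasi-convexity and invariance), contradicting that $\mc{C}$ has nonempty interior in $\H^\dimd$. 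In case (i) with $L$ positive definite, $L^\perp$ has signature $(\dimd-\dim L,1)$ and the same argument applies to $L^\perp$. In case (ii), $L$ is tangent to the light cone: $L\cap L^\perp$ is an isotropic line, which is therefore a $\Gamma$-fixed point of $\partial\H^\dimd$; but then $\Lambda_\Gamma$ would be a single point or $\Gamma$ would be elementary, contradicting that $\Gamma$ acts cocompactly on a convex body with nonempty interior (its limit set is the whole boundary of that body, which has at least $\dimd$ points in general position).

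\textbf{Main obstacle.} The delicate point is making precise the claim "$\mc{C}$ must lie in any $\Gamma$-invariant totally geodesic subspace that carries the limit set," i.e.\ that cocompactness of $\mc{C}/\Gamma$ forces $\mc{C}$ to be "as small as its limit set allows." The clean way is: by Fact~\ref{fact:quasi-convex}, a $\Gamma$-orbit $\Gamma o$ is quasi-convex, so $\mc{C}$ is within bounded Hausdorff distance of the convex hull of $\Lambda_\Gamma$ in $\H^\dimd$; if $\Lambda_\Gamma\subset\partial\sph(V)$ for a proper $\Gamma$-invariant subspace $\sph(V)$ with $\sph(V)\cap\H^\dimd\ne\emptyset$, then that convex hull lies in $\sph(V)\cap\H^\dimd$, forcing $\mc{C}$ to be contained in a bounded neighborhood of a lower-dimensional subspace, which is impossible since $\mc{C}$ contains an open subset of $\H^\dimd$ (it develops the interior of the bc-manifold). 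I would also need to rule out the isotropic-line case cleanly: a $\Gamma$-fixed boundary point makes $\Gamma$ virtually a subgroup of the stabilizer of that point, which acts on $\H^\dimd$ with no bounded orbits transverse to the horospheres, so no invariant convex body can have compact quotient of dimension $\dimd$ — again contradicting the hypothesis. Once these geometric reductions are in place, the algebra (signature bookkeeping for $L$ and $L^\perp$) is routine.
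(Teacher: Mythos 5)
Your overall strategy — argue by contradiction, reduce to the fact that the limit set cannot lie in a proper subsphere, then handle the invariant subspace $L$ via the Lorentzian form and its orthogonal — is a legitimate route and genuinely different from the paper's argument, which proceeds directly: it invokes the characterization that $\Gamma$ is irreducible iff $\mathcal{CH}(\Lambda_\Gamma)$ has non-empty interior, then shows the closure of $\mc{C}$ equals $\mathcal{CH}(\Lambda_\Gamma)$ via Krein--Milman, because every extremal point of $\overline{\mc C}$ must lie on $\partial\H^\dimd$ (walls have dimension $\dimd-1\ge2$ and corners have dimension $\dimd-2\ge1$, so neither contains extremal points).

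However, there is a genuine gap at the heart of your reduction. You assert that ``$\mc{C}$ contained in a bounded neighborhood of a lower-dimensional totally geodesic subspace is impossible since $\mc{C}$ contains an open subset of $\H^\dimd$,'' and more broadly that a discrete group acting cocompactly on a convex body with non-empty interior cannot have its limit set in a proper subsphere. This is false. Take $\Gamma<\SO_0(\dimd-1,1)\subset\SO_0(\dimd,1)$ a uniform lattice of $\H^{\dimd-1}$, and $\mc{C}=N_R(\H^{\dimd-1})$ the closed $R$-neighborhood in $\H^\dimd$. This $\mc{C}$ is $\Gamma$-invariant, convex, has non-empty interior, has compact quotient $\mc{C}/\Gamma$, yet $\Lambda_\Gamma=\partial\H^{\dimd-1}$ lies in a proper subsphere and $\Gamma$ is reducible. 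Your argument would ``prove'' irreducibility here, which is wrong.

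The resolution is that your argument never uses the hypothesis that $\mb{X}$ is a \emph{bc}-manifold (totally geodesic boundary and corners), nor the hypothesis $\dimd\ge3$; both are essential and in fact the Fact fails for $\dimd=2$ (an annulus $\mc{C}/\langle\gamma\rangle$, $\gamma$ loxodromic, $\mc{C}$ a convex strip with geodesic boundary, has reducible holonomy). What rules out the $N_R$-type counterexample is exactly that the boundary of $N_R(\H^{\dimd-1})$ is \emph{not} totally geodesic. To close your argument you would need precisely the paper's observation: since $\partial\mc{C}\cap\H^\dimd$ consists of walls (codimension $1$) and corners (codimension $2$, hence dimension $\ge1$ when $\dimd\ge3$), none of which contains an extremal point of $\overline{\mc C}$, all extremal points lie in $\Lambda_\Gamma$; then Krein--Milman forces $\overline{\mc C}\subset\mathcal{CH}(\Lambda_\Gamma)$, so $\Lambda_\Gamma$ cannot lie in a proper subsphere. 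But once you have that, the signature bookkeeping for $L$ and $L^\perp$ is unnecessary — you are already done, by the standard characterization of irreducibility via the limit set.
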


\begin{proof}[Proof of the fact]
 It is classical that $\Gamma$ is irreducible if and only if the convex hull in $\H^\dimd$ of its limit set has non-empty interior (see for instance \cite[Prop.\,2.5]{EeERfH+}).
 
As $\Gamma$ acts cocompactly on the closed convex subset $\mc{C}\subset\H^\dimd$, its limit set is $\partial\mc{C}\cap\partial\H^\dimd$.
Moreover, the closure of $\mc{C}$ (in $\sph^\dimd$) is the convex hull of its extremal points.
None of these extremal points is in $\partial\mc{C}\cap \H^\dimd$ since this set is a union of walls and corner (the corners have positive dimension since $\dimd>2$), so they are all in $\partial\mc{C}\cap\partial\H^\dimd$, the limit set of $\Gamma$, whose convex hull is therefore $\mc{C}$, which has non-empty interior.
\end{proof}

\begin{proof}[Proof of Claim \ref{item:proper convexity}]
Suppose $\dimd>2$, then by Fact~\ref{fact:irred} $\pi_1(M)$ acts irreducibly on $\sph^\dimd$, which immediately implies, by standard arguments, that $DM$, which is $\pi_1(M)$-invariant, is properly convex (see Definition~\ref{def:prop convex}).
\end{proof}

%

At this point we can apply Theorem~\ref{thm:addendum1}.

Finally, Theorem~\ref{thm:addendum2}, Corollary~\ref{cor:extremal points} and Fact~\ref{obs:smooth} ensure that every point of $\partial\tilde N$ outside closures of walls is extremal and $\cal C^1$, provided that:

\begin{claim}{7}
\label{item:hypb double reg1} 
Every point of $\partial\tilde M\cap\partial\H^\dimd$ outside the closures of the walls is an extremal $\cal C^1$ point of $\partial \tilde M$.
\end{claim}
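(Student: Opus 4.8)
Write $\tilde{\mb M}\subset\H^\dimd$ for the universal cover of $\mb M$ (a closed convex set on which $\Gamma:=\pi_1(\mb M)$ acts with compact quotient), and fix $\xi\in\partial\tilde{\mb M}\cap\partial\H^\dimd$ lying in the closure of no wall of $\tilde{\mb M}$. Extremality is immediate: since $\tilde{\mb M}\subset\H^\dimd$ and $\partial\H^\dimd$ is a strictly convex quadric, the tangent hyperplane $T:=T_\xi\partial\H^\dimd$ is a supporting hyperplane of $\tilde{\mb M}$ with $T\cap\overline{\tilde{\mb M}}=\{\xi\}$, so the face $F_{\overline{\tilde{\mb M}}}(\xi)$ reduces to $\{\xi\}$.

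For the $\cal C^1$ property the plan is to argue by contradiction, assuming that $H\neq T$ is a second supporting hyperplane of $\tilde{\mb M}$ at $\xi$. Then $\mathfrak h:=H\cap\H^\dimd$ is a genuine totally geodesic hyperplane with $\xi$ in its ideal boundary, and $\tilde{\mb M}$ lies on one closed side of $\mathfrak h$. Taking $o\in{\rm int}(\tilde{\mb M})$ and the geodesic ray $r\colon[0,\infty)\to\tilde{\mb M}$ from $o$ to $\xi$, an elementary hyperbolic computation (upper half-space model with $\xi=\infty$) shows that $d_{\H^\dimd}(r(t),\mathfrak h)\to 0$. I would then distinguish two cases according to whether $r$ stays a definite distance away from $\partial\tilde{\mb M}\cap\H^\dimd$ along some sequence of times going to infinity, or not.

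In the first case, choosing $t_n\to\infty$ and $\gamma_n\in\Gamma$ carrying $r(t_n)$ into a fixed compact subset of ${\rm int}(\tilde{\mb M})$, up to a subsequence $\gamma_n r(t_n)\to y\in{\rm int}(\tilde{\mb M})$; since $d_{\H^\dimd}(\gamma_n r(t_n),\gamma_n\mathfrak h)=d_{\H^\dimd}(r(t_n),\mathfrak h)\to 0$ and each $\gamma_n\mathfrak h$ is a totally geodesic hyperplane with $\tilde{\mb M}$ on one side, the $\gamma_n\mathfrak h$ subconverge to a genuine hyperplane $\mathfrak h_\infty\ni y$ supporting $\tilde{\mb M}$, which forces the contradiction $y\in\partial\tilde{\mb M}$. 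In the second case there are $t_n\to\infty$ and walls $W_n$ with $d_{\H^\dimd}(r(t_n),W_n)\to 0$; renormalizing by $\gamma_n\in\Gamma$ and using that only finitely many walls meet a fixed compact set (uniform local finiteness of walls, as in the proof of Fact~\ref{obs:hyperbolic ghost stratum}) one may assume $\gamma_n W_n=W^*$ is a single wall, whence $\gamma_n^{-1}y\in\overline{W_n}$ and $\gamma_n^{-1}y\to\xi$. If the $W_n$ are eventually constant this already gives $\xi\in\overline{W_n}$, contradicting the hypothesis; if they are pairwise distinct (so all in the $\Gamma$-orbit of $W^*$), combining $d_{\H^\dimd}(r(t_n),\mathfrak h)\to 0$ with $d_{\H^\dimd}(r(t_n),W_n)\to 0$ forces $\gamma_n\mathfrak h\to\Span(W^*)\cap\H^\dimd$, hence $\gamma_n(H\cap\partial\H^\dimd)$ to converge to the full $(\dimd-2)$-sphere $\Span(W^*)\cap\partial\H^\dimd$ — which, since $\dimd\geq3$, is incompatible with the North--South collapsing dynamics of the diverging sequence $\gamma_n$ (whose repelling ideal point must be $\xi$, as $\gamma_n r(t_n)$ stays bounded while $r(t_n)\to\xi$). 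Either way, no second supporting hyperplane exists.

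I expect the main obstacle to be this second case, specifically ruling out a geodesic ray to $\xi$ that oscillates through infinitely many distinct walls: the argument rests on there being only finitely many walls modulo $\Gamma$ together with the fact that $\tilde{\mb M}$ has no ghost strata (Fact~\ref{obs:hyperbolic ghost stratum}), which makes the closures of distinct corners disjoint and prevents a geodesic from shadowing longer and longer pieces of the wall structure without converging into a single wall closure.
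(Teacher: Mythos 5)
Your extremality observation and your ``first case'' are exactly the paper's argument: since $x$ does not lie in the closure of any wall and $\mb M$ is compact, the ray $[p,x)$ returns infinitely often to a compact subset of the interior of $M$, one renormalizes by $\gamma_n\in\Gamma$, and the fact that $\gamma_n r(t_n)$ subconverges to an interior point while $d(r(t_n),\mathfrak h)\to 0$ gives the contradiction. The difference is that the paper takes the recurrence of $[p,x)$ to the interior as the \emph{only} case (justified by compactness of $\mb M$ and the hypothesis $x\notin\overline W$ for every wall $W$), whereas you split off a ``second case'' and try to derive a contradiction there as well.

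The gap is precisely in that second case, and more specifically in its final step. You argue that $\gamma_n(H\cap\partial\H^\dimd)$ converging to a full $(\dimd-2)$-sphere is ``incompatible with the North--South collapsing dynamics'' of the diverging sequence $\gamma_n$, whose repelling ideal point is $\xi$. But this is exactly the situation where North--South dynamics \emph{fails} to collapse a hyperplane: since $H$ is a supporting hyperplane of $\tilde{\mb M}$ at $\xi$, we have $\xi\in H$, so $H$ passes through the repelling point, and a sequence $\gamma_n H$ of hyperplanes through the repelling point can Hausdorff-converge to an entire hyperplane rather than degenerate. Concretely (with the quadratic form $x_1x_4+x_2^2+x_3^2$ and $\gamma_n=\mathrm{diag}(e^n,1,1,e^{-n})$), the circle $\{x_1=x_2\}\cap\partial\H^3$ contains the repelling point $[0:0:0:1]$ and $\gamma_n\bigl(\{x_1=x_2\}\cap\partial\H^3\bigr)$ Hausdorff-converges to the whole circle $\{x_2=0\}\cap\partial\H^3$, while $\gamma_n r(t_n)$ stays bounded for a suitable ray $r(t_n)\to[0:0:0:1]$. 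So no contradiction is forthcoming from this mechanism. (A smaller issue: to conclude $\gamma_n\mathfrak h\to\Span W^*\cap\H^\dimd$ you implicitly use that $y$ lies in the relative interior of the wall $W^*$, so that $\Span W^*$ is the unique supporting hyperplane of $\tilde{\mb M}$ at $y$; this also needs to be arranged.)

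What is actually needed is the statement you flagged as the ``main obstacle'': that if $x$ lies outside every $\overline W$, the ray cannot drift off shadowing infinitely many distinct walls. This is a recurrence statement (the projection of $[p,x)$ to $\mb M$ accumulates on the compact set $\{y\in\mb M: d(y,\partial\mb M)\geq\epsilon\}$ for some $\epsilon>0$), proved by noting that if instead $d(r(t),\partial\tilde{\mb M}\cap\H^\dimd)\to 0$ then the nearest-point projections $q(t)\in\partial\tilde{\mb M}$ converge to $x$, and then using compactness of $\mb M$, the uniform separation between non-adjacent walls, and the absence of ghost strata to show the wall containing $q(t)$ stabilizes, forcing $x\in\overline W$. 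You should replace the dynamical argument in Case 2 by a proof of this recurrence, after which Case 1 alone suffices exactly as in the paper.
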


\begin{claim}{8}
\label{item:hypb double reg2} 
For every wall $\tilde W\subset\tilde M$, every point of $\partial\tilde W\cap \partial\H^\dimd$ outside closures of a corners is an extremal $\cal C^1$ point of the boundary of the union of $\tilde M$ with any bulged reflection of $\tilde M$ across $\tilde W$.
\end{claim}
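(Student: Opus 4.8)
The plan is to prove both claims by the same argument, exploiting that the relevant convex sets are built from the ellipsoid $\H^\dimd$, which is strictly convex with $\cal C^1$ boundary everywhere. First I would record the structure of $\partial\tilde M$ near an ideal point: since the bc-manifold $\mb M$ is compact it has complete walls and corners and no ghost strata (Sections~\ref{sec:wallcplt} and \ref{sec:ghost}), so $\partial\tilde M\smallsetminus\tilde M$ equals $\partial\tilde M\cap\partial\H^\dimd$, closed walls of $\tilde M$ meet only along closed corners, and any two non-adjacent walls have disjoint closures. Hence for $\xi\in\partial\tilde M\cap\partial\H^\dimd$ the point $\xi$ lies in the closure of at most one wall, and in none if $\xi$ is outside closures of walls (Claim~\ref{item:hypb double reg1}), while for an ideal point $\xi$ of a wall $\tilde W$ outside closures of corners (Claim~\ref{item:hypb double reg2}) the only wall whose closure contains $\xi$ is $\tilde W$ itself. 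The heart of the matter is then to control how the \emph{other} walls can approach $\xi$.

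The key claim is: no sequence of pairwise distinct walls of $\tilde M$ (other than $\tilde W$, in the setting of Claim~\ref{item:hypb double reg2}) can have closures accumulating at $\xi$ except by shrinking to the point $\xi$. Suppose walls $W_n$ and $x_n\in\overline{W_n}\cap\partial\H^\dimd$ with $x_n\to\xi$. Compactness of $\mb M$ gives, after extraction, $W_n=g_n W_0$ for a fixed wall $W_0$ and deck transformations $g_n$ in pairwise distinct $\Stab_{\pi_1(\mb M)}(W_0)$-cosets, so $g_n$ diverges in $\pi_1(\mb M)$; by proper discontinuity and quasi-convexity of an orbit (Fact~\ref{fact:quasi-convex}), a subsequence of $g_n$ has north--south dynamics on $\overline{\H^\dimd}$ with attracting point $\xi$ (it must be $\xi$ because $g_n^{-1}x_n$ stays in the compact set $\overline{W_0}$). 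If $\overline{W_0}$ avoids a neighbourhood of the repelling point, then $\overline{W_n}=g_n\overline{W_0}$ shrinks to $\xi$, which is the alternative allowed by the key claim; otherwise $\overline{W_n}$ accumulates, in Hausdorff distance, onto a nontrivial convex subset $K\subset\partial\tilde M\cap\partial\H^\dimd$ with $\xi\in K$, and since distinct walls of a compact bc-manifold have disjoint closures and there are no ghost strata, $K$ is forced into the closure of a single wall, hence $\xi$ into a wall closure, contradiction. For Claim~\ref{item:hypb double reg2} this is applied to walls of $\tilde M$ other than $\tilde W$: those adjacent to $\tilde W$ are excluded because their closures meet $\overline{\tilde W}$ only along closed corners missing $\xi$, and the non-adjacent ones because their closures are disjoint from $\overline{\tilde W}$.

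With this in hand, Claim~\ref{item:hypb double reg1} follows: any segment of $\partial\tilde M$ through $\xi$ would, away from $\xi$, avoid the (shrinking) walls, hence lie in $\partial\H^\dimd$, contradicting strict convexity of $\H^\dimd$ unless trivial, so $\xi$ is extremal; and $T_\xi\partial\H^\dimd$ is the unique supporting hyperplane of $\tilde M$ at $\xi$, since any other supporting hyperplane would cut into $\H^\dimd$, producing points of $\H^\dimd\smallsetminus\tilde M$ accumulating at $\xi$ and hence a sequence of walls violating the key claim, so $\xi$ is $\cal C^1$ (this can also be phrased through Fact~\ref{obs:smooth} applied to ${\rm int}(\tilde M)\subset\H^\dimd$). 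For Claim~\ref{item:hypb double reg2} the same reasoning shows that near $\xi$ the boundary of $\tilde M\cup\tilde M''$ (for any bulged reflection $\tilde M''$ of $\tilde M$ across $\tilde W$) is that of the convex union $(\H^\dimd\cap H^-)\cup B(\H^\dimd\cap H^+)$ of two half-ellipsoids glued along $\Span(\tilde W)$, $B$ being the bulging transformation fixing $\Span(\tilde W)$ pointwise and the polar $[e_{\dimd+1}]$; convexity of this union is Remark~\ref{rk:union of two convex sets}, and one notes that $\xi\in\Span(\tilde W)\cap\partial\H^\dimd$ forces $T_\xi\partial\H^\dimd=\xi^\perp$ to contain the polar line and to be $B$-invariant, so $\partial\H^\dimd$ and $\partial(B\H^\dimd)$ share their tangent hyperplane at $\xi$, making the union $\cal C^1$ there, while extremality at $\xi$ follows from strict convexity of each half-ellipsoid and of the seam $\Span(\tilde W)\cap\partial\H^\dimd$. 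The hard part throughout is the key claim of the second paragraph — excluding walls accumulating at an ideal point without shrinking to it — which is exactly where compactness of $\mb M$ (convex-cocompactness of $\pi_1(\mb M)$) and the absence of ghost strata are used in an essential way.
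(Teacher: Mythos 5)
Your proposal is genuinely different from the paper's proof, and while the idea is in the right spirit, there are gaps in the crucial step.

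The paper's proof constructs a single $\pi_1(W)$-invariant ellipsoid $\cal E$ (obtained by stretching $\H^\dimd$ in the polar direction) that contains $\tilde M\cup\tilde M'$ and satisfies $\cal E\cap\sph^{\dimd-1}=\H^{\dimd-1}$. Extremality of $x$ then follows from strict convexity of $\cal E$. For the $\cal C^1$-ness, the paper takes $p\in\tilde W$ and uses compactness of $W$ plus the fact that $x$ avoids closed corners to conclude that the projection of the ray $[p,x)$ returns infinitely often to a compact subset of $\mathrm{int}(W)$; this yields points $p_n\to x$ on the ray with a uniform Hilbert ball $B_r(p_n)\subset\tilde M\cup\tilde M'$, and since any other ray $[q,x)\subset\cal E$ is asymptotic to $[p,x)$ it must enter these balls, forcing uniqueness of the supporting hyperplane.

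Your route instead hinges on the ``key claim'' that walls shrink to $\xi$, and on an explicit $\cal C^1$ computation for the union $(\H^\dimd\cap H^-)\cup B(\H^\dimd\cap H^+)$. The explicit computation is correct ($\xi^\perp$ contains the polar line and is $B$-invariant), and extremality does follow from containment in a strictly convex set. The gap is in the $\cal C^1$ step. First, $\cal C^1$-ness of the containing set $\cal E'=(\H^\dimd\cap H^-)\cup B(\H^\dimd\cap H^+)$ does \emph{not} transfer to the subset $\tilde M\cup\tilde M''$ — Fact~\ref{obs:smooth} goes from the \emph{smaller} to the \emph{larger} set, and your parenthetical ``phrased through Fact~\ref{obs:smooth} applied to $\mathrm{int}(\tilde M)\subset\H^\dimd$'' has the implication backwards. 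Second, the statement that near $\xi$ the boundary of $\tilde M\cup\tilde M''$ ``is that of'' the union of half-ellipsoids is an over-claim: the shrinking walls still accumulate at $\xi$, so the boundaries are not locally equal. Third, the key claim alone (walls shrinking to $\xi$) does not, as written, rule out a second supporting hyperplane $H'$ at $\xi$: the segments from the ray $[p,x)\subset\tilde W$ to points of $\H^\dimd$ on the wrong side of $H'$ would cross walls, but the key claim only says those walls shrink, which is a priori consistent. You would need a quantitative recurrence statement (the ray in $\tilde W$ returns uniformly to a compact part of $W$) and the convergence of asymptotic rays — exactly the mechanism the paper supplies and your write-up does not. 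Finally, in the ``otherwise'' branch of your key-claim argument, a nontrivial convex $K\subset\partial\tilde M\cap\partial\H^\dimd$ is impossible outright by strict convexity of $\partial\H^\dimd$; invoking disjointness of wall closures there is not what closes that case.
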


The proofs of these two points are essentially the same.

\begin{proof}[Proof of Claim \ref{item:hypb double reg1}]
 Let $x\in\partial\tilde M\cap\partial\H^\dimd$ be outside the closures of the walls.
 This point is extremal for $\tilde M$ because it is extremal for $\H^\dimd$, which contains $\tilde M$.
 
 Pick $p\in \tilde M$.
 Since $\mb{M}$ is compact, the fact that $x$ does not lie in the closure of a wall says that the projection of the ray $[p,x)$ comes back infinitely often in a compact subset of the interior of $M$.
 Therefore there exists $r>0$ and $p_n\in[p,x)$ tending to $x$ such that the ball of $\H^\dimd$ of radius $r$ around all $p_n$ is contained in $\tilde M$.
 This classically implies that $\partial\tilde M$ is $\cal C^1$ (otherwise there would be a ray $[q,x)\subset\H^\dimd$ outside of $\tilde M$, but then by basic hyperbolic geometry  this ray would eventually get arbitrarily close to $[p,x)$).
\end{proof}

\begin{proof}[Proof of Claim \ref{item:hypb double reg2}]
 Consider a bulged reflection $\tilde M'$ of $\tilde M$ across $\tilde W$.
 One can stretch $\H^\dimd$ to obtain a $\pi_1(W)$-invariant ellipsoid $\cal E$ that contains $\tilde M\cup \tilde M'$, and such that $\cal E\cap\sph^{\dimd-1}=\H^{\dimd-1}$.
 
 Let $x\in\partial\tilde W\cap\partial\H^{\dimd-1}$ be outside the closures of corners.
 This point is extremal for $\tilde M$ because it is extremal for $\cal E$, which contains $\tilde M\cup \tilde M'$.
 
 Pick $p\in \tilde W$.
 Since $W$ is compact, the fact that $x$ does not lie in the closure of a corner says that the projection of the ray $[p,x)$ comes back infinitely often in a compact subset of the interior of $W$.
 Therefore there exists $r>0$ and $p_n\in[p,x)$ tending to $x$ such that the ball of $\cal E$ of radius $r$ around all $p_n$ is contained in $\tilde M\cup \tilde M'$.
 This classically implies that $\partial\tilde M\cup \tilde M'$ is $\cal C^1$.
\end{proof}

This concludes the proof of Theorem \ref{thm:hyperbolic doubles}.
\end{proof}

\begin{rk}[The case $d=2$]
Note that the proof given above holds also in dimension $d=2$ except in Claim \ref{item:proper convexity}. However, the conclusion of the claim still holds (by an elementary argument). 
\end{rk}

\subsection{The proofs of the main theorems}

\begin{proof}[Proof of Theorems~\ref{thm:main1} and \ref{thm:main2}]
Fix $k=4$. Let $M$ be the convex compact hyperbolic $d$-manifold with totally geodesic boundary and corners with angles $\pi/4$ provided by Theorem \ref{thm:main7}, whose graph of the boundary is bipartite.
Let us consider a coloring of the walls of $M$ in yellow and purple such that any two adjacent walls have different colors.

For $j=1,2$, consider $\nu_j$ such that 
\[
 \frac1{\sqrt2}\left( \nu_j^2 +\nu_j^{-2} \right) - \frac1{\sqrt2}\left( 1+1 \right) = 2 + (2-j).
\]

By Theorem~\ref{thm:hyperbolic doubles}, by giving parameters $\nu_j^{(\dimd+1)/2}$ and $\nu_j^{-(\dimd+1)/2}$ to yellow and purple walls, we get a compact convex projective manifold with totally geodesic boundary $N_j$ without corners, with complete walls (Definition~\ref{def:Cplteness}), and without ghost strata (Definition~\ref{def:ghost}).
(Note that $N_1$ and $N_2$ are homeomorphic.)

Let $\tilde N_j\subset\sph^\dimd$ be the universal cover of $N_j$. Then:
\begin{itemize}
\item{The description stated in Theorem~\ref{thm:main2} of the walls of $\tilde N_j$ (the lifts of the walls of $N_j$) as cones with stabilizer of  the form $\pi_1(\corner)\times \Z$ with $\corner\subset M$ a corner, is given in Theorem~\ref{thm:hyperbolic doubles}.}
\item{Using again Theorem~\ref{thm:hyperbolic doubles}, every point of $\partial N_j$ which is not in the closure of a wall is extremal and $\cal C^1$.}
\end{itemize}

Let $\Gamma_j$ be the fundamental group of $N_j$.

Let $\Lambda_j$ be the full orbital limit set of $\Gamma_j$ in $\partial\Omega_j$.

\begin{itemize}
\item{By Theorem~\ref{thm:relhypb}, $\Gamma_j$ is relatively hyperbolic with respect to the fundamental groups of the walls of $N_j$, \ie the stabilizers of the walls of $\tilde N_j$.}
\end{itemize}

{\bf Case $j=1$}. If $j=1$, then:

\begin{itemize}
\item{Each wall has a polar that satisfies the Containment Condition by Theorem~\ref{thm:hyperbolic doubles}.}
\item{By Theorem~\ref{thm:cvx}, the projective structure on $N_1$ extends to a convex projective structure on the double $N=DN_1$, which is actually \emph{properly convex} as $\dimd\geq3$ by Fact~\ref{fact:irred}.}
\item{Let $\Omega_1\subset\sph^\dimd$ be the universal cover of $N$, which is divided by $\Gamma:=\pi_1(N)$ since $N$ is a closed manifold. By Proposition~\ref{prop:cvxcocpct}, $\Gamma_1$ acts convex-cocompactly on $\Omega_1$.}
\item{Let us call ``wall'' the image in $N$ of the walls of $N_1$, and the lifts of these in $\Omega_1$.
By the description of the walls of $\tilde N_1$ and their stabilizers, the walls of $\Omega_1$ form a $\Gamma$-invariant collection of properly embedded cones with pairwise disjoint closures, whose stabilizers have the form $\pi_1(\corner)\times \Z$ where $\corner\subset M$ is a corner.}
\item{By Theorem~\ref{thm:addendum2}, Corollary~\ref{cor:extremal points} and Fact~\ref{obs:smooth}, every point of $\partial \Omega_1$ which is not in the closure of a wall is extremal and $\cal C^1$.}
\item{By Fact~\ref{fact:Teddyrelhypb}, $\Gamma$ is relatively hyperbolic with respect to the stabilizers of the walls, \ie the properly embedded cones.
This concludes the proof of Theorem~\ref{thm:main1}.}
\item{By convex-cocompactness and \cite[Lem.\,4.1]{DGK17}, $\Lambda_1=\partial\tilde M_1\smallsetminus \tilde M_1$, so that $\mc{CH}(\Lambda_1)\smallsetminus\Lambda_1=\tilde M_1$.}
\end{itemize}

This finishes the proof of Theorem~\ref{thm:main2}.

{\bf Case $j=2$}. Suppose $j=2$. Let us check that $\Lambda_2=\partial\tilde M_2\smallsetminus \tilde M_2$.

The inclusion $\Lambda_2\subset\partial\tilde M_2\smallsetminus \tilde M_2$ is clear, let us prove the other one.

Fix $p\in\Omega_2$.
Let $x\in\partial\tilde M_2$ which is not in the closure of any wall.
As shown in the proof of Theorem~\ref{thm:relhypb}, the image of $[p,x)$ in $M_2$ comes back infinitely often in a compact subset of the complement of the walls, in other words there exists $x_n\in[p,x)$ going to $x$ and $g_n\in\Gamma_2$ such that $g_nx_n$ stays in a compact subset of $\Omega_2$.
Consider $R>0$ large enough so that the closed Hilbert ball $B_n:=B_R(g_nx_n)$ contains $p$ for every $n$.
Since $x$ is extremal, $g_n^{-1}B_n=B_R(x_n)$ converge to $x$ (see for instance \cite[Fact 2.1.10]{ThesePL}).
In particular, $x\in \overline{\Gamma_2\cdot p}$.

As points in the relative boundary of walls can be approached by points outside closed walls, $\partial\tilde M_2\smallsetminus \tilde M_2$ is contained in $\overline{\Gamma_2p}$, and hence in the full orbital limit set.
\end{proof}

Theorem \ref{thm:mainA} is a particular case of Theorem~\ref{thm:main1}.

We prove Theorem \ref{thm:mainB}.

\begin{proof}[Proof of Theorem \ref{thm:main3}]
Take $\ep$ from Fact~\ref{fact:ChoiSeries}.
Consider $a_i,b_i,c_i$, $i=1,\dots,n$, as in Theorem~\ref{thm:main3}, \ie such that $1<a_i<e^{\ep/2}$, $b_i>3$ and $1<c_i<-2+b_i^2/2$.

Set $\ell_i:=2\ln(a_i)<\ep$, and consider the hyperbolic metric on $M$ with totally geodesic and corners, such that the corners are the $\alpha_i$'s, with length $\ell_i$ and angle $\theta_i<\pi/4$.

The fact that $c_i<-2+b_i^2/2$ and $\theta_i<\pi/4$ guarantees that 
$$c_i+c_i^{-1}<\frac12(b_i^2+b_i^{-2})-1<\cos^2\theta_i(b_i^2+b_i^{-2})-2\sin^2\theta_i,$$
and, hence, that
$$2<\frac{\cos^2\theta_i(b_i^2+b_i^{-2})-(c_i+c_i^{-1})}{\sin^2\theta_i}.$$
Thus there exists a unique $d_i>1$ such that
$$d_i^2+d_i^{-2}<\frac{\cos^2\theta_i(b_i^2+b_i^{-2})-(c_i+c_i^{-1})}{\sin^2\theta_i},$$
and 
$$t_i:=\cos^2\theta_i(b_i^2+b_i^{-2})-\sin^2\theta_i(d_i^2+d_i^{-2})=c_i+c_i^{-1}>2.$$

Set $\mu_i:=\sqrt{b_id_i}$ and $\mu_i':=\sqrt{d_i/b_i}$.

Note that $\mu_i/\mu_i'=b_i$ and $\mu_i\mu_i'=d_i$ and $c_i=t_i/2+\sqrt{t_i^2/4 -1}$.

We get the desired projective structure on $DM\smallsetminus U$ by applying the bulging procedure from Theorem~\ref{thm:hyperbolic doubles} with parameters $\mu_i$ and $\mu_i'$ (recall that $\dimd=3$).
\end{proof}

\subsection{Commensurability}
Lastly, we show that our construction is flexible enough to produce an abundance of examples of inequivalent non-symmetric non-strictly divisible convex domains.

To begin with let us recall the definition of commensurability.

\begin{defi}[Commensurable]
Two groups $G,G'$ are {\em commensurable} if they contain isomorphic finite index subgroups $H<G,H'<G'$.
\end{defi}

We prove

\begin{thm}
\label{thm:non-commensurable}
Suppose $d\geq5$.
Let $\theta=\pi/n,\theta'=\pi/n'$ be such that the number fields $F=\mb{Q}(\cos(\theta),\sin(\theta)),F'=\mb{Q}(\cos(\theta'),\sin(\theta'))$ are different. Let $\tau\in F,\tau'\in F'$ be elements satisfying the assumptions of Lemma \ref{lem:number field}. Let $\Gamma,\Gamma'<{\rm PSL}_{d+1}(\mb{R})$ be the groups as in Theorem \ref{thm:main1} dividing the properly convex sets $\Omega,\Omega'\subset\mb{RP}^d$ which are relatively hyperbolic with respect to the collections of subgroups $\{\Theta_j\times\mb{Z}\}_{j\in J},\{\Theta_j'\times\mb{Z}\}_{j\in J'}$ with $\Theta_j,\Theta_j'$ finite index subgroups of ${\rm SO}(q)_\mc{O},{\rm SO}(q')_{\mc{O}'}$ where $q=x_1+\cdots+x_{d-2}^2-\tau y^2$ and $q'=x_1+\cdots+x_{d-2}^2-\tau' y^2$. Then $\Gamma,\Gamma'$ are not commensurable.
\end{thm}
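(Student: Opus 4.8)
The plan is to exploit the relative hyperbolicity structure: commensurable relatively hyperbolic groups have, up to finite index, ``the same'' peripheral structure. More precisely, if $H<\Gamma$ and $H'<\Gamma'$ are isomorphic finite index subgroups, then by the theory of relative hyperbolicity (a finite index subgroup of a relatively hyperbolic group is relatively hyperbolic with respect to the collection of its intersections with conjugates of the peripheral subgroups, and this peripheral structure is canonical up to conjugacy and finite index — see e.g.\ Drutu--Sapir or Osin), the peripheral subgroups of $H$ are, up to conjugacy and commensurability, intersections of $H$ with the $\Gamma$-conjugates of the $\Theta_j\times\mathbb{Z}$, and similarly for $H'$. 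An isomorphism $H\cong H'$ must send peripheral subgroups to peripheral subgroups (up to conjugacy), because the peripheral structure is intrinsic: the peripherals are, up to finite index, precisely the maximal subgroups $P$ such that $P$ is not hyperbolic and $P$ has infinite index in its commensurator — or more robustly one invokes that $\Gamma$ is hyperbolic relative to $\{\Theta_j\times\mathbb Z\}$ \emph{and} that these peripherals are not relatively hyperbolic with respect to any proper subgroups (they are virtually $\mathrm{SO}(d-2,1)$-lattices times $\mathbb Z$, hence one-ended-ish with no proper peripheral structure once $d\ge 5$). So the upshot of this first step is: some finite index subgroup of some $\Theta_j\times\mathbb Z$ is isomorphic to some finite index subgroup of some $\Theta_{j'}'\times\mathbb Z$; in particular $\Theta_j$ and $\Theta_{j'}'$ are commensurable (pass to the quotient by the center, which is the unique maximal normal $\mathbb Z$, or note the $\mathbb Z$ factor splits off virtually).

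\textbf{Second step: commensurability invariants of the $\Theta$'s.} Now $\Theta_j$ is a finite index subgroup of $\mathrm{SO}(q)_{\mathcal O}$ with $q=x_1^2+\cdots+x_{d-2}^2-\tau y^2$ over the totally real field $F=\mathbb Q(\cos\theta,\sin\theta)$, and similarly $\Theta_{j'}'$ over $F'$. These are arithmetic lattices in $\mathrm{SO}(d-2,1)$ of the simplest (quadratic form) type. The commensurability classification of such arithmetic lattices is classical (Borel--Harish-Chandra, and for the specific invariants, work going back to Gromov--Piatetski-Shapiro, see also Maclachlan--Reid in dimension $3$ and its higher-dimensional analogues): the commensurability class of $\mathrm{SO}(q)_{\mathcal O}$ determines, and is determined by, the pair consisting of the \emph{invariant trace field}, which is $F$ (more precisely the field generated over $\mathbb Q$ by traces of squares of elements, which for these groups recovers $F$ since $\dim\ge 3$ forces Zariski density and the adjoint trace field is $F$), together with the similarity class of the quadratic form over that field. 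Since $F\neq F'$ by hypothesis, the invariant trace fields differ, so $\Theta_j$ and $\Theta_{j'}'$ cannot be commensurable. This contradicts the conclusion of the first step, so $\Gamma$ and $\Gamma'$ are not commensurable.

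\textbf{Where the work is.} The main obstacle — and the step I would write most carefully — is the first one: making rigorous the claim that an abstract isomorphism between finite-index subgroups of $\Gamma$ and $\Gamma'$ must match up the peripheral subgroups (up to commensurability), \emph{and} that it induces a commensurability between the relevant $\Theta$'s rather than between the full $\Theta\times\mathbb Z$ (one must kill the central $\mathbb Z$; this is fine because $\mathbb Z\times\Theta$ has a characteristic finite-index subgroup whose center is exactly $\mathbb Z$ when $\Theta$ is a virtually centerless lattice with finite abelianization up to finite index, true for $\mathrm{SO}(d-2,1)$-lattices when $d-2\ge 3$, i.e.\ $d\ge 5$, which is exactly the hypothesis). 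I would cite: (i) that finite-index subgroups of relatively hyperbolic groups are relatively hyperbolic with the induced peripheral structure (Osin, or Drutu--Sapir); (ii) rigidity of peripheral structure under isomorphism — either via the intrinsic characterization of peripherals, or directly: an isomorphism is a quasi-isometry, peripheral subgroups are (up to finite Hausdorff distance) the ``non-hyperbolic directions'' and are preserved by QIs of relatively hyperbolic groups with non-relatively-hyperbolic peripherals (Behrstock--Drutu--Mosher); and (iii) the arithmetic commensurability classification via invariant trace field and quadratic form similarity class. The second step is then essentially a citation. One subtlety to address in step two: one should confirm that the $\Theta_j$ appearing across different $j$ within a single $\Gamma$ are all commensurable to $\mathrm{SO}(q)_{\mathcal O}$ — this is clear from the construction in Theorem~\ref{thm:main7}, since all corners of $M$ are totally geodesic $(d-2)$-submanifolds of the arithmetic manifold $\mathbb H^d/G$ and hence their fundamental groups are finite-index in $\mathrm{SO}(q|_V)_{\mathcal O}$ for the relevant form, all of which lie in one commensurability class.
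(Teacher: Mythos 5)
Your proposal is correct in outline and reaches the same intermediate conclusion as the paper — namely that some $\Theta_i$ and $\Theta'_j$ must be commensurable — and then concludes identically via the invariant trace field (the paper cites Vinberg and Prasad--Rapinchuk for this; your references to Gromov--Piatetski-Shapiro/Maclachlan--Reid are essentially the same material). Where you differ is in how you establish that intermediate commensurability. You invoke abstract QI-rigidity of the peripheral structure for relatively hyperbolic groups (finite-index heredity of relative hyperbolicity, uniqueness of the peripheral structure when the peripherals are NRH, and preservation of that structure under isomorphism), whereas the paper instead works directly inside the convex domain $\Omega$ using Islam--Zimmer's structure theorem for relatively hyperbolic groups dividing convex sets: the limit sets $L'_{j,\alpha}$ of the $\Gamma'$-peripherals inside $\Omega$ are convex, pairwise disjoint, swallow all boundary segments, and have cocompact stabilizers; since a properly embedded cone is a union of segments through its vertex, its relative boundary must lie inside a single $L'_{j,\alpha}$, and a cohomological-dimension argument then forces the two peripheral stabilizers to intersect in a common finite-index subgroup. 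The paper's route is thus more concrete and self-contained (it does not require the intrinsic characterization of peripherals, which you yourself flag as the delicate point and phrase somewhat imprecisely — ``infinite index in its commensurator'' is not the right characterization of peripherals, though your fallback NRH criterion is fine since $\Theta_j\times\mathbb{Z}$ has infinite center). Your route is more general and would transfer to settings without a convex geometry realization, at the cost of citing heavier machinery. Your handling of the central $\mathbb{Z}$ factor and the $d\ge 5$ hypothesis (ensuring $d-2\ge 3$ so the lattices are centerless at finite index and Mostow rigidity applies) agrees with the paper.
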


\begin{proof}
Suppose that $\Gamma,\Gamma'$ are commensurable. This means that there exists a group $G$ that embeds in both $\Gamma,\Gamma'$ as a subgroup of finite index. 
Let us say $G<\Gamma$ and $\iota:G\hookrightarrow \Gamma'$ is an embedding.

As $\Gamma,\Gamma'$ are relatively hyperbolic with respect to the collections of subgroups $\{\Theta_j\times\mb{Z}\}_{j\in J},\{\Theta_j'\times\mb{Z}\}_{j\in J'}$ it follows that $G$ is relatively hyperbolic with respect to $\{S_{j,\alpha}:=\alpha(\Theta_j\times\mb{Z})\alpha^{-1}\cap G\}_{j\in J,\alpha\in\Gamma}$ and also with respect to $\{S_{j,\alpha}':=\iota^{-1}(\alpha(\Theta_j'\times\mb{Z})\alpha^{-1})\}_{j\in J',\alpha\in\Gamma'}$. Notice that each $S_{j,\alpha},\iota(S_{j,\alpha}')$ is a finite index subgroup of $\alpha(\Theta_j\times\mb{Z})\alpha^{-1},\alpha(\Theta_j'\times\mb{Z})\alpha^{-1}$.

By work of Islam and Zimmer \cite[Th.\,1.6]{IZ22relhypb}, we have the following: Let $L_{j,\alpha}'$ be the (full orbital) limit set of the group $S_{j,\alpha}'$ in $\Omega$. Then: 
\begin{itemize}
\item{$L_{j,\alpha}'\subset\partial\Omega$ is closed and $\mc{CH}(L_{j,\alpha}')\smallsetminus L_{j,\alpha}'$ is a convex subset of $\Omega$}
\item{The group $S_{j,\alpha}'$ acts cocompactly on $\mc{CH}(L_{j,\alpha}')\smallsetminus L_{j,\alpha}'\subset\Omega$.}
\item{There is an equivariant homeomorphism between the Bowditch boundary of $(G,\{S_{j,\alpha}'\}_{j\in J',\alpha\in\Gamma'})$ and the space obtained from $\partial\Omega$ by collapsing each $L_{j,\alpha}'$ to a point. In particular, the sets $L_{j,\alpha}'\subset\partial\Omega$ are pairwise disjoint.}
\item{If $[a,b]\subset\partial\Omega$ is a non-trivial line segment, then $[a,b]\subset L_{j,\alpha}'$ for some $j\in J'$ and $\alpha\in\Gamma'$.}
\end{itemize}

Consider one of the properly embedded cones $\mc{CH}(H,p)\cap\Omega \in\mc{C}$ on which acts cocompactly $S_{i,\beta}=\beta(\Theta_k\times\mb{Z})\beta^{-1}\cap G$ for some $k\in J$ and $\beta\in\Gamma$; let $L_{i,\beta}$ be the full orbital limit set of $S_{i,\beta}$, \ie the relative boundary of the cone.

For every $q\in\partial H$ the line segment $[q,p]\subset\partial\Omega$ is contained in some $L_{j,\alpha}'$. As all segments intersect in $p$ we conclude that they are all contained in the same $L_{j,\alpha}'$. Therefore $L_{i,\beta}\subset L'_{j,\alpha}$ and $\mc{CH}(H,p)\subset\mc{CH}(L_{j,\alpha}')$. 

Note that $S_{i,\beta}$ preserves $L'_{j,\alpha}$.
Indeed if $\gamma\in S_{i,\beta}$ then $L'_{j,\iota(\gamma)\alpha}=\gamma L'_{j,\alpha}$ contains $L_{i,\beta}$ hence intersects $L'_{j,\alpha}$ and hence is equal to it.

Since $S'_{j,\alpha}$ is a finite index subgroup of $\Stab_G(L'_{j,\alpha})$, this means that $S_{i,\beta}\cap S_{j,\alpha}'$ is a finite index subgroup of $S_{i,\alpha}$.
In particular its cohomological dimension is the same as that of $S_{i,\beta}$, which is $\dimd-1$, which is also the cohomological dimension of $S'_{j,\alpha}$.
By a standard argument (see \cite{cohomgpdiscret}), this implies that $S_{i,\beta}\cap S_{j,\alpha}'$ is also a finite index subgroup of $S'_{j,\alpha}$.

In particular, $\Theta_j'\times\mb{Z}$ and $\Theta_i\times\mb{Z}$ are commensurable as they contain the isomorphic finite index subgroups $S_{i,\beta}\cap S_{j,\alpha}'$ and $\iota(S_{i,\beta}\cap S_{j,\alpha}')$.

Let $K$ be a group such that every finite index subgroup of $K$ has trivial center. For example, all $\Theta_i,\Theta_j'$ have such property. Consider a finite index subgroup $S<K\times\mb{Z}$. Then the center of $S$ is $Z(S)=S\cap(\{1\}\times\mb{Z})$ and we have
\[
S/Z(S)=S/\left(S\cap(\{1\}\times\mb{Z})\right)\simeq \pi(S)<K
\]
where $\pi:K\times\mb{Z}\to K$ is the projection onto the first factor.

Let us prove this claim: Consider $\pi(S)<K$. It is a finite index subgroup and, hence, it has trivial center by assumption. If $(\alpha,t)\in S$ is in the center of $S$, then $\alpha$ is in the center of $\pi(S)$ which is trivial. Vice versa it is immediate to see that $\{1\}\times\mb{Z}$ is in the center of $K\times\mb{Z}$.

We apply this fact to $S:=S_{i,\beta}\cap S_{j,\alpha}'<\Theta_i\times\mb{Z}$ and $\iota(S)<\Theta_j'\times\mb{Z}$: Let $\pi_i,\pi_j$ be the projections of $\Theta_i\times\mb{Z},\Theta_j'\times\mb{Z}$ onto the first factors. We have
\[
 \pi_j(S)\simeq S/Z(S)\simeq \pi_i(\iota(S)).
\]

Thus $\Theta_j'$ is commensurable with $\Theta_i$ as they contain the isomorphic finite index subgroups $\pi_j(S)$ and $\pi_i(\iota(S))$.

In order to conclude we will use a commensurability invariant of hyperbolic manifolds, namely the invariant trace field: 

\begin{defi}[Trace Field]
Let $\Gamma<{\rm SO}_0(d-2,1)$ be a lattice. The {\em invariant trace field} of $\Gamma$ is
\[
k(\Gamma):=\mb{Q}\left(\{{\rm tr}({\rm Ad}(\gamma))\left|\;\gamma\in\Gamma\right.\}\right)
\]
where ${\rm Ad}:{\rm SO}_0(d-2,1)\to\mathfrak{so}(d-2,1)$ is the adjoint representation.

By Mostow Rigidity (see \cite[Ch.\,15]{WitteMorris}), if $d\ge 5$, the invariant trace field $k(\Gamma)$ is an invariant of the abstract group $\Gamma$ and does not depend on the particular embedding $\Gamma<{\rm SO}_0(d-2,1)$ as a lattice (which is unique up to conjugation).
\end{defi}

Vinberg \cite{Vin71b} shows that the trace field $k(\Gamma)$ is an invariant of the commensurability class of $\Gamma$:

\begin{fact}[{Vinberg \cite{Vin71b}}]
Let $\Theta<{\rm SO}_0(d-2,1)$ be a lattice. If $\Theta_0<\Theta$ is a finite index subgroup, then $k(\Theta_0)=k(\Theta)$.
\end{fact}

The computation of the trace field in our case is not difficult:

\begin{fact}[{Prasad--Rapinchuck \cite[Lem.\,2.6]{PR09}}]
Let $F$ be a number field with ring of integers $\mc{O}$. Let 
\[
q=x_1^2+\cdots+x_{d-2}^2-\tau y^2
\]
be a quadratic form where $\tau\in\mc{O}$ satisfies the assumptions of Lemma \ref{lem:number field}. Then 
\[
k({\rm SO}(q)_{\mc{O}})=F.
\]
\end{fact}

Since $\Theta_j',\Theta_i$ are commensurable we must have $k(\Theta_j')=k(\Theta_i)$. However, $\Theta_j',\Theta_i$ are finite index subgroups of ${\rm SO}(q)_\mc{O},{\rm SO}(q')_{\mc{O}'}$ respectively where $q=x_1+\dots+x_{d-2}^2-\tau y^2$ and $q'=x_1+\dots+x_{d-2}^2-\tau' y^2$ and $\mc{O},\mc{O}'$ are the ring of integers of $F,F'$. By the above results, we get $k(\Theta_j')=F'$ and $k(\Theta_i)=F$. As $F\neq F'$, this provides a contradiction and finishes the proof.
\end{proof}

\end{document}